\documentclass[a4paper,10pt,reqno]{amsart}
\usepackage{dsfont,amsmath,amsfonts,amscd,amssymb,graphicx,mathrsfs,eufrak,upgreek,pdflscape,colonequals,euscript,amsmath}
\usepackage[usenames]{color}
\usepackage{setspace}
\usepackage{array,multirow,booktabs,longtable}
\usepackage{mathtools}%% for psmallmatrix
\usepackage{comment}
\usepackage{cancel}
\usepackage{upref}
\renewcommand*\partial{\textnormal{\reflectbox{6}}}
\usepackage{nccmath}%%for medium sized math
\usepackage{calc}
\usepackage{enumitem}

\setlist[enumerate]{font=\normalfont}

\usepackage{array,multirow,booktabs,longtable}
\newcommand{\marginparstretch}{0.6}
\let\oldmarginpar\marginpar
\renewcommand\marginpar[1]{\-\oldmarginpar[\framebox{\setstretch{\marginparstretch}\begin{minipage}{\marginparwidth}{\raggedleft\tiny #1}\end{minipage}}]{\framebox{\setstretch{\marginparstretch}\begin{minipage}{\marginparwidth}{\raggedright\tiny #1}\end{minipage}}}}

\newcommand{\xdownarrow}[1]{%
  {\left\downarrow\vbox to #1{}\right.\kern-\nulldelimiterspace}
}

\usepackage[pagebackref]{hyperref}

\hypersetup{
  colorlinks   = true,          %Colors links instead of ugly boxes
  urlcolor     = blue,          %Color for external hyperlinks
  linkcolor    = purple,          %Color of internal links
  citecolor   = blue             %Color of citations
}

\makeatletter
\let\c@figure\c@table
\let\ftype@figure\ftype@table
\let\ext@figure\ext@table

\makeatother 
\usepackage{tikz,mathrsfs}
\usetikzlibrary{arrows,decorations.pathmorphing,decorations.pathreplacing,positioning,shapes.geometric,shapes.misc,decorations.markings,decorations.fractals,calc,patterns}

\tikzset{
        cvertex/.style={circle,draw=black,inner sep=1pt,outer sep=3pt},
        vertex/.style={circle,fill=black,inner sep=1pt,outer sep=3pt},
        DB/.style={circle,draw=black,circle,fill=black,inner sep=0pt, minimum size=4pt},
        DBlue/.style={circle,draw=black,circle,fill=white,inner sep=0pt, minimum size=4pt},
        DW/.style={circle,draw=black,inner sep=0pt, minimum size=4pt},
        star/.style={circle,fill=yellow,inner sep=0.75pt,outer sep=0.75pt},
        tvertex/.style={inner sep=1pt,font=\scriptsize},
        gap/.style={inner sep=0.5pt,fill=white}}

\newtheorem{thm}{Theorem}[section]
\newtheorem{prop}[thm]{Proposition}
\newtheorem{lemma}[thm]{Lemma}
\newtheorem{cor}[thm]{Corollary}

\newtheorem{claim}{Claim}

\theoremstyle{definition} 
\newtheorem{defin}[thm]{Definition}

\newtheorem{example}[thm]{Example}
\newtheorem{setup}[thm]{Setup}

\newtheorem{remark}[thm]{Remark}

\newtheorem{notation}[thm]{Notation}

\numberwithin{equation}{section}

\newcommand{\m}{\mathfrak{m}}
\newcommand{\n}{\mathfrak{n}}

\DeclareMathOperator{\Spec}{\rm Spec}
\DeclareMathOperator{\Sing}{\rm Sing}
\DeclareMathOperator{\ab}{\rm ab}

\def\filt{{\sf{filt}}}

\newcommand{\Bl}{\mathrm{Bl}}

\newcommand{\con}{\mathrm{con}}

\newcommand{\CA}{\mathrm{A}_{\con}}

\newcommand{\CAab}{\mathrm{A}^{\ab}_{\con}}

\newcommand\Curve{\mathrm{C}}

\renewcommand{\P}{\mathbb P}
\newcommand{\A}{\mathbb A}
\newcommand{\N}{\mathbb N}
\newcommand{\Z}{\mathbb Z}

\newcommand{\Ga}{\Gamma}

\newcommand{\C}{\mathbb C}

\newcommand{\la}{\uplambda}
\renewcommand{\mu}{\upmu}
\DeclareMathOperator{\ord}{ord}

\newcommand{\sff}{\mathsf{f}}
\newcommand{\sfg}{\mathsf{g}}
\newcommand{\sfh}{\mathsf{h}}

\newcommand{\sfm}{\mathsf{m}}

\newcommand{\bigO}{\scrO}

\newcommand{\kx}{\mathsf{r}}
\newcommand{\ky}{\mathsf{s}}

\newcommand{\scrA}{\EuScript{A}}
\newcommand{\scrB}{\EuScript{B}}
\newcommand{\scrC}{\EuScript{C}}

\newcommand{\scrE}{\EuScript{E}}
\newcommand{\scrF}{\EuScript{F}}
\newcommand{\scrG}{\EuScript{G}}

\newcommand{\scrI}{\EuScript{I}}
\newcommand{\scrJ}{\EuScript{J}}

\newcommand{\scrL}{\EuScript{L}}

\newcommand{\scrN}{\EuScript{N}}
\newcommand{\scrO}{\EuScript{O}}

\newcommand{\scrR}{\EuScript{R}}
\newcommand{\scrS}{\EuScript{S}}
\newcommand{\scrT}{\EuScript{T}}
\newcommand{\scrU}{\EuScript{U}}

\newcommand{\scrX}{\EuScript{X}}
\newcommand{\scrY}{\EuScript{Y}}
\newcommand{\scrZ}{\EuScript{Z}}

\DeclareMathOperator{\Neck}{\EuScript{N}\rm eck}
\DeclareMathOperator{\Mono}{\EuScript{M}\rm ono}
\DeclareMathOperator{\NeckPoly}{\EuScript{N}}
\DeclareMathOperator{\Orb}{\EuScript{O}\mathrm{rb}}

\DeclareMathOperator{\coh}{\rm coh}

\newcommand{\hatM}{\mathfrak{n}}
\newcommand{\dcyc}{\updelta}

\newcommand{\llangle}{\langle\kern -2.5pt\langle}
\newcommand{\rrangle}{\rangle\kern -2.5pt\rangle}
\newcommand{\ringtwo}{\C\llangle x,y\rrangle}
\newcommand{\llcurve}{\{\kern -3pt\{}
\newcommand{\rrcurve}{\}\kern -3pt\}}
\newcommand{\llsq}{[\![}
\newcommand{\rrsq}{]\!]}
\newcommand{\lcl}{(\kern -2.5pt(}
\newcommand{\rcl}{)\kern -2.5pt)}
\newcommand{\Jac}{\scrJ\mathrm{ac}}

\newcommand{\rad}{\mathrm{rad}}

\renewcommand{\phi}{\upvarphi}

\newcommand{\ring}{\C\llangle x,y\rrangle}
\newcommand{\xyring}{\C\llangle x,y_1,\hdots,y_{d-1}\rrangle}
\newcommand{\yring}{\C\llangle y_1,\hdots,y_{d-1}\rrangle}
\newcommand{\freering}{\C\langle x,y\rangle}
\newcommand{\al}{\upalpha}
\newcommand{\be}{\upbeta}
\newcommand{\ga}{\upgamma}

\newcommand{\xsf}{\mathsf{x}}
\newcommand{\ysf}{\mathsf{y}}
\newcommand{\gsf}{\mathsf{g}}
\newcommand{\hsf}{\mathsf{h}}
\newcommand{\jsf}{\mathsf{j}}
\newcommand{\ksf}{\mathsf{k}}

\newcommand{\Xsf}{\mathsf{X}}
\newcommand{\Ysf}{\mathsf{Y}}
\newcommand{\fringd}{\C\langle \mathsf{x}\rangle}
\newcommand{\ringd}{\C\llangle \mathsf{x}\rrangle}

\newcommand{\polyringd}{\C\langle x_1,\dots,x_d\rangle}
\newcommand{\polyringtwo}{\C\langle x,y\rangle}
\newcommand{\corank}{\scrC\mathrm{rk}}

\newcommand{\fJ}{{\mathfrak{J}}}
\newcommand{\Arad}{{\mathfrak{r}}}

\newcommand\GV[1]{\mathrm{GV}_{\! #1}}

\DeclareMathOperator{\LT}{LT}
\DeclareMathOperator{\LC}{LC}
\DeclareMathOperator{\LM}{LM}
\newcommand\Spoly{$S$\nobreakdash-\hspace{0pt}polynomial}
\newcommand\Spolys{$S$\nobreakdash-\hspace{0pt}polynomials}
\newcommand{\hot}{\textrm{h.o.t.}}

\DeclareMathOperator{\OL}{OL}

\newcommand{\mR}{\mathfrak{m}}

\newcommand{\Xbar}{\overline{X}}

\newcommand{\msf}{\mathsf{m}}
\newcommand{\nsf}{\mathsf{n}}
\newcommand{\fsf}{\mathsf{f}}
\newcommand{\asf}{\mathsf{a}}
\newcommand{\bsf}{\mathsf{b}}

\newcommand{\stub}{\mathrm{stub}}
\newcommand{\red}{\mathrm{red}}

\newcommand{\tf}{$3$\nobreakdash-fold}
\newcommand{\tfs}{$3$\nobreakdash-folds}
\newcommand{\Gaj}{\mathfrak{j}}

\newcommand{\caseA}{A}				% Z
\newcommand{\caseB}{B}		% X
\newcommand{\caseC}{C}			% Y
\newcommand{\nat}{\upvartheta}

\DeclareMathOperator{\scrHom}{\EuScript{H}\rm om}

\newcommand{\dd}{\mathds{D}}
\DeclareMathOperator{\Ext}{\mathrm{Ext}}

\newcommand{\Esix}[6]{%
\begin{tikzpicture}[scale=0.21]
\node at (0,0) [#1] {};
\node at (1,0) [#2] {};
\node at (2,0) [#3] {};
\node at (2,1) [#4] {};
\node at (3,0) [#5] {};
\node at (4,0) [#6] {};
\end{tikzpicture}
}

\newcommand{\Eseven}[7]{%
\begin{tikzpicture}[scale=0.21]
\node at (0,0) [#1] {};
\node at (1,0) [#2] {};
\node at (2,0) [#3] {};
\node at (2,1) [#4] {};
\node at (3,0) [#5] {};
\node at (4,0) [#6] {};
\node at (5,0) [#7] {};
\end{tikzpicture}
}

\newcommand{\Eeight}[8]{%
\begin{tikzpicture}[scale=0.21]
\node at (0,0) [#1] {};
\node at (1,0) [#2] {};
\node at (2,0) [#3] {};
\node at (2,1) [#4] {};
\node at (3,0) [#5] {};
\node at (4,0) [#6] {};
\node at (5,0) [#7] {};
\node at (6,0) [#8] {};
\end{tikzpicture}
}
\tikzset{
W/.style={circle,draw=black,circle,fill=white,inner sep=0pt, minimum size=4pt},
B/.style={circle,draw=black!80!white,circle,fill=black!80!white,inner sep=0pt, minimum size=4pt},
Or/.style={circle,draw=Orange,circle,fill=Orange,inner sep=0pt, minimum size=4pt},
P/.style={circle,draw=Pink,circle,fill=Pink,inner sep=0pt, minimum size=4pt},
R/.style={circle,draw=black!80!white,circle,fill=red!80!white,inner sep=0pt, minimum size=4pt},  
}

\begin{document}
\title[The Classification of Flops]{The Classification of Smooth $3$-fold Flops}
\author{Gavin Brown}
\address{Gavin Brown, Mathematics Institute, Zeeman Building, University of Warwick, Coventry, CV4 7AL, UK.}
\email{G.Brown@warwick.ac.uk}
\author{Michael Wemyss}
\address{Michael Wemyss, School of Mathematics and Statistics,
University of Glasgow, University Place, Glasgow, G12 8QQ, UK.}
\email{michael.wemyss@glasgow.ac.uk}
\begin{abstract}
This paper establishes the analytic classification of smooth, simple, \tf\ flops, and in the process provides both an intrinsic characterisation and a classification of their contraction algebras.    
\end{abstract}

%Primary: 58K40 Classification; finite determinacy of map germs
%Secondary: 16S15 Finite generation, finite presentability, normal forms (diamond lemma, term-rewriting)
%16S38 Rings arising from noncommutative algebraic geometry [See also 14A22]
%32S10 Invariants of analytic local rings
%14E30 Minimal model program (Mori theory, extremal rays)

\maketitle
\setcounter{tocdepth}{1}
%\tableofcontents        %% temporary for navigation during writing

\section{Introduction }
This paper classifies smooth \tf\ flops, namely those birational morphisms $\scrX\to\scrX_{\con}$ that contract a single nonsingular rational curve $\Curve$ that satisfies $K_{\scrX}\cdot \Curve =0$ with $\scrX$ a smooth \tf.
Such contractions are simply fundamental within both mathematics and theoretical physics: to give one example, such flops link any two birational Calabi--Yau \tfs\ \cite{KawaConnects}.  The techniques developed to do this are noncommutative, and their generality has many additional consequences, including to crepant divisor-to-curve contractions. 

Combined, the overall outcome is a full and final classification of flops, in a manner satisfyingly similar to the lists of Arnold \cite{Arnold}, Du Val \cite{DuVal1,DuVal2} and Klein \cite{Klein}, in which our families of flops exhibit moduli from length $3$ (equivalently, $E_6$) onwards. In contrast, this paper also shows that crepant divisor-to-curve contractions are much more rare: they stop at length $3$.

\subsection{Meta Strategy}\label{subsec:MetaStrategy}
One defining feature of any smooth, simple, $3$-fold flop
\[
\begin{tikzpicture}
\node at (-1,0) {$\scrX$};
\node at (1,0) {$\scrX^+$};
\node at (0,-1) {$\Spec\scrR$};
\draw[->] (-0.8,-0.25) -- node[left]{$\scriptstyle f\,\,$}(-0.2,-0.75);
\draw[->] (0.8,-0.25) -- node[right]{$\,\scriptstyle f^+$}(0.2,-0.75);
\end{tikzpicture}
\]
is its obvious symmetry.  Both the birational maps $\scrX\dashrightarrow\scrX^+$ and $\scrX^+\dashrightarrow\scrX$  are flops, which makes it manifestly impossible to distinguish or prefer $\scrX$ over $\scrX^+$.  As a consequence, any definition which attempts to `classify flops' up to analytic or complete local isomorphism needs to at the very least take into account this symmetry. 

Perhaps the most natural method, which avoids mention of the involution completely, is instead to classify the possible base rings $\scrR$ that can appear up to isomorphism; should that approach work, the good news is that by \cite{Pinkham} both $f$ and $f^+$ can be obtained canonically from $\scrR$ by blowing up the two generators $\pm 1$ of the class group $\mathrm{Cl}(\scrR)\cong\mathbb{Z}$.

The less good news is that the base rings $\scrR$, the extent to which we know them, are described by massive, highly non-trivial polynomials currently beyond the realms of human readability.  A fundamentally different approach is required, particularly when it turns out that an elegant classifying data structure does indeed exist. 

\medskip
To specify the setting, geometrically we are interested in the set $\scrF$ of all smooth $3$-folds $\scrX$ containing a $K_\scrX$-trivial curve $\Curve\cong\mathbb{P}^1$ which suitably locally (see \ref{cor:flops} and \S\ref{sec:BaseCategory}) can be contracted to give a flopping contraction, up to some choice of local isomorphism that is explained in \S\ref{sec:BaseCategory} but is standard and so we suppress from this introduction.    From this data the other side of the flop can be canonically constructed, so, exactly as above, understanding the set $\scrF$ suffices for our purposes.  

From here, the first step is to use the recent positive resolution of the Donovan--Wemyss conjecture \cite{JKM, KLW} (see also \cite{Booth}) which establishes a bijection
\[
\begin{tikzpicture}
\node at (0,0) {$\scrF \colonequals \{ \textnormal{all smooth $3$-fold flops}\}/\cong$};
\node at (0,-1) {$\scrA \colonequals \{\textnormal{finite dimensional contraction algebras}\}/\cong$};
\draw[<->] (-1.95,-0.3) -- node[left]{$\scriptstyle \sim$}(-1.95,-0.7);
\end{tikzpicture}
\]
This should be viewed as a theoretical classification, as it reduces the problem to understanding isomorphisms between specific finite dimensional algebras but says nothing about what $\scrF$ or $\scrA$ looks like, nor whether a classification of either is in fact possible.

From here, the strategy is remarkably simple. Taking the view that the set $\scrA$ is the set we wish to classify,  the main body of this paper constructs sets $\scrB$ and $\scrC$, where  
\begin{align*}
%\scrA &= \textnormal{set of things we want to classify}\\[2mm]
\scrB &= \{\textnormal{algebras satisfying some specific axioms, specified in \S\ref{subsec:NormalFormsIntro}}\}/\cong\\
\scrC &= \{\textnormal{the small list of algebras given in \ref{thm:introNFmain}\eqref{thm:introNFmain1}}\}/\cong
\end{align*}
and then produces a sequence of results which establishes that
\[
\scrA\subseteq \scrB\subseteq \scrC\subseteq \scrA.
\]
From this, $\scrA=\scrB=\scrC$ follows.   In particular we obtain  $\scrA=\scrC$, namely a full classification, and also $\scrA=\scrB$, an intrinsic characterisation. 

\medskip 
In more geometric terms, and as explained in \S\ref{subsec:2ndpaper}, the set $\scrC$ is in bijection with a set~$\scrS$ of \emph{some} $3$-fold flops.  The fact that $\scrA=\scrC$ then implies that the set $\scrF$ of all $3$-fold flops is in bijection with the subset $\scrS$, and so $\scrS$ realises the classification in geometric terms.

\subsection{Intrinsic Contraction Algebras (the set $\scrB$)}\label{subsec:NormalFormsIntro}
A feature common to both flops and crepant divisor-to-curve contractions is that by \cite{DW1} they admit an associated contraction algebra $\CA$.  This noncommutative algebra can be defined in various ways, via deformation theory of the exceptional curves, via cluster tilting, or via noncommutative singularity theory.

The fundamental problem is that contraction algebras are algebras that `come from geometry', and it is unclear how to \emph{intrinsically} characterise them within the wider class of all algebras.  This is the central problem solved here, from which almost all else follows.  

For the single-curve flops $\scrX\to\Spec\scrR$ under consideration in this paper, the most fundamental class, it is well-known that $\CA\cong\Jac(f)$ for some element $f\in\ringtwo$ in the complete free algebra; see e.g.\ \cite{VdBCY} and \S\ref{sec:differentiation}.  The following new  invariant is the key.

\begin{defin}\label{def:alglengthintro}
An element $f\in\ringtwo$ is said to have \emph{algebraic length $\ell$}, where $1\leq \ell\leq 6$, if there exists $z\in\scrZ(\Jac(f))$ such that $\Jac(f)/\lcl z\rcl \cong e\Uppi e$, for some ADE preprojective algebra $\Uppi$ and some primitive idempotent $e$ with associated number $\ell$ in \eqref{eqn:allDynkin}.  If no such central element exists, $f$ is said to be \emph{purely algebraic}.
\end{defin}

The number $\ell$ in \ref{def:alglengthintro} associated to the primitive idempotent $e$ is very natural indeed, coming from either the highest root in Lie theory, or the fundamental cycle in surface singularities; see \S\ref{subsec:AlgL} for details.  Regardless, one perhaps startling aspect of  \ref{def:alglengthintro} is that there is no appeal to generic central elements. Unlike in the geometry, where it is necessary to use generic elements to extract intrinsic invariants, on the noncommutative side plain old central elements suffice.  The reason is simple: the contraction algebra is the minimal classifying object, and so there is no redundancy in the information it contains. 

It is established in \ref{thm:ReductionPrecise} that the algebraic length is well defined and not multi-valued.  This then ensures that
\[
\scrB\colonequals \{\Jac(f)\mid \textnormal{$f$ has an algebraic length, and } \dim_\mathbb{C}\Jac(f)<\infty \}/\cong
\]
is a set.  The following summary theorem, which in the finite dimensional setting is simply the statement that $\scrA=\scrB$, asserts that algebraic length is indeed the intrinsic description of contraction algebras that we seek. 

\begin{thm}[{\ref{thm:Summary}}]\label{thm:IntroSummary}
If $\mathrm{B}$ is a $\mathbb{C}$-algebra, then the following statements are equivalent. 
\begin{enumerate}
\item $\mathrm{B}\cong\Jac(f)$ for some element $f\in\ringtwo$ that has an algebraic length.
\item $\mathrm{B}\cong\CA$ where $\CA$ is the contraction algebra of a smooth $3$-fold flop, or a smooth crepant divisor-to-curve contraction.
\end{enumerate}
In particular, contraction algebras of flops are characterised up to isomorphism as those $\Jac(f)$ where $f$ has an algebraic length, and where furthermore $\dim_\mathbb{C}\Jac(f)<\infty$.
\end{thm}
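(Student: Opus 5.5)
Both directions go through the generic hyperplane section. The reverse direction (2)$\Rightarrow$(1) is mostly geometric. Suppose $\mathrm{B}\cong\CA$ for a flop or crepant divisor-to-curve contraction $\scrX\to\Spec\scrR$. By the known result (\cite{VdBCY}, \S\ref{sec:differentiation}) we have $\CA\cong\Jac(f)$ for some $f\in\ringtwo$. It then remains to produce a central element $z$ with $\Jac(f)/\lcl z\rcl\cong e\Uppi e$.

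For this I would take a generic hyperplane section $g\in\scrR$. By Reid's general elephant, $\scrR/g$ is an ADE surface singularity. The pullback of $g$ gives a partial resolution of $\scrR/g$ contracting the single curve $\Curve$, whose length is $\ell$. The image $z$ of $g$ in $\CA$ is central, since $\CA$ is a quotient of $\mathrm{End}_\scrR(\scrR\oplus N)$ and $g$ lies in the base. I would then invoke the standard base-change property of contraction algebras under slicing: $\CA/\lcl z\rcl$ is the contraction algebra of the surface partial resolution, and this is $e\Uppi e$ for the preprojective algebra $\Uppi$ of the corresponding ADE type, with $e$ the idempotent of the vertex being contracted. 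Since the contracted curve has length $\ell$, the associated number of $e$ is $\ell$, so $f$ has algebraic length $\ell$.

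The forward direction (1)$\Rightarrow$(2) is the main obstacle, since we must build geometry from algebra. Given $f$ and $z$ with $\Jac(f)/\lcl z\rcl\cong e\Uppi e$, I would argue by deformation. Lift $z$ to an element $\tilde z$, and view $\Jac(f)$ as a one-parameter deformation (parameter $z$) of the surface contraction algebra $e\Uppi e$.

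The first step is normal-form theory. Using the fact (\ref{thm:ReductionPrecise}) that $\ell$ is well defined, reduce $f$ up to isomorphism of $\Jac$ to a normal form $f_0+\hot$, where $f_0$ is the cubic or quartic leading term dictated by $\ell$, and show that any such $f$ is isomorphic to one in an explicit family. The second step is to realise every member of that family geometrically. Write down explicit $cA_n/cD/cE$ hypersurfaces $\scrR$ of the form $u^2+q(x,y,t)$ that deform the ADE surface by the parameter $t\mapsto z$. Construct the crepant partial resolution by blowing up the appropriate class-group generator, and compute its contraction algebra, verifying it matches $\Jac(f)$. Here the universal property of the versal deformation of $e\Uppi e$ (Katz/Morrison-style) should let one match the noncommutative deformation to a geometric one.

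The hardest step is showing that every $f$ with an algebraic length lies in the image, including the infinite-dimensional case; this yields divisor-to-curve contractions. I would first try to handle it by the classification of normal forms, checking that the infinite-dimensional normal forms correspond to families where $\scrR$ has a curve of singularities. The final sentence of the theorem then follows from \cite{DW1}: $\dim\CA<\infty$ exactly when the contraction is a flop.
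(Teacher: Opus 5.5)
Your direction (2)$\Rightarrow$(1) is correct and is essentially the paper's proof of (3)$\Rightarrow$(1) in \ref{thm:Summary}. A generic elephant $g\in\scrR$ induces a central element of $\CA$, and \cite[(3.C)]{DW1} identifies the quotient with $e\Uppi e$ for a vertex with number $\ell$.

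The gaps are in (1)$\Rightarrow$(2). In the normal-form step, well-definedness of $\ell$ is an \emph{output} of the classification, not a tool for producing it. Even granting \ref{thm:ReductionPrecise}, you only control $f$ up to an arbitrary $\scrO_6$ tail. The mechanism you are missing runs as follows.
\begin{enumerate}
\item A central $z$ maps to a central element of every truncation $A_k=\Jac(f)/\fJ^k$. The centres of the $A_k$ are computable by elementary standard bases.
\item The required isomorphism $A_k/\lcl\uppi_k(z)\rcl\cong\Upgamma_k$ can then be refuted by dimension counts, and by the (non)existence of square-zero elements of $\fJ\setminus\fJ^2$. This fixes the low-degree jet.
\item All higher-degree terms must then be funnelled by sequences of automorphisms (\ref{main chase lemma}) into necklace form.
\item Finally one proves $\n^N\subset\lcl\dcyc f\rcl$ via weak standard bases (\S\ref{sec:FD}). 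This lets the tail be discarded and $f$ replaced by a \emph{finite} sum of necklace polynomials, as in \ref{cor:NecklaceSummary}.
\end{enumerate}
Without this you have no explicit family to realise.

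The realisation step is the more serious gap. Writing down $cDV$ hypersurfaces, blowing up a class-group generator and computing $\CA$ by hand is not a workable strategy for $\ell\ge 3$. The base rings are unmanageable, and there are continuous moduli in every such length (\S\ref{sec:ExistModuli}), so a uniform construction is needed. Nor is there an available ``universal property of the versal deformation of $e\Uppi e$'' converting algebraic data into $3$-folds. Pinkham's construction does produce flops from curves in the deformation base of the partial resolution, but it gives no way to prescribe which $\Jac(f)$ arises. Indeed, for a flop $z$ is nilpotent in the finite dimensional $\Jac(f)$, so $\Jac(f)$ is not even a flat deformation of $e\Uppi e$ over $\C\llsq z\rrsq$.

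The paper instead imports the companion paper \cite{BW5}. For each finite necklace potential $g$ in \ref{cor:NecklaceSummary}, that paper glues two copies of $\mathbb{A}^3$ with transition function read off from $g$. It then proves that the curve contracts (only analytically locally when $\ell\ge4$) to a flop with contraction algebra $\Jac(g)$, and computes its Koll\'ar length. This is exactly why the necklace forms are needed.

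In the infinite-dimensional case the classification leaves only $x^2$, $x^2y+\upvarepsilon y^{2m-1}$ (known from \cite{BW3}) and $x^3+xy^3$. The last is realised by the single explicit $cE_6$ blowup of \ref{E6divcurveMain}. Its contraction algebra is pinned down by the classification (length $3$ together with infinite dimension), not by direct computation. As written, your (1)$\Rightarrow$(2) is a programme rather than a proof.
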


All the content in (2)$\Rightarrow$(1), which for flops is equivalent to the statement $\scrA\subseteq\scrB$, is that algebraic length is well defined and so $\scrB$ exists.   The proof of (1)$\Rightarrow$(2), which for flops is equivalent to $\scrB\subseteq\scrA$, involves vastly different techniques, and is a consequence of the two statements $\scrB\subseteq\scrC$ and $\scrC\subseteq \scrA$ which we now explain.

\subsection{Classification of Algebraic Length (the set $\scrC$)}

After further developing the techniques in noncommutative singularity theory initiated in \cite{BW3, BW4}, in \S\ref{sec:ParseOne}--\S\ref{sec:FunnellingOne} every algebra in the set $\scrB$, namely every algebra with algebraic length $\ell$, is shown to be isomorphic to one of the algebras stated in \ref{thm:introNFmain} below.  The set $\scrC$ is then defined to be the outcome of this process, namely the list, and so by design $\scrB\subseteq \scrC$.  All the content here, much like in the classical singularity theory of Arnold \cite{Arnold}, is in finding and establishing the list in the first place.

We then show in \S\ref{sec:FD}, still without using any geometry, precisely which algebras in the list are finite dimensional.  Again, as in classical singularity theory, the significance of these families is only fully realised \emph{after} classification: there is still no obvious reason why the following purely algebraic result should yet be classifying anything sensible.

To set notation, $\scrN_{i,j}$ are the necklace polynomials recalled in \S\ref{subsec:DefNeck}.  From the perspective of this introduction, $\scrN_{i,j}$ should simply be viewed as the natural noncommutative replacement of the commutative monomial $x^iy^j$.
\begin{thm}[{\ref{cor:classifyAcon}}]\label{thm:introNFmain} 
Suppose that $f\in\ringtwo$ has an algebraic length.
\begin{enumerate}
\item\label{thm:introNFmain1}  If $\dim_\C\Jac(f)<\infty$, then $\Jac(f)\cong\Jac(g)$ for some $g$ in the following list
\[
\renewcommand{\arraystretch}{1.3}
\medmath{\begin{array}{c@{\hspace{20pt}}l@{\hspace{2pt}}l@{\hspace{20pt}}l@{\hspace{2pt}}}
1 & x^2 + y^n && n\geq 2 \\
2 &x^2y+ y^{2n}+\upvarepsilon y^{2m-1} && 2n-1\geq m\geq 2  \\
3 &x^3 + xy^3 + y^n + y^{n+1}q& &q\in\C[y],\,n\geq 4  \\
%4 &x^3 + y^4+\scrN_{2,3}+\upalpha_1\scrN_{2,4}+\upalpha_2\scrN_{3,3}+ xy^7p +y^6q  && p,q\in\C[y]\\
4 &x^3 + y^4+\scrN_{2,3}+\sum_{(i,j)\in\scrT_4}\upalpha_{i,j}\scrN_{i,j} &&\\
5 &x^3+\scrN_{2,2}+y^5+xy^5p+y^6q  && p,q\in\C[y]\\
6 &x^3 + y^4+\scrN_{3,3}+\uplambda\scrN_{2,4}+\sum_{(i,j)\in\scrT_6}\upalpha_{i,j}\scrN_{i,j} && \uplambda\neq 0\\
\end{array}}
\renewcommand{\arraystretch}{1}
\]
where the length indexes the rows, $\upvarepsilon\in\{0,1\}$, $\upalpha_*\in\C$ and $\scrT_4,\scrT_6$ are defined in \eqref{eqn:DefS4} and \eqref{eqn:DefS6}.

\item\label{thm:introNFmain2}  If $\dim_\C\Jac(f)=\infty$, then $\Jac(f)\cong\Jac(g)$ for some $g$ in the following list
\[
\renewcommand{\arraystretch}{1.3}
\medmath{\begin{array}{c@{\hspace{20pt}}lll}
1& x^2 &  \\
2&x^2y + \upvarepsilon y^{2m-1} & m\geq2  \kern 135pt\\
3&x^3 + xy^3 & 
\end{array}\kern 33pt}
\renewcommand{\arraystretch}{1}
\]
where the length indexes the rows, and $\upvarepsilon\in\{0,1\}$.
\end{enumerate}
Conversely, every element in the lists in \eqref{thm:introNFmain1} and \eqref{thm:introNFmain2} has Jacobi algebra of length stated.
\end{thm}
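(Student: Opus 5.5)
The plan is to argue length by length, using the central element $z$ from Definition~\ref{def:alglengthintro} to pin down the low-order part of $f$, and then to run noncommutative normal-form reduction (the coordinate changes and Jacobi-algebra isomorphisms developed in \cite{BW3,BW4}) on the higher-order terms. Fix $f$ of algebraic length $\ell$, so there is a central $z$ with $\Jac(f)/\lcl z\rcl\cong e\Uppi e$. Since $z$ is central and $e\Uppi e$ is known explicitly, I would first lift generators. Here $e\Uppi e$ is a quotient of $\ringtwo$ with relations in small degree: for $\ell=1$ it is $\C[x]/x$-type, for $\ell=2$ it is of type $D$, and for $\ell\geq 3$ it is of type $E$. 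The first key step is to show that the cubic (or quadratic) part of $f$ is determined up to $\mathrm{GL}_2$ by $\ell$. This should give the leading terms $x^2$ for $\ell=1$, $x^2y$ for $\ell=2$, and $x^3$ plus specified quartic necklaces for $\ell\geq3$. The way to see this is to compare the associated graded algebras of $\Jac(f)$ and $e\Uppi e$ modulo $z$, using that $z$ must lie in $\m^k$ for some explicit $k$. Where needed, well-definedness of $\ell$ (\ref{thm:ReductionPrecise}) is available.

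The second step is the splitting and reduction. For $\ell=1$ the noncommutative splitting lemma gives $f\sim x^2+g(y)$, hence $x^2+y^n$ or $x^2$. For $\ell=2$ I would eliminate higher terms containing $x$ by substitutions $x\mapsto x+\dots$ and cyclic equivalence. This leaves $x^2y+h(y)$, and rescaling $x,y$ normalises $h$ to $y^{2n}+\upvarepsilon y^{2m-1}$. The constraint $m\leq 2n-1$ should come from the fact that an odd term of higher order can be absorbed by the even one. For $\ell\geq 3$ I would proceed in filtered degree, weighting $x$ and $y$ so that the leading part is quasi-homogeneous. At each weight I would compute the tangent space to the orbit of the group of automorphisms combined with cyclic permutations, and choose a complement spanned by necklaces $\scrN_{i,j}$. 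The complements at the relevant weights are exactly the index sets $\scrT_4$ and $\scrT_6$, plus the $p,q$ tails. A finite determinacy statement then terminates the process. The conditions in the centrality hypothesis should separate lengths $4,5,6$; in particular they should force $\uplambda\neq0$ in length $6$.

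The third step is to decide which forms in the lists have finite-dimensional Jacobi algebras, giving the split into \eqref{thm:introNFmain1} and \eqref{thm:introNFmain2}. I would do this with Gröbner/diamond-lemma computations of $\Jac(g)$, counting normal words. Finiteness fails precisely when a pure-$y$ term is absent, which is exactly the truncations in (2); for length $\geq4$ the algebra is always finite-dimensional. For the converse, in each family I would exhibit an explicit central element, typically a power of $y$ or the leading necklace, and check by the same diamond-lemma computation that the quotient by it is $e\Uppi e$ for the right vertex $e$.

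I expect the main obstacle to be the $\ell=4,6$ reductions. There the noncommutative orbit tangent spaces are large, and proving that the complement is exactly $\scrT_4$ or $\scrT_6$ requires controlling cyclic equivalence across infinitely many weights. My first attempt would be an inductive argument on weight in which every necklace outside $\scrT_\ell$ is written as a combination of cyclic derivatives multiplied against the leading form.
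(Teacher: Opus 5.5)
Your forward direction has the same skeleton as the paper's: images of central elements in the truncations $A_k=\Jac(f)/\fJ^k$ pin down the low-degree terms, then comes funnelling, then standard-basis dimension bounds. You do not say how $z\in\fJ^k$ would be obtained; the paper gets this by computing $\scrZ(A_k)$ for each of the degree-$4$ and degree-$5$ normal forms of \S\ref{sec:ParseOne}. Two steps, however, would fail as proposed.

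\textbf{The converse.} Exhibiting ``a power of $y$ or the leading necklace'' as a central element, and checking $\Jac(g)/\lcl z\rcl\cong e\Uppi e$ by the diamond lemma, only works in the easy cases. These are $y$ for $x^2+y^n$, the known elements for $\ell=2$, and $x^2+xyx+yx^2$ for $x^3+xy^3$. For $x^3+xy^3+y^n+y^{n+1}q$, and for every family with $\ell=4,5,6$, the required central elements are very complicated. Already the simplest $\ell=3$ case needs the elements $g_{6,n}$ of \cite[\S7.3]{BW3}; see \ref{rem:converseAlgLength}. The families also carry continuous parameters and arbitrary polynomial tails, so there is no finite verification to perform.

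The paper never writes these elements down; its converse is geometric. By \cite{BW5}, each finite-dimensional $\Jac(g)$ in the list with $\ell\geq3$ is the contraction algebra of a smooth flop, of Koll\'ar length $\ell$ by \cite[\S2]{BW5}. For $\ell=1,2$ this is \cite{BW3}, and $x^3+xy^3$ is realised by the divisor-to-curve contraction of \ref{E6divcurveMain}. Then, by Reid's general elephant, a generic $h\in\scrR$ has $\scrR/h$ an ADE singularity. By \cite[(3.C)]{DW1}, $h$ induces a central element of $\CA$ with $\CA/h\cong e\Uppi e$ at a vertex with number $\ell$, as in \ref{thm:Summary}. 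Without this input, or an actual construction of the central elements, the converse is unproved.

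\textbf{The $\ell\geq4$ funnel and its termination.} Complements to the tangent space of the quasi-homogeneous leading form cannot produce $\scrT_4$ or $\scrT_6$. None of the stubs $x^3+y^4+\scrN_{2,3}$, $x^3+\scrN_{2,2}+y^5$, $x^3+y^4+\scrN_{3,3}+\uplambda\scrN_{2,4}$ is quasi-homogeneous. Their quasi-homogeneous parts $x^3+y^4$ and $x^3+y^5$ have infinite-dimensional Jacobi algebras, e.g.\ $\Jac(x^3+y^4)=\ringtwo/\lcl x^2,y^3\rcl$. Consequently every cyclic word $xy^{a_1}\cdots xy^{a_t}$ with all $a_i\in\{1,2\}$ lies outside the span, modulo cyclic equivalence, of arbitrary multiples of the cyclic derivatives of the leading form. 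Yet, for instance, $xyxy^2xy^2$, whose necklace $\scrN_{3,5}$ is not in $\scrT_4$, has to be removed.

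The paper removes such words only by exploiting the non-leading stub terms: the $xyxy^2$ inside $\scrN_{2,3}$ in \ref{Length4Neckpush}, and the hypothesis $\uplambda\neq0$ in \ref{Length6Neckpush}. It does so via composite substitutions whose lower-order by-products cancel, and at one point even trades a degree-$14$ word for $y^{11}$. This is exactly what your linear inductive step cannot see.

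The termination is also circular as ordered. The paper first proves $\n^{15},\n^{21},\n^{26}\subset\lcl\dcyc f\rcl$ for every $f$ with the partially normalised low-degree part of \ref{cor:SemiFinalAlgLength} (\S\ref{sec:FD}), and only then truncates. Your Step~3 only examines the final forms. For $x^3+xy^3$ there is no determinacy at all, so a limit argument such as \ref{main chase lemma} is required.

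Finally, two slips. The low-degree part is not determined by $\ell$ up to $\mathrm{GL}_2$ (compare $x^2y$ with $x^2y+y^3$). For $\ell=2$, finiteness fails when $y^{2n}$ is absent, not when every pure-$y$ term is absent: $x^2y+y^{2m-1}$ already has infinite-dimensional Jacobi algebra.
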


The main part of the above theorem, namely \eqref{thm:introNFmain1} , establishes the definition of $\scrC$ and so the statement $\scrB\subseteq\scrC$.  The final converse statement for the list in \eqref{thm:introNFmain1}, which is equivalent to $\scrC\subseteq\scrB$, again takes longer: it is a combination of the statement $\scrC\subseteq\scrA$ explained in the next subsection, combined with $\scrA\subseteq\scrB$ established in the previous subsection.

\medskip
 Before proceeding, two remarks are in order. The first is simply that there are far fewer infinite dimensional algebras in \ref{thm:introNFmain} than might naively be expected. This has ramifications to the existence of crepant divisor-to-curve contractions below. The second is that every element in  \ref{thm:introNFmain}\eqref{thm:introNFmain2} can be viewed as a suitable $n\to\infty$ limit of some family in \ref{thm:introNFmain}\eqref{thm:introNFmain1}, a fact that remains mysterious.

\subsection{Classification of Flops}\label{subsec:2ndpaper}
Once the above algebraic steps which establish that $\scrA\subseteq\scrB$ and $\scrB\subseteq\scrC$ are complete, geometry can enter.  Inspecting the list in \ref{thm:introNFmain}, each element is a specific sum of necklace polynomials, and so the construction of flops from our companion paper \cite{BW5} then enters to show that \emph{every} Jacobi algebra in the list $\scrC$ can be realised as the contraction algebra of some $3$-fold flop, thus establishing $\scrC\subseteq\scrA$.  The chain of inclusions is then complete, and so finally $\scrA=\scrB=\scrC$.  

Translating the above into birational geometry, in the end simplicity reigns, and a classification of flops exists that is both uniform and elementary.   Removing all noncommutative technology from the statements (albeit, not the proofs), the following is the main geometric result.

\begin{thm}[{\ref{cor:flopsMain}}]\label{cor:flops}
Let $\scrX\to\scrX_{\con}$ be a smooth \tf\ flopping contraction, as recalled in \ref{defin:flop}. 
Then the completion of $\scrX$ along $\Curve$ is isomorphic to the completion of the variety defined by gluing two copies of $\mathbb{A}^3$ via the rule
\begin{eqnarray}\label{eq!glue_intro}
(a,z,y) & \stackrel{a\neq 0}{\longleftrightarrow} & (a^{-1}, \,a^{-1}z, \,\,a^{3}y + \partial E)
\end{eqnarray}
along the curve $y=z=0$, for some $\partial E\in\mathbb{C}[a^{\pm1},z]$ in the following list
\[
\renewcommand{\arraystretch}{1.5}
\medmath{\begin{array}{c@{\hspace{30pt}}r@{\hspace{2pt}}l@{\hspace{2pt}}c@{\hspace{2pt}}l@{\hspace{2pt}}c@{\hspace{2pt}}c@{\hspace{2pt}}c@{\hspace{20pt}}l@{\hspace{2pt}}l@{\hspace{2pt}}l@{\hspace{2pt}}c@{\hspace{2pt}}}
1 &a^2z &+& a^{2-n}z^{n-1}&&&&&
n\geq 2\\
2 &az^2 &+& a^{2-2n}z^{2n-1} &+& \upvarepsilon a^{3-2m}z^{2m-2}  &&&
 2n-1\geq m\geq 2,\, \upvarepsilon\in\{0,1\}\\
3&a^2z^2 &+& a^{-1}z^3 &+& a^{2-n}z^{n-1}&+&z^nQ_{3} & n\geq 4  \\ 
4& a^2z^2 &+& a^{-2}z^3 &+& a^{-1}z^4 &+& Q_4  &  \\ 
5&a^2z^2 &+& \phantom{a^{-2}}z^3 &+& a^{-3}z^4 &+& Q_5 &\\
6&a^2z^2 &+&  a^{-2}z^3 &+& (a^{-1} + \uplambda a^{-2})z^5  &+& Q_6  & \uplambda\in\C^* \\
\end{array}}
\renewcommand{\arraystretch}{1}
\]
where $Q_3, Q_4,Q_5,Q_6\in\mathbb{C}[a^{\pm1},z]$ are polynomials defined in \eqref{def:Qs}.  

Conversely, every member of the above list, for every choice of coefficients in the polynomials $Q_3, Q_4,Q_5,Q_6$, contracts locally, producing a smooth $3$-fold flopping contraction $\scrX\to\Spec\scrR$ of the given length, where $\scrR$ is complete local.  
\end{thm}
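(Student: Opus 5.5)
The plan is to derive \ref{cor:flops} from the algebraic classification together with the Donovan--Wemyss bijection and the geometric realisation results of the companion paper \cite{BW5}. The argument is the chain $\scrA\subseteq\scrB\subseteq\scrC\subseteq\scrA$ from \S\ref{subsec:MetaStrategy}, with the last inclusion made explicit enough that one can read off a gluing rule.

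\textbf{Step 1: from the flop to an algebra in $\scrC$.} Start with a smooth flopping contraction $\scrX\to\scrX_{\con}$ and complete along $\Curve$. The contraction algebra then has the form $\CA\cong\Jac(f)$ for some $f\in\ringtwo$, and it is finite dimensional because $\scrX\to\scrX_{\con}$ is a flop. Theorem \ref{thm:IntroSummary}, direction (2)$\Rightarrow$(1), which is the well-definedness result \ref{thm:ReductionPrecise}, says that $f$ has an algebraic length $\ell$. That length agrees with the geometric length of $\Curve$: take the central element to be a generic hyperplane section, and the quotient becomes the contraction algebra of the partial resolution of the generic ADE slice, which is $e\Uppi e$. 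Theorem \ref{thm:introNFmain}\eqref{thm:introNFmain1} then gives $\CA\cong\Jac(g)$ for some $g$ in row $\ell$ of the list.

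\textbf{Step 2: realising each normal form by a gluing.} For each $g$ in the list, use the construction of \cite{BW5}. A sum of necklace polynomials $\sum \upalpha_{i,j}\scrN_{i,j}$ determines a gluing of two copies of $\mathbb{A}^3$ as in \eqref{eq!glue_intro}. Under this construction the leading terms $x^2$, $x^2y$, $x^3$ correspond to $a^2z$, $az^2$, $a^2z^2$, and each necklace $\scrN_{i,j}$ corresponds to a Laurent monomial $a^{\ast}z^{i+j-1}$. Going row by row, translate the normal forms into the stated $\partial E$, and let $Q_3,\dots,Q_6$ be the images of the tails $y^{n+1}q$, $\sum_{\scrT_4}$, $xy^5p+y^6q$ and $\sum_{\scrT_6}$ respectively. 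As an example, $y^n\mapsto a^{2-n}z^{n-1}$ matches row 1, and the term $\uplambda\scrN_{2,4}$ in row 6 produces the $\uplambda a^{-2}z^5$ term. From \cite{BW5} we get that the resulting variety $\scrX_g$ contains $\Curve\cong\P^1$ with normal bundle built from $\scrO(1)$ and $\scrO(-3)$ data. Once the contraction algebra is finite dimensional, the curve contracts to a complete local $\Spec\scrR$ and gives a flop whose contraction algebra is $\Jac(g)$.

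\textbf{Step 3: concluding.} The flops $\scrX$ and $\scrX_g$ have isomorphic contraction algebras, so by the Donovan--Wemyss bijection $\scrF\leftrightarrow\scrA$ \cite{JKM,KLW} their completions along the curve are isomorphic. This proves the first half of the theorem. For the converse, every $g$ in the list has finite dimensional $\Jac(g)$ by \S\ref{sec:FD}, so by Step 2 every gluing in the list contracts to a flop. The length of that flop is the algebraic length of $g$, which is row $\ell$ by the converse part of Theorem \ref{thm:introNFmain}. Since both flops have contraction algebra $\Jac(g)$, the well-definedness of length from Step 1 then shows the geometric length is also $\ell$.

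\textbf{Main obstacle.} I expect the hardest part to be Step 2: checking that the dictionary between necklaces and Laurent monomials in \cite{BW5} sends each normal form, including the full parameter families $\scrT_4$, $\scrT_6$ and the polynomials $p,q$, exactly onto the stated $\partial E$ and $Q_i$ up to contraction-algebra isomorphism. My first approach would be to verify that the correspondence is linear on necklace sums and compatible with the truncations used in the proof of Theorem \ref{thm:introNFmain}. I would then define each $Q_i$ as the image of the relevant tail, so that the identification holds by construction.
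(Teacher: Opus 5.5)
Your proposal is correct and is essentially the paper's proof: the classification of contraction algebras in \ref{cor:classifyAcon}\eqref{cor:classifyAcon1}, the realisation of each listed $\Jac(g)$ by exactly the stated gluing via \cite[1.1(1)]{BW5}, and the Donovan--Wemyss bijection \cite{JKM,KLW}. The necklace-to-gluing dictionary you flag as the main obstacle is imported from \cite{BW5} rather than re-verified, and truncations play no role because the listed $g$ are already finite necklace sums. The one real difference is the length in the converse: you obtain it from the converse of \ref{thm:introNFmain}, which is valid, but the paper proves that converse using \cite[\S2]{BW5}'s computation of the Koll\'ar length of these very gluings, so it simply cites \cite[1.1(2)]{BW5} directly.
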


In the course of establishing \ref{cor:flops}, we show in \ref{thm:Summary} that Koll\'ar's length invariant \cite{Kollar} for geometric contractions is equal to the algebraic length defined in \ref{def:alglengthintro}.  Henceforth, we will use the word `length' unambiguously, to mean either.

In \ref{cor:flops}, for $\ell\geq 3$ the polynomials $Q_\ell$ should be treated as a shorthand for a choice of finitely many scalars, in which all scalars can be zero.
% \marginpar{This para now makes point of removing repetition; this is different to moduli, which is made below}
We will return in \S\ref{sec:ExistModuli} to explain that these scalars cannot all be removed within any fixed length $\ell\geq 3$ (that is, flops have moduli), but we pause to remark here that there are also some highly non-obvious isomorphisms within any fixed length $\ell\geq 3$.  Different parameters in e.g.\ $Q_4$ can happen to give isomorphic flopping neighbourhoods, analogous to the situation of Weierstrass models $y^2=x^3+ax+b$ of complex elliptic curves.

\medskip
The fact that the model \eqref{eq!glue_intro} is enough to describe all flopping neighbourhoods is a conclusion of the result, not an input.  The full and final classification in \ref{cor:flops} above includes Reid's $\ell=1$ Atiyah/Pagoda classification \cite{Pagoda} as the top line, albeit via a non-obvious isomorphism. Similarly, after further non-obvious changes in coordinates it of course also contains all known $\ell=2$ examples due to \cite{Laufer, Ferrari, AM, BW,Okke, Kawamata}, classified in \cite{BW3}, plus the four known examples of flops with $\ell\geq 3$ due to \cite{Ando, Karmazyn}.  

\subsection{Geometric Corollaries}
The existence of continuous parameters is hidden within \ref{thm:introNFmain} and \ref{cor:flops} as the choice of coefficients of a polynomial. A key benefit of the contraction algebra $\CA$ is that its isomorphism class is strictly easier to control than geometric data (e.g.\ the formal scheme defined by the gluing in \ref{cor:flops}), and so to prove that flops admit continuous parameters it simply suffices to verify when two noncommutative finite dimensional algebras are, or are not, isomorphic.

The next result establishes the existence of continuous parameters when $\ell=3$.  
\begin{prop}[{\ref{prop:moduliE65}}]\label{prop:moduliintro}
Consider
\begin{equation}
f = x^3 + xy^3 + y^5 + ay^6
\quad\textrm{and}\quad
g = x^3 + xy^3 + y^5 + by^6
\end{equation}
where $a,b\in\C$.
Then $\Jac(f)\cong\Jac(g)$ if and only if $a=b$.
\end{prop}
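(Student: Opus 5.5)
The direction $a=b\Rightarrow\Jac(f)\cong\Jac(g)$ is trivial, so all the work is in showing that an isomorphism $\Jac(f)\cong\Jac(g)$ forces $a=b$. My plan is to extract from $\Jac(f)$ a scalar invariant of the algebra, depending only on the isomorphism class, that recovers $a$. Both algebras are finite dimensional: they are the $n=5$ members of row $3$ of \ref{thm:introNFmain}\eqref{thm:introNFmain1}, with $q=a$ a constant. So any isomorphism is a filtered isomorphism of local algebras. It therefore lifts to an automorphism $\phi$ of $\ringtwo$ with $\phi(f)$ cyclically equivalent to a unit multiple of $g$ modulo suitable terms. I will use the standard fact from noncommutative singularity theory, as in \cite{BW3,BW4}, that two Jacobi algebras $\Jac(f)\cong\Jac(g)$ which are finite dimensional (so $f,g$ are finitely determined) are isomorphic only if $f$ and $g$ are right equivalent up to a scalar, modulo cyclic permutations.

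The reduction is then a question of normal forms. First I compute the relations: $\delta_x f = 3x^2+y^3$ and $\delta_y f = \sum (\text{cyclic } xy^2\text{ terms}) + 5y^4+6ay^5$. I work out the dimension and a monomial basis of $\Jac(f)$ via a Gröbner/diamond-lemma computation; I expect roughly $\dim=11$ or $12$. This determines the socle degree at which $y^6$ lives.

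Next I consider an automorphism $x\mapsto \alpha x+\beta y+\hot$, $y\mapsto \gamma y+\delta x+\hot$ together with a scalar $c$, and impose that $\phi(f)\sim c\,g$ up to cyclic equivalence. The leading quasi-homogeneous part $x^3+xy^3$, with weights $\mathrm{wt}(x)=3$, $\mathrm{wt}(y)=2$ and degree $9$, forces $\beta=\delta=0$ on leading terms and gives $\alpha^3=c$ and $\alpha\gamma^3=c$. The $y^5$ term has weight $10$ and $y^6$ has weight $12$. Matching the $y^5$ coefficient gives $\gamma^5=c$. Together these force $\gamma^{15}=\alpha^{15}$ and $\alpha^2=\gamma^2$, and hence a finite group of admissible scalings.

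The main obstacle is showing that the higher-order parts of $\phi$, of weight $\geq 1$ above the linear part, cannot change the $y^6$ coefficient. That is, I need to show $y^6$ is not in the tangent space of the orbit at weight $12$. Concretely, I must check that $y^6$ does not lie in the span, modulo cyclic equivalence and weight $>12$, of $\{u\,\delta_x f_0,\ v\,\delta_y f_0\}$ plus the contributions of $y^5$, where $f_0=x^3+xy^3$ and $u,v$ have the appropriate weights. I would do this by a weighted-degree bookkeeping: in weight $12$, the perturbations available are $x\mapsto x+\lambda y^2$ (weight $4$ versus $3$, a shift of $1$) and $y\mapsto y+\mu x$ (shift of $1$), among others. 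Their second-order effect on $f_0$, together with their first-order effect on $y^5$, must be computed. I would show that the resulting weight-$12$ terms are $y^6$-free modulo the Jacobian relations, or else are killed by the weight-$11$ constraints. This forced-vanishing argument is the delicate step.

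Finally, the scalar group found above acts on $a$ by $a\mapsto a\gamma^6/c$. Using $\gamma^5=c$ and the other constraints, this equals $a\gamma$ with $\gamma=1$ after normalising, so $a=b$.
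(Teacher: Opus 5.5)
\textbf{The gap.} Your argument rests on a claimed ``standard fact'': that an isomorphism of finite dimensional Jacobi algebras $\Jac(f)\cong\Jac(g)$ forces $f$ and $g$ to be right equivalent up to a scalar, modulo cyclic permutation. The references you cite do not establish this. They only use the easy converse direction: an automorphism $\upphi$ with $\upphi(f)\sim g$ gives $\Jac(f)\cong\Jac(g)$, as in \ref{lem!simauto}.

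Lifting an algebra isomorphism $\Jac(g)\to\Jac(f)$ to an automorphism $\Phi$ of $\ringtwo$ only yields $\Phi(\lcl\dcyc g\rcl)=\lcl\dcyc f\rcl$. That is an equality of Jacobian ideals; it says nothing about how $\Phi(g)$ relates to $f$. The known result of this kind, the noncommutative Mather--Yau theorem of Hua--Zhou, recovers the right equivalence class from the Jacobi algebra \emph{together with} the class of the potential in $\Jac(f)/[\Jac(f),\Jac(f)]$. So even a complete proof that $f$ and $c\,g$ are never right equivalent only excludes isomorphisms that carry $[g]$ to a multiple of $[f]$. You have no control over where an arbitrary isomorphism sends that class. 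As it stands, everything after your first paragraph proves non-right-equivalence, which is weaker than the statement. The ``delicate'' weight-$12$ step does not close this gap, and it mixes two frameworks: exact cyclic equivalence of potentials on one hand, and ``modulo the Jacobian relations'' on the other.

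\textbf{How to close it.} The missing idea is to work with a hypothetical algebra isomorphism directly, which is what the paper does. Write $\upphi\colon\Jac(g)\to\Jac(f)$ on generators in the monomial basis of $\Jac(f)$ coming from the standard basis; below degree $6$ its leading terms are $x^2$, $xy^2$, $xyxy$. Then impose $\upphi(\dcyc_xg)=\upphi(\dcyc_yg)=0$ in the truncations $\Jac(f)/\fJ^k$ for $k=3,4,5,6$. This successively:
\begin{itemize}
\item kills the $y$-coefficient of $\upphi(x)$ and the $x$-coefficient of $\upphi(y)$;
\item forces the linear scalings to be trivial;
\item at $k=6$, shows that the coefficient of the basis element $y^5$ in $\upphi_6(\dcyc_yg)$ is exactly $b-a$.
\end{itemize}
Your weighted bookkeeping is roughly the potential-level shadow of this computation. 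To become a proof it must be run on the relations in these truncations, with reduction by the standard basis taking the place of cyclic equivalence.

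\textbf{Smaller slips.}
\begin{itemize}
\item Here $\dim_\C\Jac(f)=4n+12=32$, not $11$ or $12$.
\item The $x$-coefficient $\delta$ of $\upphi(y)$ has positive weight shift, so the weight-$9$ part does not force it to vanish. It is only killed at weight $10$, together with the $y^2$-coefficient $\lambda$ of $\upphi(x)$. That coefficient also contributes $\lambda\gamma^3y^5$ through $\upphi(xy^3)$, so $\gamma^5=c$ holds only after this.
\item The equations $\alpha^3=\alpha\gamma^3=\gamma^5=c$ give $\alpha=\gamma^2$ and $\alpha^2=\gamma^3$. Hence $\alpha=\gamma=c=1$, not a nontrivial finite group.
\end{itemize}
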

Similar results also establish the existence of continuous parameters for $\ell=4,5,6$, and details can be found in \ref{prop:modForl4}, \ref{prop:modForl5}, and \ref{prop:modForl6}.  We note that the continuous parameters makes \ref{thm:introNFmain} a list of forms, but not normal forms in the sense that there are still some isomorphisms hidden within the list. As in classical singularity theory, the existence of continuous parameters makes the quest for normal forms less pressing.
 
\medskip
It has for some time been an open question to  extract the numerical Gopakumar--Vafa (GV) invariants $\GV{1},\hdots,\GV{\ell}$ of a flopping contraction explicitly and easily from its corresponding contraction algebra $\CA$.  The next result completely solves this question, essentially by making it redundant: it turns out that basically all flops have the same GV (and thus DT/GW etc.) invariants.  This promptly extinguishes any hope that numerical curve invariants are in any way helpful for higher length flops, and it further demonstrates that the main result of \cite{BW} indeed captures the general behaviour.

\begin{cor}[{\ref{cor:allDims}}]\label{cor:allDimsIntro}
Let $\scrX\to\Spec\scrR$ be a smooth $3$-fold flop as in \ref{cor:flops}, with associated contraction algebra $\CA$.  Then $\dim_\mathbb{C}\CA$, and the associated \textnormal{GV} invariants, are the following, where the $n$ and $m$ in $\ell=1,2,3$ match those stated in \ref{thm:introNFmain}\eqref{thm:introNFmain1}.
\[
\medmath{
\begin{array}{l@{\hspace{20pt}}l@{\hspace{20pt}}l@{\hspace{10pt}}l@{\hspace{10pt}}l@{\hspace{10pt}}}
%\toprule
\ell & \dim_\C\CA & \GV{1},\hdots,\GV{\ell}&\\[2pt]
1 & n-1 & n-1 &&\\[3pt]
2&2m+4n-4 & 2m,n-1 & \mbox{if }\upvarepsilon=1 \mbox{ and } n\geq m\\
2& 6n-3& 2n+1,n-1 & \mbox{if }\upvarepsilon=1 \mbox{ and } n<m\\
2& 6n-3& 2n+1,n-1 & \mbox{if }\upvarepsilon=0 & \\[3pt]
3& 27 & 6,3,1 & \mbox{if }n= 4& \\
3& 4n+12 & 7,n-1,1 &\mbox{if }n>4\\[3pt]
4 & 60 & 6,5,2,1 \\[2pt]
5 & 125 & 8,6,4,2,1  \\[2pt]
6 & 200 & 6,6,4,3,2,1
\end{array}}
\]
\end{cor}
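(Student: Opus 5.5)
Since Theorem~\ref{cor:flops} is already available, every flop is one of the listed forms, and its contraction algebra is $\Jac(g)$ for the corresponding $g$ in Theorem~\ref{thm:introNFmain}\eqref{thm:introNFmain1}. So the plan is to compute $\dim_\C\Jac(g)$ and the GV invariants directly from $g$, one family at a time. Two inputs tie the GV invariants to algebra. First, by Toda's formula,
\[
\dim_\C\CA=\sum_{j=1}^{\ell} j^2\,\GV{j}.
\]
Second, the GV invariants can be read off from $\CA$ together with its length filtration. Concretely, $\GV{\ell}$ should equal $\dim_\C$ of the quotient of $\CA$ by the ideal cut out by the element $z$ of Definition~\ref{def:alglengthintro}. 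More generally, the invariants should be obtainable from the dimensions of the contraction algebras after generic hyperplane sections or deformations, which split the flop into curves of lower length. In practice I would compute $\GV{j}$ as the number of points in a generic fibre of the deformation that have length $j$, each weighted by its own contraction algebra dimension.

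Step one is to compute $\dim_\C\Jac(g)$ for each family. For $\ell=1$ we have $\Jac(x^2+y^n)\cong\C[y]/(y^{n-1})$, so the dimension is $n-1$ and $\GV{1}=n-1$. For $\ell=2$, the relations $xy+yx$ and $x^2+2ny^{2n-1}+\upvarepsilon(2m-1)y^{2m-2}$ make the algebra a skew extension, and I would run a Gröbner/diamond-lemma normal-form computation as in \cite{BW3}. This splits into the cases $n\geq m$ (when the $\upvarepsilon$ term dominates) and $n<m$ or $\upvarepsilon=0$ (when $y^{2n-1}$ dominates). For $\ell\geq 3$ the polynomials contain the parameters $\upalpha_{i,j}$, $q$ and $p$. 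I would show by a filtered Gröbner basis argument that the leading terms of the relations are independent of these parameters. Then $\dim_\C\Jac(g)$ is constant on each family, and it suffices to compute it at the simplest member, for example all $\upalpha=0$. In length $3$, the $n=4$ case differs because $y^4$ interacts with $xy^3$.

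Step two is to extract the GV invariants once the dimension is known. Toda's formula gives one linear constraint. To get the others, I would use the fact that the GV invariants are constant in flat families of flops, so they can be computed on a convenient member of each family, for instance one whose deformation is visible. Alternatively, one can pass to the quotient algebras
\[
\CA/\lcl z\rcl \cong e\Uppi e,
\]
and use the known dimensions of $e\Uppi e$ to pin down the contributions from the top lengths. The lowest invariant $\GV{1}$ can be identified with $\dim_\C \CA^{\ab}$. Taken together, this should give enough equations: for example, for $\ell=6$ the system is $200=\GV{1}+4\cdot 6+9\cdot 4+16\cdot 3+25\cdot 2+36\cdot 1$, which forces $\GV{1}=6$ once the higher invariants are known.

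The main obstacle is this second step for $\ell\geq 4$: showing that the higher GV invariants are forced and do not depend on the moduli. I would first try to argue that the generic hyperplane section, namely the elephant given by the ADE data $e\Uppi e$, together with Katz's formula for GV invariants, is determined by the length alone for $\ell=4,5,6$. That would immediately give the constancy seen in the table.
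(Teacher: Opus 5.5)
The content of this corollary lies in your Step two, and there the proposal has no working mechanism. Several of the principles you invoke are false.

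First, $\GV{\ell}$ is not $\dim_\C \CA/\lcl z\rcl$. That quotient is $e\Uppi e$, which has dimension $60$ for $\ell=6$, whereas $\GV{6}=1$.

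Second, GV invariants are not constant in flat families of flops. The potentials $x^3+xy^3+ty^4+y^5$ have contraction algebras of dimension $27$ for $t\neq0$ and $32$ at $t=0$, and the invariants jump from $6,3,1$ to $7,4,1$. So ``compute on a convenient member'' presupposes the constancy you want to prove.

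Third, the generic elephant is determined by $\ell$ for $\ell=4,5,6$ (Katz--Morrison), but it does not determine the invariants. It only yields the lower bounds of \cite[7.7]{DW5}: $6,4,2,1$ for $\ell=4$, $7,6,4,2,1$ for $\ell=5$, $6,6,4,3,2,1$ for $\ell=6$, and $\GV{3}\geq 1$ for $\ell=3$. For $\ell=4,5$ these bounds are not attained; the true values are $6,5,2,1$ and $8,6,4,2,1$. So any argument that reads the invariants off the elephant must fail there.

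Counting $(-1,-1)$-curves in a generic deformation is Katz's definition, not a computation. Also, even an exact value of $\dim_\C\CA$ together with $\GV{1}=\dim_\C\CAab$ leaves one linear equation in $\ell-1$ unknowns, so further input is unavoidable.

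The missing idea is a squeeze through Toda's formula.

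From above, you only need an upper bound $\dim_\C\CA\leq N$. This comes from exhibiting finitely many elements of $\lcl\dcyc g\rcl$ whose leading monomials leave exactly $N$ monomials: see \ref{prop:l3finite}, \ref{prop:l3finite4}, \ref{prop:l4finite}, \ref{prop:l5finite}, \ref{prop:l6finite}, giving $4n+12$, $27$, $60$, $125$, $200$. Your Step one, a full standard basis with parameter-independent leading terms, is exactly the cumbersome verification the paper avoids and only recovers a posteriori.

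From below, combine three inputs.
\begin{itemize}
\item The \cite{DW5} bounds.
\item $\GV{1}=\dim_\C\CAab$, which is $6$ or $7$ for $\ell=3$, $6$ for $\ell=4$, and $8$ for $\ell=5$ (the last exceeding the elephant bound $7$).
\item For $\ell=3,4$, a separate $A_\infty$-computation on $\scrO_{2\Curve}$: \ref{lem:GV2forl3} gives $\GV{2}=n-1$, and \ref{lem:GV2forl4} gives $\GV{2}\geq 5$.
\end{itemize}

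In every row, the resulting lower bound for $\sum_j j^2\GV{j}$ equals the upper bound for $\dim_\C\CA$; for example $6+4\cdot 5+9\cdot 2+16\cdot 1=60$ when $\ell=4$. This forces the dimension and all the invariants at once.

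Without the $\GV{2}$ inputs the squeeze does not close. For $\ell=4$ the \cite{DW5} bounds only give $56\leq\dim_\C\CA\leq 60$. For $\ell=3$ you need $\GV{2}\geq n-1$, which grows with $n$. It therefore cannot come from the elephant, which is the same $E_6$ vertex for every $n$.
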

The static nature of the dimension of the contraction algebra $\CA$ when $\ell\geq 4$ is particularly remarkable.

The above also demonstrates that for $\ell\geq 4$, the length $\ell$ of the curve determines \emph{all} other numerical invariants.  As such, the length invariant $\ell$ returns at the end of the story as the hero, since it is just as good a numerical invariant as any other.  Of course, the whole point is that numerical invariants only take us so far: it is through understanding isomorphism classes of algebras, rather than simply studying their dimensions, that the classification of flops has been made possible.

\subsection{Divisor-to-Curve Contractions}
Since the main techniques of noncommutative singularity are general, this paper ends up also classifying all Jacobi algebras with algebraic length, not only the finite dimensional ones.  Thus there are additional applications to crepant divisor-to-curve contractions, that is, primitive Type III contractions.

Extreme care has to be taken in this more general setting, for two reasons. The first is that for crepant divisor-to-curve contractions the analogue of Katz--Morrison \cite{KM} is still open, although this creates at worst minor technical problems. The second, more serious reason, is that the following geometric example (which we believe to be unique) contradicts some papers and results on Type III contractions from the 1990s; we refer the reader to~\ref{rem:Issue90s} for details on how to correct those.

\begin{prop}[\ref{E6divcurveMain}]\label{E6divcurveIntro}
Consider  $f=x^5 - 3x^2yz + xy^3 + z^3 - t^2$   
and $\scrX_{\con}=\Spec\scrR$ for $\scrR\colonequals\C\llsq x,y,z,t\rrsq/(f)$.
\begin{enumerate}
\item The singular locus $C=\Sing(\scrX_\con)_{\mathrm{red}}$ is a singular curve germ.
\item Blowing up $C\subset\scrX_\con$ gives a crepant $\ell=3$ divisor-to-curve contraction $\scrX\to\scrY$ where $\scrX$ is smooth.
\item The contraction algebra of $\scrX\to\scrX_\con$ is isomorphic to $\Jac(x^3+xy^3)$.
\end{enumerate}
\end{prop}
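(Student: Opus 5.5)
The plan is to verify (1) and (2) by explicit local computation, and then obtain (3) with no further calculation by combining them with the classification results already stated.

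For (1), I would compute the Jacobian ideal of $f$. The partial derivatives are
\[
f_t=-2t,\quad f_x=5x^4-6xyz+y^3,\quad f_y=3x(y^2-xz),\quad f_z=3(z^2-x^2y).
\]
So $\Sing(\scrX_\con)$ lies in $t=0$, and from $f_y=0$ we get $x=0$ or $xz=y^2$.

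On the branch $x=0$, the condition $f_z=0$ forces $z=0$, and then $f_x=0$ forces $y=0$. So this branch meets the singular locus only at the origin.

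On the branch $xz=y^2$, the equations $xz=y^2$ and $z^2=x^2y$ cut out the monomial curve $(x,y,z)=(s^3,s^4,s^5)$. A direct substitution confirms $f$ and $f_x$ vanish there:
\[
s^{15}-3s^{15}+s^{15}+s^{15}=0,\qquad 5s^{12}-6s^{12}+s^{12}=0.
\]
I would then check that the radical of the Jacobian ideal equals the prime ideal of this curve, i.e.\ the $2\times 2$ minors defining the semigroup curve $\langle 3,4,5\rangle$ together with $t$. Hence $C=\Sing(\scrX_\con)_{\red}$ is the image of $s\mapsto(s^3,s^4,s^5,0)$, which is a singular curve germ because its tangent cone at $0$ is not a line.

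For (2), I would first check the transverse type. At a general point of $C$, the hypersurface is locally (curve)$\times$($A_1$ surface singularity), since $f$ has multiplicity $2$ along $C$ with a nondegenerate transverse Hessian.

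Next I would blow up the ideal of $C$ explicitly, using the generators $y^2-xz,\ z^2-x^2y,\ x^3-yz$ (the minors) and $t$, and check in each affine chart that $\scrX$ is smooth. The delicate charts are those over the origin, where $C$ is not a complete intersection.

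Crepancy follows from the transverse $A_1$ structure: $K_\scrX=\pi^*K_{\scrX_\con}+0\cdot E$ away from the central fibre, and $\scrX_\con$ is Gorenstein with $\scrX$ smooth, so the discrepancy can only live on the exceptional divisor $E$, where it is $0$.

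For the length, I would identify the fibre over the origin as a single $\mathbb{P}^1$ and compute Koll\'ar's length as the generic length of the scheme-theoretic fibre along it. Equivalently, I would slice by a generic hyperplane through $0$. The cubic part $z^3-3x^2yz$-type terms together with $t^2$ should give an $E_6$ surface singularity, and $\scrX$ then restricts to a partial resolution of $E_6$ contracting only the vertex of multiplicity $3$, which has $\ell=3$.

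For (3), $\scrX\to\scrX_\con$ is a smooth crepant divisor-to-curve contraction, so by \ref{thm:IntroSummary} (2)$\Rightarrow$(1) its contraction algebra is $\Jac(g)$ for some $g$ with algebraic length, and this length equals Koll\'ar's length, namely $3$, by \ref{thm:Summary}. The contracted locus is a divisor, so $\CA$ is infinite dimensional. Then \ref{thm:introNFmain}\eqref{thm:introNFmain2} leaves only $g=x^3+xy^3$ in length $3$, which gives $\CA\cong\Jac(x^3+xy^3)$.

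The main obstacle I expect is the chart computation in (2) over the origin: proving smoothness of the blowup along the non-lci curve $C$, and identifying the central fibre precisely enough to certify $\ell=3$. I would try the generic hyperplane section / $E_6$ partial resolution argument first, since it yields both smoothness near the central curve and the length at once.
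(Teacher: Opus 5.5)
Your plan matches the paper's proof in outline. For (1) you compute the singular locus from the Jacobian ideal; the paper does this by saturating against $x$ to remove an embedded point at the origin. For (2) you blow up $I_C=(t,\,xz-y^2,\,x^2y-z^2,\,x^3-yz)$ explicitly in affine charts and read off Koll\'ar length from the scheme-theoretic central fibre. For (3) you argue exactly as the paper does: \ref{thm:Summary}, then infinite-dimensionality of $\CA$ for divisor-to-curve contractions (Donovan--Wemyss), then the length-three row of \ref{cor:classifyAcon}\eqref{cor:classifyAcon2}.

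The one genuine difference is crepancy. The paper checks, via a $3\times 3$ Jacobian determinant in the chart $t\neq 0$, that the generator $\tfrac{1}{t}dx\wedge dy\wedge dz$ of $\omega_Y$ pulls back to $da_1\wedge da_2\wedge da_3$. You instead compute the discrepancy of $E$ at its generic point, using the local product structure ($A_1$ surface)$\times$(disc) along $C\setminus 0$. That is legitimate, because the fibre over $0$ is $1$-dimensional, so $E$ carries all the discrepancy, and it avoids the determinant.

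The step that would fail is the shortcut you say you would try first. A generic hyperplane section cannot certify smoothness of $\scrX$. For generic $H$ through $0$, the pullback $\phi^*H$ is the strict transform $S$ of the $E_6$ surface. Since $S$ extracts only the central (label $3$) vertex, $S$ necessarily has singularities of types $A_2,A_2,A_1$ on the exceptional $\mathbb{P}^1$, namely the complement of the central node in $E_6$. So ``smooth Cartier divisor implies smooth ambient space'' gives you nothing. You would also have to show that these singular points move with $H$, which again needs explicit charts.

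The chart computation is therefore unavoidable, and it is what the paper does:
\begin{itemize}
\item the chart $t\neq 0$ is isomorphic to $\A^3$;
\item the chart $xz-y^2\neq 0$ is the smooth hypersurface $x-yb_1^2-xb_1^2b_3+b_3^3=0$ in $\A^4_{x,y,b_1,b_3}$, smooth because $1-b_1^2b_3$ and $-b_1^2$ cannot vanish together;
\item its central fibre is $\Spec\C[b_1,b_3]/(b_3^3)$, and the two charts together cover the central $\mathbb{P}^1$ and give $\ell=3$ directly.
\end{itemize}

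There are also some minor slips.
\begin{itemize}
\item In (1), $xz=y^2$ and $z^2=x^2y$ also cut out the line $y=z=0$, which is discarded only because $f_x=5x^4$ there.
\item $C$ is singular because its multiplicity (equivalently, its embedding dimension) is $3$, not because its tangent cone fails to be a line. The curve is unibranch, and its tangent cone is the line $y=z=t=0$ counted three times.
\item The $E_6$ in a generic section comes from $t^2$, $z^3$ and the quartic $xy^3$, not from $-3x^2yz$.
\end{itemize}
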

The contraction in \ref{E6divcurveIntro} is constructed Zariski locally in \ref{ex:l3div} as a blowup of an ideal $X\to X_{\con}=\Spec\C[x,y,z,t]/(f)$, then passing to the formal fibre realises the contraction algebra.  The algebraic construction is mildly advantageous, since it allows us to consider neighbouring fibres, which are all of Type $A_1$.

With the above in hand, together with $\ell=2$ divisor-to-curve contractions constructed in \cite{BW3}, and the results of the previous subsections, the following is then immediate. Just as \ref{thm:introNFmain}\eqref{thm:introNFmain1} can be compared to Arnold \cite{Arnold}, Du Val \cite{DuVal1,DuVal2} and Klein \cite{Klein},  the following can be compared to Buchweitz--Greuel--Schreyer \cite{BGS}.

\begin{cor}[{\ref{cor:classifyAcon}\eqref{cor:classifyAcon2}}]\label{cor:DivCurveIntro}
Suppose that $\CA$ is the contraction algebra for some smooth crepant divisor-to-curve contraction. Then $\CA\cong\Jac(g)$ for some $g$ in the following list
\[
\renewcommand{\arraystretch}{1.2}
\medmath{\begin{array}{c@{\hspace{20pt}}lll}
1 & x^2 &  \\
2 &x^2y + \upvarepsilon y^{2m-1} & m\geq2, \upvarepsilon\in\{0,1\} \phantom{\quad+ (xy)^3 + \uplambda(xy^2)^2 +\, \scrO_7}\\
3 &x^3 + xy^3 & 
\end{array}}
\renewcommand{\arraystretch}{1}
\]
where the algebraic/Koll\'ar length $\ell$ indexes the rows.  Conversely, every algebra in the above list is the contraction algebra of a crepant divisor-to-curve contraction.
\end{cor}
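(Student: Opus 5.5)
The first half of the statement (every contraction algebra of a smooth crepant divisor-to-curve contraction lies in the list) will be derived from the chain of algebraic results, and the converse from explicit constructions.

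\emph{Forward direction.} Given such a contraction, $\CA\cong\Jac(f)$ for some $f\in\ringtwo$ by the standard description recalled in \S\ref{sec:differentiation}. By (2)$\Rightarrow$(1) of \ref{thm:IntroSummary}, that is, by the well-definedness of algebraic length in \ref{thm:ReductionPrecise} together with the identification of Koll\'ar length with algebraic length, $f$ has an algebraic length $\ell$ equal to the geometric length.

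I then need $\dim_\C\Jac(f)=\infty$. A crepant divisor-to-curve contraction has a positive-dimensional singular locus in the base. I would argue, via the noncommutative deformation interpretation of $\CA$ (the curve deforms, equivalently the base is not an isolated singularity), that $\CA$ is infinite dimensional. This is where the failure of the analogue of Katz--Morrison could cause friction. So I would use only the implication ``non-isolated $\Rightarrow\dim\CA=\infty$'', which should follow from \cite{DW1}, since finite dimensionality of $\CA$ is equivalent to the contraction being a flop. With $\dim_\C\Jac(f)=\infty$ established, \ref{thm:introNFmain}\eqref{thm:introNFmain2} gives exactly the list, with rows indexed by $\ell$.

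\emph{Converse.} For $\ell=1$, $\Jac(x^2)\cong\C\llsq y\rrsq$ is realised by $\mathbb{C}\times$(the $A_1$ surface resolution), i.e.\ a family of $(-2)$-curves over a line. For $\ell=2$, the algebras $\Jac(x^2y+\upvarepsilon y^{2m-1})$ are realised by the divisor-to-curve contractions constructed in \cite{BW3}. For $\ell=3$, $\Jac(x^3+xy^3)$ is realised by \ref{E6divcurveIntro}. Beyond citing these, the only check needed is that the smooth models have contraction algebras exactly as stated, which is what those results assert.

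\emph{Main obstacle.} The hardest step is the input $\dim\CA=\infty$ for a Type III contraction. Everything else is bookkeeping from \ref{thm:introNFmain}. I would first try to argue directly that the exceptional locus being a divisor forces $\CA$ to surject onto functions on the positive-dimensional family of curves.
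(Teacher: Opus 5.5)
Your proposal is correct and is essentially the paper's argument. The forward direction is \ref{thm:Summary}$(3)\Rightarrow(1)$ combined with $\dim_\C\CA=\infty$ and the purely algebraic classification in \ref{cor:NecklaceSummary}\eqref{cor:NecklaceSummary2}, and the converse cites \cite{BW3} for $\ell=1,2$ and \ref{E6divcurveMain} for $\ell=3$. The step you flag as the main obstacle is only a citation in the paper, namely \cite[1.4]{DW2} (not \cite{DW1}), which says $\CA$ is finite dimensional exactly when no divisor is contracted; the missing Katz--Morrison analogue plays no role at that step.
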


A very remarkable aspect of \ref{cor:DivCurveIntro} is how few contraction algebras appear.  The analogue of the Donovan--Wemyss conjecture is still open in the setting of \ref{cor:DivCurveIntro}, namely there could still be multiple geometries which realise each contraction algebra listed, however there is enough information to establish the following non-existence result. The logic is simple: if a geometric contraction of large length existed, it would have to give an infinite dimensional contraction algebra $\CA\cong\Jac(f)$ with algebraic length $\ell\geq 4$.  But these are classified by \ref{cor:DivCurveIntro}, and there are none. In summary, the obstruction to such a contraction existing is noncommutative.  

\begin{cor}[\ref{cor:NoLength456Main}]\label{cor:NoLength456Intro}
Smooth crepant divisor-to-curve contractions of $\ell\geq 4$ do not exist.
\end{cor}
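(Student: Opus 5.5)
The argument runs by contradiction, following the logic sketched just before the statement. Suppose $\scrX\to\scrY$ is a smooth crepant divisor-to-curve contraction whose Koll\'ar length satisfies $\ell\geq 4$. We pass to the formal fibre over a general point of the curve, or equivalently to the complete local neighbourhood of the contracted fibre $\Curve\cong\mathbb{P}^1$. There we take the associated contraction algebra $\CA$, which exists by \cite{DW1}. As recalled in \S\ref{sec:differentiation}, $\CA\cong\Jac(f)$ for some $f\in\ringtwo$.

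Two inputs are needed about $\Jac(f)$. First, since the contraction is divisor-to-curve rather than a flop, the fibre is not isolated. Hence $\CA$ is infinite dimensional: $\dim_\C\CA<\infty$ characterises flops among such contractions, which is the divisor-to-curve side of the criterion of \cite{DW1}, as invoked in \ref{thm:Summary}. Second, by the direction (2)$\Rightarrow$(1) of \ref{thm:IntroSummary}, proved in \ref{thm:Summary}, $f$ has an algebraic length. That same result identifies the algebraic length with Koll\'ar's length, so $f$ has algebraic length $\ell\geq4$. Concretely, cutting by a generic hyperplane gives a surface section with an ADE singularity, and its fundamental-cycle multiplicity is $\ell$. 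That slicing corresponds to a central element $z$ of $\Jac(f)$ with $\Jac(f)/\lcl z\rcl\cong e\Uppi e$, where $e$ is the vertex with number $\ell$. Well-definedness of this number is \ref{thm:ReductionPrecise}.

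Now apply \ref{thm:introNFmain}\eqref{thm:introNFmain2}, i.e.\ \ref{cor:classifyAcon}. It says every infinite dimensional $\Jac(f)$ with an algebraic length is isomorphic to some $\Jac(g)$ with $g$ of length $1$, $2$ or $3$ only. By its converse clause, $\Jac(g)$ then has algebraic length at most $3$. Since algebraic length is an isomorphism invariant of the Jacobi algebra by \ref{thm:ReductionPrecise}, this contradicts $\ell\geq4$. Hence no such contraction exists.

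The main obstacle is the second input: matching the geometric (Koll\'ar) length to the algebraic length in the divisor-to-curve setting. Here the analogue of Katz--Morrison is not available, so one cannot immediately identify the generic hyperplane section with a specific ADE type. I would first obtain an upper bound by exhibiting a central element from a generic function on the base. Reducing $\Jac(f)$ modulo that element should recover the contraction algebra of the surface section, namely $e\Uppi e$ for the appropriate vertex. This is the reduction mechanism that \ref{thm:Summary} uses for flops, and I expect it to carry over verbatim, since it only uses the surface slice and not isolatedness.
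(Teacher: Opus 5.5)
Your argument is correct and is essentially the paper's. It combines three inputs:
\ref{thm:Summary}(3)$\Rightarrow$(1);
infinite-dimensionality of $\CA$ for divisor-to-curve contractions (the paper cites \cite{DW2} here, not \cite{DW1});
and the fact that every $\Jac(f)$ of algebraic length $\ell\geq4$ is finite dimensional. The paper takes this last fact directly from \ref{cor:NecklaceSummary}\eqref{cor:NecklaceSummary2}, while you reach it through \ref{cor:classifyAcon}\eqref{cor:classifyAcon2} and \ref{thm:ReductionPrecise}, which rest on that same result.

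Two small corrections. First, the Koll\'ar length is attached to the fibre over the closed point, not over a general point of the singular curve; fibres over general points are typically of length one, as in \ref{ex:l3div}. Second, the step you flag as the main obstacle is already covered: \ref{thm:Summary}(3) explicitly includes divisor-to-curve contractions, and its proof notes that isolatedness of $\scrR$ is not needed.
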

Pending the analogue of Katz--Morrison \cite{KM} or Donovan--Wemyss \cite{JKM, KLW} in this setting, the above  \ref{cor:NoLength456Intro} asserts that smooth crepant divisor-to-curve contractions stop at~$E_6$, which is precisely what happens for flips \cite{Mori}. This could either be a wild coincidence or it could hint at a much deeper relationship, but at this stage it is hard to say which.

\subsection{What Remains?}\label{sec:FAQ}
The classification of flops here is presented in two ways: via the forms in noncommutative singularity theory that realise the associated contraction algebra, and via the geometric gluing data that realises the smooth space $\scrX$ in the flopping contraction.  Notwithstanding that this is more than sufficient, it would in future be useful to have a Users Guide to Flops, akin to the fifteen characterisations of \cite{Durfee}, in which as much equivalent algebraic and geometric data is presented as possible.  This could include (1) the equations that define the base $\scrR$ (subject to the warnings in \S\ref{subsec:MetaStrategy}), (2) methodology that easily identifies a given flop in nature with one within the classification here, (3) the ability to recognise noncommutative normal forms and thus contraction algebras in a way similar to that in classical singularity theory, (4) a description of the matrix factorisations (or unprojection divisors) that realise the blowup, (5) a presentation of the NCCRs, together with (6) various other desirable properties such as establishing firm control of the dimension of the moduli.  As was the case in the development of surfaces, wish lists usually take some years to materialise; see also \cite{BW6} and for an overview \cite{Wemyss}.

\subsection{The Base Category}\label{sec:BaseCategory}
As opined by Reid \cite[\S4.4]{Reid:opinion}, we will not be overly concerned about the category in which we will work. We have to make a choice, so we state all results in the setting $\scrX\to\Spec\scrR$ where $\scrR$ is complete local, given that we are using \cite{VdB1d}. However, working \'etale locally, or with analytic varieties, or formal schemes, are perfectly reasonable alternative options.

\subsection{Notation and Conventions}\label{conventions}
The ground field, denoted $\C$, is algebraically closed of characteristic zero throughout.
The following notation is reserved and is used without further comment.

\begin{enumerate}
\item
Vector space dimension will be written  $\dim_{\mathbb{C}}V$.
\item
$\m=(x,y)$, a two-sided maximal ideal of the free algebra $\C\langle x,y\rangle$, whilst $\n=(x,y)$ is the unique two-sided maximal ideal of $\ringtwo$ (see \S\ref{sect: power notation}).
\item
The natural $\ring\twoheadrightarrow\Jac(f)$ will be written $\nat_f$, or simply $\nat$ when $f$ is clear.%\marginpar{macro \texttt{backslash-nat}}
\item
The Jacobson radical of $\Jac(f)$ will be written $\fJ_f$, or simply $\fJ$ when $f$ is clear.%\marginpar{macro \texttt{backslash-fJ}}
\item
$\ringd_i$ or $\fringd_i$ will denote the vector subspace consisting of homogeneous degree~$i$ polynomials. 
For a formal power series $g\in\ringd$ we denote the graded (necessarily polynomial) piece of degree~$i$ of $g$ by $g_i\in\ringd_i$.
\item
Write $g_{<d} = \sum_{i<d}g_i$ and $g_{>d}=\sum_{i>d}g_i$, with natural self-documenting variations such as $g_{\ge d}$. 
Thus, for example, $g=g_3+g_4+g_{\ge5}$ is a power series with no terms in degrees 0, 1 and~2, and no further conditions.
\item
The notation $f=g + \bigO_n$ means $f-g \in\mathfrak{n}^n$. That is, $f$ is a particular power series for which only terms of degree $<n$ are known, and specified by those of~$g$. This is additive in products: $\msf\times(g + \bigO_n) = \msf g + \bigO_{n+\deg(\msf)}$ for any monomial~$\msf$.
\item The notation $f=g + \hot$ is subtly different to $f=g + \bigO_n$. It indicates that $f$ may contain terms beyond the terms of $g$ which are larger with respect to the monomial order in \eqref{eqn:order}.
\end{enumerate}

\subsection*{Acknowledgements}
We place on record our appreciation of the very many mathematicians who have both directly and indirectly contributed to this work since it started over 12 years ago.  This includes, amongst others, Will Donovan, Osamu Iyama, Yujiro Kawamata, Shigefumi Mori and Miles Reid. Particular thanks are due to Allan Steel, whose conversation with M.W.\ in Sydney in 2023 changed our viewpoint on what was possible.

\subsection*{Funding} The authors were supported by EP/R034826/1, and by ERC Consolidator Grant 101001227 (MMiMMa).

\subsection*{Open Access} For the purpose of open access, the authors have applied a Creative Commons Attribution (CC:BY) licence to any Author Accepted Manuscript version arising from this submission.

%-----------------------------------------------------------------------------------------
\section{Preliminaries and Notation}

This section summarises, and in some cases amplifies, those tools and results of \cite[\S2]{BW2} used throughout, and is mainly used to fix notation.

\subsection{Polynomial and Power Series Notation}\label{sect: power notation}
We write $\ringd=\C\llangle x_1,\hdots,x_d\rrangle$ for formal noncommutative power series in $d$ variables, and $\fringd=\polyringd$ for the free algebra in $d$ variables. In practice, we work mostly in $\C\llangle x_1,x_2\rrangle$ or more simply $\C\llangle x,y\rrangle$. 

For either $f\in\fringd$ or $\ringd$
write $f$ in terms of its homogeneous pieces as
\[
f=f_0+f_1+f_2+f_3+f_4+\hdots,
\]
and define the \emph{order} of $f$ to be $\ord(f) = \min\{i\mid f_i\neq 0\}$, where by convention $\ord(0) = \infty$.   

We write $\m=(x_1,\hdots,x_d)$ for the two-sided maximal ideal of the free algebra $\fringd$ and $\hatM=(x_1,\hdots,x_d)\subset\ringd$.
For any $t\geq0$, set 
\[
\ringd_{\geq t}=\{f\in\ringd\mid f_i=0\mbox{ if }i<t\}=\{f\in\ringd\mid\ord(f)\geq t\} =\hatM^t
\] 
and note that this contains the zero element.

There is an isomorphism $\ringd\cong \varprojlim \fringd/\m^n$ sending $f\mapsto(f_{<n} + \m^n)_{n\geq 1}$, and continuous truncation maps
\[
\uppi_j\colon\ringd \longrightarrow \fringd/\m^j \quad(\cong\ringd/\hatM^j)
\]
where $\uppi_j(f)$ is the class of  $f_0 + \hdots + f_{j-1}$.

%-----------------------------------------------------------------------------------------
\subsection{Closure and Cyclic Permutation}
For any subset $\scrS\subset \ringd$, its \emph{closure} is 
\[
\overline{\scrS}\colonequals \bigcap_{i= 1}^{\infty}(\scrS + \hatM^i).
\]
In other words, $b\in\overline{\scrS}$ if and only if for all $i\geq 1$, there exists $s_i\in\scrS$ such that $b-s_i\in\hatM^i$.

\begin{defin}\label{DWZ cyclic fact}
Two elements $f,g\in \ringd$ are called cyclically equivalent, or $f$ is said to cyclically permute to $g$, if for every $i\ge0$, the homogeneous piece $(f-g)_i = f_i-g_i$ is a finite sum of commutators, that is, equals
$\sum_{j=1}^k \uplambda_j(a_jb_j-b_ja_j)$ for elements $a_j,b_j\in\ringd$ and $\uplambda_j\in\C$. 
We write $f\sim g$ in this case.
\end{defin}
The condition may be stated equivalently on $n$-jets: $f\sim g$ if and only if for every $n\ge0$, $f-g= \sum_{j=1}^k \uplambda_j(a_jb_j-b_ja_j) + \bigO_{n+1}$ for elements $a_j,b_j\in\ringd$ and $\uplambda_j\in\C$ (noting that $f=f_{\le n} + \bigO_{n+1}$).

%-----------------------------------------------------------------------------------------

\subsection{Jacobi Algebras}\label{sec:differentiation}
Consider the $\C$-linear  `strike off' maps $\partial_{i}\colon \ringd\to\ringd$ which act on monomials via the rule
\[
\partial_{i}(m)=\begin{cases} n & \text{if } m = x_in \\ 0 & \text{otherwise.} \end{cases}
\]
This determines $\C$-linear maps $\dcyc_{i}\colon \ringd\to\ringd$ which on monomials send
\[
x_{i_1}\hdots x_{i_t} \mapsto 
\sum_{j=1}^t \partial_{i}(x_{i_j}x_{i_{j+1}}\hdots x_{i_t}\cdot x_{i_1}\hdots x_{i_{j-1}}).
\]
The $\dcyc_{i}$ are called the \emph{cyclic derivatives}.  There is one such map, for each variable $x_i$.

\begin{defin}\label{defin:Jacobi}
For $f\in\ringd$, the Jacobi algebra is defined to be
\[
\Jac(f) = \frac{\ringd}{\lcl\dcyc_{1}f,\dots, \dcyc_{d}f\rcl}
\]
where $\lcl\dcyc_{1}f,\dots, \dcyc_{d}f\rcl\,\reflectbox{\colonequals}\, \lcl \updelta f\rcl$ is the closure of the two-sided ideal $(\updelta f)\colonequals (\dcyc_{1}f,\dots, \dcyc_{d}f)$.
\end{defin}

\subsection{Radicals and Coranks}\label{subsec:RadicalsandDim}
For any ring $\scrR$, write $\rad(\scrR)$ for its Jacobson radical. If $I$ is any ideal of $\scrR$ contained in $\rad(\scrR)$, then $\rad(\scrR/I)=\rad(\scrR)/I$ \cite[4.6]{Lam}.  
It is clear that $\rad(\ringd)=\n$.  If $f\in\ringd_{\geq 2}$, which will be the case throughout, then $\lcl \updelta f\rcl$ is contained in~$\n$, and so $\rad(\Jac(f))=\n/\lcl \updelta f\rcl$.  
We set $\fJ_f=\rad(\Jac(f))$ throughout, and more simply write $\fJ$ when $f$ is clear from the context.  Further, 
\[
\fJ^n = \big(\n^n+\lcl \updelta f \rcl \big)/\lcl \updelta f\rcl \textnormal{ for } n\ge2.
\]
From this, $\corank_i(f)\colonequals \dim_\C\fJ^i/\fJ^{i+1}$ is called the $i$th corank of $f$.

%-----------------------------------------------------------------------------------------
\subsection{Noncommutative Standard Basis}
\label{sec:ncsb}

The basic idea of \emph{standard bases} (also known as \emph{local Gr\"obner bases}) for non-commutative power series rings is natural, conceptually a close relation of the Bergman--Newman--Bokut--Shirshov Diamond Lemma \cite[1.2]{bergman}. 
Hironaka \cite{hironaka} and Mora \cite{mora88,mora94}
present the commutative case $\C\llsq\xsf\rrsq$, in a sense dual to Buchberger's well-known global polynomial version \cite{buchberger70}.
In the noncommutative case, Gerritzen and Holtkamp \cite{GH98} define the process for $\ringd$ and prove the main results. Following \cite{GH98}, below we set up notation and review the main ideas and results in $\ring$.

\subsubsection{Monomial orders, leading terms, standard basis, $\n$-adic bases}
\label{sec:stdbasis}

The set of monomials $W\subset\ring$
is an $\hatM$-adic basis for $\ring$, meaning just that
$\left\{ \uppi_j(\msf) \mid \msf \in W\right\} \setminus\{0\}$
is a basis for $\freering/\m^j$ for every $j\ge1$.
This is called the \emph{standard $\hatM$-adic basis} of $\ring$.
Any $f\in\ring$ has a unique expansion in the standard
$\hatM$-adic basis:
\[
f=\sum_{\msf\in W}c_{\msf}\msf
\quad\text{for unique coefficients }c_{\msf}\in\C.
\]
Once and for all, fix the local graded monomial order on $\ring$ used throughout:
for distinct monomials $\msf=\msf_1\msf_2\hdots \msf_d$ and
$\nsf=\nsf_1\hdots \nsf_e\in\freering$ (where each $\msf_i, \nsf_j\in\{x,y\}$), write
\begin{equation}
\msf<\nsf
\qquad\text{if and only if}\qquad
\begin{array}{rll}
\text{either}& d < e \\
 \text{ or} &
d = e \text{ and } \msf_k = x,\ \nsf_k=y
\end{array}\label{eqn:order}
\end{equation}
where $k\in\N$ is the minimum index $i$ for which the $i$th factor $\msf_i$ of $\msf$ and $\nsf_i$ of $\nsf$ disagree.

The \emph{leading term} $\LT(f)=c_{\msf}\msf$ of a nonzero $f=\sum c_{\msf}\msf\in\ring$
is the unique term with $c_{\msf}\ne0$ and $\msf\in W$ minimal with respect to this order.  By convention, the leading term of zero is zero. When $\LT(f)=c_{\msf}\msf$, the \emph{leading coefficient} of~$f$ is $\LC(f) = c_{\msf}$, the \emph{leading monomial} of~$f$ is $\LM(f)=\msf$, and $f$ is \emph{normalised} if $\LC(f)=1$.

For any $\Ga\subset\ring$, the set of leading monomials of $\Ga$ is $\LM(\Ga)=\{\LM(h)\mid h\in\Ga\}$.
For any ideal $I\subset\ring$ write $\scrI=\lcl I\rcl$ for the closure of $I$ and note that  $\LM(I) =\LM(\scrI)$.
For a set of monomials $X\subset\ring$, write $\Xbar\subset X$ for the unique minimal monomial generating set of the ideal generated by~$X$.

A \emph{weak standard basis} of $I$ is a set $\scrS=\{h_1,h_2,\hdots\}\subset\scrI$ for which $\LM(\scrS)$ generates the ideal $\LM(I)$ (the countability assumption is harmless), and $\scrS$ is a \emph{standard basis} if $\LM(\scrS)$ is a minimal generating set
(in particular, with no repetitions of leading monomials).
A standard basis $\scrS$ of $I$ is called \emph{simple} if each $\LC(h_i)=1$, and \emph{reduced} if the only monomial of  $\overline{\LM(I)}$ to appear in $h_i$ is $\LM(h_i)$.
A simple reduced standard basis of $I$ exists and is unique.
Any weak standard basis has the following properties:
\begin{enumerate}
\item\label{GI2}
$\lcl\scrS\rcl=\scrI$; equivalently
$f\in\scrI \Longleftrightarrow$ $f$ reduces to zero under $\scrS$
(in the $\hatM$-adic limit, as explained below).
\item\label{GI3} 
$W\setminus \left(W\LM(\scrS)W\right)$ describes a monomial $\hatM$-adic basis for $\ring/\scrI$.
\item\label{GI4}
Each vector space $(\scrI\cap\hatM^i)/(\scrI\cap\hatM^{i+1})$
has a monomial basis
\[
\left\{
\msf v\nsf \mid v\in\LM(\scrS)\text{ and } \msf,\nsf\in W \text{ with }
\deg \msf + \deg \nsf =i-\deg v \right\}.
\]
\end{enumerate}
A weak standard basis $\scrS$ is easily transformed into a simple reduced standard basis by scaling, removing multiples, and then reducing: that is, if $h\in\scrS$ and $u$ is a monomial from $\overline{\LM(I)}$ that lies in the support of $h$ but $u\not=\LM(h)$, then
$u=\msf\LM(g)\nsf$ is some multiple of some $g\in\scrS$, and $h\mapsto h-\la \msf g\nsf$ for suitable $\la\in\C$ removes $u$ from the support of $h$ while preserving leading term and $h\in(\scrS)$. Repeating inductively for such $u$ of minimal degree gives the reduction, preserving only $h\in\lcl\scrS\rcl$. 
Such reductions are the reason we have only $\scrS\subset\scrI$, not necessarily $\scrS\subset I$. 

More generally, we may reduce any $f\in\ring$ by $\scrS$, inductively pushing terms of minimal degree that are divisible by some $\LM(g)$ into higher degree to leave a power series having no term divisible by any $g\in\scrS$: that is
\begin{enumerate}[resume]
\item\label{GI5}
Any $f\in\ring$ may be expressed in the $\n$-adic basis \eqref{GI3} by reduction under $\scrS$.
\end{enumerate}

\subsubsection{Calculating a standard basis}
We describe the usual device for systematically `cancelling off low order terms'.
Set $W^* = W\setminus\{1\}$.
For $X\subset W$, the \emph{set of overlaps} is
\[
\mathsf{ol}(X) = \left\{  (\asf_0,\asf_1,\asf_2) \mid \asf_0,\asf_1,\asf_2\in W^* \text{ and } \asf_0\asf_1, \asf_1\asf_2\in\Xbar\right\}.
\]
This is the natural notion of overlaps: if $X = \{ x^2,xy\}$ then $\mathsf{ol}(X) = \{ (x,x,x), (x,x,y) \}$.

For $\scrS\subset\ring$, the \emph{set of overlaps} is
\begin{eqnarray*}
\OL(\scrS) &=& \left\{ (\asf,f) \mid \asf=(\asf_0,\asf_1,\asf_2)\in\mathsf{ol}(\LM(\scrS))\text{ and }\right. \nonumber\\
&& \qquad\qquad \left. f=(f_1,f_2)\text{ with }\LM(f_1)=\asf_0\asf_1,\ \LM(f_2)=\asf_1\asf_2 \right\}.
\end{eqnarray*}
The set of \Spolys{} $S(\scrS) \subset (\scrS)$ is
\[
S(\scrS) = \left\{ \LC(f_1)\asf_0f_2 - \LC(f_2)f_1\asf_2 \mid ( (\asf_0,\asf_1,\asf_2), (f_1,f_2)) \in \OL(\scrS) \right\}.
\] 

The analogue of the Buchberger criterion \cite[4.1 and 5]{GH98} is similar to the commutative case in practice. 
Starting with the given generators $\scrS=\{f_1,\hdots,f_s\}$ of $I$, work in increasing degree,
calculating \Spolys{} and reducing them.
If some \Spoly{} $H\in S(\scrS)$ does not reduce to zero under $\scrS$,
then include a normalised and reduced multiple of $H$ in $\scrS$,
reduce the elements of $\scrS$ up to the current degree and continue.
There is no termination guarantee, but if $\dim\ring/\lcl I\rcl<\infty$
then of course the procedure terminates with a finite set $\scrS$
that satisfies properties \eqref{GI2}--\eqref{GI4} above.
If only a weak standard basis is required, then it is not necessary to perform reductions.

%-----------------------------------------------------------------------------------------
\subsection{Unitriangular Automorphisms}
For any fixed $\sff_1,\dots,\sff_d\in\hatM\subset\ringd$, and $i\geq 1$, consider
\[
\phi_i\colon \fringd/\m^i\to \fringd/\m^i
\]
which sends $x_k+\m^i\mapsto x_k+(\sff_k)_{<i} +\m^i$ for each $1\le k\le d$.  On the truncated finite dimensional algebras $\fringd/\m^i$, since both the domain and codomain of $\phi_i$ have the same dimension, clearly each $\phi_i$ is an algebra isomorphism provided that it is surjective.
This is equivalent to $\phi_2$ being surjective.  Often this condition will be clear, so will not be mentioned.  Regardless, when $\phi_2$ is surjective, as in \cite[2.4]{DWZ} the $\phi_i$ combine to induce an automorphism $\phi\colon \ringd\to\ringd$.

\begin{defin}\label{unitriangular def}
Consider $\sff_1,\dots,\sff_d\in\hatM$ defining $\phi_i$ as above. If $\phi_2$ is surjective, then the induced $\phi\colon \ringd\to\ringd$ is called a \emph{unitriangular automorphism}. %By convention, when we say $\phi$ has depth $e\le0$ then $\phi$ is the identity map.
\end{defin}
We will abuse notation slightly and write 
\[\arraycolsep=2pt
\begin{array}{rcl}
\ringd&\to&\ringd\\
x_k&\mapsto& x_k+\sff_k
\end{array}
\]
for $\phi$, since it is induced by such morphisms on the truncations $\fringd/\m^i$.  For $e\ge1$ we say that $\phi$ has \emph{depth} $e$ provided that $\sff_1,\dots,\sff_d\in\hatM^{e+1}$.  Note that if $\phi$ has depth $\geq1$, then $\phi_2$ sends $x_i\mapsto x_i$ and so is clearly surjective, and thus $\phi$ is automatically an isomorphism.

\begin{lemma}\label{lem!simauto}\label{prop:scaling}
Let $\upphi$ be an automorphism of $\ringd$, and suppose that either
\begin{enumerate}
\item
$\upphi$ is a unitriangular automorphism, or
\item
$\upphi$ is a scaling automorphism $x\mapsto \upalpha x$ and $y\mapsto \upbeta y$, where $\upalpha,\upbeta\in\C^*$.
\end{enumerate}
If $f\sim g$, then $\upphi(f)\sim\upphi(g)$ and $\Jac(f)\cong\Jac(g)\cong\Jac(\upphi(f))\cong\Jac(\upphi(g))$.  Furthermore, $\Jac(h)\cong\Jac(\uplambda h)$ for any $\uplambda\in\C^*$
\end{lemma}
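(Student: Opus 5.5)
The plan is to reduce everything to two facts. First, $\upphi$ preserves cyclic equivalence. Second, $\upphi$ carries the closed ideal $\lcl\updelta f\rcl$ onto $\lcl\updelta\upphi(f)\rcl$. Given these, the isomorphisms are assembled as
\[
\Jac(\upphi(g))\cong\Jac(\upphi(f))\cong\Jac(f)\cong\Jac(g).
\]

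\textbf{Step 1: cyclic equivalence.} Suppose $f\sim g$. For each $n$ write
\[
f-g=\sum_j \uplambda_j(a_jb_j-b_ja_j)+\bigO_{n+1}.
\]
Applying the algebra automorphism $\upphi$ gives
\[
\upphi(f)-\upphi(g)=\sum_j\uplambda_j\bigl(\upphi(a_j)\upphi(b_j)-\upphi(b_j)\upphi(a_j)\bigr)+\upphi(\bigO_{n+1}).
\]
In both cases $\upphi$ sends $\n$ into $\n$, so it sends $\n^{n+1}$ into $\n^{n+1}$. For unitriangular maps this holds since each $x_k\mapsto x_k+\sff_k$ with $\sff_k\in\n$; scaling is clear. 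Hence the jet formulation of $\sim$ gives $\upphi(f)\sim\upphi(g)$.

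\textbf{Step 2: cyclically equivalent potentials have equal Jacobian ideals.} For each $i$ we have $\dcyc_i(ab-ba)=0$, since a commutator is a sum of differences of cyclic rotations and $\dcyc_i$ is invariant under rotation. The maps $\dcyc_i$ are continuous and preserve the degree filtration (they lower degree by one). It follows that $\dcyc_i(f-g)\in\bigcap_n\n^n=0$, so $\updelta f=\updelta g$. In particular $\Jac(f)=\Jac(g)$, and likewise $\Jac(\upphi(f))=\Jac(\upphi(g))$.

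\textbf{Step 3: $\Jac(\upphi(f))\cong\Jac(f)$.} This is the main step, and it is a chain rule for cyclic derivatives.

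For scaling the computation is direct. We have $\dcyc_x(\upphi f)=\upalpha\,\upphi(\dcyc_x f)$, and similarly for $y$. Hence $\upphi$ maps $(\updelta f)$ onto $(\updelta\upphi f)$.

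For a unitriangular $\upphi$ I would invoke the standard chain rule from \cite{DWZ} (their Lemma 3.9 / Proposition 3.7). It says that $\dcyc_k(\upphi(f))$ lies in the closed two-sided ideal generated by the elements $\upphi(\dcyc_j f)$. More precisely, it equals a sum $\sum_j \Delta_{jk}\diamond\upphi(\dcyc_jf)$ of terms $u\,\upphi(\dcyc_jf)\,v$ determined by the cyclic derivative of $\upphi(x_j)$ with respect to $x_k$. This gives
\[
\lcl\updelta\upphi(f)\rcl\subseteq\upphi\lcl\updelta f\rcl.
\]
The matrix $(\Delta_{jk})$ is invertible modulo $\n$ because $\phi_2$ is surjective. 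Alternatively, apply the same argument to $\upphi^{-1}$, which is again unitriangular, with $f$ replaced by $\upphi(f)$. Either way we obtain the reverse inclusion. Continuity of $\upphi$ ensures that closures are respected. Then $\upphi$ induces $\Jac(f)\xrightarrow{\sim}\Jac(\upphi(f))$.

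I expect the main obstacle to be checking that the chain rule holds for infinite power series and passes to closures. This can be done on each truncation $\fringd/\m^i$ and then by passing to the limit.

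\textbf{Step 4: scaling the potential.} Finally $\updelta(\uplambda h)=\uplambda\,\updelta h$, so the two ideals coincide and $\Jac(\uplambda h)=\Jac(h)$.
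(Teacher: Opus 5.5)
Your proposal is correct and follows essentially the paper's route. The same jet computation, using $\upphi(\n^{n+1})\subseteq\n^{n+1}$, gives $\upphi(f)\sim\upphi(g)$. The isomorphisms then rest on two facts, which the paper simply cites as \cite[3.3]{DWZ} and \cite[3.7]{DWZ}: cyclically equivalent potentials have the same cyclic derivatives, and automorphisms carry $\lcl\updelta f\rcl$ onto $\lcl\updelta\upphi(f)\rcl$. The differences are only cosmetic: you verify the first fact, and the scaling case of the second, by hand, and your $\upphi^{-1}$ trick is exactly how the DWZ chain rule yields the reverse inclusion.
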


\begin{proof}
By the characterisation of \ref{DWZ cyclic fact}, for any $n\ge0$, $f-g = \sum_{i=1}^k\uplambda_i(a_ib_i-b_ia_i) + \bigO_{n+1}$.  
For both cases of automorphism $\upphi$, $\upphi(\n^{n+1})\subset\n^{n+1}$, so 
\[
\upphi(f) - \upphi(g) =
\sum_{i=1}^k\uplambda_i(\upphi(a_i)\upphi(b_i)-\upphi(b_i)\upphi(a_i)) + \bigO_{n+1}
\]
for any $n\ge1$. Thus $\upphi(f)\sim\upphi(g)$.  The fact that $\sim$ implies isomorphism of Jacobi algebras is \cite[3.3]{DWZ}.  Since the automorphisms in (1) and (2) fix the ground field $\C$, there is isomorphism $\Jac(f)\cong\Jac(\upphi(f))$ by \cite[3.7]{DWZ}.  The last statement involving $\uplambda$ is obvious, since the relations given by differentiating $\uplambda h$ are scalar multiples of the relations given by differentiating $h$.
\end{proof}

The following will be convenient: it is shorthand for $f$ and $g$ being equal up to and including degree $t-1$, and cyclically equivalent in all higher degrees.
\begin{notation}
For $t\geq 1$, write $f\sim_{t} g$ if and only if $f\sim g$ and $f - g \in \n^{t}$.
\end{notation}
A form of the usual Tschirnhausen trick works in the noncommutative context, taking care with cyclic equivalence.  Below, the following will mostly be applied with $k=3$, with the indexing $n+1\colonequals m+k$.

\begin{lemma}[Tschirnhausen]\label{tschirnhausen}
Suppose that $g\in\xyring$ satisfies
\[
g\sim_{m+k-1} x^k+ x^{k-1}p + xq + r +  \scrO_{m+k}
\]
for some $k\ge1$, $p,q\in\xyring$, $r\in\yring$, where $\ord(p)=m\ge2$ and $\ord(q)\ge k$.  Then the depth $m-1$ unitriangular $\upvarphi\colon x\mapsto x-\frac{1}{k}p, \,\,y\mapsto y$ satisfies
\[
\upvarphi(g)\sim_{m+k-1} x^k + xq+r + \scrO_{m+k}.
\]
\end{lemma}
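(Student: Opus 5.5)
The plan is to apply $\upvarphi$ directly to the given representative and track degrees, using \ref{lem!simauto} to transport cyclic equivalence. First, since $g\sim_{m+k-1} h$ with $h\colonequals x^k+x^{k-1}p+xq+r$ (up to $\scrO_{m+k}$), Lemma \ref{lem!simauto}(1) gives $\upvarphi(g)\sim\upvarphi(h)$. Moreover $g-h\in\n^{m+k-1}$, and $\upvarphi$ preserves each $\n^{t}$. So $\upvarphi(g)-\upvarphi(h)\in\n^{m+k-1}$, and hence $\upvarphi(g)\sim_{m+k-1}\upvarphi(h)$. It therefore suffices to show that $\upvarphi(h)\sim_{m+k-1} x^k+xq+r+\scrO_{m+k}$, and then combine the two relations. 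This combination works because $\sim_t$ is transitive: both cyclic equivalence and membership of the difference in $\n^t$ are transitive.

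Next I compute $\upvarphi(h)$ term by term, with $\upvarphi(x)=x-\tfrac1k p$, where $\ord(p)=m\ge2$.
\begin{itemize}
\item \emph{The term $r$.} Since $r\in\yring$ and $y$ is fixed, $\upvarphi(r)=r$ exactly.
\item \emph{The term $xq$.} $\upvarphi(xq)=xq-\tfrac1k p\,q+x(\upvarphi(q)-q)-\tfrac1k p(\upvarphi(q)-q)$. Here $pq$ has order at least $m+k$. Each monomial of $\upvarphi(q)-q$ has replaced at least one $x$ by something of order $\ge m$, so $\upvarphi(q)-q\in\n^{m-1+k}$. Hence $x(\upvarphi(q)-q)\in\n^{m+k}$, and so $\upvarphi(xq)=xq+\scrO_{m+k}$.
\item \emph{The term $x^{k-1}p$.} Similarly $\upvarphi(x^{k-1}p)=x^{k-1}p+\scrO_{m+k}$, since every correction has order at least $(k-2)+m+m\ge m+k$ (for $k=1$ there is no $x$-factor, and $\upvarphi(p)-p\in\n^{2m-1}\subseteq\n^{m+1}$).
\item \emph{The term $x^k$.} Expanding $(x-\tfrac1k p)^k$ gives $x^k-\tfrac1k\sum_{i=0}^{k-1}x^{i}px^{k-1-i}$ plus terms containing at least two copies of $p$, which have order $\ge 2m+k-2\ge m+k$. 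Each $x^{i}px^{k-1-i}$ cyclically permutes to $x^{k-1}p$, because the difference is a commutator $[x^i,\,px^{k-1-i}]$, homogeneous degreewise.
\end{itemize}
Hence the linear term becomes $-\tfrac1k\cdot k\,x^{k-1}p+(\text{sum of commutators})$, which cancels $x^{k-1}p$.

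The point needing care, and the main obstacle, is that these commutators live in degrees $\ge m+k-1$, and we need a relation of type $\sim_{m+k-1}$ rather than only $\sim$. The commutators $[x^i,px^{k-1-i}]$ lie in $\n^{m+k-1}$, so replacing each $x^ipx^{k-1-i}$ by $x^{k-1}p$ changes the element only in degrees $\ge m+k-1$, by a cyclically equivalent amount. This is exactly what $\sim_{m+k-1}$ permits. Thus $\upvarphi(h)\sim_{m+k-1}x^k+xq+r+\scrO_{m+k}$, where all the discarded terms of order $\ge m+k$ are absorbed into $\scrO_{m+k}$. Composing with the first step yields the claim.
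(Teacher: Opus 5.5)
Your proof is correct and follows the same term-by-term route as the paper's: you control $\upvarphi$ applied to $x^k$, $x^{k-1}p$ and $xq$ by the same order estimates ($2m+k-2\ge m+k$, and so on), and you are more explicit about why the cyclic permutations stay inside $\sim_{m+k-1}$. One small slip: the commutator $[x^i,px^{k-1-i}]$ relates $x^ipx^{k-1-i}$ to $px^{k-1}$ rather than to $x^{k-1}p$; use $[x^ip,\,x^{k-1-i}]=x^ipx^{k-1-i}-x^{k-1}p$ instead.
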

\begin{proof}
Set $p=p(x,y)$ and $q=q(x,y)$, using the abbreviation $y=y_1,\hdots,y_{d-1}$, where it helps to emphasise that the arguments of $p$ and $q$ are not more general expressions.  Also, write $\sim$ for $\sim_{m+k-1}$.

First, note throughout that $2m+k-2\ge m+k$ and observe that 
\begin{align}
\left(x-\tfrac{1}{k}p\right)^k &= x^k - \tfrac{1}k\left(x^{k-1}p + x^{k-2}px + x^{k-3}px^2 + \hdots + px^{k-1}\right) + \bigO_{2m+k-2} \nonumber \\
&\sim x^k - x^{k-1}p + \bigO_{m+k}.\label{eq!xxk}
\end{align}
Second, $x^{k-1}p(x-\tfrac{1}{k}p,y)-x^{k-1}p(x,y) \in\n^{2m+k-2}$ since replacing one factor of $x$ in $p$ by the expression $\tfrac{1}{k}p$ leads to a series of order $(k-1)+(\ord(p)-1)+\ord(p)$. Thus
\begin{align}
\left(x-\tfrac{1}{k}p\right)^{k-1}p(x-\tfrac{1}{k}p,y) &=
 x^{k-1}p(x-\tfrac{1}{k}p,y) + \bigO_{2m+k-2} \nonumber\\
&= x^{k-1}p(x,y) + \bigO_{m+k}.\label{eq!xkm1p}
\end{align}
Third, $xq(x-\tfrac{1}{k}p,y)-xq(x,y) \in\n^{m+k}$ since replacing one factor of $x$ in $q$ by the expression $\tfrac{1}{k}p$ gives a series of order $1+(\ord(q)-1)+\ord(p)$. Thus
\begin{align}
\left(x-\tfrac{1}{k}p\right)q(x-\tfrac{1}{k}p,y) &= xq(x-\tfrac{1}{k}p,y) + \bigO_{m+k} \nonumber\\
&= xq(x,y) + \bigO_{m+k}.\label{eq!xq}
\end{align}
Now expanding $\upvarphi(g)$, it follows that
\begin{align*}
\upvarphi(g)
&\sim \upvarphi(x^k+ x^{k-1}p + xq + r +  \scrO_{m+k})  \tag{by e.g.\ \ref{lem!simauto}}\\
&= (x-\tfrac{1}{k}p)^k + (x-\tfrac{1}{k}p)^{k-1}p(x-\tfrac{1}{k}p,y)+ (x-\tfrac{1}{k}p)q(x-\tfrac{1}{k}p,y) + r(y) + \scrO_{m+k}\\
&= (x-\tfrac{1}{k}p)^k + x^{k-1}p(x,y)+ xq(x,y) + r+ \scrO_{m+k} \tag{by \eqref{eq!xkm1p} and \eqref{eq!xq}}\\
&\sim (x^k - x^{k-1}p) + x^{k-1}p+ xq + r+ \scrO_{m+k} \tag{by \eqref{eq!xxk}} \\
&=x^k + xq + r + \scrO_{m+k}.\qedhere
\end{align*}
\end{proof}

In future sections we will start with a given~$f$, and produce an infinite sequence of automorphisms which chase terms into higher and higher degrees. The following will allow us to take limits which are compatible with cyclic permutation.

%Unstated convention below:  \sff_{n} = \sff_{n-1} + (assessing the new degree n terms) + \scrO_{n+1}.  
%	so \sff_6 = (stub in deg 5) + (monomials in deg 6 to be assessed) + \scrO_7.

\begin{prop}\label{main chase lemma}
Start with a fixed $\sff_N$ of order $3$, where the index $N\geq 6$.  Assume that for every $n\geq N$ there are automorphisms $\uppsi_{n,1},\hdots,\uppsi_{n,m_n}$ each of depth $\geq n - 5$, 
%[calibrate: to fix the deg N=6 terms into the state we want, we need depth 1, so quadratics and more. Yes, this is what we use] 
and power series $\sff_{n,0},\hdots,\sff_{n,m_n}$ such that $\sff_n\sim_{n-1}\sff_{n,0}$ and
\[
\uppsi_{n,1}(\sff_{n,0}) \sim_{n-1} \sff_{n,1},\quad
\uppsi_{n,2}(\sff_{n,1}) \sim_{n-1} \sff_{n,2},\quad\hdots\quad,
\uppsi_{n,m_n}(\sff_{n,m_n-1}) \sim_{n-1} \sff_{n,m_n}.
\]
That is for each $n\geq N$, take $\sff_n$ (which is either $\sff_N$, or has been constructed inductively), cycle only in degrees $\geq n-1$ then apply a depth $\geq n-5$ automorphism, take this output, cycle only in degrees $\geq n-1$ then apply a depth $\geq n-5$ automorphism, etc., finitely often.

Then  $\sff_\infty \colonequals \lim_{n\geq N} \sff_n$ exists, and $\Jac(\sff_N) \cong \Jac (\sff_\infty)$.
\end{prop}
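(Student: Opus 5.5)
The plan is to build, by induction on $n$, one automorphism $\Phi_n$ of $\ringd$ recording all moves up to stage $n$, and to pass to the limit degree by degree. Following the description in the statement, set $\sff_{n+1}\colonequals\sff_{n,m_n}$. Then define
\[
\Phi_n\colonequals\uppsi_{n,m_n}\circ\cdots\circ\uppsi_{n,1}\circ\Phi_{n-1},
\]
with $\Phi_{N-1}=\mathrm{id}$.

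\textbf{Step 1: the limit $\sff_\infty$ exists.} I will show that $\sff_{n+1}-\sff_n\in\n^{n-2}$, so that $(\sff_n)$ is $\n$-adically Cauchy.
\begin{enumerate}
\item The cycling steps only alter terms of degree $\geq n-1$, by the definition of $\sim_{n-1}$.
\item Apply a depth $\geq n-5$ automorphism $x_k\mapsto x_k+\sfg_k$, with $\sfg_k\in\n^{n-4}$, to a series $h$ of order $\geq 3$. Then $\uppsi(h)-h\in\n^{(3-1)+(n-4)}=\n^{n-2}$, since replacing a single variable in a monomial of degree $\geq3$ by $\sfg_k$ raises its degree by at least $n-5$.
\end{enumerate}
An induction shows every intermediate $\sff_{n,j}$ still has order $3$, as its cubic part never changes once $n\geq 6$. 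Hence the claimed bound holds, and $\sff_\infty-\sff_{n}\in\n^{n-2}$ for all $n$.

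\textbf{Step 2: the composite automorphism converges.} Each $\uppsi_{n,j}$ has depth $\geq n-5$, so it induces the identity on $\fringd/\m^{n-4}$. For fixed $i$, the truncations $\uppi_i\circ\Phi_n$ therefore stabilise once $n\geq i+4$. As in the construction preceding \ref{unitriangular def}, the compatible isomorphisms on the truncations $\fringd/\m^i$ assemble into an automorphism $\Phi$ of $\ringd$. Moreover $\Phi(h)-\Phi_n(h)\in\n^{n-3}$ for every $h$ of order $\geq1$, and in fact of order $\geq n-2$ when $h$ has order $3$.

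\textbf{Step 3: $\Phi(\sff_N)\sim\sff_\infty$.} First I prove by induction on $n$ that $\Phi_n(\sff_N)\sim\sff_{n+1}$.
\begin{enumerate}
\item The inductive hypothesis gives $\Phi_{n-1}(\sff_N)\sim\sff_n\sim\sff_{n,0}$.
\item By \ref{lem!simauto}, applying $\uppsi_{n,1}$ preserves $\sim$, giving $\uppsi_{n,1}\Phi_{n-1}(\sff_N)\sim\uppsi_{n,1}(\sff_{n,0})\sim\sff_{n,1}$, and so on along the chain.
\item Transitivity of $\sim$ holds because it is checked degreewise, each degree being a finite sum of commutators.
\end{enumerate}
Now fix a degree $d$ and choose $n\geq d+3$. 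By Steps 1 and 2, the degree-$d$ pieces of $\Phi(\sff_N)$ and $\Phi_n(\sff_N)$ agree, as do those of $\sff_\infty$ and $\sff_{n+1}$. So $(\Phi(\sff_N)-\sff_\infty)_d=(\Phi_n(\sff_N)-\sff_{n+1})_d$, which is a finite sum of commutators. By \ref{DWZ cyclic fact}, $\Phi(\sff_N)\sim\sff_\infty$.

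\textbf{Conclusion.} The isomorphism follows from \cite[3.7]{DWZ} applied to $\Phi$, which fixes $\C$, and \cite[3.3]{DWZ} applied to $\sim$, exactly as in the proof of \ref{lem!simauto}:
\[
\Jac(\sff_N)\cong\Jac(\Phi(\sff_N))\cong\Jac(\sff_\infty).
\]

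\textbf{Main obstacle.} The delicate point is Step 3. Cyclic equivalence is not obviously preserved under infinite limits, so one cannot simply take $n\to\infty$ in $\Phi_n(\sff_N)\sim\sff_{n+1}$. The remedy is the degreewise definition of $\sim$, combined with the uniform order estimates of Steps 1 and 2. Those estimates need careful bookkeeping: the order $3$ of $\sff_N$ and the depth bound $n-5$ must combine to push all changes into degrees growing with $n$.
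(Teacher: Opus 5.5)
Your proof is correct and follows the same skeleton as the paper's. You compose the moves at each stage, track $\sim$ using \ref{lem!simauto} and transitivity, and pass to the limit using the growing depths. The only real difference is in the limit step. The paper reindexes the per-stage composites $\upphi_n$, which satisfy $\upphi_n(\sff_n)\sim\sff_{n+1}$ and $\upphi_n(\sff_n)-\sff_{n+1}\in\n^{n-1}$, and then cites \cite[2.9]{BW3} to obtain a limiting automorphism $F$ with $F(\sff_N)\sim\sff_\infty$. You instead build the limit $\Phi=\lim\Phi_n$ of the cumulative composites yourself and check $\Phi(\sff_N)\sim\sff_\infty$ degree by degree, which makes the argument self-contained.
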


\begin{proof}
We first claim that 
\begin{enumerate}
\item The composition $\upphi_n \colonequals \uppsi_{n,m_n}\circ\hdots\circ\uppsi_{n,1}$ has depth $\geq n-5$, 
\item For all $n\geq N$, with $\sff_{n+1}\colonequals \sff_{n,m_{n}}$, (a) $\upphi_n(\sff_n) \sim \sff_{n+1}$, (b) $\upphi_n(\sff_n) - \sff_{n+1} \in \n^{n-1}$.
\end{enumerate}
(1) is well known (see e.g.\ \cite[2.2(2)]{BW3}).  For (2), by assumption we may write
\begin{align*}
\sff_n - \sff_{n,0} &= s_{n-1}^0 + s_n^0 + \scrO_{n+1}\\
\uppsi_{n,1}(\sff_{n,0}) - \sff_{n,1} &= s_{n-1}^1 + s_n^1 +\scrO_{n+1}\\
\vdots&\\
\uppsi_{n,m_n}(\sff_{n,m_n-1}) - \sff_{n,m_n} &= s_{n-1}^{m_n} + s_n^{m_n} +\scrO_{n+1} \end{align*}
where all $s_i^j$ have degree $i$, and all $s_i^j \sim 0$.  Now
\begin{align*}
\uppsi_{n,1}(\sff_n) 
&= \uppsi_{n,1}(\sff_{n,0}) + s_{n-1}^0 + s_n^0 + \scrO_{n+1}  \tag{since $\mathrm{depth}\,\uppsi\geq n-5\geq 1$}\\
&= \sff_{n,1} + (s_{n-1}^1 + s_{n-1}^0) + (s_n^0+s_n^1) + \scrO_{n+1}   \tag{again $\mathrm{depth}\,\uppsi\geq n-5\geq 1$}
\end{align*}
thus by induction
\[
\upphi_n(\sff_n) = \sum_{j=0}^{m_n} s_{n-1}^j + \sum_{j=0}^{m_n} s_{n}^j + \sff_{n,m_n}  + \scrO_{n+1} 
\]
Since $\sff_{n+1}\colonequals \sff_{n,m_n}$,  both (2)(a)(b) follow easily.

Now reindex, by setting $\sfg_1 = \sff_N, \sfg_2 = \sff_{N+1},\hdots$ and $\upvarphi^1=\upphi_N, \upvarphi^2=\upphi_{N+1}$, etc.  Then $\upvarphi^i$ has depth $\geq N+(i-1)-5 \geq i$  (since $N\geq 6$), $\upvarphi^i(\sfg_i) - \sfg_{i+1} \sim 0$, and  
\[
\upvarphi^i(\sfg_i) - \sfg_{i+1} =  \upphi_{N+i-1}(\sff_{N+i-1}) - \sff_{N+i}  \in \n^{N+i-2} \subset \n^{i+1}.  \tag{since $N-2\ge 4$}  
\]
These are the three properties that allow us to apply \cite[2.9]{BW3} directly, and conclude that $\lim \sfg_i=\lim_{n\geq N} \sff_n\colonequals \sff_\infty$ exists, and that there exists an automorphism $F$ such that $F(\sfg_1)\sim \sff_\infty$.  The last claimed isomorphism follows from the chain 
\[
\Jac(f_N)=\Jac(\sfg_1)\cong\Jac(F(\sfg_1))= \Jac(\sff_\infty),
\]
where the first and third follow from the easy fact that cyclic equivalence induces equality of Jacobi algebras, and the middle follows from the slightly harder fact that Jacobi algebras are preserved under automorphisms \cite[3.7]{DWZ}.
\end{proof}

%-----------------------------------------------------------------------------------------
\section{Normal Forms in Degrees 4 and 5}\label{sec:ParseOne}
 
All elements $f\in\ringtwo$ with $\ord(f)\leq 3$, except those satisfying $f\cong x^3+\scrO_4$, were put in normal form in \cite{BW3}; these are the cases $\ell=1,2$ (Types $A$ and $D$) in \ref{thm:ReductionPrecise} below.  This section considers the remaining elements of the form $f\cong x^3+\scrO_4$, and via changes in coordinates describes the options for the degree $4$ and $5$ terms of $f$.

\subsection{Normal Forms in Degree 4}
%The convention is that the coranks stated are for the stubs.
\begin{thm}\label{thm:uptodeg4}
If $f=x^3+\scrO_4\in\ring$, then $f\cong g+\scrO_5$ where $g$ is one of the following named cases, and either $\upvarepsilon=0$ or $\upvarepsilon=1$. 
\[
g=
\left\{
\begin{array}{lll}
x^3 + xy^3 + \upvarepsilon y^4&\textrm{\qquad case }E_6\\%&[2,3,4,4]\\
x^3 + xyxy&\textrm{\qquad case }\caseC\\%&[2,3,4,6]\\				% C = Y
x^3 + y^4&\textrm{\qquad case }\caseB\\%& [2,3,4,5]\\				% B = X
x^3 &\textrm{\qquad case }\caseA\\%&[ 2, 3, 5, 8 ]					% A = Z
\end{array}
\right.
\]
\end{thm}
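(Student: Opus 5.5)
We classify the degree $4$ part of $f=x^3+f_4+\scrO_5$ up to cyclic equivalence, depth-$1$ unitriangular automorphisms and scaling, using \ref{lem!simauto} throughout. The symbol $\cong$ is read as $\Jac$-isomorphism via these moves.

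\textbf{Step 1: reduce $f_4$ to terms without $x^2$.} Write $f_4$ as a sum of cyclic words of degree $4$ in $x,y$. Up to cyclic equivalence the basis is
\[
x^4,\quad x^3y,\quad x^2y^2,\quad xyxy,\quad xy^3,\quad y^4.
\]
Every cyclic word containing two adjacent $x$'s can be written as $x^2p$ with $\ord(p)=2$. Apply \ref{tschirnhausen} with $k=3$, $m=2$: the substitution $x\mapsto x-\tfrac13 p$ has depth $1$ and removes these terms modulo $\scrO_5$. This kills $x^4$, $x^3y$ and $x^2y^2$. One checks that the word $xyxy$ has no cyclic rotation beginning with $x^2$, so it survives. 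Hence
\[
f\cong x^3+a\,xyxy+b\,xy^3+c\,y^4+\scrO_5 .
\]

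\textbf{Step 2: case split.}
\begin{enumerate}
\item If $b\neq0$ and $a=0$, scale $x\mapsto\upalpha x$, $y\mapsto\upbeta y$ and then rescale $f$ by \ref{lem!simauto}. Choose $\upalpha^3=\upalpha\upbeta^3$ so that $b=1$; the remaining freedom normalises $c\in\{0,1\}$. This gives case $E_6$.
\item If $b=0$ and $a\neq0$: when $c\neq0$ I expect the change $x\mapsto x+\upgamma y^2$ to absorb $y^4$, using that $xyxy$ produces a $y^2xy$-type term cyclically equivalent to $xy^3$. Otherwise $y^4$ must be moved into $xy^3$ or absorbed. Scaling then gives case $\caseC$.
\item If $a=b=0$, we get $x^3+y^4$ when $c\neq0$ (after scaling), which is case $\caseB$, and $x^3$ when $c=0$, which is case $\caseA$.
\end{enumerate}

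\textbf{Step 3: the mixed case $a,b\neq0$, which I expect to be the main obstacle.} Use the degree-$4$ substitution $x\mapsto x+\upgamma y^2$ (depth $1$). Its effect on $x^3$ is $\upgamma(x^2y^2+xy^2x+y^2x^2)\sim 3\upgamma\,x^2y^2$, which Step 1 then removes again, so this move alone does nothing. The genuinely available moves are therefore:
\begin{enumerate}
\item linear changes preserving $x^3$ up to scalar, namely scaling and $y\mapsto y+\upmu x$;
\item the degree-$2$ substitution $y\mapsto y+q(x,y)$, which changes $f_4$ only through $x^3$, and $x^3$ does not involve $y$.
\end{enumerate}
The change $y\mapsto y+\upmu x$ alters $a\,xyxy$ and $b\,xy^3$ by terms containing $x^2$, which are removed again by Tschirnhausen, and by new terms such as $3\upmu c\,xy^3$ coming from $c\,y^4$.

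So the plan is to compute the induced action on the triple $(a,b,c)$ exactly and find its orbits. I expect the key phenomenon to be that $xyxy$ and $xy^3$ combine after such a substitution, for instance via $xy^3\sim y^3x$ together with the $\upmu$-corrections. This would show that $a\neq0$ can always be absorbed when $b\neq0$, leaving the $E_6$ form, and that when $b=0$ the form $x^3+xyxy$ absorbs $y^4$.

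The real work is to verify this orbit computation carefully, tracking cyclic equivalence classes in degree $4$ modulo the image of Tschirnhausen. It is possible that the true invariant is whether the cubic-in-$y$ part, computed in the cyclic quotient, has a repeated factor, and the four cases correspond to the four orbit types.
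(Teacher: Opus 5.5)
There is a genuine gap. The only substantive part of the theorem is what to do with the $xyxy$ coefficient $a$ when $b$ or $c$ is nonzero. Your proposal gets Step~2(2) wrong and leaves Step~3 open, with an incorrect computation.

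\textbf{Step~2(2) with $c\neq0$.} The substitution $x\mapsto x+\upgamma y^2$ changes the degree-$4$ part only through $x^3$, by $\sim 3\upgamma\,x^2y^2$; you observe this yourself in Step~3. Substituting into $a\,xyxy$ gives $\upgamma(y^3xy+xy^4)+\cdots$, which lies in degree $5$. So nothing absorbs $y^4$.

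In fact no admissible move can. To keep the cubic a multiple of $x^3$, the linear part of any automorphism must be $x\mapsto a_1x$, $y\mapsto b_1x+b_2y$. Parts of depth $\geq1$ only add $x^2$-terms in degree $4$, via $x^3$. Hence, modulo cyclic classes containing $x^2$, the coefficient of $y^4$ is only ever multiplied by a nonzero scalar. So $x^3+a\,xyxy+c\,y^4$ with $c\neq0$ never reaches case~$\caseC$; it actually belongs to case $E_6$ with $\upvarepsilon=1$.

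\textbf{The missing idea} is the correct action of $y\mapsto y+\upmu x$. Your claim that it changes $a\,xyxy$ and $b\,xy^3$ only by $x^2$-terms is false. The expansion of $x(y+\upmu x)^3$ contains $\upmu\,xyxy$. Modulo $x^2$-classes, $(y+\upmu x)^4\sim y^4+4\upmu\,xy^3+2\upmu^2\,xyxy$, not $3\upmu\,xy^3$. Hence, after re-applying Tschirnhausen,
\[
(a,b,c)\longmapsto\bigl(a+\upmu b+2\upmu^2c,\ b+4\upmu c,\ c\bigr).
\]
This is translation $t\mapsto t+\upmu$ of the quadratic $2ct^2+bt+a$. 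It is exactly the paper's argument: the paper applies $y\mapsto ay+bx$ and chooses $b$ to kill the $xyxy$ coefficient $\updelta+b\upzeta+2b^2\upeta$.

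With this in hand everything is immediate, and the case split should be on $c$ first, then $b$:
\begin{itemize}
\item If $c\neq0$, take $\upmu$ a root of the quadratic. Then $xyxy$ disappears and you get $x^3+b'xy^3+c\,y^4$ with $b'=b+4\upmu c$. This scales to case~$\caseB$ when the root is double (so $b'=0$), and to $E_6$ with $\upvarepsilon=1$ otherwise.
\item If $c=0\neq b$, take $\upmu=-a/b$ and scale to $E_6$ with $\upvarepsilon=0$.
\item Only $b=c=0$ leaves $x^3+a\,xyxy$, which is case~$\caseC$ or $\caseA$.
\end{itemize}
So the relevant invariant is the discriminant of this quadratic, not a repeated factor of a cubic.
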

\begin{proof}
%See emails to Gavin, 21st Feb 2024. And Deg4CleanUp, "examples from here" 
By cyclic permutation term-by-term, there exists $\upalpha, \upbeta, \upgamma, \updelta, \upzeta, \upeta\in\C$ such that
\begin{equation}\label{eq!deg4h4}
f\sim x^3 +
(\upalpha x^4
+ \upbeta x^3y
+ \upgamma x^2y^2
+ \updelta xyxy
+ \upzeta xy^3
+ \upeta y^4)
+\bigO_5.
\end{equation}
Suppose first that $\upeta\ne0$, and consider $\upvarphi_y\colon y\mapsto ay + bx$ for $a,b\in\C$, which implicitly fixes~$x$.
Up to cyclic permutation, the terms in $\upvarphi_y(f)$ with $xyxy$ and $y^4$ are precisely
\[
(a^2\updelta + a^2b\upzeta + 2a^2b^2\upeta) xyxy + a^4\upeta y^4.
\]
We may choose non-zero $a$ such that $a^4\upeta=1$, and $b$ such that $\updelta + b\upzeta + 2b^2\upeta=0$. For this choice $\upvarphi_y$ is an automorphism which when applied to $f$ removes the $xyxy$ term whilst normalising the coefficient on $y^5$; consequently, after relabelling the coefficients 
\begin{align}\label{eq!phiyf}
\upvarphi_y(f) &\sim x^3 + \upalpha\, x^4 + \upbeta\,x^3y + \upgamma\,x^2y^2  + \upzeta\,xy^3 + y^4 + \scrO_5\nonumber\\
&= x^3 + x^2(\upalpha\, x^2 + \upbeta\,xy + \upgamma\,y^2)  + \upzeta\,xy^3 + y^4 + \scrO_5.
\end{align}
Applying Tschirnhausen \ref{tschirnhausen} to \eqref{eq!phiyf}, using further \ref{lem!simauto}, there exists $\upvarphi_x$ with
\[
\upvarphi_x\upvarphi_y(f) \sim x^3  + \upzeta\,xy^3 + y^4 + \scrO_5.
\]
When $\upzeta=0$, we are in case~$\caseB$.  If $\upzeta\neq 0$, then applying \ref{prop:scaling} to $\upphi\colon x\mapsto ax$ and $y\mapsto by$ with $a,b\in\C^*$ chosen so that $a^3=\upzeta ab^3=b^4$ gives
\[
\upphi\upvarphi_x\upvarphi_y(f) \sim a^3(x^3  + xy^3 + y^4) + \scrO_5.
\]
This is case $E_6$ (with $\upvarepsilon=1$), since the non-zero scalar $a^3$ may be cancelled by \ref{prop:scaling}, without effecting the isomorphism class of the Jacobi algebra.

Now suppose that $\upeta=0$ in \eqref{eq!deg4h4}, so that
\[
f \sim x^3 + \upalpha\, x^4 + \upbeta\,x^3y + \upgamma\,xyxy+\updelta\,x^2y^2  + \upzeta\,xy^3 + \scrO_5.
\]
When $\upzeta\neq 0$, the unitriangular automorphism $\upvarphi_y\colon y\mapsto y -\tfrac{\upgamma}{\upzeta}x$ exists, and satisfies
\[
\upvarphi_y(f)\sim   x^3 +  
\tfrac{\upalpha \upzeta^2 - \upbeta\upgamma \upzeta + \upgamma^2\updelta}{\upzeta^2}\,x^4+
\tfrac{\upbeta \upzeta + \upgamma^2 - 2\upgamma\updelta}{\upzeta}\,x^3y+
\tfrac{-2\upgamma + \updelta}{1}\,x^2y^2+
\upzeta \,xy^3 + \scrO_5
\]
in which the $xyxy$ has been removed at the cost of terms involving $x^2$. Tschirnhausen \ref{tschirnhausen} then determines a unitriangular automorphism $\upvarphi_x$ such that
\[
\upvarphi_x\upvarphi_y(f) \sim x^3  + \upzeta\,xy^3 + \scrO_5.
\]
As above, an easy scaling then gives case $E_6$ with $\upvarepsilon=0$.  

Finally suppose that $\upeta=\upzeta=0$ in \eqref{eq!deg4h4}.  In this case, Tschirnhausen \ref{tschirnhausen} determines a unitriangular $\upvarphi_x$ with $\upvarphi_x(f)\sim x^3 + \upgamma xyxy+\scrO_5$. 
If $\upgamma\ne0$, then an easy scaling gives case~$\caseC$. If $\upgamma=0$, we are in case~$\caseA$.
\end{proof}

\subsection{Degree 5 Cleanup}\label{subsec:deg5clean}
This section proceeds to describe the possible degree five terms that can appear in each of the Cases $E_6$, $\caseA$, $\caseB$, and $\caseC$ in \ref{thm:uptodeg4}.  

One fact used repeatedly is that every degree five element is, up to $\sim$, of the form
\begin{equation}
x^2h_3 + \upalpha\,xyxy^2 + \upbeta\,xy^4 + \upgamma\,y^5\label{eqn:deg5general}
\end{equation}
for some $h_3\in\freering_3$ and $\upalpha, \upbeta, \upgamma\in\C$.
\begin{prop}[Case $E_6$]\label{prop!caseE6}
If $f = x^3 + xy^3 + \upvarepsilon y^4 + \scrO_5$, then $f\cong x^3 + xy^3 + \upvarepsilon y^4 + \uplambda y^5 + \scrO_6$, for some $\uplambda\in\mathbb{C}$.
\end{prop}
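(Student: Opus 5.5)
The plan is to normalise the degree-$5$ part by one or two depth-$1$ substitutions in $y$, followed by a single application of Tschirnhausen \ref{tschirnhausen}. Since all changes of coordinates used have depth $\geq 1$, terms of degree $\leq 4$ are unaffected, and only degree-$5$ terms need to be tracked modulo $\bigO_6$.

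First, by \eqref{eqn:deg5general}, write
\[
f\sim x^3+xy^3+\upvarepsilon y^4 + x^2h_3+\upalpha\,xyxy^2+\upbeta\,xy^4+\upgamma\,y^5+\bigO_6
\]
with $h_3\in\freering_3$.

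To remove the $xyxy^2$ term, apply the depth-$1$ unitriangular $\uppsi_1\colon y\mapsto y+bxy$. Only the degree-$4$ terms $xy^3$ and $\upvarepsilon y^4$ contribute new degree-$5$ terms; $x^3$ contributes nothing.
\begin{itemize}
\item From $xy^3$ we obtain $x(xy\cdot y^2+y\cdot xy\cdot y+y^2\cdot xy)\sim x^2y^3+2\,xyxy^2$.
\item From $\upvarepsilon y^4$ we obtain $4\upvarepsilon\,xy^4$ up to cyclic permutation.
\end{itemize}
Choosing $b=-\upalpha/2$ therefore kills $xyxy^2$. The only cost is a change to $h_3$ and to the coefficients $\upbeta,\upgamma$.

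Next, to remove the $xy^4$ term, apply $\uppsi_2\colon y\mapsto y+ay^2$.
\begin{itemize}
\item The term $xy^3$ contributes exactly $3a\,xy^4$.
\item The term $\upvarepsilon y^4$ contributes $4\upvarepsilon a\,y^5$, up to cyclic permutation.
\end{itemize}
Choosing $a=-\upbeta'/3$ removes $xy^4$, and creates no $xyxy^2$ term and no new $x^2$-terms. After these two steps, \ref{lem!simauto} gives
\[
f\cong x^3+x^2h_3'+xy^3+\upvarepsilon y^4+\uplambda y^5+\bigO_6 .
\]

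Finally, apply Tschirnhausen \ref{tschirnhausen} with $k=3$, $p=h_3'$ (so $m=3$, provided $h_3'\neq 0$; otherwise nothing needs doing), $q=y^3$ (of order $3\geq k$) and $r=\upvarepsilon y^4+\uplambda y^5\in\C\llangle y\rrangle$. Since $m+k=6$, the lemma produces a unitriangular $\upvarphi$ with $\upvarphi(f)\sim x^3+xy^3+\upvarepsilon y^4+\uplambda y^5+\bigO_6$. Combining this with \ref{lem!simauto} gives the claimed isomorphism of Jacobi algebras.

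The main point requiring care is the cyclic bookkeeping in the first substitution. One must confirm that $xy^2xy\sim xyxy^2$, so that the coefficient of $xyxy^2$ is exactly $2b$ and is genuinely nonzero. One must also check that the order of the two $y$-substitutions is right: $\uppsi_2$ must not reintroduce $xyxy^2$, which holds because it only multiplies in powers of $y$. Everything else is routine degree counting.
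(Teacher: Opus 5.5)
Your proof is correct and is essentially the paper's argument. The paper performs your two steps at once via the single substitution $y\mapsto y+sxy+ty^2$, which agrees with the composite of $\uppsi_1$ and $\uppsi_2$ up to terms affecting only degree $\geq 6$; this leads to the same linear conditions $2s+\upalpha=0$ and $\upbeta+4\upvarepsilon s+3t=0$, and the paper then finishes with Tschirnhausen \ref{tschirnhausen} exactly as you do.
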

\begin{proof}
Consider $\upvarphi\colon y\mapsto y+ s xy + t y^2$ where $s,t\in\C$, which is an automorphism regardless of the choice of $s$ and $t$. Then
\[
\upvarphi( x^3 + xy^3 + \upvarepsilon y^4)\sim  x^3 + xy^3 + \upvarepsilon y^4 +  sx^2y^3 + 2sxyxy^2 + (4\upvarepsilon s + 3t)xy^4 + 4\upvarepsilon  ty^5+\bigO_6.
\]
Now by \eqref{eqn:deg5general} there exists $h_3\in\freering_3$ and $\upalpha, \upbeta, \upgamma\in\C$ such that 
\[
f\sim x^3 + xy^3 + \upvarepsilon y^4 +  (x^2h_3 + \upalpha\,xyxy^2 + \upbeta\,xy^4 + \upgamma\,y^5)+\bigO_6.
\]
We may choose $s$ and $t$ such that $2s+\upalpha=0$ and $\upbeta+4\upvarepsilon s + 3t=0$, and so
\[
\upvarphi(f)\sim x^3 + xy^3 + \upvarepsilon y^4 +  x^2(h_3+sy^3) + (\upgamma+ 4\upvarepsilon  t)y^5+\bigO_6.
\]
Set $\uplambda=\upgamma+ 4\upvarepsilon  t$, then applying Tschirnhausen \ref{tschirnhausen} yields the result.
\end{proof}

\begin{prop}[Case $\caseA$]\label{prop!caseA}
If $f = x^3 + \scrO_5$, then $f\cong g + \scrO_6$ where $g$ is one of the following:
\[
g= 
\left\{ 
\begin{array}{ll}
x^3 +  y^5 \\% &[ 2, 3, 5, 7, 10 ] \\ 
x^3 +  xyxy^2\\%  &[ 2, 3, 5, 7, 10 ] \\ 
x^3 +  xyxy^2 + y^5\\% &[ 2, 3, 5, 7, 9] \\ 
x^3 + xy^4\\ % 	& [ 2, 3, 5, 7,9 ]\\
x^3\\%	& [ 2, 3, 5, 8, 13 ]		 
\end{array}
\right.
\]
\end{prop}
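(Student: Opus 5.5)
By \eqref{eqn:deg5general}, there exist $h_3\in\freering_3$ and $\upalpha,\upbeta,\upgamma\in\C$ with
\[
f\sim x^3+x^2h_3+\upalpha\,xyxy^2+\upbeta\,xy^4+\upgamma\,y^5+\scrO_6 .
\]
Since $\ord(h_3)=3\geq 2$, Tschirnhausen \ref{tschirnhausen} with $k=3$, $p=h_3$ and $q=\upalpha\,yxy^2+\upbeta\,y^4$ gives a depth $2$ unitriangular automorphism whose image satisfies
\[
f\cong x^3+\upalpha\,xyxy^2+\upbeta\,xy^4+\upgamma\,y^5+\scrO_6 .
\]
It remains to normalise the triple $(\upalpha,\upbeta,\upgamma)$, using two tools: linear changes $y\mapsto y+cx$, and scalings as in \ref{prop:scaling}.

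\emph{Case $\upalpha\neq 0$.} Apply $\upvarphi\colon y\mapsto y+cx$. Its effect on $x^3$ is trivial. On the degree five terms, only contributions with at most one extra $x$ matter modulo $x^2(\cdots)$, since anything with $x^2$ can again be absorbed by Tschirnhausen. Substituting one $x$ into $y^5$ gives $5c\,xy^4$ up to cyclic permutation. Substituting into $xy^4$ or $xyxy^2$ produces words with two $x$'s. Such words are either cyclic to $x^2(\cdots)$ or to $xyxy^2$; for example, $xy^4\mapsto c(x^2y^3+xyxy^2+xy^2xy+xy^3x)\sim c(2x^2y^3+2xyxy^2)$.

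So the coefficient of $xy^4$ shifts by $5c\upgamma$ plus terms that only involve $x^2$ and $xyxy^2$. Coefficient bookkeeping shows that each coefficient changes only by terms coming from monomials with fewer $x$'s. This is a unitriangular action in the number of $x$'s, in the order $y^5\to xy^4\to xyxy^2$.

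\emph{Removing $xy^4$.}
\begin{itemize}
\item If $\upgamma\ne0$, choose $c$ to kill $xy^4$. This changes $\upalpha$ only to $\upalpha+2c\upbeta+(\text{stuff})c^2\upgamma$. If the new $\upalpha$ becomes $0$, that is fine too, but this needs a separate check. Then Tschirnhausen and scaling give $x^3+xyxy^2+y^5$, or $x^3+y^5$ when the $xyxy^2$ coefficient is zero.
\item If $\upgamma=0$ and $\upbeta\neq0$, the substitution gives $xyxy^2$ coefficient $\upalpha+2c\upbeta$, so choose $c$ to kill it. This yields $x^3+xy^4$ after scaling.
\item If $\upgamma=\upbeta=0$, scaling gives $x^3+xyxy^2$ when $\upalpha\ne0$, and $x^3$ otherwise.
\end{itemize}

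\emph{Scalings.} Under $x\mapsto ax$, $y\mapsto by$, followed by dividing by $a^3$, the coefficients become
\[
\upalpha\mapsto \upalpha\, b^3/a, \qquad \upbeta\mapsto \upbeta\, b^4/a^2, \qquad \upgamma\mapsto \upgamma\, b^5/a^3 .
\]
Two nonzero coefficients can therefore be simultaneously normalised to $1$: for $\upalpha,\upgamma$ this means solving $b^3=a/\upalpha$ and $b^5=a^3/\upgamma$, which is possible over $\C$.

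\emph{Main obstacle.} The main difficulty is getting the cyclic-permutation bookkeeping of $y\mapsto y+cx$ exactly right in degree five. One must verify that every two-$x$ word is cyclically equivalent to a combination of $x^2(\cdots)$ and $xyxy^2$. The two-$x$ cyclic words in degree five are $x^2y^3$, $xyxy^2$ and $xy^2xy$, and the last of these is cyclic to $xyxy^2$, so this holds. One must also handle the degenerate subcase where killing $xy^4$ happens to kill $xyxy^2$ as well; this is harmless, since it just lands in another listed form. Since only the $xy^4$ coefficient needs linear solvability, the degenerate cases merely select among the five listed forms.
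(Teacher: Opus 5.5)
Your proof is correct and follows essentially the same route as the paper: the linear substitutions $y\mapsto y-\tfrac{\upbeta}{5\upgamma}x$ when $\upgamma\neq0$ and $y\mapsto y-\tfrac{\upalpha}{2\upbeta}x$ when $\upgamma=0\neq\upbeta$, followed by Tschirnhausen and scaling. The differences are minor: the paper applies Tschirnhausen once at the end rather than before and after the substitution, your heading ``Case $\upalpha\neq 0$'' is vestigial since your three bullets cover every case regardless of $\upalpha$, and the unspecified ``stuff'' is $5$, giving the coefficient $\tfrac{5\upalpha\upgamma-\upbeta^2}{5\upgamma}$ on $xyxy^2$, although, as you note, its exact value is irrelevant.
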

\begin{proof}
%See Line 151 of Eallclearing.
By \eqref{eqn:deg5general} there exists $h_3\in\freering_3$ and $\upalpha, \upbeta, \upgamma\in\C$ such that 
\begin{equation}\label{eq!Zh5}
f\sim x^3 +  (x^2h_3 + \upalpha\,xyxy^2 + \upbeta\,xy^4 + \upgamma\,y^5)+\bigO_6.
\end{equation}
Consider first the case $\upgamma\neq 0$.  Under this assumption, the unitriangular automorphism $\upvarphi_y\colon y\mapsto y - \frac{\upbeta}{5\upgamma}x $ exists and satisfies
\[
\upvarphi_y(f)\sim  x^3 + x^2g_3 +
    \tfrac{5\upalpha\upgamma - \upbeta^2}{5\upgamma}\,xyxy^2 + 
    \upgamma\,y^5 +\scrO_6
\]
for some $g_3$. The point is that $xy^4$ has been removed at the cost of a coefficient on $xyxy^2$ plus some extra terms which have been absorbed into the $x^2$.  We may then use Tschirnhausen \ref{tschirnhausen} to supply a $\upvarphi_x$ which removes the $x^2g_3$ term, and thus 
\[
\upvarphi_x\upvarphi_y(f)\sim  x^3  +
    \upmu\,xyxy^2 + 
    \upgamma\,y^5 +\scrO_6.
\]
If $\upmu=0$, then a simple rescaling gives the first case.  If $\upmu\neq 0$, then the scaling $x\mapsto ax, y\mapsto by$ with $a,b\in\C^*$ satisfying
$a^3=\upmu a^2b^3=\upgamma b^5$  gives the third case.

Suppose instead that $\upgamma=0$ in \eqref{eq!Zh5}.  If $\upbeta\neq 0$, then  the unitriangular automorphism $\upvarphi_y\colon y\mapsto y - \frac{\upalpha}{2\upbeta}x $ exists and satisfies
\[
\upvarphi_y(f)\sim  x^3 + x^2h_3 +
 \upbeta xy^4 +\scrO_6
\]
for some $h_3$. The point is that $xyxy^2$ has been removed at the cost of absorbing more terms into the $x^2$ factor.  Now \ref{tschirnhausen} followed by an easy rescaling gives the fourth case. 

Hence we can assume that $\upgamma=\upbeta=0$ in \eqref{eq!Zh5}.  When $\upalpha=0$ then \ref{tschirnhausen} gives the fifth case; when $\upalpha\neq 0$ then \ref{tschirnhausen} followed by an easy rescaling gives the second case. 
\end{proof}

\begin{prop}[Case $\caseB$]\label{prop:CaseBsplit}
If $f=x^3+y^4+\scrO_5$, then $f\cong g + \scrO_6$ where $g$ is one of the following:
\[
g=
\left\{\begin{array}{lll}
x^3 + y^4 + xyxy^2 &\textrm{\qquad case }E_7\\%&[ 2, 3, 4, 5, 6 ]\\   
x^3 + y^4  	&\textrm{\qquad case }E_{8,6}^*\\%&[ 2, 3, 4, 5, 7 ]
\end{array}
\right.
\]
%with option for necklace $N_{23}$ instead.
\end{prop}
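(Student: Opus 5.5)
The plan is to follow the template of \ref{prop!caseE6} and \ref{prop!caseA}. First put the degree five part of $f$ into the shape \eqref{eqn:deg5general}. Then remove the $y$-only and $xy^4$ terms with a depth one automorphism in $y$, clear the $x^2$-block with Tschirnhausen \ref{tschirnhausen}, and finally rescale according to whether the coefficient of $xyxy^2$ vanishes.

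By \eqref{eqn:deg5general}, there exist $h_3\in\freering_3$ and $\upalpha,\upbeta,\upgamma\in\C$ with
\[
f\sim x^3+y^4+\bigl(x^2h_3+\upalpha\,xyxy^2+\upbeta\,xy^4+\upgamma\,y^5\bigr)+\bigO_6 .
\]
Consider $\upvarphi\colon y\mapsto y+s\,xy+t\,y^2$ with $s,t\in\C$, which fixes $x$ and has depth $1$, so it is an automorphism.
\begin{itemize}
\item The term $x^3$ is unchanged.
\item On the degree five part, $\upvarphi$ acts as the identity modulo $\bigO_6$.
\item On $y^4$, substituting into exactly one of the four factors and cyclically permuting each resulting word gives
\[
\upvarphi(y^4)\sim y^4+4s\,xy^4+4t\,y^5+\bigO_6 .
\]
\end{itemize}
Choosing $s=-\upbeta/4$ and $t=-\upgamma/4$ therefore yields
\[
\upvarphi(f)\sim x^3+x^2h_3+y^4+\upalpha\,xyxy^2+\bigO_6 .
\]
This is precisely the situation of Tschirnhausen \ref{tschirnhausen}, with $k=3$, $m=2$, $p=h_3$, $q=0$ and $r=y^4+\upalpha\,xyxy^2$ modulo cyclic equivalence. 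Note that $xyxy^2$ is not of the form $x^2(\cdot)$, so it stays in the remaining part rather than being absorbed into $p$. Tschirnhausen then supplies $\upvarphi_x$ with
\[
\upvarphi_x\upvarphi(f)\sim x^3+y^4+\upalpha\,xyxy^2+\bigO_6 ,
\]
using \ref{lem!simauto} to pass $\sim$ through the automorphisms.

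If $\upalpha=0$, this is case $E_{8,6}^*$. If $\upalpha\neq0$, apply the scaling $x\mapsto ax$, $y\mapsto by$ of \ref{prop:scaling} with $a^3=b^4=\upalpha a^2b^3$. This system is solvable with $a,b\in\C^*$: take $a=\upalpha b^3$ and $b^5=\upalpha^{-3}$. The result is $a^3(x^3+y^4+xyxy^2)+\bigO_6$. The overall nonzero scalar $a^3$ is then removed using \ref{prop:scaling}, giving case $E_7$.

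The only step requiring real care is the cyclic computation of $\upvarphi(y^4)$. Each of the four words obtained by replacing one factor $y$ by $xy$ cyclically permutes to $xy^4$, and each word obtained by replacing one $y$ by $y^2$ is $y^5$. One must also check that no other degree five contributions arise: $x^3$ is fixed by $\upvarphi$, and every other term of $f$ already has degree at least five. Once this is verified, the remaining steps are mechanical.
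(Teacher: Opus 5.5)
Your proposal is correct and is essentially the paper's proof, including the same Tschirnhausen step and the same scaling $a^3=b^4=\upalpha a^2b^3$; the paper uses $y\mapsto y+syx+ty^2$ rather than $y\mapsto y+sxy+ty^2$, which makes no difference up to cyclic equivalence. The only bookkeeping to fix is in invoking \ref{tschirnhausen}: take $m=3$ (since $\ord(h_3)=3$, giving error term $\scrO_{m+k}=\scrO_6$), and put $\upalpha\,xyxy^2$ into $xq$ with $q=\upalpha\,yxy^2$ (so $\ord(q)=4\geq k$) and $r=y^4$, because the lemma requires $r\in\C\llangle y\rrangle$.
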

\begin{proof}
%Then Line 446 summarises
As before, by \eqref{eqn:deg5general}  there are $h_3\in\freering_3$ and $\upalpha, \upbeta, \upgamma\in\C$ with
\[
f\sim x^3 + y^4 + x^2h_3 + \upalpha\,xyxy^2 + \upbeta\,xy^4 + \upgamma\,y^5 +\scrO_6.
\]
Applying the automorphism $\upvarphi_y\colon y\mapsto y+ syx + ty^2$ with $s,t\in\C$ gives
\[
\upvarphi_y(f)\sim x^3 + y^4 + x^2h_3 + \upalpha\,xyxy^2 + (\upbeta+4s)xy^4 + (\upgamma+4t)y^5 +\scrO_6.
\]
Choosing $s=-\upbeta/4$ and $t=-\upgamma/4$, then composing with the $\upvarphi_x$ supplied by Tschirnhausen \ref{tschirnhausen} to clear the $x^2h_3$ term gives
\[
\upvarphi_x\upvarphi_y(f)\sim x^3 + y^4 + \upalpha\,xyxy^2 +\scrO_6. 
\]
If $\upalpha=0$ then we are in case $E_{8,6}^*$, whereas if $\upalpha\neq 0$ then scaling via $x\mapsto ax, y\mapsto by$ with $a,b\in\C^*$ satisfying $a^3=b^4=\upalpha a^2b^3$ gives case $E_7$.
\end{proof}

In contrast, the following is mildly more involved.

\begin{prop}[Case $\caseC$]\label{prop!caseC}
If $f=x^3 + xyxy + \scrO_5$, then $f\cong g + \scrO_6$ where $g$ is one of the following:
\[
g=
\left\{\begin{array}{lll} 
x^3 + xyxy + \upbeta\,xy^4 &\\%&[ 2, 3, 4, 6, 7 ] \\
x^3 + xyxy + y^5    &\textrm{\qquad case }E_{8,5}\\%&[ 2, 3, 4, 6, 7 ]\\
\end{array}
\right.
\]
for some $\upbeta\in\C$.
\end{prop}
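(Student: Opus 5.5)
The plan is to follow the template of \ref{prop!caseA} and \ref{prop:CaseBsplit}. By \eqref{eqn:deg5general} there are $h_3\in\freering_3$ and $\upalpha,\upbeta,\upgamma\in\C$ with
\[
f\sim x^3+xyxy+x^2h_3+\upalpha\,xyxy^2+\upbeta\,xy^4+\upgamma\,y^5+\scrO_6 .
\]
Throughout, the $x^2h_3$ part is harmless: Tschirnhausen \ref{tschirnhausen} with $k=3$, $p=h_3$ (so $m=3$), $q=yxy$ and $r$ the pure $y$-part removes it modulo $\scrO_6$. We are free to add any $x^2(\cdot)$ terms in degree $5$ and clean them up at the end.

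\emph{Step 1 (remove $xyxy^2$).} Apply the depth-one automorphism $\upvarphi\colon y\mapsto y+ty^2$. On $x^3$ it does nothing. On $xyxy$ it produces $t(xy^2xy+xyxy^2)\sim 2t\,xyxy^2$ in degree $5$. It does not touch $xy^4$ or $y^5$ in degree $5$, since those only change in degree $6$. Choosing $t=-\upalpha/2$ kills the $xyxy^2$ term. If $\upgamma=0$ we then apply \ref{tschirnhausen} and are in the first case $x^3+xyxy+\upbeta\,xy^4$.

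\emph{Step 2 ($\upgamma\neq0$: remove $xy^4$).} We cannot kill $xy^4$ with $x\mapsto x+uy^2$, because on $x^3$ this creates a degree $4$ term $3u\,x^2y^2$, and repairing that in turn alters degree $5$. Instead, as in \ref{prop!caseA}, use $y\mapsto y+cx$. On $y^5$ this gives $5c\,xy^4$ up to cyclic permutation, plus terms with at least two $x$'s. On $xyxy$ it creates degree $4$ terms $2c\,x^2yx+c^2x^4$, which are of the form $x^2p$ with $\ord(p)=2$ after cycling. So we apply \ref{tschirnhausen} with $m=2$; this restores $x^3+xyxy+\scrO_5$ but perturbs degree $5$.

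The main obstacle is to track these degree-$5$ perturbations, reduce them via \eqref{eqn:deg5general}, and check two things: the new $xy^4$ coefficient has the form $\upbeta+5c\upgamma+(\text{terms of order }c^2\text{ or involving }\upalpha)$, and the new $y^5$ coefficient is still $\upgamma$. The $y^5$ claim holds because every perturbation contains an $x$. I would try first to show that the $c$-linear part of the correction is exactly $5c\upgamma$, by noting that the Tschirnhausen shift $x\mapsto x-\tfrac13p$ only feeds into degree $5$ through $xq$ with $p\in(x)$. That substitution produces monomials with at least two $x$'s, which by \eqref{eqn:deg5general} contribute only to $x^2(\cdot)$ and $xyxy^2$. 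Granting this, the $xy^4$ coefficient is $\upbeta+5c\upgamma$, and $c=-\upbeta/(5\upgamma)$ kills it. We then redo Step 1 to remove any reintroduced $xyxy^2$, noting that Step 1 does not affect the $xy^4$ coefficient.

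\emph{Step 3 (scale).} After a final application of \ref{tschirnhausen}, we have $f\cong x^3+xyxy+\upgamma\,y^5+\scrO_6$ with $\upgamma\neq0$. The scaling $x\mapsto ax$, $y\mapsto by$ needs $a^3=a^2b^2=\upgamma b^5$, i.e.\ $a=b^2$ and $b=\upgamma$. Combined with \ref{prop:scaling}, this gives case $E_{8,5}$.
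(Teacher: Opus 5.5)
Your proposal is correct and is essentially the paper's argument. It uses the same moves: $y\mapsto y+cx$ with $c=-\upbeta/(5\upgamma)$, Tschirnhausen to clear $x^2(2cxy+c^2x^2)$, then $y\mapsto y+ty^2$ to kill $xyxy^2$, a final Tschirnhausen, and the scaling $a=b^2$, $b=\upgamma$; your preliminary Step 1 is redundant when $\upgamma\neq0$. One small correction: with $m=2$ the Tschirnhausen shift also feeds into degree $5$ through the $x^3$ and $x^2p$ terms (e.g.\ $\tfrac13xp^2$), not only through $xq$. Since neither substitution ever lowers the number of $x$'s in a monomial, these contributions still have at least two $x$'s, so the $xy^4$ coefficient is exactly $\upbeta+5c\upgamma$ and your hedge about $c^2$ corrections is unnecessary.
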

\begin{proof}
%Line 2063 
Again by \eqref{eqn:deg5general} there exists $h_3\in\freering_3$ and $\upalpha, \upbeta, \upgamma\in\C$ with
\begin{equation}
f\sim x^3 + xyxy + x^2h_3 + \upalpha\,xyxy^2 + \upbeta\,xy^4 + \upgamma\,y^5 +\scrO_6.\label{eqn:CaseC1}
\end{equation}
Suppose that $\upgamma\neq0$.
The strategy is first to remove the $xy^4$ term, then those degree~4 terms which are created but are divisible by $x^2$, then the $xyxy^2$ term, and finally any remaining terms in degree~5 divisible by $x^2$. First apply $\upvarphi_y\colon y\mapsto y-\tfrac{\upbeta}{5\upgamma}x$ to give
\[
\upvarphi_y(f) \sim x^3 + xyxy + x^2\big(h_2+ j_{3}\big) + \upnu\,xyxy^2 + \upgamma\,y^5 +\scrO_6,
\]
for some $\upnu\in\C$, some element $j_{3}$ of degree three, where $h_2 \colonequals \tfrac{\upbeta^2}{25\upgamma^2}x^2 - \tfrac{2\upbeta}{5\upgamma}xy$.
That is, $xy^4$ has been removed at the cost of a coefficient on $xyxy^2$, plus some some additional (degree 2 and degree 3) terms on the $x^2$.

Next, attack the $x^2h_2$ term.  Applying $\upvarphi_x\colon x\mapsto x - h_2/3$ as in \ref{tschirnhausen} gives
\[
\upvarphi_x\upvarphi_y(f) \sim x^3 + xyxy + x^2 k_3 + \upmu\,xyxy^2  + \upgamma\,y^5 +\scrO_6
\]
for some element $k_3$ of degree~3 and $\upmu\in\C$.  Applying $\uppsi_y\colon y\mapsto y - \tfrac{\upmu}{2}y^2$ then gives
\[
\uppsi_y\upvarphi_x\upvarphi_y(f)\sim x^3 + xyxy + x^2k_3  + \upgamma\,y^5 +\scrO_6,
\]
from which applying the $\uppsi_x$ supplied by Tschirnhausen \ref{tschirnhausen} to clear the $x^2k_3$ term gives
\[
\uppsi_x\uppsi_y\upvarphi_x\upvarphi_y(f)\sim x^3 + xyxy + \upgamma\,y^5 +\scrO_6.
\]
The scaling $x\mapsto ax, y\mapsto by$ with $a,b\in\C^*$ satisfying $a^3=a^2b^2=\upgamma b^5$ gives the second case $E_{8,5}$.

Now suppose that $\upgamma=0$ in \eqref{eqn:CaseC1}. 
Applying $\upvarphi_y\colon y\mapsto y - \tfrac{\upalpha}{2}y^2$ gives
\[
\upvarphi_y(f)\sim x^3 + xyxy + x^2h_3  + \upbeta\,xy^4 +\scrO_6,
\]
after which the $\upvarphi_x$ supplied by Tschirnhausen \ref{tschirnhausen} gives the first case.
\end{proof}

%------------------------------------------------------------------------------------------------------------
\section{Centres of Nilpotent Jacobi Quotients}\label{sec:centres}\label{subsec:computeZ}

To set notation, $\scrZ(\Jac(f))$ is the centre of $\Jac(f)$.
In \S\ref{sec:AlgLength}, we control $\scrZ(\Jac(f))$ by considering the images of central elements in the quotients $A_k$ defined below.
The analysis of $A_k$ in this section is an easy and elementary application of the theory of noncommutative standard bases of ideals explained in \S\ref{sec:ncsb}.  

\begin{notation}\label{not!jacqt}
For fixed $f\in\ringtwo$ and $k\geq 1$, we use the following notation throughout.

\begin{enumerate}
\item Recall from \S\ref{subsec:RadicalsandDim} that $\fJ$ denotes the Jacobson radical of $\Jac(f)$.
\item Set $A_k \colonequals \Jac(f) / \fJ^k$, namely $\Jac(f)$ modulo the $k$th power of its radical. 
\item Consider the natural maps $\uppi_k\colon \Jac(f)\rightarrow A_k$ and $\uprho_{k+1}\colon A_{k+1}\rightarrow A_{k}$, and note that $\uppi_k = \uprho_{k+1}\circ\uppi_{k+1}$.
\item
Write $\Arad\colonequals \uppi_k(\fJ)\subset A_k$ for the Jacobson radical of $A_k$,
noting that $\uppi_k(\fJ^j)=\Arad^j$ for any~$j\ge1$.
\item\label{not!Ik}
Write $A_k = \ring/I_k$ where $I_k=(\dcyc f) + \m^k$.
The standard basis $\scrS$ of $I_k$ is finite because $\dim_{\C}A_k<\infty$.
It consists of particular polynomials $h_1,\hdots,h_s$ of $\ord h_i\le \deg h_i\le k-2$
together with all monomials of degree~$k-1$ not divisible by any $\LT(h_i)$.
We abbreviate this hereafter by saying `$\scrS=\{h_1,\hdots,h_s\}$ and monomials of degree~$k-1$', without specifying those additional monomials (their precise form plays no role in arguments).
\end{enumerate}
All of these notions depend on $f$, but this is suppressed from the notation.
\end{notation}

\begin{remark}\label{rk:sbn^k}
By \S\ref{sec:stdbasis}\eqref{GI3}, for any (weak) standard basis $\scrS$ of $I_k$,
the set of (classes in $A_k$ of) all monomials of degree $\le k-1$ that are not divisible by the leading term of any element of~$\scrS$ is a $\C$-vector space basis of $A_k$. Expressing elements in this basis using reduction \S\ref{sec:stdbasis}\eqref{GI5} is the fundamental tool.

It is important to be clear when standard basis is a simple tool and when it is not.
For any given ideal $I\subset\ring$, the standard basis of $I$ is likely to be very complicated,
 even if finite. In \S\ref{sec:FD}, knowing the standard basis would show at once that $\dim\ring/\lcl I\rcl<\infty$, but alas it cannot reasonably be calculated.

By contrast, for small $k$ the standard basis of $I+\n^k$ is very easy to calculate, either by eye or in one or two simple steps.
For example, consider
$f = \tfrac{1}3x^3 + xy^3 + \tfrac{1}4\la y^4 + \tfrac{1}5\mu y^5 + \bigO_6$.
The ideal $\lcl\dcyc f\rcl + \n^5$ is precisely $(\sfg_1,\sfg_2)+\n^5$, where
$\sfg_1=x^2+y^3$ and $\sfg_2=xy^2+yxy+y^2x+\la y^3 + \mu y^4$ are the derivatives truncated at order~5.
There are \Spolys
\[
x\sfg_1-\sfg_1x = xy^3 - y^3x \buildrel{\sfg_2}\over{\equiv} -yxy^2-y^2xy-\la y^4 - y^3x
\buildrel{\sfg_2}\over{\equiv} 0
\]
(truncating as we go) so that $xy^3\equiv y^3x$, and 
\[
x\sfg_2 - \sfg_1y^2 = xyxy + xy^2x + \la xy^3  \buildrel{\sfg_2}\over{\equiv} xyxy - yxyx - y^2x^2 - \la y^3x + \la xy^3
 \buildrel{\sfg_1}\over{\equiv} xyxy - yxyx.
\]
It is immediate that all other \Spolys\ lie in $\n^5$ and so need no consideration. Thus the standard basis of $(\sfg_1,\sfg_2)+\n^5$ is
\[
\scrS = \{\sfg_1, \sfg_2, xyxy-yxyx\}
\textrm{ and monomials of degree 5}
\]
where the monomials of degree~5 are in fact $y^2xyx$, $y^4x$, $y^5$, those not divisible by any of the leading terms $x^2$, $xy^2$, $xyxy$. 
It follows that an $\n$-adic $\C$-basis of $\ring/I+\n^5$ is given by the 14 monomials $1,x,y,xy,yx,y^2,xyx,yxy,y^2x,y^3,yxyx,y^2xy,y^3x,y^4$.
\end{remark}
Throughout \S\ref{subsec:computeZ}--\ref{sec:AlgLength}, the use of standard bases is restricted to very simple calculations as in \ref{rk:sbn^k}. As such, typically we give only quick outlines of proof, and only in harder cases.

\begin{lemma}
\label{lem:Ak2}
$\Arad^{k-1}\subset\scrZ(A_k)$.
\end{lemma}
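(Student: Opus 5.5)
The plan is to use only the fact that $\Arad^k=0$ in $A_k$. This holds because $A_k=\Jac(f)/\fJ^k$ and $\uppi_k(\fJ^k)=\Arad^k$ by \ref{not!jacqt}. Let $z\in\Arad^{k-1}$ be arbitrary. To show $z$ is central it suffices to check that $z$ commutes with a set of algebra generators of $A_k$, and for these I take $1$, $x$ and $y$.

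First, $A_k=\ring/I_k$ is a quotient of $\ring$. Every element $a\in A_k$ can therefore be written as $a=c+r$ with $c\in\C$ and $r\in\Arad$: take $c$ to be the constant term of any lift, so that $r$ is the image of an element of $\n$. Here we use that $\Arad=\uppi_k(\fJ)$ is the image of $\n$, by \S\ref{subsec:RadicalsandDim}. Then
\[
za-az = zr-rz.
\]
Since $z\in\Arad^{k-1}$ and $r\in\Arad$, both products $zr$ and $rz$ lie in $\Arad^{k}$, which is zero. Hence $za=az$ for every $a\in A_k$, and so $z\in\scrZ(A_k)$.

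This makes the generator reduction unnecessary, since the argument handles arbitrary $a$ directly. The only point requiring care is the identification $\Arad^k=\uppi_k(\fJ^k)=0$ together with the decomposition $A_k=\C\oplus\Arad$. Both are immediate from \ref{not!jacqt} and from $\rad(\Jac(f))=\n/\lcl\updelta f\rcl$, which holds because $f\in\ringtwo_{\geq 2}$. No step is a real obstacle.
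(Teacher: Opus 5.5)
Your proof is correct and is essentially the paper's: both rest on $\Arad^{k-1}\cdot\Arad\subseteq\Arad^k=0$ in $A_k$. The paper applies this only to the generators $x,y$ (so $xz=zx=yz=zy=0$), while you apply it to an arbitrary element written as $c+r$ with $c\in\C$, $r\in\Arad$; this is the same argument and avoids the reduction to generators.
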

\begin{proof}
If $z\in\Arad^{k-1}$ then each of $xz,zx,yz,zy\in\Arad^k$ are zero in~$A_k$, so that $z$ commutes with $x$ and $y$ trivially.
\end{proof}

The remainder of this section controls the centre of particular $A_k$ for various classes of~$f$.

\begin{prop}\label{prop:z1=0}
Let $f\in\ringtwo$.
If $f\cong\upvarepsilon x^3 + \bigO_4$ for $\upvarepsilon\in\{0,1\}$, then $\scrZ(A_k)\cap\Arad\subset\Arad^2$ for any $k\ge3$.
\end{prop}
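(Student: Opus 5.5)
We first reduce to a single value of $k$. Since $\uprho_{k+1}\colon A_{k+1}\to A_k$ is surjective and maps the radical of $A_{k+1}$ onto the radical of $A_k$, and since $\scrZ(A_k)\cap\Arad\subset\Arad^2$ is a statement about linear terms only, we want to show the following. If $z\in\scrZ(A_k)\cap\Arad$, then the degree-one part of $z$ vanishes modulo $\Arad^2$. Because the quotient $A_k\to A_3$ is surjective, it sends central elements to central elements and sends $\Arad^2$ to $\Arad^2$. Hence if $z=\upalpha x+\upbeta y+(\text{elements of }\Arad^2)$ is central in $A_k$, its image is central in $A_3$, and it suffices to show that every central element of $A_3$ lying in $\Arad$ has $\upalpha=\upbeta=0$. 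The reduction only needs the observation that $\Arad/\Arad^2$ is spanned by the classes of $x,y$, with no relations among them. This holds because $\dcyc f\in\n^2$ when $\ord f\geq 3$.

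Next we pass to a normal form. By \ref{lem!simauto}, isomorphisms $\Jac(f)\cong\Jac(g)$ induced by automorphisms or cyclic equivalence preserve the radical filtration, so they induce isomorphisms $A_k(f)\cong A_k(g)$ that respect $\Arad$ and $\Arad^2$. We may therefore assume $f=\upvarepsilon x^3+\bigO_4$. Then $\dcyc_x f=3\upvarepsilon x^2+\bigO_3$ and $\dcyc_y f=\bigO_3$. Consequently $I_3=(\dcyc f)+\m^3$ equals $(\upvarepsilon x^2)+\m^3$. The standard basis is $\{x^2\}$ together with the degree-two monomials when $\upvarepsilon=1$, and is just the degree-two monomials when $\upvarepsilon=0$.

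The final step is a direct check in $A_3$. Every element of $\Arad^2$ is killed on multiplication by $\Arad$ in $A_3$. So for $z=\upalpha x+\upbeta y+w$ with $w\in\Arad^2$, we get $xz-zx=\upbeta(xy-yx)$ and $yz-zy=\upalpha(yx-xy)$. In both cases of $\upvarepsilon$, the monomials $xy$ and $yx$ are distinct basis elements of $A_3$ by \S\ref{sec:stdbasis}\eqref{GI3}, since neither is divisible by $x^2$. Thus $xy-yx\neq0$, and centrality forces $\upalpha=\upbeta=0$, that is, $z\in\Arad^2$.

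The only point needing care is the normal-form transfer, namely that the isomorphism preserves the filtration so that $\scrZ\cap\Arad$ and $\Arad^2$ correspond under it. This is standard, because automorphisms of $\ringtwo$ preserve $\n$ and induced isomorphisms of local algebras preserve the Jacobson radical. Apart from this, the argument is a routine degree-two calculation.
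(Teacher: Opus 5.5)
Your argument is correct and is essentially the paper's. The paper works directly in $A_k$, observing that the only leading term of the standard basis of $I_k$ below degree three is $x^2$, which amounts to your explicit reduction to $A_3$ followed by the same commutator computation using $xy-yx\neq0$. One slip needs fixing: the standard basis of $I_3=(\upvarepsilon x^2)+\m^3$ consists of $x^2$ (when $\upvarepsilon=1$) together with the degree-\emph{three} monomials, not degree-two ones. As you wrote it, $xy$ and $yx$ would vanish in $A_3$, which contradicts your final step.
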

\begin{proof}
The ideal $I_k$ is generated by the derivatives $\dcyc_xf = 3\upvarepsilon x^2 + \bigO_3$
and $\dcyc_yf=0+\bigO_3$ together with all monomials of degree~$k$.
Therefore the standard basis of $I_k$ lies within~$\Arad^3$, with the exception of a leading term $x^2$ in the case $\upvarepsilon\ne0$. Thus there is a $\C$-basis of $A_k$ comprising $1,x,y,xy,yx,y^2$ (and $x^2$ in the case $\upvarepsilon=0$) together with some elements of $\Arad^3$.

If $z=\la_1x+\la_2y+\bigO_2\in\scrZ(A_k)\cap\Arad$, then $xz-zx = \la_2(xy-yx)+\bigO_3$ and $yz-zy=\la_1(yx-xy)+\bigO_3$,
so $\la_1=\la_2=0$ since they both appear as coefficients of basis elements in an expression of zero.
\end{proof}
The proof strategy from here on is to determine a (weak) standard basis $\{h_1,h_2,\dots,h_s\}$ of $A_k$, by \ref{prop:z1=0} consider an arbitrary central element $z\in\Arad^2$ expressed with respect to the corresponding monomial $\C$-basis, and expand the commutators $xz-zx=yz-zy=0$.

\begin{prop}\label{prop:ZA4(1)}
Let $f\in\ringtwo$.
If $f \cong x^3 + \scrO_4$, then $\scrZ(A_4)\cap\Arad=\Arad^3$.
\end{prop}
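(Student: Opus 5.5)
The inclusion $\Arad^3\subset\scrZ(A_4)\cap\Arad$ is immediate from \ref{lem:Ak2} with $k=4$, so the content is the reverse inclusion. Since $\Jac$ and the quotients $A_4$ are invariant under isomorphism (automorphisms and cyclic equivalence, \ref{lem!simauto}), I will replace $f$ by $x^3+g_4+\bigO_5$. As $A_4=\ring/((\dcyc f)+\m^4)$, only the degree $\le 3$ parts of the derivatives matter, and these depend only on the degree $3$ part of $f$. Scaling $f$ by $\tfrac13$ using \ref{lem!simauto}, I may therefore assume $\dcyc_x f = x^2+\bigO_3$ and $\dcyc_y f = 0+\bigO_3$.

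Working modulo $\m^4$, the derivatives contribute only $x^2$ plus cubic terms. The cubic part of $\dcyc_x f$ is absorbed when reducing by $x^2$, and $\dcyc_y f\in\m^3$ contributes a cubic polynomial $q$. The key observation is that $I_4\cap\m^2$ coincides with the span of $x^2$ together with everything in $\m^3$ lying in the ideal, and in degree $\le 2$ the only relation is $x^2=0$. Hence the standard basis of $I_4$ is $\{x^2+\hot\}$, possibly some cubic elements, and monomials of degree $3$. This gives a $\C$-basis of $A_4$ of the form $1,x,y,xy,yx,y^2$ together with some monomials of degree $3$ spanning $\Arad^3$. I do not need to know exactly which cubic monomials survive, since only the degree $2$ part will be tested.

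By \ref{prop:z1=0}, with $\upvarepsilon=1$ and $k=4$, any $z\in\scrZ(A_4)\cap\Arad$ lies in $\Arad^2$. I therefore write
\[
z = a\,xy + b\,yx + c\,y^2 + w, \qquad w\in\Arad^3.
\]
Commutators with $w$ vanish in $A_4$, since they lie in $\Arad^4=0$. I then compute $xz-zx$ and $yz-zy$ in degree $3$, reduce them by the standard basis, and demand that they vanish. For example,
\[
yz-zy = a(yxy-xy^2) + b(y^2x-yxy) + c(y^3-y^3),
\]
and $xz-zx$ contains terms such as $a(x^2y - xyx)$, where $x^2y\equiv 0$, together with $b(xyx-yx^2)$ and $c(xy^2-y^2x)$. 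Since no degree $2$ relations beyond $x^2$ exist, the conclusion $a=b=c=0$ should follow, provided the relevant cubic monomials, such as $xyx$, $xy^2$, $y^2x$ and $yxy$, are linearly independent modulo the cubic part of $I_4$.

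The main obstacle is this last step: the cubic part of $I_4$ is not controlled, because $\dcyc_yf$ contributes an arbitrary cubic $q$, together with two-sided multiples of $x^2$. Its dimension is at most $1+(2+2+\dots)$ — namely $q$ itself and the multiples $x^3$, $x^2y$, $yx^2$, $xyx$-free words containing $x^2$ — so at most one cubic relation comes from $q$. My plan is to exhibit the commutators as combinations of words that avoid $x^2$. The cubic monomials without a factor $x^2$ are $xyx$, $xy^2$, $yxy$, $y^2x$ and $y^3$, and these are linearly independent up to the single extra relation $q$. I then check that the linear map
\[
(a,b,c)\longmapsto \bigl([x,z],[y,z]\bigr)
\]
remains injective after quotienting by one arbitrary relation. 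This is a small rank computation: the map has rank $3$ into a $5$-dimensional space, with enough spare directions, and I would verify it in the worst case, where $q$ lies in the image.
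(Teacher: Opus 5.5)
Your reduction is sound up to the last step, and your commutators are right. In $A_4$ you get
\[
[x,z]=(b-a)\,xyx+c\,(xy^2-y^2x),\qquad [y,z]=-a\,xy^2+(a-b)\,yxy+b\,y^2x .
\]
The cubic part of $A_4$ is $V/\langle\bar q\rangle$, where $V$ is spanned by $xyx,xy^2,yxy,y^2x,y^3$ and $\bar q$ is the cubic part of $\dcyc_yf$ reduced modulo $(x^2)$.

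\textbf{The gap is the final injectivity claim.} It is false that $(a,b,c)\mapsto([x,z],[y,z])$ stays injective modulo one \emph{arbitrary} cubic relation. It fails exactly in the worst case you planned to check. Take $\bar q=xy^2-y^2x$. Then $[x,y^2]=\bar q\equiv 0$ and $[y,xy+yx]=-\bar q\equiv0$, while $[x,xy+yx]=x^2y-yx^2\equiv 0$. So both $y^2$ and $xy+yx$ are central in $\ring/\bigl((x^2,\,xy^2-y^2x)+\n^4\bigr)$, and the conclusion fails there. A short case analysis shows this is the only bad relation up to scalar. A rank count ("rank $3$ with spare directions") cannot detect this, and for arbitrary $q$ the statement is simply untrue.

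\textbf{The missing input is that $q$ is not arbitrary.} It equals $\dcyc_y g_4$ for the quartic part $g_4$ of $f$. By \eqref{eq:dfpairs4}, its reduction modulo $(x^2)$ therefore lies in the span of $xyx$, $xy^2+yxy+y^2x$ and $y^3$; in particular $xy^2$ and $y^2x$ appear with equal coefficients.

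With this constraint your plan closes. Write $\bar q=\beta\,xyx+\gamma(xy^2+yxy+y^2x)+\delta y^3$, with $[x,z]=\lambda\bar q$ and $[y,z]=\mu\bar q$.
\begin{enumerate}
\item Comparing the coefficients of $xy^2$ and $y^2x$ in $[x,z]$ gives $c=\lambda\gamma=-c$, so $c=0$.
\item Comparing $xy^2$, $yxy$, $y^2x$ in $[y,z]$ gives $-a=a-b=b$, hence $a=b=0$.
\end{enumerate}

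The paper imposes the same constraint differently. It first uses \ref{thm:uptodeg4}, so that the cubic standard basis element has leading part
\[
m_3\in\{xy^2+yxy+y^2x+4\upvarepsilon y^3,\ xyx,\ y^3,\ 0\},
\]
and then checks each case. Either way, you must use that the extra relation is a cyclic derivative.
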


\begin{proof}
Consider $z\in\scrZ(A_4)\cap\Arad$. By \ref{prop:z1=0} we may suppose $z\in\Arad^2$.
By \ref{thm:uptodeg4}, $f\cong x^3 + h +\scrO_5$ for $h\in\{xy^3+\upvarepsilon y^4,xyxy,y^4,0\}$ and $\upvarepsilon=0$ or~$1$.
The standard basis of $I_4$ therefore comprises some element of the form $x^2+\bigO_3$, some element $m_3+\bigO_4$ with $m_3\in\{xy^2+yxy+y^2x+4\upvarepsilon y^3,xyx,y^3,0\}$, and monomials of degree~4, % not divisible by $x^2$ or~$\LT(m_3)$, if nonzero
since any \Spoly{} involving $x^2$ has order $\ge4$. The corresponding $\C$-basis of $A_4$ consists of all monomials of degree $\le3$ that are not divisible by $x^2$ or $\LT(m_3)$. Thus $z$ has the form $z=\la_1xy+\la_2yx+\la_3y^2+\bigO_3$, and
\begin{eqnarray*}
xz-zx &=& \la_1(x^2y-xyx)+\la_2(xyx-yx^2)+\la_3(xy^2-y^2x) \nonumber\\
&=& (\la_2-\la_1)xyx+\la_3xy^2-\la_3y^2x 
\end{eqnarray*}
since the standard basis includes $x^2+\bigO_3$, so that $x^2$ reduces to zero in $A_4$, and
\[
yz-zy = \la_1(yxy-xy^2)+\la_2(y^2x-yxy) = (\la_1-\la_2)yxy -\la_1xy^2+\la_2y^2x.
\]
When $\LT(m_3)\in\{xyx,y^3,0\}$, all the monomials involved in these two expressions are elements of a $\C$-basis or zero, and it is immediate that $\la_1=\la_2=\la_3=0$ and $z\in\Arad^3$ as claimed. When $\LT(m_3)=xy^2$, the expressions above reduce further to
\[
 (\la_2-\la_1)xyx-\la_3yxy-2\la_3y^2x-4\la_3\upvarepsilon y^3
 \textrm{ and }
(\la_1-\la_2)yxy +\la_1yxy+(\la_1+\la_2)y^2x+4\la_1\upvarepsilon y^3
\]
and again $\la_1=\la_2=\la_3=0$ and $z\in\Arad^3$ follow immediately.
The result follows by \ref{lem:Ak2}.
\end{proof}

\begin{cor}\label{cor:ZAk}
If $f\cong x^3 + \bigO_4$, then
$\scrZ(A_k)\cap\Arad\subset\Arad^3$ for any $k\ge4$.
\end{cor}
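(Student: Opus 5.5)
The plan is to reduce to $A_4$ using the surjection $\uprho\colon A_k\to A_4$ (the composite of the maps $\uprho_{j+1}$), which sends central elements to central elements and radical to radical. Take $z\in\scrZ(A_k)\cap\Arad$ with $k\ge4$. Its image $\uprho(z)$ lies in $\scrZ(A_4)$, and it lies in the radical of $A_4$ because $\uprho(\Arad)$ is the radical of $A_4$. By \ref{prop:ZA4(1)}, $\uprho(z)$ therefore lies in the cube of the radical of $A_4$. That cube is the image of $\fJ^3$, and it is nonzero in $A_4$, but it is zero in $A_3$.

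Next I lift this back to $A_k$. The key point is that $\uprho$ is induced from $\pi_k$ and $\pi_4$ applied to $\Jac(f)$, so
\[
\ker(\uprho)=\fJ^4/\fJ^k=\Arad^4\subset \Arad^3 .
\]
Write $\uprho(z)=\uprho(w)$ for some $w\in\Arad^3$. This is possible because $\uprho(\Arad^3)$ equals the cube of the radical of $A_4$, as both are images of $\fJ^3$. Then $z-w\in\ker\uprho=\Arad^4\subset\Arad^3$, and hence $z\in\Arad^3$, as required.

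The only care needed is the bookkeeping: $\uppi_k(\fJ^j)=\Arad^j$ (Notation \ref{not!jacqt}) and $\uppi_4=\uprho\circ\uppi_k$. Together these give both the surjectivity of $\Arad^3\to$ (radical of $A_4$)$^3$ and the identification of the kernel. Centrality is preserved under the surjective algebra map $\uprho$, since $\uprho(z)$ commutes with $\uprho(a)$ for every $a\in A_k$, and $\uprho$ is onto. I expect no real obstacle. For $k=4$ the statement is simply \ref{prop:ZA4(1)}, and the argument above covers it uniformly.
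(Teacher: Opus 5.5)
Your argument is correct and follows the same route as the paper: you push $z$ along the surjection $\uprho\colon A_k\to A_4$, use \ref{prop:ZA4(1)} to place $\uprho(z)$ in the cube of the radical of $A_4$, and then pull back. The only difference is in how the pullback is phrased: the paper uses the monomial $\n$-adic basis $\scrB$ of $A_k$ (on whose degree $\le3$ span $\uprho$ is injective), whereas you use $\ker\uprho=\Arad^4\subset\Arad^3$ directly, which is the same fact stated without reference to a basis.
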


\begin{proof}
By \ref{sec:stdbasis}\eqref{GI3}, the standard basis of $I_k$ determines a monomial $\n$-adic basis $\scrB$ of~$A_k$.
The natural map $\uprho\colon A_k\rightarrow A_4$ is injective on the span of monomials in $\scrB$ of degree $\le3$.
Consider $z\in\scrZ(A_k)\cap\Arad$. Using $\scrB$, we may write $z=h+\bigO_3$ with $h$ expressed in
the elements of $\scrB$ of degree~$\le2$. The image $\uprho(z)\in\scrZ(A_4)\cap\Arad$ has the same form $\uprho(z) = h + \bigO_3$, and so $h=0$ by \ref{prop:ZA4(1)}.
\end{proof}

Thus we may strengthen the proof strategy and restrict attention to arbitrary central elements $z\in\Arad^3$.

\begin{prop}\label{lem:ZA5}
Let $f\in\ringtwo$.
If $f$ belongs to cases $\caseA$, $\caseB$ or $\caseC$, namely
\[
f\cong
\left\{
\begin{array}{lll}
x^3 + \scrO_5&\caseA\\
x^3 + y^4+ \scrO_5&\caseB\\
x^3 + xyxy+ \scrO_5&\caseC\\
\end{array}
\right.
\]
then $\scrZ(A_5)\cap\Arad=\Arad^4$.
\end{prop}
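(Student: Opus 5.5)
By \ref{lem:Ak2} we already have $\Arad^4\subset\scrZ(A_5)$, so only the reverse inclusion needs proof. By \ref{cor:ZAk} any $z\in\scrZ(A_5)\cap\Arad$ lies in $\Arad^3$. The plan is therefore to write $z$ as a combination of the basis monomials of degree exactly~$3$, plus terms in $\Arad^4$, and to show that all these degree~$3$ coefficients vanish. Since $A_5$ is replaced up to isomorphism when $f$ is, we may take $f$ to be $x^3+h_4+\scrO_5$ with $h_4\in\{0,\,y^4,\,xyxy\}$; only the $4$-jet of $f$ matters modulo $\n^5$ for the derivatives in degrees $\le 3$.

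First compute the standard basis of $I_5=(\dcyc f)+\m^5$, exactly as in \ref{rk:sbn^k}. We have $\dcyc_xf=3x^2+\dcyc_x h_4+\bigO_4$ and $\dcyc_yf=\dcyc_yh_4+\bigO_4$, where
\[
\dcyc_yh_4\in\{0,\;4y^3,\;2xyx\}.
\]
After scaling, the standard basis consists of $g_1=x^2+\bigO_3$, possibly $g_2$ with leading monomial $y^3$ (case $\caseB$) or $xyx$ (case $\caseC$), any \Spolys{} in degree~$4$, and the monomials of degree~$4$. The \Spolys{} coming from overlaps $x\cdot x^2$ and the like have order $\ge4$. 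One must check whether they produce new leading terms in degree~$4$, for instance $xg_1-g_1x$ in case $\caseC$, but they cannot affect degree~$3$. So the degree~$3$ part of the basis of $A_5$ is exactly the set of degree~$3$ monomials not divisible by $x^2$, nor by $y^3$ in case $\caseB$, nor by $xyx$ in case $\caseC$:
\begin{itemize}
\item case $\caseA$: $xyx,\;xy^2,\;yxy,\;y^2x,\;y^3$;
\item case $\caseB$: the same list with $y^3$ removed;
\item case $\caseC$: the same list with $xyx$ removed.
\end{itemize}
Write $z=\sum\la_\msf\msf+\bigO_4$ over these monomials.

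Next expand $xz-zx$ and $yz-zy$ in degree~$4$ and reduce them modulo the standard basis to the degree~$4$ basis monomials. In case $\caseA$ nothing of degree~$\le3$ reduces except $x^2$, so the reduction is only $x^2\equiv$ (terms in $\Arad^3$ times stuff), and everything of degree~$4$ then lands in $\Arad^5=0$. A monomial $x\msf$ or $\msf x$ is zero precisely when it contains $x^2$, and otherwise it is a distinct basis element. Comparing coefficients should force every $\la_\msf=0$; this is a small linear system. Cases $\caseB$ and $\caseC$ work the same way, except that $y^3$, respectively $xyx$, is now also zero in degree~$4$ multiples, modulo $\n^5$.

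The main obstacle is the bookkeeping in the degree~$4$ reduction. One must identify the leading monomials of degree~$4$ \Spolys{}, since these kill further monomials, and then check that the surviving basis monomials still separate the coefficients. For example, in case $\caseC$ the relation $xyx\equiv0$ makes $xyxy$ and $yxyx$ vanish. For $z=\la\,yxy+\dots$ one gets
\[
xz-zx=\la(xyxy-yxyx)+\dots=0,
\]
so $\la$ is not directly detected by $[x,z]$, and one needs $[y,z]$ instead. I would first tabulate the surviving degree~$4$ basis in each case. I would then verify, case by case, that each $\la_\msf$ appears as the coefficient of some nonzero basis monomial in $xz-zx$ or $yz-zy$, with the monomials $y\msf$ and $\msf y$ doing the work where $x$-commutators vanish. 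The conclusion is $z\in\Arad^4$.
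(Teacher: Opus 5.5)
Your strategy is the paper's: reduce to $z\in\Arad^3$ via \ref{cor:ZAk}, expand $z$ in the degree~$3$ basis monomials, and kill the coefficients using $[x,z]$ and $[y,z]$. Cases $\caseB$ and $\caseC$ go through as you describe. There is, however, a genuine gap in case $\caseA$, and it comes from your reduction to the $4$-jet.

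The algebra $A_5=\ring/((\dcyc f)+\n^5)$ depends on the $5$-jet of $f$. The degree~$5$ part $f_5$ contributes $\dcyc f_5$ in degree~$4$, and this survives modulo $\n^5$.
\begin{itemize}
\item In cases $\caseB$ and $\caseC$ this is harmless. It only changes the tails of elements with leading terms $x^2,y^3$ (resp.\ $x^2,xyx$), whose degree~$4$ multiples vanish in $A_5$ anyway, and all \Spolys{} reduce into $\n^5$.
\item In case $\caseA$, $\dcyc_yf=\dcyc_yf_5+\bigO_5$ has order $\ge4$. After discarding monomials containing $x^2$, it is in general a new standard basis element with a degree~$4$ leading term. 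For instance, this is $y^4$ for $f=x^3+y^5$, $xy^3$ for $f=x^3+xy^4$, and $xyxy$ for $f=x^3+xyxy^2$.
\end{itemize}
This element is not an \Spoly, so your plan never sees it. Your claim that every $x\msf$ or $\msf x$ not containing $x^2$ is a distinct basis element is therefore false in case $\caseA$. In that case the conditions $[x,z]=[y,z]=0$ in $A_5$ only say that these degree~$4$ expressions lie in the span of the extra relation. So the linear system you propose to solve is the wrong one.

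What is missing is control of the degree~$5$ terms. The paper supplies it via \ref{prop!caseA}, which gives $f\cong x^3+h+\bigO_6$ with $h\in\{y^5,\,xyxy^2+\upvarepsilon y^5,\,xy^4,\,0\}$. It then removes the leading term of the extra element $m_4+\bigO_5$ from the basis, and checks that after reducing by it the relevant coefficients still sit on basis elements.

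Alternatively you can argue directly. Modulo monomials containing $x^2$, $\dcyc_yf_5$ lies in the span of $y^4$, $xy^3+yxy^2+y^2xy+y^3x$ and $xyxy+xy^2x+yxyx$. Any element of this span has equal coefficients on $xyxy,yxyx$, and equal coefficients on $xy^3,yxy^2$. Compare coefficients on these pairs, first for $[x,z]$ and then for $[y,z]$. This shows that neither commutator can be a nonzero multiple of such an element, and your coefficient comparison then goes through.

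Separately, in your description of the standard basis of $I_5$, the monomials to include are those of degree~$5$, not degree~$4$.
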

\begin{proof}
Case $\caseA$. By \ref{prop!caseA}, $f\cong x^3+h+\bigO_6$ for $h\in\{y^5,xyxy^2+\upvarepsilon y^5, xy^4,0\}$ and $\upvarepsilon=0$ or~1. The standard basis of $I_5$ therefore comprises an element $x^2+\bigO_4$, an element $m_4+\bigO_5$ with $m_4\in\{y^4,xy^2x+yxy^2+y^2xy+5\upvarepsilon y^4, xy^3+yxy^2+y^2xy+y^3x,0\}$, and monomials of degree~5.
The corresponding $\C$-basis of $A_5$ consists of all 19 monomials of degree $\le4$ that are not divisible by $x^2$, with $\LT(m_4)$ also removed if nonzero.

By \ref{cor:ZAk}, we may consider $z\in\scrZ(A_5)\cap\Arad^3$. Such $z$ has the form
\[
z= 
\mu_1xyx+\mu_2xy^2+\mu_3yxy+\mu_4y^2x+\mu_5y^3 + \bigO_4.
\]
Since $x^2\equiv \bigO_4$ in $A_5$, the commutator $xz-zx$ equals
\[
\mu_3 (xyxy-yxyx) + (\mu_4-\mu_2) xy^2x + \mu_5 (xy^3-y^3x) + \bigO_5.
\]
Reducing further by any of the possible $\LT(m_4)$ leaves these same coefficients on (possibly different) $\C$-basis elements, so 
$\mu_2=\mu_4$ and $\mu_3=\mu_5=0$. Imposing these, $yz-zy$ equals
\[
\mu_1 (yxyx-xyxy)+ \mu_2 (yxy^2 + y^3x - xy^3 - y^2xy) + \bigO_5
\]
so, even if $m_4$ reduces $xy^3$, we have $\mu_1=\mu_2=0$, and $z\in\Arad^4$.

\medskip
Case $\caseB$. 
By \ref{prop:CaseBsplit}, $f\cong x^3+y^4+h+\bigO_6$ for $h\in\{xyxy^2,0\}$. The standard basis of $I_5$ therefore comprises an element of form $x^2 + \bigO_4$, an element $y^3+\bigO_4$, and monomials of degree~5. 
The corresponding $\C$-basis of $A_5$ consists of all monomials of degree $\le4$ that are not divisible by $x^2$ or $y^3$. 
The proof now follows as Case~$\caseA$ above with $\mu_5=0$ (and using that now $xy^3,y^3x$ reduce into $\bigO_5$).

\medskip
Case $\caseC$. 
By \ref{prop!caseC}, $f\cong x^3+xyxy+h+\bigO_6$ for $h\in\{\upbeta xy^4,y^5\}$ with $\upbeta\in\C$. The standard basis of $I_5$ therefore comprises an element of form $x^2+\tfrac{2}3yxy+\bigO_4$, an element $xyx + m_4+\bigO_5$ with $m_4\in\{\tfrac{\upbeta}2(xy^3+yxy^2+y^2xy+y^3x),\tfrac{5}2y^4\}$, and monomials of degree~5. (The four \Spolys{} on the overlaps of $x^2$ and $xyx$ all reduce into $\bigO_5$ at once.) 
The corresponding $\C$-basis of $A_5$ consists of all 16 monomials of degree $\le4$ that are not divisible by $x^2$ or~$xyx$. 
Thus, using \ref{cor:ZAk}, we consider $z\in\scrZ(A_5)\cap\Arad^3$ in the form
\[
z=
\mu_1xy^2+\mu_2yxy+\mu_3y^2x+\mu_4y^3 + \bigO_4
\]
so that $yz-zy$ equals
\[
(\mu_1-\mu_2)yxy^2 - \mu_1xy^3+(\mu_2-\mu_3)y^2xy+\mu_3y^3x + \bigO_5
\]
whence $\mu_1=\mu_2=\mu_3=0$, and then $xz-zx=\mu_4(xy^3-y^3x)+\bigO_5$ 
gives $\mu_4=0$, and $z\in\Arad^4$ as claimed.
\end{proof}

\begin{prop}\label{lem:ZA6}
Let $f\in\ringtwo$.
\begin{enumerate}
\item\label{lem:ZA61}
(Within $\caseB$) If $f\cong x^3 + y^4 +\scrO_6$ (the case $E_{8,6}^*$),
then $\scrZ(A_6)\cap\Arad=\Arad^5$.
\item\label{lem:ZA62} (Within $\caseC$)
If $f\cong x^3 + xyxy + \upbeta\,xy^4 +\scrO_6$ for some $\upbeta\in\C$,  and $z\in\scrZ(A_6)\cap\Arad$,
then $z=\uplambda xy^2x + \bigO_5$ for some $\uplambda\in\C$.
\end{enumerate}
\end{prop}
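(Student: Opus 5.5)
The plan is to follow the same template as \ref{prop:ZA4(1)} and \ref{lem:ZA5}. First, reduce to central elements lying in $\Arad^4$. The argument of \ref{cor:ZAk} goes through verbatim with $A_4$ replaced by $A_5$: the natural map $A_6\to A_5$ is injective on the span of the standard-basis monomials of degree $\le 4$. Both cases of the statement lie in cases $\caseB$ or $\caseC$, so \ref{lem:ZA5} gives $\scrZ(A_5)\cap\Arad=\Arad^4$. Hence any $z\in\scrZ(A_6)\cap\Arad$ has the form $z=(\textnormal{degree 4 basis monomials})+\bigO_5$. Next, compute a standard basis of $I_6$, read off the degree $4$ basis monomials of $A_6$, and expand $xz-zx$ and $yz-zy$ modulo $\bigO_6$ in the resulting $\C$-basis.

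For (1), take $f=x^3+y^4+\bigO_6$. The derivatives are $3x^2+\bigO_5$ and $4y^3+\bigO_5$. The self-overlaps on $x^2$ and on $y^3$ give \Spolys\ lying in $\n^6$, so the standard basis of $I_6$ is $\{x^2+\bigO_5,\ y^3+\bigO_5\}$ together with monomials of degree $6$. The degree $4$ basis monomials are those avoiding $x^2$ and $y^3$, namely $xyxy,\ xy^2x,\ yxyx,\ yxy^2,\ y^2xy$. Write
\[
z=\mu_1xyxy+\mu_2xy^2x+\mu_3yxyx+\mu_4yxy^2+\mu_5y^2xy+\bigO_5.
\]
Any degree $5$ word containing $x^2$ or $y^3$ vanishes modulo $\n^6$, because $x^2$ and $y^3$ equal order-$5$ terms in $A_6$. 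Consequently $xz-zx$ and $yz-zy$ become explicit combinations of distinct degree $5$ basis monomials, namely
\[
xyxyx,\ xyxy^2,\ xy^2xy,\ yxyxy,\ yxy^2x,\ y^2xyx,\ y^2xy^2.
\]
Comparing coefficients should force every $\mu_i=0$. For instance, $xz-zx$ contains $\mu_3xyxyx-\mu_1xyxyx$ together with terms from $\mu_4,\mu_5$ on distinct monomials, and $yz-zy$ isolates the remaining coefficients. Thus $z\in\Arad^5$, and \ref{lem:Ak2} gives equality.

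For (2), take $f=x^3+xyxy+\upbeta xy^4+\bigO_6$. The truncated derivatives are
\[
\sfg_1=x^2+\tfrac23yxy+\tfrac{\upbeta}3y^4+\bigO_5,\qquad
\sfg_2=xyx+\tfrac{\upbeta}2(xy^3+yxy^2+y^2xy+y^3x)+\bigO_5.
\]
Unlike in \ref{lem:ZA5}, the \Spolys\ now contribute in degree $5$. These come from the overlaps $x\cdot x\cdot x$, $x\cdot x\cdot yx$, $xy\cdot x\cdot x$ and $xy\cdot x\cdot yx$, and each must be reduced by $\sfg_1,\sfg_2$ modulo $\n^6$. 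This may add a few degree $5$ standard basis elements, typically expressing words such as $yxy^2x$ or $xy^2xy$ in terms of others. The degree $4$ basis monomials avoiding $x^2$ and $xyx$ are $xy^2x,\ xy^3,\ yxy^2,\ y^2xy,\ y^3x,\ y^4$. Write $z=\uplambda xy^2x+\sum\mu_i(\textnormal{others})+\bigO_5$. Then $yz-zy$ contains $\mu$-terms such as $\mu(xy^4-y^4x)$ and $(\mu-\mu')yxy^3$, and $xz-zx$ contains $xy^4$ and $y^4x$ terms and similar. Reducing via the degree $5$ standard basis and comparing coefficients should kill all the $\mu_i$, while leaving $\uplambda$ unconstrained; note the statement does not require deciding whether $\uplambda$ is forced to vanish.

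The main obstacle is the degree $5$ \Spoly\ computation in (2). That step must establish exactly which degree $5$ monomials survive as basis elements, and check that the reductions do not collapse two of the commutator terms onto the same basis monomial in a way that loses a constraint on some $\mu_i$. I would carry this out by hand overlap by overlap, tracking the $\upbeta$-terms, which only affect coefficients and never leading monomials.
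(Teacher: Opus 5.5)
Your strategy is the paper's: reduce to $z\in\Arad^4$ via \ref{lem:ZA5} and the argument of \ref{cor:ZAk}, then expand $xz-zx$ and $yz-zy$ in a monomial basis of $A_6$. Part (1) is complete and matches the paper's computation.

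Part (2), however, stops at ``should kill all the $\mu_i$''. You never determine the degree-$5$ relations of $A_6$, which is the only nontrivial input, so the conclusion is not yet proved. The paper supplies this input as an explicit standard basis of $I_6$ (Claim~\ref{claimSB}). Your sketch of the commutators is also slightly off. In $yz-zy$ the words $xy^4$ and $y^4x$ carry the separate coefficients $-\mu_{xy^3}$ and $\mu_{y^3x}$. The combination $\mu(xy^4-y^4x)$ is what $xz-zx$ produces, from the $y^4$ coefficient.

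The missing step can be closed as follows. After rescaling so that $f\cong\tfrac13x^3+\tfrac12(xy)^2+b\,xy^4+\bigO_6$, you have $\sfg_1=x^2+yxy+by^4+\bigO_5$ and $\sfg_2=xyx+b(xy^3+yxy^2+y^2xy+y^3x)+\bigO_5$. Hence every degree-$5$ word containing $x^2$ or $xyx$ vanishes in $A_6$. Of the four overlaps:
\begin{itemize}
\item $xy\sfg_2-\sfg_2yx\in\n^6$.
\item $x\sfg_1-\sfg_1x$ reduces to zero. It equals $xyxy-yxyx+b(xy^4-y^4x)$ modulo $\n^6$, and $\sfg_2$ reduces $xyxy-yxyx$ to $-b(xy^4-y^4x)$.
\item The other two give
\[
x\sfg_2-\sfg_1yx\equiv b\,xy^2xy+b\,xy^3x-yxy^2x,\qquad xy\sfg_1-\sfg_2x\equiv xy^2xy-b\,xy^3x-b\,yxy^2x .
\]
\end{itemize}
Both displayed relations involve only words with two $x$'s. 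So the one-$x$ words $xy^4,yxy^3,y^2xy^2,y^3xy,y^4x$ remain linearly independent basis elements of $A_6$, untouched by any reduction of two-$x$ words.

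Now write $z=\uplambda xy^2x+\mu_2xy^3+\mu_3yxy^2+\mu_4y^2xy+\mu_5y^3x+\mu_6y^4+\bigO_5$. The one-$x$ part of $yz-zy$ is
\[
-\mu_2xy^4+(\mu_2-\mu_3)yxy^3+(\mu_3-\mu_4)y^2xy^2+(\mu_4-\mu_5)y^3xy+\mu_5y^4x,
\]
while $\uplambda$ contributes only the two-$x$ term $\uplambda(yxy^2x-xy^2xy)$. Hence $\mu_2=\dots=\mu_5=0$. The one-$x$ part of $xz-zx$ is $\mu_6(xy^4-y^4x)$, so $\mu_6=0$.

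One caution: your aside that $\uplambda$ is left unconstrained fails when $b=-1$. There the two displayed relations are proportional, giving $xy^2xy\equiv-xy^3x-yxy^2x$, and then $yz-zy$ forces $\uplambda=0$ as well. The paper's explicit basis for $b\neq0$ likewise tacitly assumes $b\neq-1$. This is harmless, since the statement only asserts the form $\uplambda xy^2x+\bigO_5$.
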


\begin{proof}
\eqref{lem:ZA61}
The standard basis of $I_6$ comprises an element $x^2+\bigO_5$, an element $y^3+\bigO_5$ and monomials of degree~6.
The corresponding $\C$-basis of $A_6$ consists of all 23 monomials of degree $\le5$ that are not divisible by $x^2$ or~$y^3$. 
By \ref{lem:ZA5}, $z\in\scrZ(A_5)\cap\Arad$ has the form
\[
z= \la_1xyxy + \la_2 xy^2x + \la_3 yxyx + \la_4 yxy^2 + \la_5 y^2xy + \bigO_5
\]
so that $xz-zx$ equals
\[
(\la_3-\la_1) xyxyx + \la_4 xyxy^2 + \la_5 xy^2xy - \la_4 yxy^2x - \la_5 y^2xyx + \bigO_6
\]
so $\la_4=\la_5=0$ and $\la_1=\la_3$, and then $yz-zy=-\la_1xyxy^2-\la_2 xy^2xy 
+ \la_2 yxy^2x + \la_1 y^2xyx$ 
so that $\la_1=\la_2=\la_3=0$ and $z\in\Arad^5$ as claimed.

\medskip
\noindent
\eqref{lem:ZA62}
Rescaling $x,y,\upbeta$ gives $f\cong \tfrac{1}3x^3 + \tfrac{1}2(xy)^2 + \upbeta xy^4 + \bigO_6$.
The following claim is elementary.
\begin{claim}\label{claimSB}
If $\upbeta=0$ then the standard basis of $I_6$ is $\scrS=\{x^2 + yxy, xyx, xy^2xy,yxy^2x\}$
together with monomials of degree~6.

If $\upbeta\ne0$ then the standard basis of $I_6$ is
\[
\scrS = \left\{
\begin{array}{ll}
x^2 + yxy + by^4,&
    xyx + bxy^3 + byxy^2 + by^2xy + by^3x, \\
    xy^2xy - yxy^2x, &
    xy^3x + \tfrac{b - 1}{b}yxy^2x
\end{array}
\right\}
\]
together with monomials of degree~6.
\end{claim}

\medskip\noindent
Suppose $\upbeta=0$. By Claim~\ref{claimSB},
there is a $\C$-basis of $A_6$ consists of all 23 monomials of degree $\le5$ that are not divisible by any of $x^2$, $xyx$, $xy^2xy$ and $yxy^2x$. 

By \ref{lem:ZA5}, $z\in\scrZ(A_5)\cap\Arad$ has the form
\begin{equation}\label{eq:zform}
z= \la_1xy^2x + \la_2 xy^3 + \la_3 yxy^2 + \la_4 y^2xy + \la_5 y^3x + \la_6y^4 + \bigO_5
\end{equation}

If $\upbeta=0$ then
$yz-zy=$
\begin{align}
&-\la_1xy^2xy - \la_2 xy^4 +\la_1yxy^2x + (\la_2-\la_3) yxy^3 + (\la_3-\la_4) y^2xy^2 \nonumber\\
&\qquad + (\la_4-\la_5) y^3xy + \la_5 y^4x + \bigO_6 \nonumber\\
&=
- \la_2 xy^4 + (\la_2-\la_3) yxy^3 + (\la_3-\la_4) y^2xy^2 + (\la_4-\la_5) y^3xy + \la_5 y^4x + \bigO_6 \nonumber\label{eq:yzminuszy}
\end{align}
so that $\la_2=\la_3=\la_4=\la_5=0$. With those, $z=\la_1xy^2x +  \la_6y^4 + \bigO_5$ and so
\[
xz-zx = \la_1x^2y^2x +  \la_6xy^4  - \la_1xy^2x^2 +  \la_6y^4x  + \bigO_6 = \la_6xy^4 +  \la_6y^4x  + \bigO_6 
\]
so $\la_6=0$ and $z=\la_1xy^2x+\bigO_6$, as required.

\medskip\noindent
Now suppose $\upbeta\ne0$.
By Claim~\ref{claimSB},
there is a $\C$-basis of $A_6$ consists of all 23 monomials of degree $\le5$ that are not divisible by any of $x^2$, $xyx$, $xy^2xy$ and $xy^3x$. 
Thus $z\in\scrZ(A_5)\cap\Arad$ again has the form~\eqref{eq:zform}
and the identical argument follows.
\end{proof}

Finally we consider the case $f\cong 0+\scrO_4$. The ideas are no different, except that this case does not benefit from the preparation lemmas of \S\ref{sec:ParseOne}.
\begin{lemma}\label{lem:preplast}
If $f\in\ring$ has $\ord(f)=4$, then the degree~$3$ leading monomials of $\dcyc_xf$ and $\dcyc_yf$ are one of the following pairs:
\[
(x^3,h),\quad (x^2y,x^3),\quad (xy^2,x^2y),\quad (yxy,xyx),\quad (y^3,xy^2), \quad (0,y^3)
\]
where $h\in\{0,x^3,x^2y,xyx,xy^2,y^3\}$.
\end{lemma}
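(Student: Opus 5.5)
The plan is a direct computation on the degree~$4$ piece of $f$. Cyclic derivatives preserve degree up to a shift by one, so the degree~$3$ parts of $\dcyc_xf$ and $\dcyc_yf$ are $\dcyc_x(f_4)$ and $\dcyc_y(f_4)$. Cyclic derivatives kill commutators, since $\dcyc_i$ of a cyclic permutation of a monomial equals $\dcyc_i$ of the monomial \cite{DWZ}. Hence, as in \eqref{eq!deg4h4}, I first replace $f_4$ by a cyclically equivalent representative
\[
f_4\sim \upalpha x^4+\upbeta x^3y+\upgamma x^2y^2+\updelta xyxy+\upzeta xy^3+\upeta y^4,
\]
one for each of the six cyclic classes of degree~$4$ monomials. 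Here I read $\ord(f)=4$ as saying that $f_4$ is not cyclically equivalent to zero, i.e.\ not all six coefficients vanish. Otherwise both degree~$3$ parts are zero, which is excluded from the list; if necessary this is the normalisation to state explicitly.

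Next I compute the cyclic derivatives term by term:
\begin{align*}
\dcyc_x f_4&=4\upalpha x^3+\upbeta(x^2y+xyx+yx^2)+\upgamma(xy^2+y^2x)+2\updelta\, yxy+\upzeta\, y^3,\\
\dcyc_y f_4&=\upbeta x^3+\upgamma(x^2y+yx^2)+2\updelta\, xyx+\upzeta(xy^2+yxy+y^2x)+4\upeta\, y^3.
\end{align*}
In degree $3$ the order \eqref{eqn:order} reads
\[
x^3<x^2y<xyx<xy^2<yx^2<yxy<y^2x<y^3,
\]
and the leading monomial is the minimal monomial with nonzero coefficient.

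The final step is a case split on the first nonvanishing coefficient in the list $\upalpha,\upbeta,\upgamma,\updelta,\upzeta,\upeta$.
\begin{itemize}
\item If $\upalpha\neq0$, then $\LM(\dcyc_xf_4)=x^3$. The leading monomial of $\dcyc_yf_4$ is then $0$, $x^3$, $x^2y$, $xyx$, $xy^2$ or $y^3$, according to which of $\upbeta,\upgamma,\updelta,\upzeta,\upeta$ is first nonzero. The monomial $yxy$ never leads, because it only appears alongside $xy^2$, which is smaller. This gives $h$.
\item If $\upalpha=0\neq\upbeta$, the pair is $(x^2y,x^3)$.
\item If $\upalpha=\upbeta=0\neq\upgamma$, the pair is $(xy^2,x^2y)$.
\item If additionally $\upgamma=0\neq\updelta$, the pair is $(yxy,xyx)$.
\item If additionally $\updelta=0\neq\upzeta$, the pair is $(y^3,xy^2)$.
\item Finally, if only $\upeta\neq0$, the pair is $(0,y^3)$.
\end{itemize}

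There is no real obstacle here. The only points needing care are confirming that cyclic equivalence does not change the cyclic derivatives, so that passing to the six-term representative is legitimate, and reading off minimal monomials correctly in the order \eqref{eqn:order}.
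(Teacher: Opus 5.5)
Your proof is correct and follows the paper's argument: replace $f_4$ by its six-term cyclic representative, compute both cyclic derivatives (your expansions agree exactly with the paper's $2\times 6$ matrix), and read off the leading monomials from the first nonvanishing coefficient in the degree-$3$ order. Your remark that $\ord(f)=4$ should be read as $f_4\not\sim 0$ is also implicit in the paper, which takes $\al_1,\hdots,\al_6$ not all zero.
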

\begin{proof}
Write $f\sim g_4 + \bigO_5$ where $g_4=\al_1x^4 + \al_2x^3y + \al_3x^2y^2 + \al_4xyxy + \al_5xy^3 + \al_6y^4$
for some $\al_1,\hdots,\al_6$ not all zero. Now  
\begin{equation}\label{eq:dfpairs4}
\medmath{
\begin{pmatrix}
\dcyc_xg_4 \\ \dcyc_yg_4
\end{pmatrix} =
\begin{pmatrix}
4x^3 & x^2y+xyx+yx^2 & xy^2+y^2x & 2yxy & y^3 & 0 \\
0 & x^3 & x^2y+yx^2 & 2xyx & xy^2+yxy+y^2x & 4y^3
\end{pmatrix}
\begin{pmatrix}
\al_1 \\\vdots\\ \al_6
\end{pmatrix}.
}
\end{equation}
The statement follows, since the entries of the $2\times 6$ matrix in \eqref{eq:dfpairs4} are written in increasing monomial order.
\end{proof}

\begin{prop}\label{prop:ZA4(0)}
If $f\in\ringtwo_{\ge4}$, then either
\begin{enumerate}
\item
$\scrZ(A_4)\cap\Arad=\Arad^3$, or
\item
$f = \al(x^2y^2-xyxy) + \bigO_5$ for some $\al\in\C^*$ and $\scrZ(A_4)\cap\Arad = (xy-yx) + \Arad^3$.
\end{enumerate}
\end{prop}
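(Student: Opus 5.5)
Since $f\in\n^4$, both cyclic derivatives lie in $\n^3$, so $I_4=(\dcyc f)+\m^4$ is spanned modulo $\m^4$ by the degree~3 parts $\dcyc_xg_4,\dcyc_yg_4$ of \eqref{eq:dfpairs4}. Hence $A_4$ has a $\C$-basis consisting of $1,x,y$, all four degree~2 monomials, and the degree~3 monomials modulo the span $V$ of $\dcyc_xg_4,\dcyc_yg_4$. No \Spolys\ arise below degree~4, so this is the whole standard basis. By \ref{prop:z1=0} (with $\upvarepsilon=0$), a central element $z\in\Arad$ may be written $z=\mu_1x^2+\mu_2xy+\mu_3yx+\mu_4y^2+\bigO_3$. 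By \ref{lem:Ak2}, $\Arad^3$ is central, so it remains to find which quadratic parts survive. The conditions are
\[
xz-zx=\mu_1\cdot 0+\mu_2(x^2y-xyx)+\mu_3(xyx-yx^2)+\mu_4(xy^2-y^2x)\in V,
\]
\[
yz-zy=\mu_1(yx^2-x^2y)+\mu_2(yxy-xy^2)+\mu_3(y^2x-yxy)+\mu_4\cdot 0\in V,
\]
taken inside $\freering_3$, where $V$ has dimension at most~$2$.

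The proof is linear algebra on these two cubic commutators against $V$. The key observation is that every element of $V$ is a sum of cyclic derivatives. By the standard identity $\sum_i [x_i,\dcyc_i g]=0$ for homogeneous $g$, we have $[x,\dcyc_xg_4]+[y,\dcyc_yg_4]=0$. More useful is the description of $V$ itself: the six columns of \eqref{eq:dfpairs4} are explicit, and a general element of $V$ is $\dcyc_xg_4\cdot a+\dcyc_yg_4\cdot b$ for scalars $a,b$. I would write the two commutators and a general element of $V$ in the monomial basis of $\freering_3$, with coordinates $x^3,x^2y,xyx,yx^2,xy^2,yxy,y^2x,y^3$, and solve.

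Since $[x,z]$ has no $x^3$ or $y^3$ coefficient, and its coordinates sum to zero in each ``$x$-content'' block, membership forces strong restrictions on $(a,b)$ and $\alpha$. I would carry this out using the leading-term case split of \ref{lem:preplast}. For generic $g_4$ the leading monomials of $V$ are too few to absorb either commutator, which forces $\mu_2=\mu_3=\mu_4=0$. The case $\mu_1\neq0$ is killed by the $yx^2$ coefficient of $[y,z]$.

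The main obstacle is identifying exactly when a nonzero solution exists. A nonzero solution requires $V$ to contain two independent commutator-type cubics, and from the matrix one checks that this happens only for $g_4$ proportional to $x^2y^2-xyxy$, modulo the cyclic equivalence already used in writing $g_4$. In that case
\[
\dcyc_xg_4=xy^2+y^2x-2yxy,\qquad \dcyc_yg_4=x^2y+yx^2-2xyx.
\]
The choice $z=xy-yx$ then satisfies
\[
[x,z]=x^2y-2xyx+yx^2\in V,\qquad [y,z]=-\bigl(xy^2-2yxy+y^2x\bigr)\in V.
\]
Hence $\scrZ(A_4)\cap\Arad=(xy-yx)+\Arad^3$, and solving the linear system shows the solution space is exactly one-dimensional.

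For all other $g_4$ the system has only the trivial solution, giving alternative~(1). Checking this cleanly over the cases of \ref{lem:preplast} is the tedious but routine part. I would organise it by the coefficients $\alpha_1,\dots,\alpha_6$ directly, showing that the $2\times 2$ minors of the solvability condition vanish only when $\alpha_3=-\alpha_4\neq0$ and the other $\alpha_i=0$.
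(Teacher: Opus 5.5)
Your reduction matches the paper's and is correct. You have that $I_4=V+\m^4$ with $V=\C\dcyc_xg_4+\C\dcyc_yg_4$, that \ref{prop:z1=0} puts $z$ in $\Arad^2$, and that centrality becomes $[x,z_2],[y,z_2]\in V$. Your check that $z_2=xy-yx$ works for $g_4=x^2y^2-xyxy$, with a one-dimensional solution space, is also right.

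The gap is alternative (1). You assert that every other $g_4$ gives only the trivial solution, and the two shortcuts you offer for this are not justified.

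First, the $yx^2$ coordinate of $[y,z]$ does not by itself kill $\mu_1$, because $V$ can contain $yx^2$ (its coefficient is $\al_2$ in $\dcyc_xg_4$ and $\al_3$ in $\dcyc_yg_4$). That coordinate only detects $\mu_1$ after reducing modulo $V$, and this uses the fact that $x^2y$ and $yx^2$ occur with equal coefficients in every element of $V$. In the paper, this reduction is where the coefficient $2\la_1$ comes from.

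Second, and more seriously, a nonzero solution need not a priori produce two independent commutators in $V$. For $z_2=u^2$ with $u=px+qy$ one has $p[x,u^2]+q[y,u^2]=[u,u^2]=0$, so a single commutator direction in $V$ could suffice. These $z_2$, including $x^2$ and $y^2$, are exactly the ones your sketch never excludes. As written, the case analysis over the pairs of \ref{lem:preplast}, which is the real content of the paper's proof, is missing.

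You can close this without any case split by using word reversal. Let $\rho$ be the anti-automorphism of $\freering$ that fixes $x,y$ and reverses words.

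\begin{itemize}
\item One checks that $\dcyc_i(\rho w)=\rho(\dcyc_iw)$. Every cyclic word of degree $4$ in $x,y$ is cyclically equivalent to its reversal. Hence $\dcyc_xg_4$ and $\dcyc_yg_4$ are $\rho$-fixed, and so is all of $V$.
\item Write $z_2=z^++z^-$ with $\rho(z^\pm)=\pm z^\pm$. Since $\rho[x,w]=-[x,\rho w]$, the commutators $[x,z^\pm]$ and $[y,z^\pm]$ lie in the $\mp1$-eigenspace of $\rho$.
\item Therefore $[x,z_2],[y,z_2]\in V$ forces $[x,z^+]=[y,z^+]=0$ in the free algebra. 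This gives $z^+=0$, so $z_2=\mu(xy-yx)$.
\item If $\mu\neq0$, then $[x,[x,y]]=x^2y-2xyx+yx^2$ and $[[x,y],y]=xy^2-2yxy+y^2x$ are two independent elements of $V$. So $V$ is their span, and every element of $V$ is a sum of commutators.
\item The abelianisation of $\dcyc_xg_4$ (resp.\ $\dcyc_yg_4$) is the $x$- (resp.\ $y$-) partial derivative of the abelianisation of $g_4$. Both partial derivatives must therefore vanish, giving $\al_1=\al_2=\al_5=\al_6=0$ and $\al_4=-\al_3$. Finally $\al_3\neq0$, because $\dim V=2$.
\end{itemize}

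This yields both alternatives at once. Alternatively, carry out the paper's explicit reductions case by case.
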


\begin{proof}
We write $h_1$, $h_2$ for the normalised $\dcyc_xf$, $\dcyc_yf$ respectively. Their leading terms $m_1,m_2$ form one of the pairs of \ref{lem:preplast}, or are both zero if $\ord(f)\ge5$.
The ideal $I_4=(m_1,m_2)+\Arad^4$ has a standard basis $\{m_1,m_2\}$ and monomials of degree~4, since any \Spoly{} among $m_1,m_2$ lies in $\Arad^4$. Therefore a $\C$-basis of $A_4$ is given by the monomials of degree $\leq 3$ after removing $m_1$ and $m_2$.

Write $z\in\scrZ(A_4)\cap\Arad^2$ in the form $z=\la_1x^2 + \la_2xy+\la_3yx+\la_4y^2+\bigO_3$
and consider
\begin{equation}\label{eq:z2derivs}
\begin{array}{rcl}
xz-zx &=& \phantom{-}\la_2 x^2y + (\la_3-\la_2)xyx - \la_3yx^2 + \la_4xy^2 \phantom{{}+ (\la_2-\la_3)yxy } - \la_4y^2x\\
yz-zy &=& -\la_1 x^2y \phantom{{}+(\la_3-\la_2)xyx} + \la_1yx^2 - \la_2xy^2 + (\la_2-\la_3)yxy + \la_3y^2x
\end{array}
\end{equation}
When $\ord(f)\ge5$, the monomials of degree $\le3$ form a $\C$-basis of $A_4$, and so
$\la_1,\hdots,\la_4$ are all zero and $z\in\Arad^3$, so suppose $\ord(f)=4$. We must reduce \eqref{eq:z2derivs} by
the two entries of~\eqref{eq:dfpairs4} corresponding to the first non-vanishing~$\al_i$.

If $\al_1\ne0$ then $m_1=x^3$ and $h_1$ makes no reduction,
while $h_2$ makes a reduction only in the cases $m_2=x^2y$, $xyx$ or $xy^2$.
We show that $\la_1,\hdots,\la_4$ are all zero in each of these cases (using notation $c_i\in\C$ for the scaled coefficients):
\begin{enumerate}
\item
If $m_2=x^2y$, then $h_2= x^2y+yx^2 + c_1xyx + c_2(xy^2+yxy+y^2x)+c_3y^3+\bigO_4$ eliminates $x^2y$ from the $\C$-basis, and the resulting coefficient of $yx^2$ in $yz-zy$ is~$2\la_1$. Thus $\la_1=0$, and so in fact there was no reduction in $yz-zy$ (since $x^2y$ did not actually appear) and so also $\la_2=\la_3=0$.
With that, similarly there is no reduction in $xz-zx$, and so also $\la_4=0$.

\item
If $m_2=xyx$, then $h_2= xyx + c_1(xy^2+yxy+y^2x)+c_2y^3+\bigO_4$ and there is no reduction of $yz-zy$ so $\la_1=\la_2=\la_3=0$. Thus the coefficient of $xyx$ in $xz-zx$ is zero, so there is no reduction there either and so also $\la_4=0$.

\item
If $m_2=xy^2$, then  $h_2= xy^2+yxy+y^2x+\hot$. 
The $x^2y$ and $yx^2$ terms do not reduce, so $\la_1=\la_2=\la_3=0$. Then $xz-zx$ reduces to
${}-\la_4yxy +\hot$ so $\la_4=0$.
\end{enumerate}
If $\al_1=0$ and $\al_2\ne0$, then $h_2$ makes no reduction ($m_2=x^3$) and reducing \eqref{eq:z2derivs} by $h_1=x^2y+xyx+yx^2+\hot$ gives $yz-zy\equiv \la_1xyx+\hot$ Thus $\la_1=0$ and so in fact $yz-zy$ does not reduce, whence $\la_2=\la_3=0$
so also $xz-zx$ does not reduce and $\la_4=0$.

The case $\al_1=\al_2=0$ and $\al_3\ne0$ is where the alternative arises. We reduce \eqref{eq:z2derivs} by $h_1=xy^2+y^2x+c_1yxy+c_2y^3$ and $h_2=x^2y+yx^2 + c_3xyx+c_4(xy^2+yxy+y^2x) + c_5y^3$. At once $xz-zy\equiv (\la_3-\la_2-c_3\la_2)xyx-(\la_2+\la_3)yx^2+\hot$ and $yz-zy\equiv 2\la_1yx^2 + \hot$, so $\la_1=\la_2+\la_3=0$.
Setting $\la_3=-\la_2$, the pair \eqref{eq:z2derivs} then reduces further to
\[
\begin{array}{l}
-(2+c_3)\la_2xyx  +(c_1c_2\la_2 - c_1\la_4-c_4\la_2)yxy-2\la_4y^2x + (c_2c_4\la_2 -c_2\la_4- c_5\la_2)y^3 \\
 \quad\textrm{and}\quad (2+c_1)\la_2 yxy + c_2\la_2y^3.
\end{array}
\]
Thus $\la_4=0$. If $\la_2=0$ we are done. Otherwise $\la_2\ne0$ and so $c_2=c_4=c_5=0$ and $c_1=c_3=-2$.  Noting that $c_1=c_3=2\al_4/\al_3$, we have that $f = \al_3(x^2y^2-xyxy)+\bigO_5$ and $z=\la_2(xy- yx)\in\scrZ(A_4)$, giving the alternative result.

The remaining cases are quick.
If $\al_{\le3}=0$ and $\al_4\ne0$, then $m_1=yxy$ and $m_2=xyx$ so $yz-zy$ reduces by $h_1$ only to show $\la_1=\la_2=\la_3=0$;
then $\la_4=0$ since $xz-zx$ is already reduced.
If $\al_{\le4}=0$ and $\al_5\ne0$, then $m_1=y^3$ and $m_2=xy^2$ so at once $\la_1=\la_2=\la_3=0$;
then $\la_4=0$ after simple $xz-zx$ reduction.
Finally if $\al_{\le5}=0$ and $\al_6\ne0$, then there are no reductions and all $\la_i=0$.
\end{proof}

%------------------------------------------------------------------------------------------------------------
\section{Algebraic Length}\label{sec:AlgLength}
This section proves \ref{thm:ReductionPrecise}, which asserts that if there exists~$z\in\scrZ(\Jac(f))$ such that $\Jac(f)/z$ is isomorphic to $e\Uppi e$ for some idempotent $e$ in some ADE preprojective algebra~$\Uppi$, then $f$ must belong to a very small number of cases from \S\ref{subsec:deg5clean}.  This will form the first key step towards the classification of contraction algebras.

%-------------------------------------------------------------------
\subsection{Leading Terms of Central Elements}
Recall the notation $\uppi_k\colon\Jac(f)\rightarrow A_k$ from \ref{not!jacqt}.
The following lemma is clear since each $\uppi_k$ is a surjective homomorphism.
\begin{lemma}\label{lem:Zmodk}
If $z\in\scrZ(\Jac(f))$, then $\uppi_k(z)\in\scrZ(A_k)$ for any $k\ge1$.
\end{lemma}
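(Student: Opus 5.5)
The statement is immediate from the definitions in \ref{not!jacqt}, and the plan is to prove it directly rather than through any earlier structural result. The only inputs are that $\uppi_k\colon\Jac(f)\to A_k=\Jac(f)/\fJ^k$ is the natural quotient map, and is therefore a surjective ring homomorphism.

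Take $z\in\scrZ(\Jac(f))$ and an arbitrary $a\in A_k$. By surjectivity, choose a preimage $b\in\Jac(f)$ with $\uppi_k(b)=a$. Then, using that $\uppi_k$ is multiplicative and that $zb=bz$,
\[
\uppi_k(z)\,a=\uppi_k(z)\uppi_k(b)=\uppi_k(zb)=\uppi_k(bz)=\uppi_k(b)\uppi_k(z)=a\,\uppi_k(z).
\]
Since $a$ was arbitrary, $\uppi_k(z)\in\scrZ(A_k)$. This argument is uniform in $k\geq 1$; in particular the degenerate case $k=1$, where $A_1\cong\C$ is commutative, needs no separate treatment.

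An alternative is to check commutation only against the generators $\uppi_k(x)$ and $\uppi_k(y)$. That would require knowing that these generate $A_k$ as an algebra, which holds because $A_k$ is a quotient of $\ring$ by an ideal containing $\n^k$. The preimage argument above avoids this step entirely, so I will use it.

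There is no genuine obstacle. The only point worth stating explicitly is surjectivity of $\uppi_k$, and that holds by construction since $A_k$ is defined as a quotient of $\Jac(f)$.
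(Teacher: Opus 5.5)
Your proposal is correct and is the same argument the paper gives: the paper simply notes that the lemma is clear because each $\uppi_k$ is a surjective homomorphism, and your preimage computation spells out exactly that reasoning.
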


\begin{prop}\label{lem:keycentreallcases}
Let $f\in\ringtwo$ and $z\in\scrZ(\Jac(f))\cap\fJ$.
\begin{enumerate}
\item\label{lem:keycentreallcases1} If $f \cong x^3 + \scrO_4$, then $\uppi_3(z)=0$.
\item\label{lem:keycentreallcases10} If $f\cong 0+\scrO_4$, then $\uppi_3(z)=\al(xy-yx)$ for some $\al\in\C$.
\item\label{lem:keycentreallcases2} If $f$ belongs to cases $\caseA$, $\caseB$ or $\caseC$, namely
\[
f\cong
\left\{
\begin{array}{lll}
x^3 + \scrO_5&\caseA\\
x^3 + y^4+ \scrO_5&\caseB\\
x^3 + xyxy+ \scrO_5&\caseC\\
\end{array}
\right.
\]
then $\uppi_4(z)=0$.
\item\label{lem:keycentreallcases3} (Within $\caseB$)
If $f$ belongs to case $E_{8,6}^*$, namely $f\cong x^3 + y^4 +\scrO_6$, then $\uppi_5(z)=0$.
\item\label{lem:keycentreallcases4} (Within $\caseC$)
If $f\cong x^3 + xyxy + \upbeta\,xy^4 +\scrO_6$ for some $\upbeta\in\C$,  then  $\uppi_5(z)=\uplambda xy^2x$ for some $\uplambda\in\C$.
\end{enumerate}
\end{prop}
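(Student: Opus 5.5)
The plan is to reduce each part to the corresponding statement about the finite quotients $A_k$ proved in \S\ref{sec:centres}. By \ref{lem:Zmodk}, for any $z\in\scrZ(\Jac(f))\cap\fJ$ we have $\uppi_k(z)\in\scrZ(A_k)\cap\Arad$. Before invoking those results, I will check that everything is invariant under the relevant isomorphisms. The hypotheses are phrased up to $\cong$, meaning an automorphism followed by cyclic equivalence, and such a change induces an isomorphism $\Jac(f)\cong\Jac(g)$ by \ref{lem!simauto}. This isomorphism preserves centres and Jacobson radicals, hence powers of the radical, and so it descends to isomorphisms between the $A_k$.

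Within these isomorphisms, the statements $\uppi_k(z)=0$ and $\uppi_k(z)\in\Arad^{k-1}$ are intrinsic, so parts \eqref{lem:keycentreallcases1}, \eqref{lem:keycentreallcases2} and \eqref{lem:keycentreallcases3} are unaffected by the coordinate change. The statements naming explicit elements, namely $\al(xy-yx)$ and $\uplambda xy^2x$, should be read in the normal-form coordinates. Concretely, I will replace $f$ by the representative $g$ given in the hypothesis and work with $\Jac(g)$ throughout.

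Next, I will obtain each part by passing from $A_{k+1}$ to $A_k$. For \eqref{lem:keycentreallcases1}, \ref{cor:ZAk} with $k=4$ gives $\uppi_4(z)\in\Arad^3$ in $A_4$. Applying $\uprho_4$ then sends this into the image of $\Arad^3$ in $A_3$, which is zero, so $\uppi_3(z)=\uprho_4\uppi_4(z)=0$.

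Part \eqref{lem:keycentreallcases10} is similar and uses \ref{prop:ZA4(0)}. Either $\uppi_4(z)\in\Arad^3$, giving $\uppi_3(z)=0$, or $\uppi_4(z)\in(xy-yx)+\Arad^3$ up to a scalar, in which case projecting to $A_3$ yields $\al(xy-yx)$. One small point needs care here. The alternative in \ref{prop:ZA4(0)} describes the set $\scrZ(A_4)\cap\Arad$ as $\C(xy-yx)+\Arad^3$; the statement writes $(xy-yx)+\Arad^3$, and I will read it as the span.

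For \eqref{lem:keycentreallcases2}, \ref{lem:ZA5} gives $\uppi_5(z)\in\Arad^4$, hence $\uppi_4(z)=0$. For \eqref{lem:keycentreallcases3}, \ref{lem:ZA6}\eqref{lem:ZA61} gives $\uppi_6(z)\in\Arad^5$, hence $\uppi_5(z)=0$. For \eqref{lem:keycentreallcases4}, \ref{lem:ZA6}\eqref{lem:ZA62} gives $\uppi_6(z)=\uplambda xy^2x+\bigO_5$, and projecting to $A_5$ kills the $\bigO_5$ part, leaving $\uppi_5(z)=\uplambda xy^2x$. Each of these steps also uses that $\ord(f)\ge 2$ in every case, so that $\fJ$ maps onto $\Arad$ and $\uprho_{k+1}(\Arad^{j})=\Arad^j$.

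The main obstacle I expect is the bookkeeping in the invariance step, since \S\ref{sec:centres} proves its statements for $f$ in normal form. The worry is that ``$f\cong$ normal form'' might change the meaning of an explicit element such as $xy^2x$. I will handle this by proving the proposition for the representative $g$ and noting that the conclusions are used only up to the induced isomorphism. Everything else is a direct composition of surjections with results already established.
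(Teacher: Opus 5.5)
Your proposal is correct and is essentially the paper's proof: push $\uppi_{k+1}(z)\in\scrZ(A_{k+1})\cap\Arad$ down along $\uprho_{k+1}$, applying \ref{prop:ZA4(1)} (equivalently \ref{cor:ZAk} with $k=4$), \ref{prop:ZA4(0)}, \ref{lem:ZA5} and \ref{lem:ZA6} in the respective parts. Your extra remarks, on transporting $z$ along $\Jac(f)\cong\Jac(g)$ and on reading $\al(xy-yx)$ and $\uplambda xy^2x$ in normal-form coordinates, make explicit what the paper leaves implicit.
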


\begin{proof}
The strategy to impose conditions on $\uppi_k(z)$ is simple: consider the image of $z$ under the composition
\[
\uppi_k\colon \Jac(f) \buildrel{\uppi_{k+1}}\over{\longrightarrow} A_{k+1} \buildrel{\uprho_{k+1}}\over{\longrightarrow} A_{k}
\]
using the fact that $\uppi_{k+1}(z)\in\scrZ(A_{k+1})$ and apply the results of \S\ref{subsec:computeZ}.

\noindent
(1) By \ref{prop:ZA4(1)} we have $\scrZ(A_4)\cap\n=\n^3$, so $\uppi_4(z)\in\n^3$.
Thus $\uppi_3(z) = \uprho_4(\uppi_4(z)) = 0$.

\noindent
(2) By \ref{prop:ZA4(0)} we have either $\scrZ(A_4)\cap\n=\n^3$, so $\uppi_3(z)=0$ as in \eqref{lem:keycentreallcases1} (and $\al=0$ gives the result), or
$\uppi_4(z) = \al(xy-yx)+\bigO_3$ for some $\al\ne0$, so $\uppi_3(z)=\al(xy-yx)$.

\noindent
(3) By \ref{lem:ZA5} we have $\scrZ(A_5)\cap\n=\n^4$, so $\uppi_5(z)\in\n^4$ and $\uppi_4(z)=0$ follows as before.

\noindent
(4) By \ref{lem:ZA6}\eqref{lem:ZA61} we have $\scrZ(A_6)\cap\n=\n^5$, so $\uppi_6(z)\in\n^5$ and $\uppi_5(z)=0$.

\noindent
(5) By \ref{lem:ZA6}\eqref{lem:ZA62}, if $z\in\scrZ(A_6)\cap\n$, then $z=\uplambda xy^2x + \bigO_5$ and
the result follows.
\end{proof}

%-------------------------------------------------------------------
\subsection{Algebraic Length}\label{subsec:AlgL}
Recall first the following ADE Dynkin diagrams, where the vertices have been labelled with the data of  the rank of the highest root.
\begin{equation}\label{eqn:allDynkin}
\begin{array}{c}
\begin{tikzpicture}
\node at (0,0) {$A_n$};
\node at (2.3,0) {$
\begin{tikzpicture}[xscale=0.6]
 \node (0) at (0,0) [DW] {};
 \node (1) at (1,0) [DW] {};
 \node (2) at (2,0) [DW] {};
 \node (3) at (4,0) [DW] {};
 \node (4) at (5,0)[DW] {};
 \node at (3,0) {$\cdots$};
 \node (0a) at (0,-0.3) {$\scriptstyle 1$};
 \node (1a) at (1,-0.3) {$\scriptstyle 1$};
 \node (2a) at (2,-0.3) {$\scriptstyle 1$};
 \node (3a) at (4,-0.3) {$\scriptstyle 1$};
 \node (4a) at (5,-0.3) {$\scriptstyle 1$};
\draw [-] (0) -- (1);
\draw [-] (1) -- (2);
\draw [-] (2) -- (2.6,0);
\draw [-] (3.4,0) -- (3);
\draw [-] (3) -- (4);
\end{tikzpicture}$};
\node at (5,0) {$D_n$};
\node at (7.3,0) {\begin{tikzpicture}[xscale=0.6,yscale=0.7]
 \node (0) at (0,0) [DW] {};
 \node (1) at (1,0) [DW] {};
 \node (1b) at (1,0.75) [DW] {};
 \node (2) at (2,0) [DW] {};
 \node (3) at (4,0) [DW] {};
 \node (4) at (5,0)[DW] {};
 \node at (3,0) {$\cdots$};
 \node (0a) at (0,-0.3) {$\scriptstyle 1$};
 \node (1a) at (1,-0.3) {$\scriptstyle 2$};
 \node (1ba) at (1.3,0.75) {$\scriptstyle 1$};
 \node (2a) at (2,-0.3) {$\scriptstyle 2$};
 \node (3a) at (4,-0.3) {$\scriptstyle 2$};
 \node (4a) at (5,-0.3) {$\scriptstyle 1$};
\draw [-] (0) -- (1);
\draw [-] (1) -- (2);
\draw [-] (2) -- (2.6,0);
\draw [-] (3.4,0) -- (3);
\draw [-] (3) -- (4);
\draw [-] (1) -- (1b);
\end{tikzpicture}};
\node at (0,-1.5) {$\begin{tikzpicture}[yscale=0.7,xscale=0.6]
\node at (-0.5,0.75) {$E_6$};
 \node (m1) at (-1,0) [DW] {};
 \node (0) at (0,0) [DW] {};
 \node (1) at (1,0) [DW] {};
 \node (1b) at (1,0.75) [DW] {};
 \node (2) at (2,0) [DW] {};
 \node (3) at (3,0) [DW] {};
\node (m1a) at (-1,-0.3) {$\scriptstyle 1$};
 \node (0a) at (0,-0.3) {$\scriptstyle 2$};
 \node (1a) at (1,-0.3) {$\scriptstyle 3$};
 \node (1ba) at (1.3,0.75) {$\scriptstyle 2$};
 \node (2a) at (2,-0.3) {$\scriptstyle 2$};
 \node (3a) at (3,-0.3) {$\scriptstyle 1$};
\draw [-] (m1) -- (0);
\draw [-] (0) -- (1);
\draw [-] (1) -- (2);
\draw [-] (2) -- (3);
\draw [-] (1) -- (1b);
\end{tikzpicture}$};
\node at (3.5,-1.5) {$\begin{tikzpicture}[yscale=0.7,xscale=0.6]
\node at (-0.5,0.75) {$E_7$};
 \node (m1) at (-1,0) [DW] {};
 \node (0) at (0,0) [DW] {};
 \node (1) at (1,0) [DW] {};
 \node (1b) at (1,0.75) [DW] {};
 \node (2) at (2,0) [DW] {};
 \node (3) at (3,0) [DW] {};
 \node (4) at (4,0) [DW] {};
\node (m1a) at (-1,-0.3) {$\scriptstyle 2$};
 \node (0a) at (0,-0.3) {$\scriptstyle 3$};
 \node (1a) at (1,-0.3) {$\scriptstyle 4$};
 \node (1ba) at (1.3,0.75) {$\scriptstyle 2$};
 \node (2a) at (2,-0.3) {$\scriptstyle 3$};
 \node (3a) at (3,-0.3) {$\scriptstyle 2$};
 \node (4a) at (4,-0.3) {$\scriptstyle 1$};
\draw [-] (m1) -- (0);
\draw [-] (0) -- (1);
\draw [-] (1) -- (2);
\draw [-] (2) -- (3);
\draw [-] (3) -- (4);
\draw [-] (1) -- (1b);
\end{tikzpicture}$};
\node at (8.2,-1.5) {$\begin{tikzpicture}[yscale=0.7,xscale=0.6]
\node at (-0.5,0.75) {$E_8$};
 \node (m1) at (-1,0) [DW] {};
 \node (0) at (0,0) [DW] {};
 \node (1) at (1,0) [DW] {};
 \node (1b) at (1,0.75) [DW] {};
 \node (2) at (2,0) [DW] {};
 \node (3) at (3,0) [DW] {};
 \node (4) at (4,0) [DW] {};
  \node (5) at (5,0) [DW] {};
\node (m1a) at (-1,-0.3) {$\scriptstyle 2$};
 \node (0a) at (0,-0.3) {$\scriptstyle 4$};
 \node (1a) at (1,-0.3) {$\scriptstyle 6$};
 \node (1ba) at (1.3,0.75) {$\scriptstyle 3$};
 \node (2a) at (2,-0.3) {$\scriptstyle 5$};
 \node (3a) at (3,-0.3) {$\scriptstyle 4$};
 \node (4a) at (4,-0.3) {$\scriptstyle 3$};
 \node (5a) at (5,-0.3) {$\scriptstyle 2$};
\draw [-] (m1) -- (0);
\draw [-] (0) -- (1);
\draw [-] (1) -- (2);
\draw [-] (2) -- (3);
\draw [-] (3) -- (4);
\draw [-] (4) -- (5);
\draw [-] (1) -- (1b);
\end{tikzpicture}$};
\end{tikzpicture}
\end{array}
\end{equation}
To each of these graphs is an associated preprojective algebra $\Uppi$ (see e.g.\ \cite{CBH}),
which is finite dimensional.  The vertices correspond to primitive idempotents $e$ in $\Uppi$, so that each primitive idempotent of $\Uppi$ comes equipped with the number in \eqref{eqn:allDynkin}.

Our main new algebraic invariant which will aid classification is the following. 

\begin{defin}\label{def:alglength}
We say $f\in\ringtwo$ has \emph{algebraic length $\ell$}, where $1\leq \ell\leq 6$, if there exists $z\in\scrZ(\Jac(f))$ such that $\Jac(f)/\lcl z\rcl \cong e\Uppi e$, for some ADE preprojective algebra $\Uppi$ and some primitive idempotent $e$ with associated number $\ell$ above.  If no such central element exists, we say that $f$ is \emph{purely algebraic}.
\end{defin}

The fact that the algebraic length is well-defined will be established in \ref{thm:ReductionPrecise} below.
\begin{remark}\label{rem:notgeneric}
Later (in \ref{thm:Summary}) we will show that the algebraic length is precisely Koll\'ar's length invariant for flops.  With that in mind, one truly remarkable aspect is that \ref{def:alglength} does not require the central element to be generic to be well defined.  This is a manifestation of the fact that the contraction algebra is the minimal classification object: it, unlike the $3$-fold geometry, is very small object, and has comparatively few central elements.
\end{remark}

\begin{remark}\label{def!6algebras}
Whilst the following remark cannot be used until after the classification in \S\ref{sec:MAIN}, it is helpful to keep in mind that by Katz--Morrison \cite{KM} we should expect that the $e\Uppi e$ in the following cases
\[
\begin{array}{c}
\begin{tikzpicture}[scale=0.625]
\node at (0,0) {$
\begin{tikzpicture}[scale=0.45]
 \node (0) at (0,0) [DBlue] {};
 \node (0a) at (0,-0.5) {$\scriptstyle 1$};
 \node[draw=none] at (0,1) {};
\end{tikzpicture}$};
\node at (2,0) {\begin{tikzpicture}[scale=0.4]
 \node (0) at (0,0) [DB] {};
 \node (1) at (1,0) [DBlue] {};
 \node (1b) at (1,0.75) [DB] {};
 \node (2) at (2,0) [DB] {};
 \node (0a) at (0,-0.5) {$\scriptstyle 1$};
 \node (1a) at (1,-0.5) {$\scriptstyle 2$};
 \node (1ba) at (1.4,0.75) {$\scriptstyle 1$};
 \node (2a) at (2,-0.5) {$\scriptstyle 1$};
\draw [-] (0) -- (1);
\draw [-] (1) -- (2);
\draw [-] (1) -- (1b);
\end{tikzpicture}};
\node at (5.1,0) {$\begin{tikzpicture}[scale=0.4]
 \node (m1) at (-1,0) [DB] {};
 \node (0) at (0,0) [DB] {};
 \node (1) at (1,0) [DBlue] {};
 \node (1b) at (1,0.75) [DB] {};
 \node (2) at (2,0) [DB] {};
 \node (3) at (3,0) [DB] {};
\node (m1a) at (-1,-0.5) {$\scriptstyle 1$};
 \node (0a) at (0,-0.5) {$\scriptstyle 2$};
 \node (1a) at (1,-0.5) {$\scriptstyle 3$};
 \node (1ba) at (1.4,0.75) {$\scriptstyle 2$};
 \node (2a) at (2,-0.5) {$\scriptstyle 2$};
 \node (3a) at (3,-0.5) {$\scriptstyle 1$};
\draw [-] (m1) -- (0);
\draw [-] (0) -- (1);
\draw [-] (1) -- (2);
\draw [-] (2) -- (3);
\draw [-] (1) -- (1b);
\end{tikzpicture}$};
\node at (9,-0) {$\begin{tikzpicture}[yscale=0.4,xscale=0.4]
 \node (m1) at (-1,0) [DB] {};
 \node (0) at (0,0) [DB] {};
 \node (1) at (1,0) [DBlue] {};
 \node (1b) at (1,0.75) [DB] {};
 \node (2) at (2,0) [DB] {};
 \node (3) at (3,0) [DB] {};
 \node (4) at (4,0) [DB] {};
\node (m1a) at (-1,-0.5) {$\scriptstyle 2$};
 \node (0a) at (0,-0.5) {$\scriptstyle 3$};
 \node (1a) at (1,-0.5) {$\scriptstyle 4$};
 \node (1ba) at (1.4,0.75) {$\scriptstyle 2$};
 \node (2a) at (2,-0.5) {$\scriptstyle 3$};
 \node (3a) at (3,-0.5) {$\scriptstyle 2$};
 \node (4a) at (4,-0.5) {$\scriptstyle 1$};
\draw [-] (m1) -- (0);
\draw [-] (0) -- (1);
\draw [-] (1) -- (2);
\draw [-] (2) -- (3);
\draw [-] (3) -- (4);
\draw [-] (1) -- (1b);
\end{tikzpicture}$};
\node at (13.25,0) {$\begin{tikzpicture}[yscale=0.4,xscale=0.4]
 \node (m1) at (-1,0) [DB] {};
 \node (0) at (0,0) [DB] {};
 \node (1) at (1,0) [DB] {};
 \node (1b) at (1,0.75) [DB] {};
 \node (2) at (2,0) [DBlue] {};
 \node (3) at (3,0) [DB] {};
 \node (4) at (4,0) [DB] {};
  \node (5) at (5,0) [DB] {};
\node (m1a) at (-1,-0.5) {$\scriptstyle 2$};
 \node (0a) at (0,-0.5) {$\scriptstyle 4$};
 \node (1a) at (1,-0.5) {$\scriptstyle 6$};
 \node (1ba) at (1.4,0.75) {$\scriptstyle 3$};
 \node (2a) at (2,-0.5) {$\scriptstyle 5$};
 \node (3a) at (3,-0.5) {$\scriptstyle 4$};
 \node (4a) at (4,-0.5) {$\scriptstyle 3$};
 \node (5a) at (5,-0.5) {$\scriptstyle 2$};
\draw [-] (m1) -- (0);
\draw [-] (0) -- (1);
\draw [-] (1) -- (2);
\draw [-] (2) -- (3);
\draw [-] (3) -- (4);
\draw [-] (4) -- (5);
\draw [-] (1) -- (1b);
\end{tikzpicture}$};
\node at (18,0) {$\begin{tikzpicture}[yscale=0.4,xscale=0.4]
 \node (m1) at (-1,0) [DB] {};
 \node (0) at (0,0) [DB] {};
 \node (1) at (1,0) [DBlue] {};
 \node (1b) at (1,0.75) [DB] {};
 \node (2) at (2,0) [DB] {};
 \node (3) at (3,0) [DB] {};
 \node (4) at (4,0) [DB] {};
  \node (5) at (5,0) [DB] {};
\node (m1a) at (-1,-0.5) {$\scriptstyle 2$};
 \node (0a) at (0,-0.5) {$\scriptstyle 4$};
 \node (1a) at (1,-0.5) {$\scriptstyle 6$};
 \node (1ba) at (1.4,0.75) {$\scriptstyle 3$};
 \node (2a) at (2,-0.5) {$\scriptstyle 5$};
 \node (3a) at (3,-0.5) {$\scriptstyle 4$};
 \node (4a) at (4,-0.5) {$\scriptstyle 3$};
 \node (5a) at (5,-0.5) {$\scriptstyle 2$};
\draw [-] (m1) -- (0);
\draw [-] (0) -- (1);
\draw [-] (1) -- (2);
\draw [-] (2) -- (3);
\draw [-] (3) -- (4);
\draw [-] (4) -- (5);
\draw [-] (1) -- (1b);
\end{tikzpicture}$};
\end{tikzpicture}
\end{array}
\]
should play a central role, where $e$ corresponds to the marked vertex.   Indeed, reading left to right, the $e\Uppi e$ corresponding to the above cases are precisely the algebras
\begin{equation}\label{eqn:6keyalgebras}
\arraycolsep=4pt
\begin{array}{cccccc}
\Upgamma_A&\Upgamma_{D,4}&\Upgamma_{E,6}&\Upgamma_{E,7}&\Upgamma_{E,8,5}&\Upgamma_{E,8,6}\\[2mm]
\mfrac{\mathbb{C}\langle x,y,z\rangle}
{\begin{array}{c}
x+y+z,\\
x, y, z
\end{array}}
&
\mfrac{\mathbb{C}\langle x,y,z\rangle}
{\begin{array}{c}
x+y+z,\\
x^2, y^2, z^2
\end{array}}
&
\mfrac{\mathbb{C}\langle x,y,z\rangle}
{\begin{array}{c}
x+y+z,\\
x^2, y^3, z^3
\end{array}}
&
\mfrac{\mathbb{C}\langle x,y,z\rangle}
{\begin{array}{c}
x+y+z,\\
x^2, y^3, z^4
\end{array}}
&
\mfrac{\mathbb{C}\langle x,y,z\rangle}
{\begin{array}{c}
x^2+y+z,\\
\mathrm{(see\,\, below)}
\end{array}}
&
\mfrac{\mathbb{C}\langle x,y,z\rangle}
{\begin{array}{c}
x+y+z,\\
x^2, y^3, z^5
\end{array}}
\end{array}
\end{equation}
where $\Upgamma_{E,8,5}$ can be presented as
\begin{equation}\label{eqn:theawkwardone}
\Upgamma_{E,8,5}=
\mfrac{\mathbb{C}\langle x,y,z\rangle}{\left(\begin{array}{c}
x^2+y+z,\\ x^4,\,\, z^2+xyx,\,\, yxy+y^2x+yx^3 \end{array}\right)}
\cong
\mfrac{\freering}{\left(\begin{array}{c}
x^2 + xy^2 + yxy + y^2x,\\
xyx - yxy^2 - y^2xy,\ y^4\end{array}\right)}
\end{equation} 
The reason \ref{def:alglength} is stated generally is that we cannot yet appeal to Katz--Morrison: it is only after classification that many equivalent characterisations of algebraic length can be made. For more details, see \ref{rem:KMok} and \cite{BW6}.
\end{remark}

For an ADE preprojective algebra $\Uppi$ and idempotent $e$, for $t\geq 1$ set $\Upgamma_t=e\Uppi e/\Gaj^t$ where $\Gaj$ is the radical of $e\Uppi e$, and consider the sequence 
\[
\filt\colonequals [\dim_\C\Upgamma_1,\dim_\C \Upgamma_2, \dim_\C \Upgamma_3,\hdots].
\]
\begin{lemma}\label{lem:allSurfaceData}
For any given \textnormal{ADE} preprojective algebra $\Uppi$ and idempotent $e$ with associated number $\ell$ in \eqref{eqn:allDynkin}, the following statements hold.
\begin{enumerate}
\item\label{lem:allSurfaceData1} If $\ell=1$, then $e\Uppi e$ is commutative.
\item\label{lem:allSurfaceData2} If $\ell=2$, then $\filt$ starts $[1,3,a,b]$, where $a\leq 5$ and $b\leq 7$.  
\item\label{lem:allSurfaceData3} If $\ell=3$, then $\filt$ starts $[ 1, 3, 6, 9, c ]$, where $c\in\{11,12\}$.
\item\label{lem:allSurfaceData4} If $\ell=4$, then $\filt$ starts $[ 1, 3, 6, 10, 14, 18, d]$, where $d\in\{21,22\}$. 
\item\label{lem:allSurfaceData5} If $\ell=5$, then $\filt$ equals $[ 1, 3, 6, 10, 15, 20, 25, 30, 34, 37, 39, 40 ]$.
\item\label{lem:allSurfaceData6} If $\ell=6$, then $\filt$ equals $[ 1, 3, 6, 10, 15, 21, 27, 33, 39, 45, 50, 54, 57, 59, 60 ]$.
\end{enumerate}
\end{lemma}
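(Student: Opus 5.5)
This is a finite check over ADE pairs $(\Uppi,e)$, so I would organise the proof around an explicit description of $e\Uppi e$ together with its radical filtration. The preprojective algebra $\Uppi$ of an ADE quiver is graded by path length, and its radical is the ideal of arrows. For a vertex $e$ the radical of $e\Uppi e$ is the span of paths of positive length from $e$ to $e$. So $\Gaj^t$ is spanned by paths that factor as a product of at least $t$ cycles at $e$, each of positive length. This is not the same as the path-length filtration, so $\filt$ has to be computed with some care.

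I would use two standard inputs. The first is the Hilbert series of $e_i\Uppi e_j$ for Dynkin quivers, from Malkin--Ostrik--Vybornov. The second is the identification of $e\Uppi e$ with the quotient rings in \eqref{eqn:6keyalgebras}, or more generally with the contraction-type algebras obtained from the surface singularity by contracting all curves except the one marked by $e$. From these, the length-$1$ case \eqref{lem:allSurfaceData1} is immediate. If $e$ has highest-root coefficient $1$, then $e$ is a minuscule vertex (for type $A$, every vertex), and $e\Uppi e$ is generated by a single element, so it is commutative. In type $A$, for instance, $e\Uppi e\cong\C[t]/(t^k)$, generated by the loop $t$ through a neighbour. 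In $D_n$ and $E_6$, $E_7$ the length-$1$ vertices behave the same way: $e\Uppi e$ is a truncated polynomial ring, possibly in more than one variable but generated by commuting loops. I would verify commutativity directly from the preprojective relation at $e$. Since $e$ has valency $\le 2$, or is an end vertex, the two-cycle loops at $e$ commute modulo the relation.

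For \eqref{lem:allSurfaceData2}--\eqref{lem:allSurfaceData6} the plan is a direct computation.

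1. List, for each Dynkin type, the vertices with coefficient $\ell\ge 2$. For $\ell=2$ these are all $D_n$ interior vertices, plus the vertices labelled $2$ in $E_6$, $E_7$ and $E_8$. For $\ell=3$ there are four: one vertex in $E_6$, two in $E_7$ and two in $E_8$. For $\ell=4$ there are three. For $\ell=5$ and $\ell=6$ there is exactly one each, in $E_8$.
2. For each $(\Uppi,e)$, produce a presentation of $e\Uppi e$ by generators and relations, as in \eqref{eqn:6keyalgebras} and \eqref{eqn:theawkwardone}.
3. Compute $\dim e\Uppi e/\Gaj^t$ from a noncommutative standard basis, exactly as in \S\ref{sec:ncsb}.

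Two checks would confirm the answers. First, the total dimensions must come out as $40$ and $60$ for $\ell=5,6$; these match the known dimensions $\dim e\Uppi e$, computable as $\sum_k \dim e\Uppi_k e$ via the Cartan-type formula. Second, the first few entries $1,3,6,10,\dots$ say that $e\Uppi e$ is free in two generators up to a certain radical degree. That follows from counting the minimal generators $\dim\Gaj/\Gaj^2=2$ and locating the lowest-degree relations. When $\ell\ge2$, $e\Uppi e$ needs exactly two generators, and relations first appear in radical degree $\ell$ or so, which gives the entries $1,3,6,\dots$ up to the stated point.

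The main obstacle is the bookkeeping across many vertices: only the initial segment is claimed for $\ell\le4$, but it must hold for every vertex with that label. I would handle this uniformly by observing that the first relation among the two generators lies in $\Gaj^{\ell}$, which gives freeness below that degree. For the remaining one or two entries ($a,b,c,d$ and the full sequences for $\ell=5,6$), I would enumerate and compute the standard basis in each case, by computer algebra if necessary, as for the finite lists elsewhere in the paper.
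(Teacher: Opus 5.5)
Your fallback for the $E$-type vertices matches what the paper does for $\ell\ge3$: write down explicit presentations of $e\Uppi e$ (e.g.\ by knitting) and count $\dim e\Uppi e/\Gaj^t$ with a standard basis. Your total-dimension checks ($40$ and $60$) are also correct, although there are five $\ell=3$ vertices, not four.

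The real gap is at $\ell=2$, which is not a finite check. It includes every interior vertex of $D_n$ for every $n\ge4$, so computing each case, even by computer algebra, cannot give $a\le5$ and $b\le7$ for this infinite family. These bounds are genuine statements about relations: $a\le5$ means at least two independent relations already hold in $\Gaj^2/\Gaj^3$. You give no uniform argument for them. The paper handles the whole family geometrically:
\begin{itemize}
\item by Pinkham's construction (as used by Katz--Morrison), each such $(\Uppi,e)$ is a not-necessarily-generic slice $\Jac(g)/c\cong e\Uppi e$ of a smooth flop;
\item Katz--Morrison show that flop is generically of type $D$, and type $D$ contraction algebras grow at worst like $[1,3,5,7,\dots]$, a bound the quotient inherits;
\item $c\in\n^2$ then gives $\dim\Upgamma_2\ge3$.
\end{itemize}
An algebraic alternative would need an honest uniform computation in $D_n$ for all $n$.

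Your uniform shortcut for the initial segments is also wrong. The free algebra on two generators has $\filt=[1,3,7,15,\dots]$, not $[1,3,6,10,\dots]$. For $\ell\ge3$, $\dim\Upgamma_3=6$ forces exactly one relation in $\Gaj^2\setminus\Gaj^3$, e.g.\ $x^2=0$ in $\Upgamma_{E,6}$, $\Upgamma_{E,7}$ and $\Upgamma_{E,8,6}$. So the first relation never lies in $\Gaj^\ell$ when $\ell\ge3$, and ``freeness below degree $\ell$'' would give the wrong numbers. The initial segments come from low-degree relations such as $x^2$ and $y^3$; for instance $\Upgamma_4\cong\C\langle x,y\rangle/(x^2,y^3,\m^4)$ for all three $\ell=4$ vertices. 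They have to be read off the presentations.

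Finally, your $\ell=1$ argument does not work. The relation at $e$ only constrains the $2$-cycles at $e$, and these need not generate $e\Uppi e$. At an end vertex of $D_4$ the $2$-cycle vanishes, yet $e\Uppi e\cong\C[t]/(t^2)$ with $t$ a $4$-cycle through another end vertex. Moreover, valency $\le2$ does not separate label $1$ from label $2$. Instead use the paper's argument: $e\Uppi e\cong\underline{\mathrm{End}}_R(M_e)$ for the indecomposable CM module $M_e$ over the Kleinian singularity $R$, and the rank of $M_e$ equals the label. When the rank is $1$, $\mathrm{End}_R(M_e)\cong R$ is commutative, so its factor $e\Uppi e$ is commutative too.
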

\begin{proof}
(1) This is essentially just \cite[8.2]{BW3}, the point being that $e\Uppi e$ can be viewed as a factor of an endomorphism ring of a rank one CM module over a Kleinian singularity, which is always commutative.\\
(2) Although this algebraic statement can be proved combinatorially using the techniques below, here we give a direct (albeit outrageous) geometric proof, since this avoids the proliferation of cases.   Consider any vertex $v$ in some ADE $\Updelta$ which is labelled $2$ in \eqref{eqn:allDynkin}, and consider the corresponding $e\Uppi e$.  Using \cite[p366--367]{Pinkham} as explained in \cite[\S1]{KM}, we may build a smooth $3$-fold flopping contraction $\upphi\colon Y\to\Spec R$ such that there exists $c\in R$ for which under the base change
\[
\begin{tikzpicture}
\node (A) at (0,0) {$S$};
\node (B) at (2,0) {$Y$};
\node (a) at (0,-1) {$\Spec(R/c)$};
\node (b) at (2,-1) {$\Spec R $};
\draw[->] (A)--(B);
\draw[->] (a)--(b);
\draw[->] (A)--node[left]{$\scriptstyle \upvarphi$}(a);
\draw[->] (B)--node[right]{$\scriptstyle \upphi$}(b);
\end{tikzpicture}
\]
the surface $R/c$ is Kleinian of type $\Updelta$, and $S$ is its partial resolution where we have only extracted the curve corresponding to $v$. It is important that to emphasise that $c$ need not be generic.  Regardless, since $\upphi$ is a smooth flopping contraction, by \cite{DW1} there is an associated contraction algebra $\Jac(g)$ for some $g\in\ringtwo$. Furthermore by \cite[(3.C)]{DW1}, which does not require $c$ to be generic, necessarily $\Jac(g)/c \cong e\Uppi e$.  

The main result of \cite{KM} asserts that $\upphi$ is generically a Type $D$ flop.  But contraction algebras of those are now known by \cite[8.9]{BW3}, so by \cite[4.11]{BW3} it follows that $\Jac(g)$ grows at worst as $[1,3,5,7,\hdots]$.  Thus its quotient $\Jac(g)/c\cong e\Uppi e$ grows at worst as $[1,3,5,7,\hdots]$. But since $\Jac(g)$ is a contraction algebra, $c$ is in $\n^2$ by \cite[8.6]{BW2}, thus the basis of $\Jac(g)/c$ must contain $1,x,y$ and so $\dim_\C\Upgamma_2\geq 3$.  Combining, it follows that the first two numbers in $\filt$ are precisely $1,3$, and the next two are at worst $5,7$.  \\
(4) There are only three times the number $4$ appears in \eqref{eqn:allDynkin}, so we simply exhaust all cases.  The idempotents $\Eseven{B}{B}{W}{B}{B}{B}{B}$ and $\Eeight{B}{B}{B}{B}{B}{W}{B}{B}$ 
%$E_7$ middle and $E_8$ long 
both have $\filt=[ 1, 3, 6, 10, 14, 18, 21, 23, 24 ]$, whilst  
%$E_8$ short 
the idempotent $\Eeight{B}{W}{B}{B}{B}{B}{B}{B}$ has 
 $\filt=[ 1, 3, 6, 10, 14, 18, 22, 25, 27, 28 ]$.  These facts follow immediately from the presentations of the algebras $e\Uppi e$ given in \ref{rem:ell4presentations} below.\\
(3) There are only five times the number $3$ appears in \eqref{eqn:allDynkin}, so here again it is easier to just exhaust all cases.  The idempotents $\Esix{B}{B}{W}{B}{B}{B}$, $\Eseven{B}{W}{B}{B}{B}{B}{B}$, and $\Eeight{B}{B}{B}{B}{B}{B}{W}{B}$
%$E_6$ middle, $E7$ short, $E_8$ long 
all have $\filt=[ 1, 3, 6, 9, 11, 12 ]$.  Again, the first of these follows the known presentation of $e\Uppi e$ in  e.g.\ \cite[Theorem 1]{Melit} (see also \cite[7.2]{BW3}), and the others follow similarly, using knitting and the method e.g.\ in \cite[7.4]{DW5}. 
In a similar way, the idempotent $\Eseven{B}{B}{B}{B}{B}{W}{B}$ has $\filt=[ 1, 3, 6, 9, 12, 14, 15 ]$,  whilst the idempotent $\Eeight{B}{B}{B}{W}{B}{B}{B}{B}$ has $\filt=[ 1, 3, 6, 9, 12, 14, 15, 16 ]$.\\
(5) and (6) are precisely the idempotents $\Eeight{B}{B}{B}{B}{W}{B}{B}{B}$ and $\Eeight{B}{B}{W}{B}{B}{B}{B}{B}$ respectively. The result follows a consequence of the presentation in \eqref{eqn:theawkwardone}, and the last presentation in \eqref{eqn:6keyalgebras}.
\end{proof}

\begin{remark}\label{rem:ell4presentations}
Below we will need just slightly more information on the case $\ell=4$.  In each of the three cases in \ref{lem:allSurfaceData}\eqref{lem:allSurfaceData4}, the algebra $e\Uppi e$ can be presented as a factor of $\polyringtwo$ by the following ideals
\[
\begin{tabular}{cl}
$\Eseven{B}{B}{W}{B}{B}{B}{B}$ & $\medmath{(x^2,y^3,(x+y)^4)}$\\

$\Eeight{B}{B}{B}{B}{B}{W}{B}{B}$ &
$\medmath{(x^3, y^4, y^2x^2, x^2yx + xyx^2 + y^2, xyxy + yx^2y + yxyx, y^2xyx^2, xy^2 - y^2x)}$\\

$\Eeight{B}{W}{B}{B}{B}{B}{B}{B}$ & $\medmath{(xy^3 - y^3x, y^6x, xyxy + xy^2x + yxyx + y^3, x^2, yxy^2 + y^2xy + y^3x, y^4xyx + y^6, y^8)}$
\end{tabular}
\]
where the first is $\Upgamma_{E,7}$ in \eqref{eqn:6keyalgebras}, and the others are obtained using the same technique; we have made no attempt to simplify the presentations.  Whilst it turns out that the three algebras presented above are mutually non-isomorphic, we will not require this, however we will require the claim that after factoring by the fourth power of the radical, they become isomorphic. Indeed, removing all terms of degree $\geq 4$ in the above, it follows that in each case the algebra $\Upgamma_4$ can be presented as a factor of $\polyringtwo$ by the following ideals
\[
\begin{tabular}{cl}
$\Eseven{B}{B}{W}{B}{B}{B}{B}$ & $\medmath{(x^2,y^3,\m^4)}$\\

$\Eeight{B}{B}{B}{B}{B}{W}{B}{B}$ &
$\medmath{(x^3, y^2, xy^2 - y^2x,\m^4)=(x^3, y^2,\m^4)}$\\

$\Eeight{B}{W}{B}{B}{B}{B}{B}{B}$ & $\medmath{(y^3, x^2, \m^4)}$
\end{tabular}
\]
These are all clearly isomorphic to $\polyringtwo\big/(x^2, y^3,\m^4)$.
\end{remark}

\subsection{Summary of Strategy}
The main strategy is to leverage the simple observation (in \ref{lem:Zmodk}) that any central element remains central modulo any power of the radical.

Given $f\in\ringtwo$, recall the notation $\fJ$, $A_k$ and $\uppi_k$ from \ref{not!jacqt}.

\begin{remark}\label{rem:overallstrategy}
Suppose that there exists some $z\in\scrZ(\Jac(f))\cap\fJ$ with $\Jac(f)/\lcl z\rcl\cong e\Uppi e$, as in \ref{def:alglength}.  As above \ref{lem:allSurfaceData}, for any $k\geq 1$ set $\Upgamma_k \colonequals e\Uppi e/\Gaj^k$, then 
\begin{equation}\label{eq!Akiso}
\Upgamma_k\cong \big(\Jac(f)/\lcl z\rcl\big) \big/ \fJ^k \cong A_k\big/\lcl \uppi_k(z)\rcl
\end{equation}
where $\uppi_k(z)\in\scrZ(A_k)$.  That is, for all $k\geq 1$ the central element $\uppi_k(z)$ slices the finite dimensional $A_k$ to obtain the `surface data' $\Upgamma_k$.

The overall strategy to prove that there is no isomorphism $\Jac(f)/\lcl z\rcl\cong e\Uppi e$ is to contradict \eqref{eq!Akiso} for some $k$, using knowledge of $\uppi_k(z)$ from \ref{lem:keycentreallcases}.  Since by \ref{lem:allSurfaceData} the dimension of the left hand side $\Upgamma_k$ of \eqref{eq!Akiso} is known in all cases, a simple dimension count usually shows that $A_k\big/\lcl \uppi_k(z)\rcl$ is too big, and so the isomorphism in \eqref{eq!Akiso} cannot exist.  
\end{remark}

We illustrate \ref{rem:overallstrategy} first in the following, which generalises \cite[8.8]{BW3} from $D_4$ to cover all length two vertices in all ADE diagrams.

\begin{prop}\label{prop:Never1or2}
If $f\cong x^3 + \bigO_4$ has an algebraic length $\ell$, then $\ell\geq 3$.
\end{prop}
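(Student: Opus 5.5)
We argue via the strategy of \ref{rem:overallstrategy}, by contradiction. Suppose $f\cong x^3+\bigO_4$ and there is $z\in\scrZ(\Jac(f))$ with $\Jac(f)/\lcl z\rcl\cong e\Uppi e$ where $e$ has associated number $\ell\in\{1,2\}$. By \ref{lem:simauto} we may replace $f$ by the normal form and so assume $f=x^3+\bigO_4$ on the nose. First note that $z\in\fJ$: otherwise $z$ is a unit, $\Jac(f)/\lcl z\rcl=0$, which is not isomorphic to $e\Uppi e\neq0$. Hence \ref{lem:keycentreallcases}\eqref{lem:keycentreallcases1} applies and gives $\uppi_3(z)=0$, so that by \eqref{eq!Akiso} with $k=3$ we get $\Upgamma_3\cong A_3$.

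We then compute $A_3=\ring/((\dcyc f)+\n^3)$. Since $\dcyc_xf=3x^2+\bigO_3$ and $\dcyc_yf\in\n^3$, the ideal $I_3$ is generated by $x^2$ together with all degree $\geq3$ monomials. Thus $A_3$ has basis $1,x,y,xy,yx,y^2$, so $\dim_\C A_3=6$. Moreover $A_3$ is noncommutative, since $xy\neq yx$ in $A_3$.

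The case $\ell=1$ is then immediate. By \ref{lem:allSurfaceData}\eqref{lem:allSurfaceData1}, $e\Uppi e$ is commutative, hence so is its quotient $\Upgamma_3$. But $\Upgamma_3\cong A_3$ is not commutative, a contradiction.

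For $\ell=2$ we argue by dimension. \ref{lem:allSurfaceData}\eqref{lem:allSurfaceData2} gives $\dim\Upgamma_3=a\leq5<6=\dim A_3$, again a contradiction. So $\ell\geq3$.

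The only point needing care is the first step, namely that $\uppi_3(z)=0$. The relevant input is that the central element cannot have linear or quadratic part, and this is exactly the content of \ref{prop:z1=0} and \ref{prop:ZA4(1)}, packaged in \ref{lem:keycentreallcases}. The remaining ingredients are checking that $z$ lies in the radical, and the elementary basis count for $A_3$.
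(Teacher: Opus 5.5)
Your proposal is correct and follows the paper's own argument. It uses \ref{lem:keycentreallcases}\eqref{lem:keycentreallcases1} to get $\uppi_3(z)=0$, hence $\Upgamma_3\cong A_3=\C\langle x,y\rangle/(x^2,\m^3)$, which is noncommutative of dimension $6$; this rules out $\ell=1$ by commutativity and $\ell=2$ by the bound $\dim_\C\Upgamma_3\le 5$. Your explicit check that $z$ lies in the radical (otherwise the quotient would be zero) is a detail the paper leaves implicit in \ref{rem:overallstrategy}.
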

\begin{proof}
By \ref{lem:keycentreallcases}\eqref{lem:keycentreallcases1} $\uppi_3(z)=0$ and so $A_3\big/\lcl \uppi_3(z)\rcl =  A_3 =\polyringtwo/(x^2, \m^3)$, which is noncommutative of dimension $6$.  If $\ell=1$ then by \ref{lem:allSurfaceData}\eqref{lem:allSurfaceData1} $e\Uppi e$ is commutative, thus so is its factor $\Upgamma_3$, and so an isomorphism \eqref{eq!Akiso} with $k=3$ is not possible. Thus $\ell\neq 1$.  If $\ell=2$ then by \ref{lem:allSurfaceData}\eqref{lem:allSurfaceData2} the left hand side of \eqref{eq!Akiso} with $k=3$ has dimension $\le5$, but the right hand side has dimension $6$; thus again an isomorphism \eqref{eq!Akiso} with $k=3$ is not possible, which rules out $\ell=2$.
\end{proof}

\subsection{The Possible Algebraic Lengths}\label{sec:possalglengths}

\begin{prop}\label{prop:orderfournotalg}
If $f\in\ringtwo$ with $\ord(f)\geq 4$, then $f$ is purely algebraic.
\end{prop}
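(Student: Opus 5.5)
Suppose for contradiction that some $z\in\scrZ(\Jac(f))$ satisfies $\Jac(f)/\lcl z\rcl\cong e\Uppi e$ for an ADE preprojective algebra $\Uppi$ and primitive idempotent $e$ of number $\ell$. The plan is the strategy of \ref{rem:overallstrategy}: compare $\Upgamma_k$ with $A_k/\lcl\uppi_k(z)\rcl$ for $k=3$ and $k=4$, using \ref{lem:allSurfaceData} for the dimensions on the left.

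\emph{Step 1: $z$ lies in the radical.} If $z\notin\fJ$, then $z$ is a unit in the local algebra $\Jac(f)$. In that case $\Jac(f)/\lcl z\rcl=0$, which is not $e\Uppi e\neq 0$. So $z\in\scrZ(\Jac(f))\cap\fJ$, and \ref{lem:keycentreallcases}\eqref{lem:keycentreallcases10} applies (its hypothesis $f\cong 0+\scrO_4$ holds because $\ord f\geq4$). This gives $\uppi_3(z)=\al(xy-yx)$ for some $\al\in\C$.

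\emph{Step 2: dimension count at $k=3$.} Since $\ord f\geq 4$, we have $\updelta f\in\n^3$, so $A_3=\freering/\m^3$ has dimension $7$.
\begin{itemize}
\item If $\al=0$, then $\Upgamma_3\cong A_3$ is noncommutative of dimension $7$.
\item If $\al\neq0$, then $\Upgamma_3\cong\freering/(xy-yx,\m^3)$, which is commutative of dimension $6$.
\end{itemize}
The case $\ell=1$ is ruled out by commutativity when $\al=0$. Lengths $\ell\geq 3$ need $\dim\Upgamma_3=6$ by \ref{lem:allSurfaceData}, and length $2$ needs $\dim\Upgamma_3\le5$. So $\al=0$ is impossible for every $\ell$. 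When $\al\neq0$, $\ell=2$ is excluded.

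\emph{Step 3: the remaining cases at $k=4$.} What survives is $\al\neq0$ with either $\ell=1$ or $\ell\geq3$.
\begin{itemize}
\item For $\ell\geq3$, I would use $k=4$. Here $\Upgamma_4$ has dimension $9$ or $10$ and must be noncommutative. Indeed, in every $\ell\geq3$ case the presentations show $\Upgamma_4$ is not commutative; for $\ell\ge4$ it is $\freering/(x^2,y^3,\m^4)$ by \ref{rem:ell4presentations}. But $\Upgamma_4\cong A_4/\lcl\uppi_4(z)\rcl$ with $\uppi_4(z)=\al(xy-yx)+\bigO_3$ and $\al\neq0$. Modulo $\m^4$ this forces $xy-yx\in\m^3$ modulo the ideal, so $\Upgamma_4$ is a quotient of $\freering/(xy-yx+c_3,\m^4)$ with $c_3$ cubic, and hence $\dim\Upgamma_4\le 1+2+3+4=10$.

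To contradict, the cleanest route is through the associated graded. The degree-2 piece of $\mathrm{gr}\,\Upgamma$ is 3-dimensional (commutative monomials), and $\mathrm{gr}\,\Upgamma_4$ is generated by two commuting degree-one elements. Hence $\mathrm{gr}\,\Upgamma_4$ is a quotient of $\C[x,y]/\m^4$, but the associated graded of each $e\Uppi e$ with $\ell\ge3$ is noncommutative in degree $3$. For instance, in $\freering/(x^2,y^3)$ the elements $xy^2$ and $y^2x$ are distinct in degree 3. For $E_6$, $\mathrm{gr}$ has $\dim=3$ in degree 3 while the commutative quotient of $\C[x,y]$ by the degree-2 relation leaves at most... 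I need to check each case.
\item For $\ell=1$, commutativity is consistent, so a different argument is needed. I would use \ref{prop:ZA4(0)}: $\al\neq0$ forces $f=\al'(x^2y^2-xyxy)+\bigO_5$. Then $A_k/\lcl z\rcl$ should be compared to a commutative $e\Uppi e$ for type $A$, where $\filt=[1,2,\dots]$ or $[1,3,\dots]$ with $\dim\Upgamma_2\le 2$ in most cases. Since $\uppi_2(z)=0$, $\dim\Upgamma_2=3$. For type $A$ the corank is at most $2$ in degree one only if $e\Uppi e\cong\C[x,y]/(xy)$-like, so one needs to rule out that $\freering/(xy-yx,\dots)$ matches.
\end{itemize}

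\emph{Main obstacle.} I expect the main obstacle to be precisely these $\al\neq0$ cases, where the central element kills the commutator. There a dimension count alone may not suffice, and one must either show that $\ord\ge4$ with $\al\neq0$ forces the slice to be too large in some degree (e.g.\ by computing the standard basis of $I_5$ for $f=x^2y^2-xyxy+\bigO_5$ and showing $\uppi_4(z)$ cannot be central, extending \ref{prop:ZA4(0)} to $A_5$), or compare graded pieces with the known $\filt$.
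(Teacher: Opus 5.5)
Your Steps 1 and 2 are correct and agree with the paper. They give $z\in\fJ$ and $\uppi_3(z)=\al(xy-yx)$, show that $\al=0$ is impossible for every $\ell$, and show that $\al\neq0$ excludes $\ell=2$. The gap is Step 3, which you leave unfinished in both remaining subcases. In each subcase a single structural fact about $e\Uppi e$ already closes the argument at $k=3$ (or $k=2$). The detours through $k=4$, associated gradeds, or standard bases of $I_5$ are therefore unnecessary.

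\emph{Case $\ell\geq3$.} When $\al\neq0$, the algebra $A_3/\lcl\uppi_3(z)\rcl\cong\C[x,y]/\m^3$ is commutative. So all you need is that $\Upgamma_3$ is noncommutative, i.e.\ $xy-yx\notin\Gaj^3$; this is exactly the paper's argument. It holds because $\Upgamma_3\cong\freering/(x^2,\m^3)$ in each of the following cases:
\begin{itemize}
\item $\ell=4$, by \ref{rem:ell4presentations};
\item $\ell=5,6$, by \eqref{eqn:theawkwardone} and \eqref{eqn:6keyalgebras} (cf.\ \cite[3.16]{DW1});
\item the $E_6$ middle vertex, whose algebra is $\freering/(x^2,y^3,(x+y)^3)$.
\end{itemize}
The other $\ell=3$ vertices are checked similarly. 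Noncommutativity of $\Upgamma_4$, which you cite, is not the relevant property: in $A_4/\lcl\uppi_4(z)\rcl$ the commutator may equal a nonzero cubic. Your associated-graded reformulation is equivalent to $xy-yx\notin\Gaj^3$, and it stalls exactly at $\ell=3$. Also, $\Upgamma_4\not\cong\freering/(x^2,y^3,\m^4)$ when $\ell=5$.

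\emph{Case $\ell=1$.} You correctly note that $\dim\Upgamma_2=3$, since $\uppi_2(z)=0$. You then allow $\filt=[1,3,\dots]$, but this never occurs. For every vertex of label $1$, $e\Uppi e\cong\C[t]/(t^m)$:
\begin{itemize}
\item in type $A_n$ directly, $e_i\Uppi e_i\cong\C[t]/(t^{\min(i,n+1-i)})$;
\item in general, the Pinkham construction in the proof of \ref{lem:allSurfaceData}\eqref{lem:allSurfaceData2} realises $e\Uppi e$ as a central quotient of the contraction algebra $\C[t]/(t^N)$ of an $\ell=1$ flop.
\end{itemize}
Hence $\dim\Upgamma_2\le2$, a contradiction. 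Be aware that the paper's written proof rules out $\ell=1$ only by noncommutativity. That is conclusive only when $\al=0$, where $\Upgamma_3\cong A_3$, so this fact is needed to complete that argument too.
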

\begin{proof}
Suppose that $f$ has algebraic length $\ell$, and consider  \eqref{eq!Akiso}  with $k=3$, which by \ref{lem:keycentreallcases}\eqref{lem:keycentreallcases10} reads $\Upgamma_3\cong A_3/(\al(xy-yx))$ for some $\al\in\C$.

If $\al=0$, then $A_3=\polyringtwo\big/\m^3$. Now $\Jac(f)$ is not commutative since its factor $A_3$ is not, so by \ref{lem:allSurfaceData}\eqref{lem:allSurfaceData1} $\ell\geq 2$.  Clearly $\dim_\C A_3=7$, whereas inspecting \ref{lem:allSurfaceData}\eqref{lem:allSurfaceData2}--\eqref{lem:allSurfaceData6} shows that  $\dim_\C\Upgamma_3\leq 6$, thus an isomorphism $\Upgamma_3\cong A_3$ is impossible and $f$ is purely algebraic.

If $\al\ne0$, $A_3/\lcl\uppi(z)\rcl\cong\C[x,y]/\m^3$ is commutative of dimension~6; in particular, $\ell\ne2$ by \ref{lem:allSurfaceData}\eqref{lem:allSurfaceData2}.
In each case $\ell\ge3$, the algebra $\Upgamma_3$ is not commutative.
Indeed \cite[3.16]{DW1} shows this for the Katz--Morrison cases listed in \ref{def!6algebras} (which includes all cases for $\ell=5,6$), while the case $\ell=4$ follows from \ref{rem:ell4presentations}, and the case $\ell=3$ is very similar.
\end{proof}

\begin{prop}[Case $\caseA$]\label{prop!x3y5centre}
If $f\cong x^3 + \scrO_5$, then $\dim_\C A_4\big/\lcl \uppi_4(z)\rcl = 11$
for any $z\in \scrZ(\Jac(f))\cap\fJ$, and as a consequence $f$ is purely algebraic.
\end{prop}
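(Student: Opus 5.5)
Case $\caseA$ is a subcase of \ref{lem:keycentreallcases}\eqref{lem:keycentreallcases2}, so for any $z\in\scrZ(\Jac(f))\cap\fJ$ we have $\uppi_4(z)=0$. Hence $A_4/\lcl\uppi_4(z)\rcl=A_4$, and the dimension claim reduces to computing $\dim_\C A_4$. Since $f\cong x^3+\bigO_5$, we may replace $f$ by this form, because $A_4$ depends only on $\Jac(f)$ up to isomorphism. Then $\dcyc_xf=3x^2+\bigO_4$ and $\dcyc_yf=0+\bigO_4$, so $I_4=(x^2)+\m^4$. Any \Spoly{} on the overlap $(x,x,x)$ vanishes identically, so the standard basis is $x^2$ together with monomials of degree $4$. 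The $\C$-basis of $A_4$ therefore consists of the monomials of degree $\le3$ not divisible by $x^2$: these are $1$; $x,y$; $xy,yx,y^2$; and $xyx,xy^2,yxy,y^2x,y^3$. This gives $1+2+3+5=11$.

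To deduce that $f$ is purely algebraic, suppose $z\in\scrZ(\Jac(f))$ with $\Jac(f)/\lcl z\rcl\cong e\Uppi e$. If $z\notin\fJ$, then $z$ is a unit, the quotient is zero, and it cannot be $e\Uppi e$. So $z\in\fJ$, and \eqref{eq!Akiso} with $k=4$ gives $\Upgamma_4\cong A_4$, so $\dim_\C\Upgamma_4=11$. By \ref{prop:Never1or2}, $\ell\ge3$. Reading off the fourth entry of $\filt$ in \ref{lem:allSurfaceData}\eqref{lem:allSurfaceData3}--\eqref{lem:allSurfaceData6}, we have $\dim\Upgamma_4=9$ for $\ell=3$ and $\dim\Upgamma_4=10$ for $\ell=4,5,6$. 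Neither equals $11$, which is a contradiction.

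The only delicate point is the bookkeeping in the standard basis computation: confirming that no degree-$3$ element other than those divisible by $x^2$ enters $I_4$. This holds because $\dcyc_yf$ has order at least $4$, and the only overlap of $x^2$ with itself produces the trivial \Spoly{} $x\cdot x^2-x^2\cdot x=0$.
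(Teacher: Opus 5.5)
Your proof is correct and follows the paper's argument. You use \ref{lem:keycentreallcases}\eqref{lem:keycentreallcases2} to get $\uppi_4(z)=0$ and identify $A_4\cong\C\langle x,y\rangle/(x^2,\m^4)$ of dimension $11$. You then combine $\ell\ge 3$ from \ref{prop:Never1or2} with $\dim_\C\Upgamma_4\le 10$ from \ref{lem:allSurfaceData} to contradict \eqref{eq!Akiso} at $k=4$. Your remark that a central element outside $\fJ$ is a unit, so the quotient would be zero, fills in a small point the paper leaves implicit.
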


\begin{proof}
By \ref{lem:keycentreallcases}\eqref{lem:keycentreallcases2} $\uppi_4(z)=0$, thus $A_4\big/\lcl \uppi_4(z)\rcl = A_4$.  But clearly, given the form of $f$, $A_4=\polyringtwo/(x^2,\m^4)$, which evidently has dimension $11$.  The first claim follows.  

If $f$ has algebraic length $\ell$, then $\ell\geq 3$ by \ref{prop:Never1or2}.  But for these $\ell$, the data within \ref{lem:allSurfaceData}\eqref{lem:allSurfaceData3}--\eqref{lem:allSurfaceData6} shows that $\dim_\C\Upgamma_4\leq 10$.  But this contradicts \eqref{eq!Akiso} with $k=4$, since the left hand side has dimension $\leq 10$, whereas the right hand side has dimension $11$.  It follows that such an isomorphism \eqref{eq!Akiso} can never exist, and so $f$ is purely algebraic.
\end{proof}

\begin{prop}[Case $E_6$]\label{prop!caseE6only3}
If $f\cong x^3 + xy^3 + \upvarepsilon y^4+\scrO_5$ has algebraic length~$\ell$, then $\ell=3$.
\end{prop}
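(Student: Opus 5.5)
The plan is to rule out $\ell\in\{4,5,6\}$ by dimension counting, following \ref{rem:overallstrategy}. By \ref{prop:Never1or2} we already know $\ell\geq 3$. Suppose $z\in\scrZ(\Jac(f))\cap\fJ$ satisfies $\Jac(f)/\lcl z\rcl\cong e\Uppi e$, and consider \eqref{eq!Akiso}. The key observation is elementary:
\[
\dim_\C A_k/\lcl\uppi_k(z)\rcl\leq\dim_\C A_k \quad\text{for every } k.
\]
So it suffices to find, for each $\ell\geq4$, some $k$ with $\dim_\C A_k<\dim_\C\Upgamma_k$, where the right-hand side is read off from \ref{lem:allSurfaceData}. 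In particular we do not need the precise form of $\uppi_k(z)$, although \ref{cor:ZAk} tells us that it lies in $\Arad^3$, and this may help sharpen the counts if needed.

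\textbf{Step 1 (the cases $\ell=5,6$).} Replace $f$ by its normal form $x^3+xy^3+\upvarepsilon y^4+\uplambda y^5+\bigO_6$ from \ref{prop!caseE6}, which we may do since the algebras $A_k$ depend only on $\Jac(f)$ up to isomorphism. Compute the standard basis of $I_5$ exactly as in \ref{rk:sbn^k}. The generators are $x^2+\bigO_3$ and $xy^2+yxy+y^2x+4\upvarepsilon y^3+\bigO_4$, and the one new element in degree~4 is $xyxy-yxyx+\bigO_5$. Hence $A_5$ has the $14$ basis monomials listed in \ref{rk:sbn^k}, and the same leading terms arise for either value of $\upvarepsilon$. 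Since $\dim_\C\Upgamma_5=15$ for $\ell=5,6$ by \ref{lem:allSurfaceData}, the isomorphism \eqref{eq!Akiso} with $k=5$ is impossible.

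\textbf{Step 2 (the case $\ell=4$).} Here $\dim\Upgamma_5=14$ and $\dim\Upgamma_4=10=\dim A_4$, so the counts at $k=4,5$ do not separate. At $k=4$ even an isomorphism check fails when $\upvarepsilon=1$: a suitable $u=\tfrac14 x+y$ gives $u^3\equiv0$, which suggests $A_4\cong\polyringtwo/(x^2,y^3,\m^4)$, and this is exactly the common $\Upgamma_4$ of \ref{rem:ell4presentations}. So I would push the standard basis computation to $I_6$, and if necessary $I_7$. The aim is to show $\dim A_6<18$, or failing that $\dim A_7<21$. Both bounds are consistent with the expected growth $[1,3,6,10,14,\dots]$ of the $E_6$-type algebra tapering off, compared with $[\dots,14,18,\{21,22\}]$ for $\ell=4$.

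Concretely, I would compute the \Spolys\ among $x^2+\hot$, $xy^2+\dots$ and $xyxy-yxyx+\dots$ in degree~5, and then in degree~6. The expectation is that at least one further leading monomial appears in each degree, for instance a new relation from the overlap of $xy^2$ with $xyxy$. This would cut the number of degree-$5$ basis monomials below $4$, giving $\dim A_6\leq17<18$.

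\textbf{Main obstacle.} I expect Step 2 to be the crux. One must track the degree-$5$ terms ($\uplambda y^5$ and the $\upvarepsilon$-dependent terms) through the \Spoly\ reductions carefully, so that the count is uniform in $\upvarepsilon$ and $\uplambda$. If $A_6$ happens to have dimension exactly $18$ for some parameters, my fallback is to use $\uppi_6(z)\in\Arad^3$ from \ref{cor:ZAk}. A nonzero such element generates an ideal that removes at least one more dimension. If instead $\uppi_6(z)\in\Arad^6$, so that it vanishes, then compare $A_6$ with $\Upgamma_6$ directly via the corank in degree~$5$.
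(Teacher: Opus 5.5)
Your Step 1 is exactly the paper's argument for $\ell=5,6$. Step 2, however, has a genuine gap. The inequalities you aim for are false, and dimension counting alone cannot finish.

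In Case $E_6$ the only leading terms of the standard basis of $I_6$ in degrees $\le5$ are $x^2$, $xy^2$ and $xyxy$. The degree-$5$ \Spolys\ all reduce to zero, including $\sfg_3y-xy\sfg_2$ from the overlap $xyxy^2$ that you hoped would help. So the degree-$5$ basis monomials are exactly $y^2xyx,y^3xy,y^4x,y^5$, and $\dim_\C A_6=18=\dim_\C\Upgamma_6$.

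The first new leading term is $y^3xyx$ in degree $6$ when $\upvarepsilon=1$; it appears only later when $\upvarepsilon=0$. Hence $\dim_\C A_7$ is $21$ or $22$ respectively, and each of these equals $\dim_\C\Upgamma_7$ for some $\ell=4$ vertex.

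Your fallback does dispose of $\uppi_6(z)\neq0$. But if $\uppi_6(z)=0$ then $A_6\cong\Upgamma_6$ with identical filtrations $[1,3,6,10,14,18]$, so comparing coranks yields nothing. When the central element vanishes in the truncation, what you need is a non-numerical invariant.

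The paper finds one at $k=4$, precisely where you gave up.
\begin{itemize}
\item If $\uppi_4(z)\neq0$, then $\dim_\C A_4/\lcl\uppi_4(z)\rcl\le9<10=\dim_\C\Upgamma_4$.
\item If $\uppi_4(z)=0$, then $A_4\cong\Upgamma_4\cong\polyringtwo/(x^2,y^3,\m^4)$ by \ref{rem:ell4presentations}. The latter contains $x\in\scrJ\setminus\scrJ^2$ with $x^2=0$.
\end{itemize}
In $A_4$, however, there is no such element. Write $v=\upalpha_1x+\upalpha_2y+\upalpha_3xy+\upalpha_4yx+\upalpha_5y^2+\hdots$ and use $x^2=-\tfrac13y^3$ and $xy^2+y^2x=-yxy-4\upvarepsilon y^3$. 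The degree-$2$ part of $v^2$ forces $\upalpha_2=0$. The degree-$3$ part is then $\upalpha_1(\upalpha_3+\upalpha_4)xyx-\upalpha_1\upalpha_5yxy-(\tfrac13\upalpha_1^2+4\upvarepsilon\upalpha_1\upalpha_5)y^3$, which forces $\upalpha_1=0$.

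So $A_4\not\cong\polyringtwo/(x^2,y^3,\m^4)$, contrary to your heuristic. Your cube test does not support that heuristic either. With the relations $3x^2+y^3$ and $xy^2+yxy+y^2x+4y^3$ one finds $(\tfrac14x+y)^3=\tfrac1{16}xyx\neq0$ in $A_4$ when $\upvarepsilon=1$.
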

\begin{proof}
Necessarily $\ell\geq 3$ by \ref{prop:Never1or2}.  Now by \ref{prop!caseE6}, $f\cong x^3 + xy^3 + \upvarepsilon y^4 + \uplambda y^5 + \scrO_6$ where $\uplambda\in\C$.
From this, by standard bases as in \ref{rk:sbn^k}, we have $\dim_\C A_4=10$ and $\dim_\C A_5=14$. If $\ell=5,6$ then $\dim_\C\Upgamma_5=15$ by \ref{lem:allSurfaceData}\eqref{lem:allSurfaceData5}--\eqref{lem:allSurfaceData6}, but this would contradict \eqref{eq!Akiso} with $k=5$ since the left hand side of \eqref{eq!Akiso} has dimension $15$, but the right hand side is a factor of $A_5$, and thus has dimension $\leq 14$.  

The only remaining options are $\ell=3,4$, so consider the isomorphism \eqref{eq!Akiso} with $k=4$. We will rule out $\ell=4$ in both the following subcases. Consider $z\in\scrZ(\Jac(f)$ as in \ref{def:alglength}, the definition of~$\ell$.\\
\emph{Case $\uppi_4(z)\neq 0$}. Since  $\dim_\C A_4=10$, necessarily $\dim_\C A_4/\lcl \uppi_4(z)\rcl \leq 9$.  If $\ell=4$ then due to the $10$ in \ref{lem:allSurfaceData}\eqref{lem:allSurfaceData4}, the left and right sides of  \eqref{eq!Akiso} (with $k=4$) would have different dimensions.  Hence $\ell\neq 4$.\\
\emph{Case $\uppi_4(z)=0$}. In this case, \eqref{eq!Akiso} with $k=4$ reduces to the statement
\begin{equation}
\Upgamma_4\cong A_4= \frac{\polyringtwo}{(3x^2+y^3,  xy^2 + yxy + y^2x + 4\upvarepsilon y^3,\m^4)}.  \label{eqn:A4notl4} 
\end{equation}
If $\ell=4$, then by \ref{rem:ell4presentations} the left hand side is always isomorphic to $\polyringtwo\big/ (x^2,y^3,\m^4)$.  Such an isomorphism \eqref{eqn:A4notl4} cannot exist: the left-hand side $\Upgamma_4$ has a nilpotent $x$ of order $2$, but the right-hand side does not, as we prove next.
\begin{claim}\label{v2claim}
There is no element $v\in\scrJ\setminus\scrJ^2$ in $A_4$ with $v^2=0$.
\end{claim}
To verify the claim, suppose that $v=\al_1 x+ \al_2 y+ \al_3xy + \al_4yx + \al_5y^2 + \bigO_3\in A_4$ satisfies $v^2=0$ for some $\al_i\in\C$, using the standard monomial $\C$-basis to express~$v$ (cf.~\ref{rk:sbn^k}). First note that applying the reduction $x^2 = -\tfrac{1}3y^3$,
\[
v^2 = \al_1^2x^2 + \al_1\al_2(xy+yx) + \al_2^2 y^2 + \bigO_3 
= \al_1\al_2(xy+yx) + \al_2^2 y^2 + \bigO_3
\]
so that $\al_2=0$. Then using the same $x^2$ reduction to push terms into $\m^4$,
\begin{eqnarray*}
v^2 &=& -\tfrac{1}3\al_1^2 y^3 + \al_1(\al_3+\al_4) xyx + \al_1\al_5(xy^2+y^2x) + \bigO_4 \nonumber\\
&=& \al_1(\al_3+\al_4) xyx - \al_1\al_5 yxy -\tfrac{1}3\al_1^2 y^3 + \bigO_4 
\end{eqnarray*}
so that $\al_1=0$. Thus $v\notin \scrJ\setminus\scrJ^2$ and the claim holds.

In both cases $\ell\neq4$.  Hence, if $f$ has an algebraic length, necessarily $\ell=3$.
\end{proof}

\begin{prop}[Case $\caseB$]\label{prop!caseBfirstlength}
If $f=x^3+y^4+\scrO_5$ has an algebraic length $\ell$, then $\ell=4$ or $\ell=6$.
\end{prop}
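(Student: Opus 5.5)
We follow the strategy of \ref{rem:overallstrategy}, using the dimension filtrations of \ref{lem:allSurfaceData}. Since $f\cong x^3+\bigO_4$, \ref{prop:Never1or2} already gives $\ell\geq 3$, so it remains to rule out $\ell=3$ and $\ell=5$. Throughout, $z\in\scrZ(\Jac(f))\cap\fJ$ is the central element with $\Jac(f)/\lcl z\rcl\cong e\Uppi e$, and by \ref{lem:keycentreallcases}\eqref{lem:keycentreallcases2} we have $\uppi_4(z)=0$. (If $z$ were a unit the quotient would be zero, so we may assume $z\in\fJ$.)

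\emph{Ruling out $\ell=3$.} With $\uppi_4(z)=0$, \eqref{eq!Akiso} for $k=4$ reads $\Upgamma_4\cong A_4$. The standard basis of $I_4$ is $\{x^2+\bigO_3,\ y^3+\bigO_4\}$ together with the degree~$4$ monomials, since all \Spolys{} lie in $\n^4$. Hence a basis of $A_4$ consists of the monomials of degree $\le 3$ not divisible by $x^2$ or $y^3$, namely $1,x,y,xy,yx,y^2,xyx,xy^2,yxy,y^2x$. So $\dim_\C A_4=10$. But \ref{lem:allSurfaceData}\eqref{lem:allSurfaceData3} gives $\dim_\C\Upgamma_4=9$ when $\ell=3$, a contradiction.

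\emph{Ruling out $\ell=5$.} Here the dimension count must go one level higher, since for $\ell=5$ we have $\dim\Upgamma_4=10=\dim A_4$. We split by \ref{prop:CaseBsplit} into $f\cong x^3+y^4+xyxy^2+\bigO_6$ (case $E_7$) and $f\cong x^3+y^4+\bigO_6$ (case $E^*_{8,6}$).

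In case $E^*_{8,6}$, \ref{lem:keycentreallcases}\eqref{lem:keycentreallcases3} gives $\uppi_5(z)=0$, so $\Upgamma_5\cong A_5$. The standard basis of $I_5$ is $x^2+\bigO_4$, $y^3+\bigO_4$ and the degree~$5$ monomials, as in the proof of \ref{lem:ZA5}. The monomials of degree~$4$ avoiding $x^2$ and $y^3$ are $xyxy,xy^2x,yxyx,yxy^2,y^2xy$, so $\dim A_5=15$. This matches the $\ell=5$ value, so in this case we pass to $k=6$. The basis computed in the proof of \ref{lem:ZA6}\eqref{lem:ZA61} gives $\dim A_6=23$, and by \ref{lem:ZA6}\eqref{lem:ZA61} $\uppi_6(z)\in\Arad^5$. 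Then $\dim A_6/\lcl\uppi_6(z)\rcl\geq 23-8=15$ and $\leq 23$, which does not immediately give $20$ or a contradiction. What we expect instead: since $\uppi_5(z)=0$, the ideal generated by $\uppi_6(z)\in\Arad^5$ in $A_6$ is just its span, because $\Arad^6=0$. So the quotient has dimension $22$ or $23$, whereas $\ell=5$ requires $\dim\Upgamma_6=20$. This is a contradiction.

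In case $E_7$, we first check that the term $xyxy^2$ only alters the standard basis in degree $\geq 4$, and we recompute $\scrZ(A_5)$ and $\scrZ(A_6)$ by the same commutator expansions as in \ref{lem:ZA6}. We expect again that $\uppi_5(z)=0$ and $\uppi_6(z)\in\Arad^5$. Since $\Arad^6=0$ in $A_6$, the ideal $\lcl\uppi_6(z)\rcl$ has dimension at most one, so $\dim A_6/\lcl\uppi_6(z)\rcl\geq\dim A_6-1$. It then suffices to show $\dim A_6\geq 22$ via the standard basis of $I_6$, which contradicts $\dim\Upgamma_6=20$.

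The main obstacle is the $E_7$ subcase. Its centre computation in $A_6$ is not covered by the stated lemmas, and the extra term $xyxy^2$ produces additional \Spolys{} in degree~$5$, for example from the overlap of $x^2+\hot$ with the new derivative terms. These could lower $\dim A_6$ or allow a central element of degree~$4$. We would compute the degree-$5$ part of the standard basis of $I_6$ by hand first, and only then run the commutator argument.
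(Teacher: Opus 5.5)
The exclusion of $\ell\le2$ and $\ell=3$ in your proposal is exactly the paper's argument. Your $E^*_{8,6}$ argument against $\ell=5$ is also sound. One small correction there: $\dim A_6=22$, not $23$, since only $7$ monomials of degree $5$ avoid $x^2$ and $y^3$ (cf.\ Case~1 in the proof of \ref{thm:Length456HalfWay}). The quotient then has dimension $21$ or $22$, which is still not $20$.

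The gap is the $E_7$ subcase, which you leave as an expectation, and the completion you sketch cannot work.

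First, no centre computation will give $\uppi_5(z)=0$ outright. For the length-$4$ potentials in this family, the central element realising the length has $\dim A_5/\lcl\uppi_5(z)\rcl=\dim\Upgamma_5=14$, so $\uppi_5(z)\neq0$. You would have to split on whether $\uppi_5(z)=0$ instead, as in the proof of \ref{prop:withinB}.

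More seriously, $\dim A_6\ge22$ is false. With $\sfg_1=x^2+\tfrac13(yxy^2+y^2xy)+\bigO_5$, the \Spoly{} $x\sfg_1-\sfg_1x=\tfrac13(xyxy^2+xy^2xy-yxy^2x-y^2xyx)+\bigO_6$ is already reduced and has new leading monomial $xyxy^2$. Hence $\dim A_6=15+6=21$.

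Now suppose $\ell=5$. Then $\uppi_5(z)=0$ is forced, since otherwise $\dim A_5/\lcl\uppi_5(z)\rcl=14$. So $\uppi_6(z)\in\Arad^5$ spans an ideal of dimension at most one, and $\dim A_6/\lcl\uppi_6(z)\rcl\in\{20,21\}$. This is compatible with $\dim\Upgamma_6=20$, so dimension counts at $k=6$ cannot exclude $\ell=5$ in this subcase.

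The missing idea is to use a structural invariant rather than a dimension, and to do so at $k=4$, where no splitting is needed.

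In both subcases $A_4\cong\freering/(x^2,y^3,\m^4)$, which contains $x\in\fJ\setminus\fJ^2$ with $x^2=0$.

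For $\ell=5$, \eqref{eqn:theawkwardone} gives $\Upgamma_4\cong\freering/(x^2+xy^2+yxy+y^2x,\ xyx,\ \m^4)$. As in Claim~\ref{v2claim}, this algebra has no such element. Write $v=\upalpha_1x+\upalpha_2y+\bigO_2$. The degree-$2$ part of $v^2$ forces $\upalpha_2=0$. After that, the coefficient of the basis monomial $yxy$ in $v^2$ is $-\upalpha_1^2$, which forces $\upalpha_1=0$.

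This is the paper's proof, and it disposes of $\ell=5$ in one step.
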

\begin{proof}
Necessarily $\ell\geq 3$ by \ref{prop:Never1or2}.  To rule out $\ell=3,5$, consider the isomorphism \eqref{eq!Akiso}  with $k=4$, which reads $\Upgamma_4\cong A_4/\lcl \uppi_4(z)\rcl$. By  \ref{lem:keycentreallcases}\eqref{lem:keycentreallcases2} $\uppi_4(z)=0$, so \eqref{eq!Akiso} becomes $\Upgamma_4\cong A_4$.

Now $A_4=\polyringtwo\big/ (x^2,y^3,\m^4)$ has dimension $10$.  Since by  \ref{lem:allSurfaceData}\eqref{lem:allSurfaceData3} $\dim_\C\Upgamma_4\leq 9$ if $\ell=3$, it follows that $\ell= 3$ would contradict $\Upgamma_4\cong A_4$.   Hence $\ell\neq 3$. If $\ell=5$, then $A_4$ must be isomorphic to $\Upgamma_4$ where $\Upgamma$ is the $\ell=5$ algebra in \eqref{eqn:theawkwardone}.   But whilst $A_4$ admits an element $v\in\scrJ\setminus \scrJ^2$ such that $v^2=0$, $\Upgamma_4$ for the $\ell=5$ algebra in \eqref{eqn:theawkwardone} does not, by a similar calculation to Claim~\ref{v2claim} above.  Hence $\ell\neq 5$. \end{proof}

\begin{prop}[{Within case $\caseB$}]\label{prop:withinB}
The following statements hold.
\begin{enumerate}
\item \textnormal{(Case $E_7$)} If $f\cong x^3 + y^4 + xyxy^2 + \scrO_6$ has algebraic length $\ell$, then $\ell=4$.
\item \textnormal{(Case $E_{8,6}^*$)} If $f\cong x^3 + y^4 +\scrO_6$ has algebraic length $\ell$, then $\ell=6$.
\end{enumerate}
\end{prop}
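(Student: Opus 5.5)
By \ref{prop!caseBfirstlength}, in both cases $\ell\in\{4,6\}$. So it suffices to rule out $\ell=6$ in case $E_7$ and $\ell=4$ in case $E_{8,6}^*$. In each case I will use the isomorphism \eqref{eq!Akiso} with $k=6$, namely $\Upgamma_6\cong A_6/\lcl\uppi_6(z)\rcl$, and compare dimensions. By \ref{lem:allSurfaceData}, $\dim_\C\Upgamma_6=18$ when $\ell=4$ and $\dim_\C\Upgamma_6=21$ when $\ell=6$. So I need $\dim_\C A_6$ in each case, together with control of $\uppi_6(z)$.

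\emph{Case $E_{8,6}^*$.} Here $f\cong x^3+y^4+\scrO_6$. As in the proof of \ref{lem:ZA6}\eqref{lem:ZA61}, the standard basis of $I_6$ consists of $x^2+\bigO_5$, $y^3+\bigO_5$ and monomials of degree $6$. Hence $A_6$ has the $21$ monomials of degree $\le 5$ avoiding $x^2$ and $y^3$ as a basis. By \ref{lem:ZA6}\eqref{lem:ZA61} we have $\uppi_6(z)\in\Arad^5$. I will count its effect on the quotient in two cases.

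\begin{itemize}
\item If $\uppi_6(z)=0$, then $A_6/\lcl\uppi_6(z)\rcl=A_6$ has dimension $21\neq 18$, so $\ell\neq4$.
\item If $\uppi_6(z)\neq0$, the quotient has dimension $20$, since $\Arad^5\cdot\Arad=0$ and so the ideal generated by $\uppi_6(z)$ is one-dimensional. This is neither $18$ nor $21$, which is impossible for any length.
\end{itemize}

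In either case $\ell=4$ is excluded, so $\ell=6$. To keep the two-sided counts safe, I could instead use $k=5$: here $\uppi_5(z)=0$ by \ref{lem:keycentreallcases}\eqref{lem:keycentreallcases3}, and $\dim A_5=16$ (monomials of degree $\le4$ avoiding $x^2,y^3$). This already differs from $\dim\Upgamma_5=14$ for $\ell=4$, so $\ell\neq4$ directly. I will use this cleaner $k=5$ argument.

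\emph{Case $E_7$.} Here $f\cong x^3+y^4+xyxy^2+\scrO_6$, and I want to exclude $\ell=6$. Again by \ref{lem:keycentreallcases}\eqref{lem:keycentreallcases2}, $\uppi_4(z)=0$. First I compute the standard basis of $I_6$ as in \ref{rk:sbn^k}. The derivatives are $3x^2+(yxy^2+y^2xy)+\bigO_5$ and $4y^3+(xyx y+\dots)+\bigO_5$, more precisely $4y^3+xy^2x+ \text{(cyclic terms of } \dcyc_y(xyxy^2))$. The \Spolys{} from the overlaps of $x^2$ and $y^3$ (for instance $x\cdot\dcyc_x f-\dcyc_x f\cdot x$ and $y\,\dcyc_yf-\dcyc_yf\,y$) should produce a new degree-$5$ leading term. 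I expect $A_5$ to have dimension $15$ rather than $16$, so that $\dim_\C A_6\le 19$ or $20$.

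The aim is to show $\dim_\C A_6<21=\dim\Upgamma_6$ for $\ell=6$, which gives a contradiction because $A_6/\lcl\uppi_6(z)\rcl$ is a quotient of $A_6$. Indeed it may already suffice to show $\dim_\C A_5\le 14$, or even just $\dim_\C A_5<15$, since $\ell=6$ forces $\dim\Upgamma_5=15$. I will first try $k=5$ and then fall back to $k=6$.

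The main obstacle is this \Spoly{} computation in case $E_7$: verifying that the $xyxy^2$ term genuinely forces an extra relation in low degree, so that $\dim A_5$ or $\dim A_6$ drops strictly below the corresponding entry $15$ or $21$ of the $\ell=6$ filtration. If the drop only appears in degree $6$, then I will use $k=7$ with $\dim\Upgamma_7=27$ versus an upper bound on $\dim A_7$.
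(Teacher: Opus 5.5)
Your $k=5$ argument for case $E_{8,6}^*$ is exactly the paper's, up to an arithmetic slip. The monomials of degree $\le4$ avoiding $x^2,y^3$ number $1+2+3+4+5=15$, not $16$; those of degree $\le5$ number $22$, not $21$. Since $15\neq14=\dim_\C\Upgamma_5$ for $\ell=4$, the conclusion stands.

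Case $E_7$, however, has a genuine gap, and the route you sketch cannot close it.
\begin{itemize}
\item In $I_5$ the only leading terms below degree $5$ are $x^2$ and $y^3$. The first new $S$-polynomial, $x\,\dcyc_xf-\dcyc_xf\,x=xyxy^2+xy^2xy-yxy^2x-y^2xyx+\bigO_6$, lives in degree $5$. So $\dim_\C A_5=15=\dim_\C\Upgamma_5$ for $\ell=6$, not less.
\item At $k=6$ its leading term $xyxy^2$ is the only new one, so $\dim_\C A_6=21=\dim_\C\Upgamma_6$.
\item At $k=7$ the next leading term $(xy^2)^2$ gives $27=\dim_\C\Upgamma_7$.
\end{itemize}
This is structural. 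The $60$ monomials of \ref{prop:l4finite} (a basis, by \ref{cor:allDims}) number $1,2,3,4,5,6,6,6,6,6,5,4,3,2,1$ in degrees $0,\dots,14$. Their partial sums are exactly the $\ell=6$ sequence in \ref{lem:allSurfaceData}\eqref{lem:allSurfaceData6}. So in the subcase $\uppi_k(z)=0$, no dimension count at any $k$ can exclude $\ell=6$. You need an isomorphism invariant finer than dimension.

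The paper works at $k=5$ and splits on $\uppi_5(z)$.
\begin{itemize}
\item If $\uppi_5(z)\neq0$, then $\dim_\C A_5/\lcl\uppi_5(z)\rcl\le14\neq15$.
\item If $\uppi_5(z)=0$, then $A_5\cong\Upgamma_5\cong\C\langle x,y\rangle/(x^2,y^3,\m^5)$. There $v=x\in\scrJ\setminus\scrJ^2$ satisfies $v^2=0$, whereas $A_5$ has no such element.
\end{itemize}
To prove the latter, write $v=\al_1x+\al_2y+\al_3xy+\al_4yx+\al_5y^2+\bigO_3$. Reduce $v^2$ in the monomial basis using $3x^2\equiv-(yxy^2+y^2xy)$; in $A_5$, every monomial of degree $3$ or $4$ divisible by $x^2$, and every monomial of degree $4$ divisible by $y^3$, vanishes.
\begin{itemize}
\item The degree-$2$ part $\al_1\al_2(xy+yx)+\al_2^2y^2$ forces $\al_2=0$.
\item If $\al_1\neq0$, the degree-$3$ part $\al_1(\al_3+\al_4)xyx+\al_1\al_5(xy^2+y^2x)$ forces $\al_5=0=\al_3+\al_4$.
\item The coefficient of the basis monomial $yxy^2$ in degree $4$ is then $-\tfrac13\al_1^2\neq0$.
\end{itemize}
Hence $\al_1=\al_2=0$, i.e.\ $v\in\scrJ^2$, a contradiction.
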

\begin{proof}
 (1) Note that $A_5=\polyringtwo\big/(\dcyc_xf,\dcyc_yf,\m^5)$, which has dimension 15.   By \ref{prop!caseBfirstlength} either $\ell=4$ or $\ell=6$, so consider the isomorphism \eqref{eq!Akiso} with $k=5$. We will rule out $\ell=6$ in both the following subcases. \\
\emph{Case $\uppi_5(z)\neq 0$}. Since  $\dim_\C A_5=15$, necessarily $\dim_\C A_5/\lcl \uppi_5(z)\rcl \leq 14$.  If $\ell=6$ then due to the $15$ in \ref{lem:allSurfaceData}\eqref{lem:allSurfaceData6}, the left and right sides of  \eqref{eq!Akiso} (with $k=5$) would have different dimensions.  Hence $\ell\neq 6$.\\
\emph{Case $\uppi_5(z)=0$}. In this case, \eqref{eq!Akiso} with $k=5$ reduces to the statement
\begin{equation}
\Upgamma_5\cong A_5= \frac{\polyringtwo}{(3x^2+yxy^2 +y^2xy,  4y^3 + xy^2x +xyxy+yxyx,\m^5)}.\label{eqn:A5notl6}
\end{equation}
If $\ell=6$, then by \ref{def!6algebras} the left hand side is isomorphic to $\polyringtwo\big/ (x^2,y^3,\m^5)$.  Such an isomorphism \eqref{eqn:A5notl6} cannot exist, as we claim that the right hand side has no element $v\in\scrJ\setminus\scrJ^2$ such that $v^2=0$, whereas the left hand side clearly does.  

The claim follows as before. Set $\sfg_1 = x^2 + \tfrac{1}3(yxy^2+y^2xy)\in I_5$. Suppose that $v=\al_1 x+ \al_2 y+ \al_3xy + \al_4yx + \al_5y^2 + \bigO_3\in A_5$ for some $\al_i\in\C$ has $v\in\scrJ\setminus \scrJ^2$ and $v^2=0$.  Expressing $v^2$ in the $\C$-basis, we 
have $v^2-a^2\sfg_1 = \al_1\al_2(xy+yx) + \al_2^2y^2 + \bigO_3$, so that $\al_2=0$, after which
have $v^2-\al_1^2\sfg_1 - \al_1\al_3\sfg_1y - \al_1\al_4\sfg_1$ is equal to
\[
(\al_1\al_3 + \al_1\al_4)xyx + \al_1\al_5(xy^2 + y^2x) + \al_3^2xyxy + \hot
\]
thus necessarily also $\al_1=0$.  

In both cases $\ell\neq6$.  Hence, if $f$ has an algebraic length $\ell$, then $\ell=4$.

\noindent
(2)  Again by \ref{prop!caseBfirstlength} the only options are $\ell=4,6$, and here we rule out $\ell=6$ by considering  the isomorphism \eqref{eq!Akiso} with $k=5$.  By \ref{lem:keycentreallcases}\eqref{lem:keycentreallcases3}  $\uppi_5(z)=0$, and so $\dim_\C A_5\big/\lcl \uppi_5(z)\rcl=\dim_\C A_5=15$.  Given this, if $\ell=4$ then due to the $14$ in \ref{lem:allSurfaceData}\eqref{lem:allSurfaceData4}, the left and right sides of \eqref{eq!Akiso} (with $k=5$) would have different dimensions.  It thus follows that $\ell=6$.
\end{proof}

\begin{prop}[Case $\caseC$]\label{prop!caseCfirstlength}
If $f \cong x^3 + xyxy + \scrO_5$ has algebraic length~$\ell$, then $\ell=5$.
\end{prop}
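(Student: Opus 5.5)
The plan is to show that the isomorphism \eqref{eq!Akiso} with $k=4$ is impossible unless $\ell=5$; this single truncation handles all the unwanted lengths at once. Suppose $z\in\scrZ(\Jac(f))$ realises the algebraic length $\ell$ as in \ref{def:alglength}. By \ref{prop:Never1or2} we already have $\ell\geq 3$, so it remains to exclude $\ell=3,4,6$. By \ref{lem:keycentreallcases}\eqref{lem:keycentreallcases2}, case $\caseC$ gives $\uppi_4(z)=0$, so \eqref{eq!Akiso} with $k=4$ reads $\Upgamma_4\cong A_4$. Taking derivatives of $x^3+xyxy$ and truncating gives
\[
A_4=\polyringtwo\big/(3x^2+2yxy,\ xyx,\ \m^4).
\]
All \Spolys{} here land in $\m^4$, so a $\C$-basis consists of the monomials of degree $\le 3$ not divisible by $x^2$ or $xyx$. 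Counting these in degrees $0,1,2,3$ gives $1+2+3+4$, so $\dim_\C A_4=10$.

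The case $\ell=3$ is excluded by a dimension count. By \ref{lem:allSurfaceData}\eqref{lem:allSurfaceData3}, $\dim_\C\Upgamma_4=9\neq 10$.

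For $\ell=4$ and $\ell=6$ I first identify $\Upgamma_4$ explicitly. When $\ell=4$, \ref{rem:ell4presentations} shows that $\Upgamma_4\cong\polyringtwo/(x^2,y^3,\m^4)$. When $\ell=6$, I take the last presentation in \eqref{eqn:6keyalgebras}, eliminate $z=-x-y$, and observe that $z^5\in\m^5$. Hence $\Upgamma_4$ is again $\polyringtwo/(x^2,y^3,\m^4)$, consistent with the entry $10$ in \ref{lem:allSurfaceData}\eqref{lem:allSurfaceData6}. In both cases $\Upgamma_4$ contains an element $v\in\scrJ\setminus\scrJ^2$ with $v^2=0$, namely $v=x$.

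The step needing care is showing that $A_4$ has no such element, in the spirit of Claim~\ref{v2claim}. I write
\[
v=\al_1x+\al_2y+\al_3xy+\al_4yx+\al_5y^2+\bigO_3
\]
in the monomial basis. The degree-$2$ part of $v^2$ is $\al_1\al_2(xy+yx)+\al_2^2y^2$ after reducing $x^2$ into degree $3$, which forces $\al_2=0$. In degree $3$, the terms $x\cdot xy$ and $yx\cdot x$ become degree $4$ after substituting $x^2=-\tfrac23 yxy$, and so vanish. The term $xyx$ is zero, and $\al_1^2x^2=-\tfrac23\al_1^2\,yxy$. This leaves
\[
v^2=-\tfrac23\al_1^2\,yxy+\al_1\al_5(xy^2+y^2x)+\bigO_4.
\]
Since $yxy$ is a basis element independent of the other monomials, $\al_1=0$, contradicting $v\notin\scrJ^2$. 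Therefore $\Upgamma_4\not\cong A_4$ for $\ell=4,6$, and the only remaining possibility is $\ell=5$.
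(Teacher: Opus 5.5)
Your proposal is correct and follows the paper's proof essentially step for step. You show $\uppi_4(z)=0$ and $\dim_\C A_4=10$, exclude $\ell=3$ because $\dim_\C\Upgamma_4=9$, and exclude $\ell=4,6$ because $\Upgamma_4\cong\polyringtwo/(x^2,y^3,\m^4)$ has a square-zero element of $\scrJ\setminus\scrJ^2$ while $A_4$ has none. You give more detail than the paper, which only says the square-zero claim ``follows easily as before'', and your computation of it is correct: the $y^2$ coefficient of $v^2$ forces $\al_2=0$, and then the $yxy$ coefficient forces $\al_1=0$.
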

\begin{proof}
By definition $A_4=\polyringtwo\big/(x^2+\tfrac{2}{3}yxy, xyx, \m^4)$, and the ideal has standard basis consisting of the two truncated derivatives $x^2 + \tfrac{2}3yxy$ and $xyx$ together with all those monomials of degree~4 not divisible by $x^2$ or $xyx$, and $\dim_\C A_4=10$ follows.
The fact that there is no $v\in A_4$, with $v\in\scrJ\setminus \scrJ^2$ follows easily as before.

The proof now concludes easily: if an algebraic length exists, necessarily $\ell\geq 3$ by \ref{prop:Never1or2}.  The isomorphism \eqref{eq!Akiso} reads $\Upgamma_4\cong A_4/\lcl \uppi_4(z)\rcl$, which by \ref{lem:keycentreallcases}\eqref{lem:keycentreallcases2} equals $A_4$ and thus has dimension $10$; inspecting \ref{lem:allSurfaceData}\eqref{lem:allSurfaceData3} then rules out $\ell=3$.  Now combining \ref{rem:ell4presentations} with the presentation of $\Upgamma_{E,8,6}$ in \eqref{eqn:6keyalgebras}, all the possible $\Upgamma_4$ for either $\ell=4$ or $\ell=6$ have an element $v\in\scrJ\setminus\scrJ^2$ with $v^2=0$. Hence the above claim rules out $\ell=4,6$. 
\end{proof}

\begin{prop}[{Within case $\caseC$}]\label{prop:withingC}
If $f\cong x^3 + xyxy + \upbeta\,xy^4 +\scrO_6$ for some $\upbeta\in\C$, then $f$ is purely algebraic.
\end{prop}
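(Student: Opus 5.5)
Suppose for a contradiction that $f$ has algebraic length $\ell$, witnessed by $z\in\scrZ(\Jac(f))$ with $\Jac(f)/\lcl z\rcl\cong e\Uppi e$. Since $f\cong x^3+xyxy+\scrO_5$, Proposition \ref{prop!caseCfirstlength} forces $\ell=5$, so $e\Uppi e\cong\Upgamma_{E,8,5}$ and, by \ref{lem:allSurfaceData}\eqref{lem:allSurfaceData5}, $\dim_\C\Upgamma_5=15$ and $\dim_\C\Upgamma_6=20$. I will contradict the isomorphism \eqref{eq!Akiso} with $k=6$ (falling back to $k=5$ if that already suffices), using the control on $z$ from \ref{lem:keycentreallcases}\eqref{lem:keycentreallcases4}: $\uppi_5(z)=\uplambda xy^2x$ for some $\uplambda\in\C$.

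The first step is to compute $A_5$ and $A_6$ via the standard bases of Claim \ref{claimSB}. $A_5$ has the $16$ basis monomials found in the proof of \ref{lem:ZA5} (Case $\caseC$), and $A_6$ has $23$ basis monomials by Claim \ref{claimSB}. For $k=5$, $A_5/\lcl\uppi_5(z)\rcl$ is $A_5$ modulo the ideal generated by the degree-4 element $\uplambda xy^2x$. That ideal meets degree $\le4$ in at most the span of $xy^2x$, and everything it generates in higher degree is zero in $A_5$. Hence the quotient has dimension $16$ if $\uplambda=0$ and $15$ if $\uplambda\neq0$. The case $\uplambda=0$ contradicts $\dim\Upgamma_5=15$ immediately.

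So assume $\uplambda\neq0$, and pass to $k=6$. Here $\uppi_6(z)=\uplambda xy^2x+w_5$, with $w_5$ of degree $5$, by \ref{lem:ZA6}\eqref{lem:ZA62}. The quotient $A_6/\lcl\uppi_6(z)\rcl$ is $A_6$ modulo the span of $\uppi_6(z)$ together with the degree-5 elements $x\cdot xy^2x$, $xy^2x\cdot x$, $y\cdot xy^2x$ and $xy^2x\cdot y$, reduced in $A_6$. The first two reduce, via $x^2\equiv -yxy+\dots$, to $-yxy^3x$ and $-xy^2yxy$, up to the relations of $\scrS$. The other two are $yxy^2x$ and $xy^2xy$, which the standard basis identifies or kills. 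I expect the span of these degree-5 images to be small, so that $\dim A_6/\lcl\uppi_6(z)\rcl\geq 23-1-(\text{at most }2)\geq 20$.

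To get a contradiction I need strict inequality, or else a structural invariant if the dimension happens to equal $20$. The main obstacle is this degree-5 bookkeeping: computing exactly which of $yxy^3x$, $xy^3xy$, $yxy^2x$, $xy^2xy$ survive in the $\C$-basis of $A_6$, separately for $\upbeta=0$ and $\upbeta\neq0$ (where $xy^2xy\equiv yxy^2x$ and $xy^3x$ reduces). My first attempt is to show the ideal contributes only $1+1$ dimensions, giving $21\neq20$. If instead the count lands exactly on $20$, I will compare $\Upgamma_6$ against $A_6/\lcl\uppi_6(z)\rcl$ through a finer invariant, namely the dimension of the centre or of $\{v\in\scrJ\setminus\scrJ^2 : v^2\in\scrJ^4\}$, using the presentation \eqref{eqn:theawkwardone} of $\Upgamma_{E,8,5}$. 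If even that fails, I will push the same argument to $k=7$, where \ref{lem:allSurfaceData}\eqref{lem:allSurfaceData5} gives $\dim\Upgamma_7=25$.
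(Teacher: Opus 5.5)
Your route is correct, but it settles the case $\uplambda\neq0$ differently from the paper. The paper stays at $k=5$. There both sides have dimension $15$, so it brings in a finer invariant: $\C\langle x,y\rangle/(\dcyc_xf,\dcyc_yf,xy^2x,\m^5)$ abelianises to an algebra of dimension $8$, whereas the $\ell=5$ algebra $\Upgamma_5$ abelianises to dimension $7$. You instead pass to $k=6$ and use dimension alone. The count does close there, so none of your fallbacks is needed.

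One slip needs fixing. After substituting for $x^2$, the products $x\cdot xy^2x$ and $xy^2x\cdot x$ become $-yxy^3x$ and $-xy^3xy$. These have degree $6$, so they are already zero in $A_6$. Hence the ideal generated by $\uppi_6(z)=\uplambda xy^2x+w_5$ is spanned by $\uppi_6(z)$, $yxy^2x$ and $xy^2xy$.
\begin{itemize}
\item For $\upbeta=0$, the last two are leading terms of $\scrS$, so the quotient has dimension $22$.
\item For $\upbeta\neq0$, Claim~\ref{claimSB} gives $xy^2xy\equiv yxy^2x$ with $yxy^2x$ a basis monomial, so the quotient has dimension $21$.
\end{itemize}
Either way this differs from $20=\dim_\C\Upgamma_6$.

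There is also a version that does not rely on the exact shape of Claim~\ref{claimSB}. Every degree-$5$ relation involved lies in the $3$-dimensional span of $xy^2xy$, $xy^3x$, $yxy^2x$. These relations are the reductions of the $S$-polynomials on the overlaps $x^2yx$ and $xyx^2$, together with $yxy^2x$ and $xy^2xy$. Hence $\dim_\C A_6/\lcl\uppi_6(z)\rcl\geq 16-1+9-3=21$ for every $\upbeta$. This also covers $\upbeta=-1$ in the rescaled form of Claim~\ref{claimSB}. At that value the two $S$-polynomials become proportional and $\dim_\C A_6=24$ rather than $23$, so the Claim's stated count fails there.

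Comparing the two routes:
\begin{itemize}
\item The paper's argument needs only the trivial low-degree standard basis of $A_5$ and two small commutative computations, at the cost of a second invariant.
\item Yours needs the degree-$5$ part of the standard basis of $I_6$, but it is purely numerical.
\end{itemize}
Your approach also gives dimension at least $22$ when $\uplambda=0$, so your $k=5$ step is in fact optional.
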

\begin{proof}
Note first that $\dim_\C A_5=16$, regardless of $\upbeta$: the leading terms of the standard basis in degree $\le4$ are $x^2$ and~$xyx$.  If $f$ has an algebraic length, by \ref{prop!caseCfirstlength} necessarily $\ell=5$.  We thus suppose $\ell=5$ and  consider the isomorphism \eqref{eq!Akiso}  with $k=5$, which reads $\Upgamma_5\cong A_5\big/ \lcl \uppi_5(z)\rcl$.  By \ref{lem:keycentreallcases}\eqref{lem:keycentreallcases4} $\uppi_5(z)=\uplambda xy^2x$ for some $\uplambda\in\C$.   When $\uplambda=0$ the contradiction is then immediate, since the left hand side $\Upgamma_5$ has dimension $15$ by \ref{lem:allSurfaceData}\eqref{lem:allSurfaceData5}, whereas the right hand side has dimension $16$.  To obtain a contradiction when $\uplambda\neq 0$, observe then that  $A_5\big/\lcl \uppi_5(z)\rcl  =\polyringtwo\big/(\dcyc_xf,\,\dcyc_yf, \,xy^2x,\, \m^5)$.  This clearly abelianises to an algebra of dimension of $8$ (with $x^2$ and $xy^3$ pushed into higher degree, or zero), whereas the presentation of $\Upgamma_5$ in \eqref{eqn:6keyalgebras} (for $\ell=5$) abelianises to an algebra of dimension $7$ (with also $y^4$ set to zero).  Thus, regardless of $\uplambda$, an isomorphism cannot exist, and so  $f$ must be purely algebraic.
\end{proof}

\subsection{Summary}

The following is the main result of this section. The statement is slightly subtle: it does not claim that every element listed in the list has the prescribed algebraic length, it only claims that if $\Jac(f)$ admits a central element with associated number $\ell$, then $f\cong g$ where $g$ is some element in the set of possible $g$ stated.

\begin{thm}\label{thm:ReductionPrecise}
If $f\in\ringtwo$ has algebraic length $\ell$, then $\Jac(f)\cong \Jac(g)$ where $g$ is some element in the following list, where all $\upvarepsilon_i\in\{0,1\}$.
\[
\renewcommand{\arraystretch}{1.2}
\medmath{\begin{array}{c@{\hspace{20pt}}l@{\hspace{1pt}}l@{\hspace{15pt}}l}
1 & x^2 + \upvarepsilon_1 y^n && n\geq 2 \\
2 & x^2y+\upvarepsilon_2 y^{2n}+\upvarepsilon_3 y^{2m-1} &&  2n-1\geq m\geq 2 \\
3 & x^3 + xy^3 + \upvarepsilon_1 y^4 + \upalpha y^5& + \,\scrO_6 & \upalpha\in\mathbb{C} \\
4 & x^3 + y^4 + xyxy^2  & +\, \scrO_6\\
5 & x^3 + xyxy + y^5 & +\, \scrO_6\\
6 & x^3 + y^4 & +\, \scrO_6\\
%\bottomrule
\end{array}}
\renewcommand{\arraystretch}{1}
\]
In particular, the following statements hold.
\begin{enumerate}
\item The algebraic length is well defined.  That is, if there exist $z_1,z_2\in\scrZ(\Jac(f))$ such that $\Jac(f)/\lcl z_1\rcl \cong e_1\Uppi_1 e_1$ and $\Jac(f)/\lcl z_2\rcl \cong e_2\Uppi_2e_2$ for some \textnormal{ADE} preprojective algebras $\Uppi_1,\Uppi_2$ and some primitive idempotents $e_1$ and $e_2$ with associated numbers $\ell_1$ and $\ell_2$ in \eqref{eqn:allDynkin}, then $\ell_1=\ell_2$.
\item If $f$ and $h$ have different algebraic lengths, then $\Jac(f)\ncong\Jac(h)$. 
\end{enumerate}
\end{thm}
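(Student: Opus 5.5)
The plan is a case split on the $3$-jet of $f$, then combining the propositions of \S\ref{sec:possalglengths} with the normal forms of \S\ref{sec:ParseOne}.

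\emph{Low order cases.} If $\ord(f)\geq 4$, then $f$ is purely algebraic by \ref{prop:orderfournotalg}, so this case does not occur. If $\ord(f)=2$, or $\ord(f)=3$ with cubic part not cyclically equivalent (after a linear change) to $x^3$, then the normal forms of \cite{BW3} give $\Jac(f)\cong\Jac(g)$ with $g$ one of the Type $A$ forms $x^2+\upvarepsilon_1 y^n$ or Type $D$ forms $x^2y+\upvarepsilon_2y^{2n}+\upvarepsilon_3y^{2m-1}$, including the degenerate options ($\upvarepsilon$'s zero) needed in infinite dimension.

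Two things must be checked here. First, the $A$ forms should occur only for $\ell=1$ and the $D$ forms only for $\ell=2$. For the $A$ forms, $\Jac$ is commutative and $\corank_1=1$, while for $\ell\geq2$ we have $\dim\Upgamma_2=3$ by \ref{lem:allSurfaceData}. For the $D$ forms, I would use the argument of \cite[8.8]{BW3} extended as in \ref{prop:Never1or2}: the $\filt$ dimensions $6,10$ versus at most $5,9$ for $\ell\geq 3$. Since $\Jac(f)$ is noncommutative, $\ell=1$ is also excluded there. Second, within Type $A$ with $\ell=1$ all $n$ are permitted, as listed, so I expect this to be just bookkeeping.

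\emph{The case $f\cong x^3+\scrO_4$.} Here \ref{thm:uptodeg4} splits into cases $E_6$, $\caseA$, $\caseB$, $\caseC$.
\begin{enumerate}
\item Case $\caseA$ is purely algebraic by \ref{prop!x3y5centre}.
\item Case $E_6$ forces $\ell=3$ by \ref{prop!caseE6only3}, and \ref{prop!caseE6} gives the row~$3$ form.
\item In case $\caseB$, \ref{prop:CaseBsplit} splits into $E_7$ and $E_{8,6}^*$, and \ref{prop:withinB} gives $\ell=4$ and $\ell=6$ respectively, yielding rows $4$ and $6$.
\item In case $\caseC$, \ref{prop!caseC} splits into the $\upbeta xy^4$ subcase, which is purely algebraic by \ref{prop:withingC}, and $E_{8,5}$, which has $\ell=5$ by \ref{prop!caseCfirstlength}, giving row~$5$.
\end{enumerate}
Throughout, $\cong$ of $f$ up to $\scrO_k$ means that a unitriangular or scaling automorphism plus cyclic equivalence carries $f$ to $g+\scrO_k$, so $\Jac(f)\cong\Jac(g')$ for the transformed $g'$ by \ref{lem!simauto}. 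This establishes the list.

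\emph{Consequences.} For (1), note that $f$ determines which row it lands in purely from its jet type (order, cubic type, and degree $4$ and $5$ cases). These classes are disjoint and invariant under automorphism and cyclic equivalence, since they are distinguished by invariants such as coranks, dimensions of $A_k$, and noncommutativity. Each row was shown to force a single value of $\ell$, so any two witnesses $z_1,z_2$ give $\ell_1=\ell_2$. For (2), if $\Jac(f)\cong\Jac(h)$, then a central element for one transports to the other with the same quotient, so $h$ has the same algebraic length as $f$. By (1) this value is unique, which proves (2).

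The main obstacle is (1) in the Type $A$/$D$ regime, since the earlier propositions of this paper only treat $x^3+\scrO_4$. I would handle it first by the $\filt$ dimension count at $k=3,4$, which mirrors \ref{prop:Never1or2}.
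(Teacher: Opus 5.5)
Your case split and your treatment of $f\cong x^3+\scrO_4$ match the paper. Order $\ge4$ is excluded by \ref{prop:orderfournotalg}. The rest is sorted by $\corank_2(f)\le1$, $=2$, $=3$ (your ``order $2$ / cubic not $x^3$ / cubic $x^3$''), with Types $A$ and $D$ imported from \cite{BW3}. The $x^3$ branch then runs through \ref{prop!x3y5centre}, \ref{prop!caseE6only3}, \ref{prop:withinB}, \ref{prop!caseCfirstlength} and \ref{prop:withingC}.

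For (1) you only need two things: $f$ lies in at least one class, and each class admits a single $\ell$. The disjointness of the classes, which you assert without proof, is not needed. The case of a linear term, where $\Jac(f)=0$, should be dismissed explicitly. The real gap is in the Type $D$ step, on which (1) depends.

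\emph{Excluding $\ell\ge3$ for Type $D$.} You appeal to the argument of \ref{prop:Never1or2}, but that argument runs the other way. It uses $\uppi_3(z)=0$ from \ref{lem:keycentreallcases}, which you do not have for Type $D$, and your numbers are off ($\dim\Upgamma_3=6$ for every $\ell\ge3$). The correct argument needs nothing about $z$: these algebras grow at worst as $[1,3,5,\dots]$, so $\dim A_3/\lcl\uppi_3(z)\rcl\le5<6$.

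\emph{Excluding $\ell=1$ for Type $D$.} This is the genuine gap. ``$\Jac(f)$ is noncommutative, hence $\ell\neq1$'' is a non sequitur: a noncommutative algebra can have a commutative quotient by a central element. For instance, a central element with a linear term could kill a generator. What is missing is control of $z$, as in \S\ref{sec:centres}; the paper obtains the absence of linear terms from \cite[8.6]{BW3}.

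Here this is elementary, since $\dcyc_xf=xy+yx$ exactly for every $D$ form. If $z\equiv\al x+\be y$ modulo $\fJ^2$, then $[x,z]\equiv2\be\,xy$ and $[y,z]\equiv-2\al\,xy$ modulo $\fJ^3$. Since $xy\ne0$ in $\fJ^2/\fJ^3$, this forces $\al=\be=0$.

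That alone cannot finish at level $2$, because $A_2=\ring/\n^2$ is commutative, so go one step further. Write $\uppi_4(z)=c_1xy+c_2y^2+\bigO_3$. In $A_4$ one has $[y,\uppi_4(z)]=c_1(yxy-xy^2)=-2c_1\,xy^2$. Moreover $xy^2=y^2x\neq0$ in $A_4$, because below degree $4$ the standard basis of $\lcl\dcyc f\rcl$ has only the leading terms $x^2$ and $xy$. Hence $c_1=0$. Then $\Upgamma_3\cong A_3/\lcl c_2y^2\rcl$ still has $xy-yx=2xy\neq0$, contradicting \ref{lem:allSurfaceData}\eqref{lem:allSurfaceData1}.
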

\begin{proof}
If $f$ has a linear term then $\Jac(f)=0$, which does not have algebraic length. Hence we can assume that $f\in\ringtwo_{\geq 2}$.

Since $f$ has algebraic length $\ell$, by \ref{prop:orderfournotalg} $f$ must contain some term of degree $\leq 3$. In the notation of \S\ref{subsec:RadicalsandDim}, $\corank_2(f)\leq 3$. There are three options; the key is that the first two are somewhat implicit in \cite{BW3}, whilst the third relies on all of \S\ref{sec:ParseOne} and \S\ref{sec:AlgLength}. 
\setlength{\leftmargini}{2em}
\begin{enumerate}
\item[(A)] $\corank_2(f)\leq 1$, which by \cite[1.2, above 4.11]{BW3} holds if and only if $f\cong x^2 + \upvarepsilon y^n$.  It is clear by inspection that these Jacobi algebras have a central element $y$ with $\ell=1$. Further, a central element with $\ell\geq 2$ is not possible since then $A_2\big/ \lcl \uppi_2(z)\rcl\cong \Upgamma_2$, but the left hand side is a factor of $\C\llsq y\rrsq\big/(y^2)$, and thus has dimension $\leq 2$, but the right hand side has $\dim_\C\Upgamma_2=3$ by inspecting \ref{lem:allSurfaceData}\eqref{lem:allSurfaceData2}--\eqref{lem:allSurfaceData6}.   Thus every central element of Jacobi algebras within this class can only have $\ell=1$.

\item[(B)] $\corank_2(f)=2$, which by  \cite[1.3]{BW3} (with $x$ and $y$ swapped) holds if and only if $f\cong x^2y+\upvarepsilon_2y^{2n}+\upvarepsilon_3 y^{2m-1}$.  By \cite[1.4]{BW3} there is a known central element with $\ell=2$.  By standard bases (see e.g.\ \cite[4.11]{BW3}) these $\Jac(f)$ grow at worst as $[1,3,5,\hdots]$. Thus any central quotient $e\Uppi e$ grows at worst as $[1,3,5,\hdots]$, from which \ref{lem:allSurfaceData}\eqref{lem:allSurfaceData3}--\eqref{lem:allSurfaceData6} shows that $\ell\geq 3$ is not possible.  A central quotient with $\ell=1$ is also not possible, but for different reasons:  since the Jacobi algebras within this class are known contraction algebras, we may apply \cite[8.6]{BW3} to see that every central element has no linear terms. Thus $\Upgamma_2\cong A_2\big/ \lcl \uppi_2(z)\rcl = A_2$, but the left hand side is commutative for $\ell=1$ (by \ref{lem:allSurfaceData}\eqref{lem:allSurfaceData1}), but clearly $A_2$ is not commutative.  Thus every central element of Jacobi algebras within this class can only have $\ell=2$.

\item[(C)] $\corank_2(f)=3$, which by  \cite[below 1.3]{BW3} holds if and only if $f\cong x^3+\scrO_4$.  By \ref{thm:uptodeg4} this family splits into Cases $E_6$, $\caseA$, $\caseB$ and $\caseC$.  The assumption that $f$ has algebraic length $\ell$, together with \ref{prop!x3y5centre}, rules out Case $\caseA$.  The only options remaining are the following:
\begin{enumerate}
\item Case $E_6$, which by \ref{prop!caseE6only3} can only admit a central element with $\ell=3$.
\item Case $\caseB$, first subcase (of two) in \ref{prop:CaseBsplit}.  By \ref{prop:withinB}, this family ($E_7$) can only admit a central element with $\ell=4$.
\item Case $\caseB$, final subcase in \ref{prop:CaseBsplit}.  By \ref{prop:withinB}, this family ($E_{8,6}^*$) can only admit a central element with $\ell=6$.

\item Case $\caseC$, which by \ref{prop!caseCfirstlength} can only admit a central element with $\ell=5$.  Then \ref{prop:withingC} further refines this, by showing that within Case $\caseC$, only those of the form $x^3 + xyxy + y^5+\scrO_6$ ($E_{8,5}$) can possibly have $\ell=5$, given that the others are purely algebraic.

\end{enumerate}
\end{enumerate}
Given these facts, the proof is routine.  The key is that since $\corank_2(f)\leq 3$ the element $f$ must appear in at least one of (A), (B), (C)(a)(b)(c)(d),  each possible number $\ell$ appears in only one of these options, and within each option, the number is unique.  

For example, if $f$ has algebraic length $1$, then as $f$ must appear somewhere, the only option is that $f$ belongs to (A).  This populates the first line in the list.  Similarly, if $f$ has algebraic length $2$, it must appear somewhere, and (B) is the only option.  This populates the second line.  The proof of the first statement, namely populating the list, continues in this manner.  

\noindent 
(1) This statement now follows using uniqueness within each option. For example, if $f$ admits a central element with $\ell=3$, then by above $f$ must belong to (C)(a). Further, every central element of Jacobi algebras within this class can only have $\ell=3$.

\noindent
(2) This follows immediately from (1).
\end{proof}

\begin{remark}
The implication in \ref{thm:ReductionPrecise} is strict: it is not true that every element in bottom line ($x^3 + y^4+\, \scrO_6$, case $\ell=6$) admits a desired central element.  Only after making the $\ell=6$ family smaller, and after significant extra work, will \ref{thm:ReductionPrecise} finally be upgraded to an if and only if statement; this upgrade does not appear until right at the end, in \ref{rem:converseAlgLength}.  See also \ref{rem:AlmostconverseAlgLength}.  
\end{remark}

%--------------------------------------------------------------------------------------------------------
\section{Funnelling the Normal Forms}\label{sec:FunnellingOne}

For each $f$ in the final four rows of the list in \ref{thm:ReductionPrecise}, this section uses the low degree terms in each case to simplify, or `funnel', the monomials which can appear within $\scrO_6$. Section~\ref{subsec:E6funnel} contains the definitive answer for length three, to arbitrarily high degree. However, to avoid duplication in \S\ref{sec:NecklacIFY} later, in \ref{thm:Length456HalfWay} we only funnel lengths $\ell=4,5,6$ up to small degree, since this is all that is required to determine finite dimensionality of the Jacobi algebra in \S\ref{sec:FD}.   The full funnel for the cases $\ell=4,5,6$, which is needed to construct and classify flops, will be deduced as a consequence of \S\ref{sec:NecklacIFY}.

\subsection{Length Three}\label{subsec:E6funnel}
Set $f_{\rm stub} =  x^3+xy^3+\upvarepsilon y^4+\uplambda y^5$, where $\upvarepsilon\in\{0,1\}$, and throughout this subsection consider $f\cong f_{\rm stub}+\scrO_6$.   The aim is to simplify $f$ into the form $\Jac(f)\cong \Jac(x^3  + xy^3 + p)$ for some monic $p\in\mathbb{C}\llsq y\rrsq_{\geq 4}$, using \ref{main chase lemma}.

Below, Tschirnhausen \ref{tschirnhausen} will be used to clear terms containing $x^2$, and up to cyclic equivalence every monomial not containing $x^2$ is either of the form $xy^{a_1}\hdots xy^{a_t}$ or $y^a$.  Thus to achieve the aim of this section will require all terms of the form $xy^{a_1}\hdots xy^{a_t}$ to be cleared into higher degree, in a way compatible with \ref{main chase lemma}.  

\begin{notation}\label{tonotation}
Set $f=f_{\rm stub} +g_n+\scrO_{n+1}$ with $n\geq 6$, and where $\deg(g_n)=n$.
\begin{enumerate}
\item\label{tonotation1} For degree $n$ monomials $\sfm_1, \sfm_2$, write $\mathsf{m}_1\to \mathsf{m}_2$ if for all $\upalpha\in\C$ there exists a depth $\geq n-5$ unitriangular automorphism $\upphi$ such that
\begin{equation}
\upphi(f) \sim_{n-1} f_{\rm stub}  + ( g_n-\upalpha\mathsf{m}_1+\upmu\upalpha\mathsf{m}_2) +\scrO_{n+1}\label{eqn:arrownotation}
\end{equation}
for some $\upmu\neq0$.  That is, after applying $\upphi$ to $f$ and cyclically permuting in degrees $\geq n-1$ we obtain the right hand side of \eqref{eqn:arrownotation}, which has removed $\upalpha\mathsf{m}_1$ from $g_n$ at the cost of adding in $\upmu\upalpha\mathsf{m}_2$.  Very occasionally (e.g.\ in \S\ref{subsec:L5fullfunnel}) it will be necessary to remember the coefficient $\upmu$, and if so we will write $\mathsf{m}_1\to \upmu\mathsf{m}_2$.
\item\label{tonotation2} For implicit use later, note that if we add arbitrary degree $6,\hdots,n-1$ terms to both $f$ on the left hand side and to $f_{\stub}$ on the right hand side this does not effect the conclusion of \eqref{eqn:arrownotation}, since the depth of $\upphi$ is $\geq n-5$.  
%CALIBRATION: depth n-5 is x --> x + O_{n-4}.  Thus in deg 6 term, if subs into one place, get degree 5 + (n-4) = n+1
\item\label{tonotation2REV} If  $\mathsf{m}_1\to \mathsf{m}_2$, then  $\mathsf{m}_2\to \mathsf{m}_1$, simply by setting $\upbeta\colonequals -\upmu\upalpha$.
\item\label{tonotation3} Also note that if $\mathsf{n}$ is a monomial such that $\mathsf{n}\sim\mathsf{m}$ and $\mathsf{m}\to  \mathsf{m}_2$, then $\mathsf{n}\to\mathsf{m}_2$.
\item Write $\mathsf{m}_1\to \scrO_{n+1}$ if the same conditions in (1) hold, with $\mathsf{m}_2=0$.
\item Write $\mathsf{m}_1\to \mathsf{m}_2,\mathsf{m}_3,\upvarepsilon\mathsf{m}_4$ if in \eqref{eqn:arrownotation}, $\upmu\mathsf{m}_2$ is replaced with $\upmu_2\upalpha\mathsf{m}_2+\upmu_3\upalpha\mathsf{m}_3+\upvarepsilon\upmu_4\upalpha\mathsf{m}_4$, where recall that throughout this subsection $\upvarepsilon\in\{0,1\}$. 
\item Set $(a_1,\hdots,a_n)=xy^{a_1}\hdots xy^{a_t}$, and for simplicity write $(a_1,\hdots,a_n)\to (b_1,\hdots,b_s)$ and $(a_1,\hdots,a_n)\to \scrO_{n+1}$ etc. 
\end{enumerate}
\end{notation}
The notation $\mathsf{m}_1\to \mathsf{m}_2,\mathsf{m}_3$ should be read `$\mathsf{m}_1$ is replaced by a linear combination of $\mathsf{m}_2$ and $\mathsf{m}_3$, in a way compatible with \ref{main chase lemma}'.
\begin{lemma}\label{lem:E6funnelA}
Consider $xy^{a_1}\hdots xy^{a_{t}}=(a_1,\hdots,a_t)$ of degree $n$, where $t\geq 3$.
\begin{enumerate}
\item\label{lem:E6funnelA1} $(1,1,a_3,\hdots,a_{t})\to \upvarepsilon (3,a_3,\hdots,a_{t})$.
\item\label{lem:E6funnelA2} $(2,a_2,\hdots,a_{t})\to (1,a_2,\hdots,a_{t}+1), \upvarepsilon(a_2,\hdots,a_{t-1},a_t+3)$.
\item\label{lem:E6funnelA3} If $a_1\geq 3$, then $(a_1,a_2,\hdots,a_t)\to $
\[
(a_1-2,a_2,\hdots,a_{t-1},a_t+2),\,(a_1-1,a_2,\hdots,a_{t-1},a_t+1),\,\upvarepsilon(a_2,\hdots,a_{t-1},a_1+a_t+1).
\]
\item\label{lem:E6funnelA4} If $n\geq 5$, then $xy^{n-1}\to\upvarepsilon y^{n}$.
\end{enumerate}

\end{lemma}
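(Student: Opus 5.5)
Each part follows one mechanism: apply a depth $\geq n-5$ unitriangular automorphism that alters $x$ or $y$ by a monomial. Its leading effect on $f_{\rm stub}=x^3+xy^3+\upvarepsilon y^4+\uplambda y^5$ is degree $n$, because substituting $y\mapsto y+\upalpha\,\mathsf{h}$ with $\deg\mathsf{h}=n-3$ into $xy^3$ (or into $y^4$) shifts the degree by exactly $n-4$. The term $\uplambda y^5$ only contributes in degree $\geq n+1$, and depth $\geq n-5$ guarantees that the degree $6,\dots,n-1$ terms of $f$ contribute only in degree $\geq n+1$, as in \ref{tonotation}\eqref{tonotation2}. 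Likewise, substituting $x\mapsto x+\upalpha\,\mathsf{h}'$ with $\deg\mathsf{h}'=n-2$ into $x^3$ produces only $x^2$-divisible terms in degree $n$, which Tschirnhausen \ref{tschirnhausen} clears. Its effect on $xy^3$ is $\upalpha\,\mathsf{h}'y^3$.

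The key computation, up to cyclic permutation, is
\[
y\mapsto y+\upalpha\,\mathsf{h}\colon\quad xy^3\mapsto xy^3+\upalpha(\mathsf{h}y^2x+\mathsf{h}yxy+\mathsf{h}xy^2)+\bigO_{n+1},\qquad
y^4\mapsto y^4+4\upalpha\,\mathsf{h}y^3+\bigO_{n+1}.
\]
In each case the unique $x^2$-free monomial generated has the target shape, and the coefficient on $\mathsf{m}_1$ can be set to $-\upalpha$.

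For \eqref{lem:E6funnelA4}, I would take $y\mapsto y+\upalpha y^{n-3}$. This gives $3\upalpha\,xy^{n-1}+4\upvarepsilon\upalpha y^n$ up to $\sim$. After rescaling $\upalpha$, this is $xy^{n-1}\to\upvarepsilon y^n$.

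For \eqref{lem:E6funnelA1}, I would use $x\mapsto x+\upalpha\,\mathsf{h}'$ with $\mathsf{h}'$ chosen so that $\mathsf{h}'y^3$ is cyclically equivalent to an expression $xy^{a_3}\cdots xy^{a_t}\,xy\,xy$. Concretely, $\mathsf{h}'=y^{-}$-free of the form $xy^{a_3}\cdots xy^{a_t}\,xy\,x\cdot y^{-2}$ is not available, so I would instead combine the $y$-substitution $y\mapsto y+\upalpha\,xy^{a_3}\cdots xy^{a_t}x$ with $\mathsf{h}=xy^{a_3}\cdots x$ of degree $n-3$. This produces three terms $\mathsf{h}y^2x$, $\mathsf{h}yxy$ and $\mathsf{h}xy^2$, which cyclically are $(1,1,a_3,\dots)$-type words, rotations of $(2,\dots)$ with $x^2$, and a word containing $x^2$. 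The $x^2$-terms are pushed into higher degree by Tschirnhausen, and the $y^4$-contribution gives $\upvarepsilon(3,a_3,\dots)$.

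For \eqref{lem:E6funnelA2} and \eqref{lem:E6funnelA3}, I would use $\mathsf{h}=y^{a_1-2}xy^{a_2}\cdots xy^{a_t}$ (for $a_1\ge 2$). Then $\mathsf{h}y^2x$ rotates to $(a_1,a_2,\dots,a_t)$. The term $\mathsf{h}yxy$ rotates to $(a_1-1,\dots,a_t+1)$. The term $\mathsf{h}xy^2$ rotates to $(a_1-2,\dots,a_t+2)$, which contains $x^2$ when $a_1=2$ and is then cleared by Tschirnhausen. Finally, $4\upvarepsilon\,\mathsf{h}y^3$ rotates to $(a_2,\dots,a_{t-1},a_t+a_1+1)$. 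This gives exactly the stated arrows; when $a_1=2$ the last term is $(a_2,\dots,a_t+3)$.

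The main obstacle is the bookkeeping. One must check that all cyclic rotations are legitimate in degrees $\geq n-1$ only, and that the Tschirnhausen steps used to clear $x^2$-terms have depth $\geq n-5$ and create no new $x^2$-free degree-$n$ words. This holds because the $x^2$-divisible terms have order $n$, so $p$ in \ref{tschirnhausen} has order $n-2$ and the automorphism has depth $n-3$. Part \eqref{lem:E6funnelA1} is the most delicate: $t\ge3$ is needed to ensure that the chosen $\mathsf{h}$ exists with the target word as its only new $x^2$-free output besides the $\upvarepsilon$-term.
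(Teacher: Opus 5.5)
Your proposal is correct and is essentially the paper's own proof. The paper uses exactly your substitutions: $y\mapsto y-\upalpha\,xAx$ for (1), with $xA=xy^{a_3}\cdots xy^{a_t}$; $y\mapsto y-\upalpha\,xB$ for (2), which is your $\mathsf{h}$ with $a_1=2$; $y\mapsto y-\upalpha C$ with $C=y^{a_1-2}xy^{a_2}\cdots xy^{a_t}$ for (3); and $y\mapsto y-\tfrac{\upalpha}{3}y^{n-3}$ for (4). The paper also writes out the Tschirnhausen $x$-corrections explicitly.

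One small slip: it is $\mathsf{h}xy^2$ that rotates to $(a_1,\dots,a_t)$, and $\mathsf{h}y^2x$ that rotates to $(a_1-2,a_2,\dots,a_t+2)$, not the other way round. Since the three terms enter the sum symmetrically, this does not affect your conclusion.
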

\begin{proof}
(1) Write $(1,1,a_3,\hdots,a_{t})=xyxyxA$, and apply the automorphism $y\mapsto y-\upalpha xAx$, $x\mapsto x+ \frac{\upalpha}{3}(y^2xA + Axy^2)$, which has depth $n-4$.\\
(2) Write $(2,a_2,\hdots,a_{t})=xy^2xB$, and send $x\mapsto x + \frac{\upalpha}{3}By^2$, $y\mapsto y-\upalpha xB$.\\
(3) Write $(a_1,a_2,\hdots,a_{t})=xy^2C$ and send $y\mapsto y-\upalpha C$.\\
(4) Apply the automorphism $y\mapsto y - \tfrac{\upalpha}{3}y^{n-3}$. 
\end{proof}

In what follows, we say that a monomial $xy^{a_1}\hdots xy^{a_t}$ has width $t$.
\begin{cor}\label{lem:E6funnelC}
If $t\geq 3$, then $(a_1,\hdots,a_t)\to \mathsf{m}_1,\hdots,\mathsf{m}_s$ where each $\mathsf{m}_i$ has width $< t$. 
\end{cor}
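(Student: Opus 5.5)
Fix a monomial $(a_1,\hdots,a_t)=xy^{a_1}\hdots xy^{a_t}$ of degree $n$ with $t\geq 3$. The plan is to reduce $(a_1,\hdots,a_t)$ to width $<t$ by repeatedly applying Lemma~\ref{lem:E6funnelA}, inducting on the first exponent in a suitable cyclic rotation. Two facts from Notation~\ref{tonotation} do the bookkeeping. By \ref{tonotation}\eqref{tonotation3} we may replace a monomial by any cyclic rotation before applying an arrow. Arrows also compose: if $\sfm\to\sfm_2,\sfm_3$ and $\sfm_2\to\sfm_4$, then composing the two depth $\geq n-5$ automorphisms (whose composition still has depth $\geq n-5$, with the cyclic permutations absorbed as in the proof of \ref{main chase lemma}) gives $\sfm\to\sfm_4,\sfm_3$, up to rescaling the coefficients. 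So it suffices to show that every width $t$ monomial is carried, after finitely many arrows, to a combination of lower width monomials.

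First rotate cyclically so that the leading exponent is $a_1=\max_i a_i$. Since $n\geq 6$ and $t\geq 3$, every $a_i\geq 1$ for a genuine monomial of the form $(a_1,\hdots,a_t)$, and we may assume $a_1\geq 1$. We then proceed by cases, inducting on $a_1$.

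\textbf{Case $a_1=1$.} Then all $a_i=1$, so the monomial is $(1,1,1,\hdots,1)$. Lemma~\ref{lem:E6funnelA}\eqref{lem:E6funnelA1} sends it to $\upvarepsilon(3,1,\hdots,1)$, which has width $t-1$, and we are done.

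\textbf{Case $a_1=2$.} Lemma~\ref{lem:E6funnelA}\eqref{lem:E6funnelA2} gives $(1,a_2,\hdots,a_t+1)$ together with a term of width $t-1$. In the new width-$t$ term, rotate so that it begins $(a_t+1,1,a_2,\hdots)$, or alternatively so that two $1$'s are adjacent.

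\textbf{Case $a_1\geq 3$.} Lemma~\ref{lem:E6funnelA}\eqref{lem:E6funnelA3} lowers the first exponent by $1$ or $2$ and raises $a_t$, plus a width $t-1$ term.

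The main obstacle is termination. Case \eqref{lem:E6funnelA3} moves mass from $a_1$ to $a_t$, and after re-rotating the maximum may simply reappear, so the argument could cycle. My approach is to avoid always rotating to the maximum. Instead, I would drive the exponent vector toward the pattern $(1,1,\ast,\hdots)$, using \eqref{lem:E6funnelA2} and \eqref{lem:E6funnelA3} to produce adjacent $1$'s. Concretely, apply \eqref{lem:E6funnelA3} repeatedly to a fixed position $a_1$ (without re-rotating) until $a_1\in\{1,2\}$, with all the excess accumulating in $a_t$. If $a_1=2$, apply \eqref{lem:E6funnelA2} to reach $a_1=1$. Then rotate to the next position $a_2$ and repeat. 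Each step is applied to a position other than the one receiving the mass, so positions $a_1,\hdots,a_{t-1}$ can successively be lowered to $1$. Since $t\geq 3$, this yields $(1,1,\ldots)$, which \eqref{lem:E6funnelA1} then kills into width $t-1$. The lex-type measure on $(a_1,\hdots,a_{t-1})$ strictly decreases, so the procedure terminates. Every discarded term has width $<t$. Applying \eqref{lem:E6funnelA2} at position $a_{t-1}$ does add $1$ to the last entry, but positions already set to $1$ are untouched, which I will check carefully.
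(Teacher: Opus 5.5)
There is a genuine gap in your termination procedure: the direction of the mass transfer is wrong. In \ref{lem:E6funnelA}\eqref{lem:E6funnelA2} and \eqref{lem:E6funnelA3}, the entry being lowered is the first one, and the entry being raised is the last one, i.e.\ the cyclic predecessor of the lowered entry. (In the proof of \eqref{lem:E6funnelA3}, writing $(a_1,\hdots,a_t)=xy^2C$ and substituting $y\mapsto y-\upalpha C$ into $xy^3$ produces $xyCy$ and $xCy^2$.)

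So there is no fixed slot $a_t$ that absorbs all the excess. Once you rotate to bring $a_2$ to the front, the monomial reads $(a_2,\hdots,a_t,a_1)$, and the slot receiving the excess is exactly the old $a_1$ you had just lowered to $1$. Hence ``positions already set to $1$ are untouched'' is false, the lex measure on $(a_1,\hdots,a_{t-1})$ does not decrease, and you do not reach $(1,1,\ast,\hdots)$.

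Concretely, start from $(3,3,3)$:
\begin{itemize}
\item Processing slot $1$ gives $(1,3,5)$.
\item Processing slot $2$ (rotated as $(3,5,1)$) gives $(1,5,3)$, i.e.\ $(a_1,a_2,a_3)=(3,1,5)$. This is where your procedure stops for $t=3$, and it is not of the form $(1,1,\ast)$.
\item Processing slot $3$ (rotated as $(5,3,1)$) returns $(3,3,3)$ itself among the outputs, so the procedure cycles.
\end{itemize}

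The repair is to work in the opposite cyclic order, and you only need two adjacent $1$'s.
\begin{itemize}
\item First produce a single $1$ by lowering any one entry with \eqref{lem:E6funnelA3}, followed by \eqref{lem:E6funnelA2} if needed. This terminates because each application strictly lowers the front entry on every branch.
\item Then rotate so that this $1$ sits in the \emph{second} slot, $(a_1,1,a_3,\hdots,a_t)$, and lower the front entry by induction on $a_1$. Here \eqref{lem:E6funnelA3} raises only $a_t$, which is not the second slot because $t\geq 3$, so the $1$ survives. Then \eqref{lem:E6funnelA2} takes $a_1=2$ to $a_1=1$, and \eqref{lem:E6funnelA1} finishes.
\end{itemize}
This is exactly the paper's proof: induction on $\min_i a_i$, with the case $\min=1$ handled by induction on $a_1$ after placing a $1$ in slot two. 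Your Case $a_1=2$ already contains the right rotation $(a_t+1,1,a_2,\hdots)$; it is the general procedure that points the wrong way.

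A minor point: $a_i\geq 1$ does not follow from $n\geq 6$ and $t\geq 3$; for example $x^3y^3=(0,0,3)$. It holds because of the standing convention that monomials divisible by $x^2$ are removed separately by Tschirnhausen \ref{tschirnhausen}.
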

\begin{proof}
The proof proceeds by induction on $c=\mathrm{min}\{a_1,\hdots,a_t\}$.  When $c=1$, by \ref{tonotation}\eqref{tonotation2} via cyclic rotation if necessary we can assume that the monomial is $(a_1,1,a_3,\hdots,a_t)$. If $a_1=1$ we are done by \ref{lem:E6funnelA}\eqref{lem:E6funnelA1}, and if $a_1=2$ apply \ref{lem:E6funnelA}\eqref{lem:E6funnelA2} then \ref{lem:E6funnelA}\eqref{lem:E6funnelA1}.  In all other cases, apply \ref{lem:E6funnelA}\eqref{lem:E6funnelA3} and use induction.   Similarly, when $c=2$, by \ref{tonotation}\eqref{tonotation2} we can assume that $a_1=2$.  But then \ref{lem:E6funnelA}\eqref{lem:E6funnelA2} followed by the $a_1=1$ result just proved gives the statement. 

Hence we can assume that $c>2$, that $a_1$ achieves this minimum, and by induction that the result is true for all monomials $(b_1,\hdots,b_t)$ for which $b_1<a_1$.  Applying the move in  \ref{lem:E6funnelA}\eqref{lem:E6funnelA3} to $(b_1,\hdots,b_t)$, followed by the inductive hypothesis on the first two terms produced (noting that the third term already has smaller width) proves the statement.
\end{proof}

Applying \ref{lem:E6funnelC} repeatedly will result only in monomials of width $2$.  To clear these is slightly more delicate, since it involves tracking coefficients.

\begin{lemma}\label{lem:E6funnelD}
If $(a_1,a_2)$ has degree $n\geq 6$, then $(a_1,a_2)\to xy^{n-1}$.
\end{lemma}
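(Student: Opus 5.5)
The plan is to treat the width-two monomials of fixed degree $n$ as a finite-dimensional system of unknowns, and to produce enough linear relations among them, via moves of the type in \ref{lem:E6funnelA}, to express every one of them as a multiple of $xy^{n-1}$. Set $N\colonequals n-2$ and, for $0\le k\le N$, write $u_k$ for the class of the monomial $(k,N-k)=xy^kxy^{N-k}$. We work modulo the moves of depth $\geq n-5$ permitted in \ref{tonotation}, with $w\colonequals xy^{n-1}$. Two facts are free. First, cyclic rotation gives $u_k=u_{N-k}$ by \ref{tonotation}\eqref{tonotation3}. Second, $u_0=u_N$ is a multiple of $x^2$ up to cyclic equivalence, so Tschirnhausen \ref{tschirnhausen} pushes it into $\scrO_{n+1}$, and hence $u_0\equiv 0$.

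The relations come from the width-two analogue of \ref{lem:E6funnelA}\eqref{lem:E6funnelA3}. For $a\ge 2$, write $(a,N-a)=xy^2C$ with $C=y^{a-2}xy^{N-a}$, which has degree $n-3$, and apply $y\mapsto y-\upalpha C$. This automorphism has depth $n-4\ge n-5$, and $n\geq 6$ guarantees it is a genuine unitriangular automorphism. We then track the degree $n$ change in $f_{\rm stub}$. The term $x^3$ is untouched. Replacing each of the three $y$'s in $xy^3$ by $C$ gives, after cyclic rotation, exactly $u_a+u_{a-1}+u_{a-2}$. The term $\upvarepsilon y^4$ contributes $4\upvarepsilon\, y^3C\sim 4\upvarepsilon\, w$. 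Terms of $\uplambda y^5$ and of degree $\ge 6$ only change in degree $\ge n+1$. This yields, for $2\le a\le N$, a relation
\[
u_a+u_{a-1}+u_{a-2}\equiv 4\upvarepsilon w,
\]
meaning that any one of the three monomials may be traded for a combination of the other two plus a multiple of $w$. The case $a=2$ should coincide with \ref{lem:E6funnelA}\eqref{lem:E6funnelA2}, since by the same reasoning the $x$-substitution there only produces $x^2$-divisible terms or higher-degree terms.

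Next I would solve this linear recurrence. Together with $u_0\equiv0$, the recurrence determines every $u_k$ from $u_1$ modulo multiples of $w$. The homogeneous part has characteristic roots the primitive cube roots of unity, so modulo $w$ the sequence is $3$-periodic, of the form $(0,s,-s,0,s,-s,\dots)$. Imposing the symmetry $u_k=u_{N-k}$ kills $s$. If $N\not\equiv 0\pmod 3$, then $u_N$ is $\pm s$ while also $u_N=u_0=0$. If $N\equiv 0\pmod 3$, then $u_{N-1}=-s$ must equal $u_1=s$. In either case $s=0$, so every $u_k$, in particular $(a_1,a_2)$, is a scalar multiple of $w$. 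Composing the finitely many automorphisms used realises this as a single chain of moves, each of depth $\ge n-5$, exactly in the form required by \ref{tonotation}\eqref{tonotation1} and compatible with \ref{main chase lemma}.

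The main obstacle I expect is bookkeeping rather than ideas. One must check that the cyclic rotations really identify the three substitution terms with $u_a,u_{a-1},u_{a-2}$ with coefficient $1$ each, and that nothing of degree $n$ leaks in from $x^3$ or from the degree-$6$ to degree-$(n-1)$ terms; the latter is \ref{tonotation}\eqref{tonotation2}. One must also confirm the required nonvanishing of the coefficient $\upmu$. When $\upvarepsilon=0$ the argument actually yields $(a_1,a_2)\to\scrO_{n+1}$, and I would record this degenerate case explicitly, since it is at least as strong for the subsequent use. When $\upvarepsilon=1$, the coefficient of $w$ is an explicit solution of the recurrence that can be read off and, if necessary, checked to be nonzero.
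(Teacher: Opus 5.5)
Your argument is correct and is essentially the paper's proof: the paper uses the same automorphisms $y\mapsto y+\upalpha y^{i}xy^{n-4-i}$ to obtain the three-term relations, folds the Tschirnhausen clearing of $x^2y^{n-2}$ into its first automorphism $\upphi$, and uses the cyclic symmetry $xy^kxy^{n-2-k}\sim xy^{n-2-k}xy^k$ to restrict to half the monomials. The only difference is that you solve the resulting recurrence explicitly via its $3$-periodicity, whereas the paper just asserts that the corresponding banded coefficient matrix is nonsingular, so your version in effect supplies that determinant check.
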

\begin{proof}
We provide the proof when $n=2m$ is even; the proof when $n$ is odd is very similar.  

Consider first the automorphism $\upphi$ sending $x\mapsto x-\tfrac{\upalpha}{3}y^{n-2}$ and $y\mapsto y + \upalpha xy^{n-4}$, 
% both terms in \n^{n-3}, so depth n-4.
then
\[
\upphi(f_{\stub})\sim_{n} f_{\stub}+\upalpha xyxy^{n-3} + \upalpha xy^2xy^{n-4}+4\upvarepsilon\upalpha xy^{n-1}+\scrO_{n+1}
\]
Consider next the automorphisms $\upphi_i$ defined by $y\mapsto y+\upalpha y^{i}xy^{n-4-i}$, 
% degree n-3, which is same as for \upphi!  
fixing $x$, defined for all $i$ such that $1\leq i\leq m-2$.  Then for $i$ such that $0\leq i\leq m-3$,
\[
\upphi_i(f_{\stub})\sim_{n} f_{\stub}+\upalpha xy^{i}xy^{n-2-i} + \upalpha xy^{i+1}xy^{n-3-i}+ \upalpha xy^{i+2}xy^{n-4-i} +4\upvarepsilon\upalpha xy^{n-1}+\scrO_{n+1}
\]
% largest i + 2 = (m-3) + 2 = m - 1
% smallest n - 4 - i = 2m - 4 - (m - 3) = m - 1,
%         so far RHS is (m-1,m-1).
whereas for $i=m-2$ 
\[
\upphi_{m-2}(f_{\stub})\sim_{n} f_{\stub}+ 2\upalpha xy^{m-2}xy^{m}+ \upalpha xy^{m-1}xy^{m-1} +4\upvarepsilon\upalpha xy^{n-1}+\scrO_{n+1}
\]
%first x*y^power*x*  matches the index on the map 
We next build a matrix where the columns are indexed by $xy^ixy^{n-2-i}$ for $i=1,\hdots,m-1$, (namely, all possible $(a_1,a_2)$ with degree $n$), and the rows by $\upphi,\upphi_1,\hdots,\upphi_{m-2}$, where the entries capture the coefficients on $xy^ixy^{n-2-i}$ in the above three displayed equations. The first row is $[1,1,0,\hdots,0]$, the second row is $[1,1,1,0,\hdots,0]$, the third $[0,1,1,1,0,\hdots,0]$, and the pattern continues until the bottom row, which is $[0,\hdots,0,2,1]$.  This matrix has non-zero determinant, so by taking suitable linear combinations of the depth $n-4$ automorphisms $\upphi,\upphi_1,\hdots,\upphi_{m-2}$, any particular $xy^ixy^{n-2-i}$ can be pushed into higher degree with (at worst) adding to the $xy^{n-1}$ term.
% n - 2 - (m-1) = 2m -  2 - m + 1 = m - 1 
%
%For $N\ge2$, let $K_N(x,y)=(k_{ij})$ be the $N\times N$ matrix with
%\[
%\medmath{
%k_{ij} = 
%\begin{cases}
%0 & \textrm{if }|i-j| > 1 \\
%1 & \textrm{if $|i-j|\le 1$ and $j<n$} \\
%y & i=n-1, j=n \\
%x & i = j = n
%\end{cases}}
%\]
%where $x$ and $y$ are indeterminates. Thus
%\[
%K_2(x,y) =
%\begin{pmatrix}
%1&y\\1&x\\
%\end{pmatrix} 
%\quad \textnormal{and} \quad
%K_3(x,y) =
%\begin{pmatrix}
%1&1&0\\
%1&1&y\\
%0&1&x
%\end{pmatrix}
%\]
%and analogously for $n\ge4$.
%Clearly $\det K_2(x,y)=x-y$ and $\det K_3(x,y)=-y$, and for $N\ge4$ by expanding along the first row gives
%\[
%\det K_N(x,y) = \det K_{N-1}(x,y) - \det K_{N-2}(x,y)
%\]
%so that inductively $\det K_N(x,y) \in\{\pm x, \pm y, \pm (x-y)\}$.
%In particular, $\det K_N(x,y)=0$ if and only if $xy(x-y)=0$.
%The matrices above are either $K_{2n}(2,1)$ or $K_{2n+1}(1,2)$, so are always nonsingular,
\end{proof}

The following is the main result of this subsection.
\begin{thm}\label{thm:TypeEGeneric}
$\Jac(f_{\rm stub} + \scrO_6)\cong \Jac(x^3  + xy^3 + y^4p)$ for some monic $p\in\mathbb{C}\llsq y\rrsq$.
\end{thm}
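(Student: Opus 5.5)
We build, degree by degree, a sequence satisfying the hypotheses of \ref{main chase lemma}, starting from $\sff_6\colonequals f\cong f_{\rm stub}+\scrO_6$ (so $N=6$). Inductively, suppose $\sff_n\sim_{n-1} x^3+xy^3+y^4p_{<n-4}+g_n+\scrO_{n+1}$ with $g_n$ homogeneous of degree $n\geq 6$. By \ref{tonotation}\eqref{tonotation2}, the lower-degree pure $y$-terms $y^4p_{<n-4}$ do not affect the moves of \ref{lem:E6funnelA}, \ref{lem:E6funnelC} and \ref{lem:E6funnelD}, since those moves use automorphisms of depth $\geq n-5$.

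Up to cyclic equivalence in degree $n$, write $g_n$ as a sum of three kinds of terms: terms divisible by $x^2$, monomials $(a_1,\hdots,a_t)=xy^{a_1}\cdots xy^{a_t}$ with $t\geq1$, and a multiple of $y^n$. The degree-$n$ step then runs in four stages.
\begin{enumerate}
\item Repeatedly apply \ref{lem:E6funnelC}. This lowers width, so after finitely many moves all monomials of width $\geq3$ have been converted to width $\leq 2$.
\item Apply \ref{lem:E6funnelD} to convert every width-$2$ monomial into $xy^{n-1}$.
\item Apply \ref{lem:E6funnelA}\eqref{lem:E6funnelA4} to convert $xy^{n-1}$ into $\upvarepsilon y^n$.
\item Clear the terms containing $x^2$ with Tschirnhausen \ref{tschirnhausen}, applied with $k=3$ and $p$ of order $n-2$. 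This automorphism has depth $n-3\geq n-5$, and it leaves $x^3+xy^3+\dots$ unchanged modulo $\scrO_{n+1}$.
\end{enumerate}
Some moves (for example \ref{lem:E6funnelA}\eqref{lem:E6funnelA1} and the substitution of $x$ in \ref{lem:E6funnelD}) may create new $x^2$-divisible terms in degree $n$. This is harmless: stage 4 comes last in each degree, and the Tschirnhausen automorphism creates nothing new in degree $n$ apart from what it removes.

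After these finitely many moves, each of depth $\geq n-5$ and each interleaved with cycling in degrees $\geq n-1$, we reach $\sff_{n+1}\sim_{n} x^3+xy^3+y^4p_{<n-3}+\scrO_{n+1}$, where the new coefficient of $y^n$ has been absorbed into $p$. This is exactly the data required by \ref{main chase lemma}. Hence $\sff_\infty=\lim\sff_n=x^3+xy^3+y^4p$ exists, with $p\in\C\llsq y\rrsq$, and $\Jac(f)\cong\Jac(\sff_\infty)$.

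It remains to arrange that $p$ is monic, meaning, as I read it, that its lowest term can be normalised to coefficient $1$. If $p=0$ there is nothing to prove. Otherwise write $y^4p=cy^{r}+\hot$ with $c\neq0$. A scaling $x\mapsto ax$, $y\mapsto by$ with $a^3=ab^3$ (to keep the $x^3$ and $xy^3$ coefficients equal) and $c\,b^{r}=a^3$, followed by dividing through by $a^3$ via \ref{prop:scaling}, sets $c=1$. Such $a,b\in\C^*$ exist because we may take $a=b^{3/2}$ and then solve $c\,b^r=b^{9/2}$, which has a nonzero solution whenever $r\neq 9/2$, and $r$ is an integer.

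I expect the main obstacle to be the bookkeeping that the moves in a fixed degree terminate. Each move of \ref{lem:E6funnelA}\eqref{lem:E6funnelA2}--\eqref{lem:E6funnelA3} creates new monomials, and I need the process in degree $n$ to be finite and to leave only $y^n$ and $x^2$-terms. I plan to use the width induction of \ref{lem:E6funnelC} together with the fact that moves only ever produce width $\leq$ current width. I also need to check that \ref{lem:E6funnelD} produces only $xy^{n-1}$, and that its matrix argument, written out for $n$ even, works equally for $n$ odd. Finally, I need to confirm that $\sim_{n-1}$ is preserved at each stage, so that \ref{main chase lemma} applies verbatim.
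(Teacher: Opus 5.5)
Your proposal is correct and follows the paper's proof: the same application of \ref{main chase lemma} with $N=6$, the same degree-$n$ clearing moves (Tschirnhausen, \ref{lem:E6funnelC}, \ref{lem:E6funnelD}, then \ref{lem:E6funnelA}\eqref{lem:E6funnelA4}), and the same scaling with $a=b^{3/2}$ to make $p$ monic. The only deviation is that you clear the $x^2$-terms last rather than first, which is harmless either way; your worry that the other moves create new $x^2$-divisible terms is unfounded, because by the definition of $\to$ in \ref{tonotation} each move only trades the listed monomials (the compensating $x$-substitutions in the proofs of \ref{lem:E6funnelA} and \ref{lem:E6funnelD} cancel such terms).
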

\begin{proof}
By cyclically permuting $g=f_{\rm stub} + \scrO_6$ so that the degree six terms begin with either an $x^2$, or are monomials of the form $xy^{a_1}\hdots xy^{a_t}$, or are $y^6$, we may write
\[
g\sim_6 f_{\rm stub} + \uplambda_6y^6 + (x^2H + \sum_{t=1}^s \upmu xy^{a_1}\hdots xy^{a_t})  + \scrO_7 \colonequals \sff_6.
\]
where $(-)$ has degree six.  The above is the fixed $\sff_N$, where here $N=6$, in \ref{main chase lemma}.  Now, for a fixed $n\geq N$, we will assume that $\sff_n$ has been constructed and has the form 
\begin{equation}
\sff_n = f_{\rm stub} + \sum_{t=6}^{n}\uplambda_t y^t + (x^2H' + \sum_{t=1}^{s'} \upmu xy^{a_1}\hdots xy^{a_t} )+\scrO_{n+1},\label{eqn:sffnl3}
\end{equation}
where $(-)$ has degree $n$.  From this, we claim that we can apply finitely many `$\to$ moves' to $\sff_n$ to produce a power series $\sff_{n+1}$ which has the same form as in \eqref{eqn:sffnl3}, but where $n$ is replaced by $n+1$.  Once this has been achieved, \ref{main chase lemma} applies, and so $\Jac(g)\cong\Jac(\sff_6)\cong\Jac(\sff_\infty)$ where $\sff_\infty \colonequals \lim_{n\geq 6} \sff_n$.  This $\sff_\infty$ clearly has the form $x^3  + xy^3 + p$ for some $\mathbb{C}\llsq y\rrsq_{\geq 4}$.  If $p=0$ we are done, hence suppose that the lowest degree coefficient in $p$ is $c_n\neq0$.  Choose $a,b\in\C^*$ such that $b^{9/2-n}=c_n$ and $a=b^{3/2}$. Then $a^3=ab^3=c_nb^n$, and the scaling automorphism $x\mapsto ax$ and $y\mapsto by$ replaces $p$ by a monic polynomial.

To prove the claim, by \ref{tonotation}\eqref{tonotation2} we can freely use \ref{lem:E6funnelA}--\ref{lem:E6funnelD}.  Now first use Tschirnhausen \ref{tschirnhausen} to push $x^2H'\to \scrO_{n+1}$, then use \ref{lem:E6funnelC} repeatedly to push the $\sum$ onto width two monomials, and from there use \ref{lem:E6funnelD} to push these onto the width one monomial $xy^{n-1}$. By \ref{lem:E6funnelA}\eqref{lem:E6funnelA4}, this then pushes onto (at worst) $y^n$, leaving only some scalar times $y^n$ in degree $n$.  We then cyclically permute the degree $n+1$ terms so that they begin with either an $x^2$, or are monomials of the form $xy^{a_1}\hdots xy^{a_t}$, or are $y^{n+1}$.  Overall, this has only used finitely many `$\to$ moves', and has cyclically permuted only in degrees $n$ and $n+1$.  This proves the claim, and completes the proof.
\end{proof}

\subsection{Lengths 4--6}
The following result is the minimum information required in \S\ref{sec:FD} to establish that all $\Jac(f)$ with algebraic length $\ell\geq 4$ are finite dimensional.  This result will be greatly improved in \S\ref{sec:NecklacIFY}.

\begin{thm}\label{thm:Length456HalfWay}
If $f\in\ringtwo$ has algebraic length $\ell\geq 4$, then $\Jac(f)\cong \Jac(g)$ where $g$ is some element in the following list, where all $\upalpha_*\in\C$ and $\uplambda\in\C^*$.
\[
\renewcommand{\arraystretch}{1.2}
\medmath{\begin{array}{c@{\hspace{30pt}}c@{\hspace{2pt}}c@{\hspace{2pt}}c@{\hspace{2pt}}c@{\hspace{4pt}}c@{\hspace{2pt}}c@{\hspace{4pt}}c@{\hspace{2pt}}c@{\hspace{2pt}}c@{\hspace{2pt}}c@{\hspace{4pt}}c@{\hspace{2pt}}l@{\hspace{2pt}}c@{\hspace{2pt}}c@{\hspace{2pt}}c@{\hspace{2pt}}c@{\hspace{2pt}}c@{\hspace{2pt}}c@{\hspace{2pt}}}
4 && x^3 &+& y^4 &+& xyxy^2 &+& \upalpha_1(xy)^3 &+& \upalpha_2(xy^2)^2  &+& \scrO_7\\
5 && x^3 &+& (xy)^2 &+& y^5 &+& \upalpha_3(xy^2)^2 &+& \upalpha_4y^6 &+& \scrO_{7}\\
6 && x^3 &+& y^4 &&&+& (xy)^3&+& \uplambda(xy^2)^2 &+& \upalpha_5(xy)^3y&+& \upalpha_6(xy)^4&+&\scrO_9
\end{array}}
\renewcommand{\arraystretch}{1}
\]
\end{thm}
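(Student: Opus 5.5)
By \ref{thm:ReductionPrecise}, if $f$ has algebraic length $\ell\geq4$ then $\Jac(f)\cong\Jac(g)$ with $g$ in one of the last three rows of that list:
\[
x^3+y^4+xyxy^2+\scrO_6,\qquad x^3+xyxy+y^5+\scrO_6,\qquad x^3+y^4+\scrO_6 .
\]
In each row I would normalise degree by degree with explicit unitriangular automorphisms, as in the proofs of \ref{prop!caseE6}--\ref{prop!caseC}. That means degree $6$ for $\ell=4,5$, and degrees $6,7,8$ for $\ell=6$. No limiting argument via \ref{main chase lemma} is needed, because the statement keeps an $\scrO_7$ or $\scrO_9$ tail. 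In each target degree $d$, I first cyclically permute the degree $d$ part into the shape $x^2H+\sum \upmu\,(a_1,\dots,a_t)+\upgamma y^d$ as in \eqref{eqn:deg5general}.

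Each monomial class of the form $xy^{a_1}\cdots xy^{a_t}$ or $y^d$ is then treated in one of two ways. It is either killed or kept, using depth $d-3$ (for $\ell=6$, depth $d-4$) substitutions $y\mapsto y+\upalpha\,\mathsf m$ and $x\mapsto x+\upalpha\,\mathsf m'$. To first order these add $\upalpha$ times $\partial$-type corrections from the lower-degree stub. Any terms produced that contain $x^2$ are removed by Tschirnhausen \ref{tschirnhausen}.

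For $\ell=4$, with stub $x^3+y^4+xyxy^2$, the degree $6$ cyclic classes without $x^2$ are $y^6$, $xy^5$, $xyxy^3$, $xy^2xy^2$ and $(xy)^3$. I would proceed as follows:
\begin{itemize}
\item $y^6$ and $xy^5$ are killed by $y\mapsto y+cy^3+c'xy^2$, using the $y^4$ term.
\item $xyxy^3$ is killed by $y\mapsto y+c\,xyx$, using $y^4$. Any cross terms from $xyxy^2$ either contain $x^2$ or lie in degree $\ge7$.
\item What remains is $\upalpha_1(xy)^3+\upalpha_2(xy^2)^2$, as claimed.
\end{itemize}

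For $\ell=5$, with stub $x^3+(xy)^2+y^5$, I would use $y\mapsto y+c\,yxy$ and similar substitutions, exploiting $\dcyc_y(xyxy)=2xyx$ to absorb the classes $xyxy^3$, $(xy)^3$ and $xy^5$. The remaining classes are $(xy^2)^2$ and $y^6$.

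For $\ell=6$, with stub $x^3+y^4$, substitutions $y\mapsto y+\mathsf m$ change $f$ by $4y^3\mathsf m$ modulo cyclic equivalence and higher order. Substitutions $x\mapsto x+\mathsf m$ change $f$ by $3x^2\mathsf m$, which is absorbed into the $x^2$-part and cleared by Tschirnhausen \ref{tschirnhausen}. So in each degree $d$ the achievable targets are, modulo cyclic equivalence, exactly the span of $x^2(\cdot)$ and $y^3(\cdot)$. Equivalently, degree $d$ of $f$ can be reduced to the degree $d$ part of the cyclic quotient of $\freering/(x^2,y^3)$.
\begin{itemize}
\item In degree $6$ the surviving necklaces are $(xy)^3$ and $(xy^2)^2$. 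Let $\upmu$ and $\uplambda$ be their coefficients. Then $\upmu\neq0$, since otherwise the $v^2=0$ test of \ref{prop:withinB} (or a coarser dimension count in $A_6$ against the $\filt$ in \ref{lem:allSurfaceData}\eqref{lem:allSurfaceData6}) would contradict $\ell=6$. Rescaling normalises $\upmu=1$.
\item I also need $\uplambda\neq0$. I would prove this by showing that $\uplambda=0$ forces an extra central element, or a wrong dimension, in $A_7$ or $A_8$ compared with $\filt=[1,3,6,10,15,21,27,\dots]$. This is the main obstacle. I would first test it directly on the abelianisation or corank of $A_7$, mirroring the argument of \ref{prop:withingC}.
\item In degree $7$ the surviving necklace is $(xy)^3y$, with coefficient $\upalpha_5$.
\item In degree $8$ the surviving necklace is $(xy)^4$, with coefficient $\upalpha_6$. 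Other survivors, such as $xyxy^2xy^2$, must be removed using the now-nonzero degree $6$ terms via $y\mapsto y+c\,\mathsf m$ of depth $2$. This needs a small linear-algebra check of invertibility, analogous to the matrix in \ref{lem:E6funnelD}.
\end{itemize}
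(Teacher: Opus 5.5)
Your overall route is the paper's: reduce via \ref{thm:ReductionPrecise}, then in each degree clear $x^2$-divisible monomials by Tschirnhausen and $y^3$-divisible (resp.\ $xyx$-divisible) monomials using the $y^4$ (resp.\ $xyxy$) term. For $\ell=4$ your argument is exactly the paper's. There are, however, two genuine gaps.

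\emph{First gap: the non-vanishing of the degree~$6$ coefficients for $\ell=6$.} Here you have attached the square-zero test of \ref{prop:withinB} to the wrong coefficient.
\begin{itemize}
\item Suppose $\upmu=0\neq\uplambda$. Then $x^2=-\tfrac{2\uplambda}{3}y^2xy^2\in\Arad^5$ in $A_6$, and centrality only gives $\uppi_6(z)=\upgamma x^2$. For $\upgamma\neq0$ the quotient $\C\langle x,y\rangle/(x^2,\,y^2xy^2,\,2y^3+\uplambda(yxy^2x+xy^2xy),\,\m^6)$ has dimension $21=\dim\Upgamma_6$. It contains $v=x$ with $v^2=0$, exactly as $\Upgamma_6$ does. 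So neither your dimension count nor the $v^2=0$ test gives a contradiction, and a finer invariant is needed. For instance, its associated graded for the radical filtration is $\C\langle x,y\rangle/(x^2,y^3,y^2xy^2,\m^6)$, whereas $\Upgamma_6=\C\langle x,y\rangle/(x^2,y^3,(x+y)^5,\m^6)$ is already graded. A graded isomorphism preserving the degree~$2$ and~$3$ relations must be diagonal, and then it cannot carry $(x+y)^5$ to a multiple of $y^2xy^2$ modulo $(x^2,y^3)$.
\item Conversely, $\uplambda\neq0$, which you leave as the unresolved main obstacle, is exactly where the $v^2=0$ test works, already at $k=6$. If $\upmu\neq0=\uplambda$ then $\uppi_6(z)=\upgamma y^3$. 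For $\upgamma=0$ one gets $\dim A_6=22\neq21$. For $\upgamma\neq0$ the quotient $\C\langle x,y\rangle/(x^2+\upmu\,yxyxy,\,y^3,\,xyxyx,\,\m^6)$ has dimension $21$ but no $v\in\scrJ\setminus\scrJ^2$ with $v^2=0$.
\item Every one of these dimension counts presupposes computing $\scrZ(A_7)$ to pin down $\uppi_6(z)$, which your sketch omits. Without that, a central element could cut the $22$-dimensional $A_6$ down to dimension $21$.
\end{itemize}

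\emph{Second gap: the class $xy^5$ for $\ell=5$.} It contains a single $x$, so it is never cyclically of the form $xyx\,\mathsf m$ and cannot be absorbed via $\dcyc_y(xyxy)=2xyx$. It needs the depth~$1$ substitution $y\mapsto y+\upalpha xy$. Its degree~$5$ effect $2\upalpha x^2yxy$ is cancelled by $x\mapsto x-\tfrac{2\upalpha}{3}yxy-\tfrac{\upalpha^2}{3}yx^2y$. The net change in degree~$6$ is $5\upalpha xy^5-\tfrac{4}{3}\upalpha(xy^2)^2$. So $xy^5$ can only be traded for $(xy^2)^2$, which is why the latter survives.

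A similar caveat applies to your degree~$8$ step for $\ell=6$. A substitution $y\mapsto y+\upalpha\mathsf m$ with $\deg\mathsf m=3$ also changes degree~$6$ by $4\upalpha y^3\mathsf m$, so you must take $\mathsf m$ with $y^3\mathsf m\sim0$, e.g.\ $\mathsf m=yxy-xy^2$. Modulo $x^2$- and $y^3$-divisible terms this produces $3\upalpha(xy)^4+2\upalpha\uplambda\,xy(xy^2)^2$. That is the single trade required, and it is where $\uplambda\neq0$ is used.
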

\begin{proof}
For $\ell=4$, by \ref{thm:ReductionPrecise} $\Jac(f)\cong\Jac(g)$ where $g=x^3+y^4+xyxy^2+\scrO_6$.  By Tschirnhausen \ref{tschirnhausen} the  the $x^3$ term can be used to remove from $g$ any monomial in degree six  divisible by $x^2$, at the cost of higher order terms. Similarly, the $y^4$ term can be used to remove any monomial in $g$ of degree six divisible by $y^3$, at the cost of higher order terms.  Up to cyclic rotation, in degree six this leaves only the monomials $(xy)^3$ and $(xy^2)^2$.\\
For $\ell=5$,  by \ref{thm:ReductionPrecise} $\Jac(f)\cong\Jac(g)$ where $g=x^3+xyxy+y^5+\scrO_6$.  Set $f_{\stub}=x^3+xyxy+y^5$, then by Tschirnhausen \ref{tschirnhausen} the $x^3$ term can again be used to clear any monomial in degree six divisible by $x^2$ into higher order, and similarly the $xyxy$ term can be used to clear any monomial in $g$ of degree six divisible by $xyx$ into higher order.  Up to cyclic rotation, this only leaves $xy^5, (xy^2)^2$, and $y^6$ in degree six.  Applying the automorphism $\upphi$ defined by $x\mapsto x - \tfrac{2\upalpha}{3}yxy - \tfrac{\upalpha^2}{3}yx^2y$, $y\mapsto y + \upalpha xy$ (for any $\upalpha\neq 0$),
\[
\upphi(f_{\stub})\sim_6 f_{\stub}  - \tfrac{4\upalpha}{3}(xy^2)^2 + 5\upalpha xy^5+\scrO_7
\]
Thus, choosing $\upalpha$ appropriately, we can also choose between $(xy^2)^2$ and $xy^5$.  Replacing $xy^5$ by $(xy^2)^2$ proves the statement.\\
For $\ell=6$, by \ref{thm:ReductionPrecise} $\Jac(f)\cong\Jac(g)$ where $g=x^3+y^4+\scrO_6$.  By Tschirnhausen \ref{tschirnhausen} the $x^3$ term can be used to push any monomial in degree six divisible by $x^2$ into higher order, and the $y^4$ term can be used to push any monomial in degree six divisible by $y^3$ into higher order. Up to cyclic rotation, this only leaves $(xy)^3$ and $(xy^2)^2$ in degree six, thus $\Jac(f)\cong\Jac(h)$ where $h=x^3+y^4+\upalpha (xy)^3 + \upbeta(xy^2)^2+\scrO_7$
for some $\upalpha,\upbeta\in\C$.  Repeating the above analysis in degree seven, pushing every monomial divisible by $x^2$ and every monomial divisible by $y^3$ into higher order, then up to cyclic permutation, this only leaves $(xy)^3y$. Thus $\Jac(f)\cong \Jac(x^3+y^4+\upalpha (xy)^3 + \upbeta(xy^2)^2+\upalpha_5(xy)^3y+\scrO_8)$ for some $\upalpha_5\in\C$.

We next claim, since $f$ has algebraic length, that both $\upalpha$ and $\upbeta$ must be non-zero.  

\medskip
The proof of the claim is by considering the quotient of $\Jac(f)$ by a central element $z\in\scrZ(\Jac(f))\cap\n$, in a similar way to \S\ref{sec:centres}--\ref{sec:AlgLength}. It splits into cases.\\
\emph{Case 1:  $\upalpha=\upbeta=0$}, so that $h=x^3+y^4+\scrO_7$.  In this case, then in a similar way to \ref{lem:keycentreallcases}\eqref{lem:keycentreallcases3} but raised one degree, $\uppi_6(z)=0$ for any central $z$. Thus $A_6\big/\lcl \uppi_6(z)\rcl=A_6=\tfrac{\C\langle x,y\rangle}{(x^2,y^3,\m^6)}$, which has dimension $22$.   This contradicts \eqref{eq!Akiso} with $k=6$, since for $\ell=6$ we have $\dim_\C\Upgamma_6=21$ by \ref{lem:allSurfaceData}\eqref{lem:allSurfaceData6}.\\
\emph{Case 2:  $\upalpha\neq 0, \upbeta=0$}, so that after rescaling 
%choose $b=\upalpha^{-1/3}$ and $a=\upalpha^{-4/9}$, so that $a^3=b^4=\upalpha a^3b^3$. 
$h\cong x^3+y^4+ (xy)^3 +\upalpha'_5(xy)^3y+\scrO_8$.  As in \S\ref{sec:centres}, it is elementary to check that $\uppi_6(z)=\upgamma y^3$ for some $\upgamma\in\C$.  If $\upgamma=0$, then $\dim_\C A_6\big/\lcl \uppi_6(z)\rcl=22$, which contradicts $\dim_\C\Upgamma_6=21$ as in Case~1. If $\upgamma\neq 0$, then $A_6\big/\lcl \uppi_6(z)\rcl=\C\langle x,y\rangle/I_6$ where $I_6=(x^2+yxyxy,y^3,xyxyx,\m^6)$.  The given generators of $I_6$ are already a standard basis, so determine a monomial $\C$-basis $\scrB$ of $\C\langle x,y\rangle/I_6$ with 21 elements. We show that this is not isomorphic to $\Upgamma_6$ following \S\ref{sec:possalglengths}: there is no $v\in\scrJ\setminus\scrJ^2$ in $A_6\big/\lcl \uppi_6(z)\rcl$ with $v^2=0$.
Indeed expressing $v$ in the ordered basis $\scrB$ as
\[
\medmath{v = \al_1x + \al_2y + \al_3xy + \al_4yx + \hdots + \al_8yxy + \hdots + \al_{10}xyxy + \al_{11}xy^2x + \al_{12}yxyx + \hdots}
\]
then $v^2$ expressed in $\scrB$, after reducing by $x^2 \equiv - yxyxy$ and $y^3\equiv xyxyx\equiv 0$, is
\[
\medmath{v^2 = \al_1\al_2(xy+yx) + \al_2^2y^2 + \al_1(\al_3+\al_4)xyx + \hdots + (-\al_1^2 + \al_2(\al_{10}+\al_{12}) + \al_8(\al_3+\al_4))yxyxy + \hdots}
\]
so that $\al_2=0$, from which it follows at once that $\al_1=0$, and the claim holds.\\
\emph{Case 3: $\upalpha =0, \upbeta\neq 0$}, so that after rescaling 
%Want a^3 = b^4 = A*a^2*b^4
%so choose a = root(1/A), then second equation holds.  Choosing b to be the 4th root of a^3, the first holds. Thus, we can scale!
$h\cong x^3+y^4+ (xy^2)^2 +\upalpha'_5(xy)^3y+\scrO_8$.  In a similar manner to the previous case, now $\uppi_6(z)=\upgamma x^2$. Either $\upgamma=0$ in which case we obtain an immediate contradiction since $\dim_\C A_6\big/\lcl \uppi_6(z)\rcl=22$, or $\upgamma\neq 0$ in which case $A_6\big/\lcl \uppi_6(z)\rcl=\C\langle x,y\rangle/J$ where $J=(x^2,yxyxy,y^3+xyxyx,\m^6)$.  Whilst this quotient again has dimension 21, it too is not isomorphic to $\Upgamma_6=\tfrac{\C\langle x,y\rangle}{(x^2, y^3,(x+y)^5,\m^6)}$ for $\ell=6$. This again contradicts \eqref{eq!Akiso} with $k=6$.

\medskip

As the above three cases have obtained a contradiction, we conclude that $\upalpha,\upbeta\neq 0$, proving the claim.  Thus, since $\upalpha\neq0$, the same rescaling as in Case 2 
%choose $b=\upalpha^{-1/3}$ and $a=\upalpha^{-4/9}$, so that $a^3=b^4=\upalpha a^3b^3$. 
then shows that $\Jac(f)\cong\Jac(f_{\stub} +\upalpha'_5(xy)^3y+\scrO_8)$, where $f_{\stub}\colonequals x^3+y^4+(xy)^3+ \uplambda(xy^2)^2$, $\uplambda\in\C^*$ and $\upalpha'_5\in\C$.

In degree eight, again pushing every monomial divisible by $x^2$ and every monomial divisible by $y^3$ into higher order, then up to cyclic permutation the only remaining monomials are $xy(xy^2)^2$ and $(xy)^5$.  The automorphism $\upphi$ sending $x\mapsto x + \upalpha y^2xyxy + \tfrac{2\upalpha\uplambda}{3}y^3xy^2$ and $y\mapsto y + \upalpha yxy - \upalpha xy^2 + \upalpha^2y^2xyx - \upalpha^2xy^2xy$ (which will become more conceptual in \S\ref{subsec:l6funnel}) for some $\upalpha\in\C$ then gives
\[
\upphi(f_{\stub}+\upalpha_5(xy)^3y)\sim_8  f_{\stub}+\upalpha_5(xy)^3y+   3\upalpha (xy)^4 + 2\upalpha\uplambda xy(xy^2)^2+\scrO_9
\]
Thus, since $\uplambda\neq 0$, choosing $\upalpha$ appropriately we can remove any $xy(xy^2)^2$ term in degree nine at the cost of loading more onto the $(xy)^4$ coefficient, and at the cost of higher order terms. The statement follows.
\end{proof}

\begin{cor}\label{cor:SemiFinalAlgLength}
If $f\in\ringtwo$ has algebraic length $\ell$, then $\Jac(f)\cong \Jac(g)$ where $g$ is some element in the following list, where all $\upvarepsilon_*\in\{0,1\}$ and all $\upalpha_*\in\C$.
\[
\renewcommand{\arraystretch}{1.3}
\medmath{\begin{array}{l@{\hspace{15pt}}lll}
1 & x^2 + \upvarepsilon_1 y^n & n\geq 2, \upvarepsilon\in\{0,1\} \\
2 &x^2y+\upvarepsilon_2 y^{2n}+\upvarepsilon_2y^{2m-1} &   2n-1\geq m\geq 2 \\
3 &x^3 + xy^3 + y^4p & p\in\C\llsq y\rrsq  \\
4 & x^3 + y^4 + xyxy^2 + \upalpha_1(xy)^3 + \upalpha_1(xy^2)^2+\scrO_7\\
5 & x^3 + xyxy + y^5  + \upalpha_3(xy^2)^2 + \upalpha_4y^6 + \scrO_{7}\\
6 & x^3 + y^4 + (xy)^3+ \uplambda(xy^2)^2+ \upalpha_5(xy)^3y  + \scrO_8& \uplambda\in\C^*
\end{array}}
\renewcommand{\arraystretch}{1}
\]
\end{cor}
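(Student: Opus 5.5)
The plan is simply to assemble earlier results row by row, since Corollary~\ref{cor:SemiFinalAlgLength} is a repackaging of \ref{thm:ReductionPrecise}, \ref{thm:TypeEGeneric} and \ref{thm:Length456HalfWay}. Given $f$ of algebraic length $\ell$, \ref{thm:ReductionPrecise} gives $\Jac(f)\cong\Jac(g)$ with $g$ in the corresponding row of that list. For $\ell=1,2$ the rows there are already the rows here, up to relabelling of the $\upvarepsilon_i$ and the conventions on $n,m$. Nothing further is needed, since the normal forms of \cite{BW3} were invoked exactly in cases (A) and (B) of that proof.

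For $\ell=3$, \ref{thm:ReductionPrecise} places $g$ in the form $f_{\rm stub}+\scrO_6$ with $f_{\rm stub}=x^3+xy^3+\upvarepsilon y^4+\upalpha y^5$. Then \ref{thm:TypeEGeneric} gives $\Jac(g)\cong\Jac(x^3+xy^3+y^4p)$ with $p\in\C\llsq y\rrsq$, monic or zero. Note that the theorem's statement says ``monic $p$'', but its proof explicitly allows $p=0$. This is the row-$3$ form, and here $p$ need only lie in $\C\llsq y\rrsq$, so the zero case is covered. Since isomorphism of Jacobi algebras is transitive, $\Jac(f)\cong\Jac(x^3+xy^3+y^4p)$.

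For $\ell=4,5,6$ I would quote \ref{thm:Length456HalfWay} directly, which already produces the required forms. The row-$6$ form in the corollary is a truncation of the theorem's form: the $\upalpha_6(xy)^4$ term is absorbed into $\scrO_8$. The rows for $\ell=4,5$ coincide with the theorem's rows, modulo the typographical reuse of subscripts on $\upalpha$ (the coefficients are arbitrary scalars).

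The only point needing care is the bookkeeping of isomorphism chains and of the parameter ranges, for instance that $\uplambda\in\C^*$ persists in row~6 and that $p=0$ is allowed in row~3. There is no genuine obstacle, as all substantive work was done in \S\ref{sec:AlgLength}--\S\ref{sec:FunnellingOne}.
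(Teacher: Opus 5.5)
Your proposal is correct and is exactly the paper's argument: start from \ref{thm:ReductionPrecise}, do nothing further for $\ell=1,2$, upgrade the $\ell=3$ row via \ref{thm:TypeEGeneric}, and upgrade the $\ell=4,5,6$ rows via \ref{thm:Length456HalfWay}. Your extra remarks are accurate bookkeeping that the paper leaves implicit: $p=0$ is allowed in row~3, the $\upalpha_6(xy)^4$ term is absorbed into $\scrO_8$ in row~6, and the repeated subscripts on $\upvarepsilon_2$ and $\upalpha_1$ are typographical.
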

\begin{proof}
Consider the elements in \ref{thm:ReductionPrecise}. For $\ell=1,2$ there is nothing to do.  For $\ell=3$, the form in \ref{thm:ReductionPrecise} can be improved into the line in the list above using \ref{thm:TypeEGeneric}.  For $\ell=4,5,6$, the forms in \ref{thm:ReductionPrecise} can be improved into the lines stated using \ref{thm:Length456HalfWay}.  
\end{proof}

%-----------------------------------------------------------------------------------------
\section{Bounding the Dimension}\label{sec:FD}

Within the set of $f\in\ringtwo$ which have algebraic length $\ell$, this section determines those that satisfy $\dim_\mathbb{C}\Jac(f)<\infty$.  The answer is surprising: all but one $f$ with $\ell=3$, and \emph{all} $f$ with $\ell\geq 4$ satisfy this strong finiteness condition.

The main novelty is the development of a weak version of Hironaka's standard bases, adapted to the noncommutative setting, which is used to establish upper bounds on the dimension.  Those upper bounds do turn out to be sharp, but to bypass unnecessary and cumbersome (and very hard to read) standard bases verifications, we avoid establishing the precise dimension in this section, deliberately leaving this to \ref{cor:allDims} where more elegant geometric arguments can be used.

\subsection{Weak Standard Basis Strategy}
If $J\subset\ring$ is any two-sided ideal, then the ideal of leading terms
$\LT(J) = \left\{ \LT(h) \mid h \in J\right\}$ satisfies the numerical property 
\[
\dim_\mathbb{C}\frac{\ring}{J + \m^k} = \dim_\mathbb{C}\frac{\ring}{\LT(J) + \m^k}
\]
for any $k\ge0$.

Since $\LT(J)$ is defined using all elements of $J$, it follows given \emph{any} finite number of elements $\mathsf{g}_1,\hdots, \mathsf{g}_s\in J$ that
$( \LT(\mathsf{g}_1),\hdots,\LT(\mathsf{g}_s)) + \m^k \subset \LT(J) + \m^k$,
and so
\begin{equation}
\dim_\mathbb{C}\frac{\ring}{J + \m^k} \le \dim_\mathbb{C}\frac{\ring}{(\LT(\mathsf{g}_1),\hdots,\LT(\mathsf{g}_s)) + \m^k}.\label{eqn:aimGB}
\end{equation}
In what follows, we set $J=\lcl\dcyc f\rcl$ and the strategy is to exhibit $\scrS=\{\mathsf{g}_1,\hdots, \mathsf{g}_s\}\subset J$ for which there exists $N\in\N$ with the right hand side of \eqref{eqn:aimGB} $\leq N$ for all $k$.   Once this is established, it follows immediately that $\dim_\mathbb{C}\Jac(f)\leq N$.  The fact that the $\mathsf{g}_i$ may be found by running standard bases algorithms is interesting, but not relevant here; this section merely exhibits the $\mathsf{g}_1,\hdots, \mathsf{g}_s\in J$ and produces the upper bound $N$.

Throughout this section, congruences $g\equiv h$ of elements $g,h\in\ring$ are taken modulo~$\lcl\dcyc f\rcl$,
where sometimes the more precise decorated notation $g\buildrel{\mathsf{a}}\over{\equiv}h$ is used for congruence modulo the ideal generated by only an element~$\mathsf{a}$.

\subsection{Length Three}\label{subsec:l3bound}
If $f\in\ringtwo$ satisfies $\ell=3$, then by \ref{thm:TypeEGeneric} $f\cong x^3+xy^3+p(y)$ for some monic $p\in\mathbb{C}\llsq y\rrsq_{\geq 4}$.  It turns out that the dimension of $\Jac(f)$ depends only on the order of $p$.  The following is the easiest case, namely $p=0$.

\begin{lemma}\label{lem:l3infinite}
If  $\Jac(f)\cong \Jac(x^3 + xy^3)$, then $\dim_\mathbb{C}\Jac(f)=\infty$.
\end{lemma}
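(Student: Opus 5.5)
Since isomorphic algebras have equal dimension, it suffices to show $\dim_\C\Jac(x^3+xy^3)=\infty$. The plan is to find an infinite dimensional quotient of $\Jac(f)$ with $f=x^3+xy^3$, and the natural candidate is the abelianisation (or a further commutative quotient). The cyclic derivatives are
\[
\dcyc_x f = 3x^2 + y^3,\qquad \dcyc_y f = xy^2+yxy+y^2x .
\]
Any algebra homomorphism $\Jac(f)\to\C\llsq t\rrsq$ (or to any commutative complete local ring) factors through the commutative Jacobi quotient $\C\llsq x,y\rrsq/(3x^2+y^3,\,3xy^2)$. A surjection onto an infinite dimensional algebra therefore gives the result.

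First I would try $x\mapsto 0$. Then $\dcyc_y f\mapsto 0$, but $\dcyc_x f\mapsto y^3$, so this only yields $\C\llsq y\rrsq/(y^3)$, which is finite. Instead I would use the commutative curve $3x^2+y^3=0$, $xy^2=0$. In the commutative quotient $\C\llsq x,y\rrsq/(3x^2+y^3,xy^2)$ we get $y^5=-3x^2y^2=0$, and a finite computation then bounds it, so it is also finite dimensional. The abelianisation is therefore too small, and a noncommutative target is needed.

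The better approach is a map to a matrix algebra over $\C\llsq t\rrsq$, or, more cleanly, a direct proof that the monomials $(xy)^n$, or $(yx)^n$, are linearly independent in $\Jac(f)$. I would look for a representation $\uprho\colon\ringtwo\to M_2(\C\llsq t\rrsq)$ that kills both derivatives and in which $\uprho(x)$ and $\uprho(y)$ are topologically nilpotent. The requirement is that $\uprho(xy)$ is not nilpotent, so that its powers give infinitely many independent elements. Concretely, I would take $Y=\begin{psmallmatrix}0&a\\ b&0\end{psmallmatrix}$ with $a,b\in t\C\llsq t\rrsq$, so that $Y^2=ab\,I$ and $Y^3=abY$. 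I would then solve $3X^2=-Y^3$ and $XY^2+YXY+Y^2X=0$. The second equation reads $ab(2X+YXY/(ab))=0$, so I would look for $X$ anticommuting suitably with $Y$, for instance $X=\begin{psmallmatrix}c&0\\0&-c\end{psmallmatrix}$. This gives $YXY=-cab\begin{psmallmatrix}1&0\\0&-1\end{psmallmatrix}$, and the second relation becomes $2abX-abX=abX$, which is nonzero. So diagonal $X$ fails, and I would adjust by allowing $X=\begin{psmallmatrix}c&d\\ e&-c\end{psmallmatrix}$ with unknown series, and solve the two polynomial matrix equations using Hensel or an Artin-type approximation argument over $\C\llsq t\rrsq$.

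The main obstacle is finding this explicit solution. If matrix representations resist, my fallback would be the weak standard basis method of this section applied in reverse. I would compute the simple reduced standard basis of $\lcl\dcyc f\rcl$ for the homogeneous (weighted homogeneous, with $\deg x=3$, $\deg y=2$) $f$, and show that it stabilises into a finite pattern while leaving infinitely many standard monomials, for example $(y^2x)^n$, or $y^{k}$ interleaved with $x$. Weighted homogeneity makes $\Jac(f)$ graded, so it suffices to show that infinitely many weighted degrees are nonzero. This reduces to verifying that all \Spolys\ of the finite candidate basis reduce to zero, which is a finite Buchberger check.
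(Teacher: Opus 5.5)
As written, this is not yet a proof. Both of your routes stop at exactly the step that carries the content.

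\textbf{The matrix ansatz cannot be repaired.} Keeping $Y=\begin{psmallmatrix}0&a\\ b&0\end{psmallmatrix}$ and solving for a general $X$ by Hensel or Artin approximation cannot work, because no solution with infinite dimensional image exists, even over an algebraic closure of the fraction field of $\C\llsq t\rrsq$. Here $Y^2=abI$ is scalar, so the second relation reads $2abX+YXY=0$. Diagonalising $Y=\mathrm{diag}(\nu,-\nu)$ with $\nu^2=ab$ turns this into $(2\nu^2+\mu_i\mu_j)X_{ij}=0$, that is, $3\nu^2X_{ii}=0$ and $\nu^2X_{12}=\nu^2X_{21}=0$. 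If $ab\neq0$ this forces $X=0$, and then $3X^2+Y^3=abY\neq0$, a contradiction. If $ab=0$ then $Y$ is strictly triangular, the relations force $X$ to be a multiple of the same matrix unit, and the image is finite dimensional. Approximation theorems only lift solutions that already exist, so they cannot help.

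\textbf{The fallback is unexecuted.} Your fallback is the paper's method, but the part you defer as `a finite Buchberger check' is the whole proof. After rescaling to $f=\tfrac13x^3+xy^3$, one must:
\begin{enumerate}
\item find the third standard basis element $xyxy-yxyx$, which comes from reducing the overlap $x\,\dcyc_yf-(\dcyc_xf)\,y^2$ of $x^2$ and $xy^2$;
\item check that the remaining overlaps reduce to zero;
\item read off the standard monomials $y^n,\ y^{n-1}x,\ y^{n-2}xy,\ y^{n-3}xyx$.
\end{enumerate}
Your candidate family $(y^2x)^n$ is not standard, since $(y^2x)^2=y^2(xy^2)x$ is divisible by the leading monomial $xy^2$.

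\textbf{What the matrix route was missing.} $Y$ should not have trace zero. Its two eigenvalues should differ by a primitive cube root of unity, which is exactly what kills $\dcyc_yf$. Let $\upomega^2+\upomega+1=0$, and take $Y=\begin{psmallmatrix}t&0\\0&\upomega t\end{psmallmatrix}$ and $X=\begin{psmallmatrix}0&t\\-t^2/3&0\end{psmallmatrix}$. Then:
\begin{enumerate}
\item $3X^2=-t^3I=-Y^3$;
\item the diagonal entries of $XY^2+YXY+Y^2X$ vanish because $X$ has zero diagonal;
\item its off-diagonal entries are multiples of $t^2(1+\upomega+\upomega^2)=0$.
\end{enumerate}
All entries lie in $t\C\llsq t\rrsq$, so $x\mapsto X$, $y\mapsto Y$ extends to a continuous homomorphism $\uprho\colon\ringtwo\to M_2(\C\llsq t\rrsq)$ with $\uprho(\n^k)\subseteq t^kM_2(\C\llsq t\rrsq)$. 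It kills $(\dcyc f)$, hence also its closure $\lcl\dcyc f\rcl$, because the target is $t$-adically separated. So $\uprho$ factors through $\Jac(f)$. The images $\uprho(y^n)=\mathrm{diag}(t^n,\upomega^nt^n)$ are linearly independent, so $\dim_\C\Jac(f)=\infty$.

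This completes your primary route, and it is genuinely shorter than the paper's standard-basis computation, needing no overlap checks at all. What it does not give is the explicit monomial basis of $\Jac(x^3+xy^3)$. The paper gets that basis from its argument and reuses it as the comparison model in the proof of \ref{prop:l3finite}.
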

\begin{proof}
After rescaling we can assume that $f=\tfrac{1}3x^3 + xy^3$.
The Jacobian ideal $\lcl\dcyc f\rcl$  is the closed ideal generated by two elements $\mathsf{g}_1\colonequals  x^2+y^3$ and $\mathsf{g}_2\colonequals xy^2+yxy+y^2x$, and we simply compute its standard basis.

First note that $x \mathsf{g}_1- \mathsf{g}_1x = xy^3-y^3x$, so that $y^3$ commutes with $x$ modulo~$\mathsf{g}_1$ (which we use without further comment), and furthermore
\[
x\mathsf{g}_1-\mathsf{g}_1x = xy^3-y^3x\buildrel{\mathsf{g}_2}\over{\equiv} -yxy^2-y^2xy-y^3x\buildrel{\mathsf{g}_2}\over{\equiv} (y^2xy+y^3x)-y^2xy-y^3x=0.
\]
Consider $x\mathsf{g}_2-\mathsf{g}_1y^2=xyxy+xy^2x-y^5\buildrel{\mathsf{g}_2}\over{\equiv} xyxy-yxyx-y^2x^2-y^5\buildrel{\mathsf{g}_1}\over{\equiv}xyxy-yxyx$
and set $\mathsf{g}_3\colonequals xyxy-yxyx\in \lcl\dcyc f\rcl$.
The remaining overlaps are trivial:
\begin{align*}
&\mathsf{g}_1yxy - x\mathsf{g}_3=y^4xy+xyxyx\buildrel{\mathsf{g}_3}\over{\equiv} y^4xy + yxyx^2\buildrel{\mathsf{g}_1}\over{\equiv}0,
\quad\text{and $xy\mathsf{g}_3-\mathsf{g}_3xy$ equals}\\
 &\quad -xy^2xyx+yxyx^2y\equiv ((yx)^3+y^2x^2yx)-yxy^5\equiv (y^2x)^2 - y^6x -y^4xy^2\buildrel{\mathsf{g}_2}\over{\equiv}0.
\end{align*}
Thus $\{\mathsf{g}_1,\mathsf{g}_2,\mathsf{g}_3\}$ is the standard basis, and so by \ref{sec:stdbasis} a monomial basis of $\Jac(f)$ is given by the classes of the monomials not divisible by any of the leading terms $x^2, xy^2, xyxy$. Thus a monomial basis of $\Jac(f)$ is given by the monomials
\begin{equation}
\begin{array}{c}
\renewcommand{\arraystretch}{1.2}
\newcommand{\cw}[1]{4pt}
\medmath{
\begin{array}{llllllll}
&&&xyx&yxyx&\hdots&y^{n-3}xyx&\hdots \\
&&xy&yxy&y^2xy&\hdots&y^{n-2}xy&\hdots\\
&x&yx&y^2x&y^3x&\hdots&y^{n-1}x&\hdots\\
1&y&y^2&y^3&y^4&\hdots&y^{n}&\hdots
\end{array}
}\label{eqn:E6full}
\end{array}
\renewcommand{\arraystretch}{1}
\end{equation}
and so $\dim_\mathbb{C}\Jac(f)=\infty$.
\end{proof}
In all other cases, namely those for which $p\neq 0$, we show that that $\dim_\mathbb{C}\Jac(f)<\infty$.
 
\begin{prop}\label{prop:l3finite}
Suppose $f\in\ringtwo$ has $\ell=3$.
Consider $g=x^3+xy^3+y^4p$ with $p\in\C\llsq y\rrsq$ and $\Jac(f)\cong\Jac(g)$,
as given by \ref{cor:SemiFinalAlgLength}. Suppose $p\neq 0$ and consider the set
\[
\scrS = \{x^2, xy^2, xyxy, y^{n-1}xyx, y^{n+2}xy, y^{n+4}x, y^{n+7}\}\subset\ring
\]
where $n\colonequals \ord(p)+4\ge5$.
Then the following statements hold:
\begin{enumerate}
\item\label{E6dimi}
$\scrS\subset\LT(J)$ where $J=\lcl\dcyc g\rcl$.
\item\label{E6dimii}
$\dim_\mathbb{C} \ring/\lcl \scrS\rcl = 4n+12$ and  $\n^{n+7}\subset \lcl \scrS\rcl$.
\end{enumerate}
In particular, $\Jac(f)$ is finite dimensional, $\dim_\mathbb{C}\Jac(f)\le 4n+12$, and $\n^{n+7}\subset\lcl\dcyc f\rcl$.
\end{prop}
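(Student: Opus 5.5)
Rescale first, then work with derivatives. Since $\Jac(f)\cong\Jac(g)$ and both the dimension and the property $\n^{N}\subset\lcl\dcyc\,\cdot\,\rcl$ are invariant under the unitriangular and scaling automorphisms of \ref{lem!simauto} (these preserve the filtration by $\n^i$), it suffices to work with $g$. Moreover, by \ref{prop:scaling} we may rescale $x$, $y$ and $g$ so that $g=\tfrac13x^3+xy^3+c\,y^{n}+\hot$ with $c\neq0$ and $n=\ord(p)+4$. The cyclic derivatives are then
\[
\mathsf{g}_1=x^2+y^3,\qquad \mathsf{g}_2=xy^2+yxy+y^2x+nc\,y^{n-1}+\hot,
\]
where the higher terms of $\mathsf{g}_2$ are powers of $y$ only. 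The plan is to prove \eqref{E6dimi} by exhibiting, for each monomial of $\scrS$, an explicit element of $J$ whose leading term is that monomial. This imitates the standard basis computation in \ref{lem:l3infinite}, except that the $y$-power perturbation now prevents the overlaps from closing up.

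\textbf{Step 1.} The leading terms of $\mathsf{g}_1$ and $\mathsf{g}_2$ are $x^2$ and $xy^2$. Exactly as in \ref{lem:l3infinite}, $x\mathsf{g}_2-\mathsf{g}_1y^2$ reduces to $\mathsf{g}_3=xyxy-yxyx+\hot$, whose leading term is $xyxy$. The extra terms come from $x\cdot y^{n-1}$, which reduces via $\mathsf{g}_2$ to terms $y^{a}xy^b$ of degree $n$.

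\textbf{Step 2.} Next compute the overlaps that were trivial in \ref{lem:l3infinite}: $x\mathsf{g}_1-\mathsf{g}_1x$, $\mathsf{g}_1yxy-x\mathsf{g}_3$, $xy\mathsf{g}_3-\mathsf{g}_3xy$, and the self-overlaps of $\mathsf{g}_2$ and $\mathsf{g}_3$. Reducing them by $\mathsf{g}_1,\mathsf{g}_2,\mathsf{g}_3$, the $p=0$ parts cancel as before. What survives is driven by $c\,y^{n-1}$, and its leading term is $c'\,y^{n-1}xyx$ with $c'$ a nonzero multiple of $c$. This gives $\mathsf{g}_4$.

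\textbf{Step 3.} Overlapping $\mathsf{g}_4$ with $\mathsf{g}_3$ and $\mathsf{g}_2$ (for instance $\mathsf{g}_4y-y^{n-1}\mathsf{g}_3$-type combinations, and $x\mathsf{g}_4$ type ones) produces $\mathsf{g}_5$ with leading term $y^{n+2}xy$. Iterating once more gives $\mathsf{g}_6$ with leading term $y^{n+4}x$. Finally, using $y^3\equiv-x^2$ to convert $y^{n+7}$ into multiples of already-obtained leading terms, one obtains $\mathsf{g}_7$ with leading term $y^{n+7}$.

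At each stage only the lowest-order term needs to be tracked, and one checks that its coefficient is a nonzero multiple of a power of $c$. Only membership in $J$ and the correct leading term are needed, not reducedness, since \eqref{eqn:aimGB} requires only a weak set of leading terms. \emph{I expect this to be the main obstacle}: the degree bookkeeping must be done carefully enough to be sure the predicted monomial really is the minimal surviving term, with nonzero coefficient, for all $n\ge5$ uniformly.

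\textbf{Step 4 (proof of \eqref{E6dimii}).} This is pure combinatorics. The monomials not divisible by any element of $\scrS$ are exactly the columns of the array \eqref{eqn:E6full}, truncated as follows:
\begin{itemize}
\item the $y^kxyx$ row stops at $k\le n-2$;
\item the $y^kxy$ row stops at $k\le n+1$;
\item the $y^kx$ row stops at $k\le n+3$;
\item the $y^k$ row stops at $k\le n+6$.
\end{itemize}
Counting gives $(n-1)+(n+2)+(n+4)+(n+7)=4n+12$. Every monomial of degree $\ge n+7$ is divisible by some element of $\scrS$, so $\n^{n+7}\subset\lcl\scrS\rcl$.

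\textbf{Conclusion.} By \eqref{eqn:aimGB}, for every $k$ we have $\dim_\C\ring/(J+\n^k)\le 4n+12$. Taking $k=n+8$, where the truncated monomial ideal already contains $\n^{n+7}$, a standard Nakayama-type argument shows that $\n^{n+7}\subset J+\n^{n+8}$. Since $J$ is closed, it follows that $\n^{n+7}\subset J$, and hence $\dim_\C\Jac(f)\le4n+12$.
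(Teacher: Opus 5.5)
The monomial count in part (2) is right. Your closing deduction also works: every degree $n+7$ monomial is the leading monomial of some element of $J$, so $\n^{n+7}\subset J+\n^{n+8}$. Multiplying by $x$ and $y$ then gives $\n^{n+7}\subset J+\n^N$ for all $N$, and closedness gives $\n^{n+7}\subset J$. This is a valid repackaging of the paper's convergent reduction. (For passing to $f$, just use that any algebra isomorphism preserves the Jacobson radical and its nilpotency index.)

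The gap is part (1), which is the actual content of the proposition. Steps 2 and 3 are asserted rather than proved. The principle you give for checking them, that only the lowest-order term of each new element needs tracking, fails at the decisive step.

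Concretely, the element with leading term $y^{n+4}x$ comes from the overlap of $y^{n+2}xy$ with $xy^2$, namely $y^{n+2}\mathsf{g}_2-\mathsf{g}_5y$. After reducing its $y^{n+3}xy$ terms by $y\mathsf{g}_5$, its degree $n+4$ part is $(1+\beta+\beta^2)\,y^{n+4}x$. Here $\beta$ is the \emph{subleading} coefficient in the normalised $\mathsf{g}_4=y^{n-1}xyx+\beta y^{n+3}+\scrO_{n+4}$, which propagates into $\mathsf{g}_5=y^{n+2}xy-\beta y^{n+3}x+\scrO_{n+5}$.

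So the relevant coefficient is not automatically a nonzero multiple of a power of $c$. It carries the factor $1+\beta+\beta^2$, which vanishes if $\beta$ is a primitive cube root of unity. You therefore have to compute $\beta$, which the paper does as follows:
\begin{itemize}
\item Evaluate $xy\mathsf{g}_3-\mathsf{g}_3xy$ to precision $\scrO_{n+4}$, not just its leading term. The leading coefficient $c'$ must also be shown nonzero.
\item Split on $n\bmod 3$, since the case split comes from commuting blocks $y^3$ past $x$; the paper gets $\beta=-\tfrac12,1,-2$.
\item Check that the $y^n$ term of $\mathsf{g}_2$ contributes nothing in degree $n+3$. Since $1+\beta+\beta^2\neq0$ for real $\beta$, this is essentially what is needed beyond $c'\neq0$.
\item Treat $n=5$ separately: there $2n-2<n+4$, and $\mathsf{g}_4$ picks up an extra $y^7x$.
\end{itemize}

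Your suggested source for $\mathsf{g}_5$ also gives nothing new: for $n\ge6$, $\mathsf{g}_4y-y^{n-1}\mathsf{g}_3=y\mathsf{g}_4+\scrO_{n+5}$. The overlap that is needed is that of $y^{n-1}xyx$ with $x^2$, i.e.\ $y^{n-1}xy\mathsf{g}_1-\mathsf{g}_4x\equiv y^{n+2}xy-\beta y^{n+3}x$, using that $y^3$ commutes with $x$ modulo $J$. Similarly, $\mathsf{g}_7$ comes from $y^{n+4}\mathsf{g}_1-\mathsf{g}_6x$. Until these computations are done, part (1), and with it the bound $4n+12$, is unproved.
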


\begin{proof}
(1) After rescaling we can assume that $g = \tfrac{1}3x^3 + xy^3 + \tfrac{1}{n}y^n + \tfrac{\upmu}{n+1}y^{n+1} + \bigO_{n+2}$ for some $\upmu\in\C$.  
The Jacobian ideal $J=\lcl\dcyc g\rcl$ is the closed ideal generated by two elements
\[
\mathsf{g}_1 \colonequals x^2 +y^3 \qquad\textrm{and}\qquad 
\sfg_2\colonequals xy^2 + yxy + y^2x + y^{n-1} + \upmu y^n + \scrO_{n+1}.
\]
As in \ref{lem:l3infinite}, by $x\mathsf{g}_1-\mathsf{g}_1x\in J$, $x$ and $y^3$ commute modulo~$J$, which we do without further comment.
To prove \eqref{E6dimi} we construct seven elements of $J$ whose leading terms are the elements of~$\scrS$.  With $\mathsf{g}_1, \sfg_2$ as stated, consider $\sfg_3 \colonequals x\sfg_2-\sfg_2x+y^2\mathsf{g}_1-\mathsf{g}_1y^2\in J$ so that
\[
\sfg_3  = xyxy - yxyx + xy^{n-1} - y^{n-1}x + \upmu xy^n - \upmu y^nx + \scrO_{n+2}
\]
where the higher order terms of $\sfg_3$ are not required below.

\begin{lemma}\label{lem!E6g4}
If $n\ge6$, then there is an element $\sfg_4 = y^{n-1}xyx + \be y^{n+3} + \scrO_{n+4}\in(\mathsf{g}_1,\sfg_2)$ for some $\be\in\C$ with $\be^2+\be+1\ne0$.
\end{lemma}

We use that $x^2\equiv-y^3$ to substitute for $x^2$ in expressions without further comment.

\begin{proof}
In the proof of this lemma, we work to precision $\bigO_{n+4}$ except where specified.
Consider the expression 
\begin{align}
xy\sfg_3-\sfg_3xy &= -xy^2xyx + xyxy^{n-1} - xy^nx + \upmu xyxy^n - \upmu xy^{n+1}x \nonumber\\
&\qquad{} +yxyx^2y - xy^{n-1}xy + y^{n-1}x^2y - \upmu xy^nxy + \upmu y^nx^2y \nonumber\\
&\buildrel{\mathsf{g}_1}\over{\equiv} (-xy^2xyx+yxyx^2y) + (xyxy^{n-1} - xy^{n-1}xy) - xy^nx - y^{n+3} \nonumber\\
&\qquad{} + \upmu\big(xyxy^n - xy^nxy\big) - \upmu xy^{n+1}x  \label{xyCCxy}
\end{align}
since $y^nx^2y\equiv - y^{n+4}$.
Consider first the terms in \eqref{xyCCxy} of degree~6:
\begin{align}
-xy^2xyx+yxyx^2y &\buildrel{\sfg_2}\over\equiv \big((yx)^3+y^2x^2yx+y^{n-1}xyx+\upmu y^nxyx\big) - yxy^5 \nonumber\\
&\buildrel{\mathsf{g}_1}\over{\equiv} \big((yx)^3 - y^6x + y^{n-1}xyx + \upmu y^nxyx\big) - y^4xy^2  \label{Ddeg6terms}\\
&\buildrel{\sfg_2}\over{\equiv} \big((yx)^3 - \cancel{y^6x} + y^{n-1}xyx + \upmu y^nxyx\big) + \big(y^5xy+\cancel{y^6x} + y^{n+3}\big).\nonumber
\end{align}
The first term of \eqref{Ddeg6terms} is
\begin{align}
(yx)^3 = y(xyxy)x &\buildrel{\sfg_3}\over{\equiv} y^2xyx^2 - yxy^{n-1}x + y^nx^2 - \upmu yxy^nx + \upmu y^{n+1}x^2  \nonumber\\
&\buildrel{\mathsf{g}_1}\over{\equiv} -y^5xy - yxy^{n-1}x - y^{n+3} - \upmu yxy^nx \label{yx3terms}
\end{align}
so that \eqref{Ddeg6terms} and \eqref{yx3terms} together give
\begin{equation}\label{Ddeg6reduced}
-xy^2xyx+yxyx^2y \equiv y^{n-1}xyx - yxy^{n-1}x + \upmu\big(y^nxyx-yxy^nx \big).
\end{equation}
Combining \eqref{xyCCxy} and \eqref{Ddeg6reduced}, collected into degree~$n+2$
and $n+3$ parts as indicated, gives
\begin{align}
xy\sfg_3-\sfg_3xy &\equiv \big(xyxy^{n-1} - xy^{n-1}xy + y^{n-1}xyx - yxy^{n-1}x - xy^nx\big) \nonumber\\
&\qquad{} + \big({-y^{n+3}} + \upmu(xyxy^n - xy^nxy + y^nxyx - yxy^nx)  \big). \label{Dmidpoint}
\end{align}
To reduce \eqref{Dmidpoint}, consider the residues $n$ mod $3$ separately. Using only $x^2\equiv -y^3$, easy congruences are indicated in the following table.
\[
\medmath{\begin{array}{c@{\hspace{20pt}}lllll}
& xyxy^{n-1} & -xy^{n-1}xy & y^{n-1}xyx & -yxy^{n-1}x & -xy^nx \\[4pt]
n\equiv0 & y^{n-3}xyxy^2 & -y^{n-3}xy^2xy & y^{n-1}xyx & -y^{n-2}xy^2x & +y^{n+3} \\
n\equiv1 & y^{n-1}xyx & +y^{n+3} & y^{n-1}xyx & +y^{n+3} & -y^{n-1}xyx \\
n\equiv2 & y^{n-2}xyxy & -y^{n-2}xyxy & y^{n-1}xyx & -y^{n-1}xyx & -y^{n-2}xy^2x
\end{array}}
\]
To precision $\bigO_{2n-2}$, any substitution using $\sfg_2$ in monomials of degree $\ge n+2$ replaces $xy^2$ by $-yxy-y^2x$,
and to precision $\bigO_{2n}$ any substitution using $\sfg_3$ replaces $xyxy$ by $yxyx$.
Since $n\ge6$ we have $2n-2\ge n+4$, so we do not lose precision by these steps.
Making these substitutions, and using $\sfg_3$ twice rather than $\sfg_2$ to simplify factors $xyxy^2$, refines the table above to the following.
\[
\medmath{\begin{array}{c@{\hspace{20pt}}ccccc}
& xyxy^{n-1} & -xy^{n-1}xy & y^{n-1}xyx & -yxy^{n-1}x & -xy^nx \\[4pt]
n\equiv0 & y^{n-1}xyx & y^{n-1}xyx-y^{n+3} & y^{n-1}xyx & y^{n-1}xyx-y^{n+3} & y^{n+3} \\
n\equiv1 & y^{n-1}xyx & y^{n+3} & y^{n-1}xyx & y^{n+3} & -y^{n-1}xyx \\
n\equiv2 & y^{n-1}xyx & -y^{n-1}xyx & y^{n-1}xyx & -y^{n-1}xyx & y^{n-1}xyx-y^{n+3}
\end{array}}
\]
Adding up the rows, it follows that the three cases $n\equiv 0, 1, 2$ mod $3$, together with the additional term $-y^{n+3}$ from \eqref{Dmidpoint}, are thus
\[
4y^{n-1}xyx - 2y^{n+3},\quad
y^{n-1}xyx + y^{n+3}
\quad\textrm{and}\quad
y^{n-1}xyx - 2 y^{n+3}
\]
respectively. Thus, there is an element $\sfh_4\equiv xy\sfg_3-\sfg_3xy$ of the form
\[
\sfh_4 = y^{n-1}xyx + \be y^{n+3} + \upmu\big(xyxy^n - xy^nxy + y^nxyx - yxy^nx\big) + \bigO_{n+4}.
\]
In particular, $y^nxyx = y\sfh_4 + \bigO_{n+4} \equiv 0 + \bigO_{n+4}$ up to our working precision $\bigO_{n+4}$.
We use this to push all remaining degree $n+3$ terms of $\sfh_4$ into higher degree, as follows.

Once more, split by residue $n\mod 3$. Applying $x^2\equiv -y^3$ and $xyxy\equiv yxyx$
and $y^nxyx\equiv 0$ (up to precision $\bigO_{n+4}$)
gives the following:
\[
\medmath{\begin{array}{c@{\hspace{20pt}}c@{\hspace{15pt}}c@{\hspace{15pt}}c@{\hspace{15pt}}}
& xyxy^n & -xy^nxy & -yxy^nx \\[4pt]
n\equiv0 & y^nxyx\equiv0 & -y^nx^2y\equiv0 & y^{n+1}x^2\equiv0 \\
n\equiv1 & y^{n}xyx\equiv0 & -y^{n}xyxy\equiv0 & -y^nxyx\equiv0 \\
n\equiv2 & y^{n-2}xyxy^2 & -y^{n-2}xy^2xy & -y^{n-1}xy^2x 
\end{array}}
\]
The three terms in the case $n\equiv2$ are also eliminated by applying $\sfg_2$ or $\sfg_3$ once and then $y^nxyx\equiv 0$. Thus $\sfh_4\equiv y^{n-1}xyx+\upbeta y^{n+3} + \bigO_{n+4} \equalscolon \sfg_4$, which completes the proof.
\end{proof}

The proof above contains the main calculation also for the case $n=5$.
\begin{cor}
If $n=5$, then there is an element $\sfg_4 = y^4xyx + y^7x + \upbeta y^8 + \bigO_9\in(\sfg_1,\sfg_2)$ for some $\upbeta\in\C$ with $\upbeta^2+\upbeta+1\ne0$.
\end{cor}
\begin{proof}
The proof of \ref{lem!E6g4} is valid for $n=5$ up to \eqref{Dmidpoint}.
Commuting $xy^3\equiv y^3x$, the first 4 terms of \eqref{Dmidpoint} cancel at once.
Then, setting $\upbeta=-2$,
\[
-xy^nx-y^{n+3}\equiv -y^3xy^2x-y^8\equiv y^4xyx + y^5x^2 + y^7x -y^8 + \bigO_9 \equiv y^4xyx + y^7x + \upbeta y^8 + \bigO_9
\]
and the remaining terms reduce to zero degree~8 leaving terms in degree~$\ge9$. 
\end{proof}

We resume the general assumption $n\ge5$.
Having identified $\sfg_1,\sfg_2,\sfg_3,\sfg_4\in J$, with leading terms $x^2, xy^2, xyxy, y^{n-1}xyx$ respectively,  construct $\sfg_5\in J$ as
\[
y^{n-1}xy\mathsf{g}_1 - \sfg_4x = y^{n-1}xy^4 - \be y^{n+3}x + \scrO_{n+5}
\buildrel{\mathsf{g}_1}\over{\equiv} y^{n+2}xy - \be y^{n+3}x + \scrO_{n+5} \equalscolon \sfg_5.
\]
(Note that in the case $n=5$, the additional term $y^7x$ appears in $\sfg_4x$ as $y^7x^2\equiv y^{10}$,
and so does not contribute to precision $\bigO_{n+5}$.)
To construct $\sfg_6$, we first identify an intermediate element $\sfh_6\in J$ defined by
\begin{align}
y^{n+2}\sfg_2 - \sfg_5y &= \big(y^{n+3}xy + y^{n+4}x + y^{2n+1} + \upmu y^{2n+2}\big) + \be y^{n+3}xy + \scrO_{n+6} \nonumber \\
&\buildrel{\sfg_5}\over{\equiv}  (1+\be+\be^2) y^{n+4}x + \scrO_{n+6}\equalscolon \sfh_6\label{def:E6F0} 
\end{align}
where $2n+1\ge n+6$ since $n\ge5$.  

\begin{lemma}
There is an element $\sfg_6 = y^{n+4}x + \updelta y^{n+6} + \bigO_{n+7}\in(\mathsf{g}_1,\sfg_2)$
for some $\updelta\in\C$.
\end{lemma}
\begin{proof}
With $\sfh_6\in (\mathsf{g}_1,\sfg_2)$ as defined by \eqref{def:E6F0},
consider a term $\updelta\msf$ of degree $n+6$ in $\sfh_6$ with $\updelta\in\C$.
If $\msf$ is divisible by $x^2$, then we reduce it by $\mathsf{g}_1$ to an element of order $n+7$.
If $\msf = y^axy^b\nsf$ with $a\ge0$, $b\ge2$ and some monomial $\nsf$, then we reduce it by $\sfg_2$ to
\[
\msf \equiv -y^{a+1}xy^{b-1}\nsf - y^{a+2}xy^{b-2}\nsf + \bigO_{n+7}.
\]
If $\msf = y^axyxy\nsf$ with $a\ge0$, then we reduce it by $\sfg_3$ to
\[
\msf \equiv y^{a+1}xyx\nsf + \bigO_{n+7}.
\]
Applying these three reduction steps repeatedly leaves us to consider only the cases
where $\msf$ is one of $y^{n+3}xyx$, $y^{n+4}xy$, $y^{n+5}x$ or $y^{n+6}$.
In the first three cases, $\msf$ reduces into $\bigO_{n+7}$ by $y^4\sfg_4$, $\sfh_6y$ and $y\sfh_6$ respectively.
All that remains in degree~$n+6$ is $\updelta\msf=\updelta y^{n+6}$ for some $\updelta\in\C$, so dividing by $1+\be+\be^2\ne0$ proves the lemma.
\end{proof}

Finally, setting $\sfg_7 \colonequals y^{n+4}\mathsf{g}_1 - \sfg_6x = y^{n+7} + \scrO_{n+8}\in J$
completes the proof of \ref{prop:l3finite}\eqref{E6dimi}.

\noindent
(2) It is immediate by comparison with \eqref{eqn:E6full} that the $4(n+3)$ monomials
\[
\renewcommand{\arraystretch}{1.2}
\medmath{
\begin{array}{llllllllllllll}
&&&xyx&yxyx&\hdots&y^{n-2}xyx \\
&&xy&yxy&y^2xy&\hdots&y^{n-1}xy&y^{n}xy&y^{n+1}xy\\
&x&yx&y^2x&y^3x&\hdots&y^{n}x&y^{n+1}x&y^{n+2}x&y^{n+3}x\\
1&y&y^2&y^3&y^4&\hdots&y^{n+1}&y^{n+2}&y^{n+3}&y^{n+4}&y^{n+5}&y^{n+6}
\end{array}
}
\renewcommand{\arraystretch}{1}
\]
are exactly those monomials not divisible by an element of~$\scrS$
and thus a subset forms a basis of $\Jac(g)\cong\Jac(f)$.

Furthermore, it is immediate that every monomial of degree~$n+7$ is divisible by some element of $\scrS$, and so $\n^{n+7}\subset(\scrS)$. Now suppose $h\in\n^{n+7}$ and consider its leading monomial $\msf=\LM(h)$. By definition of $\scrS$, there are monomials $\asf,\bsf$ so that $\msf = \asf\LM(\sfg_i)\bsf$ for some $i\in\{1,\hdots,7\}$. Thus $h-\LC(h)\asf\sfg_i\bsf$ is a sum of terms of degree $\ge n+7$ strictly greater than $\msf$ in the monomial order. Inductively cancelling off leading terms in this way constructs a sequence $h-\sum_{k=1}^{s} \al_k\asf_k\sfg_{i_k}\bsf_k$ which converges to zero as $s\rightarrow\infty$. By this reduction process $h\in\lcl\dcyc g\rcl$, thus $\n^{n+7}\subset\lcl\dcyc g\rcl$. The same then follows for $\lcl\dcyc f\rcl$ as claimed, because $\n/\lcl \dcyc g\rcl$ and $\n/\lcl \dcyc f\rcl$ are the respective Jacobson radicals, and their orders of nilpotency are preserved under isomorphism.
\end{proof}

We take the case $n=4$ separately, since the result is very slightly different:
it has $y^{10}\in\scrS$ rather than~$y^{n+7}=y^{11}$, and so the dimension bound reduces by one.
\begin{prop}\label{prop:l3finite4}
Suppose $f\in\ringtwo$ has $\ell=3$.
Consider $g=x^3+xy^3+y^4 + p$ with $p\in\C\llsq y\rrsq$, $\ord(p)\ge5$ and $\Jac(f)\cong\Jac(g)$,
as given by \ref{cor:SemiFinalAlgLength}. Consider the set
\[
\scrS = \{x^2, xy^2, xyxy, y^3xyx, y^6xy, y^8x, y^{10}\}\subset\ring.
\]
Then the following statements hold:
\begin{enumerate}
\item\label{E6Fourdimi}
$\scrS\subset\LT(J)$ where $J=\lcl\dcyc g\rcl$.
\item\label{E6Fourdimii}
$\dim_\mathbb{C} \ring/\lcl \scrS\rcl = 27$ and  $\n^{10}\subset \lcl \scrS\rcl$.
\end{enumerate}
In particular, $\Jac(f)$ is finite dimensional, $\dim_\mathbb{C}\Jac(f)\le 27$, and $\n^{10}\subset\lcl\dcyc f\rcl$.
\end{prop}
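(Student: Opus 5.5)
The proof will follow the template of \ref{prop:l3finite}, specialised to $n=4$, and track precisely where the generic argument breaks. After rescaling, take $g=\tfrac13x^3+xy^3+\tfrac14y^4+\tfrac{\upmu}{5}y^5+\scrO_6$. Then $J$ is the closed ideal generated by $\sfg_1=x^2+y^3$ and $\sfg_2=xy^2+yxy+y^2x+y^3+\upmu y^4+\scrO_5$. As before, $\sfg_3\colonequals x\sfg_2-\sfg_2x+y^2\sfg_1-\sfg_1y^2$ has leading term $xyxy$; here the extra terms $xy^3-y^3x$ vanish modulo $\sfg_1$, so $\sfg_3=xyxy-yxyx+\upmu(xy^4-y^4x)+\scrO_6$.

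For $\sfg_4$, with leading term $y^3xyx$, I will redo the computation of $xy\sfg_3-\sfg_3xy$ from \ref{lem!E6g4}. The step $2n-2\ge n+4$ fails when $n=4$, so every substitution must be carried out honestly to precision $\bigO_8$. In particular, the $y^{n-1}=y^3$ term of $\sfg_2$ now sits in low degree and feeds back into the degree-$6$ and degree-$7$ calculations. I expect the output to be an element $y^3xyx+\upbeta_1 y^6x+\upbeta_2 y^7+\bigO_8$, possibly with further lower terms that can be reduced by $\sfg_1,\sfg_2,\sfg_3$.

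Next, I form $\sfg_5$ from $y^3xy\sfg_1-\sfg_4x$, reducing by $\sfg_1$ to get leading term $y^6xy$. Then $y^6\sfg_2-\sfg_5y$, reduced by $\sfg_5$, produces a multiple of $y^8x$ with a scalar analogous to $1+\be+\be^2$, and this scalar must be checked to be nonzero. The mechanism that yields $y^{10}$ instead of $y^{11}$ should be that, with $n=4$, the $y^{n-1}$ term in $\sfg_2$ contributes already in degree $9$. So I expect $\sfg_6=y^8x+\updelta y^9+\bigO_{10}$, where the $y^9$ term can now be handled differently, or else $y^{10}$ is produced directly by combining $y^8\sfg_1-\sfg_6x$ with a $y^7\sfg_2$-type reduction. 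The reduction lemma, which clears degree-$(n+6)$ monomials using $\sfg_1,\sfg_2,\sfg_3$, $y^k\sfg_4$, and $\sfh_6$, carries over verbatim.

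The main obstacle is the degree bookkeeping in the $\sfg_4$ and $\sfg_7$ steps, because low-degree terms no longer separate from the ones being tracked. In particular, showing that the final scalar in front of $y^{10}$ is nonzero requires an explicit computation, and I would do this first, by computer-style standard basis reduction to degree $10$.

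Part (2) is then a direct count. The monomials avoiding $\scrS$ are:
\begin{itemize}
\item $y^a$ for $0\le a\le 9$, giving $10$;
\item $y^ax$ for $0\le a\le 7$, giving $8$;
\item $y^axy$ for $0\le a\le 5$, giving $6$;
\item $y^axyx$ for $0\le a\le 2$, giving $3$.
\end{itemize}
This totals $27$. Every degree-$10$ monomial is divisible by an element of $\scrS$. The final statements follow exactly as in \ref{prop:l3finite}, via \eqref{eqn:aimGB} and the leading-term cancellation argument.
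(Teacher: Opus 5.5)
Your route is the paper's route: the same seven elements $\sfg_1,\dots,\sfg_7$, built from the same $S$-polynomials. Your count for (2) is correct. You also correctly identify the $n=4$ phenomenon: the $y^3$ term of $\sfg_2$ puts $y^6\cdot y^3=y^9$ into the same degree as $y^8x$.

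\textbf{The gap.} As written, (1) is not proved, because the nonvanishing on which everything depends is left as ``I expect'' and ``must be checked''. Suppose $\sfg_4$ has the shape you anticipate, namely $\sfg_4=y^3xyx+c_1y^4xyx+c_2y^6x+\be y^7+\bigO_8$. Then everything after $\sfg_4$ is formal in $\be$. First, $\sfg_5=y^3xy\sfg_1-\sfg_4x\equiv y^6xy-\be y^7x+\bigO_9$, since the other terms acquire an $x^2$ and move to degree $9$. Next,
\[
y^6\sfg_2-\sfg_5y-(1+\be)\,y\sfg_5=(1+\be+\be^2)\,y^8x+y^9+\bigO_{10},
\]
where the $y^9$ comes only from $y^6\cdot y^3$. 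Hence $\sfg_6=y^8x+\updelta y^9+\bigO_{10}$ with $\updelta=(1+\be+\be^2)^{-1}$. Finally, $y^8\sfg_1-\sfg_6x+\updelta\, y\sfg_6=\updelta^2y^{10}+\bigO_{11}$.

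So your worry about the ``final scalar in front of $y^{10}$'' is automatic: the construction stands or falls with $1+\be+\be^2\neq0$. This is not a formality. If $1+\be+\be^2$ vanished, this step would yield leading term $y^9$ rather than $y^8x$, and the count would change.

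\textbf{Filling it.} The missing computation is short. Reduce $xy\sfg_3-\sfg_3xy$ to precision $\bigO_8$ using:
\begin{itemize}
\item $x^2\equiv-y^3$ and $xy^3\equiv y^3x$, both exact here since $p\in\C\llsq y\rrsq$;
\item $xy^2\equiv-yxy-y^2x-y^3-\upmu y^4+\bigO_5$;
\item $\sfg_3$ once more on $(yx)^3=y(xyxy)x$, which is the paper's extra $-y\sfg_3x$.
\end{itemize}
The degree-$6$ part collapses to $y^3xyx$, and one obtains
\[
\sfg_4=y^3xyx+\upmu y^4xyx+\upmu y^6x+y^7+\bigO_8 .
\]
So $\be=1$, and the two scalars are $3$ and $\tfrac13$, exactly as in the paper's table.

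\textbf{Two smaller corrections.}
\begin{itemize}
\item The $y^{10}$ is produced by clearing the term $-\updelta y^9x$ of $\sfg_6x$ with $\updelta\, y\sfg_6$. It does not come from a $y^7\sfg_2$-type reduction.
\item The general reduction lemma does not carry over verbatim. For $n=4$ the $y^3$ term of $\sfg_2$ has the same degree as $xy^2$, so the substitution $xy^2\mapsto -yxy-y^2x$ loses precision. It is also not needed: only the degree-$9$ part of $\sfh_6$ enters, and that is read off directly as above.
\end{itemize}
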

We outline the minor differences for the case $n=4$ from the general case above.
\begin{proof}
As in the proof of \ref{prop:l3finite}, we may write $g=x^3+xy^3+y^4 + \tfrac{1}5\la y^5 + \bigO_6$ and construct particular elements $\sfg_1,\hdots,\sfg_7\in\lcl\dcyc g\rcl$ according to the table:
\[
\renewcommand{\arraystretch}{1.2}
\medmath{\begin{array}{ccclll}
i & \ord & \bigO_k & \textrm{\Spoly} & \qquad\sfg_i+\bigO_k \\
1&2& \infty & \dcyc_xg& x^2 + y^3 \\
2&3& 5 & \dcyc_yg &  xy^2 + yxy + y^2x + y^3 + \la y^4 + \bigO_5   \\
3&4& 6 & x\sfg_2-\sfg_1y^2 & xyxy - yxyx + \la y^3xy - a y^4x  \\
4&6& 8 & xy\sfg_3-\sfg_3xy -y\sfg_3x  &  y^3xyx + \la y^4xyx + \la y^6x + y^7 + \bigO_8  \\
5&8& 9 & y^3xy\sfg_1-\sfg_4x  &  y^6xy - y^7x + \la y^4xy^4 + \bigO_9  \\
6&9& 10 & \tfrac{1}3 (y^6\sfg_2-\sfg_5y - 2y\sfg_5) &  y^8x + \tfrac{1}3 y^9 + \bigO_{10}  \\
7&10& 11 & \tfrac{1}9(y^8\sfg_1-\sfg_6x + \tfrac{1}3y\sfg_6) & y^{10} + \bigO_{11}
\end{array}}
\renewcommand{\arraystretch}{1}
\]
where some reductions of \Spolys\ are indicated, and the $\sfg_3,\sfg_4,\sfg_5$ require further straightforward
reductions by $\sfg_1,\sfg_2$ (substituting for occurrences of $x^2$ and $xy^2$)
to put them in the form presented.
The result follows at once, as in \ref{prop:l3finite}.
\end{proof}

\subsection{Length Four}\label{subsec:l4bound}
If $f\in\ringtwo$ satisfies $\ell=4$, then by \ref{cor:SemiFinalAlgLength}
$\Jac(f)\cong\Jac(g)$ where after harmlessly rescaling we can assume that
\begin{equation}
g = \tfrac{1}3x^3 + \tfrac{1}4y^4 + xyxy^2 + \tfrac{1}3\uplambda(xy)^3 + \tfrac{1}2\upmu(xy^2)^2 +\scrO_{7}.\label{eqn:l4rescaled}
\end{equation} The Jacobian ideal $J=\lcl\dcyc f\rcl$ of $g$ is thus the closed ideal generated by the two elements $\mathsf{g}_1 \colonequals x^2 + yxy^2 + y^2xy + \uplambda yxyxy + \upmu y^2xy^2 + \scrO_{6}$ and $\mathsf{g}_2 \colonequals y^3 + xyxy + xy^2x + yxyx + \uplambda xyxyx + \upmu xy^2xy + \upmu yxy^2x + \scrO_{6}$. 
Consider the element $\mathsf{g}_3\in J$ defined by
\begin{align*}
\mathsf{g}_3 \colonequals x\mathsf{g}_1-\mathsf{g}_1x  \nonumber 
&= 
xyxy^2 + xy^2xy  -yxy^2x  -y^2xyx   \nonumber \\
&\qquad {}+ \uplambda xyxyxy  - \uplambda yxyxyx+ \upmu xy^2xy^2  - \upmu y^2xy^2x + \scrO_{7}
\end{align*}
and also the expression 
\begin{eqnarray*}
 \mathsf{g}_3y-xyx\mathsf{g}_2 &=& 
xy^2xy^2  -yxy^2xy  -y^2xyxy  -xyx^2yxy  -xyx^2y^2x \nonumber\\
&&\ \  {} -xyxyxyx + \uplambda xyxyxy^2  - \uplambda yxyxyxy+ \upmu xy^2xy^3   -\upmu y^2xy^2xy + \scrO_{8} \nonumber\\
&\buildrel{\sfg_1,\sfg_2}\over{\equiv}& xy^2xy^2  -yxy^2xy  -y^2xyxy \nonumber \\
&&\ \  {}-xyxyxyx+ \uplambda xyxyxy^2  - \uplambda yxyxyxy   -\upmu y^2xy^2xy + \scrO_{8}\equalscolon \sfg_4
\end{eqnarray*}
where we use $\mathsf{g}_1$ to substitute for $x^2$, pushing those terms into degree $\ge9$,  and $\mathsf{g}_2$ to substitute for $y^3$ pushing that term into degree~8, and we set $\mathsf{g}_4\in J$ to be the result.
Clearly $\LT(\mathsf{g}_1) = x^2$, $\LT(\mathsf{g}_2) = y^3$, $\LT(\mathsf{g}_3) = xyxy^2$ and $\LT(\mathsf{g}_4) = xy^2xy^2$. 

\begin{notation}
We write $f=g + \bigO_{n,m}$ to mean that $f-g = p_{n+m} + \bigO_{n+m+1}$ where $p_{n+m}$ is homogeneous of
degree $n+m$ and has at most $n$ factors of~$x$ and at least $m$ factors of~$y$.
This notation does not behave well with respect to the monomial order, but in the context here it
elegantly handles terms that we do not need to consider in detail: if $f=(xy)^5 + \bigO_{4,6}$, then
any term of $f_{4+6}$ is either behind $(xy)^5$ in the monomial order, or has a factor of $x^2$ and so reduces by $\sfg_1$ into $\bigO_{11}$, and so {\em after reduction} $\LT(f)=(xy)^5$.
\end{notation}

Consider $\sfh_5\colonequals xy^2\mathsf{g}_4-\mathsf{g}_4xy^2 \in J$ which is 
\begin{eqnarray*}
\sfh_5 &=& xy^2(-yxy^2xy  -y^2xyxy) -(-yxy^2xy  -y^2xyxy)xy^2  +\scrO_{4,6}\nonumber \\
&=&  -xy^3xy^2xy - xy^4xyxy + yxy^2xyxy^2 + y^2xyxyxy^2 +\scrO_{4,6}
\end{eqnarray*}
where the precision bound ${}+\scrO_{4,6}$ holds simply because each term of degree~10 in $\sfh_5$ is a product of
$xy^2$ (with a single $x$ factor) and a term of degree~7 in $\mathsf{g}_4$, all of which have at most~3 factors of~$x$.
The key is to understand how the four degree~9 terms of $\sfh_5$ reduce under $\mathsf{g}_1,\mathsf{g}_2,\mathsf{g}_3,\mathsf{g}_4$. This is taken care of in the following lemma.
\begin{lemma}\label{lem:xy34x}
With notation as above, the following statements hold.
\begin{enumerate}
\item\label{lem:xy3x}
$xy^3x\equiv\scrO_{8}$ mod $(\mathsf{g}_1,\mathsf{g}_2)$, and so $xy^3xy^2xy\equiv \scrO_{12}$.
\item\label{lem:xy4x}
$xy^4x\equiv -(xy)^3x +\scrO_{8}$ mod $(\mathsf{g}_1,\mathsf{g}_2)$, and so $-xy^4xyxy\equiv(xy)^5+\scrO_{11}$.
\item\label{lem:D3C}
$yxy^2xyxy^2\equiv -y^2xy^2(xy)^2+\scrO_{4,6}$ mod $(\mathsf{g}_1,\mathsf{g}_2,\mathsf{g}_3)$.
\item\label{lem:D4C}
$-y^2xyxyxy^2\equiv y^2xy^2(xy)^2+\scrO_{4,6}$ mod $(\mathsf{g}_1,\mathsf{g}_2,\mathsf{g}_3,\mathsf{g}_4)$.
\end{enumerate}
Therefore
$\sfh_5$ reduces under $\sfg_1,\sfg_2,\sfg_3,\sfg_4$ to an element $\sfg_5\in J$ with $\LT(\sfg_5)=(xy)^5$.
\end{lemma}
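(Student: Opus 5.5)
The plan is to verify the four items in order, working modulo $(\sfg_1,\sfg_2,\sfg_3,\sfg_4)$ and keeping careful track of precision. Each item reduces a specific degree $9$ term of $\sfh_5$. Throughout I use three substitutions. First, $\sfg_1$ replaces $x^2$ by $-(yxy^2+y^2xy)+\bigO_6$, which raises degree by $2$. Second, $\sfg_2$ replaces $y^3$ by $-(xyxy+xy^2x+yxyx)+\bigO_6$, which preserves degree but trades two $y$'s for an $x$. Third, $\sfg_3$ rewrites the leading monomial $xyxy^2$ as $-xy^2xy+yxy^2x+y^2xyx+\bigO_7$. Since $\sfh_5$ has degree $\geq 9$, the higher order terms of these generators contribute only to degree $\geq 10$. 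Those degree $10$ contributions carry at most $4$ factors of $x$, so they can be absorbed into $\bigO_{4,6}$.

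\emph{Items (1) and (2).} For (1), I write $xy^3x = x(y^3)x$ and substitute via $\sfg_2$ to get
\[
xy^3x \equiv -(x^2yxyx + x^2y^2x^2 + xyxyx^2) + \bigO_8 .
\]
Every term on the right contains $x^2$, so $\sfg_1$ pushes it into degree $\geq 7$. In degree $7$ the only candidates come from substituting $x^2 \mapsto -(yxy^2+y^2xy)$. I then need to check that these cancel pairwise after a further application of $\sfg_2$ and $\sfg_3$ (the three terms are cyclically related, so I expect cancellation); this would give $xy^3x \equiv \bigO_8$. Multiplying on the right by $y^2xy$ then yields the second claim of (1). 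For (2), I write $xy^4x = xy\cdot y^3 \cdot x$ and apply $\sfg_2$ to get
\[
xy^4x \equiv -(xyxyxyx + xyxy^2x^2 + xy^2xyx) + \bigO_7 .
\]
The middle term reduces by $\sfg_1$. The last term is $xy^2\cdot xyx$, and I rewrite it using $\sfg_3$ together with (1) until only $-(xy)^3x$ survives. Multiplying by $yxy$ and using $\sfg_1$ once more gives $-xy^4xyxy \equiv (xy)^5 + \bigO_{11}$.

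\emph{Items (3) and (4).} For (3), the factor $xyxy^2$ inside $y\cdot xy^2xyxy^2$ is replaced using $\sfg_3$. The resulting words either contain $x^2$, which $\sfg_1$ pushes to degree $11$, or collapse to $-y^2xy^2(xy)^2$ after one further $\sfg_3$ rewrite. For (4), I use $\sfg_4$ in the same way to rewrite the factor $xyxyxyx$ or $xy^2xy^2$ appearing in $y^2xyxyxy^2$. Every error term here has degree $10$ with at most $4$ $x$'s, which is exactly what $\bigO_{4,6}$ allows.

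\emph{Conclusion and main obstacle.} Summing (1)--(4), the contributions $\pm y^2xy^2(xy)^2$ cancel, and what remains is
\[
\sfh_5 \equiv (xy)^5 + \bigO_{4,6} + \bigO_{11}.
\]
By the remark following the notation $\bigO_{n,m}$, every degree $10$ term with at most $4$ $x$'s either lies behind $(xy)^5$ in the monomial order or contains $x^2$ and reduces by $\sfg_1$ into $\bigO_{11}$. Hence after reduction $\LT(\sfg_5) = (xy)^5$. I expect the main obstacle to be item (2), together with the degree $7$ cancellation in (1). These are the places where genuine cancellation among several rewritings is needed rather than mere pushing into higher degree, and where the coefficients $\uplambda,\upmu$ must be checked not to interfere. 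They enter only in degree $\geq 8$, so I expect them to be harmless at the precision required.
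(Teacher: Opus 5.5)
Your plan (reduce the four degree-$9$ terms of $\sfh_5$ one at a time, then add) is the paper's plan. However, several individual reductions are wrong, and they trace back to one slip: $\sfg_2$ does not preserve degree. Replacing $y^3$ by $-(xyxy+xy^2x+yxyx)$ raises the degree by one and the number of $x$'s by two. This has the following consequences.
\begin{itemize}
\item In (1), the $x^2$-words $x^2yxyx$, $x^2y^2x^2$, $xyxyx^2$ have degree $6$, and $\sfg_1-x^2$ has order $4$, so substituting sends them to degree $8$. There is no degree-$7$ cancellation to check; this is exactly the paper's one-line proof.
\item In (2), your expansion is wrong: $xy\cdot yxyx\cdot x=xy^2xyx^2$, not $xy^2xyx$. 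Your proposed repair cannot work, because $xy^2xyx$ is divisible by none of $x^2,y^3,xyxy^2,xy^2xy^2$; if that word really occurred, (2) would be false. With the correct term, both non-target degree-$7$ words contain $x^2$ and go to degree $9$, leaving $-(xy)^3x$.
\item In (4), the word $y^2xyxyxy^2$ contains neither $xyxyxyx$ nor $xy^2xy^2$, so $\sfg_4$ cannot be applied to it. Its only reducible subword is $xyxy^2$. As in the paper, you must apply $\sfg_3$ twice to reach $y^2xy^2xyxy+y^2xy^2xy^2x$, plus words containing $x^2$, $y^3$ or $xy^3x$. Only then can you apply $\sfg_4$ to $y^2(xy^2xy^2)x$.
\item Track the sign in (4). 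The computation gives $y^2xyxyxy^2\equiv +y^2xy^2(xy)^2$ in degree $9$. This is what cancels (3), because $\sfh_5$ contains $+y^2xyxyxy^2$. The left side of item (4) as printed carries the opposite sign, so you cannot simply add the items as stated.
\end{itemize}

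The real gap is in (3) and in the final sum. The $\sfg_3$ rewrite turns $yxy^2\cdot xyxy^2$ into $-yxy^2xy^2xy+yxy^3xy^2x+yxy^4xyx$, and none of these contains $x^2$.
\begin{itemize}
\item The second word dies by (1).
\item The third gives $-(yx)^5$ by (2).
\item The first needs $\sfg_4$, not $\sfg_3$, and gives $-y^2xy^2(xy)^2-y^3(xy)^3+\cdots$. Then $\sfg_2$ sends $-y^3(xy)^3$ to $+(xy)^5$, plus words containing $x^2$ and terms of degree $\ge 11$.
\end{itemize}
So the degree-$10$ by-products are not all in $\bigO_{4,6}$. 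There are words with five $x$'s, including a second copy of the target monomial. Likewise, $\sfh_5$'s own degree-$10$ part contains $-xy^2xyxyxyx$, since $\sfg_4$ has the four-$x$ term $-xyxyxyx$.

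The decisive question is whether this $(xy)^5$ cancels the one coming from (2), and your argument never asks it. The printed proof of (3) also silently drops $y^3(xy)^3$. It does not cancel: both copies carry sign $+$. A full reduction gives $\sfg_5=2(xy)^5+\cdots+\bigO_{11}$, where every other degree-$10$ word either contains $x^2$ or lies behind $(xy)^5$ in the monomial order (for instance $(yx)^5$ and $xy^2xyxyxyx$). Only after this check does the remark on $\bigO_{n,m}$ yield $\LT(\sfg_5)=(xy)^5$. Your stated outcome $\sfh_5\equiv(xy)^5+\bigO_{4,6}+\bigO_{11}$ is not what the reduction actually produces.
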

\begin{proof}
\noindent
(1) Since every non-leading term of $\mathsf{g}_2$ starts or ends with $x$, each of the terms of $xy^3x - x\mathsf{g}_2x$ is divisible by $x^2$ and has degree $\ge6$. 
Substituting for $x^2$ using $\mathsf{g}_1$ pushes each term into $\scrO_{8}$, since $\mathsf{g}_1-x^2$ has order~4.
Thus $xy^3x\equiv\scrO_{8}$ modulo $\mathsf{g}_1$ and $\mathsf{g}_2$.
Multiplying that by $y^2xy$ on the right completes the claim.\\
%Substituting for $y^3$ using $B$ gives a sum of terms, each of degree at least $e+6$,  up to precision $+\bigO(e+8)$. Since every term of $B-y^3$ starts or ends with $x$, each of the terms is divisible by $x^2$. Substituting for $x^2$ in each term using $A$ pushes each term into $\bigO(e+8)$.
\noindent
(2) The expression $xy^4x - xy\mathsf{g}_2x$ has order~7, and every term of degree 7 is divisible by $x^2$
except the term $-xy(xyxy)x$ that arises from the unique term $xyxy$ of $\mathsf{g}_2$ in degree~4 that does not end in~$x$.
%(The term $\upmu xy^2xy$ does not lead to an $x^2$ factor, but it leads to a term in degree~8.) 
Substituting for $x^2$ using $\mathsf{g}_1$ pushes those terms into degree at least~9, giving the result.\\
(3) Reducing $xyxy^2$ by $\sfg_3$ and then applying parts (\ref{lem:xy3x}--\ref{lem:xy4x}) gives
\[
yxy^2(xyxy^2) \buildrel{\sfg_3}\over\equiv yxy^2(-xy^2xy+yxy^2x+y^2xyx) + \scrO_{4,6}  \equiv -yxy^2xy^2xy+\scrO_{4,6}
\]
and reducing that by $\sfg_4$ and then by $\sfg_1,\sfg_2$ gives
\[
-y(xy^2xy^2)xy \buildrel{\sfg_4}\over{\equiv}y^2xy^2(xy)^2 + y^3(xy)^3 + y(xy)^3x^2y + \scrO_{4,6} 
\equiv y^2xy^2(xy)^2 + \scrO_{4,6}
\]
where the precision bound holds because all terms of degree~4 in $\mathsf{g}_1$ have 1 factor of~$x$ and those of $\mathsf{g}_2$ have 2 factors of~$x$.

\noindent
(4) Reducing $xyxy^2$ by $\sfg_3$ twice followed by $\sfg_4$ (and applying \eqref{lem:xy3x} to push $xy^3x$ and $\sfg_1$ to push $x^2$ into higher degree) gives
\begin{eqnarray*}
y^2xy(xyxy^2) &\buildrel{\mathsf{g}_3}\over{\equiv}&-y^2(xyxy^2)xy+y^2xy^2xy^2x+y^2\cancel{xy^3x}yx+\scrO_{4,6} \nonumber\\
%&\buildrel{\mathsf{g}_3}\over{\equiv}&y^2(-xy^2xy+yxy^2x+y^2xyx)xy+y^2xy^2xy^2x+\scrO_{4,6} \nonumber\\
&\buildrel{\mathsf{g}_3}\over{\equiv}& \big({-y^2xy^2xyxy} + y^3xy^2\cancel{x^2y} + y^4xy\cancel{x^2y}\big) + y^2(xy^2xy^2)x +\scrO_{4,6}\nonumber\\
%&\buildrel{\mathsf{g}_1}\over{\equiv}& -y^2xy^2xyxy + y^2(xy^2xy^2)x +\scrO_{4,6}\nonumber\\
%&\buildrel{\mathsf{g}_4}\over{\equiv}&-y^2xy^2xyxy + y^2((\mathsf{g}_4)_{\le7}-(xy^2)^2)x +\scrO_{4,6}\nonumber\\
&\buildrel{\sfg_4}\over{\equiv}&-y^2xy^2xyxy + \scrO_{4,6}
\end{eqnarray*}
where the last line holds because (a) every non-leading term $m$ of degree~6 in $\mathsf{g}_4$ starts in $y$ (so $y^2mx$ pushes into degree~10 mod~$\mathsf{g}_2$ with at most 4 factors of $x$), and (b) 
every term of degree~7 in $\mathsf{g}_4$ either either ends in $x$ (so $y^2mx$ pushes into degree~12 mod~$\mathsf{g}_1$),  or starts in $y$ (so $y^2mx$ pushes into degree~11 mod~$\mathsf{g}_2$).
\end{proof}

With the above preparation, the following is clear.
\begin{prop}\label{prop:l4finite}
Suppose $f\in\ringtwo$ has $\ell=4$. Consider $g$ with $\Jac(f)\cong\Jac(g)$, as given by \ref{cor:SemiFinalAlgLength} rescaled as in \eqref{eqn:l4rescaled}.  Given the set 
\[
\scrS = \{x^2, y^3, xyxy^2, (xy^2)^2, (xy)^5\}
\]
the following statements hold.
\begin{enumerate}
\item\label{E7dimi}
$\scrS\subset\LT(J)$ where $J=\lcl\dcyc g\rcl$.
\item\label{E7dimii}
$\dim_\mathbb{C} \ring/\lcl \scrS\rcl = 60$ and  $\n^{15}\subset \lcl \scrS\rcl$.
\end{enumerate}
In particular, $\Jac(f)$ is finite dimensional, $\dim_\mathbb{C}\Jac(f)\le 60$, and $\n^{15}\subset\lcl\dcyc f\rcl$.
\end{prop}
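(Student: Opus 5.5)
Part (1) is essentially already done: the five elements $\sfg_1,\sfg_2,\sfg_3,\sfg_4\in J$ constructed above have leading terms $x^2$, $y^3$, $xyxy^2$ and $(xy^2)^2$. Lemma~\ref{lem:xy34x} supplies $\sfg_5\in J$ with $\LT(\sfg_5)=(xy)^5$. I would record this, with one check: each claimed leading term survives the reductions. The only subtlety is in $\sfg_5$. There I would confirm that the four degree~$9$ contributions of $\sfh_5$ combine as in Lemma~\ref{lem:xy34x}\eqref{lem:xy3x}--\eqref{lem:D4C}, leaving $(xy)^5+\bigO_{4,6}$. The $\bigO_{4,6}$ terms of degree~$10$ are either behind $(xy)^5$ in the order or divisible by $x^2$, hence pushed further by $\sfg_1$. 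This gives $\scrS\subset\LT(J)$.

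For part (2) the plan is purely combinatorial: count the monomials not divisible by any element of $\scrS$. Avoiding $x^2$ and $y^3$ means a word is an alternating string of $x$'s and blocks $y$ or $y^2$. Up to the optional leading and trailing $y$-blocks, such a word is encoded by the sequence of $y$-block lengths between consecutive $x$'s, each entry in $\{1,2\}$.

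Forbidding $xyxy^2$ and $xy^2xy^2$ means that a $y^2$-block preceded by an $x$ and then followed by another $x$ (i.e. an interior $y^2$ after any interior block) cannot occur. More precisely, a block $y^2$ cannot appear as the later of two consecutive interior blocks when followed by... I would organise the count carefully. Write a word as $y^{b_0}xy^{b_1}x\cdots xy^{b_t}$ with $b_0,b_t\in\{0,1,2\}$ and $b_1,\dots,b_{t-1}\in\{1,2\}$. The conditions are:
\begin{itemize}
\item[] the pattern $xy^{b_i}xy^{2}$ is forbidden for $i\ge1$, so $b_{i+1}\ne 2$ whenever $i\ge 1$ and $i+1\le t$, counting $b_t=2$ at the end as well;
\item[] $(xy)^5$ forbids five consecutive $(x,b_i=1)$ pairs.
\end{itemize}
Hence $b_2=\cdots=b_t\in\{0,1\}$ at the end and $1$ in the interior, only $b_1$ can be $2$, and the number of $x$'s is bounded.

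Enumerating by $t$ and the choices of $b_0,b_1,b_t$ gives a finite list. I would tabulate it by degree, expecting the Hilbert series to sum to $60$ with top degree $14$. The same enumeration shows that every monomial of degree $15$ is divisible by an element of $\scrS$, so $\n^{15}\subset(\scrS)$.

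The conclusions then follow exactly as in the proof of \ref{prop:l3finite}. The inequality \eqref{eqn:aimGB} gives $\dim\Jac(g)\le60$ for all truncations. For the nilpotency statement, any $h\in\n^{15}$ reduces to zero by repeatedly cancelling its leading term against $\asf\sfg_i\bsf$, which converges $\n$-adically, so $\n^{15}\subset\lcl\dcyc g\rcl$. This transfers to $f$ via the isomorphism of Jacobson radicals.

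The main obstacle is the bookkeeping in the count, getting exactly $60$ and maximal degree $14$ without an off-by-one at the word boundaries. I would guard against this by listing the normal words degree by degree and cross-checking the total against the expected GV/dimension value $60$ from \ref{cor:allDimsIntro}.
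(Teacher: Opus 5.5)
Your proposal is correct and is essentially the paper's proof: part (1) is exactly the construction of $\sfg_1,\dots,\sfg_5$, and your block-length constraints (among $b_1,\dots,b_t$ only $b_1$ may equal $2$, with $b_2=\dots=b_{t-1}=1$ and $b_t\in\{0,1\}$ when $t\ge2$, and no subword $(xy)^5$) say precisely that the normal words are $y^\alpha(xy^2)^\beta(xy)^\gamma x^\varepsilon$ with $\alpha\le 2$, $\beta,\varepsilon\le 1$, $\gamma\le 4$, which is how the paper obtains $60=3\cdot2\cdot5\cdot2$ words, all of degree at most $14$, with no degree-by-degree bookkeeping. One caution: the value $60$ in \ref{cor:allDims} is itself deduced from this upper bound, so it can only serve as a sanity check on your count, not as part of the argument.
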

\begin{proof}
\eqref{E7dimi} By above, $\scrS$ is the set of leading terms of $\mathsf{g}_1,\mathsf{g}_2,\mathsf{g}_3,\mathsf{g}_4,\mathsf{g}_5\in J$. \\
\noindent
\eqref{E7dimii}
There are exactly $60=2\times2\times3\times5$ monomials $y^\al(xy^2)^\be(xy)^\ga x^\varepsilon$ with $\be,\varepsilon\le1$, $\al\le2$, $\ga\le4$  that are not divisible by one of the monomials of $\scrS$.
In particular, since these $60$ monomials have degrees in the range $\left\{0,1,\hdots,14\right\}$, 
any monomial $\msf\in\mR^{15}$ is divisible by one of the monomials of $\scrS$.
\end{proof}

\subsection{Length Five}\label{subsec:l5bound}
This is substantially easier than the other cases.

\begin{prop}\label{prop:l5finite}
Suppose $f\in\ringtwo$ has $\ell=5$.
Consider $g= \tfrac{1}3x^3 + \tfrac{1}2(xy)^2 + \tfrac{1}5y^5 + \tfrac{1}2\upalpha(xy^2)^2 + \tfrac{1}6\upbeta y^6+ \scrO_{7}$ with $\Jac(f)\cong\Jac(g)$ as given by \ref{cor:SemiFinalAlgLength} after rescaling.  Given the set 
\[
\scrS = \{x^2, xyx, xy^4, xy^2xy, xy^3xy^2x, y^4xy^3x, y^8xy^3, y^9xy^2x, y^{13}x, (xy^3)^4x, y^{21}\}
\]
the following statements hold.
\begin{enumerate}
\item\label{E85dimi}
$\scrS\subset\LT(J)$ where $J=\lcl\dcyc g\rcl$.
\item\label{E85dimii}
$\dim_\mathbb{C} \ring/\lcl \scrS\rcl = 125$ and  $\n^{21}\subset \lcl \scrS\rcl$.
\end{enumerate}
In particular, $\Jac(f)$ is finite dimensional, $\dim_\mathbb{C}\Jac(f)\le 125$, and $\n^{21}\subset\lcl\dcyc f\rcl$.
\end{prop}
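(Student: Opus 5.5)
The plan follows the template of \ref{prop:l3finite} and \ref{prop:l4finite}: exhibit explicit elements $\sfg_1,\hdots,\sfg_{11}\in J=\lcl\dcyc g\rcl$ whose leading terms are the eleven monomials of $\scrS$, and then count the monomials not divisible by any element of $\scrS$. The derivatives are
\[
\sfg_1=x^2+yxy+\upalpha y^2xy^2+\scrO_6,\qquad \sfg_2=xyx+y^4+\upalpha(xy^2xy+yxy^2x)+\upbeta y^5+\scrO_6,
\]
after cyclically rearranging $\dcyc_y$. Their leading terms are $x^2$ and $xyx$, because $xyx<y^4$ in the order \eqref{eqn:order}. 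I will use the substitutions $x^2\equiv -yxy+\hdots$ and $xyx\equiv -y^4+\hdots$ throughout, mirroring the congruence bookkeeping of \S\ref{subsec:l3bound}.

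The elements are produced in increasing degree by the usual overlap \Spolys, each reduced by the earlier elements.
\begin{enumerate}
\item The overlap $x\cdot xyx$ versus $x^2\cdot yx$, i.e.\ $x\sfg_2-\sfg_1yx$, reduces to an element with leading term $xy^4$; this is plausible since $x(y^4)$ survives while $x\,yxy\,yx$ reduces via $xyx$.
\item The overlap $xyx\cdot yx$ versus $xy\cdot xyx$, i.e.\ $\sfg_2yx-xy\sfg_2$, reduces to an element with leading term $xy^2xy$.
\item Further overlaps among $x^2,xyx,xy^4,xy^2xy$ give, in turn, $xy^3xy^2x$, $y^4xy^3x$, $y^8xy^3$, $y^9xy^2x$, $y^{13}x$, $(xy^3)^4x$ and finally $y^{21}$. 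The last should arise as $y^{13}\sfg_1$-type combined with $\sfg_{\cdot}x$, as in the construction of $\sfg_7$ in \ref{prop:l3finite}.
\end{enumerate}
At each stage I will only track the leading term modulo a suitable $\scrO_k$, using the $\scrO_{n,m}$ bookkeeping from \S\ref{subsec:l4bound} to discard terms with too many $x$'s. The key point is that every discarded term contains an $x^2$ or $xyx$ and so reduces into higher degree.

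For \eqref{E85dimii}, I will describe the normal monomials combinatorially. Avoiding $x^2$ and $xyx$ forces every monomial to have the form $y^{a_0}xy^{a_1}x\cdots xy^{a_t}$ with inner gaps $a_i\ge2$. Avoiding $xy^4$ then forces the gaps after an $x$ to lie in $\{2,3\}$, except the initial power $a_0$ of $y$. The remaining monomials $xy^2xy$, $xy^3xy^2x$, $(xy^3)^4x$, $y^4xy^3x$, $y^8xy^3$, $y^9xy^2x$, $y^{13}x$ and $y^{21}$ then cut the set down to a finite list. That list is organised by the initial exponent $a_0$ together with words in $xy^3$ of length at most $4$, possibly ending in $xy^2$ or $x$. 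Counting these should give exactly $125$, which matches $\dim\Upgamma$ for $\ell=5$ in \ref{cor:allDimsIntro}. Checking that all surviving monomials have degree $\le 20$ gives $\n^{21}\subset\lcl\scrS\rcl$. The final claims then follow exactly as at the end of \ref{prop:l3finite}: the bound \eqref{eqn:aimGB} gives $\dim\Jac(f)\le125$, and the reduction argument shows $\n^{21}\subset J$.

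The main obstacle is the middle of the chain, where I expect the elements with leading terms $y^8xy^3$, $y^9xy^2x$ and $y^{13}x$ to be hard to obtain. As with \ref{lem!E6g4}, a coefficient such as $1+\be+\be^2$ may need to be shown nonzero before normalising, and it may depend on $\upalpha,\upbeta$. If this happens, I would first try to prove that the relevant degree-$n$ part is independent of $\upalpha$ and $\upbeta$, by showing that those terms carry too many $x$'s. This would reduce the computation to the case $g=\tfrac13x^3+\tfrac12(xy)^2+\tfrac15y^5$.
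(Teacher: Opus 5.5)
Your outline is the paper's: produce $\sfg_1,\dots,\sfg_{11}\in J$ whose leading terms are the elements of $\scrS$, then count normal monomials. The derivatives, their leading terms $x^2,xyx$, and $\sfg_3=x\sfg_2-\sfg_1yx\equiv xy^4-yxy^2x+\hdots$ are all right. However, part (1) is where all the content lies, and it is not carried out. The one further step you commit to is also wrong.

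\textbf{The $\sfg_4$ step.} In $\sfg_2yx-xy\sfg_2$ the degree-$5$ parts $xyxyx-xyxyx$ cancel, and the degree-$6$ part is $y^5x-xy^5$. So this element has order $6$ and cannot have leading term $xy^2xy$, which has degree $5$. The correct overlap is $xyx\cdot x=xy\cdot x^2$, giving $\sfg_4=xy\sfg_1-\sfg_2x\equiv xy^2xy-y^4x+\hdots$.

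\textbf{The bookkeeping for $\sfg_5,\dots,\sfg_{11}$.} Your plan is to track only leading terms and discard everything else on the grounds that it contains $x^2$ or $xyx$. That would not produce the remaining seven elements. In this chain, each new leading term is a \emph{non-leading} term of an earlier element, which survives once the leading parts cancel. The paper's chain (up to higher order terms and reduction by earlier elements) is
\begin{align*}
\sfg_5&=-xy^2\sfg_3+\sfg_4y^3=xy^3xy^2x-y^4xy^3+\hdots,\\
\sfg_6&=-\sfg_5x+xy^3xy^2\sfg_1=y^4xy^3x+(xy^3)^2xy+\hdots,\\
\sfg_7&=-y^4\sfg_5+\sfg_6y^2x=y^8xy^3+(xy^3)^3x+\hdots,\\
\sfg_8&=-y^8\sfg_3+\sfg_7y=y^9xy^2x+(xy^3)^3xy+\hdots,\\
\sfg_9&=-y^9\sfg_4+\sfg_8y=y^{13}x+(xy^3)^3xy^2+\hdots,\\
\sfg_{10}&=\sfg_9yx-y^{13}\sfg_2=(xy^3)^4x-y^{17}+\hdots,\\
\sfg_{11}&=-y^4\sfg_{10}+\sfg_6(y^3x)^3=y^{21}+\hdots
\end{align*}
Here is where each later leading term comes from:
\begin{enumerate}
\item $y^8xy^3$, $y^9xy^2x$ and $y^{13}x$ are the second terms $-y^4xy^3$, $-yxy^2x$, $-y^4x$ of $\sfg_5,\sfg_3,\sfg_4$, multiplied by $-y^4,-y^8,-y^9$.
\item The tail $(xy^3)^2xy$ of $\sfg_6$ becomes $(xy^3)^3x$ in $\sfg_7$. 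It is carried through $\sfg_8,\sfg_9$, and then becomes the leading term of $\sfg_{10}$.
\item $y^{21}$ comes from the tail $-y^{17}$ of $\sfg_{10}$.
\end{enumerate}
None of these tails contains $x^2$ or $xyx$, so your discarding rule would lose exactly the terms that matter.

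Two further remarks on your anticipated obstacle. Every combination above has coefficients $\pm1$, so no condition like the $1+\upbeta+\upbeta^2\neq0$ of \ref{lem!E6g4} arises. Also, your fallback of removing $\upalpha,\upbeta$ by counting $x$'s would not handle $\upbeta$: for instance $\sfg_4$ contains $-\upbeta y^5x$, which has a single $x$.

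\textbf{Part (2).} This is fine in outline; the paper also omits the count. Your description should be sharpened: the final power of $y$ may be $0$ or $1$, and the possible endings include $xy$ and $xy^2x$. Precisely, the monomials avoiding $x^2,xyx,xy^4,xy^2xy$ are $y^a(xy^3)^kT$ with $T\in\{1,x,xy,xy^2,xy^2x\}$. The remaining elements of $\scrS$ then bound $a$ and $k$, giving counts $69,20,16,16,4$ for $k=0,1,2,3,4$. That totals $125$, with maximal degree $20$ attained by $y^{20}$.

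Finally, the $125$ in \ref{cor:allDimsIntro} is $\dim\CA$, not $\dim\Upgamma$. It is deduced from this proposition, so it can serve only as a consistency check.
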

\begin{proof}
(1) The Jacobian ideal $J=\lcl\dcyc g\rcl$ of $g$ is the closed ideal generated by the two elements
$\mathsf{g}_1\colonequals x^2 + yxy + \upalpha y^2xy^2 + \scrO_{6}$ and $
\mathsf{g}_2 \colonequals xyx + y^4 + \upalpha xy^2xy + \upalpha yxy^2x + \upbeta y^5 + \scrO_{6}$. Therefore $J$ also contains
\[
x\mathsf{g}_2  - \mathsf{g}_1yx =  xy^4 +  \upalpha \cancel{x^2y^2}xy + \upalpha \cancel{xyx}y^2x  - yxy^2x + \bigO_7 \buildrel{\sfg_1,\sfg_2}\over\equiv xy^4 - yxy^2x + \scrO_{7} \equalscolon\sfg_3
\]
and similarly
\[
xy\mathsf{g}_1 - \mathsf{g}_2x = xy^2xy  - y^4x -  \upalpha xy^2\cancel{xyx} - \upalpha yxy^2\cancel{x^2} + \bigO_7  \buildrel{\sfg_1,\sfg_2}\over\equiv xy^2xy - y^4x + \scrO_{7} \equalscolon \mathsf{g}_4.
\]
Therefore $J$ contains elements of all the following forms, since the right-hand expression
is evidently in $J$, and congruence modulo $\mathsf{g}_1,\mathsf{g}_2$ preserves membership of $J$.
\[
\renewcommand{\arraystretch}{1.3}
\medmath{
\begin{array}{c@{\hspace{2pt}}c@{\hspace{2pt}}l@{\hspace{2pt}}c@{\hspace{2pt}}l@{\hspace{2pt}}c@{\hspace{2pt}}c@{\hspace{2pt}}}
\mathsf{g}_5 &\colonequals& xy^3xy^2x - y^4xy^3 + \scrO_{10} & =& -xy^2\mathsf{g}_3 + \mathsf{g}_4y^3 \\
\mathsf{g}_6 &\colonequals& y^4xy^3x + (xy^3)^2xy + \scrO_{11} & =& - \mathsf{g}_5x + xy^3xy^2\mathsf{g}_1  \\
\mathsf{g}_7 &\colonequals& y^8xy^3 + (xy^3)^3x + \scrO_{14} & =& -y^4\mathsf{g}_5 + \mathsf{g}_6y^2x  \\
\mathsf{g}_8 &\colonequals& y^9xy^2x + (xy^3)^3xy + \scrO_{15} & =& -y^8\mathsf{g}_3 + \mathsf{g}_7y \\
\mathsf{g}_9 &\colonequals& y^{13}x + (xy^3)^3xy^2 + \scrO_{16} & =& -y^9\mathsf{g}_4 + \mathsf{g}_8y  \\
\mathsf{g}_{10} &\colonequals& (xy^3)^4x - y^{17} + \scrO_{18} & =& \phantom{-}\mathsf{g}_9yx - y^{13}\mathsf{g}_2  \\
\mathsf{g}_{11} &\colonequals& y^{21}  + \scrO_{22} & =& - y^4\mathsf{g}_{10} + \mathsf{g}_6(y^3x)^3 
\end{array}}
\renewcommand{\arraystretch}{1}
\]
The leading terms of $\mathsf{g}_1,\hdots,\mathsf{g}_{11}\in J$ are the 11 monomials of $\scrS$, which proves \eqref{E85dimi}.\\
\eqref{E85dimii}
Given (1), this is a very elementary count, and we refrain from giving details here.
\end{proof}

\subsection{Length Six}\label{subsec:l6bound}
\begin{prop}\label{prop:l6finite}
Suppose $f\in\ringtwo$ has $\ell=6$.  As given by \ref{cor:SemiFinalAlgLength}, after rescaling consider  $g=\tfrac{1}3x^3 + \tfrac{1}4y^4 + \tfrac{1}3(xy)^3 + \tfrac{1}2\uplambda(xy^2)^2 + \upmu(xy)^3y+ \tfrac{1}4\upnu(xy)^4 + \scrO_9$ with $\Jac(f)\cong\Jac(g)$, where $\uplambda,\upmu,\upnu\in\C$ and $\uplambda\not=0$.  Given the set 
\begin{align*}
\scrS = &\{
    x^2,
    y^3,
    (xy)^3,
    xy(xy^2)^2,
    xyxy^2(xy)^2x,
    (xy^2)^2(xy)^2x,
    (xy^2)^4, \\
    &\qquad\qquad\qquad\qquad
    y(xy^2)^3(xy)^2y,
    (xy^2)^3(xyxy^2)^2,
    (xyxy^2)^4,
    (xy^2xy)^4xy
\}
\end{align*}
the following statements hold.
\begin{enumerate}
\item\label{E86dimi}
$\scrS\subset\LT(J)$ where $J=\lcl\dcyc g\rcl$.
\item\label{E86dimii}
$\dim_\mathbb{C} \ring/\lcl \scrS\rcl = 200$ and  $\n^{26}\subset \lcl \scrS\rcl$.
\end{enumerate}
In particular, $\Jac(f)$ is finite dimensional, $\dim_\mathbb{C}\Jac(f)\le 200$, and $\n^{26}\subset\lcl\dcyc f\rcl$.
\end{prop}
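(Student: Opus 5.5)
The plan mirrors \ref{prop:l4finite} and \ref{prop:l5finite}. Write $\sfg_1=\dcyc_x g = x^2 + yxyxy + \uplambda\, y^2xy^2 + \hot$ and $\sfg_2 = \dcyc_y g = y^3 + (xyxyx) + \uplambda(xy^2xy + yxy^2x) + \hot$, up to the scalings fixed by the rescaling. Their leading terms are $x^2$ and $y^3$. For each of the remaining nine monomials of $\scrS$ I will exhibit an explicit element $\sfg_3,\hdots,\sfg_{11}\in J$ with that leading term. Each is built from an \Spoly\ on earlier $\sfg_i$, then reduced by substituting $x^2$ via $\sfg_1$ and $y^3$ via $\sfg_2$, which pushes those terms into higher order.

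The first new element should come from the overlap $x\sfg_1-\sfg_1x$ or $y\sfg_2-\sfg_2y$. After reduction, the degree $6$ part is a combination of $(xy)^3$-type and $(xy^2)^2$-type cyclic words. Since $\uplambda\neq 0$, a suitable combination such as $xy\sfg_2 - \sfg_2 yx$ or $x\sfg_2 - \sfg_2x$, corrected by multiples of $\sfg_1$, should have leading term $(xy)^3$. This is exactly where the hypothesis $\uplambda\ne0$ enters, as in the Case~2/Case~3 analysis of \ref{thm:Length456HalfWay}.

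The later elements are built by iterating commutator \Spolys\ of the form $\mathsf{a}\sfg_i - \sfg_i\mathsf{a}$ with $\mathsf{a}\in\{xy,xy^2\}$, or overlaps $\sfg_i\mathsf{b}-\mathsf{c}\sfg_j$. Following the template of \ref{lem:xy34x}, I would track the degree ranges using an $\bigO_{n,m}$-type precision bookkeeping, so that only the terms that can survive as leading terms need to be computed. This produces, in increasing degree, leading terms $xy(xy^2)^2$ (degree $8$), $xyxy^2(xy)^2x$, $(xy^2)^2(xy)^2x$, $(xy^2)^4$, and so on, up to $(xy^2xy)^4xy$ of degree $22$. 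At each step the coefficient of the claimed leading monomial must be checked to be nonzero. I expect these coefficients to be polynomials in $\uplambda$, and possibly $\upmu,\upnu$, that are nonvanishing precisely because $\uplambda\neq0$, just as $1+\be+\be^2\neq0$ was needed in \ref{prop:l3finite}.

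For \eqref{E86dimii}, count the monomials avoiding $\scrS$ as words in the letters $x,y$ with no $x^2$ and no $y^3$, that is, words in the blocks $xy$ and $xy^2$ with optional $y$ or $y^2$ prefix and optional $x$ suffix. The count should be $200$, with maximal degree $25$, so $\n^{26}\subset\lcl\scrS\rcl$. The final conclusion then follows from \eqref{eqn:aimGB} and the same closure argument as in \ref{prop:l3finite}(2).

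The main obstacle is the middle of the chain, roughly degrees $11$–$18$. There the reductions of the \Spolys\ become long, and one must confirm that the unknown coefficients $\upmu,\upnu$ and the $\scrO_9$ tail cannot cancel the intended leading terms. My first approach would be to work modulo the $\bigO_{n,m}$ filtration by $x$-count, together with the weighted grading $\deg x=4,\deg y=3$. Under this grading $x^3+y^4$ is homogeneous and $(xy)^3,(xy^2)^2$ are of higher weight. This lets the leading-term computations be done on truncated quasi-homogeneous pieces, where the tail terms provably contribute only behind the leading monomial.
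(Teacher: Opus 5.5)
Your plan for \eqref{E86dimi} is the paper's: take $\sfg_1=\dcyc_xg$ and $\sfg_2=\dcyc_yg$, and produce nine further elements of $J$ as \Spolys\ reduced by $x^2$ and $y^3$. As written, though, it is a plan rather than the computation, and two of its concrete expectations are wrong.

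First, $\la\neq0$ plays no role for $(xy)^3$. Already $\sfg_3\colonequals x\sfg_1-\sfg_1x=xyxyxy+\la\,xy^2xy^2-yxyxyx-\la\,y^2xy^2x+\hdots$ has leading term $(xy)^3$ with coefficient $1$ for every $\la$. The hypothesis enters at the next step: after reduction, $xy\sfg_3-\sfg_3xy$ has leading term $\la\,xyxy^2xy^2$, and the paper divides by $\la$ to get the element with leading term $xy(xy^2)^2$.

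Second, not every leading term shows up as a nonzero coefficient to be checked. The element with leading term $(xy^2)^3(xyxy^2)^2$ comes from an \Spoly\ of order $17$, built from $y^2\sfg_8$ with corrections by $\sfg_1,\hdots,\sfg_4$. The real work there is showing that its entire degree $17$ and $18$ parts reduce to zero. Bookkeeping that only computes ``terms that can survive as leading terms'' presupposes the answer at exactly this step.

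The weighting $\deg x=4$, $\deg y=3$ does not help either. Its initial form $\tfrac13x^3+\tfrac14y^4$ has the infinite dimensional Jacobi algebra $\C\langle x,y\rangle/(x^2,y^3)$, so every leading term beyond $x^2,y^3$ comes from non-initial terms of differing weights. The paper instead carries $\upmu,\upnu$ through explicitly and controls the $\scrO_9$ tail by ordinary degree precision.

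For \eqref{E86dimii}, the step you assert is false. Set $a=xy$ and $b=xy^2$. The monomials avoiding $x^2$ and $y^3$ are exactly $y^\alpha wx^\varepsilon$ with $\alpha\le2$, $\varepsilon\le1$ and $w$ a word in $a,b$. Avoiding the rest of $\scrS$ forces $w=b^k(ab)^mt$ with $k,m\le3$ and $t\in\{\emptyset,a,aa\}$, subject to four conditions:
\begin{enumerate}
\item $t=aa$ forces $\varepsilon=0$ unless $m=0$ and $k\le1$;
\item $t\neq aa$ when $k\ge1$ and $m=3$;
\item $m\le1$ when $k=3$;
\item $\alpha=0$ when $(k,m)=(3,1)$.
\end{enumerate}
Sorting by $k$ gives $63+60+57+20=200$ monomials, so the dimension claim is right. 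But the top degree is $26$, not $25$. The monomial $y^2(xy^2)^2(xyxy^2)^3xyx$ (here $k=2$, $m=3$, $t=a$) is divisible by no element of $\scrS$: the would-be copies of $(xy^2xy)^4xy$ and $(xyxy^2)^4$ run off its final $x$. Hence only $\n^{27}\subset\lcl\scrS\rcl$ holds.

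The paper's proof does not address \eqref{E86dimii} at all, and the exponent $26$ in the statement appears to be off by one. Indeed, since $\dim_\C\Jac(g)=200$ by \ref{cor:allDims}, the monomial above is a nonzero element of $\fJ^{26}$. This is harmless downstream: by \ref{Length6NeckLeft} every monomial of degree $27$ also clears, so the funnel in \ref{thm!funnel6} can be run one degree further before the tail is discarded.
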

\begin{proof}
(1) We outline the elementary steps, which can be carried out by hand.

\medskip
\paragraph{\bf Step 1}
Set $\sfh_1=\dcyc_xf$ and $\sfh_2=\dcyc_yf$, and then define power series
$\sfh_3,\hdots,\sfh_{7}\in (\dcyc f)$ via the following \Spolys\ (with an additional reduction in the case of $\sfh_6$) after reduction by $\sfh_1,\sfh_2$ in the leading polynomials, as follows.
\[
\medmath{\begin{array}{ccclll}
 & \ord & \bigO_k & \textrm{\Spoly} & \textrm{leading polynomial of $\sfh_i$} \\
\sfh_1&2& 8 & & x^2  \\
\sfh_2&3& 8 & &  y^3   \\
\sfh_3&6& 9 & x\sfh_1-\sfh_1x & xyxyxy + \la xy^2xy^2 - yxyxyx - \la y^2xy^2x  \\
\sfh_4&8& 11 & \tfrac{1}\la \big(xy\sfh_3-\sfh_3xy)  &  xyxy^2xy^2 - \tfrac{1}{\la }xy^2xyxyx - xy^2xy^2xy  \\
\sfh_5&10& 13 & -\la (xy\sfh_4-\sfh_3yxy^2)  &  xyxy^2(xy)^2x + \la x(yxy)^3 - \la (yx)^5 - 
        \la^2(y^2x)^2yxy^2   \\
\sfh_6&11& 14 & -\tfrac{1}\la (xyxy^2xyxy\sfh_1-\sfh_5x) &   \\
&& & \hfill -xyxy^3xy^2\sfh_1- \sfh_4xyx &  (xy^2)^2(xy)^2x - (yx)^4y^2x - \la (y^2x)^2yxy^2x  \\
\sfh_7&12& 15 & \tfrac{1}\la (xy^2xy^2\sfh_3-\sfh_6y) & (xy^2)^4 + \tfrac{1}{\la }(yx)^4y^2xy + 
        (y^2x)^2yxy^2xy 
\end{array}}
\]
These realise the first 7 elements of $\scrS$ as their leading terms.

\medskip
\paragraph{\bf Step 2}
Inter-reduce $\sfh_1,\hdots,\sfh_7$ by their 7 leading terms to determine 
$\sfg_1,\hdots,\sfg_7\in (\dcyc f)$ as follows.
\[
\renewcommand{\arraystretch}{1.2}
\medmath{\begin{array}{l}
\sfg_1 =
x^2 + yxyxy + \la y^2xy^2 + \mu yxyxy^2 + \mu yxy^2xy + 
        \mu y^2xyxy - \la {\upnu}y(xy^2)^2 + {\upnu}y^2xyxyx + \bigO_8
\\
\sfg_2 =
y^3 + xyxyx + \la xy^2xy + \la yxy^2x + \mu xyxy^2x + \mu xy^2xyx 
        - \la \mu (xy^2)^2 + 2\mu yxyxyx + \la \mu y^2xy^2x\\ 
        \qquad\quad - 
        \mu^2xyxy^2xy - \mu^2xy^2xyxy - \la {\upnu}(xy^2)^2x + 
        \mu^2yxyxy^2x  + \mu^2yxy^2xyx \\ 
        \qquad\qquad+ \la \mu^2y(xy^2)^2 - 
        \la {\mu}^2y^2xy^2xy + \bigO_8
\\
\sfg_3 =
(xy)^3 + \la (xy^2)^2 - yxyxyx - \la y^2xy^2x + {\mu}xyxy^2xy 
        + {\mu}xy^2xyxy - {\mu}yxyxy^2x - {\mu}yxy^2xyx  
        \\\qquad{} -\la {\mu}y(xy^2)^2 + \la {\mu}y^2xy^2xy - \tfrac{{\mu}^2}{\la }xy^2xyxyx - 
        {\mu}^2xy^2xyxy^2 + (-\la {\upnu} - {\mu}^2)(xy^2)^2xy + \la {\upnu}y(xy^2)^2x \\\qquad{}\quad 
        + {\mu}^2y^2xyxy^2x + {\mu}^2y^2xy^2xyx + \bigO_9
\\
\sfg_4 =
xy(xy^2)^2 - \tfrac{1}{\la }xy^2xyxyx - (xy^2)^2xy - 
        \tfrac{{\mu}}{\la }xy^2xyxy^2x - \tfrac{{\mu}}{\la }(xy^2)^2xyx  - 
        \tfrac{{\mu}^2}{\la }xyxy^2xyxy^2 \\\qquad{}+ \tfrac{{\mu}^2}{\la }xy^2xyxy^2xy + 
        \tfrac{{\mu}^2}{\la }(xy^2)^2xyxy + {\upnu}(xy^2)^2xy^2x + 
        {\mu}^2y(xy^2)^3 - {\mu}^2y^2xy^2xyxy^2 + \bigO_{11}
\\
\sfg_5 =
xyxy^2xyxyx + \la (xy^2)^2xyxy + \la^2y(xy^2)^3 - 
        \la^2y^2xy^2xyxy^2 + {\mu}xyxy^2xyxy^2x + 
        \la {\mu}(xy^2)^2xyxy^2 \\\qquad{}+ \la {\mu}(xy^2)^2xy^2xy - 
        2\la {\mu}y(xy^2)^3x + 2\la {\mu}y^2xy^2xyxy^2x - 
        {\mu}^2xy^2xyxy^2xyxy \\\qquad{}+ (-\la {\upnu} - {\mu}^2)(xy^2)^2xyxy^2x  - 
        {\mu}^2(xy^2)^3xyx - 2\la {\mu}^2y(xy^2)^2xyxy^2 + (-\la^2{\upnu} 
        - \la {\mu}^2)y(xy^2)^3xy \\\qquad{}+ \la {\mu}^2y^2xyxy^2xyxy^2 + 
        \la^2{\upnu}y^2xy^2xyxy^2xy + \la {\mu}^2y^2(xy^2)^3x + \bigO_{13}
\\
\sfg_6 =
(xy^2)^2xyxyx + \la y(xy^2)^3x - \la y^2xy^2xyxy^2x + 
        {\mu}(xy^2)^2xyxy^2x + {\mu}(xy^2)^3xyx + 
        {\mu}^2(xy^2)^3xyxy \\\qquad{}- 2{\mu}^2y(xy^2)^2xyxy^2x + (-\la {\upnu} 
        - {\mu}^2)y(xy^2)^3xyx + {\mu}^2y^2xyxy^2xyxy^2x + 
        \la {\upnu}y^2xy^2xyxy^2xyx + \bigO_{14}
\\
\sfg_7 =
(xy^2)^4 - y(xy^2)^3xy + y^2xy^2xyxy^2xy 
        + \la xy^2xyxy^2xyxy^2x  - 2\tfrac{{\mu}^2}{\la }(xy^2)^3xyxy^2 \\\qquad{}+ 
        2\tfrac{{\mu}^2}{\la }y(xy^2)^2xyxy^2xy - \tfrac{{\mu}^2}{\la }y^2xyxy^2xyxy^2xy + \tfrac{{\mu}^2}{\la }y^2xy^2xyxy^2xyxy + \bigO_{15}
\end{array}}
\renewcommand{\arraystretch}{1}
\]

\paragraph{\bf Step 3}
Determine power series $\sfh_8,\hdots,\sfh_{11}\in (\dcyc f)$, and their reductions $\sfg_8,\hdots,\sfg_{11}\in (\dcyc f)$ as follows.
Each reduction $\equiv$ only uses $\sfg_1,\sfg_2$ to simplify the leading polynomial.
\[
\renewcommand{\arraystretch}{1.2}
\medmath{\begin{array}{r@{\hspace{4pt}}c@{\hspace{4pt}}l}
&&\textrm{\Spoly\ $+$ small term reductions $\equiv$ leading polynomial ${}+\hot$}\\
\sfh_8 &\colonequals& xy^2\sfg_7-\sfg_7xy^2 + x\sfg_2(xy^2)^3xy
        - x\sfg_2yxy^2xyxy^2xy \\
        &=& y(xy^2)^3xyxy^2 - y^2xy^2xyxy^2xyxy^2 +\hdots + \bigO_{18} \equalscolon \sfg_8\\
\sfh_9 &\colonequals& -\tfrac{1}\la (y^2\sfg_8-\sfg_2(xy^2)^3xyxy^2
+ \sfg_2yxy^2xyxy^2xyxy^2 - \sfg_3xy^2xyxy^2xyxy^2) \\
      &&   \hspace{8em}- xy^2\sfg_4xy^2xyxy^2
        - \tfrac{1}{\la }xyxy\sfg_1y^2(xy^2)^2xyxy^2
        - \tfrac{1}{\la }(xy^2)^2xyxy\sfg_1y^2xyxy^2 \\
&\equiv&
(xy^2)^3(xyxy^2)^2 - (yxy^2x)^3yxy^2 +
    (y^2xyx)^3y^2xy - y^2(xy^2)^2xyxy^2xyxy^2x + \bigO_{20}\equalscolon \sfg_9 \\
\sfh_{10} &\colonequals&  
-x\sfg_9 + \sfg_1y^2(xy^2)^2xyxy^2xyxy^2
        - \tfrac{1}{\la }xyxyxy\sfg_1y^2xyxy^2xyxy^2 + xyxy^2\sfg_1y^2(xy^2)^2xyxy^2
\\
&\equiv&
(xyxy^2)^4 - 
    (xy^2xy)^4 + 
    (yxy^2x)^4 - 
    (y^2xyx)^4 + \bigO_{21} \equalscolon \sfg_{10}\\
\sfh_{11} &\colonequals&  
-xy\sfg_{10} + \sfg_3y(xyxy^2)^3 - \sfg_{10}xy
        + \sfg_4xy^2x(yxy^2x)^2 + 2\tfrac{{\mu}^2}{\la^2}xyxy\sfg_8xy^2
        - 2\tfrac{{\mu}^2}{\la^2}xyxy^2xy\sfg_8 \\
&\equiv&        
(xy^2xy)^4xy - xy(yxy^2x)^4 + \bigO_{23}\equalscolon \sfg_{11}
\end{array}}
\renewcommand{\arraystretch}{1}
\]
The key, non-trivial, part of the above is the construction of $\sfh_9$, where the stated \Spoly\ naturally
has order $17$, but all terms of degrees $17$ and $18$ reduce as indicated
to leave a power series of order~$19$.

The elements of $\scrS$ are clearly the leading terms of $\sfg_1,\hdots,\sfg_{11}$.
\end{proof}

\section{Necklaceification}\label{sec:NecklacIFY}

This section proves that if $f$ has algebraic length, then $\Jac(f)\cong\Jac(g)$ where $g$ is a finite linear combination of necklace polynomials, recalled in \S\ref{subsec:DefNeck}.  In many respects this section is the technical heart of the paper, since the algebraic approach above needs to be interfaced directly with geometry, which is precisely what necklace polynomials allow.  The concluding result of the section, \ref{cor:NecklaceSummary},  is particularly remarkable as there is no obvious reason for why the results in this section should be true.

In \S\ref{subsec:IntrinsMain} we will use \ref{cor:NecklaceSummary} to directly argue that each $\Jac(f)$ of algebraic length $\ell$ is the contraction algebra of some smooth $3$-fold flop, or crepant divisor-to-curve contraction.

\subsection{Necklace Polynomials}\label{subsec:DefNeck}
Given two integers $j,k\geq 0$ such that $n\colonequals j+k\geq 1$, consider the set $\Mono_{j,k}$ consisting of degree $n$ monomials in $\ringtwo$ which have precisely $j$ occurrences of $x$, and thus precisely $k$ occurrences of $y$.  The cyclic group $\mathbb{Z}_n$ acts on the set $\Mono_{j,k}$ via rotation,  partitioning $\Neck_{j,k}$ into the set of cyclic equivalence classes, written $\Orb_{j,k}$.  To each $m\in\Orb_{j,k}$ we choose the unique representative $p_m$ which is smallest with respect to the local graded monomial order $<$ in \eqref{eqn:order}.  The \emph{free necklace polynomial} $\NeckPoly_{j,k}$ is defined to be
\[
\NeckPoly_{j,k}(x,y)\colonequals  \frac{1}{j+k}\sum_{m\in\Orb_{j,k}}|m| \cdot p_m\quad\in\freering
\]
For calibration, and also since they are used often below, $\NeckPoly_{3,0}=\tfrac{1}{3}x^3$, whereas 
\[
\begin{array}{c@{\hspace{2pt}}c@{\hspace{2pt}}l@{\hspace{40pt}}r@{\hspace{2pt}}c@{\hspace{2pt}}l@{\hspace{2pt}}}
\NeckPoly_{2,2} &=& x^2y^2 + \tfrac{1}{2}xyxy & \NeckPoly_{2,4} &=& x^2y^4 + xyxy^3 +\tfrac{1}{2}xy^2xy^2\\
\NeckPoly_{2,3} &=& x^2y^3 + xyxy^2& \NeckPoly_{3,3} &=& x^3y^3 + x^2yxy^2 + x^2y^2xy + \tfrac{1}{3}xyxyxy
\end{array}
\]
 
\medskip
Returning to the aim of the section, it is evident by inspection of the list in  \ref{cor:SemiFinalAlgLength} that $\Jac(f)$, for every $f$ of length $\ell=1,2$, is isomorphic to $\Jac(g)$ where $g$ is a finite sum of necklace polynomials. Further, after some work that establishes that the $p(y)$ in \ref{cor:SemiFinalAlgLength} is a polynomial and not a power series (done in \ref{cor:NecklaceSummary} below), the same is true for $\ell=3$.  The remainder of this section considers the cases, in order, $\ell=5,4,6$. Each has very different properties, and so extreme care is required.

\subsection{Length Five} \label{subsec:L5fullfunnel}
Given any $f$ with $\ell=5$,  by \ref{thm:ReductionPrecise} $\Jac(f)\cong\Jac(x^3+xyxy+y^5+\scrO_6)$.  The first problem is that $xyxy$ is not a necklace polynomial.  The following fixes this, and should be viewed as the necklace version of \ref{prop!caseC} (where in the proof there, $\upgamma\neq 0$).

\begin{lemma}\label{lem:Ell5necksmall}
If $f\in\ringtwo$ has $\ell=5$, then $\Jac(f)\cong\Jac(x^3+\NeckPoly_{2,2}+y^5+\scrO_6)$.
\end{lemma}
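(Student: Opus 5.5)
By \ref{thm:ReductionPrecise}, since $f$ has $\ell=5$ we may assume $f=x^3+xyxy+y^5+\scrO_6$. The plan is to exhibit an automorphism turning $xyxy$ into $\NeckPoly_{2,2}=x^2y^2+\tfrac12 xyxy$ (up to scaling), and then to clean up the resulting degree~$4$ and degree~$5$ terms exactly as in the proof of \ref{prop!caseC}.

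\emph{Step 1.} Compare with \ref{thm:uptodeg4}: the element $x^3+\NeckPoly_{2,2}$ has $\upeta=\upzeta=0$ and $\upgamma=\tfrac12$ on $xyxy$, together with an $x^2y^2$ term. Applying Tschirnhausen \ref{tschirnhausen} (depth $1$, $x\mapsto x-\tfrac13 y^2$) removes the $x^2y^2$ term, giving $x^3+\NeckPoly_{2,2}\cong x^3+\tfrac12 xyxy+\scrO_5$. Rather than converting $f$ into necklace form directly, I will run this in reverse, starting from the target. Set $h=x^3+\NeckPoly_{2,2}+y^5$. Its degree-$4$ part puts $h$ in case~$\caseC$, so by the proof of \ref{prop!caseC} (case $\upgamma\neq0$; here the $y^5$ coefficient is $1\neq0$, and this survives the degree~$4$ Tschirnhausen step since that step only creates terms of degree $\ge5$ divisible by $x$, up to cycling) we get $h\cong x^3+xyxy+y^5+\scrO_6$ after a scaling $x\mapsto ax$, $y\mapsto by$ with $a^3=a^2b^2=\upgamma b^5$. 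The point is that \ref{prop!caseC} shows that \emph{every} element $x^3+xyxy+\upgamma y^5+\scrO_6$ with $\upgamma\neq0$, in particular $f$, is carried to the same $5$-jet $x^3+xyxy+y^5$.

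\emph{Step 2.} This gives only $h\cong f+\scrO_6$ on $5$-jets, not an isomorphism of the full Jacobi algebras. To conclude, I compose the automorphisms in the opposite direction. Take $\upphi$ with $\upphi(h)\sim x^3+xyxy+y^5+\scrO_6$, and apply $\upphi^{-1}$ to $f$. Then $\upphi^{-1}(f)\sim x^3+\NeckPoly_{2,2}+y^5+\scrO_6$, because $\upphi^{-1}$ maps $\scrO_6$ into $\scrO_6$. By \ref{lem!simauto}, this yields $\Jac(f)\cong\Jac(\upphi^{-1}(f))$, and the right-hand side has exactly the claimed form. The scalings are harmless, by \ref{prop:scaling}.

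\emph{Main obstacle.} The hard step is bookkeeping in the degree~$5$ terms. The Tschirnhausen map $x\mapsto x-\tfrac13y^2$ applied to $\tfrac12 xyxy$ and $x^3$ produces degree~$5$ terms such as $y^2xy^2$ and $xy^4$. I must check that after the cleanup in \ref{prop!caseC}, the coefficient $\upgamma$ on $y^5$ stays nonzero. The alternative would be landing in the $\upgamma=0$ branch, which is purely algebraic by \ref{prop:withingC}. Since the map only substitutes into $x$, it cannot create $y^5$ from $x$-containing terms of degree~$4$; so $\upgamma=1$, the nonzero case applies, and the scaling normalises everything.
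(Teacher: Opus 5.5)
Your proof is correct, but it runs in the opposite direction to the paper's. The paper works forward from $x^3+\tfrac12 xyxy+y^5+\scrO_6$. It substitutes $x\mapsto x+\tfrac13 y^2$ to turn $\tfrac12 xyxy$ into $\NeckPoly_{2,2}$, at the cost of a $\tfrac23 xy^4$ term. It then reruns the cleanup of \ref{prop!caseC}, with coefficients adjusted for the new $x^2y^2$ term, and finishes with a move clearing the leftover $x^2h_3$. Throughout, it must make sure none of these moves disturbs the $x^2y^2$ term of $\NeckPoly_{2,2}$.

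You instead normalise the target $h=x^3+\NeckPoly_{2,2}+y^5$, using \ref{tschirnhausen} and \ref{prop!caseC} as black boxes. You observe that every case-$\caseC$ element with nonzero $y^5$-coefficient reaches the same $5$-jet $x^3+xyxy+y^5$, and then transport $f$ back by $\upphi^{-1}$. This needs only that $\upphi^{-1}$ preserves $\sim$ and $\n^6$, which holds for any automorphism of $\ring$. The overall scalar left by the final rescaling in \ref{prop!caseC} is absorbed by \ref{prop:scaling}.

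The one genuine check is the one you identified, and you handle it correctly. After $x\mapsto x-\tfrac13 y^2$, every new degree-$5$ term contains exactly one $x$, and cyclic equivalence cannot change the coefficient of $y^5$. Hence $\upgamma\neq 0$. Your phrase ``only creates terms of degree $\geq 5$ divisible by $x$'' is slightly too strong, since pure $y^6$ terms appear. Only degree $5$ matters here, though.

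What your route buys is that no coefficients need recomputing. It also avoids a subtle risk: a literal application of \ref{tschirnhausen} to $x^3+x^2y^2+\cdots+x^2h_3$ would strip the $x^2y^2$ term as well. The paper's final step is really the direct depth-$2$ substitution $x\mapsto x-\tfrac13 h_3$. Your argument is the standard principle that two elements with the same normal form are equivalent. The paper's route instead produces an explicit forward automorphism, in the same style as the later funnelling lemmas.
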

\begin{proof}
We have $f\cong x^3+xyxy+y^5+\scrO_6$ by \ref{thm:ReductionPrecise}, and applying the scaling automorphism $x\mapsto 32x$ and $y\mapsto 8y$
%\marginpar{this scaling doesn't seem correct, but it is!!} 
shows that $f\cong x^3+\frac{1}{2}xyxy+y^5+\scrO_6$. Then sending $x\mapsto x + \tfrac{1}{3}y^2$ 
\begin{equation}
f\cong x^3+(\tfrac{1}{2}xyxy+x^2y^2)+\tfrac{2}{3}xy^4+y^5+\scrO_6=x^3+\NeckPoly_{2,2}+\tfrac{2}{3}xy^4+y^5+\scrO_6\label{eqn:ell5funnelneckprep}
\end{equation}
That is, $xyxy$ has turned into $\NeckPoly_{2,2}$, at the cost of an $xy^4$ term.  The proof now simply follows the proof of \ref{prop!caseC}, with slightly different coefficients that account for the new $x^2y^2$ term. Summarising, applying $x\mapsto x - \tfrac{1}{3}(\tfrac{2}{75}x^2 - \tfrac{2}{5}xy)$ and $y\mapsto y -\tfrac{2}{15}x - \tfrac{8}{45}y^2$ to the right hand side of \eqref{eqn:ell5funnelneckprep} gives $f\cong x^3+\frac{1}{2}xyxy+y^5+x^2h_3+\scrO_6$ for some degree three $h_3$.  One final application of Tschirnhausen \ref{tschirnhausen} pushes $x^2h_3$ into higher degree, as required.
\end{proof}

For the remainder of this subsection, $f_{\rm stub}\colonequals x^3+\NeckPoly_{2,2}+y^5$.  Given this, we now recycle the notation $\to$ from 
\ref{tonotation}, noting that the clearing lemmas \ref{lem:E6funnelA}--\ref{lem:E6funnelD} are no longer valid since $f_{\rm stub}$ has changed.  The following lemma replaces these, for the new $f_{\rm stub}$.  Recall that $(a_1,\hdots,a_t)$ denotes $xy^{a_1}\hdots xy^{a_t}$, and below we will also sometimes drop commas and brackets and write simply $a_1\hdots a_t$, provided that no confusion arises.

\begin{lemma}\label{Length5Neckpush}
For $f_{\rm stub}\colonequals x^3+\NeckPoly_{2,2}+y^5$, the following statements hold.
\begin{enumerate}
\item\label{Length5Neckpush1} Given any element $F$ of degree $n-3$,  $xyxF\to\scrO_{n+1}$.
\item\label{Length5Neckpush2} If $t=1$ and $a_1\geq 5$, then $(a_1)\to \tfrac{1}{5}(2,a_1-3),\tfrac{2}{15}(3,a_1-4)$.
\item\label{Length5Neckpush3} It $t\geq 2$ and $a_t=4$, then $(a_1,a_2,\hdots,a_t) \to \scrO_{n+1}$.
\item\label{Length5Neckpush4} If $t\geq 2$ and $a_t\geq 5$, then $(a_1,a_2,\hdots,a_t) \to$
\[
\tfrac{2}{15}(2, a_1, a_2,\hdots,a_{t-1},a_{t}-3), \tfrac{2}{15}(3, a_1, a_2,\hdots,a_{t-1},a_t-4), \tfrac{1}{15}(2, a_1+1,a_2,\hdots,a_{t-1},a_t-4)
\]
\item\label{Length5Neckpush5}  If $t\geq 3$ and $(a_1,\hdots,a_t)$ can be rotated so that there are two consecutive $2$'s, then  $(a_1,\hdots,a_t)\to\scrO_{n+1}$.
\item\label{Length5Neckpush6} $(2323)\to\scrO_{15}$, $(23233)\to\scrO_{19}$, and $(232323)\to\scrO_{22}$. 
\end{enumerate}
\end{lemma}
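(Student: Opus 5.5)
Each of the six moves will be realised by an explicit depth $\geq n-5$ unitriangular automorphism $\upphi$. We then expand $\upphi(f_{\rm stub})$ up to $\sim_{n}$, exactly as in the proofs of \ref{lem:E6funnelA} and \ref{lem:E6funnelD}. The basic mechanism is first order. If $\upphi\colon x\mapsto x+\upalpha P$, $y\mapsto y+\upalpha Q$ with $P,Q$ homogeneous of degree $n-2$ and $n-3$, then
\[
\upphi(f_{\rm stub})\sim_{n} f_{\rm stub}+\upalpha\big(P\,\dcyc_xf_{\rm stub}+Q\,\dcyc_yf_{\rm stub}\big)_{\deg n}+\scrO_{n+1}.
\]
Here $\dcyc_xf_{\rm stub}=x^2+xy^2+y^2x+yxy$ and $\dcyc_yf_{\rm stub}=yx^2+x^2y+xyx+5y^4$, up to the normalisation of $\NeckPoly_{2,2}$. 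So each move amounts to choosing $P,Q$ so that, after cyclic rotation and after using Tschirnhausen \ref{tschirnhausen} to discard every term divisible by $x^2$, the degree $n$ contribution is $-\upalpha\mathsf{m}_1$ plus the claimed targets. Terms divisible by $x^2$ are legitimate to discard by \ref{tonotation}\eqref{tonotation2}, since Tschirnhausen has depth large enough.

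The steps are as follows.

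\eqref{Length5Neckpush1} Take $y\mapsto y-\upalpha xF$. The $xyx$ term of $\dcyc_yf_{\rm stub}$ produces $xyxF$, while the other terms produce $x^2$-divisible monomials, cleared by Tschirnhausen, and $y^4xF$ in degree $n+1$.

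\eqref{Length5Neckpush3} With $a_t=4$, write the monomial as $By^4$ where $B=xy^{a_1}\cdots x$, and send $y\mapsto y-\tfrac{\upalpha}{5}B$ with $B$ of degree $n-4$. This produces $By^4$ together with $x^2$-terms and $xyxB$-type terms, and the latter are killed by \eqref{Length5Neckpush1}.

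\eqref{Length5Neckpush2} and \eqref{Length5Neckpush4}: use $y\mapsto y+c\,y^{a_t-4}\cdot(\text{rest})$. This converts $y^4$ into the target, and the $xy^2+yxy+y^2x$ part of $\dcyc_xf_{\rm stub}$ generates $(2,\dots)$ and $(3,\dots)$ monomials after rotation. A compensating $x$-substitution removes any $xyx$ pieces via \eqref{Length5Neckpush1}. The coefficients $\tfrac15,\tfrac{2}{15},\tfrac1{15}$ then follow by bookkeeping.

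\eqref{Length5Neckpush5} With consecutive $2$'s, write the monomial as $xy^2xy^2C$ and use the $x$-substitution $x\mapsto x-\upalpha y^2xy^2\cdots$ applied against the $xy^2+y^2x+yxy$ terms. The result is either $x^2$-divisible or contains $xyx$, which is cleared by \eqref{Length5Neckpush1}. A second, $y$-type substitution may be needed to cancel a leftover of width $t$. I would first try the combination used in \ref{lem:E6funnelA}\eqref{lem:E6funnelA2}.

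\eqref{Length5Neckpush6} These are finite explicit degrees, $14$, $18$ and $21$. For each I would list the finitely many automorphisms available in that degree and solve a small linear system, mimicking the matrix argument of \ref{lem:E6funnelD}. That is, I would apply moves \eqref{Length5Neckpush2}--\eqref{Length5Neckpush5} to $(2323)$ and its rotations, record the resulting closed cycle of relations among the width-two and width-three monomials, and check that the coefficient matrix is nonsingular.

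The main obstacle is \eqref{Length5Neckpush6}, together with the exact coefficients in \eqref{Length5Neckpush4}. Moves \eqref{Length5Neckpush2}--\eqref{Length5Neckpush5} alone do not terminate on these monomials; they cycle back to themselves with a coefficient. The move clears only if that coefficient is not $1$, which forces precise tracking of the rational coefficients. I expect to verify \eqref{Length5Neckpush6} by a direct determinant computation, potentially computer-assisted.
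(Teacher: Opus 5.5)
Your proposal has a genuine gap. It lies in the basic mechanism, and it affects \eqref{Length5Neckpush3}, \eqref{Length5Neckpush5} and \eqref{Length5Neckpush6}.

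\textbf{The mechanism.} With $P,Q$ of degrees $n-2,n-3$ as in your displayed formula, the only degree-$n$ output is $3x^2P+Q(yx^2+x^2y+xyx)$. Neither $5y^4$ nor $xy^2+y^2x+yxy$ ever contributes, so the formula yields \eqref{Length5Neckpush1} and nothing else. Even there the substitution should be $y\mapsto y-\upalpha F$, since $xF$ has the wrong degree. To exploit $y^4$ you need a $y$-substitution of degree $n-4$, and to exploit $\dcyc_x\NeckPoly_{2,2}$ you need an $x$-substitution of degree $n-3$. Both create degree $n-1$ by-products, which must be cancelled by a further substitution. The substance of the lemma is that the degree-$n$ cost of this cancellation consists only of terms containing $x^2$ or $xyx$. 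Your sketch of \eqref{Length5Neckpush2} and \eqref{Length5Neckpush4} points in this direction, but leaves the cost unanalysed.

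\textbf{Move (3).} You treat the $xyxB$ terms with \eqref{Length5Neckpush1}, but they have degree $n-1$, so \eqref{Length5Neckpush1} does not apply. The paper pairs $y\mapsto y-\tfrac{\upalpha}{5}x\msf x$ with the compensating $x\mapsto x+\tfrac{\upalpha}{15}(x\msf xy+yx\msf x+\msf x^2y)$. It is because $B=x\msf x$ begins and ends with $x$ that every product with $xy^2+y^2x+yxy$ contains $x^2$ or $xyx$ cyclically.

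\textbf{Move (5).} An $x$-substitution $x\mapsto x-\upalpha P$ creates $-3\upalpha x^2P$ in degree $n-1$. Removing it by Tschirnhausen \ref{tschirnhausen} simply undoes the move, so it has to be cancelled through $\dcyc_y\NeckPoly_{2,2}$. The clean way, which is the idea you are missing, is to take $P=yx\msf xy$, so that $x^2P\sim xyx\msf xyx$ is killed by the $xyx$ term. Concretely, for the rotation $xy^2x\msf xy^2$:
\begin{itemize}
\item apply $x\mapsto x-\upalpha yx\msf xy$; the target comes from the product with $yxy$, and the other two products contain $xyx$;
\item then apply $y\mapsto y+3\upalpha\msf xyx$;
\item then apply $x\mapsto x-\upalpha(\msf xyxy+y\msf xyx)$, all of whose degree-$n$ costs contain $xyx$.
\end{itemize}
Your proposed $x\mapsto x-\upalpha y^2xy^2\cdots$ does not have this shape, and leaves both the degree $n-1$ term and further width-$t$ monomials unresolved.

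\textbf{Move (6).} Applying moves to $(2323)$ and its rotations cannot even start. All entries lie in $\{2,3\}$ and no two $2$'s are adjacent, so none of \eqref{Length5Neckpush2}--\eqref{Length5Neckpush5} applies, and there is no self-cycling coefficient to control. The argument runs the other way. Applying \eqref{Length5Neckpush4} to $(2,2,7)$ gives $(227)\to\tfrac{2}{15}(2224),\tfrac{2}{15}(3223),\tfrac{1}{15}(2323)$. Since $(227)$, $(2224)$ and $(3223)$ all clear by \eqref{Length5Neckpush5}, so does $(2323)$. Likewise $(2237)\to(22234),(32233),(23233)$ and $(22327)\to(222324),(322323),(232323)$ handle the other two monomials.

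No determinant is needed. However, the argument uses that the third coefficient in \eqref{Length5Neckpush4} is nonzero. Also, the positivity of all coefficients in \eqref{Length5Neckpush2} and \eqref{Length5Neckpush4} is used later in the proof of \ref{thm!funnel5}. So these coefficients cannot be left as bookkeeping. The paper obtains them from $y\mapsto y-\tfrac{\upalpha}{5}x\msf$ and $x\mapsto x+\tfrac{\upalpha}{15}(x\msf y+yx\msf+\msf xy)$, where $(a_1,\dots,a_t)=x\msf y^4$. Of the nine products with $xy^2+y^2x+yxy$, the three targets occur with multiplicities $2,2,1$, and the remaining four contain $x^2$ or $xyx$.
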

\begin{proof}
(1) Sending $y\mapsto y - \upalpha F$ subtracts $\upalpha xyxF$ in degree $n$, at the cost of adding the degree $n$ term $-\upalpha x^2Fy-\upalpha x^2yF$.  Tschirnhausen \ref{tschirnhausen} pushes this term into $ \scrO_{n+1}$. \\
(2) Set $n=a_1+1$ and consider $\upphi$ which sends $y\mapsto y - \frac{\upalpha}{5} xy^{a_1-4}$. Then
\[
\upphi(f_{\stub})\sim f_{\stub} - \tfrac{\upalpha}{5}x^2(xy^{a_1-3} + yxy^{a_1-4} +y^{a_1-4}xy)-\upalpha xy^{a_1}+x^2h_{n-2}+\scrO_{n+1}
\]
where $h_{n-2}$ exists and has degree $n-2$ if and only if $a_1=5$. 
%Thus the degree $n$ term $xy^{a_1}$ has been removed at the cost of the three terms of degree $n-1$, plus possibly some $x^2h_{n-2}$ which will play no role.  
Applying $\uppsi\colon x\mapsto x + \tfrac{\upalpha}{15}(xy^{a_1-3} + yxy^{a_1-4} +y^{a_1-4}xy)$ then gives
\[
\uppsi\upphi(f_{\stub})\sim (f_{\stub}-\upalpha xy^{a_1}) +\tfrac{\upalpha}{5}xyxy^{a_1-2} + \tfrac{\upalpha}{5}xy^2xy^{a_1-3} + \tfrac{2\upalpha}{15}xy^3xy^{a_1-4} + x^2g_{n-2} + \scrO_{n+1}
\]
for some $g_{n-2}$ of degree $n-2$.  Applying \eqref{Length5Neckpush1} then pushes $xyxy^{a_1-2}\to\scrO_{n+1}$, and applying Tschirnhausen \ref{tschirnhausen} pushes $ x^2g_{n-2}\to\scrO_{n+1}$.  The coefficients in the statement are read from the displayed equation above, using the convention in \ref{tonotation}\eqref{tonotation1}.\\
(3)  % see deg 13
Write $(a_1,\hdots,a_t)=(x\msf x)y^4$ and send 
$y\mapsto y - \tfrac{\upalpha}{5} x\msf x$, $x\mapsto x +\tfrac{\upalpha}{15}(x\msf xy + yx\msf x + \msf x^2y)$.
% same formula as in (4) below, with C=\msf x
This subtracts $\upalpha y^4(x\msf x)$, at the cost of adding in terms of degree $n$ that are all divisible by $x^2$ or $xyx$. Applying \eqref{Length5Neckpush1} and Tschirnhausen \ref{tschirnhausen} pushes those into $ \scrO_{n+1}$.\\
(4) Write $(a_1,\hdots,a_t)= x\msf y^4$, where $\msf \colonequals y^{a_1}(a_2,\hdots,a_{t-1},a_t-4)$, then in a manner very similar to parts \eqref{Length5Neckpush2} and \eqref{Length5Neckpush3}, send $y\mapsto  y - \tfrac{\upalpha}{5} x\msf $, and $x\mapsto x +\tfrac{\upalpha}{15}(x\msf y + yx\msf  + \msf xy)$.\\
(5) Since $t\geq 3$, we may rotate and write $(a_1,\hdots,a_t)=xy^2x\msf xy^2$ for some $\msf $.  Applying $x\mapsto  x - {\upalpha}yx\msf xy$ subtracts $\upalpha xy^2x\msf xy^2$ at the cost of adding $-3\upalpha x^2 yx\msf xy$ in degree $n-1$, and  also adding $- {\upalpha}xyxy^3x\msf  - {\upalpha}xyx\msf xy^3$ in degree $n$ . Note that both of the latter terms are divisible by $xyx$.  The trick is to rotate $- 3\upalpha x^2 yx\msf xy \sim -3\upalpha xyx\msf xyx$, so then applying $y\mapsto y + 3\upalpha \msf xyx$ replaces $- 3\upalpha x^2 yx\msf xy \sim -3\upalpha xyx\msf xyx$ in degree $n-1$ with $+3\upalpha x^2(\msf xyxy + y\msf xyx)$, at the cost of adding in an additional  term in degree $n$, also divisible by $xyx$.  Finally, applying $x\mapsto x-\upalpha (\msf xyxy + y\msf xyx)$ removes $+3\upalpha x^2(\msf xyxy + y\msf xyx)$ and thus everything from degree $n-1$, at the cost of adding in yet more terms of degree $n$, all of which (simply by the form of the last automorphism) are all divisible by $xyx$.  Thus, overall, we have removed $\upalpha xy^2x\msf xy^2$ from degree $n$, and replaced it by a sum of terms, all in degree $n$, which are all divisible by $xyx$. One final application of \eqref{Length5Neckpush1} yields the result.\\
 (6) By \eqref{Length5Neckpush4} we have $(227) \to (2224), (3223),(2323)$.  But by \eqref{Length5Neckpush5} all $(227)$, $(2224)$, $(3223)$ clear into higher degree, thus so does $(2323)$.  The other two statements follow using $(2237)\to (22234), (32233), (23233)$ and   $(22327)\to (222324),(322323),(232323)$.
\end{proof}

\begin{thm}\label{thm!funnel5}
Set $f_{\rm stub}=x^3+\scrN_{2,2}+y^5$.  If $g= f_{\rm stub}+ \scrO_6$, then there exists $p,q\in\C [y]$  such that $\Jac(g)\cong \Jac(x^3+\scrN_{2,2}+y^5+xy^5p+y^6q)$.
\end{thm}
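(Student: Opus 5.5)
The plan mirrors the proof of \ref{thm:TypeEGeneric}, replacing the clearing lemmas \ref{lem:E6funnelA}--\ref{lem:E6funnelD} by \ref{Length5Neckpush}. The goal is to feed \ref{main chase lemma} with $N=6$. At each degree $n\geq 6$, after cyclic rotation, I would assume that $\sff_n$ has the form
\[
f_{\rm stub}+\sum_{t=6}^{n-1}\big(\upmu_t\,xy^{t-1}+\uplambda_t\,y^t\big)+g_n+\scrO_{n+1}.
\]
Using finitely many `$\to$ moves', each of depth $\geq n-5$, I would reduce $g_n$ to a combination of $xy^{n-1}$ and $y^n$ only. \ref{tonotation}\eqref{tonotation2} lets the lower-degree tail be carried along freely. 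Then $\sff_\infty=x^3+\scrN_{2,2}+y^5+xy^5p+y^6q$ with $p,q\in\C\llsq y\rrsq$, and \ref{main chase lemma} gives $\Jac(g)\cong\Jac(\sff_\infty)$.

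The reduction of $g_n$ would go as follows. Up to cyclic equivalence, every degree $n$ monomial is divisible by $x^2$, or is $y^n$, or is $(a_1,\hdots,a_t)$. Terms divisible by $x^2$ go into $\scrO_{n+1}$ by Tschirnhausen \ref{tschirnhausen}. Any rotation containing a part $a_i=1$ contains $xyx$, so it clears by \ref{Length5Neckpush}\eqref{Length5Neckpush1}. This leaves width $t\geq 2$ words with all $a_i\geq 2$. Width one words $xy^{n-1}$ are kept; I would not apply \eqref{Length5Neckpush2}, since that moves them to width two. For $t\geq 2$, I rotate so that the largest part $a_t$ is last. If $a_t=4$ the word clears by \eqref{Length5Neckpush3}. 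If $a_t\geq 5$, then \eqref{Length5Neckpush4} replaces it by words of width $t+1$ whose new part is $2$ or $3$ and whose last part drops by $3$ or $4$.

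The key invariant is that repeated use of \eqref{Length5Neckpush4} strictly decreases the maximal part and eventually produces words with all parts in $\{2,3,4\}$. Words containing a $4$ clear by \eqref{Length5Neckpush3}, and words with two cyclically adjacent $2$'s of width $\geq 3$ clear by \eqref{Length5Neckpush5}. The remaining words use only parts $2,3$ with no adjacent $2$'s. This is the main obstacle: such words ($(33)$, $(233)$, $(2323)$, $(333)$, and so on) are not covered directly by the listed moves, apart from \eqref{Length5Neckpush6}.

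My first attempt would be to use \eqref{Length5Neckpush4} backwards, via \ref{tonotation}\eqref{tonotation2REV}, as in the proof of \eqref{Length5Neckpush6}. For a target word $w$, I pick a word $w'$ containing a part $\geq 5$ such that \eqref{Length5Neckpush4} sends $w'$ to $w$ plus words that already clear. Then $w$ is a combination of clearing words and so clears. For small degrees where no such $w'$ works, such as the words involving $(33)$, I would fall back on the linear-algebra trick of \ref{lem:E6funnelD}. There I would write a matrix of the coefficients of the available moves on the finitely many surviving $\{2,3\}$-words of degree $n$ and check invertibility, tracking the explicit coefficients $\tfrac{2}{15},\tfrac{1}{15}$ from \eqref{Length5Neckpush4}. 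Any residue lands on $xy^{n-1}$ or $y^n$, which is allowed.

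Finally, I would show $p,q$ are polynomials. By \ref{prop:l5finite}, $\n^{21}\subset\lcl\dcyc g\rcl$, so by finite determinacy the terms $xy^{n-1}$ and $y^n$ with $n$ large can be removed. Concretely, for $n\geq 21$ they lie in a high power of the Jacobian ideal and can be pushed off by further unitriangular automorphisms that fit into the same chase. So $p,q$ may be truncated to polynomials.
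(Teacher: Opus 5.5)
Your per-degree reduction to words with all parts in $\{2,3\}$ is the paper's: Tschirnhausen, \eqref{Length5Neckpush1}, \eqref{Length5Neckpush3} and \eqref{Length5Neckpush4}, followed by \eqref{Length5Neckpush5} and \eqref{Length5Neckpush6}. One correction: it terminates because the width grows and is bounded by $n/3$, not because the largest part drops; \eqref{Length5Neckpush4} sends $(6,6)$ to, among others, $(2,7,2)$.

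The first gap is how you handle the $\{2,3\}$-words that nothing clears. Up to degree $21$ these are $(22),(23),(33),(233),(333),(2333),(3333),(23333),(33333)$, one in each of the degrees $6,7,8,11,12,15,16,19,20$. They cannot be cleared and must be traded for $xy^{n-1}$. The only move in \ref{Length5Neckpush} that involves $xy^{n-1}$ is \eqref{Length5Neckpush2}, which you deliberately discard. So your claim that ``any residue lands on $xy^{n-1}$'' has no mechanism behind it.

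The paper's argument runs as follows. Push $xy^{n-1}$ by \eqref{Length5Neckpush2}, then repeatedly by \eqref{Length5Neckpush4}. All coefficients in these moves are positive, so nothing cancels. The survivor of that degree therefore appears with nonzero coefficient, and everything else clears. This gives $xy^{n-1}\to w$, hence $w\to xy^{n-1}$.

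Your reversed-\eqref{Length5Neckpush4} device does not replace this. It shows $w\to\scrO_{n+1}$ only when the preimage $w'$ itself clears, as $(2,2,7)$ does by \eqref{Length5Neckpush5} in the proof of \eqref{Length5Neckpush6}. Otherwise it only gives $w\to w'$, which is circular. For $(3^k)$ with $k\ge 3$, the only word sent onto it is $(3^{k-2},7)$, and no move clears that.

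The second gap is structural. You run \ref{main chase lemma} over all $n\ge 6$, so you must reduce $g_n$ to $xy^{n-1},y^n$ in every degree, and nothing in your proposal or in \ref{Length5Neckpush} does this. For example, in degree $30$ the words $(2,3,3,2,3,3,3,3)$ and $(2,3,3,3,2,3,3,3)$ are cleared by none of \eqref{Length5Neckpush1}--\eqref{Length5Neckpush6}. Every word that \eqref{Length5Neckpush4} sends onto either of them has one part $\ge5$, no part $1$ or $4$, and no two cyclically adjacent $2$'s, so the reversal trick fails. Moreover only the single move \eqref{Length5Neckpush2} touches $xy^{29}$, and you give no argument disposing of both words.

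The paper never meets this problem. It reduces only in degrees $6,\dots,21$, a finite composition of automorphisms, and reaches $x^3+\scrN_{2,2}+y^5+xy^5p+y^6q+\scrO_{22}$ with $p,q$ already polynomials. It then uses \ref{prop:l5finite}: $\n^{21}$ lies in the Jacobi ideal, and the $\scrO_{22}$ tail differentiates into $\n^{21}$. So the tail does not change the relations and can be deleted.

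Your last paragraph has this ingredient, but you apply it only after an infinite chase you have not shown can be completed. The mechanism is also misstated. What matters is that the derivatives of the tail lie in the Jacobi ideal; no further unitriangular automorphisms are involved.
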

\begin{proof}
The proof first proceeds inductively through each degree $n=6,\hdots,21$ in order, examining all the monomials up to cyclic equivalence of degree $n$.  We will prove that for every such fixed $n$, we can push every monomial except possibly $xy^{n-1}$ and $y^n$ into $\scrO_{n+1}$. 

From here, fix such an $n$.  In each case, Tschirnhausen \ref{tschirnhausen} pushes all monomials divisible by $x^2$ into higher degree, and \ref{Length5Neckpush}\eqref{Length5Neckpush1} pushes all monomials divisible by $xyx$ into higher degree. This leaves only $y^n$, and monomials of the form $(a_1,\hdots,a_t)$ where $t\geq 1$ and all $a_i\geq 2$.  Note that since $n\geq 6$, any such width one monomial $xy^{a_1}$ has $a_1\geq 5$. 

Now, given any such $(a_1,\hdots,a_t)$, if some $a_i\geq 4$, then after cyclic rotation if necessary, by \ref{Length5Neckpush}\eqref{Length5Neckpush2}--\eqref{Length5Neckpush4} we can either push this monomial into higher degree (if $a_i=4$), 
%which here forces $t\geq 2$
or into the same degree $n$, but with higher width.  Simply repeating this process pushes all $(a_1,\hdots,a_t)$ of degree $n$ either into $\scrO_{n+1}$, or onto degree $n$ monomials of the form $(b_1,\hdots,b_s)$, where all $b_i\in\{2,3\}$.
This leaves very few monomials to analyse.  

Indeed, after the above moves, the second column in the following table summarises (up to cyclic equivalence) which monomials remain, which of course depends on the degree~$n$. The third column shows either how to clear these using \ref{Length5Neckpush}, or marks with a $*$ those monomials which need to be analysed further.
\[
\medmath{
\begin{array}{c@{\hspace{10pt}}>{\arraybackslash$} p{7cm} <{$}l}
 {\rm Deg}&{\rm Remaining}&\textnormal{Clear using \ref{Length5Neckpush}(-)}\\
       6 &  y^6, (22) & *,*\\
        7 & y^7,  (23) & *,*  \\
        8 &  y^8,  (33) &  *,* \\
        9 &  y^9,  (222) &  *,\eqref{Length5Neckpush5}   \\
        10 & y^{10},  (223) &  *,\eqref{Length5Neckpush5} \\
        11 & y^{11},   (233) &   *,*\\
        12 & y^{12},   (333),(2222) & *,*,\eqref{Length5Neckpush5}   \\
        13 & y^{13},   (2223) &  *,\eqref{Length5Neckpush5} \\
        14 & y^{14},   (2323),(2233) & *,\eqref{Length5Neckpush6},\eqref{Length5Neckpush5}  \\
        15 & y^{15},   (2333),(22222) & *,*, \eqref{Length5Neckpush5}   \\
        16 & y^{16},   (3333),(22223) &  *,*, \eqref{Length5Neckpush5} \\
        17 & y^{17},   (22323),(22233) & *,\eqref{Length5Neckpush5},\eqref{Length5Neckpush5} \\
        18 &  y^{18},  (22333),(23233),(222222) & *,\eqref{Length5Neckpush5},\eqref{Length5Neckpush6},\eqref{Length5Neckpush5}   \\
        19 & y^{19},  (23333)(222223) & *,*,\eqref{Length5Neckpush5}  \\
        20 & y^{20},  (33333),(222323),(222233),(223223) & *,*,\eqref{Length5Neckpush5},\eqref{Length5Neckpush5},\eqref{Length5Neckpush5}   \\
	21 & y^{21},  (223233),(222333),(223323),(232323),(2222222) &*, \eqref{Length5Neckpush5},\eqref{Length5Neckpush5},\eqref{Length5Neckpush5},\eqref{Length5Neckpush6},\eqref{Length5Neckpush5}  \\
\end{array}}
\]
We next claim that all entries $*$ in the above table of degree $n$, which are not $y^n$, can be pushed onto $xy^{n-1}$.  For $n=6,7,8$ this is proved as follows: 
\begin{equation}\label{eqn:signs}
\begin{array}{lcl}
xy^5\stackrel{\eqref{Length5Neckpush2}}{\to} \{\tfrac{1}{5}(22),\tfrac{2}{15}(31)\}\stackrel{\eqref{Length5Neckpush1}}{\to}\tfrac{1}{5}(22)&\mbox{thus}& (22)\to xy^5  \\
xy^6\stackrel{\eqref{Length5Neckpush2}}{\to} \{\tfrac{1}{5}(23),\tfrac{2}{15}(32)\}=\tfrac{5}{15}(23)&\mbox{thus}& (23)\to xy^6  \\
xy^7\stackrel{\eqref{Length5Neckpush2}}{\to} \{\tfrac{1}{5}(24),\tfrac{2}{15}(33)\}\stackrel{\eqref{Length5Neckpush3}}{\to}\tfrac{2}{15}(33)&\mbox{thus}& (33)\to xy^7  
\end{array}
\end{equation}
All remaining cases are similar, but involve longer chains. The point is that repeatedly applying  \ref{Length5Neckpush}\eqref{Length5Neckpush2}--\eqref{Length5Neckpush4} pushes $xy^{n-1}$ onto a sum of the non-$y^n$ monomials in the above table, all of degree $n$.  In degree $11,15,19$ the term $(23\hdots3)$ appears at the end of this process (noting that all signs on the coefficients in \ref{Length5Neckpush}\eqref{Length5Neckpush2}\eqref{Length5Neckpush4} are positive, so as in \eqref{eqn:signs} terms can only add, and never cancel), and in degree $12,16,20$ the term $(3\hdots3)$ appears at the end.  Since all other terms in the above table can be cleared, it follows in degree $n=11,15,19$ that  $xy^{n-1}\to(23\hdots3)$, thus $(23\hdots3)\to xy^{n-1}$, and in degree $n=12,16,20$  that $xy^{n-1}\to(3\hdots3)$, thus $(3\hdots3)\to xy^{n-1}$.

Hence, proceeding through the degrees in increasing order, we have shown that  $\Jac(g)\cong\Jac(x^3+\scrN_{2,2}+y^5+xy^5p+y^6q+\scrO_{22})$ for some $p,q\in\C[y]$.  Applying \ref{prop:l5finite} to $\sff=x^3+\scrN_{2,2}+y^5+xy^5p+y^6q+\scrO_{22}$, the 21st power of the radical belongs to its Jacobi ideal. Since the $\scrO_{22}$ part of $\sff$ differentiates to terms of degree $\geq21$, the tail $\scrO_{22}$ does not contribute to the relations, and so can be set to zero. 
\end{proof}

\subsection{Length Four}
Throughout this subsection, $f_{\rm stub}\colonequals x^3 + y^4+\scrN_{2,3}$.  Given this, we again recycle the notation $\to$ from 
\ref{tonotation}, noting that any previous clearing lemma is no longer valid since $f_{\rm stub}$ has changed. In what follows $A$ is a non-empty tuple of $1$s and $2s$, and e.g.\ $1A2 = (1,a_1,\hdots,a_t,2)$, which as before is shorthand for the monomial $xyxy^{a_1}\hdots xy^{a_t}xy^2$.  

\begin{lemma}\label{Length4Neckpush}
For $f_{\rm stub}\colonequals x^3 + y^4+\scrN_{2,3}$, the following statements hold.
\begin{enumerate}
\item\label{Length4Neckpushm1} For any $n\geq 6$, $x^2g_{n-2}\to\scrO_{n+1}$ and $y^3g_{n-3}\to\scrO_{n+1}$, where $g_i$ has degree $i$
\item\label{Length4Neckpush0} For any $t\geq 1$,  $(1,a_1,\hdots,a_t,2)\to(1,2,a_1,\hdots,a_t)$.
\item\label{Length4Neckpush1} If $1A2\sim 12A$, then $1A2\to\scrO_{n+1}$.
\item\label{Length4Neckpush2} If $a_i=2$ for all $1\leq i\leq t$ where $t\geq 3$, then $(a_1,\hdots,a_t)\to\scrO_{n+1}$.
\item\label{Length4Neckpush3}  $(111222)\to\scrO_{16}$, $(112212)\to\scrO_{16}$, $(112122)\to\scrO_{16}$, and $(121212)\to\scrO_{16}$.
\end{enumerate}
\end{lemma}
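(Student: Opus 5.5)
I would prove the five parts in order. Each part produces an explicit automorphism of depth $\geq n-5$, exactly as in \ref{lem:E6funnelA} and \ref{Length5Neckpush}, and checks the degree $n$ output up to $\sim_{n-1}$. Throughout, write $f_{\rm stub}=x^3+y^4+x^2y^3+xyxy^2$.

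\textbf{Part (1).} The $x^2$ clause is Tschirnhausen \ref{tschirnhausen} applied with $k=3$. The $y^3$ clause is the analogous Tschirnhausen move in the variable $y$. Write $y^3g_{n-3}$, up to cyclic rotation, and send $y\mapsto y-\tfrac14 g_{n-3}$. This removes $y^3g_{n-3}$ from $y^4$, at the cost of terms from $\scrN_{2,3}$ of degree $n+1\ge n+1$. The depth is $n-4\geq n-5$.

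\textbf{Part (2).} Write $(1,a_1,\hdots,a_t,2)=xyx\msf xy^2$ with $\msf=y^{a_1}x\hdots xy^{a_t}$. I would apply $y\mapsto y-\upalpha x\msf x$. Differentiating $\scrN_{2,3}$ in $y$, the part $xyxy^2$ contributes $-\upalpha(xyx\msf xy^2+\,xy\,x\msf x\,xy\ldots)$. Terms with an $x^2$ are discarded via (1). The surviving terms are $xyx\cdot x\msf x\ldots$ (no: contains $x^2$) and the two placements giving $x(x\msf x)xy^2$ and $xyx(x\msf x)y$, together with one placement producing $xy x\msf x y^2$-type rotations. Up to cyclic equivalence, the only survivors are $1A2$ and $12A$ with opposite signs. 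The $x^2y^3$ summand only produces $x^2$-divisible terms, and $y^4$ produces $y^3$-divisible terms. Both are cleared by (1). Hence $1A2\to 12A$.

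\textbf{Part (3)} is then immediate. If $1A2\sim12A$, the move in (2) removes $\upalpha\,1A2$ and adds $\upmu\upalpha\,12A$. When $\upmu=-1$ the two cancel. The main check is that the coefficient is $-1$ rather than $+1$; otherwise I would rescale and combine with a second move $y\mapsto y-\upalpha\,\msf xyx$.

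\textbf{Part (4).} This is the analogue of \ref{Length5Neckpush}\eqref{Length5Neckpush5}, and I would argue it by mirroring that proof. Rotate to $xy^2x\msf xy^2$ and apply $x\mapsto x-\upalpha\,y\msf'$, where $\msf'$ is chosen so that the $x^2y^3$ term of $\scrN_{2,3}$ produces the target monomial. The by-products contain $y^3$ or $x^2$, or have fewer consecutive $2$'s, so they are cleared by (1) and by induction on $t$.

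\textbf{Part (5).} This is a finite computation. I expect it to be the main obstacle, because the moves (2)--(4) only relate these degree-$15$ monomials to each other, and one needs a nonsingular linear system. My plan is to set up the matrix of moves (2) among the cyclic classes $(111222),(112212),(112122),(121212)$ and their $12A$ partners, as in the proof of \ref{lem:E6funnelD}. I would use (3) for the self-rotating words, notably $(121212)$. I would then check that the determinant is nonzero, so that each word clears into $\scrO_{16}$.
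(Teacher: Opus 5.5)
Your part (1) is fine. Part (2), on which (3)--(5) rest, does not work as written, and the failure is exactly where the key idea lies.

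First, your substitution produces nothing useful. With $\msf=y^{a_1}x\cdots xy^{a_t}$, the map $y\mapsto y-\upalpha x\msf x$ applied to $xyxy^2$ gives, at the three $y$'s, the words $x^2\msf x^2y^2$, $xyx^2\msf xy$ and $xyxyx\msf x$. All three contain $x^2$ up to rotation, so neither $1A2$ nor $12A$ is produced.

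More fundamentally, $y^4$ has lower degree than $\scrN_{2,3}$. So any $y\mapsto y+\upalpha S$ with $\deg S=n-4$ also creates $4\upalpha\,y^3S$ (up to $\sim$) in degree $n-1$. This term is not cyclically trivial, so the move is not an instance of $\to$. Part (1) cannot remove it, since part (1) only acts on degree $n$ words.

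The paper cancels this term with a second, rotated substitution. Take $\msf=yxy^{a_1}\cdots xy^{a_t}$, so that $1A2=x\msf xy^2$, and its once-rotation $\msf'=xy^{a_1}\cdots xy^{a_t+1}$. Apply $y\mapsto y+\upalpha\msf-\upalpha\msf'$.
\begin{itemize}
\item Since $y^3\msf\sim y^3\msf'$, the degree $n-1$ terms coming from $y^4$ cancel.
\item Through $xyxy^2$, $\msf$ yields $1A2$, $(1,1,a_1,\hdots,a_t+1)$ and $12A$.
\item Through $xyxy^2$, $\msf'$ yields two $x^2$-words and $-(1,1,a_1,\hdots,a_t+1)$.
\end{itemize}
The net effect is $\upalpha(1A2+12A)$, plus words cleared by (1).

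This computation also settles the sign you leave open, and the sign is essential: it gives $\upmu=-1$.
\begin{itemize}
\item In (3) you need $\upmu\neq 1$. Your remark that the terms ``cancel'' when $\upmu=-1$ is backwards. With $\upmu=-1$ the two contributions combine to remove $2\upalpha\,1A2$; with $\upmu=1$ they would cancel and the move would do nothing.
\item In (5), part (3) does not apply to $(121212)$. Its only form $1A2$ has partner $12A=(122121)\sim(112212)$.
\item The four moves from (2) give rows $[1,1,0,0]$, $[0,1,1,0]$, $[0,0,1,1]$, $[0,1,0,1]$ on $(111222),(112212),(112122),(121212)$. This matrix has determinant $2$. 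With the opposite sign in every row, the last three rows would sum to zero. So nonsingularity depends on exactly the check you skipped.
\end{itemize}

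Your part (4) also fails as stated, for two reasons.
\begin{itemize}
\item Replacing an $x$ in $x^2y^3$ always leaves $y^3$ as a subword, so that term can never produce $(2,\hdots,2)$.
\item Any $x\mapsto x-\upalpha S$ with $\deg S=n-4$ creates $-3\upalpha\,x^2S$ from $x^3$ in degree $n-2$, which you do not address.
\end{itemize}
The paper instead reuses the rotated-pair $y$-substitution, with $\msf=y^2(xy^2)^{t-2}$ and $\msf'=y(xy^2)^{t-2}y$. Then $x\msf xy^2=(2,\hdots,2)$, and, using $t\geq 3$, every other degree $n$ by-product contains $x^2$ or $y^3$.
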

\begin{proof}
(1) The case  $x^2g_{n-2}\to\scrO_{n+1}$ is Tschirnhausen \ref{tschirnhausen}, and  the case $y^3g_{n-3}\to\scrO_{n+1}$ is essentially the same, using the lowest degree $y$ term $y^4$ in $f_{\stub}$ and the automorphism from Tschirnhausen  \ref{tschirnhausen} (which only involves $y$, so the $x^3$ term in $f_{\stub}$ does not contribute) to push the top degree terms involving $y^3$ into higher order.\\ 
(2) Write $1A2=x \msf xy^2$ where $\msf\colonequals yxy^{a_1}\hdots xy^{a_t}$, then apply $y\mapsto y + \upalpha \msf$.  As this is a move only on $y$, the $x^3$ in $f_{\stub}$ does not contribute, whilst the $y^4$ in $f_{\stub}$ adds a term cyclically equivalent to $4\upalpha xy^{a_1}\hdots xy^{a_{t-1}}xy^{a_t+4}$.  Further, the $x^2y^3$ in $\scrN_{2,3}$ produces (after substitution) only elements in top degree $n$ that are divisible by $x^2$ and so we are down to analysing  what $xyxy^2$ produces under substituting $ \upalpha \msf$ for $y$.  The first $y$ substitution gives the target monomial  $\upalpha (1,a_1,\hdots,a_t,2)$, the second $y$ substitution gives $\upalpha (1,1,a_1,\hdots,a_{t-1},a_t+1)$, whilst the third $y$ subs gives $\upalpha (1,2,a_1,\hdots,a_t)$.  Everything else is in higher degree.
	
Then apply $y\mapsto y - xy^{a_1}\hdots xy^{a_{t-1}}xy^{a_t+1}$, which is just $\msf$ rotated once. The $y^4$ in $f_{\stub}$ now adds a term cyclically equivalent to $-4\upalpha xy^{a_1}\hdots xy^{a_{t-1}}xy^{a_t+4}$, which cancels the one above.  The $x^2y^3$ still only produces (after substitution) elements in top degree $n$ that are divisible by $x^2$. Now the substitutions into $xyxy^2$ give two terms in top degree $n$ divisible by $x^2$, and one term in top degree $n$ equal to $= -\upalpha (1,1,a_1,\hdots,a_{t-1},a_t+1)$, which cancels the one above.

Overall, the automorphism $y\mapsto y +\upalpha \msf- \upalpha xy^{a_1}\hdots xy^{a_{t-1}}xy^{a_t+1}$ produces the term $\upalpha (1,a_1,\hdots,a_t,2)$ at the cost of adding $\upalpha (1,2,a_1,\hdots,a_t)$, plus further degree $n$ terms all of which are divisible by $x^2$ or $y^3$.  These latter monomials clear into $\scrO_{n+1}$ by \eqref{Length4Neckpushm1}.\\
(3) Is an immediate consequence of \eqref{Length4Neckpush0}, since in this case the overall automorphism in \eqref{Length4Neckpush0} produces $2\upalpha (1,a_1,\hdots,a_t,2)$, plus terms in degree $n$ that clear into $\scrO_{n+1}$.\\
(4) This is very similar to \eqref{Length4Neckpush0}. Write $xy^2\hdots xy^2=x\msf xy^2$ where $\msf=y^2xy^2\hdots xy^2$, and apply $y\mapsto y +\upalpha \msf$.  Again the $x^3$ in $f_{\stub}$ does not contribute, whilst now the $y^4$ in $f_{\stub}$ adds a term cyclically equivalent to $\sim 4\upalpha xy^{2}\hdots xy^2xy^7$.  The $x^2y^3$ part of $\scrN_{2,3}$ produces only elements in top degree $n$ that are divisible by $x^2$, and now the first $y$ substitution into $xyxy^2$ gives $(2,\hdots,2)$, whilst the second and third substitutions are both divisible by $y^3$. Then apply $y\mapsto y - \upalpha y^1xy^2\hdots xy^3$, which is again just $\msf$ rotated once. This clears the lower degree term produced by $y^4$ at the cost of adding terms in top degree, all divisible by $y^3$ or $x^2$.  Overall,  $y\mapsto y +\upalpha \msf- \upalpha y^1xy^2\hdots xy^3$ produces $\upalpha xy^2\hdots xy^2$ at the cost of degree $n$ terms, all divisible by $x^2$ or $y^3$.  Again, by \eqref{Length4Neckpushm1}, these clear into $\scrO_{n+1}$.\\
(5) The proof uses linear algebra, in a similar way to \ref{lem:E6funnelD}. Applying \eqref{Length4Neckpushm1} to $111222$, with coefficients as in the proof there exists an automorphism $\upphi_1$ such that
\[
\upphi_1(f_{\stub})\sim f_{\stub}+\upalpha 111222 + \upalpha 112212 +\scrO_{n+1}
\]
Similarly, applying \eqref{Length4Neckpush0} to $112212, 112122, 121212$ respectively, there exists automorphisms $\upphi_2,\upphi_3,\upphi_4$ such that
\[
\begin{array}{r@{\hspace{4pt}}r@{\hspace{4pt}}l@{\hspace{4pt}}}
\upphi_2(f_{\stub})&\sim& f_{\stub}+\upalpha 112212 + \upalpha 112122 +\scrO_{n+1}\\
\upphi_3(f_{\stub})&\sim& f_{\stub}+\upalpha 112122 + \upalpha 121212+\scrO_{n+1} \\
\upphi_4(f_{\stub})&\sim& f_{\stub}+\upalpha 121212 + \upalpha 112212 +\scrO_{n+1}
\end{array}
\]
Consider the matrix where the columns are indexed by $111222, 112212, 112122, 121212$, and the rows by $\upphi_1,\upphi_2,\upphi_3,\upphi_4$, where the entries capture the coefficients in the above four displayed equations. The first row is $[1,1,0,0]$, the second row is $[0,1,1,0]$, the third $[0,0,1,1]$, and the fourth row is $[0,1,0,1]$.  This matrix has non-zero determinant, so by taking suitable linear combinations of $\upphi_1,\upphi_2,\upphi_3, \upphi_4$, all of the terms  $111222$, $112212$, $112122$ and $121212$ can be pushed into higher degree.
\end{proof}

\begin{lemma}\label{Length4NeckHowToNeck}
Suppose that $\msf$ is one of the monomials in $\scrN_{i,j}$, where $i+j\geq 7$.  If every other monomial $\msf'$ in $\scrN_{i,j}$ satisfies $\msf'\to\scrO_{n+1}$, then $\msf\to \scrN_{i,j}$.  
\end{lemma}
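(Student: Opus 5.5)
The lemma is a formal consequence of the properties of the arrow notation in \ref{tonotation}, and I would derive it directly from the definition rather than through any new automorphisms. Write $\scrN_{i,j}=c\,\msf+\sum_{k}c_k\msf'_k$, where the $\msf'_k$ are the other orbit representatives and all coefficients $c=|m|/(i+j)$, $c_k$ are nonzero positive rationals. Fix $n=i+j\geq 7$, and let $f=f_{\rm stub}+g_n+\scrO_{n+1}$ be as in \ref{tonotation}, with $\upalpha\in\C$ arbitrary. The aim is to produce a depth $\geq n-5$ unitriangular automorphism $\upphi$ with $\upphi(f)\sim_{n-1}f_{\rm stub}+(g_n-\upalpha\msf+\upmu\upalpha\scrN_{i,j})+\scrO_{n+1}$ for some fixed $\upmu\neq 0$; I will take $\upmu=1/c$.

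First I would add and subtract: $-\upalpha\msf=-\upalpha\msf+\tfrac{\upalpha}{c}\scrN_{i,j}-\tfrac{\upalpha}{c}\scrN_{i,j}$, so that it is enough to clear $-\tfrac{\upalpha}{c}\scrN_{i,j}+\upalpha\msf=-\tfrac{\upalpha}{c}\sum_k c_k\msf'_k$. In other words, starting from $f$, I want to add $\tfrac{\upalpha}{c}\sum_k c_k\msf'_k$ modulo $\scrO_{n+1}$ while removing $\upalpha\msf$ and creating $\tfrac{\upalpha}{c}\scrN_{i,j}$. Concretely: for each $k$, the hypothesis $\msf'_k\to\scrO_{n+1}$ gives, for every scalar $\upbeta_k$, a depth $\geq n-5$ automorphism $\upphi_k$ that removes $\upbeta_k\msf'_k$ from the current degree-$n$ part and adds nothing else in degree $n$. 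Applying it with $\upbeta_k=-\tfrac{\upalpha c_k}{c}$ instead adds $\tfrac{\upalpha c_k}{c}\msf'_k$, since the definition is quantified over all scalars.

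I then compose $\upphi_k$ over all $k$. By \ref{tonotation}\eqref{tonotation2} each move still applies after the earlier moves have altered degrees between $6$ and $n-1$ (they do not, because each automorphism has depth $\geq n-5$ and only affects degree $\geq n$ after cycling), and the cyclic permutations used take place only in degrees $\geq n-1$. The composite therefore has depth $\geq n-5$ and yields $g_n+\tfrac{\upalpha}{c}\sum_k c_k\msf'_k=g_n-\upalpha\msf+\tfrac{\upalpha}{c}\scrN_{i,j}$. That is precisely $\msf\to\scrN_{i,j}$ with $\upmu=1/c\neq0$. The hypothesis $i+j\geq 7$ is used only so that the depth bounds match the framework of \ref{main chase lemma}.

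The step requiring care is verifying that the moves compose: that the degree-$n$ effect of a depth $\geq n-5$ automorphism is additive in the sense used in the proof of \ref{main chase lemma}, where the depth ensures $\upphi_k$ acts as the identity on $f_{\rm stub}$ modulo the specified degree-$n$ change and on the other degree-$n$ terms modulo $\scrO_{n+1}$. I would cite that argument rather than redo it. If $\msf$ is the only monomial in $\scrN_{i,j}$, the statement is trivial, with $\upphi$ the identity and $\upmu=1/c$.
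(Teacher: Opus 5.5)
Your proposal is correct and is the paper's argument. Because $\msf'\to\scrO_{n+1}$ holds for every scalar, running the same moves with the opposite sign reinstates each other monomial $\msf'$ of $\scrN_{i,j}$ with its necklace coefficient, and composing these gives $g_n-\upalpha\msf+\tfrac{\upalpha}{c}\scrN_{i,j}$. You simply make explicit what the paper leaves implicit, namely $\upmu=1/c$ and the check that the moves compose.
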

\begin{proof}
This follows simply since if we have the ability to push $\msf'$ with arbitrary coefficient into $\scrO_{n+1}$ without effecting other degree $n$ terms, we have the ability to bring it back with arbitrary coefficient.  Thus, we may bring back all the other monomials in $\scrN_{i,j}$, with coefficients as prescribed in the necklace polynomial, and so $\msf\to\scrN_{i,j}$.
\end{proof}

The following deals with monomials $\msf_1,\msf_2$ of different degrees, and so the previous notation $\to$ does not apply.  
\begin{notation}\label{not:LeadsTo}
Suppose that we are given $f=f_{\rm stub}+g_6+\hdots+g_n+\scrO_{n+1}$ for some $n\geq 7$,  where the $g_i$ are arbitrary terms of degree $i$.  
\begin{enumerate}
\item Given monomials $\msf_1$, $\msf_2$ of degrees $\geq 6$,  we will write $\msf_1\leadsto_n\msf_2$ if there exists an automorphism $\upphi$ such that
\[
\upphi(f) \sim f - \upalpha \msf_1 + \upalpha\upmu \msf_2 +\scrO_{n+1} 
\]
That is, $\upalpha \msf_1$ has been subtracted from $f$ at the cost of adding $\upalpha\upmu \msf_2$, and up to cyclic permutation no other terms of degree $\leq n$ are effected
\item\label{not:LeadsToREV} Just as in \ref{tonotation}\eqref{tonotation2REV}, if  $\msf_1\leadsto_n\msf_2$ then  $\msf_2\leadsto_n\msf_1$.
\item\label{not:LeadsToCOMP}  If $\msf_1\leadsto_n\msf_2$ and $\msf_2\leadsto_n\msf_3$, then $\msf_1\leadsto_n\msf_3$.
\end{enumerate}
\end{notation}

\begin{lemma}\label{lem:LeadsTo}
If $f=f_{\rm stub}+g_6+\hdots+g_n+\scrO_{n+1}$ for some $n\geq 7$,  where the $g_i$ are arbitrary terms of degree $i$, then the following statements hold.
\begin{enumerate}
\item\label{lem:LeadsToTsc} $x^2g_{n-3}\leadsto_n\scrO_{n+1}$ for all $n\geq 6$.
\item\label{lem:LeadsToDC} (`Double clear') $y^6g_{n-7}\leadsto_n\scrO_{n+1}$ for all $n\geq 7$.
\item\label{lem:LeadsToSum2} $23 \leadsto_8 \scrO_{9}$ and $26 \leadsto_{12} 1123 \leadsto_{12} \scrO_{13}$. Consequently
\begin{enumerate}
\item\label{lem:LeadsTo1} $\phantom{x}y^{6\phantom{1}}\leadsto_{8\phantom{0}} 14\phantom{4}\leadsto_{8\phantom{0}} 1111$.
\item\label{lem:LeadsTo4} $\phantom{x}y^{9\phantom{1}}\leadsto_{12} 17\phantom{4}\leadsto_{12} 1114\leadsto_{12} 111111$.
\end{enumerate}
\item\label{lem:LeadsTo2} $xy^{7\phantom{1}}\leadsto_{10} 114\leadsto_{10} 11111$, and $y^{8\phantom{1}}\leadsto_{10} 25\leadsto_{10} 1122$.
\item\label{lem:LeadsTo5} $xy^{10}\leadsto_{14} 117\leadsto_{14} 11114\leadsto_{14} 1111111$, and $y^{11}\leadsto_{14} 28\leadsto_{14} 1125\leadsto_{14} 111122$.
\end{enumerate}
\end{lemma}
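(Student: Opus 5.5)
Each part of \ref{lem:LeadsTo} will be proved by writing down an explicit automorphism, or a short chain of them. The key difference from \ref{Length4Neckpush} is that the automorphisms may now have depth \emph{smaller} than $n-5$. So substituting into the lower degree terms $g_6,\hdots,g_{n-1}$ of $f$ may create terms of degree $\le n$. The bookkeeping must therefore track both the stub $f_{\stub}=x^3+y^4+\scrN_{2,3}$ and the contributions from the $g_i$. The unifying observation is that only the \emph{stub} contributes in the one degree below the target monomial. The $g_i$ contribute only in degrees where their images are again divisible by $x^2$ or $y^3$, or land in $\scrO_{n+1}$.

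For \eqref{lem:LeadsToTsc}, I would use the Tschirnhausen map $x\mapsto x-\tfrac{\upalpha}{3}g_{n-3}$. Its cost in degree $n$ comes only from the $x^3$ term, and the cost in $g_6,\hdots$ is of order at least $6+n-4>n$.

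For \eqref{lem:LeadsToDC}, I would clear $y^6g_{n-7}$ in two Tschirnhausen steps. First, $y\mapsto y-\tfrac{\upalpha}{4}y^3g_{n-7}$ (after rotation) uses $y^4$ to remove the term. The cost is the image of $\scrN_{2,3}$, which is of degree $n$ and always contains a factor $x^2$ or a factor $y^3\cdot(\text{stuff})$; terms divisible by $x^2$ clear by \eqref{lem:LeadsToTsc}. The point of $y^6$ rather than $y^3$ is that the substituted piece $y^3g_{n-7}$ itself carries $y^3$. Hence every cost term is again divisible by $x^2$ or $y^3$. The $y^3$-divisible terms are pushed once more by \ref{Length4Neckpush}\eqref{Length4Neckpushm1}, which applies because the relevant depth is now $\geq n-5$. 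The contributions of the $g_i$ under this depth-$(n-5)$-ish map land in degree $\geq 6+(n-4)$.

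For \eqref{lem:LeadsToSum2}, I would take explicit maps, in the style of the degree-$8$ map in the proof of \ref{thm:Length456HalfWay}. For $23\leadsto_8\scrO_9$, send $y\mapsto y+\upalpha xy$ together with a Tschirnhausen correction in $x$. The stub $\scrN_{2,3}$ produces $xyxy^2$-type terms in degree $6$ and $y^4$ produces $xy^4$. I would then compute the degree-$7$ and degree-$8$ outputs and verify, using \ref{Length4Neckpush}, that everything except the target cancels or clears. The case $26\leadsto_{12}1123\leadsto_{12}\scrO_{13}$ is analogous, with the substituted element raised in degree.

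The consequences (a),(b),\eqref{lem:LeadsTo2},\eqref{lem:LeadsTo5} are chains built with the transitivity and reversal rules of \ref{not:LeadsTo}. The basic step is a rule of the shape $xy^{k}\leadsto 1\,(k-3)$, or $y^{k}\leadsto 2\,(k-3)$. It comes from $y\mapsto y-\tfrac{\upalpha}{4}(\text{monomial})$, which uses $y^4$ to create the target, with the cost read off from $xyxy^2$ exactly as in \ref{Length4Neckpush}\eqref{Length4Neckpush0}. I would iterate this and discard intermediate terms using \eqref{lem:LeadsToSum2} and \ref{Length4Neckpush}\eqref{Length4Neckpush1}--\eqref{Length4Neckpush3}.

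The main obstacle will be \eqref{lem:LeadsToSum2}. There the map has low depth, so it hits the arbitrary $g_6,g_7,\hdots$. I must show that their images in degrees up to $12$ are all of the form $x^2(\ast)$, $y^6(\ast)$, or already of order $>n$. I would first try substitutions of the form $y\mapsto y+\upalpha y^3(\ast)$ wherever possible, so that any damage to the $g_i$ carries $y^3$ and is then double-cleared via \eqref{lem:LeadsToDC}.
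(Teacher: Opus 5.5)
\textbf{Verdict: genuine gap in part \eqref{lem:LeadsToSum2}.} Your treatment of \eqref{lem:LeadsToTsc} and \eqref{lem:LeadsToDC} uses the same maps as the paper. The template in your penultimate paragraph is exactly how the paper proves everything else: write the target cyclically as $y^3\msf$, apply $y\mapsto y+\tfrac{\upalpha}{4}\msf$, and read the cost off $xyxy^2$. The gap is in \eqref{lem:LeadsToSum2}, the step you yourself flag as the crux.

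\textbf{Why your map fails.} Since $23=xy^2xy^3$ has degree $7$, producing it from $y^4$ requires substituting an element of degree $4$ for $y$. Your map $y\mapsto y+\upalpha xy$ has depth $1$.
\begin{itemize}
\item It sends $y^4$ to $y^4+4\upalpha xy^4+\cdots$ (cyclically).
\item It sends $xyxy^2$ to $xyxy^2+\upalpha(xy)^3+(x^2\textrm{-terms})+\cdots$.
\end{itemize}
So it alters $f$ in degrees $5$ and $6$, which the definition of $\leadsto_8$ does not allow. No Tschirnhausen correction in $x$ can remove $xy^4$ or $(xy)^3$, since neither is divisible by $x^2$. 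The computation you propose therefore cannot close. The second chain $26\leadsto_{12}1123\leadsto_{12}\scrO_{13}$ is not argued at all, although it is precisely where the arbitrary $g_6$ enters in top degree.

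\textbf{The fix is your own template.}
\begin{itemize}
\item For $23$, take $\msf=xy^2x$. Then $y^4$ yields exactly $\upalpha xy^2xy^3$. Every degree-$8$ term coming from $\scrN_{2,3}$ is cyclically divisible by $x^2$, because $\msf$ begins and ends with $x$, so it clears by \ref{Length4Neckpush}\eqref{Length4Neckpushm1}. The term $g_6$ contributes only in degree $9$.
\item For $26$, take $\msf=xy^2xy^3$. Now $\scrN_{2,3}$ gives $x^2$-divisible terms plus $\tfrac{\upalpha}{4}xyxy\msf=\tfrac{\upalpha}{4}(1123)$ in degree $11$. The term $g_6$ does contribute in the top degree $12$, but only by terms divisible by $y^3$, because $\msf$ contains $y^3$; these clear by \ref{Length4Neckpush}\eqref{Length4Neckpushm1}.
\item Finally, $1123$ is removed using $\msf=xyxyxy^2x$. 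Its degree-$12$ costs are again $x^2$-divisible, and its $g_6$-costs lie in degree $13$.
\end{itemize}

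\textbf{Two smaller corrections to your chains.}
\begin{itemize}
\item The basic step is $xy^k\leadsto 11(k-3)$ via $\msf=xy^{k-3}$ (e.g.\ $xy^7\leadsto_{10}114$), not $1(k-3)$, which has the wrong degree.
\item With $\msf=y^{k-3}$, the map produces, besides $x^2$-terms, both $\tfrac{\upalpha}{4}(2,k-3)$ and $\tfrac{\upalpha}{2}(1,k-2)$. For $k=6,9$ it is the former that is killed, by \eqref{lem:LeadsToSum2}; this is why (a) and (b) are consequences of it. In $y^8\leadsto_{10}25$ it is instead $xyxy^6\sim y^6xyx$ that must be killed, by the double clear \eqref{lem:LeadsToDC}.
\end{itemize}
The double clear is missing from your list of discarding lemmas. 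It is also what absorbs the degree-$11$ damage to $g_6$ caused by $\msf=y^6$ in (b).
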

\begin{proof}
(1)  This is identical to \ref{Length4Neckpush}\eqref{Length4Neckpushm1}, noting simply that there is a degree two gap between $x^3$ and the next term $\scrN_{2,3}$ in $f_{\stub}$ containing $x$. Thus, the clearing automorphism in Tschirnhausen \ref{tschirnhausen} extends the clear by an extra degree.\\
(2)  Apply $y\mapsto y+\tfrac{\upalpha}{4}y^3g_{n-7}$. The $y^4$ term creates ${\upalpha}y^6g_{n-7}$ in degree $n-1$, and the $\scrN_{2,3}$ terms of $f$ create terms of top degree $n$, all of which are divisible by $y^3$. All other terms of $f$ are irrelevant, as they substitute into $\scrO_{n+1}$, unless $n=7$ and the $y^4$ term double substitutes to give $\tfrac{3\upalpha^2}{8}y^8$ in top degree. Regardless, in all cases, all the new top degree terms are divisible by $y^3$ and so can be cleared into $\scrO_{n+1}$ using \ref{Length4Neckpush}\eqref{Length4Neckpushm1}, so $y^6g_{n-7}\leadsto_{n}\scrO_{n+1}$. \\
(3)  For the first statement $n=8$, and apply $y\mapsto y + \tfrac{\upalpha}{4}xy^2x$ to $f$. The $x^3$ term of $f_{\stub}$ does not contribute, and the $y^4$ term of $f$ produces a term cyclically equivalent to ${\upalpha}xy^2xy^3$ of degree $7$. The $\scrN_{2,3}$ terms of $f$ produce top degree $n=8$ terms all divisible by $x^2$, which clear into $\scrO_{n+1}$ by \ref{Length4Neckpush}\eqref{Length4Neckpushm1}, and all higher degree terms of $f$ are irrelevant since they directly substitute into $\scrO_{n+1}$. Thus $23 \leadsto_{8} \scrO_{9}$.
%ff := &+CleanUp( 3*Neck(F,3,0) +  Neck(F,2,3) + 4*Neck(F,0,4) + a*Neck(F,2,4) + b*Neck(F,3,3) + c*Neck(F,0,6));
%set := CleanUp(OurEvaluate(ff,x ,y + t/4*x*y^2*x ,8)-ff);
%[
%    1/2*t*x^3*y^2*x*y^2,
%    1/4*t*x^2*y*x*y^2*x*y,
%    1/4*t*x^2*y^2*x^2*y^2,
%    1/4*t*x*y*x^2*y^2*x*y,
%    1/4*t*x^2*y*x*y*x*y^2,
%
%    t*x*y^2*x*y^3
%]

Next, $n=12$. Applying $y\mapsto y + \tfrac{\upalpha}{4} xy^2xy^3$ to $f$, the $x^3$ term of $f_{\stub}$ does not contribute, the $y^4$ term adds in a single term cyclically equivalent to $\upalpha xy^2xy^6$ (which is our target, in degree $10$), and the $\scrN_{2,3}$ term contributes $x^2g_{n-3}$ terms plus $\tfrac{\upalpha}{4}xyxyxy^2xy^3$, all in degree $11$. The degree six terms of $f$, which are unspecified, after substitution all produce degree $n$ terms, but just since $y^3$ divides $\tfrac{\upalpha}{4}xy^2xy^3$, these top degree $n$ terms produced are all divisible by $y^3$.  The terms of $f$ of degree $7$ and higher are irrelevant, since they substitute directly into $\scrO_{n+1}$.  Now \eqref{lem:LeadsToTsc} clears the $x^2g_{n-3}$ terms, and \ref{Length4Neckpush}\eqref{Length4Neckpushm1} clears all terms in top degree $n$, thus $26 \leadsto_{12} 1123$. For the last statement, apply $y\mapsto y + \tfrac{\upalpha}{4}xyxyxy^2x$.  The $y^4$ term of $f$ produces ${\upalpha}xyxyxy^2xy^3$ of degree $11$, whilst the $\scrN_{2,3}$ terms produce top degree $n=12$ terms all divisible by $x^2$, which clear into $\scrO_{n+1}$ by \ref{Length4Neckpush}\eqref{Length4Neckpushm1}. Thus $1123 \leadsto_{12} \scrO_{13}$.

\noindent
(a) Apply $y\mapsto y+\tfrac{\upalpha}{4}y^3$ to $f$. The $y^4$ term gives $\upalpha y^6$ in degree $6$ and $\tfrac{3\upalpha^2}{8}y^8$ in top degree $8$.  The $\scrN_{2,3}$ term of $f$ gives $\tfrac{3\upalpha}{4}x^2y^5 + \tfrac{\upalpha}{2}xyxy^4 + \tfrac{\upalpha}{4}xy^2xy^3$, and the degree $6$ terms of $f$ give various degree $8$ terms, all of which are divisible by $y^3$.  Thus, all terms produced in top degree are divisible by $y^3$, so clear by \ref{Length4Neckpush}\eqref{Length4Neckpushm1}, the $x^2y^5$ clears by \eqref{lem:LeadsToTsc}, and the $xy^2xy^3$ clears by the first statement of \eqref{lem:LeadsToSum2}, which we have already proved. Thus, overall,   $y^{6}\leadsto_{8}14$. The claim $14\leadsto_{8} 1111$ is easier, since simply applying $y\mapsto y + \tfrac{\upalpha}{4}xyxy$ to $f$ produces $\upalpha xyxy^4$ in degree $7$, plus $\tfrac{\upalpha}{4}(xy)^4$ in degree $8$, plus other terms of top degree $8$ all of which are divisible by $x^2$, so clear by \ref{Length4Neckpush}\eqref{Length4Neckpushm1}.
%ff := &+CleanUp(x^3 + y^4 + Neck(F,2,3) + a*Neck(F,2,4) + b*Neck(F,3,3));
%set := CleanUp(OurEvaluate(ff,x ,y + t/4*y^3 ,   8)-ff);  
%[
%    3/4*b*t*x^3*y^5,
%    1/2*b*t*x^2*y*x*y^4,
%    1/4*b*t*x^2*y^2*x*y^3,
%    1/4*b*t*x^2*y^3*x*y^2,
%    1/2*b*t*x^2*y^4*x*y,
%    a*t*x^2*y^6,
%    1/4*b*t*x*y*x*y*x*y^3,
%    3/4*a*t*x*y*x*y^5,
%    1/2*a*t*x*y^2*x*y^4,
%    1/4*a*t*x*y^3*x*y^3,
%    3/8*t^2*y^8,
%
%    3/4*t*x^2*y^5,
%    1/2*t*x*y*x*y^4,
%    1/4*t*x*y^3*x*y^2,
%
%    t*y^6
%]
%ff := &+CleanUp(x^3 + y^4 + Neck(F,2,3) + a*Neck(F,2,4) + b*Neck(F,3,3));
%set := CleanUp(OurEvaluate(ff,x ,y + t/4*x*y*x*y ,   8)-ff);  
%[
%    1/4*t*x^3*y*x*y^3,
%    1/2*t*x^2*y*x*y*x*y^2,
%    1/4*t*x^2*y^2*x*y*x*y,
%    1/4*t*x*y*x^2*y*x*y^2,
%    1/4*t*x*y*x*y*x*y*x*y,
%    t*x*y*x*y^4
%]

\noindent
(b) This is similar. For the first, apply $f\mapsto f+\tfrac{\upalpha}{4}y^6$.  The $y^4$ term of $f$ produces $\upalpha y^9$, the $\scrN_{2,3}$ term produces $\tfrac{3\upalpha}{4}x^2y^8 + \tfrac{\upalpha}{2}xyxy^7 + \tfrac{\upalpha}{4}xy^2xy^6$.  The degree $6$ terms of $f$, which are not specified, create degree $n-1=11$ terms all divisible by $y^6$ (which are cleared by~\eqref{lem:LeadsToDC}), and the unspecified degree $7$ terms of $f$ create top degree $n=12$ terms all divisible by $y^6$, thus $y^3$ (and are cleared by \ref{Length4Neckpush}\eqref{Length4Neckpushm1}).  Now the created $x^2y^8$ term can be cleared easily, at the cost of creating additional terms in degree $12$ all divisible by $y^8$ (which are cleared by \ref{Length4Neckpush}\eqref{Length4Neckpushm1}).  The $xy^2xy^6$ terms clears into $\scrO_{13}$ by the second statement in \eqref{lem:LeadsToSum2}. Thus, overall, $y^9\leadsto_{12}xyxy^7$.  The other  $17\leadsto_{12} 1114\leadsto_{12} 111111$ are easier: for the first apply $y\mapsto y+\tfrac{\upalpha}{4}xyxy^4$ and argue in a similar way, and for the second apply  $y\mapsto y+\tfrac{\upalpha}{4}(xy)^4$.

\noindent
(4) This is all very similar to the above. In summary, for $xy^{7}\leadsto_{10} 114$ apply $y\mapsto y + \tfrac{\upalpha}{4}xy^4$, for $114 \leadsto_{10} 11111$ apply $y\mapsto y + \tfrac{\upalpha}{4}(xy)^3$.  For $y^{8}\leadsto_{10} 25$ apply $y\mapsto y + \tfrac{\upalpha}{4}y^5$, and for $25\leadsto_{10} 1122$ apply $y\mapsto y + \tfrac{\upalpha}{4}xy^2xy^2$.

\noindent
(5) This is also very similar to the above. For $xy^{10}\leadsto_{14} 117$ apply $y\mapsto y + \tfrac{\upalpha}{4}(xy)y^6 $, for $117\leadsto_{14} 11114$ apply $y\mapsto y + \tfrac{\upalpha}{4}(xy)^3y^3$ and for $11114\leadsto_{14} 1111111$ apply $y\mapsto y + \tfrac{\upalpha}{4}(xy)^5$.  For $y^{11}\leadsto_{14} 28$ apply $y\mapsto y + \tfrac{\upalpha}{4}y^8$, for $28\leadsto_{14} 1125$ apply $y\mapsto y + \tfrac{\upalpha}{4}xy^2xy^5$, and for $1125\leadsto_{14} 111122$ apply $y\mapsto y + \tfrac{\upalpha}{4}xyxyxy^2xy^2$.
\end{proof}

To set notation, consider the set $\scrT_4$ (where $\scrT$ stands for tail)
 defined to be
\begin{equation}
\scrT_4\colonequals \{ (3,3), (2,4),(4,4), (4,6),(0,11),(5,5),(6,6),(7,7) \}\label{eqn:DefS4}
\end{equation}
In the following, there are two different options for the length four funnel.  Roughly speaking, the first makes length four look aesthetically similar to length five in \ref{thm!funnel5}, whilst the second makes length four look aesthetically similar to length six (which will be proved in \ref{thm!funnel6}).  Neither option is preferred, as such, but the second is mildly easier when  constructing flops in \cite{BW5}.  
\begin{thm}\label{thm!funnel4}
Set $f_{\rm stub}=x^3 + y^4+\scrN_{2,3}$.  If $g= f_{\rm stub}+ \scrO_6$, then the following hold.
\begin{enumerate}
\item\label{thm!funnel4A} There exists $p,q\in\C[ t]$ and $\upalpha_1,\upalpha_2\in\C$ such that $\Jac(g)\cong \Jac(h)$, where 
\[
h=f_{\stub}+\upalpha_1\scrN_{2,4}+\upalpha_2\scrN_{3,3} + xy^7p + y^6q.
\]
\item\label{thm!funnel4B} There exist $\upalpha_{i,j}\in\mathbb{C}$ such that $\Jac(g)\cong \Jac(k)$, where 
\[
k=f_{\stub}+\sum_{(i,j)\in\scrT_4}\upalpha_{i,j}\scrN_{i,j}.
\]
\end{enumerate}
\end{thm}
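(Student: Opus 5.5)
We follow the template of the proof of \ref{thm!funnel5}. We proceed degree by degree, $n=6,7,\hdots$, up to $n=14$. In each degree we show that every monomial can be cleared into $\scrO_{n+1}$, except for a prescribed residual set; the moves used are compatible with \ref{main chase lemma}. Once $n=14$ is reached, \ref{prop:l4finite} gives $\n^{15}\subset\lcl\dcyc\rcl$. Since the $\scrO_{16}$ tail differentiates into $\n^{15}$, it can then be discarded, exactly as at the end of \ref{thm!funnel5}. One degree of slack may be needed here, e.g.\ by also handling degree $15$ using \ref{Length4Neckpush}\eqref{Length4Neckpush3}, which is stated with $\scrO_{16}$.

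In a fixed degree $n$ we first apply \ref{Length4Neckpush}\eqref{Length4Neckpushm1} to remove every monomial divisible by $x^2$ or by $y^3$. Up to rotation, the survivors are $y^n$ with $n\le 2$ (so none), and monomials $(a_1,\hdots,a_t)$ with all $a_i\in\{1,2\}$. Among these we use \ref{Length4Neckpush}\eqref{Length4Neckpush0} to move $1A2\to12A$. This reduces each cyclic class to a canonical arrangement, and \ref{Length4Neckpush}\eqref{Length4Neckpush1} kills any class in which the move returns to itself. All-$2$ words of width $\ge3$ die by \ref{Length4Neckpush}\eqref{Length4Neckpush2}, and the degree $15$ exceptional cluster is handled by \ref{Length4Neckpush}\eqref{Length4Neckpush3}. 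What survives in each degree is then at most one or two canonical words with a fixed number of $x$'s and $y$'s. By \ref{Length4NeckHowToNeck}, any single survivor in a $(i,j)$-class whose other monomials all clear can be traded for $\scrN_{i,j}$. This produces the necklace terms $\scrN_{3,3},\scrN_{2,4}$ in degree $6$, and in higher degrees classes such as $\scrN_{4,4}$ (from $1111$ in degree $8$), $\scrN_{5,5}$ (from $11111$), $\scrN_{4,6}$ (from $1122$), $\scrN_{6,6}$ and $\scrN_{7,7}$. In degree $6$ the funnel from \ref{thm:Length456HalfWay} already leaves only $(xy)^3,(xy^2)^2$, which we convert to $\scrN_{3,3},\scrN_{2,4}$ modulo $x^2$-, $y^3$-divisible terms.

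The difference between \eqref{thm!funnel4A} and \eqref{thm!funnel4B} lies in which representative we keep, and this is where \ref{lem:LeadsTo} enters. Its chains $\leadsto_n$ connect pure-$y$ and $xy^{k}$ monomials of lower degree to the wide $\{1,2\}$-words in degree $n$: for instance $y^6\leadsto_8 1111$, $y^9\leadsto_{12}111111$, $xy^7\leadsto_{10}11111$, $y^8\leadsto_{10}1122$, $xy^{10}\leadsto_{14}1111111$ and $y^{11}\leadsto_{14}111122$. For \eqref{thm!funnel4A} we run these chains backwards (\ref{not:LeadsTo}\eqref{not:LeadsToREV}), trading each surviving wide word for $xy^{n-1}$ or $y^{n}$. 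We then clear any $y^{\ge 6}$ or $xy^{\ge7}$ debris against $y^4$ and $x^3$ via \ref{lem:LeadsTo}\eqref{lem:LeadsToTsc}\eqref{lem:LeadsToDC}, leaving $xy^7p+y^6q$. For \eqref{thm!funnel4B} we run the chains forwards, turning any $y^{m}$ or $xy^m$ term into a wide word and then into its necklace. The exception is $y^{11}$ ($=\scrN_{0,11}$), which we keep. The resulting index set should be exactly $\scrT_4$.

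The main obstacle is the bookkeeping in degrees $8$ through $14$. There we must enumerate all cyclic classes of $\{1,2\}$-words and check that the clearing moves in \ref{Length4Neckpush} span everything except one class per listed $(i,j)$. This is a finite linear-algebra check, analogous to the matrix argument in \ref{lem:E6funnelD}. A second delicate point is that the $\leadsto_n$ moves change lower-degree terms, so they must be applied in an order compatible with \ref{main chase lemma}. I would organise this by first performing all $\to$ clears degree by degree, and only afterwards applying the cross-degree $\leadsto$ substitutions, in decreasing order of the source degree, using \ref{lem:LeadsTo}\eqref{lem:LeadsToDC} to mop up the side effects.
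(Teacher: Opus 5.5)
Your overall route is the paper's: a finite, degree-by-degree funnel through $n=6,\dots,15$ using \ref{Length4Neckpush}, necklaces via \ref{Length4NeckHowToNeck} for part (2), the chains of \ref{lem:LeadsTo} for part (1), and truncation via \ref{prop:l4finite}. Handling degree $15$ is necessary, not just possible slack: stopping at $14$ leaves an $\scrO_{15}$ tail whose derivatives lie only in $\n^{14}$.

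However, the organisation you propose for the $\leadsto$ moves would fail. A move $\msf_1\leadsto_n\msf_2$ only controls terms of degree $\le n$; it creates arbitrary new terms in every degree $>n$. Suppose you first do all $\to$ clears in degrees $6,\dots,15$ and only then perform the trades in decreasing order. The last trade, $1111\leadsto_8 y^6$ (built from $y\mapsto y+\tfrac{\upalpha}{4}y^3$ and $y\mapsto y+\tfrac{\upalpha}{4}xyxy$), repopulates degrees $9,\dots,15$ with unnormalised terms. Nothing prevents new wide words such as $11111$, $1122$, $111111$, $111122$ from appearing there. \ref{lem:LeadsTo}\eqref{lem:LeadsToDC} cannot mop these up, since it only removes degree $n-1$ terms of the form $y^6g_{n-7}$. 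So decreasing order forces you to redo everything above degree $8$.

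The correct organisation, which is the paper's, is a single increasing pass. At step $n$, first clear with $\to$ moves, then immediately perform the $\leadsto_n$ trade (part (1)) or the necklace replacement (part (2)), and only then pass to $n+1$. Because $\leadsto_n$ changes nothing else in degree $\le n$, lower degrees stay normalised, and its side effects are absorbed at later steps. Relatedly, \ref{main chase lemma} plays no role here. The process is finite and ends in truncation, and the $\leadsto$ moves do not meet its depth hypothesis anyway: $y\mapsto y+\tfrac{\upalpha}{4}y^3$ has depth $2<8-5$. Your clean-up of ``$y^{\ge6}$, $xy^{\ge7}$ debris'' is also spurious. Those are exactly the terms that make up $xy^7p+y^6q$, and no debris arises in a correctly ordered pass.

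For part (2), nothing is run ``forwards''. In a direct pass, \ref{Length4Neckpush}\eqref{Length4Neckpushm1} already kills every $y^m$ and $xy^m$ with $m\ge3$, since up to rotation they are divisible by $y^3$. The only use of \ref{lem:LeadsTo} in part (2) is in degree $14$. There the survivor $111122$ is traded \emph{down} to $y^{11}$ by reading \ref{lem:LeadsTo}\eqref{lem:LeadsTo5} backwards. That trade is why $(0,11)\in\scrT_4$, and you need to state it rather than simply ``keep $y^{11}$''.

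Also, $\scrN_{4,6}$ is not a direct instance of \ref{Length4NeckHowToNeck}. The other word $1212$ in that class does not clear; it is only pushed onto $1122$ by \ref{Length4Neckpush}\eqref{Length4Neckpush0}. You must use that move to match the coefficients of $1212$ and $1122$ to the necklace, as the paper does.
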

\begin{proof}
The proof first proceeds inductively through each degree $n=6,\hdots,15$ in order, examining all the monomials up to cyclic equivalence of degree $n$.   For any fixed $n$, \ref{Length4Neckpush}\eqref{Length4Neckpushm1} pushes all degree $n$ monomials divisible by $x^2$ and $y^3$ into higher degree. Up to cyclic permutation this leaves only those monomials of the form $xy^{a_1}\hdots xy^{a_t}=(a_1\hdots a_t)$ where all $a_i\in\{1,2\}$.  The second column in the following table summarises, up to cyclic equivalence, which monomials remain in degree $n$. The third column shows either how to clear these into higher degree using \ref{Length4Neckpush}, or how to push them onto other monomials on the same line (but to the right) again using \ref{Length4Neckpush}, or marks with a $*$ those monomials which need to be analysed further.
\[
\medmath{
\begin{array}{c@{\hspace{10pt}}ll}
 {\rm Deg}&{\rm Remaining}&\textnormal{Push using \ref{Length4Neckpush}(-)}\\
6 & (22), (111) &*,*\\
7 & (112) & \eqref{Length4Neckpush1}\\
8 & (122), (1111) &\eqref{Length4Neckpush1},*\\
9 & (222), (1112) &\eqref{Length4Neckpush2},\eqref{Length4Neckpush1}\\
10 & (1212), (1122), (11111) &\eqref{Length4Neckpush0},*,*\\
11 & (1222), (11112) &\eqref{Length4Neckpush1},\eqref{Length4Neckpush1} \\ 
12 & (2222), (11122), (11212), (111111) &\eqref{Length4Neckpush2},\eqref{Length4Neckpush0},\eqref{Length4Neckpush1},*\\
13 &  (11222), (12122), (111112)  & \eqref{Length4Neckpush0},\eqref{Length4Neckpush1},\eqref{Length4Neckpush1}\\
14 & (12222), (112112), (111212)\sim(121112), (111122),
(1111111) &\eqref{Length4Neckpush1},\eqref{Length4Neckpush0},\eqref{Length4Neckpush0},*,*\\
15 & (22222), (111222), (112212), (112122), (121212), (1111112)
&\eqref{Length4Neckpush2},\eqref{Length4Neckpush3},\eqref{Length4Neckpush3},\eqref{Length4Neckpush3},\eqref{Length4Neckpush3},\eqref{Length4Neckpush1}
\end{array}}
\]
The conclusion of the $n=6$ step is that $g\cong f_{\stub}+\tfrac{\upalpha_1}{3}(111)+\tfrac{\upalpha_2}{2}(22)+\scrO_7$ for some $\upalpha_1,\upalpha_2\in\C$.  Sending $x\mapsto x+\tfrac{\upalpha_1}{3}(xy^3+yxy^2+y^2xy)$ and $y\mapsto y + \tfrac{\upalpha_2}{4}(x^2y+xyx)$ gives $g\cong \sff_6\colonequals f_{\stub}+\upalpha_1\scrN_{3,3}+\upalpha_2\scrN_{2,4}+\scrO_{7}$. 

The proof then proceeds to the $n=7$ step.  By the above table everything in degree $n=7$ can be cleared into higher degree, so the conclusion of this step is that $g\cong \sff_7\colonequals f_{\stub}+\upalpha_1\scrN_{3,3}+\upalpha_2\scrN_{2,4}+\scrO_{8}$. 

The $n=8$ step is mildly more delicate.  By the above table everything in degree $8$ can be cleared into higher degree, except possibly $1111$, thus $g\cong f_{\stub}+\upalpha_1\scrN_{3,3}+\upalpha_2\scrN_{2,4}+\upbeta(1111) + \scrO_{9}$ for some $\upbeta$.  For (2), since every other monomial in $\scrN_{4,4}$ except $1111$ can be pushed into higher degree, by \ref{Length4NeckHowToNeck} $g\cong \sff_8\colonequals f_{\stub}+\upalpha_1\scrN_{3,3}+\upalpha_2\scrN_{2,4}+\upalpha_{4,4} \scrN_{4,4} + \scrO_{9}$. For (1), instead using \ref{lem:LeadsTo}\eqref{lem:LeadsTo1} (and \ref{not:LeadsTo}\eqref{not:LeadsToREV}\eqref{not:LeadsToCOMP}) we can replace $1111$ by the lower degree term $y^6$, without affecting (up to cyclic permutation) any other term of degree $\leq 8$.  Thus in (1) the conclusion of this step is that $g\cong \sff_8\colonequals f_{\stub}+\upalpha_1\scrN_{3,3}+\upalpha_2\scrN_{2,4}+\upalpha_3 y^6 + \scrO_{9}$.

The proof proceeds in this manner. At each step, inspecting the table then either everything is cleared and we simply move onto the next step, or alternatively for (1) we use \ref{lem:LeadsTo}, and for (2) we use \ref{Length4NeckHowToNeck}.  Indeed, at step $n=10$ by \ref{lem:LeadsTo}\eqref{lem:LeadsTo2} we can replace the remaining $1122$ by $y^8$, and the remaining $11111$ by $xy^7$.  Or, we can use \ref{Length4NeckHowToNeck} to replace $11111$ by $\scrN_{5,5}$ and replace $1122$ by $\scrN_{4,6}$ in a similar way to \ref{Length4NeckHowToNeck} since we can use \ref{Length4Neckpush}\eqref{Length4Neckpush0} $1212 \to 1221\sim1122$ to adjust the coefficients on $1212$ and $1122$ to match those dictated by the necklace, and add in the other terms at will.  

At step $n=12$ we can replace the remaining $111111$ by $y^9$ using \ref{lem:LeadsTo}\eqref{lem:LeadsTo4}, or replace $111111$ by $\scrN_{6,6}$ using \ref{Length4NeckHowToNeck}.  At step $n=14$  using \ref{lem:LeadsTo}\eqref{lem:LeadsTo5} we can replace $111122$ by $y^{11}$, and replace $1111111$ by either $xy^{10}$ using \ref{lem:LeadsTo}\eqref{lem:LeadsTo5} or $\scrN_{7,7}$ using  \ref{Length4NeckHowToNeck}.

The conclusion after step $n=15$ is either that $\Jac(g)\cong\Jac(\sff_{15})$ where $\sff_{15}\colonequals x^3 + y^4+\scrN_{2,3}+\upalpha_1\scrN_{2,4}+\upalpha_2\scrN_{3,3}+ xy^7p+y^6q +\scrO_{16}$ for some $p,q\in\C[y]$, or $\sff_{15}\colonequals x^3 + y^4+\sum_{(i,j)\in\scrT_4}\upalpha_{i,j}\scrN_{i,j}+\scrO_{16}$ for some $\upalpha_{i,j}\in\mathbb{C}$.  Applying \ref{prop:l4finite} to $\sff_{15}$, the $15$th power of the radical belongs to its Jacobi ideal. Since the $\scrO_{16}$ part of $\sff_{15}$ differentiates to degree $\geq15$, the tail $\scrO_{16}$ does not contribute to the relations, and so it can be set to zero. 
\end{proof}

\begin{remark}
The above is the only point in the whole paper where we deliberately take some term in degree $n$ and push it in a smaller degree.  The reason for this is the term $111122$ in degree $14$. The proof of \ref{thm!funnel4} replaces it with $y^{11}$, which may be in lower degree, but it does have the advantage of clearly being a necklace polynomial.  It is not clear to us whether $111122$ can be made into a necklace polynomial without dropping the degree.  In the length $\ell=5,6$ funnels, this problem does not arise.
\end{remark}

\subsection{Length Six}\label{subsec:l6funnel}
Throughout this subsection, $f_{\rm stub}\colonequals  x^3 + y^4+\scrN_{3,3}+\uplambda\scrN_{2,4}$ where $\la\neq 0$, and we again recycle the notation $\to$ from \ref{tonotation}, noting again that any previous clearing lemma is no longer valid since $f_{\rm stub}$ has changed.  Within the following clearing lemma, the assumption that $\uplambda\neq 0$ presents itself naturally in the proof of \eqref{Length6Neckpush0}, which is then used repeatedly.

In what follows, $A$ and $B$ are tuples of $1$s and $2s$, so that e.g.\ $A221B$ represents a tuple $(a_1,\hdots,a_t,2,2,1,b_1,\hdots,b_s)$ where we allow $A$ or $B$ (or both) to be empty.
\begin{lemma}\label{Length6Neckpush}
For $f_{\rm stub}\colonequals x^3 + y^4+\scrN_{3,3}+\uplambda\scrN_{2,4}$ where $\la\neq 0$, and for any $A,B$ which are (possibly empty) tuples of $1$s and $2s$, the following statements hold. 
\begin{enumerate}
\item\label{Length6Neckpush0} $A111B \to A22B$, if either $A\neq \emptyset$ or $B\neq \emptyset$.
\item\label{Length6Neckpush-1} $A122B \to A221B$.  %BA can be empty
\item\label{Length6Neckpush2} $A2112B\to\scrO_{n+1}$.  %BA can be empty
\item\label{Length6Neckpush1} $A111112B\to\scrO_{n+1}$.  %BA can be empty
\item\label{Length6Neckpush3} $A11111111B\to\scrO_{n+1}$.   %BA can be empty
\item\label{Length6Neckpush4} $A2222B\to\scrO_{n+1}$, provided that $\deg(A2222B)\geq 15$. 
\item\label{Length6Neckpush5} $A1221B\to\scrO_{n+1}$, provided that $\deg(A1221B)\geq 15$. 
\end{enumerate}
\end{lemma}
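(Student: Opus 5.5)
The plan is to follow the template of \ref{Length4Neckpush} and \ref{Length5Neckpush}. Each claimed move will be realised by an explicit depth $\geq n-5$ unitriangular automorphism, usually of the form $y\mapsto y+\upalpha\msf$ (occasionally combined with an $x$-substitution), which extracts the target monomial from a chosen term of $f_{\stub}$. All other top-degree terms produced will be arranged to be divisible by $x^2$ or $y^3$. Those residual terms are cleared by the analogue of \ref{Length4Neckpush}\eqref{Length4Neckpushm1}, which holds verbatim here. The reason is that $x^3$ and $y^4$ are the lowest-degree pure terms of $f_{\stub}$, and the next terms have degree $6$, so the Tschirnhausen automorphism from \ref{tschirnhausen} (and its $y$-analogue) pushes $x^2g_{n-2}$ and $y^3g_{n-3}$ into $\scrO_{n+1}$. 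Throughout, I use the conventions \ref{tonotation}\eqref{tonotation2REV}\eqref{tonotation3} freely.

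The first step, and the source of the hypothesis $\uplambda\neq0$, is \eqref{Length6Neckpush0}. Write $A111B\sim x\msf xyxyxy$ after rotation, which is possible since $A$ or $B$ is nonempty. Substituting $y\mapsto y+\upalpha\msf'$ for a suitable rotation $\msf'$ produces the target from the $\tfrac13(xy)^3$ part of $\scrN_{3,3}$. The same substitution makes the $xy^2xy^2$ part of $\uplambda\scrN_{2,4}$ produce $A22B$ with coefficient proportional to $\uplambda$. The $y^4$ term produces a degree $n-1$ term, which I cancel, exactly as in \ref{Length4Neckpush}\eqref{Length4Neckpush0}, by subtracting the once-rotated $\msf$. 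Every remaining top-degree term contains $x^2$ or $y^3$. Hence $A111B\to A22B$, with $\upmu$ a nonzero multiple of $\uplambda$, which is where $\uplambda\neq0$ enters. Part \eqref{Length6Neckpush-1} is the analogous computation, in which the three $y$-substitutions into $xy^2xy^2$ give $A122B$ and $A221B$ while the rest are absorbed.

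Parts \eqref{Length6Neckpush2}--\eqref{Length6Neckpush5} are then obtained by composing these moves, and where a cycle closes, by the linear-algebra trick of \ref{lem:E6funnelD} and \ref{Length4Neckpush}\eqref{Length4Neckpush3}. For \eqref{Length6Neckpush2}, one application of the basic $y$-substitution should give $A2112B$ with coefficient $2$, as in \ref{Length4Neckpush}\eqref{Length4Neckpush1}. For \eqref{Length6Neckpush1}, apply \eqref{Length6Neckpush0} to the $111$ inside $11111$ to reach strings containing $2112$ or $212$, then use \eqref{Length6Neckpush2}. Part \eqref{Length6Neckpush3} follows similarly by repeated use of \eqref{Length6Neckpush0} until a $2112$ appears.

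For \eqref{Length6Neckpush4} and \eqref{Length6Neckpush5}, use \eqref{Length6Neckpush-1} and \eqref{Length6Neckpush0} to set up a finite system of moves among the degree $n$ monomials obtained by rearranging $2222$ or $1221$ within $A\,\cdot\,B$. I then show that the coefficient matrix is invertible, so that the monomials can be cleared simultaneously. The degree hypothesis $\geq15$ should be what guarantees that enough auxiliary monomials are available (or that the $y^4$ cross-terms land in $\scrO_{n+1}$) for the matrix to have full rank.

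I expect the main obstacle to be \eqref{Length6Neckpush4}--\eqref{Length6Neckpush5}. Tracking the coefficients precisely enough to establish nonsingularity is delicate, and for small $A,B$ it may need a case-by-case check. The second most delicate point is confirming, in \eqref{Length6Neckpush0}, that the coefficient on $A22B$ is genuinely nonzero and has no cancellation from the $x^3y^3$-type terms of $\scrN_{3,3}$.
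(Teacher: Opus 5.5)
Your part (1) is the paper's argument in substance. With $BA=x\msf y$, the single substitution $y\mapsto y+\upalpha(yx\msf y-x\msf y^2)$ gives $\upalpha A111B+\la\upalpha A22B$ plus words containing $x^2$ or $y^3$. Two small corrections: the $y^4$ contribution sits in degree $n-2$, not $n-1$, and it cancels exactly; and the other three monomials of $\scrN_{3,3}$ all contain $x^2$, so there is no cancellation to fear on $A22B$. Part (2) needs no new computation. It is (1) reversed followed by (1), namely $A122B\to A1111B\to A221B$.

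The genuine gap is part (3), on which (4)--(7) all rest. The ``coefficient $2$'' mechanism you borrow from length four does not exist here. The move in (1) outputs $A111B$ and $A22B$, which have different numbers of $x$'s and so can never coincide. The composite in (2) outputs $A122B$ and $A221B$ with opposite signs, so a coincidence would cancel rather than double.

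What is needed is a new substitution that exploits the two $2$'s flanking the $11$. Rotate $A2112B\sim xyxyx\,(y^2\msf xy^2)$, where now $\msf$ is the (possibly empty) word $BA$, and apply $y\mapsto y+\upalpha(y^2\msf xy^2-y\msf xy^3)$. This works for four reasons:
\begin{enumerate}
\item The inserted word begins and ends with $y^2$, so every insertion into a monomial of $\scrN_{2,4}$ yields a word containing $y^3$.
\item The subtracted summand already contains $y^3$.
\item The monomials of $\scrN_{3,3}$ other than $\tfrac13(xy)^3$ contain $x^2$.
\item The two degree $n-2$ contributions from $y^4$ are both cyclically $y^7\msf x$, so they cancel.
\end{enumerate}
The only survivor is $\upalpha\,A2112B$ from $\tfrac13(xy)^3$, with coefficient $1$, not $2$. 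Also, in (4) a string $212$ does not clear. You must apply (1) to the \emph{first} $111$ of $111112$, which gives $A22112B$ and then (3) applies.

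For (6) and (7), the invertible-matrix plan is not an argument, and the role you guess for $\deg\ge 15$ is not the actual one. Both follow from (1), (4) and (5) by a short chain. Applying (1) in reverse twice gives $A2222B\to A111111B\sim 111111BA$, and applying it once gives $A1221B\to A11111B\sim 11111BA$. If $BA$ contains a $2$, a rotation exhibits $111112$ and (4) clears the word. Otherwise $BA$ is a string of $1$'s, each of degree $2$. Since $\deg(2222)=12$ and $\deg(1221)=10$, the hypothesis $\deg\ge 15$ forces at least two, respectively three, of them. That gives a run of eight $1$'s, which (5) clears. So the degree bound is exactly what makes (5) applicable in the all-$1$'s case.
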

\begin{proof}
(1) By assumption $n\colonequals \deg(A111B)\geq 8$.  Write $BA=x\msf y$, and consider $\upphi$ which just sends $y\mapsto y+\upalpha yx\msf y-\upalpha x\msf y^2$.  Applying $\upphi$, rotating and cancelling terms gives
\[
\upphi(f_{\stub})\sim f_{\stub} +\upalpha A111B + \la\upalpha A22B + x^2g  + y^3h+\scrO_{n+1}
\]
for some $g, h$ with $\deg(g)=n-2$ and $\deg(h)=n-3$.  Clearing $x^2g$ and $y^3h$ into $\scrO_{n+1}$ in the obvious way (using the $x^3$ and $y^4$ terms of $f_{\stub}$), since $\la\neq 0$, the statement follows.\\
(2)  If $\sfm\to\sfm'$, then $\sfm'\to\sfm$. Hence applying  \eqref{Length6Neckpush0} twice, $A122B\to A1111B\to A221B$.\\
(3) Write $A2112B\sim 112BA2 = xyxyx(y^2\msf xy^2)$ for some possibly empty $\msf $, set $n\colonequals\deg(A2112B)$, and consider the automorphism $\uppsi$ which just sends $y\mapsto y + \upalpha y^2\msf xy^2 - \upalpha y\msf xy^3$. Applying $\uppsi$, rotating and cancelling terms gives
\[
\uppsi(f_{\stub})\sim f_{\stub} +\upalpha A2112B + x^2g  + y^3h+\scrO_{n+1}
\]
for some $g, h$ with $\deg(g)=n-2$ and $\deg(h)=n-3$.  Again as in \eqref{Length6Neckpush0} clearing $x^2g$ and $y^3h$ into $\scrO_{n+1}$ in the obvious way, the statement follows.\\
(4) Simply apply \eqref{Length6Neckpush0} to give $A111112B\to A22112B$, then apply \eqref{Length6Neckpush2}.\\
(5) Simply apply \eqref{Length6Neckpush0} to give $A11111111B\to A1111122B$, then apply \eqref{Length6Neckpush1}.\\
(6) Applying \eqref{Length6Neckpush0} twice, $A2222B \to 111111BA$. If $BA$ contains a $2$, then after possibly rotating, by \eqref{Length6Neckpush1} $111111BA\to\scrO_{n+1}$.  The only other case is that $BA$ contains only $1$s, and the degree restriction forces there to be at least two of them. But then $111111BA=111111(1\hdots1)\to\scrO_{n+1}$ by \eqref{Length6Neckpush3}.\\
(7) By \eqref{Length6Neckpush0} $A1221B \to A11111B\sim 11111BA$. If  $BA$ contains a $2$, then after possibly rotating, by \eqref{Length6Neckpush1} $11111BA\to\scrO_{n+1}$.  The only other case is that $BA$ consists only of $1$s, and now by the degree restriction there must be at least three of them. Then, again, $11111BA\to\scrO_{n+1}$ by \eqref{Length6Neckpush3}.
\end{proof}
In the following, e.g.\ $(12)^4\colonequals(12121212)$ and $(12)^322\colonequals(12121222)$. 

\begin{lemma}\label{Length6NeckLeft}
If $\msf=(a_1\hdots a_t)$ is a monomial of degree $n\geq 15$, where each $a_i\in\{1,2\}$, then either $\msf\to\scrO_{n+1}$, or $\msf=(12)^t$ or $\msf=(12)^{t-1}(22)$ for some $t\geq 3$.
\end{lemma}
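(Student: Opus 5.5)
We argue combinatorially on the cyclic word $a_1\hdots a_t$ over the alphabet $\{1,2\}$, treating it up to rotation (which is harmless by \ref{tonotation}\eqref{tonotation3}). Since $n=\sum(a_i+1)\geq 15$, all the moves of \ref{Length6Neckpush} are available, including \eqref{Length6Neckpush4} and \eqref{Length6Neckpush5}. The plan is to reduce to words satisfying a list of forbidden patterns and then check that only $(12)^t$ and $(12)^{t-1}22$ survive.

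First we dispose of long blocks of $1$s. If the word contains $11111111$ cyclically, \eqref{Length6Neckpush3} clears it. If it contains $11111$ followed by a $2$, then \eqref{Length6Neckpush1} clears it. A word consisting only of $1$s has $t=n/2\geq 8$, so it falls under \eqref{Length6Neckpush3}.

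Next we treat blocks of exactly two or three $1$s adjacent to $2$s. A block $2112$ clears by \eqref{Length6Neckpush2}. A block $21112$ is sent by \eqref{Length6Neckpush0} (applicable since $t\geq 2$) to $2222$ on the adjacent letters, which clears by \eqref{Length6Neckpush4}. More generally, maximal runs of $1$s of length $3$ or $4$ can be converted using \eqref{Length6Neckpush0} into runs containing $22$ adjacent to other $2$s, or into runs of length $1$ or $2$. Runs of length $2$ are killed by \eqref{Length6Neckpush2}, and those of length $5,6,7$ are killed by \eqref{Length6Neckpush1}. Each move either clears into $\scrO_{n+1}$ or produces a word with strictly fewer $1$s, so induction on the number of $1$s applies. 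After these reductions, every maximal run of $1$s has length exactly $1$, unless there are no $1$s at all.

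Now consider runs of $2$s. A run of length $\geq 4$ clears by \eqref{Length6Neckpush4}. A word of only $2$s has length $\geq 5$ and also clears. A run $1221$ clears by \eqref{Length6Neckpush5}. A run $12221$ is sent by \eqref{Length6Neckpush-1}, reading $122\cdot 21$ as $A122B$, to $221\,21$, which gives $\hdots 2212 \hdots$. This changes the arrangement of the runs, and we expect a finite rewriting argument to move such words either into a clearable pattern or toward the normal form. It remains to handle a word alternating single $1$s with $2$-runs of length $1$ or $3$. Using \eqref{Length6Neckpush-1} in the form $122B\to221B$, together with rotations, we aim to show that two $222$-runs, or one $222$-run together with a run that is not $2$, can be pushed together to create $2222$ or $1221$. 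This would leave only $(12)^t$, or $(12)^{t-1}$ with one run $222$, that is $(12)^{t-1}22$. The degree bound $n\geq 15$ gives $t\geq 3$ in these cases.

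The main obstacle is the bookkeeping in the last step. The moves \eqref{Length6Neckpush0} and \eqref{Length6Neckpush-1} are reversible replacements rather than clearings, so a naive rewriting could cycle. We intend to control this with a potential function: first the number of $1$s, then the number of $222$-runs. We will check case by case that every configuration other than the two listed normal forms admits a move which strictly decreases this potential or reaches a pattern handled by \eqref{Length6Neckpush2}--\eqref{Length6Neckpush5}. We must also verify that the nonemptiness hypothesis of \eqref{Length6Neckpush0} is satisfied at each use, which it is because $t\geq 3$.
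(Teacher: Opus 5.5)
Your first two stages are correct, and they organise the proof differently from the paper. The paper rotates $\msf$ to end in $12$, does a case analysis on how $\msf$ begins, and then inducts on degree by extending the survivors $\hdots(12)^3$ and $\hdots(12)^222$. After your stages, every non-cleared word is, up to rotation, $1\,2^{c_1}\,1\,2^{c_2}\cdots1\,2^{c_k}$ with all $c_i\in\{1,3\}$.

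\textbf{The gap.} The final step, which is where the content lies, is only announced, and the mechanism you propose for it fails. Your potential (number of $1$s, then number of $222$-runs) is unchanged by every move that is actually needed. Take $c=(3,1,3,1)$, i.e.\ $\msf=1222\,12\,1222\,12$, of degree $32$. It contains none of the patterns of \eqref{Length6Neckpush2}--\eqref{Length6Neckpush5} and no $111$. Each application of \eqref{Length6Neckpush-1} or its reverse only slides one $222$-run by one block, leaving your potential unchanged, and the reverse of \eqref{Length6Neckpush0} increases it. So the claim that every other configuration admits a move which strictly decreases the potential or reaches a clearing pattern is false for single moves. The case-by-case check you defer cannot succeed as formulated.

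\textbf{The missing observation.} In this normalised form the letter before the $1$ that precedes a $222$-run is a $2$. So your rewriting $12221\to22121$ (an instance of \eqref{Length6Neckpush-1}) actually reads
\[
1\,2^{c_{i-1}}\,1\,222\,1\;\to\;1\,2^{c_{i-1}+2}\,1\,2\,1,
\]
that is, $(c_{i-1},c_i)\mapsto(c_{i-1}+2,c_i-2)$.

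\textbf{How to finish.}
\begin{enumerate}
\item The move $(1,3)\mapsto(3,1)$ transports a $222$-run one block to the left.
\item The move $(3,3)\mapsto(5,1)$ produces $2222$, which clears by \eqref{Length6Neckpush4} since $n\geq 15$.
\item If two of the $c_i$ equal $3$, transport one towards the other. The quantity that decreases is the number of blocks separating them, not your potential, and the word eventually clears.
\item The survivors are then exactly $c=(1,\hdots,1)$, giving $(12)^k$ with $k\geq 3$, and a single $c_i=3$, giving $(12)^{k-1}1222=(12)^k22$ with $k\geq2$.
\end{enumerate}
These are the two forms in the statement, read as in the paper's proof: $\msf\to$ a rotation of one of them. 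With this step inserted your run-length argument is complete, and it is more transparent than the paper's start-of-word case analysis.
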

\begin{proof}
Given \ref{Length6Neckpush}, the statement is entirely combinatorial, so we only sketch the proof. If $\msf$ consists only of $1$s then $\msf\to\scrO_{n+1}$ by \eqref{Length6Neckpush3}, and if $\msf$ consists only of $2$s then $\msf\to\scrO_{n+1}$ by \eqref{Length6Neckpush4}.  Hence we can assume $\msf$ contains both a $1$ and a $2$, and after rotation $\msf=\hdots 12$. The proof then analyses the possible ways that $\msf$ may start.

If (a) $\msf$ starts with $\geq5$ number $1$s, then $\msf\to\scrO_{n+1}$ by \eqref{Length6Neckpush1}. If (b) $\msf$ starts with precisely four $1$s, namely $11112$, then by \eqref{Length6Neckpush0} and rotation $\msf \to\hdots 122212$, which by \eqref{Length6Neckpush-1} gets pushed $\to \hdots (12)^222$.  If (c) $\msf$ starts with precisely three $1$s, namely $1112$, then by \eqref{Length6Neckpush0} and rotation $\msf \to\hdots 12222$, which by \eqref{Length6Neckpush4} gets pushed $\to \scrO_{n+1}$. If (d) $\msf$ starts with precisely two $1$s, namely $\msf  = 12\hdots12$, since $\deg(1212)=10$, more analysis is required. Going through the options of the third and fourth last entries of $\msf$, which are either $11$, $12$, $21$ and $22$, for the third option $\to\scrO_{n+1}$ by \eqref{Length6Neckpush2}, for the fourth option $\to \hdots (12)^222$ by \eqref{Length6Neckpush-1} twice, and for the second option $\to \hdots (12)^3$ by rotation. For the first option $\msf\sim \hdots111212\to\hdots 22212\to\hdots 21222$, and an easy argument which analyses the sixth last entry shows that either $\msf\to \scrO_{n+1}$ or $\msf\to \hdots (12)^222$.  Finally, if (e) $\msf$ starts with a $2$, so $\msf\sim\hdots 122$, then another similarly easy argument (now using \eqref{Length6Neckpush5}) shows that either $\msf\to \scrO_{n+1}$ or $\msf\to \hdots (12)^222$.

Since (a)--(e) exhaust all possibilities, the conclusion of (a)--(e) is that either $\msf\to\scrO_{n+1}$, or $\msf\to \hdots (12)^3$, or $\msf\to \hdots (12)^222$.  If $\deg(\msf)=15$ then the only non-clearing option is $(12)^3$, and if $\deg(\msf)=16$ (which cannot be obtained by extending $(12)^3$), the only non-clearing option is $(12)^222$.  Hence we can assume that $\deg(\msf)\geq 17$.

Case (i) is when $\msf = \hdots(12)^222$ is extended, to be either $1\hdots121222$ or $2\hdots121222$. The second option clears by  \eqref{Length6Neckpush4} after rotation, and from there another easy argument shows that either $\msf\to\scrO_{n+1}$ or $\msf\to \hdots (12)^322$.  Case (ii) is when $\msf = \hdots(12)^3$ is extended, to be either $1\hdots(12)^3$ or $2\hdots(12)^3$. In the first option, an easy argument shows that either $\msf\to\scrO_{n+1}$ or $\msf\to \hdots (12)^322$, and in the second option, an easy argument shows that either $\msf\to\scrO_{n+1}$, or $\msf\to \hdots (12)^322$, or $\msf\to \hdots (12)^4$.

The conclusion of (i) and (ii) is that if $\deg(\msf)=17,18,19,20$ then $(12)^4$ is the only non-clearing option, and if $\deg(\msf)=21$ then $(12)^322$ is the only non-clearing option.  Hence we can assume that $\deg(\msf)\geq 22$. Repeating the above with case (i) and (ii) extended in the obvious way establishes the induction step, and the statement follows.
\end{proof}

The proof of the following lemma is identical to \ref{Length4NeckHowToNeck}. We repeat the result here only since the notation $\to$ has changed, as it depends on the stub. 
\begin{lemma}\label{Length6NeckHowToNeck}
Suppose that $\msf$ is one of the monomials in $\scrN_{i,j}$, where $i+j\geq 7$.  If every other monomial $\msf'$ in $\scrN_{i,j}$ satisfies $\msf'\to\scrO_{n+1}$, then $\msf\to \scrN_{i,j}$.  
\end{lemma}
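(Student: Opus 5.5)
The plan is to follow the proof of \ref{Length4NeckHowToNeck} verbatim, since the statement depends on the stub only through the meaning of $\to$. Write $n=i+j$ and
\[
\scrN_{i,j}=\tfrac{1}{n}\sum_{m\in\Orb_{i,j}}|m|\,p_m .
\]
Let $\msf$ be the representative $p_{m_0}$ of some orbit $m_0$, with coefficient $c_0=|m_0|/n\neq 0$. Every monomial cyclically equivalent to $\msf$ already lies in the class of $\msf$ up to $\sim$, so the only other monomials to handle are the representatives $\msf'=p_m$ for $m\neq m_0$. By hypothesis each such $\msf'$ satisfies $\msf'\to\scrO_{n+1}$.

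First, I will use the reversibility remark \ref{tonotation}\eqref{tonotation2REV}, in its degenerate form with $\msf_2=0$. The hypothesis $\msf'\to\scrO_{n+1}$ means that for every $\upalpha\in\C$ there is a depth $\geq n-5$ unitriangular automorphism subtracting $\upalpha\msf'$ from the degree $n$ part, modulo $\sim_{n-1}$, and changing nothing else in degrees $\leq n$. Replacing $\upalpha$ by $-\upalpha$ therefore \emph{adds} $\upalpha\msf'$ with arbitrary coefficient, again without disturbing other degree $n$ terms.

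Second, I will compose finitely many such automorphisms, one for each orbit $m\neq m_0$. Each has depth $\geq n-5\geq 2$ (as $n\geq 7$), so each acts as the identity on the degree $\leq n$ part of the previous output, up to the added term. Hence their effects on degree $n$ simply add, and the composition adds $\upalpha c_m c_0^{-1}p_m$ for all $m\neq m_0$ at once. Combined with removing $\upalpha\msf$ and adding $\upalpha c_0^{-1}\cdot c_0\msf$, which is trivial bookkeeping, the net effect replaces $\upalpha\msf$ by $\upalpha c_0^{-1}\scrN_{i,j}$. This is precisely $\msf\to\upmu\scrN_{i,j}$ with $\upmu=c_0^{-1}\neq0$, in the sense of \ref{tonotation}\eqref{tonotation1}. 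The compatibility with \ref{main chase lemma}, namely cycling only in degrees $\geq n-1$ and using automorphisms of depth $\geq n-5$, is preserved under composition exactly as in the proof of \ref{main chase lemma}.

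The only point requiring care is that the clearing automorphisms for the various $\msf'$ do not interfere with each other or with $\msf$ in degree $n$. This is guaranteed by the definition of $\to$: each move alters only the specified monomial in degree $n$ modulo cyclic equivalence, plus terms in $\scrO_{n+1}$. The depth bound ensures that composing moves commutes at the level of degree-$n$ terms. I therefore expect no real obstacle; the argument is formal and independent of the stub $x^3+y^4+\scrN_{3,3}+\uplambda\scrN_{2,4}$.
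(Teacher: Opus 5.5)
Your proposal is correct and is essentially the paper's argument. The paper proves this by referring verbatim to the proof of \ref{Length4NeckHowToNeck}: any $\msf'$ that can be cleared into $\scrO_{n+1}$ with arbitrary coefficient can equally be reintroduced with arbitrary coefficient, so all the other monomials of $\scrN_{i,j}$ can be brought back with their necklace coefficients. Your write-up adds only the explicit normalisation $\upmu=c_0^{-1}$ and the depth bookkeeping under composition, both of which the paper leaves implicit.
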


As notation in the following, consider the set $\scrT_6$  defined to be
\begin{equation}
\medmath{\scrT_6\colonequals \{ (3,4), (3,5), (3,6), (4,6), (4,7), (4,8), (5,9), (6,9), (6,10), (8,12), (8,13), (10,15),(10,16)\}}\label{eqn:DefS6}
\end{equation}
\begin{thm}\label{thm!funnel6}
Set $f_{\rm stub}=x^3 + y^4+\scrN_{3,3}+\uplambda\scrN_{2,4}$, where $\uplambda\neq 0$.  If $g= f_{\rm stub}+ \scrO_7$, then there exists $\upalpha_{i,j}\in\C$  such that $\Jac(g)\cong \Jac(h)$, where $h=f_{\stub}+\sum_{(i,j)\in\scrT_6}\upalpha_{i,j}\scrN_{i,j}$.
\end{thm}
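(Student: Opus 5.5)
The proof will follow the same template as \ref{thm!funnel4} and \ref{thm!funnel5}: proceed inductively through the degrees $n=7,8,\hdots,25$. At each step we either clear the degree $n$ terms into $\scrO_{n+1}$, or replace the surviving monomial by a single necklace polynomial $\scrN_{i,j}$ with $(i,j)\in\scrT_6$ and $i+j=n$. The moves used are compatible with \ref{main chase lemma}, since each automorphism in \ref{Length6Neckpush} has depth $\geq n-5$ and only cyclically permutes in degrees $\geq n-1$. The run stops at degree $25$. By \ref{prop:l6finite}, applied to the truncation $\sff_{25}=f_{\stub}+\sum\upalpha_{i,j}\scrN_{i,j}+\scrO_{26}$, we have $\n^{26}\subset\lcl\dcyc\sff_{25}\rcl$. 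Since the $\scrO_{26}$ tail differentiates into $\n^{25}$, it contributes nothing to the relations and can be set to zero, exactly as at the end of \ref{thm!funnel5}.

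In a fixed degree $n$ the first step is standard. Tschirnhausen \ref{tschirnhausen}, together with its $y$-analogue using the $y^4$ term (as in \ref{Length4Neckpush}\eqref{Length4Neckpushm1}), pushes every monomial divisible by $x^2$ or $y^3$ into $\scrO_{n+1}$. Up to cyclic rotation this leaves only the monomials $(a_1\hdots a_t)$ with all $a_i\in\{1,2\}$, and $n=\sum(a_i+1)$. For $n\geq15$, \ref{Length6NeckLeft} shows that everything clears except $(12)^t$ or $(12)^{t-1}22$. These have bidegrees $(2t,3t)$ (so $n\equiv0\bmod5$) and $(2t,3t+1)$ (so $n\equiv1\bmod5$). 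Every other monomial of $\scrN_{2t,3t}$, respectively $\scrN_{2t,3t+1}$, contains $x^2$ or $y^3$ or clears by \ref{Length6NeckLeft}. Hence \ref{Length6NeckHowToNeck} turns the survivor into the full necklace, giving exactly the entries $(6,9),(6,10),(8,12),(8,13),(10,15),(10,16)$ of $\scrT_6$. In degrees $n\not\equiv0,1\bmod 5$ with $n\ge15$, nothing survives at all.

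The low degrees $7\le n\le14$ have to be tabulated by hand, in the same style as the tables in \ref{thm!funnel4} and \ref{thm!funnel5}. For each $n$ we list the words in $\{1,2\}$ up to rotation and reduce them using \ref{Length6Neckpush}\eqref{Length6Neckpush0}--\eqref{Length6Neckpush1}. Parts \eqref{Length6Neckpush4} and \eqref{Length6Neckpush5} of \ref{Length6Neckpush} cannot be used here, because of their degree restriction. The expected survivors are:
\begin{itemize}
\item $(111)\cdot y$-type words giving $\scrN_{3,4}$ in degree $7$;
\item $\scrN_{3,5}$ in degree $8$ and $\scrN_{3,6}$ in degree $9$;
\item $\scrN_{4,6}$ in degree $10$ (from $1212\sim$ the $1122$ classes, related by \eqref{Length6Neckpush-1});
\item $\scrN_{4,7}$ in degree $11$ and $\scrN_{4,8}$ in degree $12$;
\item $\scrN_{5,9}$ in degree $14$;
\item nothing in degree $13$.
\end{itemize}
Note that in degree $7$, $8$, $9$ with three $x$'s, the only word is $(1,1,a)$ with $a\ge2$, so $a_i\le2$ forces degree $\le 9$; the bidegree $(3,j)$ words are handled directly by \ref{Length6Neckpush}\eqref{Length6Neckpush0} where possible. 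Where a degree has two survivors in the same bidegree, the pair is related by \eqref{Length6Neckpush0} or \eqref{Length6Neckpush-1}. That relation lets us adjust their relative coefficients to match the necklace, just as was done for $1122$, $1212$ in the proof of \ref{thm!funnel4}; the remaining monomials are then added back via \ref{Length6NeckHowToNeck}. Degree $7$ needs a separate check, because \ref{Length6NeckHowToNeck} requires $i+j\geq7$ and \eqref{Length6Neckpush0} requires $A$ or $B$ to be nonempty.

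The main obstacle is this low-degree bookkeeping. We must verify, bidegree by bidegree, that the surviving classes are exactly one per element of $\scrT_6$, and that no bidegree keeps two independent survivors that cannot be merged into one necklace. The hardest cases should be degrees $10$ to $12$, where several words such as $1212$, $1122$ and $11112$ interact. There I would first apply \eqref{Length6Neckpush0} and \eqref{Length6Neckpush-1} systematically to reduce to a canonical representative. If a residual relation is missing, I would fall back on writing a small linear system of automorphisms, as in \ref{lem:E6funnelD} and \ref{Length4Neckpush}\eqref{Length4Neckpush3}, and check that it is nondegenerate.
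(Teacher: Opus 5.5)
Your template, the lemmas you use and the survivors you predict all match the paper's proof. The gap is the final truncation, which is off by one and does not work as written. Cyclic derivatives lower degree by exactly one, so the derivatives of an $\scrO_{26}$ tail lie only in $\n^{25}$. To discard them you would need $\n^{25}\subset\lcl\dcyc\sff\rcl$, which \ref{prop:l6finite} does not give.

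Your own list also contradicts this step. You include $\scrN_{10,16}$, but it lives in degree $26$, which your run never processes and whose terms you then throw away.

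The correct rule is as follows. If $\n^{k}\subset\lcl\dcyc\sff\rcl$ holds for both the truncated and the untruncated potential, you must clear or necklace-ify through degree $k$, and only then drop the $\scrO_{k+1}$ tail. Its derivatives lie in $\n^{k}$, so the two Jacobi ideals coincide. The paper takes $k=26$: it runs through $n=26$, where \ref{Length6NeckLeft} leaves only $(1212121222)$, turns that word into $\scrN_{10,16}$, and drops $\scrO_{27}$.

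Be aware, however, that the bound $\n^{26}$ in \ref{prop:l6finite} is itself one too small. The degree-$26$ word $y^2xy^2(xy^2xy)^4x$ is divisible by no element of $\scrS$ there, while every word of degree $27$ is. So the reduction argument really gives $\n^{27}\subset\lcl\dcyc\sff\rcl$, and the paper's own drop of $\scrO_{27}$ is also one degree short. The safe version runs one more step, to $n=27$. Since $27\equiv 2\bmod 5$, \ref{Length6NeckLeft} clears everything in that degree, and the $\scrO_{28}$ tail can then be dropped, leaving exactly the $h$ in the statement.

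Your low-degree analysis is stated only as expected survivors, with a fallback to linear systems, and some supporting remarks are inaccurate.
\begin{itemize}
\item The three-$x$ words are $(112)$, $(122)$ and $(222)$, which are precisely your survivors in degrees $7,8,9$, not only $(1,1,a)$.
\item The move \eqref{Length6Neckpush0} replaces $111$ by $22$, so it changes bidegree; it does not relate two survivors of the same bidegree.
\item In degree $10$ the word $(1122)\sim(2112)$ clears outright by \eqref{Length6Neckpush2}, rather than being merged with $(1212)$.
\end{itemize}
If you carry out the table, the moves \eqref{Length6Neckpush0}, \eqref{Length6Neckpush-1}, \eqref{Length6Neckpush2} and \eqref{Length6Neckpush1} leave exactly one word in each of degrees $7$--$12$ and $14$. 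These are $(112),(122),(222),(1212),(1222),(2222),(12222)$, and nothing survives in degree $13$. In each case every other monomial of the corresponding necklace contains $x^2$ or $y^3$ or clears, so \ref{Length6NeckHowToNeck} applies directly. No merging or linear algebra is needed, and degree $7$ needs no separate check since $3+4=7$.
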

\begin{proof}
The proof proceeds through each small degree $n=7,\hdots,14$ in order, then it proceeds through degrees $n=15,\hdots,25$ which is easier since for those we can use \ref{Length6NeckLeft}.  As in \ref{thm!funnel4}, for any fixed $n\geq 6$ Tschirnhausen \ref{tschirnhausen} pushes all monomials divisible by $x^2$ into higher degree, and similarly the $y^4$ term in $f_{\stub}$ can be used to push all monomials divisible by $y^3$ into higher degree. In degree $n$ this leaves only those monomials of the form $xy^{a_1}\hdots xy^{a_t}=(a_1\hdots a_t)$ where all $a_i\in\{1,2\}$.  After these moves, the second column in the following table summarises which monomials which remain (up to cyclic permutation) in degree $n$, for small $n$. The third column shows either how to clear these into higher degree using \ref{Length6Neckpush}, or how to push them onto other monomials on the same line again using \ref{Length6Neckpush}, or marks with a $*$ those monomials which need to be analysed further.
\[
\medmath{
\begin{array}{c@{\hspace{10pt}}lll}
 {\rm Deg}&{\rm Remaining}&\textnormal{Push using \ref{Length6Neckpush}(-)} & \textnormal{Necklace}\\
7 & (112) & *& \scrN_{3,4}\\
8 & (122), (1111)  &*,\eqref{Length6Neckpush0} & \scrN_{3,5}\\
9 & (222), (1112)  &*,\eqref{Length6Neckpush0} & \scrN_{3,6}\\
10 & (1212), (1122), (11111) &*,\eqref{Length6Neckpush2},\eqref{Length6Neckpush0} & \scrN_{4,6}\\
11 &  (1222), (11112) & *,\eqref{Length6Neckpush0}& \scrN_{4,7}\\ 
12 &  (2222), (11122), (11212), (111111) &*,\eqref{Length6Neckpush0},\eqref{Length6Neckpush2},\eqref{Length6Neckpush0} & \scrN_{4,8} \\ 
13 & (12122), (11222), (111112)  &\eqref{Length6Neckpush0},\eqref{Length6Neckpush2},\eqref{Length6Neckpush1}  \\
14 & (12222), (112112), (111122), (111212),
(1111111) &*,\eqref{Length6Neckpush2},\eqref{Length6Neckpush0},\eqref{Length6Neckpush0},\eqref{Length6Neckpush0} & \scrN_{5,9}
\end{array}}
\]
In each degree, we are left with a single monomial marked $*$. Either there is evidently no other monomial on the same line as a given $*$ which belongs to the same necklace, or (e.g.\ in degree $10$) every other monomial which belongs to the same necklace as $*$ can be cleared into higher degree. Hence, in both cases, every other monomial in the same necklace as $*$ can be cleared into higher degree, thus by \ref{Length6NeckHowToNeck} each $*$ can be made into a necklace, and this is summarised in the fourth column of the above table.  It follows that $\Jac(g)\cong \Jac(\sff)$ where $\sff=f_{\stub}+\upalpha_{3,4}\scrN_{3,4}+\hdots+\upalpha_{5,9}\scrN_{5,9}+\scrO_{15}$ and the sum is taken over the necklaces in the fourth column of the above.

We next proceed through degrees $n=15,\hdots,26$.  By \ref{Length6NeckLeft}, the clearing lemma \ref{subsec:l6funnel} is sufficient to clear every monomial of degree $n$ into $\scrO_{n+1}$, except possibly the following.
\[
\medmath{
\begin{array}{c@{\hspace{10pt}}ll}
 {\rm Deg}&{\rm Remaining}&\textnormal{Necklace}\\
15 & (121212) & \scrN_{6,9}\\
16 &  (121222)&\scrN_{6,10}\\
17 &  \emptyset&\\
18 &  \emptyset&\\
19 &  \emptyset&\\ 
20 &  (12121212)&\scrN_{8,12}\\
21 &  (12121222) & \scrN_{8,13}\\
22 &  \emptyset&\\
23 &  \emptyset&\\
24 &  \emptyset&\\ 
25 &  (1212121212)&\scrN_{10,15}\\
26 &  (1212121222)&\scrN_{10,16}
\end{array}}
\]
In every line, every other monomial except those stated clears into $\scrO_{n+1}$, so in particular every other monomial in the same necklace as those monomials above in degree $n=15, 16, 20, 21, 25, 26$ can be cleared into $\scrO_{n+1}$. Thus by \ref{Length6NeckHowToNeck} each can be made into a necklace, and this is summarised in the third column of the above table. 

Hence, proceeding through the degrees in increasing order, we have shown that $\Jac(g)\cong\Jac(f_{\stub}+\sum_{(i,j)\in\scrT_6}\upalpha_{i,j}\scrN_{i,j} + \scrO_{27})$ for some $\upalpha_{i,j}\in \C$.  Applying \ref{prop:l6finite}, the $26$th power of the radical belongs to its Jacobi ideal. Since the $\scrO_{27}$ part differentiates to terms of degree $\geq26$, the tail $\scrO_{27}$ does not contribute to the relations, and so can be set to zero. 
\end{proof}

\subsection{Summary}
Summarising the proceeding three subsections and \S\ref{subsec:E6funnel} gives the following, which reworks \ref{thm:ReductionPrecise} into a much more precise form suitable for geometric arguments.

\begin{cor}\label{cor:NecklaceSummary}
If $f\in\ringtwo$ has algebraic length $\ell$, then $\Jac(f)\cong \Jac(g)$ where $g$ is some element in the following list, where all $\upvarepsilon_*\in\{0,1\}$, and all $\upalpha_*\in\C$
\[
\renewcommand{\arraystretch}{1.3}
\medmath{\begin{array}{c@{\hspace{20pt}}ll}
1 & x^2 + \upvarepsilon_1y^n & n\geq 2 \\
2 &x^2y+\upvarepsilon_2 y^{2n}+\upvarepsilon_3y^{2m-1} &  2n-1\geq m\geq 2 \\
3 &x^3 + xy^3+ y^4p &p\in\C[y] \\
4 & x^3 + y^4+\scrN_{2,3}+\sum_{(i,j)\in\scrT_4}\upalpha_{i,j}\scrN_{i,j} &\\
5 & x^3+\scrN_{2,2}+y^5+xy^5p+y^6q& p,q\in\C[y]\\
6 & x^3 + y^4+\scrN_{3,3}+\uplambda\scrN_{2,4}+\sum_{(i,j)\in\scrT_6}\upalpha_{i,j}\scrN_{i,j} & \uplambda\neq 0\\
\end{array}}
\renewcommand{\arraystretch}{1}
\]
In particular, the following statements hold.
\begin{enumerate}
\item\label{cor:NecklaceSummary1} $\Jac(f)\cong\Jac(g)$ where $g$ is a finite linear combination of necklace polynomials.
\item\label{cor:NecklaceSummary2} $\dim_\C\Jac(g)=\infty\iff$ $\upvarepsilon_1=0$ in $\ell=1$, $\upvarepsilon_2=0$ in $\ell=2$, or $p = 0$ in $\ell=3$.
\end{enumerate}
\end{cor}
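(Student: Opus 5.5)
The plan is to assemble the list row by row from results already proved, and then read off (1) and (2).

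For $\ell=1,2$ the rows are exactly those of \ref{thm:ReductionPrecise}, so there is nothing new. For $\ell=4,5,6$:
\begin{itemize}
\item Length five: combine \ref{thm:ReductionPrecise} with \ref{lem:Ell5necksmall} to reach $x^3+\scrN_{2,2}+y^5+\scrO_6$, then apply \ref{thm!funnel5}.
\item Length four: \ref{thm:ReductionPrecise} gives $x^3+y^4+xyxy^2+\scrO_6$. A single substitution of the form $x\mapsto x+c\,y^2$ converts $xyxy^2$ into $\scrN_{2,3}$ modulo terms divisible by $x^2$ or $y^3$. This mirrors the proof of \ref{lem:Ell5necksmall}, with Tschirnhausen \ref{tschirnhausen} and the $y^4$ analogue in \ref{Length4Neckpush}\eqref{Length4Neckpushm1} clearing the debris. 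Then \ref{thm!funnel4}\eqref{thm!funnel4B} applies.
\item Length six: \ref{thm:Length456HalfWay} gives $x^3+y^4+(xy)^3+\uplambda(xy^2)^2+\scrO_7$ with $\uplambda\neq0$. As noted in the proof of \ref{thm!funnel4}, the substitutions $x\mapsto x+\tfrac13(xy^3+yxy^2+y^2xy)$ and $y\mapsto y+\tfrac{\uplambda}{4}(x^2y+xyx)$ turn $(xy)^3$ and $(xy^2)^2$ into $\scrN_{3,3}$ and a nonzero multiple of $\scrN_{2,4}$, modulo clearable terms. After a rescaling, \ref{thm!funnel6} applies.
\end{itemize}

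For $\ell=3$, \ref{cor:SemiFinalAlgLength} gives $x^3+xy^3+y^4p$ with $p\in\C\llsq y\rrsq$, and the point is to truncate $p$ to a polynomial. If $p=0$ we are done. Otherwise \ref{prop:l3finite} and \ref{prop:l3finite4} give $\n^{N}\subset\lcl\dcyc g\rcl$ for some $N$ ($N=n+7$, or $10$). Terms of $y^4p$ of degree $>N$ differentiate into $\n^{N}$, and $\n^N$ is already contained in the Jacobi ideal; this is the argument at the end of \ref{thm!funnel5}. The one subtlety is that $\lcl\dcyc g\rcl$ itself changes when we delete the tail. I would handle it by noting that the truncated $g'$ satisfies $\dcyc g'\equiv\dcyc g$ modulo $\n^N$, and that the standard basis elements $\sfg_1,\dots,\sfg_7$ exhibited in \ref{prop:l3finite} depend only on the low-degree part of $g$. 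Hence $\n^N\subset\lcl\dcyc g'\rcl$ as well, so the two ideals coincide. The same reasoning justifies the final truncations already made in \ref{thm!funnel4}, \ref{thm!funnel5} and \ref{thm!funnel6}.

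Statement (1) then follows by inspection: $x^2$, $x^3$, $y^k$, $x^2y$ and $xy^3$ are all multiples of necklace polynomials, because each has a unique cyclic orbit. For example $\scrN_{1,3}=xy^3$ and $\scrN_{2,1}=x^2y$, and similarly $xy^5p$ is a combination of $\scrN_{1,k}$.

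For (2), the finite-dimensional direction comes from \ref{prop:l3finite}, \ref{prop:l3finite4}, \ref{prop:l4finite}, \ref{prop:l5finite} and \ref{prop:l6finite} for $\ell\geq3$ with $p\neq0$, and from \cite{BW3} for $\ell=1,2$ with the relevant $\upvarepsilon$ nonzero. The infinite-dimensional direction comes from \ref{lem:l3infinite} together with the explicit algebras $\Jac(x^2)\cong\C\llsq y\rrsq$ and $\Jac(x^2y+\upvarepsilon_3y^{2m-1})$, which are infinite-dimensional by \cite{BW3}. The main obstacle I expect is the length-four stub conversion (checking that the degree-5 debris really clears), together with being careful that truncation preserves the Jacobi ideal rather than just its dimension.
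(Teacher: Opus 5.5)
Your overall structure is the paper's. Rows $\ell=1,2$ come straight from \ref{thm:ReductionPrecise}, $\ell=3$ is truncated using the finite-dimensionality bounds, and $\ell=5$ goes through \ref{lem:Ell5necksmall} and \ref{thm!funnel5}. Rows $\ell=4,6$ are converted to necklace stubs and fed into \ref{thm!funnel4}\eqref{thm!funnel4B} and \ref{thm!funnel6}, and (1), (2) follow by inspection and citation. Your extra justification that truncation preserves the ideal is correct, and it is more careful than the paper's one-line remark. It runs the construction of \ref{prop:l3finite} on the truncated $g'$, so both closed ideals contain $\n^N$ and their generators agree modulo $\n^N$.

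The step that fails as written is the length-four stub conversion. A substitution $x\mapsto x+c\,y^2$ cannot produce $x^2y^3$. Acting on $x^3$ it creates $3c\,x^2y^2$ in degree $4$ (up to $\sim$), plus terms in degrees $5$ and $6$. It leaves $y^4$ alone and changes $xyxy^2$ only in degree $\geq 7$. The degree-four term spoils the normal form, and clearing it with Tschirnhausen \ref{tschirnhausen} uses $x\mapsto x-c\,y^2$, which just undoes your substitution. Your analogy with \ref{lem:Ell5necksmall} is off by one degree. There $\scrN_{2,2}$ needs $x^2y^2$, whereas here $\scrN_{2,3}=x^2y^3+xyxy^2$ needs $x^2y^3$. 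The correct move, which is the paper's, is $x\mapsto x+\tfrac13 y^3$. Then $(x+\tfrac13y^3)^3\sim x^3+x^2y^3+\scrO_7$, and $y^4$ and $xyxy^2$ change only in degree $\geq 7$. So $x^3+y^4+xyxy^2+\scrO_6$ becomes $x^3+y^4+\scrN_{2,3}+\scrO_6$ directly, with no degree-five debris; the obstacle you anticipated does not exist.

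There is a milder coefficient slip for $\ell=6$. The substitutions you borrow from \ref{thm!funnel4} are calibrated to $\tfrac13(xy)^3+\tfrac{\la}{2}(xy^2)^2$. Applied to $(xy)^3+\la(xy^2)^2$ they give
\[
\scrN_{3,3}+\tfrac23(xy)^3+\la\scrN_{2,4}+\tfrac{\la}{2}(xy^2)^2,
\]
which is not a combination of necklaces. A later diagonal rescaling cannot fix ratios between monomials of the same bidegree. You must either rescale first, as the paper does to reach $\tfrac13(xy)^3+\la'(xy^2)^2$, or adjust the coefficients. For example, $x\mapsto x+(xy^3+yxy^2+y^2xy)$ and $y\mapsto y+\tfrac{\la}{2}(x^2y+xyx)$ give $3\scrN_{3,3}+2\la\scrN_{2,4}$, after which rescaling works. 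The paper instead uses $x\mapsto x+\tfrac13(xy^3+yxy^2+y^2xy)+\tfrac{2\la'}{3}y^4$ and $y\mapsto y+\tfrac{\la'}{2}xyx$.
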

\begin{proof}
Since $f$ has algebraic length, $\Jac(f)\cong\Jac(g)$ for some $g$ appearing in \ref{cor:SemiFinalAlgLength}.   If $\ell=1,2$ there is nothing to prove.

\noindent
If $\ell=3$, then by \ref{cor:SemiFinalAlgLength} $g=x^3+xy^3+y^4p$ for some $p\in\C\llsq y\rrsq$. Either $p=0$, which falls within the statement, or $p\neq 0$ in which case by \ref{prop:l3finite} some high power of the radical belongs to the Jacobi ideal. Consequently, high degree terms in $g$ do not contribute to the relations, and so they can be set to zero. Thus we can assume that $g$ is a polynomial, so $p\in\C[y]$. \\
\noindent
If $\ell=4$ then by \ref{cor:SemiFinalAlgLength} $\Jac(f)\cong\Jac(g)$ where $g=x^3 + y^4 + xyxy^2+\scrO_6$.  Applying $x\mapsto x+ \frac{1}{3}y^3$ gives $\Jac(f)\cong\Jac(x^3 + y^4 + \scrN_{2,3}+\scrO_6)$. From here, \ref{thm!funnel4}\eqref{thm!funnel4B} proves the result.\\
\noindent
If $\ell=5$ then by \ref{cor:SemiFinalAlgLength} $\Jac(f)\cong\Jac(x^3 + xyxy + y^5+\scrO_6)$.  Applying \ref{lem:Ell5necksmall} gives $\Jac(f)\cong\Jac(x^3 + \scrN_{2,2} + y^5+\scrO_6)$, and from there \ref{thm!funnel5} proves the result.\\
\noindent
If $\ell=6$ after \ref{cor:SemiFinalAlgLength} and rescaling $\Jac(f)\cong\Jac(x^3 + y^4 + \tfrac{1}{3}(xy)^3+ \uplambda'(xy^2)^2+ \scrO_7)$ where still $\uplambda'\neq 0$.  Applying the automorphism $x\mapsto x + \tfrac{1}{3}(xy^3+yxy^2+y^2xy) + \tfrac{2\la'}{3}y^4$ and $y\mapsto y + \tfrac{\la'}{2}xyx$ shows that $\Jac(f)\cong\Jac(x^3 + y^4+\scrN_{3,3}+2\la'\scrN_{2,4}+\scrO_6)$ where $2\la' \neq 0$.  After renaming the scalar $2\la'$ to be $\la$, from here \ref{thm!funnel6} proves the result.

\noindent
(1) This is clear from the list of possible $g$, since $x^m, xy^n$ and $y^n$ are (up to scalar) necklace polynomials.\\
(2) The case $\ell=1$ is clear, whilst the case $\ell=2$ is \cite[6.19]{BW3}.  For $\ell=3$, simply combine \ref{lem:l3infinite} (dimension is infinite when $p=0$) with \ref{prop:l3finite} (finite when $p\neq0$). In all the remaining cases $\ell=4,5,6$, the corresponding $\Jac(f)$ is finite dimensional by \ref{prop:l4finite}, \ref{prop:l5finite} and \ref{prop:l6finite}. 
\end{proof}

\begin{remark}\label{rem:AlmostconverseAlgLength}
Note again, like in \ref{thm:ReductionPrecise}, that \ref{cor:NecklaceSummary} only claims the implication in one direction.  The converse, namely \emph{every} $g$ stated in the list has algebraic length, will shortly be obtained by appealing to geometric arguments; see \ref{rem:converseAlgLength} below.
\end{remark}

\section{The Main Results}\label{sec:MAIN}
 
The previous sections combine in \ref{cor:NecklaceSummary} to show that an arbitrary $\Jac(f)$, where $f$ has an algebraic length, can be manipulated into the form $\Jac(f)\cong\Jac(g)$ where $g$ is a finite linear combination of necklace polynomials.  By \cite{BW3} such Jacobi algebras are known to be the noncommutative deformation algebra of some rational curve in a CY $3$-fold. From this observation, and recent advances in contractibility of such curves from our companion paper \cite{BW4}, the classification of flops finally comes into view.
 
\subsection{Intrinsic Contraction Algebras}\label{subsec:IntrinsMain}
We first recall the following.
\begin{defin}\label{defin:flop}
Let $g\colon \scrX\to \Spec \scrR$ be a crepant projective birational morphism, where $\scrX$ is smooth, $(\scrR,\m)$ is a complete local \textnormal{cDV} singularity, and we assume in addition that the contraction is simple, namely after taking reduced scheme structure $g^{-1}(\m)\cong\mathbb{P}^1$. 
\begin{enumerate}
\item If $\scrR$ is isolated, then $g$ is called a smooth flopping contraction.  The flop of $g$ is $g^+\colon \scrX^+\to \Spec\scrR$ where for every line bundle $\scrL=\scrO_\scrX(D)$ on $\scrX$ such that $-D$ is $g$-nef, the proper transform of $D$ is $\mathbb{Q}$-Cartier, and $g^+$-nef. 
\item If $\scrR$ is non-isolated, then $g$ is called a crepant divisor-to-curve contraction.
\end{enumerate}
For generic $g\in \scrR$, then in the pullback diagram
\[
\begin{tikzpicture}
\node (A) at (0,0) {$\scrY$};
\node (B) at (2,0) {$\scrX$};
\node (a) at (0,-1) {$\Spec\scrR/g$};
\node (b) at (2,-1) {$\Spec\scrR$};
\draw[->] (A)--(B);
\draw[->] (A)--(a);
\draw[->] (a)--(b);
\draw[->] (B)--(b);
\end{tikzpicture}
\]
the morphism $\scrY\to\Spec\scrR/g$ is a partial resolution of an ADE surface singularity, and thus $\scrY$ is dominated by the minimal resolution \cite{Pagoda}.   Consequently, the curve $\mathbb{P}^1$ in $\scrY$ corresponds to some node in some ADE Dynkin diagram; the Koll\'ar length $\ell$ of $g^{-1}(\m)\cong\mathbb{P}^1$ is equal to the number associated to this node in \eqref{eqn:allDynkin}.
\end{defin} 
The following is the main result of this paper, from which all else follows.
\begin{thm}\label{thm:Summary}
If $\mathrm{B}$ is a $\mathbb{C}$-algebra, then the following statements are equivalent. 
\begin{enumerate}
\item $\mathrm{B}\cong\Jac(f)$ for some element $f\in\ringtwo$ that has an algebraic length $\ell$.
\item $\mathrm{B}\cong\Jac(g)$ for some element $g\in\ringtwo$ in the list in \ref{cor:NecklaceSummary}.
\item $\mathrm{B}\cong\CA$ where $\CA$ is the contraction algebra of a smooth $3$-fold flop, or a smooth crepant divisor-to-curve contraction, with Koll\'ar length $\ell$.
\end{enumerate}
\end{thm}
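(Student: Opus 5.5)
We prove the cycle of implications (1)$\Rightarrow$(2)$\Rightarrow$(3)$\Rightarrow$(1), tracking the number $\ell$ throughout. The first implication is already in hand: if $f$ has algebraic length $\ell$, then \ref{cor:NecklaceSummary} gives $\Jac(f)\cong\Jac(g)$ with $g$ in the list, in the row indexed by $\ell$.

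For (2)$\Rightarrow$(3), every $g$ in the list is a finite linear combination of necklace polynomials by \ref{cor:NecklaceSummary}\eqref{cor:NecklaceSummary1}. We then invoke \cite{BW3} to realise $\Jac(g)$ as the noncommutative deformation algebra of a rational curve $\Curve$ in a smooth CY $3$-fold, explicitly glued from two copies of $\mathbb{A}^3$ in the style of \eqref{eq!glue_intro}. The contractibility results of \cite{BW4} (see also \cite{BW5}) then show that $\Curve$ contracts locally to a complete local cDV $\Spec\scrR$. This gives a crepant contraction $\scrX\to\Spec\scrR$ whose contraction algebra is $\Jac(g)$.

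The contraction is a flop when $\scrR$ is isolated and a divisor-to-curve contraction otherwise. The dichotomy should match \ref{cor:NecklaceSummary}\eqref{cor:NecklaceSummary2}, since by \cite{DW1} finite dimensionality of the contraction algebra is equivalent to $\scrR$ being isolated. It remains to identify the Koll\'ar length of the contraction with the row index $\ell$. We defer this to the end of the argument.

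For (3)$\Rightarrow$(1), let $\CA$ be the contraction algebra of a contraction with Koll\'ar length $\ell$. Then $\CA\cong\Jac(f)$ for some $f$ (see \S\ref{sec:differentiation}). We take a generic central element $c\in\scrR$ as in \ref{defin:flop}, so that $\scrY\to\Spec\scrR/c$ partially resolves an ADE singularity and extracts exactly the curve at a node labelled $\ell$. By \cite[(3.C)]{DW1}, the same argument used in the proof of \ref{lem:allSurfaceData}\eqref{lem:allSurfaceData2}, the image of $c$ gives $z\in\scrZ(\CA)$ with $\CA/\lcl z\rcl\cong e\Uppi e$ for the corresponding idempotent $e$. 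Hence $f$ has algebraic length $\ell$.

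The main obstacle is the length bookkeeping in (2)$\Rightarrow$(3). Suppose $g$ lies in row $\ell$ and the constructed contraction has Koll\'ar length $\ell'$. Applying (3)$\Rightarrow$(1) to this contraction shows that $g$ has algebraic length $\ell'$. By \ref{thm:ReductionPrecise}, which also shows the algebraic length is well defined, $g$ can only have the length of the row it belongs to, so $\ell'=\ell$. This sidesteps any direct geometric computation of the length. The remaining delicate point is whether the constructions of \cite{BW3, BW4} cover every family in the list, including the infinite-dimensional members. For those we would appeal to the divisor-to-curve versions of the contractibility results, which also give the crepant divisor-to-curve contractions.
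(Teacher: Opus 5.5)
\textbf{What matches the paper.} Your (1)$\Rightarrow$(2) and (3)$\Rightarrow$(1) agree with the paper's proof. The finite-dimensional part of (2)$\Rightarrow$(3) is in substance the paper's argument, which imports \cite[1.2]{BW5} for these specific necklace forms.

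\textbf{The gap.} The problem is the infinite-dimensional members of the list, and specifically $g=x^3+xy^3$ in row $3$. There is no ``divisor-to-curve version'' of the contractibility results in \cite{BW3,BW4,BW5} to appeal to. When $\Jac(g)$ is infinite dimensional the curve moves, and nothing in those papers says that a neighbourhood built from $g$ admits a crepant divisor-to-curve contraction with smooth total space. The paper stresses that even the analogues of Katz--Morrison and Donovan--Wemyss are open in this setting, and it believes the $\ell=3$ example to be unique. So as written, your argument produces no contraction realising $\Jac(x^3+xy^3)$. For the infinite-dimensional rows $\ell=1,2$ your conclusion is true, but only because those contractions were already constructed in \cite{BW3}, not because of a general theorem.

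\textbf{How the paper fills it.} It uses an explicit construction, \ref{ex:l3div} and \ref{E6divcurveMain}. One blows up the reduced singular curve of the $cE_6$ hypersurface $x^5-3x^2yz+xy^3+z^3-t^2=0$, and checks chart by chart that the blowup is smooth and crepant with a length-$3$ fibre over the origin. The contraction algebra is then identified without being computed:
\begin{enumerate}
\item by (3)$\Rightarrow$(1) it has algebraic length $3$;
\item it is infinite dimensional by \cite{DW2};
\item by (1)$\Rightarrow$(2) and \ref{cor:NecklaceSummary}\eqref{cor:NecklaceSummary2}, the only such algebra is $\Jac(x^3+xy^3)$.
\end{enumerate}
This uses only implications proved independently, so it is not circular.

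\textbf{Your length bookkeeping.} This part is valid, and it is a genuinely different route from the paper's, which imports the geometric Koll\'ar length computation from \cite[\S2]{BW5}. One correction: the statement of \ref{thm:ReductionPrecise} alone does not give $\ell'=\ell$, because you do not yet know that $g$ has algebraic length $\ell$. What you actually need are the case-by-case results behind it, applied to $g$ itself. These are \ref{prop!caseE6only3}, \ref{prop:withinB}, \ref{prop!caseCfirstlength}, and parts (A), (B) of the proof of \ref{thm:ReductionPrecise}. The low-degree jet of $g$ places it in the case matching its row. For example, row $6$ gives $g\cong x^3+y^4+\scrO_6$, so $\ell'=6$ by \ref{prop:withinB}. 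With that repair, your argument forces the Koll\'ar length purely algebraically. This avoids any external geometric length computation once a contraction realising $\Jac(g)$ is in hand.
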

\begin{proof}
(1)$\Rightarrow$(2) is the statement of \ref{cor:NecklaceSummary}.\\
(2)$\Rightarrow$(3) Consider an arbitrary $g$ in the list in \ref{cor:NecklaceSummary}.  If $\ell=1,2$ it is already known (see e.g.\ \cite{BW3}) that $\Jac(g)$ is isomorphic to a contraction algebra corresponding to either a flop or crepant divisor-to-curve with Koll\'ar length equal to $\ell$.  If $\ell=3$ with $p=0$, the single geometric example constructed in \ref{E6divcurveMain} below shows that that $\Jac(g)$ is isomorphic to a contraction algebra of a crepant divisor-to-curve contraction with Koll\'ar length~$3$.  In all other cases, namely $\ell=3$ with $g=x^3+xy^3+p$ where $p\neq 0$, or any $\ell=4,5,6$, by \ref{cor:NecklaceSummary} all such $\Jac(g)$ are finite dimensional.  Further, since each such $g$ is a \emph{finite} sum of necklace polynomials of the particular form in \ref{cor:NecklaceSummary}, we can directly import the main result of our companion paper \cite[1.2]{BW5} (which was entirely motivated by these particular forms!) to conclude that each $\Jac(g)$ is isomorphic to a contraction algebra for a smooth $3$-fold flop. Furthermore, \cite[\S2]{BW5} computes that these have Koll\'ar length equal to~$\ell$.\\
(3)$\Rightarrow$(1) That all contraction algebras have algebraic length is already known, and is implicit in \cite[(3.C)]{DW1}. To spell out the proof, consider a generic element $g\in\scrR$, which by Reid's general elephant \cite{Pagoda} satisfies $\scrR/g$ is ADE.  This $g$ induces a central element in the NCCR and thus its quotient the contraction algebra $\CA$, which we also denote by $g$.  The proof of \cite[(3.C)]{DW1}, which does not need $\scrR$ to be isolated since $g$ is generic, then shows that $\CA/g\cong e\Uppi e$, and thus $\CA$ has algebraic length.
\end{proof}

\begin{remark}\label{rem:converseAlgLength}
As a consequence of \ref{thm:Summary}(2)$\Rightarrow$(1), it follows that \emph{every} $g$ in the list in \ref{thm:Summary} has an algebraic length.  This is somewhat remarkable, given that the proof does not explicitly write down the central element which realises the needed $e\Uppi e$.  In general, the central elements are very complicated; for context, the very easiest case within $\ell=3$ are the elements $g_{6,n}$ in \cite[\S7.3]{BW3}.
\end{remark}

\begin{cor}\label{cor:classifyAcon}
The following statements hold.
\begin{enumerate}
\item\label{cor:classifyAcon1} Contraction algebras of smooth $3$-fold flops are equivalently characterised, up to isomorphism as the following.
\begin{enumerate}
\item Those $\Jac(f)$ where $f$ has an algebraic length, and $\dim_\mathbb{C}\Jac(f)<\infty$.
\item Those $\Jac(g)$ where $g$ is one of the following
\[
\renewcommand{\arraystretch}{1.3}
\medmath{\begin{array}{c@{\hspace{20pt}}l@{\hspace{2pt}}l@{\hspace{20pt}}l@{\hspace{2pt}}}
 1&x^2 + y^n && n\geq 2 \\
2&x^2y+ y^{2n}+\upvarepsilon y^{2m-1} && 2n-1\geq m\geq 2  \\
3&x^3 + xy^3 + y^n + y^{n+1}q& &q\in\C[y],\,n\geq 4  \\
4&x^3 + y^4+\scrN_{2,3}+\sum_{(i,j)\in\scrT_4}\upalpha_{i,j}\scrN_{i,j}  && \\
5&x^3+\scrN_{2,2}+y^5+xy^5p+y^6q  && p,q\in\C[y]\\
6&x^3 + y^4+\scrN_{3,3}+\uplambda\scrN_{2,4}+\sum_{(i,j)\in\scrT_6}\upalpha_{i,j}\scrN_{i,j} && \uplambda\neq 0\\
\end{array}}
\renewcommand{\arraystretch}{1}
\]
where the length indexes the rows, $\upvarepsilon\in\{0,1\}$, $\upalpha_*\in\C$ and $\scrT_4,\scrT_6$ are defined in \eqref{eqn:DefS4} and \eqref{eqn:DefS6}.
\end{enumerate}
\item\label{cor:classifyAcon2} Contraction algebras of smooth crepant divisor-to-curve contractions are equivalently characterised up to isomorphism as the following.
\begin{enumerate}
\item Those $\Jac(f)$ where $f$ has an algebraic length, and $\dim_\mathbb{C}\Jac(f)=\infty$.
\item Those $\Jac(g)$ where $g$ is one of the following
\[
\renewcommand{\arraystretch}{1.3}
\medmath{\begin{array}{c@{\hspace{20pt}}lll}
1& x^2 &  \\
2&x^2y + \upvarepsilon y^{2m-1} & m\geq2  \kern 135pt\\
3&x^3 + xy^3 & 
\end{array}\kern 33pt}
\renewcommand{\arraystretch}{1}
\]
where the length indexes the rows, and $\upvarepsilon\in\{0,1\}$.
\end{enumerate}
\end{enumerate}
\end{cor}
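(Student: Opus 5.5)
The plan is to deduce everything from \ref{thm:Summary}, \ref{cor:NecklaceSummary}, and the standard geometric dichotomy between flops and divisor-to-curve contractions. The key input is that the contraction algebra of $g\colon\scrX\to\Spec\scrR$ is finite dimensional exactly when $\scrR$ is isolated. This is the Donovan--Wemyss characterisation: $\dim_\C\CA<\infty$ if and only if the curve contracts without contracting a divisor. This separates the two classes in \ref{thm:Summary}(3). Once it is in hand, (1)(a) and (2)(a) follow directly from \ref{thm:Summary}(1)$\Leftrightarrow$(3), split by the dimension of $\Jac(f)$. The dimension is an isomorphism invariant, so the split is compatible with the equivalence.

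For the explicit lists (1)(b) and (2)(b), I would use \ref{thm:Summary}(2)$\Leftrightarrow$(3) together with \ref{cor:NecklaceSummary}\eqref{cor:NecklaceSummary2}, which tells exactly which members of the necklace list have infinite dimensional Jacobi algebra. These are $\upvarepsilon_1=0$ for $\ell=1$, $\upvarepsilon_2=0$ for $\ell=2$, and $p=0$ for $\ell=3$. Removing these from the list in \ref{cor:NecklaceSummary} leaves the following cases.
\begin{itemize}
\item For $\ell=1$: $x^2+y^n$.
\item For $\ell=2$: $x^2y+y^{2n}+\upvarepsilon y^{2m-1}$.
\item For $\ell=3$: $x^3+xy^3+y^4p$ with $p\neq0$ a polynomial. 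Writing $n-4=\ord p$, this becomes $x^3+xy^3+cy^n+y^{n+1}q$ with $c\neq 0$.
\item All of the $\ell=4,5,6$ rows.
\end{itemize}
To reach the stated form for $\ell=3$, I would normalise $c=1$ using the scaling automorphism exactly as in the end of the proof of \ref{thm:TypeEGeneric}, choosing $a^3=ab^3=cb^n$. This only rescales $q$, and by \ref{lem!simauto} the Jacobi algebra is unchanged up to isomorphism.

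The infinite dimensional members that remain are $x^2$, $x^2y+\upvarepsilon_3y^{2m-1}$ and $x^3+xy^3$, which is (2)(b). In each direction I would note that every listed $g$ does lie in \ref{cor:NecklaceSummary}, so \ref{thm:Summary} realises it geometrically with the correct dimension type. Conversely, every contraction algebra of the relevant type is isomorphic to some $\Jac(g)$ in the necklace list with matching dimension, hence to a member of the restricted list.

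The main obstacle is the finiteness dichotomy itself, and specifically the divisor-to-curve direction. One needs that an isolated cDV base forces $\dim\CA<\infty$ (standard, from \cite{DW1}), and that a non-isolated base forces $\dim\CA=\infty$. For the second direction I would argue that $\CA$ is then supported along the non-isolated singular locus, so the noncommutative deformations are unobstructed along a curve. Alternatively I would cite that finite dimensionality of $\CA$ implies the stable category is Hom-finite, which forces an isolated singularity. A smaller point is checking the boundary case $m$ versus $2n-1$ when $\upvarepsilon_2=0$ for $\ell=2$. Here I would simply inherit the parametrisation from \cite{BW3} as recorded in \ref{cor:NecklaceSummary}.
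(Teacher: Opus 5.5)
Your proposal is correct and matches the paper's proof. You restrict the equivalences of \ref{thm:Summary} to the finite, respectively infinite, dimensional algebras using the Donovan--Wemyss dichotomy, which the paper simply cites from \cite[1.4]{DW2}, so no separate argument for the divisor-to-curve direction is needed. You then read off the sublists using \ref{cor:NecklaceSummary}\eqref{cor:NecklaceSummary2}, and in the $\ell=3$ case you rescale so that the lowest term of $y^4p$ becomes $y^n$.
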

\begin{proof}
(1) Contraction algebras of flops are characterised as the subset of contraction algebras which are finite dimensional \cite[1.4]{DW2}.  Given (1)$\Leftrightarrow$(2)$\Leftrightarrow$(3) in \ref{thm:Summary}, taking the subset throughout which are finite dimensional (using \ref{cor:NecklaceSummary}\eqref{cor:NecklaceSummary2} to identify the relevant subset within the given list) then proves the statement.  In that analysis, note that for $\ell=3$, when $p\neq 0$ we can scale so that the first non-zero term in $y^4p$ is a pure power of~$y$, and thus replace $y^4p$ with $y^{n}+y^{n+1}q$ for some (possibly zero) $q\in\C[y]$, where $n\geq 4$.\\ 
(2) Similarly, this follows by taking the subset of (1)$\Leftrightarrow$(2)$\Leftrightarrow$(3)  in \ref{thm:Summary} which are infinite dimensional, using \ref{cor:NecklaceSummary}\eqref{cor:NecklaceSummary2} to know which in the list are infinite dimensional.
\end{proof}

\begin{cor}\label{rem:KMok}
If $f\in\ringtwo$ has algebraic length, then there exists a central element $z$ for which $\Jac(f)/\lcl z\rcl$ is one of the six algebras in \eqref{eqn:6keyalgebras}.
\end{cor}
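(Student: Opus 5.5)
The plan is to go through geometry, using \ref{thm:Summary}, since there is no direct handle on the central elements of a general $\Jac(f)$ (compare \ref{rem:converseAlgLength}). First, since $f$ has algebraic length $\ell$, \ref{thm:Summary}(1)$\Rightarrow$(3) gives $\Jac(f)\cong\CA$, where $\CA$ is the contraction algebra of a smooth $3$-fold flop or of a smooth crepant divisor-to-curve contraction $\scrX\to\Spec\scrR$ with Koll\'ar length $\ell$. Second, I will cut by a generic hyperplane. By Reid's general elephant \cite{Pagoda}, a generic $g\in\scrR$ makes $\scrR/g$ an ADE singularity, and $\scrY\to\Spec\scrR/g$ extracts a single curve, corresponding to a vertex $v$ of some Dynkin diagram with label $\ell$. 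As in the proof of \ref{thm:Summary}(3)$\Rightarrow$(1), $g$ induces a central element $z$ of $\CA$ with $\CA/\lcl z\rcl\cong e_v\Uppi e_v$ by \cite[(3.C)]{DW1}. Transporting $z$ across the isomorphism produces the required central element of $\Jac(f)$.

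What remains is to show that, for generic $g$, the pair (diagram, $v$) is exactly one of the six marked vertices of \ref{def!6algebras}. This is the Katz--Morrison theorem \cite{KM}, which says that for a simple flopping contraction of length $\ell$ the generic hyperplane section is of type $A_n$, $D_4$, $E_6$, $E_7$, $E_8$ with the vertex as marked. The correct $E_8$ vertex for $\ell=5$ and $\ell=6$ is forced by the label. For $\ell=1$ the ADE surface is $A_n$ with any vertex, but $e\Uppi e$ at any vertex of $A_n$ is $\C[z]/z^2$-type data, that is $\Upgamma_A$ after the quotient is taken. Applying \cite{KM}, the quotient is therefore one of $\Upgamma_A,\Upgamma_{D,4},\Upgamma_{E,6},\Upgamma_{E,7},\Upgamma_{E,8,5},\Upgamma_{E,8,6}$.

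The main obstacle is the divisor-to-curve case, because Katz--Morrison is stated for isolated singularities (flops). Here \ref{cor:classifyAcon}(2) narrows the infinite dimensional options to lengths $1,2,3$ with the explicit forms $x^2$, $x^2y+\upvarepsilon y^{2m-1}$ and $x^3+xy^3$. The $x^2$ case is immediate: $y$ is central and $\Jac(x^2)/(y)\cong\C[x]/x^2$, which matches $\Upgamma_A$.

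For the remaining forms I would try two routes. The first is to exhibit central elements directly, namely the known $\ell=2$ central element of \cite[1.4]{BW3} and, for $x^3+xy^3$, a candidate such as $y^3$ (which commutes with $x$ modulo $\dcyc f$, as in \ref{lem:l3infinite}), and then compare with the presentations in \eqref{eqn:6keyalgebras}. The second is to use that the divisor-to-curve examples deform, as in \ref{E6divcurveIntro}, to flops of the same length; the length-$\ell$ vertex type is then inherited from the generic fibre by Katz--Morrison. I expect checking the $\Upgamma_{E,6}$ isomorphism for $x^3+xy^3$ to be the step needing real computation, via the monomial basis \eqref{eqn:E6full} cut down by $y^3$.
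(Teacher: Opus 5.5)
Your overall architecture matches the paper's: Theorem~\ref{thm:Summary}, then Katz--Morrison with \cite[(3.C)]{DW1} for flops, and the list in \ref{cor:classifyAcon}\eqref{cor:classifyAcon2} for divisor-to-curve contractions. The genuine gap is the length~$3$ divisor-to-curve case, where your candidate $y^3$ does not work.

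It is central in $\Jac(x^3+xy^3)$, since it equals $-3x^2$ there. But the quotient is the graded algebra $\C\langle x,y\rangle/(x^2,\,y^3,\,xy^2+yxy+y^2x)$. Eliminating $z=-x-y$ instead gives $\Upgamma_{E,6}\cong\C\langle x,y\rangle/(x^2,\,y^3,\,xyx+xy^2+yxy+y^2x)$. Both algebras have dimension $12$ and the same radical filtration $[1,3,6,9,11,12]$, so the count via \eqref{eqn:E6full} that you propose cannot tell them apart. They are nevertheless not isomorphic. Both are graded and generated in degree one, so an isomorphism would induce a linear substitution carrying relations to relations. The degree~$2$ relation forces $x\mapsto ax$. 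Writing $y\mapsto b_1x+b_2y$, the image of $y^3$ is $b_1^2b_2\,xyx$ modulo the target relations, and $xyx\neq0$ there, so $b_1=0$. But then $xyx+xy^2+yxy+y^2x\mapsto a^2b_2\,xyx\neq 0$, a contradiction.

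The missing idea is that $xyx+yx^2$ is also central. This holds because $x^2$ is central and $xyxy=yxyx$ in $\Jac(x^3+xy^3)$ (this is $\sfg_3$ in \ref{lem:l3infinite}). The paper takes the non-homogeneous element $z=x^2+xyx+yx^2$, identifies the quotient with \cite[(7.D)]{BW3}, and invokes \cite[7.7]{BW3}. Similarly, at $\ell=2$ you need the quotient to be specifically $\Upgamma_{D,4}$, not merely some length~$2$ algebra; the paper gets this from \cite[7.7]{BW3} with $z=x^2$.

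Your fallback route does not work as stated. Katz--Morrison concerns isolated singularities, and in \ref{ex:l3div} the nearby fibres are transverse $A_1$, not length~$3$ flops, so there is nothing to inherit. A geometric repair in your spirit would be to compute the general elephant of that example directly. Cutting $x^5-3x^2yz+xy^3+z^3-t^2$ by a generic $x=\upalpha y+\upbeta z+\upgamma t$ gives $-t^2+z^3+\upalpha y^4+\hdots$, which is of type $E_6$. Its only vertex labelled $3$ is the marked one. So the generic-element form of \cite[(3.C)]{DW1} used in \ref{thm:Summary}, together with \ref{E6divcurveMain}, yields the required central element.

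There are also two smaller slips.
\begin{itemize}
\item For $\ell=1$, Katz--Morrison gives $A_1$, not $A_n$. Your justification is false in general: at an interior vertex $v$ of $A_n$, $e_v\Uppi e_v$ is a truncated polynomial ring of dimension $\min(v,n+1-v)$, whereas $\Upgamma_A\cong\C$.
\item $\Jac(x^2)/\lcl y\rcl\cong\C$, not $\C[x]/x^2$, because $\dcyc_x(x^2)=2x$.
\end{itemize}
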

\begin{proof}
By \ref{thm:Summary} every such $\Jac(f)$ is the contraction algebra of either a flop, or a crepant divisor-to-curve contraction.  When it is a flop, we can appeal to Katz--Morrison \cite{KM} to say that the flop generically slices to one of the six marked curves illustrated in \ref{def!6algebras}.  But then, by \cite[(3.C)]{DW1}, this generic slice induces a central element $z$ for which $\Jac(f)/\lcl z\rcl$ is isomorphic one of the six algebras in the list in \ref{cor:classifyAcon}\eqref{cor:classifyAcon1}.

When it is a divisor-to-curve contraction, by \ref{cor:classifyAcon}\eqref{cor:classifyAcon2} there are very few options. In the first line, $y$ is clearly central and $\Jac(f)/\lcl y\rcl\cong\C$, which is the first algebra in \eqref{eqn:6keyalgebras}. In the second line, by \cite[7.7]{BW3} $x^2$ is central and $\Jac(f)/\lcl x^2\rcl$ is isomorphic to the second algebra in \eqref{eqn:6keyalgebras}. In the third line, $z=x^2+xyx+yx^2$ is central, and 
%Note that $x^2$ by itself does not work, by \cite[7.8]{BW2}} $x^2$ is clearly central in $\Jac(f)\cong\tfrac{\ringtwo}{\lcl x^2+y^3, xy^2+yxy+y^2x\rcl}$ since it clearly commutes with both generators $x$ and $y$.  Thus $xyxy-yxyx\equiv [x,\dcyc_y f]\in \lcl \dcyc f\rcl$, so exactly as in \cite[7.6]{BW2}  $xyx+yx^2$ is also central in $\Jac(f)$.  Thus the sum $z=x^2+xyx+yx^2$ is central.
the quotient 
\begin{equation}
\Jac(f)/\lcl z\rcl \cong \frac{\ringtwo}{\lcl x^2+y^3, xy^2+yxy+y^2x, x^2+xyx+yx^2\rcl}\label{eqn:E6previous}
\end{equation}
is precisely \cite[(7.D)]{BW3} with $\uplambda=1$, $\upmu=-1$ after rescaling $y\mapsto-y$.  It was shown in \cite[7.7]{BW3}  that \cite[(7.D)]{BW3} (for any choice of $\uplambda\neq 0$ and $\upmu\neq 0$), and thus \eqref{eqn:E6previous}, is isomorphic to the third algebra in \eqref{eqn:6keyalgebras}.  
\end{proof}
 
\subsection{The Classification of Flops}\label{subsec:classflops}

Leading up to the following, consider the following polynomials in $\C[a^{\pm1},z]$ defined as
\begin{equation}\label{def:Qs}
\renewcommand{\arraystretch}{1.5}
\medmath{\begin{array}{ll}
Q_3\colonequals \upalpha_{n+1}a^{-(n-1)}+\upalpha_{n+2}a^{-n}z^{1}+\hdots+\upalpha_{\kx}a^{2-\kx}z^{\kx-1-n} \\
Q_4\colonequals\sum_{(i,j)\in\scrT_4}\upalpha_{i,j}a^{2-j}z^{i+j-1} \\
Q_5\colonequals  \sum_{k\geq 6}(\upalpha_k a^{3-k} + \upbeta_k a^{2-k})  z^{k-1}  \\
Q_6\colonequals\sum_{(i,j)\in\scrT_6}\upalpha_{i,j}a^{2-j}z^{i+j-1}
\end{array}}
\renewcommand{\arraystretch}{1}
\end{equation}
for some choice of finitely many scalars $\upalpha_*,\upbeta_*\in\C$, where $\scrT_4,\scrT_6$ are from \eqref{eqn:DefS4} and \eqref{eqn:DefS6}.  

\begin{cor}\label{cor:flopsMain}
Let $\scrX\to\scrX_{\con}$ be a smooth \tf\ flopping contraction. 
Then the completion of $\scrX$ along $\Curve$ is isomorphic to the completion of the variety defined by gluing two copies of $\mathbb{A}^3$ via the rule
\[
(a,z,y) \stackrel{a\neq 0}{\longleftrightarrow} (a^{-1}, \,a^{-1}z, \,\,a^{3}y + \partial E)
\]
along the curve $y=z=0$, for some $\partial E\in\mathbb{C}[a^{\pm1},z]$ in the following list
\[
\renewcommand{\arraystretch}{1.5}
\medmath{\begin{array}{c@{\hspace{30pt}}r@{\hspace{2pt}}l@{\hspace{2pt}}c@{\hspace{2pt}}l@{\hspace{2pt}}c@{\hspace{2pt}}c@{\hspace{2pt}}c@{\hspace{20pt}}l@{\hspace{2pt}}l@{\hspace{2pt}}l@{\hspace{2pt}}c@{\hspace{2pt}}}
1 &a^2z &+& a^{2-n}z^{n-1}&&&&&
n\geq 2\\
2 &az^2 &+& a^{2-2n}z^{2n-1} &+& \upvarepsilon a^{3-2m}z^{2m-2}  &&&
 2n-1\geq m\geq 2,\, \upvarepsilon\in\{0,1\}\\
3&a^2z^2 &+& a^{-1}z^3 &+& a^{2-n}z^{n-1}&+&z^nQ_{3} & n\geq 4  \\ 
4& a^2z^2 &+& a^{-2}z^3 &+& a^{-1}z^4 &+& Q_4  &  \\ 
5&a^2z^2 &+& \phantom{a^{-2}}z^3 &+& a^{-3}z^4 &+& Q_5 &\\
6&a^2z^2 &+&  a^{-2}z^3 &+& (a^{-1} + \uplambda a^{-2})z^5  &+& Q_6  & \uplambda\in\C^* \\
\end{array}}
\renewcommand{\arraystretch}{1}
\]
where $Q_{3},Q_4,Q_5,Q_6\in\mathbb{C}[a^{\pm1},z]$ are polynomials in $z$ of the form described in \eqref{def:Qs}.  Conversely, every member of the above list, for every choice of coefficients in $Q_{3},Q_4,Q_5,Q_6$, describes a flopping neighbourhood of the given length. 
\end{cor}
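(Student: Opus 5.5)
The plan is to reduce the geometric statement to the algebraic classification in \ref{cor:classifyAcon}\eqref{cor:classifyAcon1}, using the Donovan--Wemyss bijection together with the explicit geometric realisation of necklace-polynomial potentials from the companion paper \cite{BW5}.

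First, take a smooth flopping contraction $\scrX\to\scrX_{\con}$ and complete along $\Curve$, so that we are in the setting $\scrX\to\Spec\scrR$ with $\scrR$ complete local, as in \S\ref{sec:BaseCategory}. Its contraction algebra $\CA$ is finite dimensional. By \ref{thm:Summary}(3)$\Rightarrow$(2) together with \ref{cor:classifyAcon}\eqref{cor:classifyAcon1}, $\CA\cong\Jac(g)$ for some $g$ in the displayed list of length $\ell$, where $\ell$ is Koll\'ar's length.

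Second, for each such $g$ we invoke \cite[1.2]{BW5}. For a finite sum of necklace polynomials $g=\sum c_{i,j}\scrN_{i,j}$ (and the pure powers $x^m$, $y^n$, $xy^n$), it constructs an explicit gluing of two copies of $\mathbb{A}^3$ via $(a,z,y)\mapsto(a^{-1},a^{-1}z,a^3y+\partial E)$. Here $\partial E$ is obtained by the dictionary sending $\scrN_{i,j}$ to a monomial $a^{2-j}z^{i+j-1}$, up to normalisation. The paper also shows that this gluing contracts to a flop whose contraction algebra is $\Jac(g)$. Concretely, we would:
\begin{itemize}
\item check the dictionary term by term on each row;
\item confirm that the lists \eqref{def:Qs} are exactly the images of the tails $\sum_{\scrT_4}$, $\sum_{\scrT_6}$, $y^{n+1}q$ and $xy^5p+y^6q$, for example $\scrN_{i,j}\mapsto a^{2-j}z^{i+j-1}$ matching $Q_4$ and $Q_6$;
\item treat the leading rows similarly: $x^3\mapsto a^2z^2$, $y^4\mapsto a^{-2}z^3$, $\scrN_{2,3}\mapsto a^{-1}z^4$, and likewise for the others.
\end{itemize}

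Third, we compare the two flops. Let $\scrX'$ be the flopping neighbourhood built from $g$. Then $\scrX$ and $\scrX'$ have isomorphic contraction algebras, both isomorphic to $\Jac(g)$. By the positive resolution of the Donovan--Wemyss conjecture \cite{JKM,KLW}, the bases $\scrR$ and $\scrR'$ are isomorphic. Since the two sides of the flop are recovered from $\scrR$ by blowing up the generators $\pm1$ of $\mathrm{Cl}(\scrR)$ \cite{Pinkham}, $\scrX$ is isomorphic either to $\scrX'$ or to its flop $\scrX'^+$.

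The main obstacle is removing this ambiguity with the flop side. The flop $\scrX'^+$ has contraction algebra $\Jac(g)^{\mathrm{op}}$, which is isomorphic to $\Jac(g)$ via reversal of monomials, since necklace polynomials are invariant under reversal up to cyclic equivalence. Hence $\scrX'^+$ is itself the gluing for the same $g$, so it lies in the list as well. We would verify this via \cite{BW5}, or directly by checking that reversal preserves each row's form.

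For the converse, every listed $\partial E$ is the image of an element $g$ of \ref{cor:classifyAcon}\eqref{cor:classifyAcon1}. By \cite[1.2]{BW5}, the corresponding gluing contracts to a smooth flopping contraction $\scrX\to\Spec\scrR$ with $\scrR$ complete local. Its length is $\ell$, by \cite[\S2]{BW5} and \ref{thm:Summary}.
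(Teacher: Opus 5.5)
Your overall route matches the paper's. You use \ref{cor:classifyAcon}\eqref{cor:classifyAcon1} to list the contraction algebras, realise each one by a gluing via \cite{BW5}, rule out any other flop with the Donovan--Wemyss conjecture \cite{JKM,KLW}, and read off the converse and the lengths from \cite{BW5}. Your dictionary $\scrN_{i,j}\mapsto a^{2-j}z^{i+j-1}$ does reproduce every row and \eqref{def:Qs}.

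The one place you go beyond the paper is deciding whether $\scrX\cong\scrX'$ or $\scrX\cong\scrX^{\prime+}$, and that is where your argument fails. From $\CA(\scrX^{\prime+})\cong\Jac(g)^{\mathrm{op}}\cong\Jac(g)\cong\CA(\scrX')$ you cannot deduce that $\scrX^{\prime+}$ is the gluing for $g$. The only thing \cite{JKM,KLW} extracts from an isomorphism of contraction algebras is an isomorphism of bases, and $\scrX'$ and $\scrX^{\prime+}$ already share the base $\scrR'$, so the inference is circular. In fact the two sides of a single-curve flop always have isomorphic contraction algebras, so no contraction-algebra argument can separate them. Checking that reversal preserves the shape of each row likewise says nothing about the geometry of $\scrX^{\prime+}$.

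The missing input is geometric. Since $\scrR'$ is cDV it has multiplicity two, so $\scrR'\cong\C\llsq u,x,y,z\rrsq/(u^2-F)$ with $F\in\C\llsq x,y,z\rrsq$. The covering involution $\iota\colon u\mapsto -u$ acts as $-1$ on $\mathrm{Cl}(\scrR')\cong\Z$. Indeed, for any Weil divisor $D$, the divisor $D+\iota^*D$ is the pullback of a divisor on the factorial ring $\C\llsq x,y,z\rrsq$, hence principal. So $\iota$ lifts to an isomorphism between the blowups at the two generators of $\mathrm{Cl}(\scrR')$. That is, $\scrX'\cong\scrX^{\prime+}$ as schemes, compatibly with $\iota$ rather than over $\Spec\scrR'$; this is Koll\'ar's description of simple flops \cite{Kollar}. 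The corollary only asserts an isomorphism of completions along $\Curve$, not one commuting with the contraction, so this closes the gap.

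The paper does not spell this point out either. It is absorbed into the assertion that \cite{JKM,KLW} gives a bijection between flops and contraction algebras, which already presupposes that the two sides of a flop are identified.
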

\begin{proof}
That all such glues realise flops is the main result of the companion paper \cite{BW5}, the content here is the converse, which we claim is now a simple consequence of \ref{cor:classifyAcon}\eqref{cor:classifyAcon1} together with the fact that the Donovan--Wemyss conjecture is true \cite{JKM, KLW}. Indeed, by \cite{JKM, KLW} smooth (irreducible) $3$-fold flops are classified by the isomorphism classes of their contraction algebras.  Thus, smooth $3$-fold flops are classified by the isomorphism classes of the Jacobi algebras in  \ref{cor:classifyAcon}\eqref{cor:classifyAcon1}.  By the main result of our companion paper \cite[1.1(1)]{BW5}, \emph{each} of the $\Jac(h)$ in  \ref{cor:classifyAcon}\eqref{cor:classifyAcon1} is the contraction algebra of a flop $\scrY\to\Spec\scrR$ whose completion along the flopping curve is obtained via the glueing $\partial E$ precisely as in the statement.  Since by \ref{cor:classifyAcon}\eqref{cor:classifyAcon1} there are no further contraction algebras other than those given directly above, there can be no further flops other than those in the statement.

The final statement claiming that each flop has the stated length is \cite[1.1(2)]{BW5}.
\end{proof}

\begin{remark}
As a point of warning, the curve $\Curve$ in \ref{cor:flopsMain} contracts Zariski locally when $\ell\leq 3$ (by luck, essentially), but it only contracts analytically locally when $\ell\geq 4$. 
\end{remark}

\begin{remark}\label{rem:EquationsMessy}
The above \ref{cor:flopsMain} gives the classification of flops in clear geometric form, but notwithstanding this, it would be convenient to have additional geometric information. A more explicit description of the base $\scrR$ would be helpful, albeit the equations are in general very complicated (see e.g.\ \cite[p.16]{BW4}).  It is both convenient and remarkable that the noncommutative perspective allows the existence of continuous parameters to be established easily in \S\ref{sec:Moduli} below, without needing to explicitly describe $\scrR$.
\end{remark}

\subsection{First Applications}\label{subsec:GVfinally}
The first consequence is to numerical curve counting invariants.  Below, we focus on the Gopakumar--Vafa (GV) invariants of flopping contractions as defined by Katz \cite{KatzGV}, which are a tuple of numbers $\GV{1},\hdots,\GV{\ell}$ where $\ell$ is the length.  It is well-known (see e.g.\ \cite{BKL}) how to translate between these and all other types of numerical invariants, such as Gromov--Witten (GW) invariants. 

The main result of this subsection, \ref{cor:allDims}, will use known lower bounds on the GV invariants from \cite{DW5}, together with the upper bounds on the dimension of the contraction algebras from \S\ref{sec:FD}, to simultaneously extract the precise value of the dimension, and also the precise value of the invariants.  For this argument to work requires only the following two additional results.  Since both proofs are ultimately just $A_\infty$-calculations that are very similar in spirit to \cite{BW2}, we postpone the proofs until Appendix~\ref{appendix}.

\begin{lemma}[{\S\ref{subsec:ProofOfGVl3}}]\label{lem:GV2forl3}
Consider those $\ell=3$ flops, given in \S\ref{subsec:IntrinsMain}--\ref{subsec:classflops} by the elements $x^3 + xy^3+ y^n + y^{n+1}q$ where $n>4$ and $q\in\C[y]$.  For these, $\GV{2}=n-1$.
\end{lemma}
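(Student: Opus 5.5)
The plan is to compute $\GV{2}$ through the noncommutative deformation theory of the flopping curve. The contraction algebra is $\CA\cong\Jac(g)$ with $g=x^3+xy^3+y^n+y^{n+1}q$, and $\GV{2}$ can be read off from the $A_\infty$-structure on $\Ext^*(\scrO_\Curve,\scrO_\Curve)$, in the spirit of \cite{BW2}.

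Concretely, I would use the characterisation of \cite{Toda, DW5}: $\GV{2}$ equals the dimension of the degree-two component in the generic deformation of the flop. Equivalently, $\GV{2}$ is the number of points, counted with multiplicity, in the length-two part of the moduli of $\CA$-modules arising under a generic one-parameter deformation. Under a generic deformation, $\CA$ breaks up as a product of local algebras, and the length-two pieces correspond to those $\mathbb{C}$-points where the deformed superpotential has a critical point whose Jacobi algebra has Koll\'ar length $2$ after slicing. So the first step is to write a generic deformation $g_t=g+t\,h$ with $h$ a general element of low degree, for example $h=\upbeta_1 x^2+\upbeta_2 xy+\upbeta_3 y^2$, or more naturally the deformation induced from the $D$-type slice.

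The second step is to localise. The $\ell=2$ contributions to $g_t$ come from critical loci near which $g_t$ has a corank-two $D$-type normal form $x^2y+\dots$ in local coordinates. Restricting to the abelianisation should simplify the count: the commutative critical points of $g_t^{\ab}=x^3+xy^3+y^n+\dots+t\,h$ near the origin, weighted by their Milnor-type multiplicities and sorted by local type, account for $\GV{1},\GV{2},\GV{3}$. One can then cross-check against the known constraint $\dim\CA=\sum_k k^2\,\GV{k}$ from \cite{Toda}.

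The third step, which I expect to be the main obstacle, is the actual count. The aim is to show that the $D$-type points contribute exactly $n-1$. I would try first to reduce via the Tschirnhausen substitution $x\mapsto x-\tfrac13 t\upbeta_1$ and then eliminate $x$, obtaining a one-variable problem in $y$ of the shape $y^3(\dots)+n y^{n-1}+\dots$. The number of relevant roots of this polynomial is controlled by the order $n-1$ of the $y^{n-1}$ term, which is where $n-1$ comes from. Alternatively, and as a safer route given the appendix framing, I would run the $A_\infty$ computation of \cite{BW2} directly. This means computing the relevant higher products $m_k$ on the $2$-dimensional $\Ext^1$, whose potential is $g$, and identifying $\GV{2}$ as the dimension of the quotient of $\CA$ by the ideal cutting out the $\ell\le1$ contributions. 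That dimension should be given by the monomials $y^{k}xy$, or equivalently the $y^kx$ column in the basis of \ref{prop:l3finite}, truncated at the appropriate height, yielding $n-1$.

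Finally, I would check consistency with the bounds. The lower bounds from \cite{DW5} (presumably $\GV{3}\ge1$ and $\GV{1}\ge 7$) together with $\dim\CA\le 4n+12$ from \ref{prop:l3finite} should fit: $\GV{1}+4\GV{2}+9\GV{3}=7+4(n-1)+9=4n+12$. This forces equality throughout once $\GV{2}\ge n-1$ is established, so it would in fact suffice to prove only the lower bound $\GV{2}\ge n-1$ by exhibiting $n-1$ independent length-two contributions.
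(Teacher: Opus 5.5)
There is a genuine gap. Neither of your two routes produces $n-1$, and the first one computes the wrong invariant.

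\textbf{The abelianisation route.} Commutative critical points of $g_t^{\ab}$ are exactly the one-dimensional representations of $\Jac(g_t)$, since cyclic derivatives evaluated on scalars are ordinary partial derivatives. So their total multiplicity is $\dim\CAab=\GV{1}$ and nothing more. Here $x^3+xy^3$ is already of type $E_7$ with Milnor number $7$. For weights $(\tfrac13,\tfrac29)$ the term $y^n$ has weighted degree $2n/9>1$ when $n\ge5$, so this count is $7$ for every $n$. No elimination of $x$ will extract $n-1$ from the $y^{n-1}$ term.

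The curves of class $2[\Curve]$ in a generic deformation are $(-1,-1)$-curves. They contribute two-dimensional simple modules (blocks $M_2(\C)$), which are noncommutative critical points that disappear on abelianising. There are also no ``$D$-type points'' to localise at. Separately, you would have to justify that an arbitrary perturbation $g+th$ of the potential comes from a generic deformation of the flop.

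\textbf{The $A_\infty$ route.} Your second route uses the $A_\infty$-structure on $\Ext^*(\scrO_\Curve,\scrO_\Curve)$, which only returns $\CA=\Jac(g)$ again. The claim that $\GV{2}$ is the dimension of a quotient of $\CA$ spanned by part of the basis in \ref{prop:l3finite} has no justification. That basis has $n+2$ elements $y^kxy$ and $n+4$ elements $y^kx$. Only the row $y^kxyx$ has $n-1$ elements, and nothing in your argument singles it out. Reading GV invariants directly off $\CA$ is exactly what the paper avoids.

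\textbf{What the paper does instead.} The missing idea is to change the sheaf. By Katz \cite{KatzGV}, $\GV{2}$ is the length of the commutative deformation space of $\scrO_{2\Curve}$, the non-split extension of $\scrO_\Curve$ by $\scrO_\Curve(-1)$. The paper computes this on the explicit model of \ref{cor:flopsMain}:
\begin{itemize}
\item it writes $\scrO_{2\Curve}=\scrO/\scrI_2$, with $\scrI_2$ glued from $(v_1,v_2^2)$ and $(w_1,w_2^2)$;
\item it takes an explicit locally free resolution of $\scrO_{2\Curve}$ and forms the \v{C}ech DG-model;
\item there $\Ext^1$ and $\Ext^2$ are one-dimensional, spanned by $\xsf$ and $\Xsf$;
\item an inductive computation with explicit homotopies, following a mod~$3$ pattern, shows $\msf_k(\xsf,\dots,\xsf)=0$ for $k\le n-2$, while $\msf_{n-1}(\xsf,\dots,\xsf)$ is a nonzero multiple of $\Xsf$.
\end{itemize}
This gives the prorepresenting object $\C\llsq t\rrsq/t^{n-1}$, hence $\GV{2}=n-1$.

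Your closing reduction is correct: with $\GV{1}=7$, $\GV{3}\ge1$ and $\dim\CA\le 4n+12$, Toda's formula shows that the lower bound $\GV{2}\ge n-1$ suffices. But it saves little. In this framework the lower bound is precisely the vanishing of $\msf_k$ for $k\le n-2$, which is the bulk of the work.
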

\begin{lemma}[{{\S\ref{subsec:ProofOfGVl4}}}]\label{lem:GV2forl4}
Consider $\ell=4$ flops, classified in \S\ref{subsec:IntrinsMain}--\ref{subsec:classflops}. For these, $\GV{2}\geq  5$.
\end{lemma}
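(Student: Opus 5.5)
We need to prove Lemma \ref{lem:GV2forl4}: for $\ell=4$ flops, $\GV{2}\geq 5$. The plan is an $A_\infty$ computation, in the spirit of \cite{BW2}, on the quotient of the contraction algebra by a central element. We work with $g=x^3+y^4+\scrN_{2,3}+\sum_{(i,j)\in\scrT_4}\upalpha_{i,j}\scrN_{i,j}$ from \ref{cor:classifyAcon}.

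The link between $\GV{2}$ and algebra comes from \cite{DW5} / \cite{Toda}-style formulas. For a flop of length $\ell$, $\dim_\C\CA=\sum_k k^2\GV{k}$. Moreover, the GV invariants of the generic $\ell=2$ deformations can be detected from the dimension of the quotient of $\CA$ by the ideal generated by the abelianisation kernel and related central data. Concretely, the plan uses the known lower-bound machinery of \cite{DW5}: $\GV{1}$ equals $\dim_\C\CAab$, the dimension of the abelianisation. In parallel, $\GV{2}$ is bounded below by the number of points, counted with multiplicity, where the curve splits in a generic one-parameter deformation into curves of class $2C$. Algebraically, this number should be read off from a suitable quotient of $\CA$ isomorphic, after deformation, to a product of $2\times2$ matrix algebras over local rings.

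The route I would commit to is a deformation of the potential. Deform $g$ to $g_t=g+t\,h$, with $h$ chosen so that $\Jac(g_t)$ for generic $t$ splits as a product of local pieces. Pieces with a two-dimensional simple module contribute to $\GV{2}$, each with multiplicity at least one. By the constructions of \cite{BW5}, a deformation of the potential gives a deformation of the flop, and GV invariants are deformation invariant. The natural choice is to deform the $x^3+y^4$ part so that the $2\times2$ representations of $\Jac(g_t)$ can be counted explicitly. These are the points of the representation scheme $\mathrm{Rep}_2$, modulo conjugation, where $\dcyc_x g_t$ and $\dcyc_y g_t$ vanish on a pair of $2\times 2$ matrices.

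The key steps, in order, are as follows.
\begin{enumerate}
\item Reduce to the low-degree stub $x^3+y^4+\scrN_{2,3}$, plus the $\scrT_4$ terms, and argue that these do not lower the count. This works by semicontinuity: the number of isolated simple $2$-dimensional representations, counted with multiplicity, is lower semicontinuous.
\item Compute the $2\times2$ simple representations of $\Jac$ of the stub, or of a generic deformation of it, by taking $x$ diagonal and $y$ generic.
\item Show there are at least $5$ such representations, counted with multiplicity.
\end{enumerate}
As a sanity check, $60=\GV{1}+4\GV{2}+9\GV{3}+16\GV{4}$, and \cite{DW5} gives $\GV{3}\geq2$ and $\GV{4}\geq1$. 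Combined with $\GV{1}=6$, this forces $4\GV{2}\leq 60-6-18-16=20$, so $\GV{2}\le5$. Hence the lower bound is exactly what closes \ref{cor:allDims}.

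The main obstacle is step (3), the explicit count of two-dimensional simple modules with multiplicity. If this is too messy, my fallback is purely algebraic. Compute $\dim_\C$ of $\CA/[\CA,\CA]\CA$ together with the next layer of the commutator filtration. The aim is to exhibit a surjection from $\CA$ onto a product of five copies of $\mathrm{M}_2$ of local algebras, via a direct standard-basis computation as in \S\ref{sec:FD}.
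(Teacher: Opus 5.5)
\textbf{There is a genuine gap: the objects you propose to count do not exist.} Every $\Jac(g_t)$ with $g_t\in\ring$ is either zero (if $g_t$ has a linear term) or a quotient of the complete local ring $\ring$ with radical $\n/\lcl\dcyc g_t\rcl$ (see \S\ref{subsec:RadicalsandDim}). So its only simple module is the one-dimensional module $\C$. No deformation $g_t=g+th$ inside $\ring$ can make $\Jac(g_t)$ split into local pieces or acquire a $2$-dimensional simple module. The count in your steps (2)--(3) is therefore identically zero, and the inequality ``$\GV{2}\geq$ number of $2$-dimensional simple modules'' is vacuous.

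Your fallback fails for the same reason. $\CA$ is local, so a surjection $\CA\twoheadrightarrow\mathrm{M}_2(R)$ would produce a $2$-dimensional simple $\CA$-module, which cannot exist.

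Nor does \cite{BW5} supply the deformation you need. The flops produced there are glued as in \ref{cor:flopsMain}, with $\partial E$ divisible by $z$. Varying the coefficients of the potential therefore gives a family of flops in which $\Curve=\{y=z=0\}$ persists, with local contraction algebras $\Jac(g_t)$ and constant GV invariants. You would still have to compute $\GV{2}$ on some member, which is the original problem. The Toda-type splitting into copies of $\mathrm{M}_k(\C)$ only happens for a genuine deformation of the $3$-fold that breaks $\Curve$ into curves of class $2\Curve$. That deformation is not visible inside $\ring$. Realising it through global potentials in $\freering$ would need a correspondence theorem you have not provided. In any case, step (3), the actual count, is left undone.

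Your sanity check also only gives the upper bound $\GV{2}\le 5$, using $\dim\CA\le 60$ from \ref{prop:l4finite}. The equality $\dim\CA=60$ is an output of \ref{cor:allDims}, which itself relies on this lemma.

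\textbf{The missing idea is to compute $\GV{2}$ directly, following Katz.} $\GV{2}$ is the length of the commutative deformation space of the sheaf $\scrO_{2\Curve}$, the middle term of the non-split extension of $\scrO_\Curve$ by $\scrO_\Curve(-1)$. The paper proceeds as follows.
\begin{enumerate}
\item Using the explicit glue of \ref{cor:flopsMain}, it writes $\scrO_{2\Curve}=\scrO/\scrI_2$, where $\scrI_2$ is glued from $(v_1,v_2^2)$ and $(w_1,w_2^2)$.
\item It writes down an explicit locally free resolution of $\scrO_{2\Curve}$.
\item It works in the \v{C}ech DG model for $\Ext^*(\scrO_{2\Curve},\scrO_{2\Curve})$. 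There it exhibits bases $\xsf,\ysf$ of $\Ext^1\cong\C^2$ and $\Xsf,\Ysf$ of $\Ext^2\cong\C^2$; these dimensions come from \cite{DW5} and CY duality.
\item By direct composition it checks that $\msf_2(\xsf,\xsf)=0$, $\msf_2(\xsf,\ysf)=\msf_2(\ysf,\xsf)=\Ysf$ and $\msf_2(\ysf,\ysf)=\Xsf$.
\end{enumerate}
Dualising, the noncommutative deformation algebra of $\scrO_{2\Curve}$ is $\ring/\lcl x^2+r_3,\,xy+yx+s_3\rcl$ with $r_3,s_3\in\n^3$. Its abelianisation already has dimension $5$ modulo $\m^4$, which gives $\GV{2}\geq 5$. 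Only the quadratic part of the $A_\infty$-structure is needed, which is why the argument yields a lower bound rather than an equality.
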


\begin{cor}\label{cor:allDims}
Let $\scrX\to\Spec\scrR$ be a smooth $3$-fold flop as in \ref{cor:flopsMain}, with associated contraction algebra $\CA$.  Then $\dim_\mathbb{C}\CA$, and the \textnormal{GV} invariants, are the following.
\[
\medmath{
\begin{array}{l@{\hspace{20pt}}l@{\hspace{20pt}}l@{\hspace{10pt}}l@{\hspace{10pt}}l@{\hspace{10pt}}}
%\toprule
\ell & \dim_\C\CA & \textnormal{GV invariants}&\\[2pt]
1 & n-1 & n-1 &&\\[3pt]
2&2m+4n-4 & 2m,n-1 & \mbox{if }\upvarepsilon=1 \mbox{ and } n\geq m\\
2& 6n-3& 2n+1,n-1 & \mbox{if }\upvarepsilon=1 \mbox{ and } n<m\\
2& 6n-3& 2n+1,n-1 & \mbox{if }\upvarepsilon=0 & \\[3pt]
3& 27 & 6,3,1 & \mbox{if }n=4 & \\
3& 4n+12 & 7,n-1,1 &\mbox{if }n>4\\[3pt]
4 & 60 & 6,5,2,1 \\[2pt]
5 & 125 & 8,6,4,2,1  \\[2pt]
6 & 200 & 6,6,4,3,2,1
\end{array}}
\]
\end{cor}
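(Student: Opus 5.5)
The plan is to squeeze the dimension between an upper bound and a lower bound. The upper bounds come from the weak standard basis computations of \S\ref{sec:FD}. The lower bounds come from the known relation between $\dim_\C\CA$ and the GV invariants, together with known lower bounds on the GV invariants themselves.

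\emph{Key identity.} For a simple smooth flop of length $\ell$ (Toda's formula, see e.g.\ \cite{DW5}),
\[
\dim_\C\CA=\sum_{j=1}^{\ell} j^2\,\GV{j}.
\]

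\emph{Lengths $1$ and $2$.} These are already known from \cite{BW3}, where both the dimensions and the GV invariants were computed, so I would simply cite them.

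\emph{Upper bounds.} For $\ell\geq 3$, \ref{prop:l3finite}, \ref{prop:l3finite4}, \ref{prop:l4finite}, \ref{prop:l5finite} and \ref{prop:l6finite} give
\[
\dim_\C\CA\leq 4n+12,\ 27,\ 60,\ 125,\ 200
\]
respectively.

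\emph{Lower bounds on the GV invariants.} I would take these from \cite{DW5}. For a flop of length $\ell$ whose generic slice is the Katz--Morrison vertex, the GV invariants dominate those of the "minimal" generic deformation. This should give the bounds
\begin{align*}
\ell=3&: (\GV{1},\GV{2},\GV{3})\geq(6,3,1),\\
\ell=4&: (\GV{1},\dots,\GV{4})\geq(6,?,2,1),\\
\ell=5&: (\GV{1},\dots,\GV{5})\geq(8,6,4,2,1),\\
\ell=6&: (\GV{1},\dots,\GV{6})\geq(6,6,4,3,2,1).
\end{align*}
Also $\GV{1}\ge 7$ when $\ell=3$ and $n>4$. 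I am least sure of this last bound; if it is not directly available, I would instead fix $\GV{1}$ from the dimension count once $\GV{2}$, $\GV{3}$ are pinned down.

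\emph{Closing the squeeze.} Inserting the lower bounds into the key identity gives lower bounds on $\dim_\C\CA$:
\begin{align*}
\ell=6&: 6+24+36+48+50+36=200,\\
\ell=5&: 8+24+36+32+25=125,\\
\ell=3,\ n=4&: 6+12+9=27.
\end{align*}
Each of these matches the corresponding upper bound. Hence both bounds are equalities, and every GV inequality must be an equality as well.

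\emph{Length $4$.} The missing ingredient here is $\GV{2}$, which is what \ref{lem:GV2forl4} provides: $\GV{2}\geq 5$. The bound is then $6+20+18+16=60$, again matching the upper bound.

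\emph{Length $3$ with $n>4$.} Here \ref{lem:GV2forl3} gives $\GV{2}=n-1$ exactly. With $\GV{3}\geq 1$ and $\GV{1}\geq 7$ this yields $7+4(n-1)+9=4n+12$, matching the upper bound of \ref{prop:l3finite}.

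\emph{Main obstacle.} The delicate point is securing the correct lower bounds on $\GV{1}$ and $\GV{3}$ in length $3$, and the full lower-bound tuple in length $4$, from \cite{DW5}. That is, I need to check that the generic-slice argument really gives these specific minimal vectors for every vertex labelled $3$ or $4$, not just for the Katz--Morrison ones. Since \ref{rem:KMok} (via \cite{KM}) guarantees that the generic slice is the Katz--Morrison vertex, I would first try to apply the \cite{DW5} bounds only to those six vertices, which is what should make the argument go through.
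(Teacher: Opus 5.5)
Your squeeze is the paper's strategy: bound $\dim_\C\CA$ above by \S\ref{sec:FD} and below via Toda's formula. Your treatment of $\ell=4$ and $\ell=6$ goes through as written. The gap is in the lower bounds on $\GV{1}$, which you attribute to \cite{DW5}.

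\emph{Length $5$.} For $\ell=5$, the bound the paper quotes from \cite[7.7]{DW5} is $(7,6,4,2,1)$, not $(8,6,4,2,1)$. These lower bounds are not the invariants of the minimal flop. With $(7,6,4,2,1)$, Toda's formula gives only $\dim_\C\CA\geq 124$, one short of the upper bound $125$.

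\emph{Length $3$.} Here the paper takes only $\GV{3}\geq 1$ from \cite{DW5}. Moreover, $\GV{1}\geq 7$ for $n>4$ cannot come from any bound depending only on the generic slice: the $n=4$ flops have the same generic slice and $\GV{1}=6$. Your fallback also fails. With $\GV{2}=n-1$, $\GV{3}\geq 1$ and $\dim_\C\CA\leq 4n+12$, Toda's formula yields only $\GV{3}=1$ and $\GV{1}\leq 7$. An upper bound on the dimension can never push $\GV{1}$ up to $7$.

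\emph{The missing ingredient.} You need the identity $\GV{1}=\dim_\C\CAab$, since the abelianisation is the commutative deformation algebra of $\Curve$. Combine it with the classification \ref{cor:classifyAcon}\eqref{cor:classifyAcon1}; the paper stresses that this is precisely what makes $\GV{1}$ computable. Abelianising $\Jac(g)$ gives the Milnor algebra of the commutative potential obtained from $g$, in which each $\scrN_{i,j}$ becomes a nonzero multiple of $x^iy^j$.

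For $\ell=5$ this potential is $x^3+\tfrac32x^2y^2+y^5+\cdots$. Every term beyond $x^3+y^5$ has weighted degree $>1$ for the weights of $x^3+y^5$. Hence $\GV{1}$ equals the Milnor number of $x^3+y^5$, namely $8$, and the sum becomes $8+24+36+32+25=125$.

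For $\ell=3$ the principal part is $x^3+y^4$ when $n=4$ and $x^3+xy^3$ when $n>4$. This gives $\GV{1}=6$ and $\GV{1}=7$ respectively. Take $\GV{2}$ from \ref{lem:GV2forl3} and $\GV{3}\geq 1$ from \cite{DW5}. The squeezes $6+12+9=27$ and $7+4(n-1)+9=4n+12$ then close as you intended.
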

\begin{proof}
The proof will make extensive use of Toda's formula \cite{TodaGV}, namely 
\begin{equation}
\dim_\mathbb{C}\CA=\sum_{i=1}^\ell i^2\cdot \GV{i},\label{eqn:Toda}
\end{equation}
and furthermore $\GV{1}=\dim_\mathbb{C}\CAab$ where $\CAab$ is the abelianisation of $\CA$.  The main point below is that we can now use the classification of contraction algebras in \ref{cor:classifyAcon}\eqref{cor:classifyAcon1} to calculate the abelianisation.

For $\ell=1,2$ the answer is already known (see e.g.\ \cite{Okke}, or \cite[Figure 4]{BW2}).   The case $\ell=3$ splits into two subcases.  

\noindent
(a) When $\upalpha_3\neq 0$, equivalently $\ord(p)=0$, then it is easy to check that $\GV{1}=\dim_\C\CAab=6$. Further $\GV{2}=3$ by \ref{lem:GV2forl3}, $\dim_\C\CA\leq 27$ by \ref{prop:l3finite4}, and $\GV{3}\geq 1$ by \cite[7.7]{DW5}. Hence Toda's formula \eqref{eqn:Toda} forces $\dim_\C\CA=27$, and $\GV{3}=1$.   

\noindent
(b) When $\upalpha_3= 0$, equivalently $\ord(p)\geq 1$. Now $\GV{1}=\dim_\C\CAab=7$. Further $\GV{2}=\ord(p)+3$ by \ref{lem:GV2forl3}, $\dim_\C\CA\leq 4\ord(p)+28$ by \ref{prop:l3finite}, and $\GV{3}\geq 1$ by \cite[7.7]{DW5}. Hence \eqref{eqn:Toda} forces $\dim_\C\CA=4\ord(p)+28=4n+12$, and $\GV{3}=1$. 

For $\ell=4$, by \cite[7.7]{DW5} the theoretical lower bounds for the GV invariants are $6,4,2,1$, and thus by Toda's formula \eqref{eqn:Toda} $\dim_\C\CA\geq 56$.  Using \ref{cor:classifyAcon}\eqref{cor:classifyAcon1} it is easy to see that $\GV{1}=\dim_\C\CAab=6$, and by \ref{lem:GV2forl4} $\GV{2}\geq 5$.  We know from \ref{prop:l4finite} that $\dim_\C\CA\leq 60$.  If either $\GV{3}$ or $\GV{4}$ is greater than the theoretical minimum, this contradicts Toda's formula.  This then fixes $\GV{1}$, $\GV{2}$, $\GV{3}$ and $\GV{4}$, and also forces $\dim_\C\CA=60$.

For $\ell=5$,  by \cite[7.7]{DW5} the theoretical lower bounds for the GV invariants are $7,6,4,2,1$, and thus by Toda's formula \eqref{eqn:Toda} $\dim_\C\CA\geq 124$.  Using \ref{cor:classifyAcon}\eqref{cor:classifyAcon1} it is easy to see that $\GV{1}=\dim_\C\CAab=8$, and thus $\dim_\C\CA\geq 125$.  But by \ref{prop:l5finite} $\dim_\C\CA\leq 125$, thus equality must hold, and the GV numbers must be as in the statement.

For $\ell=6$,  by \cite[7.7]{DW5} the theoretical lower bounds for the GV invariants are $6,6,4,3,2,1$, and thus by \eqref{eqn:Toda} $\dim_\C\CA\geq 200$. But by \ref{prop:l6finite} $\dim_\C\CA\leq 200$, thus equality must hold, and further the GV numbers must be as stated.
\end{proof}

Note that the precise dimension of the contraction algebra is obtained in \ref{cor:allDims} without ever verifying that the $\{\mathsf{g}_i\}$ constructed in \S\ref{subsec:l3bound}--\ref{subsec:l6bound} are in fact a standard basis, which is an extremely cumbersome verification.  In fact, these are a standard basis, but this follows only as a consequence of the geometry in \ref{cor:allDims}, not as an input.
\begin{cor}\label{cor:IsStandardBasis!}
Fix any $\ell\in\{3,4,5,6\}$ and any potential $g\in\ring$ of length~$\ell$ as specified in the respective \ref{prop:l3finite} or \ref{prop:l3finite4} for $\ell=3$, or \ref{prop:l4finite}, \ref{prop:l5finite}  \ref{prop:l6finite} for $\ell=4,5,6$.
Consider the set $\scrG=\{\sfg_1,\hdots,\sfg_{n_\ell}\}$ constructed by the respective proofs in {\rm \S}\ref{subsec:l3bound}--{\rm \S}\ref{subsec:l6bound}, where $n_3=7$, $n_4 = 5$, $n_5=n_6=11$.
Then $\scrG$ is a standard basis of $\lcl\dcyc g\rcl$.
\end{cor}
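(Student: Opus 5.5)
The plan is a dimension-counting argument. No further reductions of \Spolys{} are needed. Write $J=\lcl\dcyc g\rcl$ and $\LT(J)$ for its ideal of leading terms. Each proof in \S\ref{subsec:l3bound}--\S\ref{subsec:l6bound} exhibits $\sfg_1,\hdots,\sfg_{n_\ell}\in J$ whose leading monomials form the set $\scrS$. Since $\lcl\scrS\rcl\subseteq\LT(J)$, for every $k$ we have
\[
\dim_\C \ring/(J+\n^k)=\dim_\C\ring/(\LT(J)+\n^k)\le\dim_\C\ring/(\lcl\scrS\rcl+\n^k).
\]
The right-hand side stabilises at the number $N_\ell$ of monomials not divisible by any element of $\scrS$. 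These are $4n+12$ (or $27$ when $n=4$), $60$, $125$ and $200$ for the respective lengths.

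The key input is \ref{cor:allDims}, combined with \ref{thm:Summary}/\ref{cor:classifyAcon}. The given $g$ has algebraic length and $\dim\Jac(g)<\infty$, so $\Jac(g)$ is the contraction algebra of a flop of length $\ell$. Then \ref{cor:allDims} gives $\dim_\C\Jac(g)=N_\ell$ exactly. For $\ell=3$ we must match parameters: with $n=\ord(p)+4$ the value is $4n+12$ when $n>4$, and $27$ when $n=4$, which agrees with the bounds in \ref{prop:l3finite} and \ref{prop:l3finite4}. Hence equality holds in the displayed inequality for $k$ large. The same propositions give $\n^{k}\subset J$ and $\n^k\subset\lcl\scrS\rcl$ for large $k$, so equality holds for $\Jac(g)$ itself.

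Next, deduce $\LT(J)=\lcl\scrS\rcl$. Both are monomial ideals containing $\n^k$ for large $k$ (for $\LT(J)$ this follows since $\n^k\subset J$), with $\lcl\scrS\rcl\subseteq\LT(J)$. Their quotients have the same finite dimension, and each quotient has as basis the monomials outside the ideal. The monomials outside $\LT(J)$ form a subset of those outside $\lcl\scrS\rcl$, and the two sets have equal cardinality, so they coincide. Therefore $\LM(\scrG)=\scrS$ generates $\LM(J)$, and $\scrG$ is a weak standard basis. It is a standard basis once $\scrS$ is a minimal generating set, which is checked by inspecting the lists: no element of $\scrS$ divides another.

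The main obstacle is bookkeeping rather than mathematics. First, the precise monomial counts $N_\ell$ must be correct; they are stated in the propositions, but for $\ell=5$ the count was only asserted as elementary and should be spot-checked. Second, the $\sfg_i$ were constructed only to finite precision ($+\scrO_k$). We must make sure they are genuine elements of $J$ whose leading terms are as stated. For this I would argue that each $\sfg_i$ was built as an explicit combination of $\dcyc g$ together with reductions, and that the reductions converge in the $\n$-adic topology. This places each $\sfg_i$ in the closed ideal $J$ with the leading monomial unchanged.
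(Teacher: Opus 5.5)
Your proposal is correct and is essentially the paper's argument: the inclusion $\lcl\scrS\rcl\subseteq\LT(J)$, combined with $\dim_\C\ring/\LT(J)=\dim_\C\Jac(g)=\dim_\C\ring/\lcl\scrS\rcl$ (the last equality coming from \ref{cor:allDims}), forces $\scrS$ to generate $\LT(J)$. Your extra check that no element of $\scrS$ divides another is required by the paper's definition of a (non-weak) standard basis; the paper leaves it implicit, and it does hold for each of the listed sets.
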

\begin{proof}
The results in \S\ref{subsec:l3bound}--\S\ref{subsec:l6bound} show that $\scrG\subset \lcl\dcyc g\rcl$ and compute $\dim_\C\ring/\lcl\scrS\rcl$, where $\scrS=\LT(\scrG)$. If $\scrS$ does not generate $\LT\lcl\dcyc g\rcl$, then we obtain the following contradiction:
\[
\dim_\C\Jac(g) \buildrel{\ref{sec:stdbasis}\eqref{GI3} }\over{=} \dim_\C\ring/\LT\lcl\dcyc g\rcl<\dim_\C\ring/\lcl\scrS\rcl\buildrel{\ref{cor:allDims}}\over{=}\dim_\C\Jac(g).
\]
Thus $\scrS$ generates $\LT\lcl\dcyc g\rcl$, which is to say that $\scrG$ is a standard basis.  \end{proof}

\subsection{Divisor-To-Curve Contractions} 
The following example realises $\Jac(x^3+xy^3)$ as a contraction algebra of a crepant divisor-to-curve contraction.
If the analogue of the Donovan--Wemyss conjecture holds in this setting, then this is the only example of such an $\ell\geq 3$ crepant divisor-to-curve contraction.

\begin{example}\label{ex:l3div}
Set $\A^4=\Spec\C[x,y,z,t]$ and $f=x^5 - 3x^2yz + xy^3 + z^3 - t^2$.
Consider the variety $Y = \Spec R\subset\A^4$ where $R=\C[x,y,z,t]/(f)$.
Clearly $Y$ has a $c{\mathrm{E}}_6$ singularity at the origin, setting $y=0$.

The reduced singular locus $C=\Sing(Y)_{\red}$ is defined by $I_C=(t,u,v,w)$ where $u=xz-y^2$, $v=x^2y-z^2$, $w=x^3-yz$ (saturating the Jacobian ideal easily against $x$ to remove the embedded point at the origin). This is the image of the monomial normalisation
\[
\begin{array}{rl}
\upnu\colon&\A^1\longrightarrow\A^4 \\
&s\mapsto(s^3,s^4,s^5,0)
\end{array}
\textrm{ and }
C = \upnu(\A^1) = 
\left(\bigwedge^2M=0
\right)
\textrm{ for }
M=
\begin{pmatrix}
x&y&z\\y&z&x^2
\end{pmatrix}
\]
so is an irreducible curve, smooth away from a singular point at the origin. 
Any $3\times3$ minor of the Hessian of $f$ at $\upnu(s)$ that involves $\partial^2f/\partial t^2$ is a power of $s$, so the singularity of $Y$ along $C$ away from the origin is of transverse ${\mathrm{A}}_1$ type.

We compute the blow up $\phi\colon X = \Bl_C Y\rightarrow Y$ of $C\subset Y$ by introducing ratios of the given generators of $I_C$.
For the blowup chart $U_1=(t\ne0)$, the ideal
\[
\left(t\cdot\begin{pmatrix}a_2,-a_3,a_1\end{pmatrix} - \bigwedge^2 M\right) \subset R[a_1,a_2,a_3]
\]
is easily saturated against $t$ by setting 
\[
\begin{array}{rcl}
\upxi = -a_1a_2 + a_3^2 &\quad& \upeta = xa_1^2 + a_2a_3 \\
\upzeta = xa_1a_3 + a_2^2 &\quad& \uptau = -x^2a_2 + xa_1 + za_2
\end{array}
\]
and observing that the easy identities
\[
\begin{array}{rcl}
t^2\upxi = x(f + t^2) &\quad& t^2\upeta = y(f + t^2) \\
t^2\upzeta = z(f + t^2) &\quad& t\uptau = -f + t^2
\end{array}
\]
give $U_1 = \left\{x=\upxi, y=\upeta, z = \upzeta, t = \uptau\right\}
\cong \Spec\C[a_1,a_2,a_3]$. Thus $U_1=\A^3$ is a smooth patch with blowdown map
$\phi(a_1,a_2,a_3) = (\upxi,\upeta,\upzeta,\uptau)$ (substituting in $z=\upzeta$ and $x=\upxi$).

It is enough to compute one other chart, $U_2=(u\ne0)$.
Setting $b_1 = t/u$, $b_2 = (x^2y-z^2)/u$, $b_3=(x^3-yz)/u$ it is immediate that
\[
u(xb_2-yb_3+z) = 0,\quad
u^2(x - yb_1^2 + b_2b_3) = u^2yf,\quad
u^2(xb_1^2 - b_3^2 + b_2) = -xu^2f.
\]
The raw blowup ideal $\left(f, b_1u - t, b_2u - (x^2y-z^2), b_3u - (x^3-yz) \right)  \subset R[b_1,b_2,b_3]$
saturated against $u$ is therefore
\[
\left(ub_1 - t, xb_1^2 - b_3^2 + b_2, x - yb_1^2 + b_2b_3, xb_2-yb_3+z \right)  \subset R[b_1,b_2,b_3]
\]
from which we eliminate $z,t,b_2$ at once to see
\[
U_2 = \Spec\C[x,y,b_1,b_3]/(x - yb_1^2 - xb_1^2b_3 + b_3^3).
\]
This is clearly smooth, with curve above the origin $\phi^{-1}(0,0,0,0) = \Spec\C[b_1,b_3]/b_3^3$, which has length~3.

%It doesn't seem helpful to calculate that the exceptional divisor is
%\[
%ya^2 + xa^2c - x - c^3 = -y^3 + x^4 =
% -y^2 + yxc + x^3a^2 - x^2c^2 = 0.
%\]

Woking in $U_1$ and denoting the standard regular $3$-forms by
$\upomega_{Y}=\frac{1}{t}dx\wedge dy\wedge dz$ and $\upomega_{X}=da_1\wedge da_2 \wedge da_3$,
it follows at once that
\[
\phi^*\left(\upomega_{Y}\right) = \frac{1}{\phi^*(t)}\det\begin{pmatrix}a_2&a_1&-2a_3\\-3a_1^2a_2 + 2a_1a_3^2 & -a_1^3 + a_3 & 2a_1^2a_3 + a_2\\
-2a_1a_2a_3 + a_3^3 & -a_1^2a_3 + 2a_2 & -a_1^2a_2 + 3a_1a_3^2
\end{pmatrix} \upomega_{X} = \upomega_{X}
\]
so $\phi$ is crepant, and it is necessarily a divisorial contraction to resolve the generic point of $C\subset Y$.
(In fact, it is easy to see that $\phi^{-1}(C)\subset X$ is defined by $\phi^*(t)=0$.)
\end{example}

\begin{prop}\label{E6divcurveMain}
Consider $\scrX_{\con}\colonequals\Spec\scrR$ where $\scrR\colonequals\C\llsq x,y,z,t\rrsq/(f)$
with $f=x^5 - 3x^2yz + xy^3 + z^3 - t^2$.
\begin{enumerate}
\item\label{E6divcurveMain1}
$C=(\Sing \scrX_\con)_{\mathrm{red}}=(t, xz-y^2, x^2y-z^2,x^3-yz)$, a curve germ, with a singular point at the origin.
\item\label{E6divcurveMain2} Blowing up $C\subset \scrX_\con$ gives rise to a crepant $\ell=3$ divisor-to-curve contraction $\scrX\to\scrX_\con$ where $\scrX$ is smooth with $\P^1$ as the reduced fibre above the origin.
\item\label{E6divcurveMain3} The contraction algebra $\CA$ of $\scrX\to\scrX_\con$ is isomorphic to $\Jac(x^3+xy^3)$.
\end{enumerate}
\end{prop}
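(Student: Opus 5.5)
Parts (1) and (2) will come from Example \ref{ex:l3div} by passing to the completion at the origin, and part (3) from the classification in \ref{cor:classifyAcon}\eqref{cor:classifyAcon2}.

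For (1), the example shows that the reduced singular locus of $Y=\Spec\C[x,y,z,t]/(f)$ is the monomial curve $C=\upnu(\A^1)$, cut out by $(t,u,v,w)$. This description was obtained by saturating the Jacobian ideal against $x$ to remove the embedded point. Completion at the origin is flat, so it commutes with taking the singular locus and the radical. Hence $(\Sing\scrX_\con)_{\mathrm{red}}$ is the completion of $C$, namely the unibranch germ of $(s^3,s^4,s^5,0)$, which is singular at $0$.

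For (2), blowing up also commutes with flat base change, so $\scrX=\Bl_C\scrX_\con$ is the base change of $X\to Y$ along $\Spec\scrR\to\Spec R$. The example already gives the following facts:
\begin{itemize}
\item $X$ is covered by smooth charts $U_1$ and $U_2$ (plus the symmetric ones);
\item $\phi^*\upomega_Y=\upomega_X$ on $U_1$, so the map is crepant;
\item $\phi$ is divisorial, since it resolves the transverse $A_1$ locus;
\item the fibre over $0$ in $U_2$ is $\Spec\C[b_1,b_3]/b_3^3$.
\end{itemize}
I would still check that the remaining charts contribute nothing new over the origin, so that the reduced fibre is a single $\P^1$ (a line in the $b_1$-direction compactified by the $U_1$ chart). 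The length $3$ needs care, because Koll\'ar length is defined through a \emph{generic} hyperplane slice, not through the scheme-theoretic fibre. I would cut by a generic $g\in\scrR$ and check that $\scrR/g$ is $E_6$ (for example $g=y+\text{generic}$, giving $x^5-3x^2\cdot(\ldots)+z^3-t^2$ with $E_6$ leading terms $z^3+x^4$-type). The curve is then extracted at a node labelled $3$, which should agree with the multiplicity $3$ visible in $b_3^3$.

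For (3), the contraction algebra $\CA$ exists by \cite{DW1}, and $\CA\cong\Jac(f')$ for some $f'\in\ringtwo$. By \ref{thm:Summary}(3)$\Rightarrow$(1), $f'$ has algebraic length, and that length equals the Koll\'ar length $3$ found in (2). Since $\scrR$ is non-isolated, $\CA$ is infinite dimensional, by \cite[1.4]{DW2} or simply because $\scrX\to\scrX_\con$ is not a flop. Then \ref{cor:classifyAcon}\eqref{cor:classifyAcon2}, together with \ref{thm:ReductionPrecise}(2), leaves only the row $\ell=3$, namely $\Jac(x^3+xy^3)$.

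One point needs care here. In \ref{thm:Summary}, the realisation of $\ell=3$, $p=0$ is itself deduced from this proposition, so the (3)$\Rightarrow$(1) direction and \ref{cor:NecklaceSummary} must be used in a way that is independent of the present statement. That direction relies only on \cite[(3.C)]{DW1} and on the algebraic classification, so there is no circularity.

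The main obstacle is (2): making the length computation rigorous for the generic slice rather than reading it off from the nonreduced fibre. I would first try to show that $\scrR/g$ is an $E_6$ surface for a general linear $g$, and then locate the extracted curve on the $E_6$ diagram via the fundamental cycle.
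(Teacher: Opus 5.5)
Your proposal is correct and follows the paper's proof. Parts (1) and (2) come from Example \ref{ex:l3div} by passing to the completion, that is, the formal fibre of $\phi$ over the origin. Part (3) combines algebraic length $3$ from \ref{thm:Summary}, $\dim_\C\CA=\infty$ from \cite{DW2}, and the uniqueness in \ref{cor:classifyAcon}\eqref{cor:classifyAcon2}; your remark that this use is not circular is accurate.

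The length issue you flag as the main obstacle is not a real one. For any $g\in\m$, the fibre of $\scrX$ over the closed point coincides with that of the slice $\{g=0\}$. For generic $g$, that slice agrees with the minimal resolution $\widetilde S$ near the generic point of the curve, where Artin's identity $\m\scrO_{\widetilde S}=\scrO_{\widetilde S}(-Z)$ holds. So the multiplicity $3$ of $\Spec\C[b_1,b_3]/b_3^3$ is the fundamental-cycle coefficient, which justifies reading $\ell=3$ directly off the fibre, as the paper does.
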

\begin{proof}
Consider $\phi\colon X\rightarrow Y$ from \ref{ex:l3div}.
(1)~$\scrX_\con$ is the completion of $Y$ at the origin.
(2)~$\scrX$ is the formal fibre of $\phi\colon X\rightarrow Y$ above the origin.
(3)~By \eqref{E6divcurveMain2} $\scrX$ has Koll\'ar length~$3$, so  $\CA$ has algebraic length~$3$ by \ref{thm:Summary}. Further $\dim_\C\CA=\infty$ by \cite{DW2}, since the contraction is divisor-to-curve.  By \ref{cor:classifyAcon}\eqref{cor:classifyAcon2} there is, up to isomorphism, only one algebra with this property, namely $\Jac(x^3+xy^3)$. Hence  $\CA\cong\Jac(x^3+xy^3)$, as claimed.
\end{proof}

\begin{prop}\label{cor:NoLength456Main}
Smooth crepant divisor-to-curve contractions with $\ell\geq 4$ do not exist.
\end{prop}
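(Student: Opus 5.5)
The argument is by contradiction, following the logic sketched in the introduction. Suppose $g\colon\scrX\to\Spec\scrR$ is a smooth crepant divisor-to-curve contraction, as in \ref{defin:flop}(2), with Koll\'ar length $\ell\geq 4$. By \cite{DW1} it has an associated contraction algebra $\CA$. As noted in \S\ref{sec:differentiation}, this algebra is isomorphic to $\Jac(f)$ for some $f\in\ringtwo$.

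\textbf{Step 1: $\CA$ has algebraic length $\ell$.} Apply \ref{thm:Summary}, implication (3)$\Rightarrow$(1). The argument there takes a generic element of $\scrR$, which by Reid's general elephant \cite{Pagoda} cuts out an ADE surface. The induced central element $z$ satisfies $\CA/\lcl z\rcl\cong e\Uppi e$ by \cite[(3.C)]{DW1}. This does not need $\scrR$ to be isolated, since the slice is generic, so it applies to divisor-to-curve contractions. By the definition of Koll\'ar length, the vertex $e$ carries the number $\ell$ in \eqref{eqn:allDynkin}. Hence $f$ has algebraic length $\ell$, which is well defined by \ref{thm:ReductionPrecise}(1).

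\textbf{Step 2: $\CA$ is infinite dimensional.} Since $\scrR$ is non-isolated, \cite[1.4]{DW2} gives $\dim_\C\CA=\infty$. That result characterises the flops as exactly the contractions with finite dimensional contraction algebra.

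\textbf{Step 3: Contradiction.} Combine Steps 1 and 2 with \ref{cor:NecklaceSummary}\eqref{cor:NecklaceSummary2}, or equivalently \ref{cor:classifyAcon}\eqref{cor:classifyAcon2}. An $f$ with algebraic length and $\dim_\C\Jac(f)=\infty$ can only occur for $\ell\in\{1,2,3\}$. Every $\Jac(f)$ of length $4,5,6$ is finite dimensional, by \ref{prop:l4finite}, \ref{prop:l5finite} and \ref{prop:l6finite}. The lengths $\ell=4,5,6$ are therefore excluded. The values $\ell\geq 7$ do not occur at all, since the labels in \eqref{eqn:allDynkin} are at most $6$.

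\textbf{Main obstacle.} The only delicate point is Step 1 in the non-isolated setting. I need that the generic slice still yields a central element of $\CA$ with quotient $e\Uppi e$, and that the vertex label equals the Koll\'ar length. The plan is to cite the proof of \ref{thm:Summary}(3)$\Rightarrow$(1) directly, since it already covers this case and avoids any appeal to a Katz--Morrison analogue. Everything else is bookkeeping against results already established.
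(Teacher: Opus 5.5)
Your proposal is correct and follows the paper's proof essentially step for step. The paper also applies \ref{thm:Summary}(3)$\Rightarrow$(1) to get algebraic length $\ell\geq 4$, uses \cite{DW2} to get $\dim_\C\CA=\infty$ for a divisor-to-curve contraction, and concludes via \ref{cor:NecklaceSummary} that no infinite dimensional Jacobi algebra of length $\ell\geq 4$ exists.
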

\begin{proof}
Should such a contraction $\scrX\to\Spec\scrR$ exist, by \ref{thm:Summary}(3)$\Rightarrow$(1), $\CA\cong\Jac(f)$ where $f$ has algebraic length $\geq 4$.  Since the contraction is divisor-to-curve, $\CA$ is supported on the base curve, and so is infinite dimensional \cite{DW2}.  However, by \ref{cor:NecklaceSummary}, such an algebra does not exist.
\end{proof}

\begin{remark}
The example in \ref{E6divcurveMain} appears in the thesis of Curto \cite[p91]{Curto}, although there it is assumed that $\scrX\to\Spec \scrR$ is projective birational with isolated base $\scrR$.  It is the infinite dimensionality of the (noncommutative) contraction algebra which most readily illustrates this mistake; note that in \ref{E6divcurveMain}, $\dim_\C\CA=\infty$ but $\dim_\C\CAab=7$. See also \S\ref{subsec:physics} below.
\end{remark}

\begin{remark}\label{rem:Issue90s}
The result \cite[3.1]{WilsonSymp}, used in e.g.\ \cite{WilsonIII}, claims that every crepant divisor-to-curve contraction (i.e.\ primitive Type III contraction) contracts to a smooth curve, and further the exceptional locus is a conic bundle over this curve.  The example \ref{ex:l3div}--\ref{E6divcurveMain} shows that this is false.  However, we believe that when $\ell=1,2$ the claim in  \cite[3.1]{WilsonSymp} is true, so in those cases \cite{WilsonIII} remains valid.  This is to be expected: roughly speaking \cite{WilsonIII} claims that the GW invariants are determined by two numbers $(n_1,n_2)$, which is consistent with the fact $\ell\leq 2$.  We should expect the GW invariants when $\ell=3$ to be determined by three numbers. 
\end{remark}

\subsection{Relationship to Physics}\label{subsec:physics}
Typically the physics literature for rational curves (e.g.\ \cite{AspinwallKatz,AM,CM,Katz}) considers superpotentials viewed inside the commutative power series ring $\C\llsq x,y\rrsq$. Ferrari \cite[\S3.3.1]{Ferrari} did observe that ordering ambiguities arise, suggesting noncommutativity, but most physics-inspired predictions remain aimed squarely within commutative deformation theory. This subsection explains what these predictions were, why they are false as stated, whilst at the same time acknowledging that these predictions greatly influenced this paper.

The Intriligator--Wecht classification (ignoring `hat' geometries), summarised in \cite[\S6.2]{Curto}, is the following, where we have in some cases swapped the variables $x$ and $y$ to make the comparison below easier. 
\begin{equation}\label{eqn:physics}
\renewcommand{\arraystretch}{1.3}
\medmath{\begin{array}{lll}
A_k& \mathrm{Tr}(x^2+y^{k+1}) &k\geq 1\\
D_{k+2}&\mathrm{Tr}(x^2y+ y^{k+1})&k\geq 2 \\
E_6&\mathrm{Tr}(x^3+y^4) \\
E_7&\mathrm{Tr}(x^3+xy^3) \\
E_8&\mathrm{Tr}(x^3+y^5) 
\end{array}}
\renewcommand{\arraystretch}{1}
\end{equation}
The notation $\mathrm{Tr}(-)$ reminds us that the superpotential is commutative, but we will typically drop this below.  There was some speculation that the superpotentials in \eqref{eqn:physics} corresponded to some flopping geometries, and in fact \cite[Table 6.1]{Curto} even gives some predictions on which glues realise these flopping geometries.

There are three fundamental problems.  The first is that \eqref{eqn:physics} contains only five lines, whereas the classification of flops (via the length invariant) contains six. The second is that there is a unique example stated in Types $E_6$, $E_7$ and $E_8$ above, whereas after the classification of flops there are moduli in all these types (see \S\ref{sec:ExistModuli} below). The third, which is actually the most serious, is that by \cite{BW3} there are \emph{many} glues which realise the commutative potentials in \eqref{eqn:physics}, and these glues have radically different properties.  It is, for example, perfectly possible to construct two glues, each with commutative superpotential $x^3+y^4$, for which in one the curve contracts, and in the other it does not.

Ignoring these issues for the moment, with the classification of flops now known via contraction algebras (=noncommutative deformation theory) by \S\ref{subsec:IntrinsMain} and \S\ref{subsec:classflops}, taking the abelianisation allows us to fully determine all possible commutative deformation algebras for the sheaves $\scrO_\Curve$ arising in smooth $3$-fold flops.  This is straightforward: after commuting variables, $\scrN_{i,j}$ turns into $\frac{1}{i+j}\binom{i+j}{j} x^iy^j$, and thus viewed in $\C\llsq x,y\rrsq$ the noncommutative potentials of \ref{cor:classifyAcon}\eqref{cor:classifyAcon1} become (where in length four we now prefer \ref{thm!funnel4}\eqref{thm!funnel4A} over \ref{thm!funnel4}\eqref{thm!funnel4B})
\[
\renewcommand{\arraystretch}{1.4}
\medmath{\begin{array}{c@{\hspace{20pt}}l}
1& x^2 + y^n \\
2&x^2y+ y^{2n}+\upvarepsilon y^{2m-1} \\
3&x^3 + xy^3+ y^n+ y^{n+1}q  \\
4&x^3 + y^4 + \tfrac{10}{5}x^2y^3+\tfrac{15}{6}\upalpha_1x^2y^4+ \tfrac{20}{6}\upalpha_2 x^3y^3+ xy^7p +y^6q  \\
5&x^3+\tfrac{6}{4}x^2y^2+y^5+xy^5p+y^6q \\
6&x^3 + y^4+ \tfrac{20}{6}x^3y^3+\uplambda\tfrac{15}{6} x^2y^4+\sum_{(i,j)\in\scrT_6}\frac{\upalpha_{i,j}}{i+j}\binom{i+j}{j} x^iy^j 
\end{array}}
\renewcommand{\arraystretch}{1}
\]
The process of commuting variables opens out more changes in coordinates; for example in $\ell=5$ the $x^2y^2$ term can now be attacked using $x^3$, via $x\mapsto x-\tfrac{1}{3}y^2$. Doing this repeatedly in all cases, the above turn out to be isomorphic to
%% Type A obvious
%% Type D goes with the lowest degree x^power, and lowest is x^3 
%% l=3 then ff := x^3 + y^4 and f := x^3 + x*y^3 + y^4 are iso
%%		but for x^3 + x*y^3 + y^5 the xy^3 term dominates ( y to x+y, clears xy^3, with x2 terms which then clear via x3).
%% l=4 clear the x^2 terms using x^3. Clear y's using y^4. Easy to see can repeat
%% l=5 x--> x -  y^2   clears the x^2y^2 at the cost of xy^4 and y^6. As for l=3, the y^5 dominates the xy^4. Then peel off unit, I think.
%% l=6 use x^3 and y^4 to clear all the others. 
%% Dims of the ones below match the GV_1
\begin{equation}\label{eqn:reality}
\renewcommand{\arraystretch}{1.3}
\medmath{\begin{array}{c@{\hspace{20pt}}ll}
1&x^2 + y^n & n\geq 2\\
2&x^2y + y^{k+1} & k\geq 2 \\
\multirow{2}{*}{3}&x^3 + y^4&\textnormal{if }n=4\\[-3pt]& x^3 + xy^3&\textnormal{if }n>4 \\
4&x^3 + y^4  \\
5&x^3 + y^5 \\
6&x^3 + y^4 & \\
\end{array}}
\renewcommand{\arraystretch}{1}
\end{equation}
Comparing \eqref{eqn:physics} and \eqref{eqn:reality}, Type $A$ agrees, as does Type $D$, but there is far less correlation amongst the others. However, note that every commutative superpotential for flops is, up to isomorphism, contained in \eqref{eqn:reality}, and thus by inspection is contained within the list (somewhere) of the Intriligator--Wecht superpotentials in \eqref{eqn:physics}.  Further, all such Intriligator--Wecht superpotentials arise in this way, from some flop.

%-----------------------------------------------------------------------------------------
\section{Moduli for Exceptional Types}\label{sec:ExistModuli}

Flops with $\ell=1,2$ are classified by discrete parameters \cite[8.10]{BW3}.
By contrast, in this section we show that there are continuous parameters in all $\ell\geq 3$ cases.  By \ref{cor:classifyAcon}\eqref{cor:classifyAcon1} it suffices to show that the isomorphism classes of the (finite dimensional) Jacobi algebras with algebraic length $\ell\geq 3$ admit continuous parameters, and to achieve this requires us to determine when two such Jacobi algebras are not isomorphic.

%-----------------------------------------------------------------------------------------
\subsection{General Setup and Filtered Shirayanagi}\label{sec!modulisetup}
Below we will assume the existence of an algebra isomorphism $\phi\colon\Jac(g)\to\Jac(f)$, which is automatically an isomorphism of $\C$-vector spaces.  The standard basis of $\lcl\dcyc f\rcl$ determines a monomial $\fJ$-adic basis $\scrB$ of $\Jac(f)$. The isomorphism $\phi$ must therefore take the two generators of $\Jac(g)$ to a unique linear combinations of the elements in $\scrB$, namely
\begin{equation}\label{eq!mapeq}
\begin{array}{c@{\hspace{4pt}}c@{\hspace{4pt}}c@{\hspace{2pt}}c@{\hspace{2pt}}c@{\hspace{2pt}}c@{\hspace{2pt}}c@{\hspace{2pt}}c@{\hspace{2pt}}c@{\hspace{2pt}}c@{\hspace{2pt}}c@{\hspace{2pt}}c@{\hspace{2pt}}c@{\hspace{2pt}}c@{\hspace{2pt}}c@{\hspace{2pt}}c@{\hspace{2pt}}}
x&\mapsto&  a_1x &+& a_2y &+& a_3x^2 &+& a_4xy &+& a_5yx &+& a_6y^2 &+& \bigO_3 \\
y&\mapsto&  b_1x &+& b_2y &+& b_3x^2 &+& b_4xy &+& b_5yx &+& b_6y^2 &+& \bigO_3
\end{array}
\end{equation}
for $a_i,b_i\in\C$ satisfying
$\det\left(\begin{smallmatrix}a_1&a_2\\b_1&b_2\end{smallmatrix}\right)\not=0$, where as standard we have mildly abused notation and not written $+\lcl\dcyc g\rcl$ in the expressions on the left hand side, and not written $+\lcl\dcyc f\rcl$ on the right hand side. The two sums are taken over
the monomial $\C$-basis $\scrB$ of $\Jac(f)$, and below we will implicitly omit monomials not in $\scrB$.

\medskip

Our strategy to show that $\Jac(g)\ncong\Jac(f)$ relies on a  new `filtered' version of the Shirayanagi algorithm \cite{Shirayanagi}, which we now outline.

\begin{setup}
Suppose the existence of an isomorphism $\phi\colon\Jac(g)\rightarrow\Jac(f)$, which is necessarily described by \eqref{eq!mapeq} for some $a_i,b_i\in\C$.  
\begin{enumerate}
\item\label{shira1}
For any $k\geq 3$, consider $\phi_k$, which is defined to be the composition
\[
\phi_k\colon
\ring\buildrel{\nat}\over{\longrightarrow}
\Jac(g)\buildrel{\phi}\over{\longrightarrow}\Jac(f)\buildrel{\uppi_k}\over{\longrightarrow} A_k=\Jac(f)/\fJ^k
\]
where $\nat$ and $\uppi_k$ are the natural maps (see \S\ref{conventions} and \ref{not!jacqt}).
\item\label{shira2}
The $\fJ$-adic basis $\scrB$ of $\Jac(f)$ determines a monomial basis $\scrB_k$ of $A_k$, simply by omitting all monomials of degree $\ge k$. Now, on one hand $\phi_k(\dcyc_xg)=0=\phi_k(\dcyc_yg)$, since after applying $\nat$, both $\dcyc_xg$ and $\dcyc_yg$ map to zero. On the other hand $\dcyc_xg$ and $\dcyc_yg$ are a combination of the generators $x$ and $y$, and so applying the algebra homomorphism in \eqref{eq!mapeq} to this combination expresses $\phi_k(\dcyc_xg)$ and $\phi_k(\dcyc_yg)$ in terms of the basis $\scrB_k$, where the coefficients are expressions in $a_i$ and~$b_i$.  Combining, it follows that if the isomorphism $\phi$ exists, all the coefficients (which are expressions in $a_i$ and $b_i$) must be zero.
\end{enumerate}
\end{setup}

The algorithm starts with $k=3$ in \ref{lem:startShira} below, and at this step the first elementary constraint on $\phi$ is obtained (namely that in \eqref{eq!mapeq} $a_2=0$). We then simply increase $k$, one step at a time: crucially, since $\scrB$ is an $\fJ$-adic basis, the conditions determined in \eqref{shira2} for lower values of $k$ remain unchanged, while new ones arise at each step.
Thus repeating \eqref{shira2} for gradually increasing $k$ gives a (possibly infinite) sequence of conditions on the $a_i$ and $b_i$. For an arbitrarily-chosen isomorphism $\phi$ to exist, all these conditions must hold, thus if the conditions become inconsistent at some finite~$k$, no isomorphism can exist, and so $\Jac(g)\not\cong\Jac(f)$.  

As a bonus, since below we will always find a contradiction at some finite $k$, the above strategy only uses the initial (finite) portion of $\scrB$ whose leading terms are bounded by that fixed degree $k$. In particular, we never need to calculate a full standard basis.

The first part of this strategy applies to all $\ell\geq 3$ potentials, as follows.
\begin{lemma}\label{lem:startShira}
Given $f,g\in\ring$ with $f=x^3+\bigO_4$ and $g=x^3+\bigO_4$, suppose that there is an isomorphism $\phi\colon\Jac(g)\rightarrow\Jac(f)$.
Then the following statements hold.
\begin{enumerate}
\item\label{killyi}
$\Jac(f)$ has a $\fJ$-adic basis $\scrB=\{1,x,y,xy,yx,y^2\}\cup\scrB^{\ge3}$, where $\scrB^{\ge3}\subset\fJ^3$.
\item\label{killyii}
With respect to any $\fJ$-adic basis of $\Jac(f)$ of the form of $\scrB$ above, $\phi$ has the form
\begin{equation}\label{eq!killy}
\begin{array}{c@{\hspace{4pt}}c@{\hspace{4pt}}c@{\hspace{2pt}}c@{\hspace{2pt}}c@{\hspace{2pt}}c@{\hspace{2pt}}c@{\hspace{2pt}}c@{\hspace{2pt}}c@{\hspace{2pt}}}
x&\mapsto&  a_1x &+& 0       &+& \sum a_\msf\msf \\
y&\mapsto&  b_1x &+& b_2y &+& \sum b_\msf\msf
\end{array}
\end{equation}
for constants $a_\msf,b_\msf\in\C$ with $a_1b_2\not=0$, where each sum in \eqref{eq!killy} is taken over all $\msf\in\{xy,yx,y^2\}\cup\scrB^{\ge3}$.
\end{enumerate}
\end{lemma}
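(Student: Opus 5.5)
For part (1), I would argue directly from the standard basis of $I_3=(\dcyc f)+\m^3$, exactly as in \ref{prop:z1=0}. Since $f=x^3+\bigO_4$, we have $\dcyc_xf=3x^2+\bigO_3$ and $\dcyc_yf\in\n^3$. Hence the only leading monomial of degree $\le2$ in $\LM(\lcl\dcyc f\rcl)$ is $x^2$. To confirm that $xy$, $yx$, $y^2$ are not leading monomials, note that any element of $\lcl\dcyc f\rcl$ has degree-2 part a scalar multiple of $x^2$. By \S\ref{sec:stdbasis}\eqref{GI3}, the monomial $\n$-adic basis of $\Jac(f)$ then contains exactly the monomials of degree $\le2$ not divisible by $x^2$, namely $1,x,y,xy,yx,y^2$. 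All remaining basis monomials have degree $\ge3$, and so lie in $\fJ^3$. Finally, any other $\fJ$-adic basis of the stated form differs only in the choice of $\scrB^{\ge3}\subset\fJ^3$, which is harmless.

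For part (2), I would use the Shirayanagi setup with $k=3$. Write $\phi$ as in \eqref{eq!mapeq}, with $a_3=b_3=0$ since $x^2\notin\scrB$. The relation $\dcyc_xg=3x^2+\bigO_3$ lies in $\lcl\dcyc g\rcl$, so $\phi_3(\dcyc_xg)=0$ in $A_3=\polyringtwo/(x^2,\m^3)$. Now compute
\[
\phi_3(x^2)=(a_1x+a_2y)^2=a_1a_2(xy+yx)+a_2^2y^2 \quad\text{in } A_3,
\]
using $x^2=0$ there. The higher-order terms of $\phi(x)$ and $\phi(\bigO_3)$ all land in $\fJ^3$ and so vanish in $A_3$. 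Since $xy,yx,y^2$ belong to the basis $\scrB_3$, their coefficients must vanish. In particular $a_2^2=0$, so $a_2=0$.

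Finally, invertibility of $\phi$ forces the induced map on $\fJ/\fJ^2$ to be bijective. This gives $\det\left(\begin{smallmatrix}a_1&0\\b_1&b_2\end{smallmatrix}\right)=a_1b_2\neq0$. The remaining coefficients of $\phi(x)$ and $\phi(y)$ are collected as $\sum a_\msf\msf$ and $\sum b_\msf\msf$ over $\{xy,yx,y^2\}\cup\scrB^{\ge3}$.

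The only mildly delicate point is making sure no degree-$\le2$ contribution sneaks into $\phi_3(\dcyc_xg)$ from $\bigO_3$ terms of $\dcyc_xg$ or from quadratic terms of $\phi(x)$. This holds because $\phi(\fJ)\subset\fJ$, so $\phi(\fJ^3)\subset\fJ^3$. Quadratic terms of $\phi(x)$ enter $\phi(x)^2$ only in degree $\ge3$.
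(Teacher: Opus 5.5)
Your proposal is correct and takes the same approach as the paper. Part (1) reads off $x^2$ as the only leading monomial in degree $\le 2$, and part (2) expands $\phi(\dcyc_x g)$ modulo $\fJ^3$ to force $a_2=0$; the paper writes $\phi_2$ there but, like you, works in the basis $\scrB_3$, and your argument for $a_1b_2\neq0$ via the induced isomorphism on $\fJ/\fJ^2$ just makes explicit the determinant condition that the paper builds into \eqref{eq!mapeq}.
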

\begin{proof}
(1) The derivatives of $f$ are $\dcyc_xf = 3x^2 + \bigO_3$ and $\dcyc_yf\in\n^3$, so that $x^2$ is the only leading monomial
of the standard basis in degree~$\le2$.\\
(2) Since $\dcyc_xg = 3x^2 + \bigO_3$, applying the algebra homomorphism $\phi_2$ gives
\[
\phi_2(\dcyc_xg)=3(a_1x+a_2y)^2 \stackrel{\dcyc_xf}{\equiv} 3a_1a_2xy + 3a_1a_2yx + 3a_2^2y^2 
\]
where the right hand side is expressed in the basis $\scrB_3$. Thus
$a_1a_2=a_2^2=0$, so $a_2=0$. The condition $\det\left(\begin{smallmatrix}a_1&a_2\\b_1&b_2\end{smallmatrix}\right)\not=0$ under \eqref{eq!mapeq} then becomes $a_1b_2\neq 0$.
\end{proof}

%-----------------------------------------------------------------------------------------
\subsection{Moduli for Exceptional Types}\label{sec:Moduli}

We use the notation and strategy of \S\ref{sec!modulisetup} throughout the remainder. Recall from \S\ref{conventions} that the notation $h + \hot$ indicates that there may be unwritten terms beyond the terms of $h$ in the monomial order, which therefore cannot alter any coefficients of~$h$ even after reduction by a standard basis.

\begin{prop}[Moduli for $\ell=3$]\label{prop:moduliE65}
For any $a,b\in\C$, consider
\[
f = \tfrac{1}3x^3 + xy^3 + \tfrac{1}5y^5 + \tfrac{1}6ay^6
\quad\textrm{and}\quad
g = \tfrac{1}3x^3 + xy^3 + \tfrac{1}5y^5 + \tfrac{1}6by^6.
\]
Then $\Jac(f)\cong\Jac(g)$ if and only if $a=b$.
\end{prop}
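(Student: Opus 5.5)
The direction $a=b\Rightarrow$ isomorphism is trivial, so the plan is to assume an isomorphism $\phi\colon\Jac(g)\to\Jac(f)$ and derive $a=b$ via the filtered Shirayanagi strategy of \S\ref{sec!modulisetup}. First I fix the $\fJ$-adic basis of $\Jac(f)$. By \ref{prop:l3finite} with $n=5$, the leading terms $x^2,xy^2,xyxy,y^4xyx,y^7xy,y^9x,y^{12}$ generate $\LT\lcl\dcyc f\rcl$ (this is a genuine standard basis by \ref{cor:IsStandardBasis!}). Hence $\scrB$ consists of the $32$ monomials listed there:
\[
\{y^i,\ y^ix,\ y^ixy,\ y^ixyx\}
\]
with the appropriate ranges of $i$. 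The same description holds for $\Jac(g)$.

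By \ref{lem:startShira}, $\phi$ has the form $x\mapsto a_1x+\sum a_\msf\msf$ and $y\mapsto b_1x+b_2y+\sum b_\msf\msf$, with $a_1b_2\neq0$. I would then impose $\phi_k(\dcyc_xg)=\phi_k(\dcyc_yg)=0$ in $A_k$, reducing by the truncated $\sfg_1,\dots,\sfg_7$ and extracting coefficients on $\scrB_k$ for $k=4,5,\dots$ in turn. At low $k$ the conditions should be weighted-homogeneous. Give $x,y$ weights $5,3$, so that the quasi-homogeneous part $\tfrac13x^3+xy^3$ has weight $15$, while $y^5$ has weight $15$ as well and $y^6$ has weight $18$.

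\begin{enumerate}
\item At $k=4$ the $x^2$-relation should force $a_1^2=b_2^3$ and $b_1=0$.
\item The $y^5$ terms should force $a_1b_2^3=b_2^5$, that is $a_1=b_2^2$, which combined with the first condition gives $b_2^4=b_2^3$ and hence $b_2=1$, $a_1=1$. This rigidity comes from $f$ having no $\C^*$-symmetry once $y^5$ is present.
\end{enumerate}

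Next I would track the first-order corrections $a_\msf,b_\msf$ in increasing degree, showing that they are either forced to vanish or are constrained linearly. At the degree where the $y^6$-coefficient first appears, which is the coefficient of $y^5$ in $\dcyc_y$, i.e.\ $k\approx7$, I extract a basis coefficient equal to $b-a$ plus a combination of the earlier $a_\msf,b_\msf$. The aim is to show that the earlier constraints kill that combination, leaving $a=b$.

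The main obstacle is this last bookkeeping. Automorphisms such as $y\mapsto y+sy^2$ or $y\mapsto y+sxy$ could a priori shift the $y^6$ coefficient, exactly as in the proof of \ref{prop!caseE6} where $t$ moved the $y^5$ coefficient via $4\upvarepsilon t$. Here $\upvarepsilon=0$, so a $y^2$-shift acting on $y^5$ produces $5sy^6$, and it must be shown that $s$ is forced to be $0$ by the $xy^3$ part of the relations at an earlier $k$. I would verify this first, by checking the coefficient of $xy^3$-type basis elements (such as $y^2x$ and $yxy$) at $k=5$. If that fails, I would look for a combination of weight-$18$ coefficients that is invariant, and argue that it equals $b$ and $a$ respectively.
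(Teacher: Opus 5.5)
Your strategy is the paper's: after \ref{lem:startShira}, impose $\phi_k(\dcyc_xg)=\phi_k(\dcyc_yg)=0$ for $k=4,5,6$, use $k=4,5$ to force $b_1=0$, $a_1^2=b_2^3$ and $a_1=b_2^2$ (so $a_1=b_2=1$), then read $b-a$ off one coefficient. You do not need \ref{prop:l3finite} or \ref{cor:IsStandardBasis!}, since below degree $6$ the only leading terms are $x^2,xy^2,xyxy$. However, the step you leave as ``the aim'' is the whole proof, and the places you point to are not the right ones.
\begin{itemize}
\item $b_1=0$ does not come from the $x^2$-relation, whose $xyx$-coefficient at $k=4$ is only $a_1(a_3+a_4)+b_1^2b_2$. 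It comes from the $xyx$-coefficient $2a_1b_1b_2$ of $\phi_4(\dcyc_yg)$.
\item The decisive coefficient is that of $y^5\in\scrB$, which is already visible in $A_6$. So the level is $k=6$, not $k\approx 7$.
\item Your $s=b_5$ cannot be detected by $y^2x$ or $yxy$. Those are degree-$3$ coefficients, fixed at $k=4$, where $b_5$ does not occur. It is killed by the coefficient $3a_1b_2b_5$ of $y^3x$ in $\phi_5(\dcyc_yg)$, as in the paper.
\item Your weights are wrong: with $(5,3)$, the monomial $xy^3$ has weight $14$. The weights making $\tfrac13x^3+xy^3$ quasi-homogeneous are $(3,2)$.
\end{itemize}

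The weights $(3,2)$ are what make the final bookkeeping tractable, and they expose a second correction you have not accounted for.
\begin{itemize}
\item Set $w(x)=3$, $w(y)=2$. Reduction by $x^2+y^3$, by $xy^2+yxy+y^2x+y^4+ay^5$ and by $xyxy-yxyx+\dots$ never lowers weight. So only terms of weight $\le 10=w(y^5)$ can affect the $y^5$-coefficient.
\item Impose $b_1=a_5=b_5=0$, $a_3+a_4=b_3+b_4=0$ and $a_1=b_2=1$. Then the $y^5$-coefficient of $\phi_6(\dcyc_yg)$ is $b-a+3a_9$, where $a_9$ is the coefficient of $y^3$ in $\phi(x)$. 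That term contributes $3a_9y^5$ to $\phi(xy^2+yxy+y^2x)$; it is the Jacobi-side trace of $x\mapsto x+cy^3$, which moves the $y^6$-coefficient through $xy^3$.
\item So you must also show $a_9=0$. This comes from the other relation at $k=5$: since $xy^3=y^3x$ in $\Jac(f)$, the coefficient of $y^3x\in\scrB$ in $\phi_5(\dcyc_xg)$ is $2a_1a_9$, forcing $a_9=0$.
\item The paper displays only $\phi_5(\dcyc_yg)$ at $k=5$, and its $k=6$ display does not show the $3a_9y^5$ term either, so this step is worth writing out.
\end{itemize}
With $b_5=a_9=0$ the coefficient is exactly $b-a$, which gives $a=b$. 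The converse is trivial.
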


Set $\sfg_1\colonequals \dcyc_xf = x^2 + y^3$ and $\sfg_2\colonequals \dcyc_yf = xy^2 + yxy + y^2x + y^4 + ay^5$. Working modulo $\n^6$, the standard basis of $\lcl\dcyc f\rcl=\lcl \sfg_1,\sfg_2 \rcl$ also includes
the reduction of the \Spoly{} $x\sfg_2 - \sfg_1y^2$, namely
\[
\sfg_3 \colonequals
x\sfg_2 - \sfg_1y^2 - \sfg_2 x + y^2\sfg_1  - \sfg_2 y^2 + y\sfg_2 y = 
xyxy - yxyx + y^3xy - y^4x + \bigO_6.
\]
The standard basis thus has (at least) leading terms $x^2, xy^2, xyxy$, and it is now quick to see that all other leading terms lie in~$\n^6$. We will also use the following lemma without further comment: it follows by noting $\sfg_1=0$ in $\Jac(f)$, and so $x^2 = -y^3$.
\begin{lemma}\label{lem:commuting}
Consider $f$ as in \ref{prop:moduliE65} and any $m\ge0$. Then in $\Jac(f)$ both $x^m$ and $y^3$ commute, and $x^2$ and $y^m$ commute.
\end{lemma}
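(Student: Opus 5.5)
The statement to prove is Lemma~\ref{lem:commuting}: in $\Jac(f)$, $x^m$ commutes with $y^3$, and $x^2$ commutes with $y^m$, for every $m\geq 0$.

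The plan is to use only the relation $\sfg_1=x^2+y^3=0$, which holds in $\Jac(f)$. This gives the identity $x^2=-y^3$ there.

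\emph{First claim.} Since $x$ commutes with $x^2$, it also commutes with $-x^2=y^3$. Concretely, $xy^3=x(-x^2)=-x^3=(-x^2)x=y^3x$. An induction on $m$ then shows $x^my^3=y^3x^m$. The case $m=0$ is trivial. For the inductive step,
\[
x^{m+1}y^3=x(x^my^3)=x(y^3x^m)=(xy^3)x^m=(y^3x)x^m=y^3x^{m+1}.
\]

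\emph{Second claim.} Symmetrically, $y$ commutes with $y^3=-x^2$. So $yx^2=y(-y^3)=-y^4=(-y^3)y=x^2y$. The same induction on $m$ gives $x^2y^m=y^mx^2$.

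No step is a real obstacle. The only care needed is that these are equalities in the quotient $\Jac(f)$, not in $\ring$. Because the quotient map is an algebra homomorphism, the relation $x^2=-y^3$ may be substituted freely. Continuity is not an issue, since only finite products are involved.
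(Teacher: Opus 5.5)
Your proof is correct and follows the paper's approach exactly. The paper simply notes that $\mathsf{g}_1=x^2+y^3=0$ in $\Jac(f)$, so $x^2=-y^3$, and your explicit induction spells out that one-line argument.
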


\begin{proof}[Proof of \ref{prop:moduliE65}]
If $\phi\colon\Jac(g)\rightarrow\Jac(f)$ is an isomorphism, then by \eqref{eq!killy} and the above paragraph it may be written 
\[
\medmath{
\begin{array}{c@{\hspace{4pt}}c@{\hspace{4pt}}c@{\hspace{2pt}}c@{\hspace{2pt}}c@{\hspace{2pt}}c@{\hspace{2pt}}c@{\hspace{2pt}}c@{\hspace{2pt}}c@{\hspace{2pt}}c@{\hspace{2pt}}c@{\hspace{2pt}}c@{\hspace{2pt}}c@{\hspace{2pt}}c@{\hspace{2pt}}c@{\hspace{2pt}}c@{\hspace{2pt}}c@{\hspace{2pt}}c@{\hspace{2pt}}c@{\hspace{2pt}}c@{\hspace{2pt}}c@{\hspace{2pt}}c@{\hspace{2pt}}c@{\hspace{2pt}}}
x&\mapsto & a_1x &+& 0 &+& a_3xy &+& a_4yx &+& a_5y^2 &+& a_6xyx &+& a_7yxy &+& a_8y^2x &+& a_9y^3 &+& \bigO_4 \\
y&\mapsto & b_1x &+& b_2y &+& b_3xy &+& b_4yx &+& b_5y^2 &+& b_6xyx &+& b_7yxy &+& b_8y^2x &+& b_9y^3 &+& \bigO_4
\end{array}}
\]
for some $a_i,b_i\in\C$, with the  conditions $a_1b_2\not=0$ and
$\upvarphi(\dcyc_xg) = 0= \upvarphi(\dcyc_yg)$ in  $\Jac(f)$.

Working modulo $\fJ^4$ (so, $k=4$ in the algorithm), we have
\[
\medmath{0=\phi_4(\dcyc_yg) = \phi_4(xy^2 + yxy + y^2x + y^4 + by^5) = 3a_1b_1^2 x^3 + 2a_1b_1b_2x^2y + 2a_1b_1b_2xyx + \hot }
\]
 Thus $a_1b_1b_2=0$, since terms divisible by $x^2$ reduce under $\sfg_1$ into degree~4 and $xyx$ is not divisible by any leading term so is in $\scrB_4$. Since $a_1b_2\neq 0$ by \ref{lem:startShira}\eqref{killyii}, necessarily $b_1=0$.    After setting $b_1=0$ and reducing $x^2$ to $-y^3$ throughout,
\[
\medmath{
\begin{array}{r@{\hspace{2pt}}c@{\hspace{2pt}}l}
0=\phi_4(\dcyc_xg) = \phi_4(x^2 + y^3)
&=& a_1(a_3 + a_4)xyx + a_1a_5xy^2 + a_1a_5y^2x + (-a_1^2+b_2^3)y^3  \\
&\stackrel{\sfg_2}{\equiv}& a_1(a_3 + a_4)xyx - a_1a_5yxy + (-a_1^2+b_2^3)y^3 
\end{array}}
\]
so that (since $a_1\neq 0$ by \ref{lem:startShira}\eqref{killyii}) $a_3+a_4=0=a_5$ and $a_1^2 = b_2^3$.

Plugging in these conditions, we next consider $k=5$ in the algorithm, so work modulo~$\fJ^5$.  Reducing the degree~3 terms by~$\sfg_2$ and degree~4 terms by~$\sfg_1$, it follows that 
\[
\medmath{
\begin{array}{c@{\hspace{4pt}}c@{\hspace{4pt}}l}
 && \phi_5(\dcyc_yg)\\
&\stackrel{\sfg_2}{\equiv}& (2a_1b_2b_3 + a_1b_2b_4)xyxy + (a_1b_2b_3 + a_1b_2b_4)xy^2x + (2a_1b_2b_5 + 
    a_3b_2^2)xy^3 \\
&& \quad +  (a_1b_2b_3 + 2a_1b_2b_4)yxyx+   a_1b_2b_5yxy^2  + a_1b_2b_5y^2xy + (2a_1b_2b_5 - a_3b_2^2)y^3x + (-a_1b_2^2 + b_2^4)y^4\\
&\stackrel{\sfg_1}{\equiv}& 2a_1b_2(b_3 + b_4)yxyx + 3a_1b_2b_5y^3x +    (-a_1b_2^2 + b_2^4)y^4.
\end{array}}
\]
Thus $b_3+b_4=b_5=0$ and $a_1=b_2^2$, whence $a_1=b_2=1$.

The final step is to consider $k=6$, that is work modulo $\fJ^6$.
The conclusion will follow from the coefficient of~$y^5$.
We may ignore terms in degree~5 that have a factor of~$x$, since they may only reduce into degree $\ge6$ or to other
terms in degree~5 with the same number of factors of~$x$.
Doing this, 
\[
\medmath{
\begin{array}{c@{\hspace{4pt}}c@{\hspace{4pt}}l}
0=\phi_6(\dcyc_yg)&=& \phi_6(xy^2 + yxy + y^2x + y^4 + by^5) \\
&=& xy^2 + yxy + y^2x + b_3x^2y^2 + b_3xyxy + a_3xy^3 - b_3yxyx - b_3y^2x^2 - a_3y^3x + y^4 \\
&&\qquad\qquad+ \textrm{(degree 5 terms with a factor of $x$)} + by^5 \\
&\equiv&  \textrm{(degree 5 terms with a factor of $x$)} + (b-a)y^5 
\end{array}}
\]
where the last line is reduction by $\sfg_1,\sfg_2,\sfg_3$ and the commutation relations of~\ref{lem:commuting}.
Thus $a=b$, as claimed.  The converse direction is clear.
\end{proof}

\begin{prop}[Moduli for $\ell=4$]\label{prop:modForl4}
For any $a,b\in\C$ consider
\[
f = \tfrac{1}3 x^3 + \tfrac{1}4 y^4 + xyxy^2 + \tfrac{1}3 a(xy)^3 
\quad\textrm{and}\quad
g = \tfrac{1}3 x^3 + \tfrac{1}4 y^4 + xyxy^2 + \tfrac{1}3 b(xy)^3
\]
Then $\Jac(f)\cong\Jac(g)$ if and only if $a=\upomega b$ for some $\upomega^5 = 1$.
\end{prop}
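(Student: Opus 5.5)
The plan is to prove both directions, starting with the easy one. For the converse, suppose $a=\upomega b$ with $\upomega^5=1$. I would look for a scaling automorphism $x\mapsto \upalpha x$, $y\mapsto \upbeta y$, as in \ref{prop:scaling}, that preserves the three stub terms. This needs $\upalpha^3=1$, $\upbeta^4=1$ and $\upalpha^2\upbeta^3=1$. Sending $(xy)^3$ to $\upalpha^3\upbeta^3(xy)^3=\upbeta^3(xy)^3$ then rescales $b$ by $\upbeta^3$. But $\upbeta^3$ is only a fourth root of unity, so this cannot give the fifth roots. The fifth roots must therefore come from a different normalisation: allow an overall scalar $\uplambda$ in \ref{prop:scaling}, so we only need $\uplambda\upalpha^3=\uplambda\upbeta^4=\uplambda\upalpha^2\upbeta^3=1$. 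This gives $\upalpha^3=\upbeta^4=\upalpha^2\upbeta^3$, hence $\upalpha=\upbeta$ and $\upalpha^3=\upalpha^4$, which forces $\upalpha=1$. So pure scalings fail, and I would instead use a scaling combined with a unitriangular change such as $y\mapsto y+cx$, $x\mapsto x+\dots$, chosen to re-normalise the degree-4 and degree-5 terms. I expect the fifth root of unity to appear as the ratio of two weights in such a combined transformation. Concretely, I would first carry out the necessity computation below. The constraints it forces, namely $a_1=b_2^{k}$ and similar, will reveal exactly which residual diagonal symmetry survives, for example $b_2^5=1$ acting by $a\mapsto b_2^{?}a$. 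I would then realise that symmetry by an explicit automorphism and finish the converse with \ref{lem!simauto}.

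For the forward direction, I would follow the filtered Shirayanagi strategy of \S\ref{sec!modulisetup}, exactly as in \ref{prop:moduliE65}. The first step is the standard basis of $\lcl\dcyc f\rcl$ modulo a low power of $\n$. Here $\sfg_1=x^2+yxy^2+y^2xy+a\,yxyxy$ and $\sfg_2=y^3+xyxy+xy^2x+yxyx+a\,xyxyx$, and from \S\ref{subsec:l4bound} we also have $\sfg_3=x\sfg_1-\sfg_1x$ with leading term $xyxy^2$ and $\sfg_4$ with leading term $(xy^2)^2$. This gives the monomial bases $\scrB_k$ for $k\le 7$.

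The main step is then to run the algorithm. By \ref{lem:startShira}, $\phi$ has $a_2=0$ and $a_1b_2\neq0$. For $k=4$, expanding $\phi_4(\dcyc_yg)$, whose leading part is the image of $y^3$, kills $b_1$ through the $x^3$ and $x^2y$-type coefficients. It should also give a relation $a_1^2=b_2^3$ from $\phi_4(\dcyc_xg)$, comparing $x^2$ against $y^3$-terms. At $k=5,6$ I would collect the coefficients of basis monomials to kill the quadratic coefficients $a_3,\dots,b_6$ and to pin $a_1=b_2^{r}$. Combined with the $k=4$ relation, this should give $b_2^5=1$ and $a_1=b_2^{s}$.

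Finally, at $k=7$, or at whatever $k$ first sees $\dcyc(xy)^3$, I would isolate the coefficient of $(yx)^2y$ or $yxyxy$ in $\phi(\dcyc_xg)$. It should read $b\cdot(\text{monomial in }b_2)-a$, which forces $a=\upomega b$ with $\upomega$ a fifth root of unity. I expect this last step to be the main obstacle: at degree $5$ in the relations, many lower-order coefficients of $\phi$ feed into the coefficient of interest after reduction by $\sfg_1,\dots,\sfg_4$. I would handle this as in \ref{prop:moduliE65}, by tracking only terms with a fixed $x$-count, which are preserved by the reductions. I would also choose the target monomial to be one not reachable from the unknown higher coefficients $a_\msf,b_\msf$.
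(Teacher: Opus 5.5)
Your forward direction uses the same filtered Shirayanagi strategy as the paper. The converse, however, is not proved, and the reason is an algebra slip. From $\upalpha^3=\upalpha^2\upbeta^3$ you get $\upalpha=\upbeta^3$, not $\upalpha=\upbeta$. Then $\upbeta^4=\upalpha^3=\upbeta^9$ gives $\upbeta^5=1$, so nontrivial diagonal scalings do exist. Explicitly, $x\mapsto\upbeta^3x$, $y\mapsto\upbeta y$ sends $f$ to $\upbeta^9$ times the same potential with $(xy)^3$-coefficient $\tfrac13\upbeta^3a$, and $\upbeta^3$ runs over all fifth roots of unity. For example, if $a=\upomega b$ then $x\mapsto\upomega^4x$, $y\mapsto\upomega^3y$ sends $f$ to $\upomega^2g$, and \ref{prop:scaling} finishes. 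So the detour through unspecified unitriangular corrections is unnecessary. As written, your converse rests on a false claim that scalings fail, followed by an automorphism you never exhibit.

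In the forward direction, the relations you predict are wrong, so you have not actually located where $\upomega^5=1$ comes from. Here $\dcyc_xf=x^2+yxy^2+y^2xy+a\,yxyxy$ has no $y^3$ term. The relation $a_1^2=b_2^3$ is carried over from the $\ell=3$ potential and is false here: combined with the true relation $a_1=b_2^3$ it would force $a_1=1$ and hence $a=b$, contradicting the converse. In $A_4$ both $x^2$ and $y^3$ vanish. So $k=4$ only gives $a_5=0$, $a_4=-a_3$ and $b_1=0$. The last comes from the coefficients of $xyx$ and $xy^2+yxy+y^2x$, not from $x^3$ and $x^2y$, which are already zero in $A_4$. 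The decisive relations appear at $k=5$:
\begin{itemize}
\item Reducing $x^2\equiv-(yxy^2+y^2xy)$ in $\phi_5(\dcyc_xg)$ gives $a_1=b_2^3$.
\item Reducing $y^3\equiv-(xyxy+xy^2x+yxyx)$ in $\phi_5(\dcyc_yg)$ gives $b_2=a_1^2$.
\end{itemize}
Together these give $a_1^5=1$. Also, you cannot kill the quadratic coefficients: $a_3$ and $b_3$ stay free, since conjugation by $1+cy$ or $1+cx$ realises them. Only $a_4=-a_3$ and $b_4=-b_3$ are forced. It is exactly this $xy-yx$ antisymmetry that makes the unwanted contributions cancel at the last step. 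With these relations your final target does work, already at $k=6$: the coefficient of $yxyxy$ in $\phi_6(\dcyc_xg)$ is $a_1^2(b_2^3b-a)$, giving $a=a_1b$. The paper instead reads off the coefficient $-\upomega^6a+\upomega^7b$ of $xyxyx$ in $\phi_6(\dcyc_yg)$.
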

The derivatives of~$f$ are $\sfg_1 \colonequals \dcyc_xf =  x^2 + yxy^2 + y^2xy + ayxyxy$ and $\sfg_2 \colonequals \dcyc_yf =  y^3 + xyxy + xy^2x + yxyx + axyxyx$, where the respective derivatives of $g$ replace $a$ by $b$ as the degree~5 coefficient.
The standard basis of $\lcl\dcyc f\rcl$ includes $\sfg_3 = x\sfg_1-\sfg_1x=xyxy^2 + xy^2xy-yxy^2x-y^2xyx + \bigO_6$,
and it is quick to see that all other standard basis leading terms lie in~$\n^6$.

\begin{proof}
If $\phi\colon\Jac(g)\rightarrow\Jac(f)$ is an isomorphism, then by \eqref{eq!killy} and the above paragraph  it may be written
\[
\medmath{
\begin{array}{c@{\hspace{4pt}}c@{\hspace{4pt}}c@{\hspace{2pt}}c@{\hspace{2pt}}c@{\hspace{2pt}}c@{\hspace{2pt}}c@{\hspace{2pt}}c@{\hspace{2pt}}c@{\hspace{2pt}}c@{\hspace{2pt}}c@{\hspace{2pt}}c@{\hspace{2pt}}c@{\hspace{2pt}}c@{\hspace{2pt}}c@{\hspace{2pt}}c@{\hspace{2pt}}c@{\hspace{2pt}}c@{\hspace{2pt}}c@{\hspace{2pt}}c@{\hspace{2pt}}c@{\hspace{2pt}}c@{\hspace{2pt}}c@{\hspace{2pt}}}
x&\mapsto&  a_1x &+& 0 &+&a_3xy &+& a_4yx &+& a_5y^2 &+& a_6xyx &+& a_7xy^2 &+& a_8yxy &+& a_9y^2x &+& \bigO_4 \\
y&\mapsto&  b_1x &+& b_2y &+& b_3xy &+& b_4yx &+& b_5y^2 &+& b_6xyx &+& b_7xy^2 &+& b_8yxy &+& b_9y^2x &+& \bigO_4
\end{array}}
\]
for some $a_i,b_i\in\C$ with $a_1b_2\not=0$.

Consider the case $k=4$, that is work modulo $\fJ^4$, where $x^2=0=y^3$. Thus
\[
\medmath{
\begin{array}{l@{\hspace{2pt}}l@{\hspace{2pt}}l@{\hspace{2pt}}l@{\hspace{2pt}}l@{\hspace{2pt}}}
\phi_4(\dcyc_xg) &=& \phi_4(x^2) &=& (a_1x + a_3xy + a_4yx + a_5y^2)^2 \\
&&&=& a_1^2x^2 + a_1a_3x^2y + a_1(a_4 + a_3)xyx + a_1a_5xy^2 + a_1a_4yx^2 + a_1a_5y^2x  \\
&&&\equiv& a_1(a_4 + a_3)xyx +     a_1a_5(xy^2 + y^2x).
\end{array}}
\]
Thus (since $a_1\neq 0$ by \ref{lem:startShira}\eqref{killyii}) $a_5=0$ and $a_4=-a_3$. Similarly (since still $x^2=y^3=0$)
\[
\medmath{\phi_4(\dcyc_yg) = \phi_4(y^3) = (b_1x+b_2y)^3 = b_1^2b_2xyx + b_1b_2^2(xy^2 + yxy + y^2x)}
\]
and so $b_1=0$. 

We plug in these conditions, and next consider $k=5$, so work modulo $\fJ^5$. Here
\[
\medmath{
\begin{array}{c@{\hspace{4pt}}c@{\hspace{4pt}}l}
\phi_5(\dcyc_xg) &=& \phi_5(x^2 + yxy^2 + y^2xy) \\
&=& \big(a_1x + a_3(xy-yx)+ a_6xyx + a_7xy^2 + a_8yxy + a_9y^2x\big)^2 +  a_1b_2^3(yxy^2 + y^2xy) \\
&\stackrel{\sfg_1,\sfg_2}{\equiv}&
(a_3^2+a_1a_8)xyxy + (-a_3^2+a_1a_9 + a_1a_7)xy^2x {} \\
&& \qquad\qquad + (a_3^2+a_1a_8)yxyx + a_1(-a_1 + b_2^3)yxy^2 + a_1(-a_1 + b_2^3)y^2xy
\end{array}}
\]
so $a_1=b_2^3$, while $\phi_5(\dcyc_yg) =\phi_5(y^3+ xyxy + xy^2x + yxyx) $ equals
\[
\medmath{
\begin{array}{l}
\big(b_2y + b_3xy + b_4yx + b_5y^2\big)^3 + a_1^2b_2^2\big(xyxy + xy^2x + yxyx\big) \\
\equiv b_2^2(-b_2+a_1^2)xyxy + b_2^2(-b_2+a_1^2)xy^2x +  b_2^2(-b_2+a_1^2)yxyx + b_2^2(b_3 + b_4)yxy^2 +b_2^2(b_3 + b_4)y^2xy
\end{array}}
\]
so that $b_4=-b_3$ and $b_2 = a_1^2$. Therefore $a_1=\upomega$ and $b_2=\upomega^2$ for some $\upomega^5=1$. 

To conclude, consider $k=6$ and the coefficient of $xyxyx$ in
\[
\phi_6(\dcyc_yg) 
= \phi_6(y^3) + \phi_6(xyxy+yxyx) + \phi_6(xy^2x) + \phi_6(bxyxyx).
\]
Taking each summand in turn,
\[
\phi_6(y^3) = \upomega^6 y^3 +
\upomega^4b_3\big(y^2(xy-yx) + y(xy-yx)y + (xy-yx)y^2\big) +
\hdots
\]
where we omit any terms that cannot reduce to $xyxyx$: in fact only the terms
$\upomega^6y^3 - \upomega^4b_3y^3x + \upomega^4b_3xy^3$
may reduce to $xyxyx$ (under $\sfg_2$), and the coincidental $\pm b_3$ cancels two of these, so the $xyxyx$ term is exactly $-\upomega^6a xyxyx$.
The contribution of $\phi_6(xyxy+yxyx)$ involves no reduction: 
the only way to achieve an $xyxyx$ term is when
the first and last factors of either term are replaced suitably by $xy$ or $yx$.
Since $xy-yx$ appears in both $\phi(x)$ and $\phi(y)$ all such terms cancel and there is no contribution.
Similarly $\phi_6(xy^2x)$ makes no contribution to the $xyxyx$ term.
Finally $\phi_6(bxyxyx) = \upomega^7bxyxyx$.
Therefore the coefficient of $xyxyx$ in $\phi_6(\dcyc_yg)$ is equal to $-\upomega^6a + \upomega^7b$,
so that $a=\upomega b$ as claimed.

Conversely, if $a=\upomega b$ in $f$, observe that the scaling automorphism $\phi$ defined by $x\mapsto\upomega^4x$ and $y\mapsto\upomega^3y$ gives $\phi(f) = \upomega^2g$, so that $\Jac(f)\cong\Jac(g)$ by \ref{prop:scaling}.
\end{proof}

To exhibit moduli for $\ell=5$ the standard basis requires a little more work.
\begin{lemma}\label{lem:stdl5trun}
Let $f = \tfrac{1}3 x^3 + \tfrac{1}2 xyxy + \tfrac{1}5 y^5 + \tfrac{1}6 ay^6$ for any $a\in\C$.
Then the standard basis of $\lcl \dcyc f\rcl$ consists of the four named elements
\[
\begin{array}{lcl}
   \sfg_1 \colonequals \dcyc_xf = x^2 + yxy &&
   \sfg_2 \colonequals \dcyc_yf = xyx + y^4 + ay^5  \\
   \sfg_3 \colonequals xy^2xy - y^4x + \bigO_6 &&
   \sfg_4 \colonequals xy^4 - yxy^2x + \bigO_6
\end{array}
\]
with others of order $\ge6$.
\end{lemma}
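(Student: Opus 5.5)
The plan is to run the noncommutative Buchberger procedure of \S\ref{sec:ncsb} on $I_6=(\sfg_1,\sfg_2)+\n^6$, exactly as in \ref{rk:sbn^k}, working throughout to precision $\bigO_6$. First compute the cyclic derivatives directly from the definition. From $\tfrac{1}{3}x^3$ one gets $x^2$ in $\dcyc_xf$, and from $\tfrac{1}{2}xyxy$ one gets $yxy$ in $\dcyc_xf$ and $xyx$ in $\dcyc_yf$; the $y^5,y^6$ terms contribute only to $\dcyc_yf$. This confirms the stated forms of $\sfg_1$ and $\sfg_2$, whose leading monomials in the order \eqref{eqn:order} are $x^2$ and $xyx$ respectively.

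Next, list the overlaps among $\{x^2,xyx\}$ that produce \Spolys\ of degree $\le5$. These are $(x,x,x)$, $(x,x,yx)$, $(xy,x,x)$ and $(xy,x,yx)$, the last giving degree $5$. I would expect $x\sfg_1-\sfg_1x=xyxy-yxyx$ to reduce under $\sfg_2$ to zero modulo $\n^6$, since $xyxy\equiv -y^5 \cdot$ stuff of high degree; this is in the same spirit as the computation in \ref{lem:ZA5} Case $\caseC$. The genuinely new elements should come from $x\sfg_2-\sfg_1yx$ and $xy\sfg_1-\sfg_2x$, which are precisely the expressions already used in the proof of \ref{prop:l5finite}. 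There, with $\upalpha=\upbeta=0$ in the notation of that proof but keeping $ay^5$, reduction by $\sfg_1,\sfg_2$ gives $xy^4-yxy^2x+\bigO_6=\sfg_4$ and $xy^2xy-y^4x+\bigO_6=\sfg_3$. The $a$-terms $axy^5$ and $ay^5x$ have degree $6$ and so drop out at this precision.

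Then verify that all remaining \Spolys\ involving $\sfg_3,\sfg_4$ (leading terms $xy^2xy$ and $xy^4$), and the degree-$5$ overlap of $\sfg_2$ with itself, either reduce to zero modulo $\n^6$ or have order $\ge6$. Any overlap involving $\sfg_3$ or $\sfg_4$ has degree at least $6$, since these already have degree $5$ and each overlap adds at least one letter; this handles all of them immediately. The only check needing care is the self-overlap $xy\sfg_2-\sfg_2yx$ in degree $5$. Here I would reduce $xy\cdot y^4$ and $y^4\cdot yx$ using $\sfg_4$ and $\sfg_1$, and expect cancellation to $\bigO_6$. Finally, reduce $\sfg_3,\sfg_4$ against each other and against $\sfg_1,\sfg_2$ so that no other monomial in their support is divisible by a leading term, which is needed to make the basis reduced.

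The main obstacle will be this degree-$5$ bookkeeping: checking that the \Spolys\ from the overlaps $(x,x,yx)$ and $(xy,x,x)$ reduce exactly to the two stated polynomials with the displayed coefficients, and that the degree-$5$ overlap $(xy,x,yx)$ contributes nothing new. I would do these reductions by hand, tracking only terms of degree $\le5$ and using $x^2\equiv -yxy$ and $xyx\equiv -y^4$ repeatedly.
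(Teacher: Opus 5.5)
Your proposal is correct and follows the paper's argument. You obtain $\sfg_3=xy\sfg_1-\sfg_2x$ and $\sfg_4=x\sfg_2-\sfg_1yx$ from the two degree-$4$ overlaps and check that every other \Spoly{} lies in $\n^6$. One simplification: the self-overlap gives $xy\sfg_2-\sfg_2yx=xy^5-y^5x+a(xy^6-y^6x)$, which already has order $6$ and needs no reduction, and $xyxy-yxyx=\sfg_2y-y\sfg_2$ reduces to zero exactly.
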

\begin{proof} 
$\sfg_3=xy\sfg_1-\sfg_2x$ and $\sfg_4=x\sfg_2-\sfg_1yx$ ($\sfg_4$ could self-reduce its $axy^5$ term, but we do not need to consider that within $\bigO_6$) and all other \Spolys\ reduce into~$\n^6$.
\end{proof}
\begin{prop}[Moduli for $\ell=5$]\label{prop:modForl5}
For any $a,b\in\C$ consider
\[
f = \tfrac{1}3 x^3 + \tfrac{1}2 xyxy + \tfrac{1}5 y^5 + \tfrac{1}6 ay^6
\quad\textrm{and}\quad
g = \tfrac{1}3 x^3 + \tfrac{1}2 xyxy + \tfrac{1}5 y^5 + \tfrac{1}6 by^6.
\]
Then $\Jac(f)\cong\Jac(g)$ if and only if $a=b$.
\end{prop}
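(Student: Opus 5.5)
The plan is to run the filtered Shirayanagi algorithm of \S\ref{sec!modulisetup}, exactly as in \ref{prop:moduliE65}, using the truncated standard basis $\sfg_1,\sfg_2,\sfg_3,\sfg_4$ of \ref{lem:stdl5trun}. Its leading terms are $x^2$, $xyx$, $xy^2xy$ and $xy^4$, with all other leading terms in $\n^6$. This gives a monomial basis $\scrB_k$ of $A_k$ for $k\le 6$: all monomials avoiding these four leading terms. The converse direction is trivial, so we assume an isomorphism $\phi\colon\Jac(g)\to\Jac(f)$. By \ref{lem:startShira} it has the form \eqref{eq!killy}, with $a_2=0$ and $a_1b_2\neq0$. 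We then write out $\phi(x),\phi(y)$ up to $\bigO_4$ (or $\bigO_5$) in the basis $\scrB$, which in degree $2$ is $xy,yx,y^2$ and in degree $3$ is $xy^2,yxy,y^2x,y^3$.

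\emph{Step $k=4$.} We compute $\phi_4(\dcyc_yg)=\phi_4(xyx+\bigO_4)$. The term $b_1^2a_1\,x^3$ is harmless, but the coefficient of the basis element $yx^2\equiv -y^2xy$ should force a relation. More robustly, we compute $\phi_4(\dcyc_xg)=\phi_4(x^2+yxy)$ and read off the coefficients of $xy^2,yxy,y^2x,y^3$ after reducing $x^2\mapsto -yxy$. We expect this to give $b_1=0$, together with a relation such as $a_1^2=a_1b_2^2$, i.e.\ $a_1=b_2^2$, and some linear conditions on the $a_3,a_4,a_5$. If $b_1=0$ is not immediate at $k=4$, it will come out at $k=5$ from the $xyx$-relation, as in the $\ell=3$ case.

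\emph{Step $k=5$.} Plugging these in, we compute $\phi_5(\dcyc_yg)=\phi_5(xyx+y^4)$ and $\phi_5(\dcyc_xg)$ modulo $\fJ^5$, reducing by $\sfg_1,\sfg_2$. The coefficient of $y^4$ in $\phi_5(\dcyc_yg)$ should give $a_1^2b_2=b_2^4$, i.e.\ $a_1^2=b_2^3$. Combined with $a_1=b_2^2$, this forces $b_2=1=a_1$. The point is that the weights of $x^3,xyxy,y^5$ are $(5,5,3)\cdot$ compatible with a unique scaling, so no root of unity survives. The remaining degree-$4$ coefficients impose linear constraints on $a_3,\dots,b_5$ (e.g.\ $b_3+b_4=0$, $b_5=0$), which are needed to control the next step.

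\emph{Step $k=6$ (main obstacle).} We look at the coefficient of $y^5$ in $\phi_6(\dcyc_yg)$. The contributions are $b\,y^5$ from the $by^5$ term of $\dcyc_yg$, the degree-$5$ part of $\phi(y)^4$, and degree-$5$ terms from $\phi(x)\phi(y)\phi(x)$ that reduce to $y^5$. Terms with a factor of $x$ only reduce among themselves or into higher degree, except through $\sfg_2$: $xyx\mapsto -y^4-ay^5$. Reducing via $\sfg_2$ is exactly where $-a\,y^5$ enters. The hard part is bookkeeping the degree-$5$ pieces, for example $b_5$-type and $a_*$-type terms times $y^3$, which also reduce to $y^5$ via $xyx\equiv -y^4$. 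We must show these cancel using the linear relations from Step $k=5$, just as the $\pm b_3$ cancelled in \ref{prop:modForl4}. If some free parameter survives, we would instead also use the $y^4x$ and $xy^4$ coefficients to pin it down. The outcome should be $(b-a)y^5=0$ in $\scrB_6$, so $a=b$.
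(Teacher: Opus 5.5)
This is the paper's own argument. You run the filtered Shirayanagi algorithm with the truncated standard basis of \ref{lem:stdl5trun}: at $k=4$ you get $a_5=0$ and $a_1=b_2^2$; at $k=5$ you get $b_1=0$, $a_1=b_2=1$ and then $a_3+a_4=0=b_5$; finally you read off the $y^5$ coefficient $b-a$ of $\phi_6(\dcyc_yg)$.

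Three small corrections to your write-up:
\begin{enumerate}
\item $b_1$ is unconstrained at $k=4$. Its vanishing comes only at $k=5$, from the one-$x$ terms $b_1b_2^3(xy^3+yxy^2+y^2xy+y^3x)$ of $\phi(y)^4$.
\item The $k=6$ cancellation you were unsure about is automatic. No degree-$5$ monomial containing $x$ can reduce to $y^5$ modulo $\fJ^6$. Once $a_3+a_4=0$ (which kills the $-(a_3+a_4)y^5$ coming from $a_3xyxy+a_4yxyx$) and $b_5=0$, the only $y^5$ contributions are $-a$ from $xyx\equiv -y^4-ay^5$ and $b$ from $by^5$. So no fallback to the $y^4x$ or $xy^4$ coefficients is needed.
\item Your suggested relation $b_3+b_4=0$ is neither derived nor needed.
\end{enumerate}
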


\begin{proof}
Here, $\dcyc_xg=x^2+yxy$ and $\dcyc_yg=xyx+y^4+by^5$.
Now if $\phi\colon\Jac(g)\rightarrow\Jac(f)$ is some isomorphism, then by \eqref{eq!killy} and \ref{lem:stdl5trun} it may be written
\begin{equation}\label{eq:l5phi}
\medmath{
\begin{array}{c@{\hspace{4pt}}c@{\hspace{4pt}}c@{\hspace{2pt}}c@{\hspace{2pt}}c@{\hspace{2pt}}c@{\hspace{2pt}}c@{\hspace{2pt}}c@{\hspace{2pt}}c@{\hspace{2pt}}c@{\hspace{2pt}}c@{\hspace{2pt}}c@{\hspace{2pt}}c@{\hspace{2pt}}c@{\hspace{2pt}}c@{\hspace{2pt}}c@{\hspace{2pt}}c@{\hspace{2pt}}c@{\hspace{2pt}}c@{\hspace{2pt}}c@{\hspace{2pt}}c@{\hspace{2pt}}c@{\hspace{2pt}}c@{\hspace{2pt}}}
x&\mapsto&  a_1x &+& 0 &+& a_3xy &+& a_4yx &+& a_5y^2 &+& a_6xy^2 &+& a_7yxy &+& a_8y^2x &+& a_9y^3 &+& \bigO_4 \\
y&\mapsto&  b_1x &+& b_2y &+& b_3xy &+& b_4yx &+& b_5y^2 &+& b_6xy^2 &+& b_7yxy &+& b_8y^2x &+& b_9y^3 &+& \bigO_4
\end{array}}
\end{equation}
for unique $a_i,b_i\in\C$ with $a_1b_2\not=0$.
Working modulo $\fJ^4$, we have%reducing by $\sfg_1,\sfg_2$,
\[
\medmath{
\begin{array}{c@{\hspace{4pt}}c@{\hspace{4pt}}l}
\phi_4(\dcyc_xg) &=& a_1^2x^2 + a_1b_1^2x^3 + (a_1a_3 + a_1b_1b_2)x^2y + (a_1a_3 + 
    a_1a_4)xyx + a_1a_5xy^2  \\
    && \qquad\qquad+ (a_1a_4 + a_1b_1b_2)yx^2 + a_1b_2^2yxy + a_1a_5y^2x \\
&\stackrel{\sfg_1,\sfg_2}{\equiv}& a_1a_5xy^2 + (-a_1^2 + a_1b_2^2)yxy + a_1a_5y^2x
\end{array}}
\]
(using $\sfg_1$, $\sfg_2$ from \ref{lem:stdl5trun})
so $a_5=0$ and $a_1=b_2^2$.
Modulo $\fJ^5$ (keeping $a_1$ in place for clarity),
\begin{equation}
\medmath{
\begin{array}{c@{\hspace{2pt}}c@{\hspace{2pt}}l}\label{eq:l5phi5}
\phi_5(\dcyc_yg)&=&a_1^2x(b_1x + b_2y)x + a_1x(b_1x+b_2y)(a_3xy + a_4yx) \\
&&\qquad\qquad{}+ a_1(a_3xy + a_4yx)(b_1x+b_2y)x + a_1^2x(b_3xy + b_4yx + b_5y^2)x + (b_1x+b_2y)^4\\
&\equiv&b_2(b_2^3-a_1^2)y^4 + \big(a_1(a_3+a_4)b_2+a_1^2b_5 + b_1^2b_2^2\big)xy^2x + b_1b_2^3(xy^3+yxy^2+yxy^2+y^3x)
\end{array}}
\end{equation}
so $b_1=0$ and $b_2^3=a_1^2=b_2^4$, whence $a_1=b_2=1$ and then $a_3+a_4+b_5=0$.
With these in \eqref{eq:l5phi},
\begin{eqnarray*}
\begin{array}{c@{\hspace{2pt}}c@{\hspace{2pt}}l}
\phi_5(\dcyc_xg)&=& a_3x^2y + (a_3+a_4)xyx + a_4yx^2 + \textrm{degree 4 terms with an $x$} \\\nonumber
& \equiv &  -(a_3+a_4)y^4 + \textrm{degree 4 terms with an $x$} 
\end{array}
\end{eqnarray*}
and so $a_3+a_4=b_5=0$.

To conclude, we need consider only the $y^5$ term in $\phi_6(\dcyc_yg)$, and so we ignore
any terms in $\phi_6(xyx)$ that cannot reduce to a $y^5$ term under \ref{lem:stdl5trun}.
In particular, no monomial of degree~5 can reduce to a $y^5$ term except $y^5$ itself, and so
when evaluating $\phi_6(xyx)$ there is no need to treat terms in \eqref{eq:l5phi} of degree $\ge3$.
Thus, for $\phi_6(xyx)$ modulo $\fJ^6$ we consider
\begin{align*}
&(x + a_3xy + a_4yx)(y + b_3xy+b_4yx)(x + a_3xy + a_4yx)  \\
&\qquad  \equiv xyx + xy(a_3xy + a_4yx) + (a_3xy+a_4yx)yx \\
&\qquad  \equiv xyx + a_3xyxy + (a_4 + a_3)xy^2x + a_4yxyx \\
&\qquad  \equiv -y^4 - ay^5 
\end{align*}
Similarly, because no degree~5 term can reduce to a $y^5$ term, the only terms of
$\phi_6(y^4+by^5)$ we need treat are $(y+b_5y^2)^4 + by^5 = y^4 + by^5$.
Added together, these two show that the $y^5$ term in $\phi_6(\dcyc_yg)$ is exactly $(b-a)y^5$.
Since $y^5$ does not reduce under \ref{lem:stdl5trun} we must have $b-a=0$, as required.  

The converse, that $a=b$ implies that $\Jac(f)\cong\Jac(g)$, is clear.
\end{proof}

Perhaps counterintuitively, $\ell=6$ is in fact the easiest case.  
\begin{prop}[Moduli for $\ell=6$]\label{prop:modForl6}
For any $a,b\in\C$ consider
\[
f = \tfrac{1}3 x^3 + \tfrac{1}4 y^4 + \tfrac{1}3 (xy)^3 + \tfrac{1}2 a(xy^2)^2
\quad\textrm{and}\quad
g = \tfrac{1}3 x^3 + \tfrac{1}4 y^4 + \tfrac{1}3 (xy)^3 + \tfrac{1}2 b(xy^2)^2.
\]
Then $\Jac(f)\cong\Jac(g)$ if and only if $a=\upzeta b$ where $\upzeta^9=1$.
\end{prop}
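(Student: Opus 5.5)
We follow the filtered Shirayanagi strategy of \S\ref{sec!modulisetup}, exactly as in \ref{prop:modForl4}. The converse direction comes first and is a scaling. Take $\phi\colon x\mapsto \upalpha x$, $y\mapsto\upbeta y$. Then $\phi(f)=\tfrac13\upalpha^3x^3+\tfrac14\upbeta^4y^4+\tfrac13\upalpha^3\upbeta^3(xy)^3+\tfrac12 a\upalpha^2\upbeta^4(xy^2)^2$. We want this to be a scalar multiple $c\,g$ of $g$, which requires
\[
\upalpha^3=\upbeta^4=\upalpha^3\upbeta^3=c \quad\text{and}\quad a\upalpha^2\upbeta^4=cb.
\]
These give $\upbeta^3=1$, then $\upbeta^4=\upbeta$, hence $\upalpha^3=\upbeta$, and finally $a\upalpha^2=b$. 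Since $\upalpha$ ranges over the ninth roots of unity, $\upalpha^2$ does too, so $a=\upzeta b$ with $\upzeta^9=1$ is realisable. Then \ref{prop:scaling} gives $\Jac(f)\cong\Jac(g)$.

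For the forward direction, we first record the low-degree standard basis of $\lcl\dcyc f\rcl$. The two derivatives are
\[
\sfg_1=\dcyc_xf=x^2+yxyxy+ay^2xy^2 \quad\text{and}\quad \sfg_2=\dcyc_yf=y^3+xyxyx+a(xy^2xy+yxy^2x).
\]
There is also the degree~$6$ element $\sfg_3=x\sfg_1-\sfg_1x$, with leading term $(xy)^3$, as in Step~1 of \ref{prop:l6finite}. All other leading terms lie in $\n^{\geq 8}$. The monomial basis of $A_k$ for $k\le 7$ therefore consists of the words in $x,y$ avoiding $x^2$, $y^3$ and $(xy)^3$.

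We then start from \ref{lem:startShira}, which gives $\phi(x)=a_1x+\hot$ and $\phi(y)=b_1x+b_2y+\hot$. Working in $A_4$, where $x^2=y^3=0$, the argument copies \ref{prop:modForl4}. Expanding $\phi_4(x^2)$ forces $a_5=0$ and $a_4=-a_3$, and expanding $\phi_4(y^3)$ forces $b_1=0$. In $A_5$ and $A_6$ the images $\phi(\dcyc_xg)$ and $\phi(\dcyc_yg)$ must vanish. The top-degree pieces $a_1^2x^2$ and $b_2^3y^3$ then reduce by $\sfg_1,\sfg_2$ to $-a_1^2\,yxyxy$ and $-b_2^3\,xyxyx$. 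Comparing these with the images $b_2^3a_1^2\,yxyxy$ and $a_1^3b_2^2\,xyxyx$ of the degree~$5$ parts of $g$ gives two equations. We also collect the $y^2xy^2$ coefficient, which should be $ab_2^3$ from the reduction of $x^2$ against $a_1^2b_2^4 b\,y^2xy^2$ from $g$, modulo corrections from the quadratic coefficients $a_3,b_3,\dots$.

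The main obstacle is to show that these correction terms cancel, as the $\pm b_3$ terms did in \ref{prop:modForl4}. The relevant contributions come from substituting $xy-yx$ into $x^2$, $y^3$ and $(xy)^3$. They either reduce into higher degree or cancel in commutator pairs; the pairing is the same $(xy-yx)$ cancellation used before, and should go through. Granting this, the comparisons give
\[
a_1^2=b_2^3a_1^2 \quad\text{(in $\dcyc_xg$)}, \qquad b_2^3=a_1^3b_2^2 \quad\text{(in $\dcyc_yg$)}, \qquad a\,b_2^3 = b\,a_1^2b_2^4,
\]
the last being an extra comparison on the $a$-term. The first gives $b_2^3=1$ and the second gives $a_1^3=b_2$, so $a_1^9=1$. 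With $b_2^3=1$ the third becomes $a=b\,a_1^2b_2=b\,a_1^5$, and $\upzeta\colonequals a_1^5$ is a ninth root of unity. This matches the scaling computed in the converse direction.
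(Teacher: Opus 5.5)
Your route is the paper's own (filtered Shirayanagi after \ref{lem:startShira}, run up to $k=6$), and your converse by scaling is correct. Your first two comparisons are also right: $b_2^3=1$ from $yxyxy$ in $\phi_6(\dcyc_xg)$, and $b_2=a_1^3$ from $xyxyx$ in $\phi_6(\dcyc_yg)$. For $k\le6$ you only need $\sfg_1,\sfg_2$, since every $S$-polynomial has order $\ge6$. Citing \ref{prop:l6finite} is therefore unnecessary, and it is unavailable when $a=0$, since it assumes $\uplambda\neq0$.

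The first problem is that the step you call the main obstacle is only asserted (``should go through'', ``Granting this''), yet it is exactly what has to be checked. The check is short. Since $\sfg_1-x^2$ and $\sfg_2-y^3$ are homogeneous of degree $5$, in $A_6$ the only terms that can reduce into degree $5$ are a degree-$2$ multiple of $x^2$ and a degree-$3$ multiple of $y^3$. In $\phi(\dcyc_xg)$ these are $a_1^2x^2$ and $0$; in $\phi(\dcyc_yg)$ they are $0$ and $b_2^3y^3$. Everything else is read off from degree-$5$ parts. Write $X_2=a_3xy+a_4yx+a_5y^2$, $X_3=a_6xyx+a_7xy^2+a_8yxy+a_9y^2x$ and $X_4$ for the quadratic, cubic and quartic parts of $\phi(x)$. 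The degree-$5$ part of $\phi(x)^2$ is $a_1(xX_4+X_4x)+X_2X_3+X_3X_2$. The terms $a_1(xX_4+X_4x)$ begin or end in $x$. The cross terms put $a_8(a_3+a_4)$ on $yxyxy$ and $a_5(a_7+a_9)$ on $y^2xy^2$, and both vanish by the $k=4$ step. No degree-$5$ term of $\phi(y)^3$ equals $xyxyx$ at all. Substituting into the degree-$5$ derivative terms only changes degree $\ge6$.

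The more serious problem is that your third equation is miscomputed. Reducing $a_1^2x^2$ by $\sfg_1=x^2+yxyxy+ay^2xy^2$ puts $-a\,a_1^2$ on $y^2xy^2$, not $-a\,b_2^3$. The leading part of $\phi(b\,y^2xy^2)$ is $b\,a_1b_2^4\,y^2xy^2$ (one $x$, four $y$'s), not $b\,a_1^2b_2^4\,y^2xy^2$. The correct comparison is $a\,a_1^2=b\,a_1b_2^4$, which with $b_2^3=1$ and $b_2=a_1^3$ gives $a=b\,a_1^2$, exactly as in the paper.

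Your relation $a=b\,a_1^5$ is in fact violated by genuine isomorphisms. Take $\upomega$ a primitive ninth root of unity and $a=\upomega^2b$ with $b\neq0$. The scaling $x\mapsto\upomega x$, $y\mapsto\upomega^3y$ sends $g$ to $\upomega^3f$, so it induces $\Jac(g)\cong\Jac(f)$ with $a_1=\upomega$ and $b_2=\upomega^3$. Yet $a\neq b\,\upomega^5$. So your remark that the result matches the scaling is false. You reach $\upzeta^9=1$ only because $a_1^5$ happens also to be a ninth root of unity.

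Your $a\,b_2^3$ is what reducing $b_2^3y^3$ puts on $xy^2xy$ in $\phi_6(\dcyc_yg)$. Comparing that with $b\,a_1^2b_2^3$ would also yield $a=b\,a_1^2$. However, there the cross terms in $\phi(y)^3$ contribute $b_2b_3(b_3+b_4)$, so you would also need $b_3+b_4=0$. The paper extracts this from $\phi_5(y^3)$, and your proposal never derives it.
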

Setting $\sfg_1\colonequals x^2 + yxyxy + ay^2xy^2$ and $\sfg_2\colonequals y^3 + xyxyx + axy^2xy + ayxy^2x$
to be the two derivatives of $f$, note that all \Spolys{} have order at least~6 and so, working modulo~$\fJ^k$ for $k\le6$,
this is the only part of the standard basis of $\lcl\dcyc f\rcl$ that is used.

\begin{proof}
Here $\dcyc_xg=x^2 + yxyxy + by^2xy^2$ and $\dcyc_yg=y^3 + xyxyx + axy^2xy + byxy^2x$. Now if $\phi\colon\Jac(g)\rightarrow\Jac(f)$ is an isomorphism, then by \eqref{eq!killy} and the above paragraph it may be written
\[
\medmath{
\begin{array}{c@{\hspace{4pt}}c@{\hspace{4pt}}c@{\hspace{2pt}}c@{\hspace{2pt}}c@{\hspace{2pt}}c@{\hspace{2pt}}c@{\hspace{2pt}}c@{\hspace{2pt}}c@{\hspace{2pt}}c@{\hspace{2pt}}c@{\hspace{2pt}}c@{\hspace{2pt}}c@{\hspace{2pt}}c@{\hspace{2pt}}c@{\hspace{2pt}}c@{\hspace{2pt}}c@{\hspace{2pt}}c@{\hspace{2pt}}c@{\hspace{2pt}}c@{\hspace{2pt}}c@{\hspace{2pt}}c@{\hspace{2pt}}c@{\hspace{2pt}}}
x&\mapsto&  a_1x &+& 0 &+& a_3xy &+& a_4yx &+& a_5y^2 &+& a_6xyx &+& a_7xy^2 &+& a_8yxy &+& a_9y^2x &+& \bigO_4 \\
y&\mapsto&  b_1x &+& b_2y &+& b_3xy &+& b_4yx &+& b_5y^2 &+& b_6xyx &+& b_7xy^2 &+& b_8yxy &+& b_9y^2x &+& \bigO_4
\end{array}}
\]
for unique $a_i,b_i\in\C$ with $a_1b_2\not=0$.

Working modulo $\fJ^4$ (crossing out terms that reduce into $\bigO_5$ under $\sfg_1$)
\[
\phi_4(\dcyc_xg) = \phi_4(x^2) = \cancel{a_1^2x^2} + a_1\cancel{a_3x^2}y + a_1(a_3 + a_4)xyx + a_1a_5xy^2 + \hot
\]
so that $a_3+a_4=a_5=0$, and
\[
\phi_4(\dcyc_yg) = \phi_4(y^3) = \cancel{b_1^3x^3} + b_1^2\cancel{b_2x^2}y + b_1^2b_2xyx + \hot
\]
so that $b_1=0$.
Imposing these conditions on $a_i,b_j$ and working modulo $\fJ^5$ (with the reductions above done,
and further reductions under $\sfg_2$ indicated) we have
\[
\phi_5(\dcyc_yg) = \phi_5(y^3) = \cancel{b_2^2y^3} + b_2^2\cancel{b_3x}y^3 + b_2^2(b_3 + b_4)yxy^2 + \hot
\]
so that $b_3+b_4=0$.

Finally, impose these conditions and work modulo $\fJ^6$.
Reductions by $\sfg_2$ push terms into $\bigO_6$ with the exception of $b_2^3y^3$ which reduces to $-b_2^3xyxyx + \hot$,
so that (omitting terms of degree $\le4$, none of which can reduce into degree~5 to alter the $xyxyx$ coefficient),
\[
\phi_6(\dcyc_yg) - b_2^3\sfg_2  = \textrm{(omitted terms of degree $\le4$)} + (a_1^3b_2^2 - b_2^3)xyxyx + \hot
\]
whence $b_2=a_1^3$.
Similarly the only nontrivial reduction under $\sfg_1$ is $a_1^2x^2$ which reduces to $-a_1^2(yxyxy + ay^2xy^2)$, so after omitting non-reducible terms of degree $\leq 4$ and any terms of degree~5 with an initial or terminal~$x$ (which include products involving the degree~4 part of $\phi(x)$, since they cannot reduce to alter the coefficients of $yxyxy$ or $y^2xy^2$),  
\[
\phi_6(\dcyc_xg) - a_1^2\sfg_1=(a_1^2b_2^3 - a_1^2)yxyxy + (-aa_1^2 + ba_1b_2^4)y^2xy^2 + \hot
\]
Thus $b_2^3=1$, whence $a_1^9=1$  and $a = ba_1^2$, proving ($\Rightarrow$).

Conversely, with $a=\upzeta b$ in $f$, observe that the scaling automorphism $\phi$ defined by $x\mapsto\upzeta^4x$ and $y\mapsto\upzeta^3y$ gives $\phi(f) = \upzeta^3g$, so that $\Jac(f)\cong\Jac(g)$ by \ref{prop:scaling}.
\end{proof}

\appendix
\section{Two GV Calculations}\label{appendix}
This appendix uses the main result \ref{cor:flopsMain} to compute the second Gopakumar--Vafa invariant $\GV{2}$ for flops of length $3$ and $4$, which is required in \S\ref{subsec:GVfinally}. 
As the proof technique closely mirrors that in \cite{BW4}, we now change notation to match \emph{loc.\ cit.}.  It is worth remaking that similar calculations can now be performed to compute any $\GV{n}$ for any length $\ell$, but such calculations are both lengthy and tedious: all such calculations except the two contained in this appendix are now unnecessary, due to \ref{cor:allDims}.

\subsection{Notation}\label{subsec:AppNot}
Given a flopping contraction $\scrX\to\Spec\scrR$ of $\ell=3,4$, by \ref{cor:flopsMain} (and a reindexing of the variables to match \cite{BW4}) the completion of $\scrX$ along the flopping curve $\Curve$, which we will temporarily denote $\mathcal{X}$, is isomorphic to the completion of the variety $\mathrm{X}$ defined by gluing $\mathrm{U}_1=\mathbb{A}_{a,v_1,v_2}^3$ with $\mathrm{U}_2=\mathbb{A}_{b,w_1,w_2}^3$ via
\[
(a,v_2,v_1) \stackrel{a\neq 0}{\longleftrightarrow} (a^{-1}, \,a^{-1}v_2, \,\,a^{3}v_1 + \partial E)=(b,w_2,w_1)
\]
whenever $a\neq 0$, where for $\ell=3,4$ the term $\partial E$ is defined to be
\begin{equation}
\medmath{\begin{array}{c@{\hspace{20pt}}llll}
3&a^2v_2^2 + a^{-1}v_2^3 + a^{-(n-2)}v_2^{n-1} +  \upalpha_{n+1}a^{-(n-1)}v_2^n+\hdots+\upalpha_{\kx}a^{2-\kx}v_2^{\kx-1}&   \\[5pt] 
4& a^2v_2^2 + a^{-2}v_2^3 + a^{-1}v_2^4 + Q_4  &  \\ [5pt]
\end{array}}\label{eqn:appendixGlue}
\end{equation}
As in \cite{BW2}, the ideal $(v_1,v_2)$ glues to the ideal $(w_1,w_2)$ to give a sheaf of ideals $\scrI$ in $\mathrm{X}$ such that $\scrO/\scrI=\scrO_{\Curve}$.   Further note that, as in \cite{BW2}, the inverse glue is given by 
\[
(a,v_2,v_1)=(b^{-1}, \,b^{-1}w_2, \,\,b^{3}w_1 - \partial F) \stackrel{b\neq 0}{\longleftrightarrow} (b,w_2,w_1)
\]
where $\partial F$ is given by one of the following
\begin{equation}
\medmath{\begin{array}{c@{\hspace{20pt}}llll}
3&b^{-1}w_2^2 + bw_2^3 + b^{2}w_2^{n-1} + \upalpha_{n+1}b^{2}w_2^{n}+\hdots+\upalpha_{\kx}b^{2}v_2^{\kx-1}&   \\[5pt] 
4& b^{-1}w_2^2 + b^{2}w_2^3 + w_2^4 + W_4 &  \\ [5pt]
\end{array}}\label{eqn:appendixGlue_inv}
\end{equation}
where $W_4$ is some expression in $b$ and $w_2$ described explicitly in \cite[(2.B)]{BW2}, which we suppress here since it will play almost no part in the calculation below.

As in \cite{BW2}, the following constants $\kx, \ky\in\N$ and polynomials $A, B$ will naturally appear throughout the analysis.

\begin{notation}\label{not: key abc notation}
Write
\[
\partial E = a^{2-\kx}v_2^{2}A\quad\textnormal{and}\quad
\partial F = b^{2-\ky}w_2^{2}B
\]
where $A=A(a,v_2)\in\scrO_{\scrU_1}$ and $B=B(b,w_2)\in\scrO_{\scrU_2}$ are \emph{polynomials},
neither $a$ nor $v_2$ divides $A$, and neither $b$ nor $w_2$ divides $B$.
In fact $2-\kx\in\Z$ is just the most negative power of $a$ appearing in \eqref{eqn:appendixGlue}, and $2-\ky$ is the most negative power of $b$ in the inverse glue.  By construction, both $\kx\geq 0$ and $\ky\geq 0$.
\end{notation}

\subsection{GV Strategy}
It is very well-known \cite{KatzGV} that $\GV{2}$ can be computed using the commutative deformation theory of a particular sheaf $\scrO_{2\Curve}\in\coh \scrX$ described below. The key point is that the categorical equivalences
\begin{equation}
\coh_\Curve\scrX \simeq \coh \mathcal{X}\simeq \coh_\Curve \mathrm{X} \label{eqn:TransferDGA}
\end{equation}
allow us to shift the problem to computing the deformation theory of $\scrO_{2\Curve}$ to that of a corresponding sheaf on $\mathrm{X}$, which being algebraic gives access the tools of \cite{BW4} involving locally free resolutions (described below).  Note in particular that this avoids having to compute anything on the formal scheme $\mathcal{X}$, where locally free resolutions are problematic, or to compute anything on $\scrX$, where we have little information.

By Katz \cite{KatzGV} there exists $\scrO_{2\Curve}\in\coh\scrX$ which can be characterised as the sheaf which sits as the middle term of the unique non-split short exact sequence
\[
0\to\scrO_\Curve(-1)\to\scrO_{2\Curve}\to\scrO_{\Curve}\to 0
\]
Automatically $\scrO_{2\Curve}$ is set-theoretically supported on $\Curve$, and so $\scrO_{2\Curve}\in\coh_\Curve \scrX$. Thus, across \eqref{eqn:TransferDGA}, below in the cases $\ell=3,4$ we will first construct the corresponding sheaf $\scrO_{2\Curve}\in\coh \mathrm{X}$ explicitly. Then, to compute its deformation theory, we use the \v{C}ech DG-model, which we now briefly review.  We first compute a locally free resolution
\[
0\to\scrE_3\to\scrE_2\to\scrE_1\to\scrE_0\to\scrO_{2\Curve}\to 0
\] 
then consider the DG-algebra $\scrC$ described \cite[\S3.4]{BW4}, where a homogenous element $\asf\in\scrC_i$ is by definition a triple of vectors of homomorphisms $\asf = (\asf_1, \asf_2, \asf_{12})$, where 
\[
\asf_1 \in \bigoplus_{n\in\Z} \scrHom_{\mathrm{U}_1}(\scrE_n,\scrE_{n-i}),\,\,
\asf_2 \in \bigoplus_{n\in\Z} \scrHom_{\mathrm{U}_2}(\scrE_n,\scrE_{n-i}),\,\,
\asf_{12} \in \bigoplus_{n\in\Z} \scrHom_{\mathrm{U}_{12}}(\scrE_n,\scrE_{n-i+1}) 
\]
The differential $\dd\colon \scrC_i\rightarrow\scrC_{i+1}$ is defined by
\begin{equation}
(\asf_1,\asf_2,\asf_{12})\mapsto (\updelta \asf_1, \updelta \asf_2, (\asf_{1|\mathrm{U}_{12}} - \asf_{2|\mathrm{U}_{12}}) - \updelta \asf_{12})\label{eqn:defdd}
\end{equation}
where $\updelta$ is the natural differential on $\scrHom$.  Multiplication $\star$ is defined on homogeneous elements $\asf=(\asf_1,\asf_2,\asf_{12})\in\scrC_i$
and $\bsf=(\bsf_1,\bsf_2,\bsf_{12})\in\scrC_j$ by
\begin{equation}\label{eqn:star}
\asf\star \bsf \colonequals
(\asf_1\circ\bsf_1, \,\,\,\asf_2\circ\bsf_2,\,\,\, \asf_{12}\circ {\bsf_2}_{|\scrU_{12}} + (-1)^i{\asf_1}_{|\scrU_{12}}\circ \bsf_{12})
\in\scrC_{i+j}
\end{equation}
and extended by linearity.  As is well-known \cite[\S8.2.1]{Dbook}, $\mathrm{H}^i(\scrC)\cong\Ext^i_\mathrm{X}(\scrO_{2\Curve},\scrO_{2\Curve})$, and the DG-algebra $(\scrC,\dd,\star)$ acts as a DG-model which can be used to compute the deformation theory of $\scrO_{2\Curve}$.

\medskip
In the subsections below, we will use the $A_\infty$ notation from \cite[\S5]{BW2} freely, in particular the chain-level $\mathsf{U}_n$ described in \cite[(5.A)]{BW2}, which will determine the products $\msf_n$ in cohomology.

\subsection{Proof of \ref{lem:GV2forl3}}\label{subsec:ProofOfGVl3}
Consider the $\ell=3$ flops given by the elements $x^3 + xy^3+ y^n + y^{n+1}q$ where $n>4$ and $q\in\C[y]$. After harmless rescaling, these correspond to the top line glues in \eqref{eqn:appendixGlue}.  To make the proof below uniform, it will be convenient to write
\[
\partial E = a^2v_2^2 + a^{-1}v_2^3 + \upalpha_5a^{-3}v_2^{4}+ \upalpha_6a^{-4}v_2^{5}+\hdots+\upalpha_{\kx}a^{2-\kx}v_2^{\kx-1}
\]
with the understanding that $\upalpha_5=\hdots=\upalpha_{n-1}=0$ (if $n>5$).
Below we will compute the sheaf $\scrO_{2\Curve}$, and its commutative deformation theory, showing that its prorepresenting object is given by $\mathbb{C}\llsq t\rrsq/t^{n-1}$. It then follows immediately that $\GV{2}$, which is the dimension of this object, equals $n-1$.

\medskip
First note that $(v_1,v_2^2)$ glues to $(w_1,w_2^2)$, and thus defines a sheaf of ideals $\scrI_2$ on $\mathrm{X}$, which is clearly a subsheaf of $\scrI$ from \S\ref{subsec:AppNot}.   Also, clearly $\scrI_2/\scrI\cong\scrO_{\Curve}(-1)$, and so the short exact sequence $0\to \scrI/\scrI_2 \to \scrO/\scrI_2\to \scrO/\scrI\to 0$ becomes
\[
0\to\scrO_{\Curve}(-1)\to \scrO/\scrI_2\to\scrO_{\Curve}\to 0
\]
Thus $\scrO_{2\Curve}\colonequals  \scrO/\scrI_2$ is the sheaf that we desire.  We will calculate the $A_\infty$-products on the endomorphism DG-algebra of $\scrO_{2\Curve}$, and this requires a locally free resolution.  

Here, by inspection of the top line of the inverse glue \eqref{eqn:appendixGlue_inv}, in the notation of \ref{not: key abc notation} we have $\ky=3$, and $A=a^{\kx}+a^{\kx-3}v_2+ \upalpha_5a^{\kx-5}v_2^{2}+ \upalpha_6a^{\kx-6}v_2^{3}+\hdots+\upalpha_{\kx} v_2^{\kx-3}$. With this notation set, consider the following locally free resolution of $\scrO_{2\Curve}$
\begin{equation}
\begin{array}{c}
\begin{tikzpicture}
\node (B) at (-0.5,0) {$\scrO({\scriptstyle -4-\kx})$};
\node (C1) at (2.5,0.75) {$\scrO({\scriptstyle -2-\kx})$};
\node () at (2.5,0.4) {$\oplus$};
\node (C2) at (2.5,0) {$\scrO({\scriptstyle -3})$};
\node () at (2.5,-0.35) {$\oplus$};
\node (C3) at (2.5,-0.75) {$\scrO({\scriptstyle-\kx})$};
\node (D1) at (7,0.8) {$\scrO({\scriptstyle-1})$};
\node () at (7,0.4) {$\oplus$};
\node (D2) at (7,0) {$\scrO({\scriptstyle-2})$};
\node () at (7,-0.4) {$\oplus$};
\node (D3) at (7,-0.8) {$\scrO({\scriptstyle2-\kx})$};
\node (E) at (10,0) {$\scrO$};
\draw[->] (B)--
node[above]{$\begin{psmallmatrix}v_2^2 \\ -a^{\kx+1}\\ -1\end{psmallmatrix}$}
node[below]{$\begin{psmallmatrix}w_2^2 \\ -1\\ -b^{4}\end{psmallmatrix}$}
(C2);
\draw[->] (C2)--
node[above]{$\begin{psmallmatrix}
a^{\kx+1}&v_2^2&0\\
A&-v_1&b^{2-\kx}w_1\\
-1&0&-v_2^2\end{psmallmatrix}$}
node[below]{$\begin{psmallmatrix}
1&w_2^2&0\\
B&-a^{-1}v_1&w_1\\
-b^{4}&0&-w_2^2\end{psmallmatrix}$}
(D2);
\draw[->] (D2)--
node[above]{$\begin{psmallmatrix}v_1 & v_2^2& b^{2-\kx}w_1\end{psmallmatrix}$}
node[below]{$\begin{psmallmatrix}a^{-1}v_1 &w_2^2& w_1\end{psmallmatrix}$}
(E);
\end{tikzpicture}
\end{array}\label{eqn:LFres}
\end{equation}
where $b^{2-\kx}w_1=a^{\kx+1}v_1+Av_2^2$ and $a^{-1}v_1=b^4w_1-w_2^2B$, and we adopt the convention of \cite[\S2.3]{BW2}, where the matrix above an arrow describes the map in the first chart, and below an arrow describes the map in the second chart.

To define certain elements below it will be convenient to write
\[
aA=a^{\kx+1}+v_2(\underbrace{a^{\kx-2}+ \upalpha_5a^{\kx-4}v_2+ \upalpha_6a^{\kx-5}v_2^{2}+\hdots+\upalpha_{\kx} av_2^{\kx-4}}_{\colonequals F})
\]
Then, using small case letters for degree one elements, consider $\xsf_1$ defined to be 
\[
\begin{tikzpicture}
\node () at (-3.5,0) {$\xsf _1=$};
\node (B1) at (-1,1) {$\scrE_3$};
\node (C1) at (3.25,1) {$\scrE_2$};
\node (D1) at (7,1) {$\scrE_1$};
\node (E1) at (8,1) {$\scrE_0$};
\draw[->] (B1)--
node[above]{$\begin{psmallmatrix}v_2^2 \\ -a^{\kx+1}\\ -1\end{psmallmatrix}$}
(C1);
\draw[->] (C1)--
node[above]{$\begin{psmallmatrix}
a^{\kx+1}&v_2^2&0\\
A&-v_1&b^{2-\kx}w_1\\
-1&0&-v_2^2\end{psmallmatrix}$}
(D1);
\draw[->] (D1)--(E1);

\node (B2) at (-2,-1.5) {$\scrE_3$};
\node (C2) at (-1,-1.5) {$\scrE_2$};
\node (D2) at (3.25,-1.5) {$\scrE_1$};
\node (E2) at (7,-1.5) {$\scrE_0$};
\draw[->] (B2)--(C2);
\draw[->] (C2)--
node[above]{$\begin{psmallmatrix}
a^{\kx+1}&v_2^2&0\\
A&-v_1&b^{2-\kx}w_1\\
-1&0&-v_2^2\end{psmallmatrix}$}
(D2);
\draw[->] (D2)--
node[above]{$\begin{psmallmatrix}v_1 & v_2^2& b^{2-\kx}w_1\end{psmallmatrix}$}
(E2);

\draw[->](B1)--node[left,pos=0.35]{$\begin{psmallmatrix}-av_2\\0\\0\end{psmallmatrix}$}(C2);
\draw[->](C1)--node[gap,pos=0.35]{$\begin{psmallmatrix}0&-av_2&0\\ F&-v_2&0\\0&0&av_2\end{psmallmatrix}$}(D2);
\draw[->](D1)--node[right,pos=0.35]{$\begin{psmallmatrix}v_2&-av_2&0\end{psmallmatrix}$}(E2);
\end{tikzpicture}
\]
There is a very similar map $\xsf_2$, defined on the other chart, such that $\xsf_{1|\mathrm{U}_{12}} - \xsf_{2|\mathrm{U}_{12}}=0$.  Under the sign convention defining the homological differential $\updelta$, which for odd degree is down-across+across-down,  $\updelta(\xsf_1)=0=\updelta(\xsf_2)$. Thus $\xsf=(\xsf_1,\xsf_2,0)\in\scrC_1$ satisfies $\dd(\xsf)=0$.

It is easy to verify, in a similar way to \cite[4.1]{BW2} that $\xsf$ descends to give a non-trivial element in $\mathrm{H}^1(\scrC)$, since the right hand vertical map in $\xsf_1$ does not belong to the ideal $(v_1,v_2^2)$ but the horizontal maps do, so there can be no homotopy.  Then since $\mathrm{H}^1(\scrC)\cong\mathrm{Ext}^1(\scrO_{2\Curve},\scrO_{2\Curve})$, and the neighbourhood is $\ell=3$ flopping, it follows from \cite[7.4]{DW5} that $\mathrm{Ext}^1(\scrO_{2\Curve},\scrO_{2\Curve})\cong\mathbb{C}$, and thus $\xsf$ in fact gives a basis.

Using capital letters for degree two elements, consider $\Xsf_1$ defined to be
\[
\begin{tikzpicture}[yscale=0.75,xscale=0.8]
\node () at (0,0) {$\Xsf_1=$};
\node (C1) at (3.25,1) {$\scrE_3$};
\node (D1) at (6.25,1) {$\scrE_2$};
\node (E1) at (7.75,1) {$\scrE_1$};
\node (F1) at (9.25,1) {$\scrE_0$};
\draw[->] (E1)--(F1);
\draw[->] (C1)--
node[above]{$\begin{psmallmatrix}v_2^2 \\ -a^{\kx+1}\\ -1\end{psmallmatrix}$}
(D1);
\draw[->] (D1)--(E1);
\node (B2) at (0.25,-1) {$\scrE_3$};
\node (C2) at (1.75,-1) {$\scrE_2$};
\node (D2) at (3.25,-1) {$\scrE_1$};
\node (E2) at (6.25,-1) {$\scrE_0$};
\draw[->] (B2)--(C2);
\draw[->] (C2)--(D2);
\draw[->] (D2)--node[above]{$\begin{psmallmatrix}v_1 & v_2^2& b^{2-\kx}w_1\end{psmallmatrix}$}(E2);
\draw[->](C1)--node[left]{$\scriptstyle a^{\kx-1}\begin{psmallmatrix}0\\1\\0\end{psmallmatrix}$}(D2);
\draw[->](D1)--node[right]{$\scriptstyle a^{\kx-1}\begin{psmallmatrix}1&0&0\end{psmallmatrix}$}(E2);
\end{tikzpicture}
\]
Again there is a similar map $\Xsf_2$ defined on the other chart, such that $\Xsf_{1|\mathrm{U}_{12}} - \Xsf_{2|\mathrm{U}_{12}}=0$.  Under the sign convention on  $\updelta$, which for even degree is (down-across)$-$(across-down),  $\updelta(\Xsf_1)=0=\updelta(\Xsf_2)$.  Thus $\Xsf=(\Xsf_1,\Xsf_2,0)\in\scrC_2$ satisfies $\dd(\Xsf)=0$.

It is easy to verify, in a similar way to \cite[4.5]{BW2} that $\Xsf$ descends to give a non-trivial element in $\mathrm{H}^2(\scrC)$.  By Calabi--Yau duality $\mathrm{Ext}^2(\scrO_{2\Curve},\scrO_{2\Curve})\cong\mathbb{C}$, and so $\Xsf$ gives a basis.

\medskip
From here, we compute (most of) the $A_\infty$-structure on $\scrC$, using the above generators. By definition $\xsf\star\xsf$ is computed by composition on both charts, and by direct verification  on chart one 
\begin{equation}
\xsf_1\circ\xsf_1=
\begin{array}{c}
\begin{tikzpicture}[yscale=0.75,xscale=1.25]
\node (C1) at (3,1) {$\scrE_3$};
\node (D1) at (4,1) {$\scrE_2$};
\node (E1) at (5,1) {$\scrE_1$};
\node (F1) at (6,1) {$\scrE_0$};
\draw[->] (E1)--(F1);
\draw[->] (C1)--(D1);
\draw[->] (D1)--(E1);
\node (B2) at (1,-1) {$\scrE_3$};
\node (C2) at (2,-1) {$\scrE_2$};
\node (D2) at (3,-1) {$\scrE_1$};
\node (E2) at (4,-1) {$\scrE_0$};
\draw[->] (B2)--(C2);
\draw[->] (C2)--(D2);
\draw[->] (D2)--(E2);
\draw[->](C1)--node[left]{$\scriptstyle \scriptstyle -av_2F\begin{psmallmatrix}0\\1\\0\end{psmallmatrix}$}(D2);
\draw[->](D1)--node[right]{$\scriptstyle -av_2F\begin{psmallmatrix}1&0&0\end{psmallmatrix}$}(E2);
\end{tikzpicture}
\end{array}\label{eqn:l3first}
\end{equation}
It is direct to verify that $\xsf_1\circ\xsf_1=\updelta(\asf_1)$ where $\asf_1$ is the following homotopy. 
\[
\begin{array}{cc}
\begin{array}{c}
\begin{tikzpicture}[yscale=0.75,xscale=1.25]
\node (B1) at (0,1) {$\scrE_3$};
\node (C1) at (1,1) {$\scrE_2$};
\node (D1) at (2,1) {$\scrE_1$};
\node (E1) at (3,1) {$\scrE_0$};
\draw[->] (B1)--(C1);
\draw[->] (C1)--(D1);
\draw[->] (D1)--(E1);
\node (B2) at (-1,-1) {$\scrE_3$};
\node (C2) at (0,-1) {$\scrE_2$};
\node (D2) at (1,-1) {$\scrE_1$};
\node (E2) at (2,-1) {$\scrE_0$};
\draw[->] (B2)--(C2);
\draw[->] (C2)--(D2);
\draw[->] (D2)--(E2);
\draw[->](B1)--node[left]{$\scriptstyle f_1$}(C2);
\draw[->](C1)--node[gap]{$\scriptstyle f_2$}(D2);
\draw[->](D1)--node[right]{$\scriptstyle f_3$}(E2);
\end{tikzpicture}
\end{array}
&
\begin{array}{lccc}
&{\scriptstyle f_1}&{\scriptstyle f_2}&{\scriptstyle f_3}\\[2mm]
\asf_{1}\colonequals &\begin{psmallmatrix}-a^2\\0\\0\end{psmallmatrix}
&\begin{psmallmatrix}0&-a^2&0\\ 0&-a&0\\0&0&a^2\end{psmallmatrix}
&\begin{psmallmatrix}a&-a^2&0\end{psmallmatrix} \\[3mm]
\end{array}
\end{array}
\]
There is a map $\asf_2$ on the other chart which glues to give $\asf_1$ (meaning $\asf_{1|\mathrm{U}_{12}} - \asf_{2|\mathrm{U}_{12}}=0$); from this we obtain $\asf=(\asf_1,\asf_2,0)\in\scrC_1$.  A similar calculation to the above, on the other chart, shows that $\xsf\star\xsf=\dd(\asf)$, and so $\msf_2(\xsf,\xsf)=0$.  The defining equation for $\fsf_2(\xsf,\xsf)$ in e.g.\ \cite[(5.C)]{BW2} then becomes $-\mathsf{U}_{2}(\xsf,\xsf)=\dd\fsf_2(\xsf,\xsf)$, so we can choose $\fsf_2(\xsf,\xsf)=-\asf$.

\medskip
To set notation for the following, consider
\[
\begin{array}{l@{\hspace{4pt}}l@{\hspace{4pt}}l@{\hspace{4pt}}l@{\hspace{4pt}}l@{\hspace{4pt}}l@{\hspace{4pt}}l@{\hspace{4pt}}l@{\hspace{4pt}}l@{\hspace{4pt}}l@{\hspace{4pt}}l@{\hspace{4pt}}}
G&\colonequals& \upalpha_5a^{\kx-5}&+&\upalpha_6a^{\kx-4}v_2&+&\upalpha_7a^{\kx-3}v_2^2&+&\hdots &+& \upalpha_\kx v_2^{\kx-5}
\end{array}
\]
so that $A=a^{\kx}+a^{\kx-3}v_2 + v_2^2G$. Further, set $\deg(a^zv_2^n)\colonequals z+n$, and define the degree of an element of $\mathbb{C}[a^{\pm},v_2]$ to be the maximum $\deg(m)$ where $m$ are its monomials. The following homotopies will be used extensively, two of which depend on elements $J,K\in\mathbb{C}[a,v_2]$.  
\[
\begin{array}{cc}
\begin{array}{c}
\begin{tikzpicture}[yscale=0.75,xscale=1.25]
\node (B1) at (0,1) {$\scrE_3$};
\node (C1) at (1,1) {$\scrE_2$};
\node (D1) at (2,1) {$\scrE_1$};
\node (E1) at (3,1) {$\scrE_0$};
\draw[->] (B1)--(C1);
\draw[->] (C1)--(D1);
\draw[->] (D1)--(E1);
\node (B2) at (-1,-1) {$\scrE_3$};
\node (C2) at (0,-1) {$\scrE_2$};
\node (D2) at (1,-1) {$\scrE_1$};
\node (E2) at (2,-1) {$\scrE_0$};
\draw[->] (B2)--(C2);
\draw[->] (C2)--(D2);
\draw[->] (D2)--(E2);
\draw[->](B1)--node[left]{$\scriptstyle f_1$}(C2);
\draw[->](C1)--node[gap]{$\scriptstyle f_2$}(D2);
\draw[->](D1)--node[right]{$\scriptstyle f_3$}(E2);
\end{tikzpicture}
\end{array}
&
\begin{array}{lccc}
&{\scriptstyle f_1}&{\scriptstyle f_2}&{\scriptstyle f_3}\\[2mm]
\gsf_{1}\colonequals &\begin{psmallmatrix}1\\0\\0\end{psmallmatrix}
&\begin{psmallmatrix}0&0&0\\ 0&1&0\\0&0&0\end{psmallmatrix}
&\begin{psmallmatrix}0&1&0\end{psmallmatrix} \\[3mm]
\hsf_{1}\colonequals &\begin{psmallmatrix}a\\0\\0\end{psmallmatrix}
&\begin{psmallmatrix}0&a&0\\ -aG&1&0\\0&0&-a\end{psmallmatrix}
&\begin{psmallmatrix}-1&a&0\end{psmallmatrix} \\[3mm]
\jsf_{1,J}\colonequals &\begin{psmallmatrix}0\\0\\0\end{psmallmatrix}
&\begin{psmallmatrix}0&0&0\\ H&0&0\\0&0&0\end{psmallmatrix}
&\begin{psmallmatrix}0&0&0\end{psmallmatrix} \\[3mm]
\ksf_{1,K}\colonequals &\begin{psmallmatrix}0\\0\\0\end{psmallmatrix}
&\begin{psmallmatrix}0&0&0\\ 0&0&-K\\0&0&0\end{psmallmatrix}
&\begin{psmallmatrix}0&0&-K\end{psmallmatrix} \\[3mm]

\end{array}
\end{array}
\]
There is a map $\gsf_2$ on the other chart which glues to give $\gsf_1$ (meaning $\gsf_{1|\mathrm{U}_{12}} - \gsf_{2|\mathrm{U}_{12}}=0$); from this we obtain $\gsf=(\gsf_1,\gsf_2,0)\in\scrC_1$. Similarly, the map $\hsf_{1}$ glues, producing $\hsf\colonequals (\hsf_{1},\hsf_{2},0)\in\scrC_1$. Further, if $\deg(J)\leq \kx$ and $\deg (K)\leq \kx-2$, those maps also glue and so we similarly obtain $\jsf_J\colonequals (\jsf_{1,J},\jsf_{2,J},0)$ and $\ksf_K\colonequals (\ksf_{1,K},\ksf_{2,K},0)$.

\begin{lemma}\label{lem:l3homot}
With notation as above, in particular $\deg(J)\leq \kx$ and $\deg (K)\leq \kx-2$, then  $\dd(\gsf), \dd(\hsf), \dd(\jsf_J)$ and $\dd(\ksf_K)$ are described as follows
\[
\begin{array}{cc}
\Ysf\colonequals
\begin{array}{c}
\begin{tikzpicture}[yscale=0.75,xscale=1.25]
\node (C1) at (3,1) {$\scrE_3$};
\node (D1) at (4,1) {$\scrE_2$};
\node (E1) at (5,1) {$\scrE_1$};
\node (F1) at (6,1) {$\scrE_0$};
\draw[->] (E1)--(F1);
\draw[->] (C1)--(D1);
\draw[->] (D1)--(E1);
\node (B2) at (1,-1) {$\scrE_3$};
\node (C2) at (2,-1) {$\scrE_2$};
\node (D2) at (3,-1) {$\scrE_1$};
\node (E2) at (4,-1) {$\scrE_0$};
\draw[->] (B2)--(C2);
\draw[->] (C2)--(D2);
\draw[->] (D2)--(E2);
\draw[->](C1)--node[left]{$\scriptstyle \begin{psmallmatrix}0\\1\\0\end{psmallmatrix} $}(D2);
\draw[->](D1)--node[right]{$\scriptstyle \begin{psmallmatrix}1&0&0\end{psmallmatrix}$}(E2);
\end{tikzpicture}
\end{array}
\begin{array}{rl}
\dd(\gsf)=&A\cdot\Ysf\\[3mm]
\dd(\hsf)=&a^{\kx-2}v_2\cdot\Ysf\\[3mm]
\dd(\jsf_J)=&v_2^2J\cdot\Ysf\\[3mm]
\dd(\ksf_K)=& K \cdot\Ysf
\end{array}
\end{array}
\]
where for e.g.\ $\dd(\gsf)$ both matrices in $\Ysf$ are multiplied by $A$.
\end{lemma}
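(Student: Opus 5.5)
The plan is a direct computation of $\dd$ using \eqref{eqn:defdd}. Each of $\gsf,\hsf,\jsf_J,\ksf_K$ has the form $(\asf_1,\asf_2,0)$ with $\asf_{1|\mathrm{U}_{12}}=\asf_{2|\mathrm{U}_{12}}$. Hence the third component of $\dd$ vanishes, and $\dd(\asf)=(\updelta\asf_1,\updelta\asf_2,0)$. Since $\Ysf$ and its analogue on the second chart agree on the overlap, it suffices to check two things. First, on chart one, $\updelta\asf_1$ equals the stated scalar times $\Ysf_1$. Second, the chart-two versions glue, which is exactly what the degree hypotheses $\deg(J)\le\kx$ and $\deg(K)\le\kx-2$ guarantee.

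For degree one elements the sign convention is down-across $+$ across-down. So for a homotopy with components $f_1\colon\scrE_3\to\scrE_3$, $f_2\colon\scrE_2\to\scrE_2$, $f_3\colon\scrE_1\to\scrE_1$ (shifted appropriately), $\updelta$ produces three components:
\begin{itemize}
\item $\scrE_3\to\scrE_1$, namely $d_3'f_1+f_2d_3$;
\item $\scrE_2\to\scrE_0$, namely $d_1f_3+\dots$, more precisely $d_2'f_2+f_3 d_2$ in the middle;
\item $\scrE_1\to\scrE_{-1}=0$ and $\scrE_3\to\scrE_2$ contributions (from $f_1$ composed with $d_1$ on the left side, etc.).
\end{itemize}
The target is the map $\Ysf_1$, whose only nonzero pieces are $\scrE_3\to\scrE_1$ given by $(0,1,0)^T$ and $\scrE_1\to\scrE_0$ given by $(1,0,0)$. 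Here I read $\Ysf$ as the degree two map of the same shape as $\Xsf$, i.e.\ with vertical maps $\scrE_2\to\scrE_0$ and $\scrE_3\to\scrE_1$. I will therefore compute, for each homotopy, the two composites landing in these slots, plus the remaining composites, which must vanish. The differentials are those in \eqref{eqn:LFres}: $d_3=(v_2^2,-a^{\kx+1},-1)^T$, $d_2$ the $3\times 3$ matrix, and $d_1=(v_1,v_2^2,b^{2-\kx}w_1)$, together with the relation $b^{2-\kx}w_1=a^{\kx+1}v_1+Av_2^2$.

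The steps, in order, are as follows.
\begin{itemize}
\item For $\gsf$: $d_1f_3+f_2\dots$ gives $(0,1,0)d_2=(A,-v_1,b^{2-\kx}w_1)$, and $f_2$ selects the middle row, cancelling the last two entries. This leaves $A$ in the first slot. Similarly, on $\scrE_3$ the contributions $f_2d_3+d_3f_1$ give $(0,-a^{\kx+1},0)^T+(v_2^2,-a^{\kx+1},-1)^T$ composed with $d_2$, and the result is $A(0,1,0)^T$.
\item For $\hsf$: the same computation, where the entry $-aG$ is designed to cancel the $v_2^2G$ part of $A$. The identity $aA=a^{\kx+1}+v_2(a^{\kx-2}+\dots)$ then leaves exactly $a^{\kx-2}v_2$ once the $a^{\kx+1}$ and $v_1$ terms cancel.
\item For $\jsf_J$ and $\ksf_K$: single-entry matrices, whose composites with $d_2$ pick out $v_2^2J$ (via the $(1,2)$ entry $v_2^2$ of $d_2$) and $K$ (via the $-1$ and $-v_2^2$ entries), respectively.
\end{itemize}
In each case I will confirm that the off-target entries cancel. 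I expect this to be routine bookkeeping, and I also read the $H$ in $\jsf_{1,J}$ as the typo it appears to be for $J$.

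The main obstacle is sign and gluing bookkeeping rather than any idea. First, the sign convention must reproduce the positive coefficients claimed. Second, the chart-two maps $\gsf_2,\hsf_2,\jsf_{2,J},\ksf_{2,K}$ must actually exist and be regular on $\mathrm{U}_2$. To handle the second point, I would rewrite each entry in the coordinates $b=a^{-1}$, $w_2=a^{-1}v_2$, and conjugate by the transition twists $\scrO(-k)$ of the resolution. The powers of $a$ introduced are then at most $a^{\kx}$ on the $J$ entry (in the $\scrO(-2-\kx)\to\scrO(-2)$ slot) and $a^{\kx-2}$ on the $K$ entry. This is precisely the source of the bounds $\deg(J)\le\kx$ and $\deg(K)\le\kx-2$, so I would check each such entry becomes a polynomial in $b,w_2$.
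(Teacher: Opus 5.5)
You have the right overall route, and it is all the paper intends: the lemma is a direct check on chart one. The \v{C}ech component of $\dd$ vanishes because all four elements have the form $(\asf_1,\asf_2,0)$ with $\asf_1=\asf_2$ on $\mathrm{U}_{12}$. Gluing is an entrywise degree check: $\scrHom(\scrO(-2-\kx),\scrO(-2))=\scrO(\kx)$ for the $J$ slot, and $\scrO(\kx-2)$ for the two $K$ slots. Your treatment of $\hsf$, $\jsf_J$, $\ksf_K$ and of the gluing is fine.

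The gap is the $\gsf$ case. There you assert the cancellations rather than compute them, and they do not happen for $\gsf_1$ as printed. With $\updelta\mathsf{f}=d\circ\mathsf{f}+\mathsf{f}\circ d$ in odd degree, a degree-one map has components $f_1\colon\scrE_3\to\scrE_2$, $f_2\colon\scrE_2\to\scrE_1$, $f_3\colon\scrE_1\to\scrE_0$ (not $\scrE_k\to\scrE_k$). Its $\updelta$ has exactly two components, $d_2f_1+f_2d_3$ and $d_1f_2+f_3d_2$. For the printed $\gsf_1$, where $f_2=\mathrm{diag}(0,1,0)$, these are
\[
(a^{\kx+1},A,-1)^T+(0,-a^{\kx+1},0)^T=(a^{\kx+1},\,A-a^{\kx+1},\,-1)^T
\quad\text{and}\quad
(0,v_2^2,0)+(A,-v_1,b^{2-\kx}w_1).
\]
These are not $A\cdot(0,1,0)^T$ and $A\cdot(1,0,0)$. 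In particular, $d_1f_2$ does not cancel the last two entries of $f_3d_2$. Your $\scrE_3\to\scrE_1$ line also uses $d_3$ where the first column of $d_2$ belongs. So $\dd(\gsf)=A\cdot\Ysf$ is false for the literal definition, and your write-up would "verify" it only by assertion.

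Just as you caught $H$ for $J$, you need to notice that $\gsf_1$ is misprinted. Its middle matrix should be $f_2=\begin{psmallmatrix}0&1&0\\0&0&0\\0&0&-1\end{psmallmatrix}$. This matches the $(1,2)$ and $(3,3)$ entries of $\hsf_1$. Also, $\asf_1$ is $-a^2$ times this corrected $\gsf_1$ plus a term supported in the $(2,2)$ entry of $f_2$ and the first entry of $f_3$. With the correction, $f_2d_3=(-a^{\kx+1},0,1)^T$ and $d_1f_2=(0,v_1,-b^{2-\kx}w_1)$. The two components become $(0,A,0)^T$ and $(A,0,0)$, i.e.\ $\dd(\gsf)=A\cdot\Ysf$. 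The new entries are constants in $\scrO(2)$ slots, so this $\gsf$ still glues. It is also the $\gsf$ the later computation of $\mathsf{U}_4$ implicitly uses, since it gives $\gsf\star\xsf+\xsf\star\gsf=F\cdot\Ysf$.

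Fix some smaller slips too. The row vector in $\Ysf$ is the $\scrE_2\to\scrE_0$ component, as you later note. There are no further ``$\scrE_3\to\scrE_2$'' components of $\updelta$. For $\jsf_J$, the factor $v_2^2$ comes from the first entry of $d_3$ and the second entry of $d_1$, not from $d_2$.
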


Given everything is now set up in both charts, to ease notation we suppress the calculation in the second chart, and only display that in the first chart. 

Since $\msf_2(\xsf,\xsf)=0$, it is direct to calculate that
\begin{equation}
\mathsf{U}_3(\xsf,\xsf,\xsf)=\fsf_2(\xsf,\xsf)\star\xsf+\xsf\star\fsf_2(\xsf,\xsf)=a^2F\cdot
\Ysf\label{eqn:l3second}
\end{equation}
If $n=5$ set $R_6=0$, else set
\[
\begin{array}{l@{\hspace{4pt}}l@{\hspace{4pt}}l@{\hspace{4pt}}l@{\hspace{4pt}}l@{\hspace{4pt}}l@{\hspace{4pt}}l@{\hspace{4pt}}l@{\hspace{4pt}}l@{\hspace{4pt}}l@{\hspace{4pt}}l@{\hspace{4pt}}}
R_6&\colonequals& \upalpha_6a^{\kx-3}&+&\upalpha_7a^{\kx-4}v_2&+&\hdots &+& \upalpha_\kx a^3v_2^{\kx-6}\\
\end{array}
\]
Then  $a^2F=a^{\kx}+\upalpha_5a^{\kx-2}v_2+v_2^2R_6$, so 
\begin{align*}
a^2F&=a^{\kx}+\upalpha_5a^{\kx-2}v_2+v_2^2R_6\\
&= A+\upalpha_5a^{\kx-2}v_2+v_2^2R_6+(a^\kx-A)
\end{align*}
Since $\deg(a^\kx-A)=\kx-2$, combining with \ref{lem:l3homot} and \eqref{eqn:l3second} it follows that
\[
\mathsf{U}_3(\xsf,\xsf,\xsf)=\dd(\gsf+\upalpha_5\hsf+\jsf_{R_6}+\ksf_{a^{\kx}-A})
\]
Thus $\msf_3(\xsf,\xsf,\xsf)=0$, and since $\dd\fsf_3(\xsf,\xsf,\xsf)=-\mathsf{U}_3(\xsf,\xsf,\xsf)$, we can choose $\fsf_3(\xsf,\xsf,\xsf)=-\gsf-\upalpha_5\hsf+\jsf_{-R_6}+\ksf_{A-a^{\kx}}$

Since all previous $\msf_i(\xsf,\hdots,\xsf)$ are zero, it is now direct to calculate that
\begin{align*}
\mathsf{U}_4(\xsf,\hdots,\xsf)&=\fsf_3(\xsf,\xsf,\xsf)\star\xsf+\fsf_2(\xsf,\xsf)\star\fsf_2(\xsf,\xsf)+\xsf\star\fsf_3(\xsf,\xsf,\xsf)\\
&=(-(1+\upalpha_5a)F-av_2(\upalpha_5aG-R_6))\cdot
\Ysf
\end{align*}
Now if $\upalpha_5\neq 0$ (i.e.\ $n=5$) then since $aF$ contains $a^{\kx-1}$, the right hand side can be written in the form $-\upalpha_5\Xsf+\dd(-)$, and so up to rescaling $\msf_4(\xsf,\hdots,\xsf)=\Xsf$.  In this case, it follows that the prorepresenting object is isomorphic to  $\mathbb{C}\llsq t\rrsq/t^{4}$, as required.

We can thus assume that $n>5$, so that $\upalpha_5=0$ and $\msf_4(\xsf,\hdots,\xsf)=0$.  This simplifies $\fsf_3$, and further
\begin{equation}
-\mathsf{U}_4(\xsf,\hdots,\xsf)=(F-av_2R_6)\cdot\Ysf=(F-\upalpha_6a^{\kx-2}v_2-v_2^2R_7)\cdot\Ysf \label{eqn:l3third}
\end{equation}
where either $R_7=0$ or
\[
\begin{array}{l@{\hspace{4pt}}l@{\hspace{4pt}}l@{\hspace{4pt}}l@{\hspace{4pt}}l@{\hspace{4pt}}l@{\hspace{4pt}}l@{\hspace{4pt}}l@{\hspace{4pt}}l@{\hspace{4pt}}l@{\hspace{4pt}}l@{\hspace{4pt}}}
R_7&\colonequals& \upalpha_7a^{\kx-3}&+&\upalpha_8a^{\kx-4}v_2&+&\hdots &+& \upalpha_\kx a^4v_2^{\kx-7}\\
\end{array}
\]
Thus combining \eqref{eqn:l3third} and \ref{lem:l3homot}, we can choose $\fsf_4(\xsf,\hdots,\xsf)=-\upalpha_6\hsf+\jsf_{-R_7}+\ksf_F$.

\medskip
The proof proceeds in this manner, and we illustrate the next few steps so that the mod 3 pattern presents itself.  Since all previous $\msf_i(\xsf,\hdots,\xsf)$ are zero, using the previous expressions for $\fsf_i$ (simplified since $\upalpha_5=0$), it is direct to calculate that
\begin{align*}
\mathsf{U}_5(\xsf,\hdots,\xsf)&=\fsf_4(\xsf,\hdots,\xsf)\star\xsf+\fsf_3(\xsf,\xsf,\xsf)\star\fsf_2(\xsf,\xsf)+\fsf_2(\xsf,\xsf)\star\fsf_3(\xsf,\xsf,\xsf)+\xsf\star\fsf_4(\xsf,\hdots,\xsf)\\
&=(-av_2(\upalpha_6aG-R_7)-a^2R_6-\upalpha_6aF)\cdot
\Ysf
\end{align*}
Now if $\upalpha_6\neq 0$ (i.e.\ $n=6$) then the right hand side can be written in the form $-2\upalpha_6\Xsf+\dd(-)$, and so up to rescaling $\msf_5(\xsf,\hdots,\xsf)=\Xsf$.  In this case, it follows that the prorepresenting object is isomorphic to  $\mathbb{C}\llsq t\rrsq/t^{5}$, as required.

We can thus assume that $n>6$, so that $\upalpha_5=\upalpha_6=0$  and $\msf_5(\xsf,\hdots,\xsf)=0$.  This simplifies $\fsf_3$ and $\fsf_4$, and further
\begin{equation}
-\mathsf{U}_5(\xsf,\hdots,\xsf)=(a^2R_6-av_2R_7)\cdot\Ysf=0 \label{eqn:l3fourth}
\end{equation}
Thus we can choose $\fsf_5(\xsf,\hdots,\xsf)=0$. Given all this, the only non-zero entries for $\mathsf{U}_6$ are
\begin{align*}
\mathsf{U}_6(\xsf,\hdots,\xsf)&=\fsf_4\star\fsf_2+\fsf_3\star\fsf_3+\fsf_2\star\fsf_4=(R_6-a^2R_7)\cdot\Ysf
\end{align*}
Now if $\upalpha_7\neq 0$ (i.e.\ $n=7$) then the right hand side can be written in the form $-\upalpha_7\Xsf+\dd(-)$, and so up to rescaling $\msf_6(\xsf,\hdots,\xsf)=\Xsf$.  In this case, it follows that the prorepresenting object is isomorphic to  $\mathbb{C}\llsq t\rrsq/t^{6}$, as required.

We can thus assume that $n>7$, so that $\upalpha_5=\upalpha_6=\upalpha_7=0$ and $\msf_6(\xsf,\hdots,\xsf)=0$.  This simplifies previous expressions, including 
\[
-\mathsf{U}_6(\xsf,\hdots,\xsf)=(\upalpha_8a^{\kx-2}v_2+v_2^2R_9-R_6)=\dd(\upalpha_8\hsf+\jsf_{R_9}+\ksf_{-R_6})
\]
where either $R_9=0$ or
\[
\begin{array}{l@{\hspace{4pt}}l@{\hspace{4pt}}l@{\hspace{4pt}}l@{\hspace{4pt}}l@{\hspace{4pt}}l@{\hspace{4pt}}l@{\hspace{4pt}}l@{\hspace{4pt}}l@{\hspace{4pt}}l@{\hspace{4pt}}l@{\hspace{4pt}}}
R_9&\colonequals& \upalpha_9a^{\kx-3}&+&\upalpha_{10}a^{\kx-4}v_2&+&\hdots &+& \upalpha_\kx a^6v_2^{\kx-9}
\end{array}
\]
Thus we choose $\fsf_6(\xsf,\hdots,\xsf)=\upalpha_8\hsf+\jsf_{R_9}+\ksf_{-R_6}$.  Substituting into the expression for $\mathsf{U}_7$, using the fact that all previous $\msf_i=0$, a direct calculation shows that
\[
\mathsf{U}_7(\xsf,\hdots,\xsf)=(R_7+\upalpha_8aF-av_2(R_9-\upalpha_8aG))\cdot \Ysf
\]
Now if $\upalpha_8\neq 0$ (i.e.\ $n=8$) then the right hand side can be written in the form $\upalpha_8\Xsf+\dd(-)$, and so up to rescaling $\msf_7(\xsf,\hdots,\xsf)=\Xsf$.  In this case, it follows that the prorepresenting object is isomorphic to  $\mathbb{C}\llsq t\rrsq/t^{7}$, as required.

We can thus assume that $n>8$, so that $\upalpha_5=\hdots=\upalpha_8=0$ and $\msf_7(\xsf,\hdots,\xsf)=0$.  This simplifies previous expressions, including 
\[
-\mathsf{U}_7(\xsf,\hdots,\xsf)=(av_2R_9-R_7)\cdot \Ysf=(\upalpha_9a^{\kx-2}v_2+v_2^2R_{10}-R_7)\cdot \Ysf
\]
Thus by \ref{lem:l3homot} we can choose $\fsf_7(\xsf,\hdots,\xsf)=\upalpha_9\hsf+\jsf_{R_{10}}+\ksf_{-R_7}$. Substituting into the expression for $\mathsf{U}_7$, using the fact that all previous $\msf_i=0$, a direct calculation shows that
\[
\mathsf{U}_8(\xsf,\hdots,\xsf)=(av_2(\upalpha_9aG-R_{10})+a^2R_9+\upalpha_9aF)\cdot \Ysf
\]
This should be compared to $\mathsf{U}_5$, where (up to sign) there is now a clear pattern. Regardless, if $\upalpha_9\neq 0$ (i.e.\ $n=9$) then the right hand side can be written in the form $2\upalpha_9\Xsf+\dd(-)$, and so up to rescaling $\msf_8(\xsf,\hdots,\xsf)=\Xsf$.  In this case, it follows that the prorepresenting object is isomorphic to  $\mathbb{C}\llsq t\rrsq/t^{8}$, as required.

We can thus assume that $n>9$, so that $\upalpha_5=\hdots=\upalpha_9=0$ and $\msf_8(\xsf,\hdots,\xsf)=0$.  This simplifies previous expressions, including 
\[
\mathsf{U}_8(\xsf,\hdots,\xsf)=(a^2R_9-av_2R_{10})\cdot \Ysf=0
\]
and thus we can choose $\fsf_8(\xsf,\hdots,\xsf)=0$. 
 
 We are finally able to assume the inductive hypothesis that $n>9$, that $\fsf_2,\fsf_3,\fsf_4$ are determined as above, that for all $i$ such that $5\leq i\leq n-3$ we have
\[
\fsf_{i}(\xsf,\hdots,\xsf)=\begin{cases}
(-1)^{i\phantom{+1}}(\jsf_{R_{i+3}}+\ksf_{-R_i}) & i\equiv 0 \mbox{ mod } 3\\
(-1)^{i+1}(\jsf_{R_{i+3}}+\ksf_{-R_i}) & i\equiv 1 \mbox{ mod } 3\\
0 & i\equiv 2 \mbox{ mod } 3\\
\end{cases}
\]
and $\msf_2(\xsf,\xsf)=\hdots=\msf_{n-2}(\xsf,\hdots,\xsf)=0$, and furthermore
\[
\fsf_{n-2}(\xsf,\hdots,\xsf)=\begin{cases}
(-1)^{n-2}(\upalpha_{n}\hsf+\jsf_{R_{n+1}}+\ksf_{-R_{n-2}}) & n\equiv 2 \mbox{ mod } 3\\
(-1)^{n-1}(\upalpha_{n}\hsf+\jsf_{R_{n+1}}+\ksf_{-R_{n-2}}) & n\equiv 0 \mbox{ mod } 3\\
0 & n\equiv 1 \mbox{ mod } 3\\
\end{cases}
\]
From the above information, a direct calculation using the defining equation of $\mathsf{U}_{n-1}$ (see e.g.\ \cite[(5.A)]{BW2}) shows that
\[
\mathsf{U}_{n-1}(\xsf,\hdots,\xsf)=\begin{cases}
(-1)^{n-1}(R_{n-1}-a^2R_n)\cdot \Ysf & n\equiv 1 \mbox{ mod } 3\\
(-1)^{n\phantom{-1}}(R_{n-1}+\upalpha_{n}aF-av_2(R_{n+1}-\upalpha_naG))\cdot \Ysf & n\equiv 2 \mbox{ mod } 3\\
(-1)^{n-1}(a^2R_n+\upalpha_naF-av_2(R_{n+1}-\upalpha_naG))\cdot \Ysf & n\equiv 0 \mbox{ mod } 3\\
\end{cases}
\]
In all cases the right hand side equals a non-zero scalar times $\Xsf$, plus $\dd(-)$.  Thus up to rescaling $\msf_{n-1}(\xsf,\hdots,\xsf)=\Xsf$, and so the prorepresenting object is isomorphic to $\mathbb{C}\llsq t\rrsq/t^{n-1}$.

\subsection{Proof of \ref{lem:GV2forl4}}\label{subsec:ProofOfGVl4}
Consider an $\ell=4$ flop, classified in \S\ref{subsec:IntrinsMain}--\ref{subsec:classflops}. In this section we will first prove the claim that the noncommutative deformation algebra $\mathrm{A}_{\mathrm{def}}$ of $\scrO_{2\Curve}$ satisfies
\begin{equation}
\mathrm{A}_{\mathrm{def}}\cong\frac{\ring}{\lcl x^2+r_3, xy+yx+s_3\rcl},\label{eqn:l4aim}
\end{equation}
where $r_3,s_3$ have order $\geq 3$.  Since the abelianisation $(\mathrm{A}_{\mathrm{def}})^{\mathrm{ab}}$ is obtained by formally commuting variables, setting $\m=(x,y)$ it then follows immediately that
\[
\GV{2} = \dim_\mathbb{C} (\mathrm{A}_{\mathrm{def}})^{\mathrm{ab}}\geq \dim_\mathbb{C}\left(\frac{(\mathrm{A}_{\mathrm{def}})^{\mathrm{ab}}}{\m^4}\right)=
 \dim_\mathbb{C}\left(\frac{\mathbb{C}[x,y]}{(x^2+s_3,2xy+r_3,\m^4)}\right)=5
\]
which proves \ref{lem:GV2forl4}. Thus it suffices to prove the claim, \eqref{eqn:l4aim}.

\medskip
Consider the glue in the second line of \eqref{eqn:appendixGlue}.  Exactly as in \S\ref{subsec:ProofOfGVl3} the ideal $(v_1,v_2^2)$ glues to the ideal $(w_1,w_2^2)$ to give a sheaf of ideals $\scrI_2$ on $X$, and $\scrO_{2\Curve}\colonequals \scrO/\scrI_2$ is the sheaf we seek.  In the glue we are considering, again $\ky=3$, %highest x is 3, in x^3
 so \eqref{eqn:LFres} is still a locally free resolution of $\scrO_{2\Curve}$ (albeit in the differentials, $\kx, A, B$ etc now gets replaced by those coming from the new glue).  Now we have $A=a^{\kx}+a^{\kx-4}v_2+v_2^2C$, where $C=a^{\kx-3}+v_2D$ for some $D$.

Again using small case letters for degree one elements, consider $\xsf_1, \ysf_1$ defined to be 
\[
\begin{tikzpicture}
\node () at (-3.5,0) {$\xsf _1=$};
\node (B1) at (-1,1) {$\scrE_3$};
\node (C1) at (3.25,1) {$\scrE_2$};
\node (D1) at (7,1) {$\scrE_1$};
\node (E1) at (8,1) {$\scrE_0$};
\draw[->] (B1)--
node[above]{$\begin{psmallmatrix}v_2^2 \\ -a^{\kx+1}\\ -1\end{psmallmatrix}$}
(C1);
\draw[->] (C1)--
node[above]{$\begin{psmallmatrix}
a^{\kx+1}&v_2^2&0\\
A&-v_1&b^{2-\kx}w_1\\
-1&0&-v_2^2\end{psmallmatrix}$}
(D1);
\draw[->] (D1)--
(E1);

\node (B2) at (-2,-1.5) {$\scrE_3$};
\node (C2) at (-1,-1.5) {$\scrE_2$};
\node (D2) at (3.25,-1.5) {$\scrE_1$};
\node (E2) at (7,-1.5) {$\scrE_0$};
\draw[->] (B2)--
(C2);
\draw[->] (C2)--
node[above]{$\begin{psmallmatrix}
a^{\kx+1}&v_2^2&0\\
A&-v_1&b^{2-\kx}w_1\\
-1&0&-v_2^2\end{psmallmatrix}$}
(D2);
\draw[->] (D2)--
node[above]{$\begin{psmallmatrix}v_1 & v_2^2& b^{2-\kx}w_1\end{psmallmatrix}$}
(E2);

\draw[->](B1)--node[left,pos=0.35]{$\begin{psmallmatrix}-av_2\\0\\0\end{psmallmatrix}$}(C2);
\draw[->](C1)--node[gap,pos=0.35]{$\begin{psmallmatrix}0&-av_2&0\\ a^{\kx-3}+av_2C&-v_2&0\\0&0&av_2\end{psmallmatrix}$}(D2);
\draw[->](D1)--node[right,pos=0.35]{$\begin{psmallmatrix}v_2&-av_2&0\end{psmallmatrix}$}(E2);
\end{tikzpicture}
\]
\[
\begin{tikzpicture}
\node () at (-3.5,0) {$\ysf _1=$};
\node (B1) at (-1,1) {$\scrE_3$};
\node (C1) at (3.25,1) {$\scrE_2$};
\node (D1) at (7,1) {$\scrE_1$};
\node (E1) at (8,1) {$\scrE_0$};
\draw[->] (B1)--
node[above]{$\begin{psmallmatrix}v_2^2 \\ -a^{\kx+1}\\ -1\end{psmallmatrix}$}
(C1);
\draw[->] (C1)--
node[above]{$\begin{psmallmatrix}
a^{\kx+1}&v_2^2&0\\
A&-v_1&b^{2-\kx}w_1\\
-1&0&-v_2^2\end{psmallmatrix}$}
(D1);
\draw[->] (D1)--
(E1);

\node (B2) at (-2,-1.5) {$\scrE_3$};
\node (C2) at (-1,-1.5) {$\scrE_2$};
\node (D2) at (3.25,-1.5) {$\scrE_1$};
\node (E2) at (7,-1.5) {$\scrE_0$};
\draw[->] (B2)--
(C2);
\draw[->] (C2)--
node[above]{$\begin{psmallmatrix}
a^{\kx+1}&v_2^2&0\\
A&-v_1&b^{2-\kx}w_1\\
-1&0&-v_2^2\end{psmallmatrix}$}
(D2);
\draw[->] (D2)--
node[above]{$\begin{psmallmatrix}v_1 & v_2^2& b^{2-\kx}w_1\end{psmallmatrix}$}
(E2);

\draw[->](B1)--node[left,pos=0.35]{$\begin{psmallmatrix}-a\\0\\0\end{psmallmatrix}$}(C2);
\draw[->](C1)--node[gap,pos=0.35]{$\begin{psmallmatrix}0&-a&0\\ aC&-1&-a^{\kx-3}v_2\\0&0&a\end{psmallmatrix}$}(D2);
\draw[->](D1)--node[right,pos=0.35]{$\begin{psmallmatrix}1&-a&-a^{\kx-3}v_2\end{psmallmatrix}$}(E2);
\end{tikzpicture}
\]
As in the previous section, there are elements $\xsf_2,\ysf_2$ on the other chart for which $\xsf\colonequals (\xsf_1,\xsf_2,0)\in\scrC_1$ and  $\ysf\colonequals (\ysf_1,\ysf_2,0)\in\scrC_1$ satisfy $\dd(\xsf)=0=\dd(\ysf)$. For very similar reasons as those in \cite[4.1]{BW2} and in the last subsection, $\xsf$ and $\ysf$ descend to give linearly independent elements of $\mathrm{H}^1(\scrC)$. Since $\mathrm{H}^1(\scrC)\cong\mathrm{Ext}^1(\scrO_{2\Curve},\scrO_{2\Curve})$, and the neighbourhood is $\ell=4$ flopping, it follows from \cite[7.4]{DW5} that $\mathrm{Ext}^1(\scrO_{2\Curve},\scrO_{2\Curve})\cong\mathbb{C}^2$, and thus $\xsf$ and $\ysf$ in fact give a basis.

Consider the degree two elements $\Xsf_1$ and $\Ysf_1$, defined as follows by specifying the two maps $F_1$ and $F_2$. 
\[
\begin{array}{cc}
\begin{array}{c}
\begin{tikzpicture}[yscale=0.75,xscale=1.25]
\node (C1) at (3,1) {$\scrE_3$};
\node (D1) at (4,1) {$\scrE_2$};
\node (E1) at (5,1) {$\scrE_1$};
\node (F1) at (6,1) {$\scrE_0$};
\draw[->] (E1)--(F1);
\draw[->] (C1)--(D1);
\draw[->] (D1)--(E1);
\node (B2) at (1,-1) {$\scrE_3$};
\node (C2) at (2,-1) {$\scrE_2$};
\node (D2) at (3,-1) {$\scrE_1$};
\node (E2) at (4,-1) {$\scrE_0$};
\draw[->] (B2)--(C2);
\draw[->] (C2)--(D2);
\draw[->] (D2)--(E2);
\draw[->](C1)--node[left]{$\scriptstyle F_1$}(D2);
\draw[->](D1)--node[right]{$\scriptstyle F_2$}(E2);
\end{tikzpicture}
\end{array}
&
%%%%% SECOND
\begin{array}{lcc}
&{\scriptstyle F_1}&{\scriptstyle F_2}\\[2mm]
\Xsf_1\colonequals&\begin{psmallmatrix}0\\-a^2C\\0\end{psmallmatrix} 
&\begin{psmallmatrix}-a^2C&0&0\end{psmallmatrix}\\[3mm]
\Ysf_1\colonequals&\begin{psmallmatrix}0\\-a^2v_2C\\0\end{psmallmatrix} 
&\begin{psmallmatrix}-a^2v_2C&0&0\end{psmallmatrix}
\end{array}
\end{array}
\]
As in the previous section, there are elements $\Xsf_2,\Ysf_2$ on the other chart for which $\Xsf\colonequals (\Xsf_1,\Xsf_2,0)\in\scrC_2$ and  $\Ysf\colonequals (\Ysf_1,\Ysf_2,0)\in\scrC_2$ satisfy $\dd(\Xsf)=0=\dd(\Ysf)$. It is easy to verify, in a similar way to \cite[4.5]{BW2} that $\Xsf$ and $\Ysf$ descend to give linearly independent elements in $\mathrm{H}^2(\scrC)$.  By CY duality $\mathrm{H}^2(\scrC)\cong\mathrm{Ext}^2(\scrO_{2\Curve},\scrO_{2\Curve})\cong\mathbb{C}^2$, and thus $\Xsf,\Ysf$ form a basis.

The lemma below will require homotopies, and for this consider the three degree one elements $\hsf, \jsf, \ksf$, defined as follows by specifying the three maps $f_1, f_2$ and $f_3$. 
\[
\begin{array}{cc}
\begin{array}{c}
\begin{tikzpicture}[yscale=0.75,xscale=1.25]
\node (B1) at (0,1) {$\scrE_3$};
\node (C1) at (1,1) {$\scrE_2$};
\node (D1) at (2,1) {$\scrE_1$};
\node (E1) at (3,1) {$\scrE_0$};
\draw[->] (B1)--(C1);
\draw[->] (C1)--(D1);
\draw[->] (D1)--(E1);
\node (B2) at (-1,-1) {$\scrE_3$};
\node (C2) at (0,-1) {$\scrE_2$};
\node (D2) at (1,-1) {$\scrE_1$};
\node (E2) at (2,-1) {$\scrE_0$};
\draw[->] (B2)--(C2);
\draw[->] (C2)--(D2);
\draw[->] (D2)--(E2);
\draw[->](B1)--node[left]{$\scriptstyle f_1$}(C2);
\draw[->](C1)--node[gap]{$\scriptstyle f_2$}(D2);
\draw[->](D1)--node[right]{$\scriptstyle f_3$}(E2);
\end{tikzpicture}
\end{array}
&
\begin{array}{lccc}
&{\scriptstyle f_1}&{\scriptstyle f_2}&{\scriptstyle f_3}\\[2mm]
\hsf_1\colonequals &\begin{psmallmatrix}-a^2\\0\\0\end{psmallmatrix}
&\begin{psmallmatrix}0&-a^2&0\\ 0&-a&0\\0&0&a^2\end{psmallmatrix}
&\begin{psmallmatrix}1&-a^2&0\end{psmallmatrix} \\[3mm]
\jsf_1\colonequals &\begin{psmallmatrix}0\\0\\0\end{psmallmatrix}
&\begin{psmallmatrix}0&0&0\\ 0&0&0\\0&0&0\end{psmallmatrix}
&\begin{psmallmatrix}0&0&a^{\kx-2}\end{psmallmatrix} \\[3mm]
\ksf_1\colonequals &\begin{psmallmatrix}0\\0\\0\end{psmallmatrix}
&\begin{psmallmatrix}0&0&0\\ 0&0&a^{\kx-2}\\0&0&0\end{psmallmatrix}
&\begin{psmallmatrix}1&0&0\end{psmallmatrix} \\[3mm]

\end{array}
\end{array}
\]
Again there are maps $\hsf_2,\jsf_2,\ksf_2$ on the other chart which glue to give the above; from this we obtain the three elements $\hsf=(\hsf_1,\hsf_2,0)$, $\jsf=(\jsf_1,\jsf_2,0)$, $\ksf=(\ksf_1,\ksf_2,0)\in\scrC_1$. 

\begin{lemma}\label{lem:AppenComp}
With notation as above, the following statements hold.
\begin{enumerate}
\item $\xsf\star \xsf= 0 +\dd(\hsf)$.
\item $\ysf\star \xsf= \Ysf +\dd(\jsf)$.
\item $\xsf\star \ysf= \Ysf +\dd(\ksf)$.
\item $\ysf\star \ysf= \Xsf +\dd(0)$.
\end{enumerate}
\end{lemma}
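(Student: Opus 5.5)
These four identities are direct chain-level computations in the \v{C}ech DG-algebra $\scrC$, carried out in the same way as \eqref{eqn:l3first} and \ref{lem:l3homot}. All of $\xsf,\ysf,\hsf,\jsf,\ksf,\Xsf,\Ysf$ have zero $\scrU_{12}$-component, and they glue, so by \eqref{eqn:star} each product $\asf\star\bsf$ is $(\asf_1\circ\bsf_1,\asf_2\circ\bsf_2,0)$. Likewise, by \eqref{eqn:defdd} each differential $\dd(\hsf)$ equals $(\updelta\hsf_1,\updelta\hsf_2,0)$. It therefore suffices to verify each identity on chart $\mathrm{U}_1$, as an equality of degree-two maps $\scrE_n\to\scrE_{n-2}$ with components $\scrE_3\to\scrE_1$ and $\scrE_2\to\scrE_0$. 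The second chart follows by the same computation, or from gluing. I will suppress it, as was done in \S\ref{subsec:ProofOfGVl3}.

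For each product I will compose the diagonal matrices of $\xsf_1$ and $\ysf_1$. For instance, $(\ysf_1\circ\xsf_1)$ on $\scrE_2\to\scrE_0$ is the row vector of $\ysf_1$ on $\scrE_1\to\scrE_0$ times the $3\times3$ matrix of $\xsf_1$ on $\scrE_2\to\scrE_1$. Similarly, on $\scrE_3\to\scrE_1$ it is the $3\times3$ matrix of $\ysf_1$ times the column vector of $\xsf_1$. I will then compute $\updelta$ of each proposed homotopy using the even-degree sign rule, which gives (down-across)$-$(across-down) against the differentials of \eqref{eqn:LFres}. The remaining step is to check that the difference is $0$, $\Ysf_1$ or $\Xsf_1$ as stated.

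Throughout I will substitute the relations $A=a^{\kx}+a^{\kx-4}v_2+v_2^2C$ and $C=a^{\kx-3}+v_2D$, together with $b^{2-\kx}w_1=a^{\kx+1}v_1+Av_2^2$. The purpose of these substitutions is to isolate the terms $a^2C$ and $a^2v_2C$ that make up $\Xsf_1$ and $\Ysf_1$. Case (1) should parallel \eqref{eqn:l3first}, with $F$ replaced by $a^{\kx-3}+av_2C$. Once the term $-av_2(a^{\kx-3}+av_2C)$ appears, the homotopy $\hsf$ should absorb it completely, so nothing survives in cohomology. In case (4), $\ysf\star\ysf$ carries the entries $-a$ and $aC$. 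The product should produce $-a^2C$ directly in the positions $(2)$ and $(1)$, and this is exactly $\Xsf_1$, with the other entries cancelling outright. Cases (2) and (3) are mixed: the extra $v_2$ factor from $\xsf$ gives $-a^2v_2C=\Ysf_1$, and the leftover terms, which involve $a^{\kx-3}v_2$ and pure powers of $a$, are killed by $\jsf$ and $\ksf$ respectively.

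I expect the main obstacle to be the bookkeeping of terms involving $a^{\kx-3}v_2$ and $a^{\kx-2}$ in cases (2) and (3). Those terms must cancel exactly against $\updelta\jsf$ and $\updelta\ksf$, and getting this right requires care with the signs in $\updelta$ and with the third column, the one involving $b^{2-\kx}w_1$. If there is a discrepancy there, my first step will be to rewrite $b^{2-\kx}w_1$ via $a^{\kx+1}v_1+Av_2^2$ and reduce modulo the ideal $(v_1,v_2^2)$. This reduction is legitimate because an adjustment of that form can be absorbed by an additional homotopy, just as in \cite[4.1]{BW2}, without changing the cohomology classes.
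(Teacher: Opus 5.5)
Your route is the paper's own: verify each identity on $\mathrm{U}_1$ by composing the matrices of $\xsf_1,\ysf_1$, compare with $\Ysf_1$ or $\Xsf_1$ plus $\updelta$ of the named homotopy, and get the second chart by gluing. Your predictions for the products are right. For instance, $\xsf_1\circ\ysf_1$ is $\begin{psmallmatrix}0\\ -a^{\kx-2}-a^2v_2C\\ 0\end{psmallmatrix}$ on $\scrE_3\to\scrE_1$ and $\begin{psmallmatrix}-a^2v_2C&0&a^{\kx-2}v_2^2\end{psmallmatrix}$ on $\scrE_2\to\scrE_0$, and $\ysf_1\circ\ysf_1$ is exactly $\Xsf_1$.

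The step that fails as written is the computation of $\updelta$ on the homotopies. Since $\hsf,\jsf,\ksf\in\scrC_1$, the correct rule is the odd-degree one, $\updelta(\hsf_1)=d\circ\hsf_1+\hsf_1\circ d$ (down-across $+$ across-down). This is the same rule under which $\updelta(\xsf_1)=\updelta(\ysf_1)=0$; the even-degree rule is for elements of $\scrC_2$ such as $\Xsf,\Ysf$. Under the even rule you propose, $\xsf_1$ and $\ysf_1$ are not cocycles, and (1)--(3) come out false. In (2), for example, $\jsf_1$ would contribute $+\begin{psmallmatrix}a^{\kx-2}&0&a^{\kx-2}v_2^2\end{psmallmatrix}$ on $\scrE_2\to\scrE_0$ instead of $-\begin{psmallmatrix}a^{\kx-2}&0&a^{\kx-2}v_2^2\end{psmallmatrix}$. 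The leftover terms of $\ysf_1\circ\xsf_1$ then double rather than cancel. Only (4), which needs no homotopy, is unaffected.

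Your fallback does not repair this. Reducing modulo $(v_1,v_2^2)$ and absorbing discrepancies into a further homotopy can at best show that the differences are coboundaries. That gives the cohomological values of $\msf_2$ used afterwards, but not the stated identities with the specified $\hsf,\jsf,\ksf$. Also, \cite[4.1]{BW2} is a non-triviality argument, not an absorption one. Nor is $b^{2-\kx}w_1$ ever the source of a discrepancy: it either does not occur or cancels exactly.

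Once you use the correct sign the computation closes. You will also find that the $\scrE_1\to\scrE_0$ components of the homotopies as printed must be adjusted:

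\begin{itemize}
\item For $\hsf_1$ you need $f_3=\begin{psmallmatrix}a&-a^2&0\end{psmallmatrix}$, exactly as for $\asf_1$ in the $\ell=3$ computation. Then $\updelta(\hsf_1)$ on $\scrE_2\to\scrE_0$ is $\begin{psmallmatrix}a^{\kx+2}-a^2A&0&0\end{psmallmatrix}=\begin{psmallmatrix}-av_2(a^{\kx-3}+av_2C)&0&0\end{psmallmatrix}$.
\item For $\ksf_1$ you need $f_3=0$. The printed entry $1$ contributes $f_3\circ d_2=\begin{psmallmatrix}a^{\kx+1}&v_2^2&0\end{psmallmatrix}$, which nothing cancels.
\end{itemize}
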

\begin{proof}
Each is a simple and direct verification, using only the fact that $\star$ is defined using composition of functions. For example, $\xsf\star \ysf$ on chart one is $\ysf_1$ followed by $\xsf_1$, which is
\[
\begin{tikzpicture}[yscale=0.75]
\node (C1) at (3,1) {$\scrE_3$};
\node (D1) at (4.5,1) {$\scrE_2$};
\node (E1) at (5.5,1) {$\scrE_1$};
\node (F1) at (6.5,1) {$\scrE_0$};
\draw[->] (E1)--(F1);
\draw[->] (C1)--(D1);
\draw[->] (D1)--(E1);
\node (B2) at (1,-1) {$\scrE_3$};
\node (C2) at (2,-1) {$\scrE_2$};
\node (D2) at (3,-1) {$\scrE_1$};
\node (E2) at (4.5,-1) {$\scrE_0$};
\draw[->] (B2)--(C2);
\draw[->] (C2)--(D2);
\draw[->] (D2)--(E2);
\draw[->](C1)--node[left]{$\scriptstyle \begin{psmallmatrix}0\\-a^{\kx-2}-a^2v_2C\\0\end{psmallmatrix} 
$}(D2);
\draw[->](D1)--node[right]{$\scriptstyle \begin{psmallmatrix}-a^2v_2C&0&a^{\kx-2}v_2^2\end{psmallmatrix}$}(E2);
\end{tikzpicture}
\]
This evidently equals $\Ysf_1+\updelta(\ksf_1)$.  All other statements are similar.
\end{proof}
The above is the only part of the $A_\infty$-structure that is necessary to verify the claim \eqref{eqn:l4aim}, and thus prove \ref{lem:GV2forl4}.  Indeed, abusing notation slightly and now writing $\xsf,\ysf$ etc for their images in cohomology, it follows from \ref{lem:AppenComp} that 
\[
\msf_2(\xsf,\xsf)=0, \quad\msf_2(\xsf,\ysf)=\Ysf, \quad\msf_2(\ysf,\xsf)=\Ysf, \quad\msf_2(\ysf,\ysf)=\Xsf
\]
where $\msf_2$ is the first non-trivial operation in the $A_\infty$-minimal model of $\Ext^*(\scrO_{2\Curve},\scrO_{2\Curve})$ (see e.g.\ \cite[\S5.1]{BW2}).  As the $A_\infty$-operations dualise to give a presentation of $\mathrm{A}_{\mathrm{def}}$ (see e.g.\ \cite[below 4.3]{Kawamata_survey}), the claim \eqref{eqn:l4aim} follows immediately.

%-----------------------------------------------------------------------------------------

\end{document}